\documentclass{amsart}
\usepackage{amsmath}
\usepackage{amssymb}
\usepackage{amsthm}
\usepackage{epsfig}
\usepackage{amsfonts}
\usepackage{amscd}
\usepackage{mathrsfs}
\usepackage{enumerate}
\usepackage{latexsym}
\usepackage{url}

\newtheorem{theorem}{Theorem}[section]
\newtheorem{lemma}[theorem]{Lemma}

\newtheorem{corollary}[theorem]{Corollary}

\theoremstyle{definition}
\newtheorem{definition}[theorem]{Definition}
\newtheorem{example}[theorem]{Example}

\newtheorem{notation}[theorem]{Notation}

\theoremstyle{question}
\newtheorem{question}[theorem]{Question}

\numberwithin{equation}{section}

\newtheorem*{xrem}{Remark}
\newtheorem*{xclaim}{Claim}

\begin{document}
\title[Compactification--like extensions]{Compactification--like extensions}

\author{M.R. Koushesh}
\address{Department of Mathematical Sciences, Isfahan University of Technology, Isfahan 84156--83111, Iran}
\address{School of Mathematics, Institute for Research in Fundamental Sciences (IPM), P.O. Box: 19395--5746, Tehran, Iran}
\email{koushesh@cc.iut.ac.ir}
\thanks{This research was in part supported by a grant from IPM (No. 86540012).}

\subjclass[2010]{54D35, 54D40, 54D20}


\keywords{Stone--\v{C}ech compactification; Compactification--like extension; minimal extension; optimal extension; tight extension; $n$--point extension; $n$--point compactification; countable--point extension; countable--point compactification; countable extension; countable compactification;  Mr\'{o}wka's condition $(\mbox{W})$; compactness--like topological property.}

\begin{abstract}
Let $X$ be a space. A space $Y$ is called an {\em extension} of $X$ if $Y$ contains $X$ as a dense subspace. For an extension $Y$ of $X$ the subspace $Y\backslash X$ of $Y$ is called the {\em remainder} of $Y$. Two extensions of $X$ are said to be {\em equivalent} if there is a homeomorphism between them which fixes $X$ pointwise. For two (equivalence classes of)  extensions $Y$ and $Y'$ of $X$ let $Y\leq Y'$ if there is a continuous mapping of $Y'$ into $Y$ which fixes  $X$ pointwise. Let ${\mathcal P}$ be a topological property. An extension $Y$ of $X$ is called a {\em
${\mathcal P}$--extension} of $X$ if it has ${\mathcal P}$. If ${\mathcal P}$ is compactness then  ${\mathcal P}$--extensions are called  {\em compactifications}.

The aim of this article is to introduce and study classes of extensions (which we call {\em compactification--like ${\mathcal P}$--extensions}, where ${\mathcal P}$ is a topological property subject to some mild requirements) which  resemble the classes of compactifications of locally compact spaces. We formally define  compactification--like ${\mathcal P}$--extensions and derive some of their basic properties, and in the case when the remainders are countable, we characterize spaces having such extensions.  We will then consider the classes of compactification--like ${\mathcal P}$--extensions as partially ordered sets. This consideration leads to some interesting results which characterize compactification--like ${\mathcal P}$--extensions of a space among all its  Tychonoff  ${\mathcal P}$--extensions with compact remainder. Furthermore, we study the relations between the order--structure of classes of compactification--like ${\mathcal P}$--extensions of a Tychonoff space $X$ and the topology of a certain subspace of its outgrowth $\beta X\backslash X$. We conclude with some applications, including an answer to an old question of S. Mr\'{o}wka and J.H. Tsai: For what pairs of topological properties ${\mathcal P}$ and ${\mathcal Q}$ is it true that every locally--$\mathcal{P}$ space with $\mathcal{Q}$ has a one--point extension with both $\mathcal{P}$ and $\mathcal{Q}$? An open question is raised.
\end{abstract}

\maketitle

\tableofcontents

\section{Introduction}

Let $X$ be a space. A space $Y$ is called an {\em extension} of $X$ if $Y$ contains $X$ as a dense subspace.  If $Y$ is an extension of $X$ then the subspace $Y\backslash X$ of $Y$ is called the {\em  remainder} of $Y$. Two extensions of $X$ are said to be {\em equivalent} if there exists a homeomorphism between them which fixes $X$ pointwise. This defines an equivalence relation on the class of all extensions of $X$. The equivalence classes will be identified with individuals when this causes no confusion. For two (equivalence classes of) extensions $Y$ and $Y'$ of $X$ we let $Y\leq Y'$ if there exists a continuous  mapping  of $Y'$ into $Y$ which fixes  $X$ pointwise. The relation $\leq$ defines a partial order on the set of all (equivalence classes of) extensions of $X$ (see Section 4.1 of \cite{PW} for more details). Let ${\mathcal P}$ be a topological
property. An extension $Y$ of $X$ is called a {\em ${\mathcal P}$--extension} of $X$ if it has ${\mathcal P}$. If ${\mathcal P}$ is compactness then  ${\mathcal P}$--extensions are called  {\em compactifications}. The aim in this article is to introduce and study classes of extensions (which we call {\em compactification--like ${\mathcal P}$--extensions} where ${\mathcal P}$ is a topological property) which look like  the classes of compactifications of locally compact spaces. These are for a Tychonoff space $X$:
\begin{itemize}
\item The class of {\em minimal ${\mathcal P}$--extensions} of $X$, consisting of those Tychonoff ${\mathcal P}$--extensions $Y$ of $X$ with compact remainder such that $Y$ is minimal (with respect to the subspace relation $\subseteq$) among all Tychonoff ${\mathcal P}$--extensions of $X$ with compact remainder. (In other words, one  cannot construct any other Tychonoff ${\mathcal P}$--extension of $X$ with compact remainder out of $Y$ by deleting points from the space $Y$.)
\item The class of {\em optimal ${\mathcal P}$--extensions} of $X$, consisting of those Tychonoff ${\mathcal P}$--extensions $Y$ of $X$  with compact remainder such that the topology of $Y$ is maximal (with respect to the inclusion relation $\subseteq$) among all topologies on $Y$ which turn $Y$ into a  Tychonoff ${\mathcal P}$--extension of $X$ with compact remainder and $Y$ is minimal (with respect to the subspace relation $\subseteq$) among all Tychonoff  ${\mathcal P}$--extensions  of $X$ with compact remainder. (In other words, one cannot construct any other Tychonoff ${\mathcal P}$--extension of $X$ with compact remainder out of $Y$ either by adding sets to the topology of $Y$ or deleting points from the space $Y$.)
\end{itemize}
Here the topological property ${\mathcal P}$ is subject to some mild restrictions and will include most of the important covering properties (such as compactness, the Lindel\"{o}f property, countable compactness, paracompactness and metacompactness) as special cases.

This article is organized as follows:

In Chapter  2 we give the formal definitions of compactification--like ${\mathcal P}$--extensions and we derive some of their basic properties.

In Chapter 3 we consider the case when the extensions have countable remainders and characterize those Tychonoff spaces which have a  compactification--like ${\mathcal P}$--extension with countable remainder.

In Chapter 4 we consider the classes of compactification--like ${\mathcal P}$--extensions of a Tychonoff space $X$ as partially ordered sets. Besides the standard partial order $\leq$ we consider two other partial orderers $\leq_{inj}$ and $\leq_{surj}$. This considerations lead to some interesting results which characterize compactification--like ${\mathcal P}$--extensions of $X$ among all Tychonoff ${\mathcal P}$--extensions of $X$ with compact remainder. Furthermore, we study the relationships between the order--structure of classes of compactification--like ${\mathcal P}$--extensions of $X$ (partially ordered with $\leq$) and the topology of a certain subspaces of its outgrowth $\beta X\backslash X$. We conclude this chapter with a result which characterizes the largest (with respect to $\leq$) compactification--like ${\mathcal P}$--extension of $X$. This largest element, which we explicitly introduce as a subspace of the Stone--\v{C}ech compactification $\beta X$ of $X$, turns out to be even the largest among all Tychonoff ${\mathcal P}$--extension of $X$ with compact remainder.

In Chapter 5 we give some applications of our study. These applications include the relations between compactification--like ${\mathcal P}$--extensions and tight ${\mathcal P}$--extensions with compact remainder (a {\em tight ${\mathcal P}$--extension} of a space $X$ is a  Tychonoff ${\mathcal P}$--extension of $X$ which does not contain properly any other ${\mathcal P}$--extension of $X$ as a subspace) and an answer to an old question of S. Mr\'{o}wka and J.H. Tsai in \cite{MT}: For what pairs of topological properties ${\mathcal P}$ and ${\mathcal Q}$  is it true that every (Tychonoff)  locally--$\mathcal{P}$ (non--$\mathcal{P}$) space with $\mathcal{Q}$ has a one--point (Tychonoff) extension with both $\mathcal{P}$ and $\mathcal{Q}$?

We  conclude  with an open question which naturally arise in connection with our study.

We now review some of the terminology, notation and well known results which will be used in the sequel. Our definitions mainly  come from the standard  text \cite{E} (thus in particular, compact spaces are Hausdorff, perfect mappings are continuous with Hausdorff domains, etc.). Other useful sources are \cite{GJ}, \cite{PW} and \cite{W}.

The letters  $\mathbf{R}$, $\mathbf{I}$ and $\mathbf{N}$ denote the real line, the closed unit interval and the set of all positive integers, respectively. By $\omega$ and $\Omega$ we denote the first infinite ordinal and the first uncountable ordinal, respectively, and by $\aleph_0$ and $\aleph_1$ we denote their cardinalities. The cardinality of a set $A$ is denoted by $\mbox{card}(A)$. For a subset $A$ of a space $X$ we let $\mbox{cl}_X A$, $\mbox{int}_X A$ and $\mbox{bd}_X A$ denote the closure, the interior and the boundary of $A$ in $X$, respectively. A subset of a space is said to be {\em clopen} if it is simultaneously closed and open. A {\em zero--set} of a space $X$ is a set of the form $Z(f)=f^{-1}(0)$ for some continuous $f:X\rightarrow \mathbf{I}$.  Any set of the form $X\backslash Z$, where $Z$ is a zero--set of a space $X$, is called a {\em cozero--set} of $X$. We denote the set of all zero--sets of $X$ by
${\mathscr Z}(X)$ and the set of all cozero--sets of $X$ by $Coz(X)$. For a Tychonoff space $X$ the {\em Stone--\v{C}ech compactification} of $X$ is the largest (with respect to the partial order $\leq$) compactification of $X$ and is  denoted by $\beta X$.  The Stone--\v{C}ech compactification of a Tychonoff $X$ is characterized among the compactifications of $X$ by either of the following properties:
\begin{itemize}
\item Every continuous  mapping  from $X$ to a compact space is continuously extendible over $\beta X$.
\item Every continuous  mapping  from $X$ to $\mathbf{I}$ is continuously extendible over $\beta X$.
\item For every $Z,S\in {\mathscr Z}(X)$ such that $Z\cap S=\emptyset$ we have
\[\mbox{cl}_{\beta X}Z\cap\mbox{cl}_{\beta X}S=\emptyset.\]
\item For every $Z,S\in {\mathscr Z}(X)$ we have
\[\mbox{cl}_{\beta X}(Z\cap S)=\mbox{cl}_{\beta X}Z\cap\mbox{cl}_{\beta X}S.\]
\end{itemize}
A continuous mapping $f:X\rightarrow Y$ is called {\em perfect} if $X$ is a Hausdorff space, $f$ is closed (not necessarily surjective) and continuous and any fiber $f^{-1}(y)$, where $y\in Y$, is a compact subset of $X$. A topological property $\mathcal{P}$ is said to be {\em invariant under perfect mappings} ({\em  inverse invariant under perfect mappings}, respectively) if for any perfect surjective mapping $f:X\rightarrow Y$ the space $Y$ ($X$, respectively) has $\mathcal{P}$  provided that $X$ ($Y$, respectively) has $\mathcal{P}$. A topological property $\mathcal{P}$  is called {\em perfect} if it is both  invariant and inverse invariant under perfect mappings. A topological property $\mathcal{P}$ is said to be {\em hereditary with respect to closed subsets} ({\em hereditary with respect to open subsets}, respectively) if any closed (open, respectively) subset of a space with $\mathcal{P}$ also has $\mathcal{P}$. A topological property $\mathcal{P}$ is called {\em finitely additive} if whenever $X=X_1\oplus\cdots\oplus X_n$ and each $X_i$ has $\mathcal{P}$ then $X$ also has  $\mathcal{P}$. Let $\mathcal{P}$ be a topological property. A space $X$ is called {\em locally--$\mathcal{P}$} if each $x\in X$ has an open neighborhood $U$ in $X$ whose closure $\mbox{cl}_XU$ has $\mathcal{P}$. Note that if $X$ is a regular (Hausdorff) space and $\mathcal{P}$ is closed hereditary, then $X$ is locally--$\mathcal{P}$ if and only if each point $x$ of $X$ has a local base consisting of open neighborhoods  $U$ of $x$ such that $\mbox{cl}_XU$ has $\mathcal{P}$.

\section{Compactification--like $\mathcal{P}$--extensions}

In this chapter we give definitions  and derive some basic results which will be used throughout.

\begin{definition}\label{4}
Let $X$ be a space, let $\mathcal{P}$ be a topological property and let $Y$ be a Tychonoff $\mathcal{P}$--extension of $X$ with compact remainder.

The extension $Y$ of $X$ is called {\em minimal} if $Y$ is  minimal (with respect to the  subspace relation $\subseteq$) among all Tychonoff ${\mathcal P}$--extensions of $X$ with compact remainder, that is, $Y$ does not contain properly any other  Tychonoff $\mathcal{P}$--extension  of $X$ with compact remainder. In other words, one  cannot obtain any other Tychonoff ${\mathcal P}$--extension of $X$ with compact remainder out of $Y$ by deleting points from the space $Y$.

The extension $Y$ of $X$ is called {\em optimal} if the topology of $Y$ is maximal (with respect to the inclusion relation $\subseteq$) among all topologies on $Y$ which turn $Y$ into a  Tychonoff ${\mathcal P}$--extension  of $X$  with compact remainder, and  $Y$ is minimal (with respect to the subspace relation $\subseteq$) among all Tychonoff  ${\mathcal P}$--extensions  of $X$ with compact remainder. In other words, one  cannot obtain any other  Tychonoff ${\mathcal P}$--extension of $X$ with compact remainder out of $Y$ either by adding sets to the topology of $Y$ or deleting points from the space $Y$.

We refer to either  minimal $\mathcal{P}$--extensions or optimal $\mathcal{P}$--extensions as {\em compactification--like $\mathcal{P}$--extensions}.
\end{definition}

\begin{notation}\label{TR}
Let  $X$ be a space and let $\mathcal{P}$ be a  topological property. Denote by  ${\mathscr E}(X)$ the set of all Tychonoff extensions of $X$ with compact remainder and denote by either ${\mathscr E}^{\mathcal P}(X)$ or ${\mathscr E}_{\mathcal P}(X)$ the set of all elements of ${\mathscr E}(X)$ which have $\mathcal{P}$. Also, let ${\mathscr M}_{\mathcal P}(X)$ and  ${\mathscr O}_{\mathcal P}(X)$ denote the set of all minimal ${\mathcal P}$--extensions of $X$ and the set of all optimal ${\mathcal P}$--extensions of $X$, respectively, and if $\mathcal{Q}$ is a topological property, let
\begin{itemize}
\item ${\mathscr E}^{\mathcal Q}_{\mathcal P}(X)={\mathscr E}^{\mathcal Q}(X)\cap {\mathscr E}_{\mathcal P}(X)$.
\item ${\mathscr M}^{\mathcal Q}_{\mathcal P}(X)={\mathscr E}^{\mathcal Q}(X)\cap {\mathscr M}_{\mathcal P}(X)$.
\item ${\mathscr O}^{\mathcal Q}_{\mathcal P}(X)={\mathscr E}^{\mathcal Q}(X)\cap {\mathscr O}_{\mathcal P}(X)$.
\end{itemize}
\end{notation}

Note that by the definitions
\[{\mathscr O}^{\mathcal Q}_{\mathcal P}(X)\subseteq{\mathscr M}^{\mathcal Q}_{\mathcal P}(X).\]

\begin{xrem}
{\em Topological properties $\mathcal{P}$ considered in this article are assumed to be non--empty, that is, it is assumed that there exists at least one space with $\mathcal{P}$. This in particular implies that for a clopen hereditary topological property $\mathcal{P}$ the empty set has $\mathcal{P}$, or, if a space is non--$\mathcal{P}$ then it is non--empty as well.}
\end{xrem}

The following subspace of $\beta X$ will play a crucial role in our study.

\begin{definition}\label{14}
For a Tychonoff space $X$ and a topological property $\mathcal{P}$ define
\[\lambda_{\mathcal{P}} X=\bigcup\big\{\mbox{int}_{\beta X}\mbox{cl}_{\beta X}Z: Z\in {\mathscr Z}(X) \mbox { has $\mathcal{P}$}\big\}.\]
\end{definition}

Note that any topological property which is hereditary with respect to clopen subsets and inverse invariant under perfect mappings is  hereditary with respect to closed  subsets of Hausdorff spaces (see Theorem 3.7.29 of \cite{E}). This simple fact will be used in a number of places throughout.

\begin{lemma}\label{B}
Let $X$ be a Tychonoff space and let $\mathcal{P}$ be a  clopen hereditary  finitely additive perfect topological property. Then for any subset $A$ of $X$  if $\mbox{\em cl}_{\beta X} A\subseteq\lambda_{\mathcal{P}} X$ then $\mbox{\em cl}_X A$ has $\mathcal{P}$.
\end{lemma}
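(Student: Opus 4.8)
The plan is to use the compactness of $\mathrm{cl}_{\beta X}A$ in $\beta X$ to cut the hypothesis down to a finite statement, pull it back into $X$, and then feed it into the stability properties of $\mathcal{P}$. Concretely: $\mathrm{cl}_{\beta X}A$ is closed in the compact space $\beta X$, hence compact, while by hypothesis it lies in $\lambda_{\mathcal{P}}X$ and so is covered by the open sets $\mathrm{int}_{\beta X}\mathrm{cl}_{\beta X}Z$ with $Z\in\mathscr{Z}(X)$ having $\mathcal{P}$. A finite subcover yields zero-sets $Z_1,\dots,Z_n$ of $X$, each with $\mathcal{P}$, such that $\mathrm{cl}_{\beta X}A\subseteq\mathrm{cl}_{\beta X}Z_1\cup\cdots\cup\mathrm{cl}_{\beta X}Z_n$. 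Intersecting with $X$, and using $\mathrm{cl}_{\beta X}A\cap X=\mathrm{cl}_XA$ together with $\mathrm{cl}_{\beta X}Z_i\cap X=Z_i$ (each $Z_i$ being closed in $X$), I obtain $\mathrm{cl}_XA\subseteq Z_1\cup\cdots\cup Z_n$.

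Then I would show $Z_1\cup\cdots\cup Z_n$ has $\mathcal{P}$. Form the topological sum $S=Z_1\oplus\cdots\oplus Z_n$, which has $\mathcal{P}$ by finite additivity, and let $f\colon S\to Z_1\cup\cdots\cup Z_n$ be the map that is the inclusion on each summand. This $f$ is a continuous surjection with finite (hence compact) fibers, and it is closed, since the image of a closed subset of $S$ is a finite union of sets closed in $X$; as $S$ is Hausdorff, $f$ is perfect, so invariance of $\mathcal{P}$ under perfect surjections gives that $Z_1\cup\cdots\cup Z_n$ has $\mathcal{P}$. Finally $\mathrm{cl}_XA$ is closed in the Hausdorff space $Z_1\cup\cdots\cup Z_n$, and since a clopen hereditary property that is inverse invariant under perfect maps is hereditary with respect to closed subsets of Hausdorff spaces (Theorem 3.7.29 of \cite{E}, cf.\ the remark preceding the lemma), it follows that $\mathrm{cl}_XA$ has $\mathcal{P}$. (When no zero-set of $X$ has $\mathcal{P}$ we have $\lambda_{\mathcal{P}}X=\emptyset$, so $A=\emptyset$ and $\mathrm{cl}_XA=\emptyset$ has $\mathcal{P}$ because $\mathcal{P}$ is clopen hereditary and non-empty.)

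The individual steps are routine; the one that needs care is deducing that the finite union $Z_1\cup\cdots\cup Z_n$ has $\mathcal{P}$ from the fact that each $Z_i$ does, since finite additivity does not apply directly (the $Z_i$ need be neither pairwise disjoint nor open) and one has to route through the perfect map $f$. More generally, the bookkeeping lies in invoking the correct one of the four assumptions on $\mathcal{P}$ — clopen heredity, finite additivity, invariance and inverse invariance under perfect mappings — at each stage.
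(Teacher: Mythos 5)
Your proof is correct and follows the same route as the paper: compactness of $\mbox{cl}_{\beta X}A$ gives finitely many zero--sets $Z_1,\ldots,Z_n$ with $\mathcal{P}$ covering it, their union $Z$ has $\mathcal{P}$, and $\mbox{cl}_XA$ is a closed subset of $Z$. The only difference is that you prove inline (via the topological sum and the perfect map onto the union) the fact that a finite union of closed subspaces each having $\mathcal{P}$ has $\mathcal{P}$, whereas the paper simply cites Theorem 3.7.22 of \cite{E} for this.
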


\begin{proof}
By the compactness of $\mbox{cl}_{\beta X} A$ and the definition of $\lambda_{\mathcal{P}} X$ we have
\[\mbox{cl}_{\beta X} A\subseteq\bigcup_{i=1}^n\mbox{int}_{\beta X}\mbox {cl}_{\beta X}Z_i\subseteq\mbox {cl}_{\beta X}Z\]
where each $Z_1,\ldots,Z_n\in {\mathscr Z}(X)$ has $\mathcal{P}$ and $Z=Z_1\cup\cdots\cup Z_n$. Since $\mathcal{P}$ is finitely additive and invariant under perfect mappings and $Z$ is the finite union of its closed subspaces $Z_i$'s each having $\mathcal{P}$, it follows that $Z$ has $\mathcal{P}$ (see Theorem 3.7.22 of \cite{E}). Now since $\mbox{cl}_X A\subseteq Z$ the set $\mbox{cl}_X A$ has $\mathcal{P}$, as it is closed in $Z$.
\end{proof}

\begin{lemma}\label{22}
Let $\mathcal{P}$ be a topological property which is clopen  hereditary and inverse invariant under perfect mappings and let $f:X\rightarrow Y$ be a perfect mapping. Then if $Y$ is locally--$\mathcal{P}$ and Hausdorff then $X$ is locally--$\mathcal{P}$.
\end{lemma}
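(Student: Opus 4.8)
The plan is to argue pointwise: fix an arbitrary $x\in X$ and produce an open neighborhood of $x$ in $X$ whose closure has $\mathcal{P}$. Before that, I would record two preliminary facts. First, since $f$ is perfect, its domain $X$ is Hausdorff. Second, by the observation following Definition \ref{14}, the hypotheses on $\mathcal{P}$ (clopen hereditary and inverse invariant under perfect mappings) force $\mathcal{P}$ to be hereditary with respect to closed subsets of Hausdorff spaces; this closed--heredity, together with inverse invariance under perfect mappings, will be the two engines of the proof.

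Now put $y=f(x)$. Since $Y$ is locally--$\mathcal{P}$, choose an open neighborhood $V$ of $y$ in $Y$ such that $C:=\mathrm{cl}_Y V$ has $\mathcal{P}$, and set $U=f^{-1}(V)$, an open neighborhood of $x$ in $X$ (as $y\in V$). By continuity of $f$ we get $f(\mathrm{cl}_X U)\subseteq\mathrm{cl}_Y f(U)\subseteq\mathrm{cl}_Y V=C$, hence $\mathrm{cl}_X U\subseteq f^{-1}(C)$, and $\mathrm{cl}_X U$ is a closed subset of $f^{-1}(C)$. Thus it suffices to show that $f^{-1}(C)$ has $\mathcal{P}$, for then $\mathrm{cl}_X U$, being closed in the Hausdorff space $f^{-1}(C)$, has $\mathcal{P}$ by closed--heredity, and $x$ was arbitrary.

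To see that $f^{-1}(C)$ has $\mathcal{P}$ I would invoke the standard fact that the restriction $g=f|_{f^{-1}(C)}\colon f^{-1}(C)\to C$ of a perfect mapping to the preimage of a subspace is again perfect. Since $f$ is not assumed surjective, $g$ need not map onto $C$; but $f$ is closed, so $f(X)$ is closed in $Y$ and therefore $C':=f\big(f^{-1}(C)\big)=f(X)\cap C$ is closed in $C$. As $C$ has $\mathcal{P}$ and is Hausdorff (being a subspace of $Y$), closed--heredity gives that $C'$ has $\mathcal{P}$. Regarding $g$ now as a mapping $f^{-1}(C)\to C'$, it is a perfect surjection onto a space with $\mathcal{P}$, so inverse invariance under perfect mappings yields that $f^{-1}(C)$ has $\mathcal{P}$, as required.

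The two ``restriction of a perfect map is perfect'' invocations and the repeated appeals to closed--heredity are routine. The one point that needs genuine care — and the main obstacle — is that $f$ is not assumed surjective: this is exactly why one cannot apply inverse invariance directly to $g\colon f^{-1}(C)\to C$, and must instead first cut down to the closed subspace $C'=f(X)\cap C$ of $C$ (using that $f$ is closed) before applying it.
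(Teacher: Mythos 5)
Your proof is correct and uses the same ingredients as the paper's: closed--heredity of $\mathcal{P}$ on Hausdorff spaces (derived from clopen--heredity plus inverse invariance), restriction of the perfect map to the preimage of $\mathrm{cl}_Y V$, and inverse invariance under perfect surjections. The only (immaterial) difference is where the non--surjectivity of $f$ is handled: the paper first shows $f[X]$ is locally--$\mathcal{P}$ and then replaces $Y$ by $f[X]$ so that $f$ may be assumed surjective, whereas you keep $Y$ and instead corestrict to the closed subset $f(X)\cap C$ of $C$ at the moment inverse invariance is invoked.
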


\begin{proof}
First note that $f[X]$ is locally--$\mathcal{P}$. To show this let $y\in f[X]$. Since $Y$ is locally--$\mathcal{P}$ there exists an open neighborhood $V$ of $y$ in $Y$ such that $\mbox{cl}_Y V$ has $\mathcal{P}$. Now $V\cap f[X]$ is an open neighborhood of $y$ in $f[X]$, the image $f[X]$ is closed in $Y$ (as $f$ is perfect and thus closed) and
\[\mbox{cl}_{f[X]}\big(V\cap f[X]\big)=\mbox{cl}_Y\big(V\cap f[X]\big)\cap f[X]\subseteq\mbox{cl}_YV.\]
Therefore $\mbox{cl}_{f[X]}(V\cap f[X])$ has $\mathcal{P}$, as it is closed in $\mbox{cl}_YV$. Since $f:X\rightarrow f[X]$ is perfect and surjective we may assume in the statement of the lemma that $f$ is moreover surjective. Let $x\in X$. There exists an open neighborhood $W$ of $f(x)$ in $Y$  such that $\mbox{cl}_YW$ has $\mathcal{P}$. Since
\[f|f^{-1}[\mbox{cl}_YW]:f^{-1}[\mbox{cl}_YW]\rightarrow\mbox{cl}_YW\]
is perfect and surjective and $\mathcal{P}$ is inverse invariant under perfect mappings, $f^{-1}[\mbox{cl}_YW]$ has $\mathcal{P}$. Now
$f^{-1}[W]$ is an open neighborhood of $x$ in $X$, and since $\mbox{cl}_Xf^{-1}[W]\subseteq f^{-1}[\mbox{cl}_YW]$ and the latter has $\mathcal{P}$, its closed subset $\mbox{cl}_Xf^{-1}[W]$ also has $\mathcal{P}$.
\end{proof}

A topological property  $\mathcal{P}$ is said to {\em satisfy Mr\'{o}wka's condition $(\mbox{\em W})$} if it satisfies the following: If $X$ is a Tychonoff  space in which there exists a point  $p$ with an open  base ${\mathscr B}$ for $X$ at $p$ such that $X\backslash  B$ has $\mathcal{P}$ for any $B\in {\mathscr B}$, then $X$ has $\mathcal{P}$ (see \cite{Mr}). If $\mathcal{P}$ is  a topological property which is closed hereditary and productive then Mr\'{o}wka's condition $(\mbox{W})$ is equivalent to the following condition: If a Tychonoff space $X$ is the union of a compact space and a space with $\mathcal{P}$ then $X$ has $\mathcal{P}$ (see \cite{MRW1}). In \cite{Mr}  S. Mr\'{o}wka showed that if  $\mathcal{P}$ is  a topological property which is  closed hereditary, finitely additive with respect to closed subsets (that is, if a space is the union of a finite number of its closed subsets each having $\mathcal{P}$, then it has $\mathcal{P}$) and invariant under continuous mappings then any Tychonoff locally--$\mathcal{P}$ space can be embedded as an open subspace in a Tychonoff space with $\mathcal{P}$ if and only if Mr\'{o}wka's  condition $(\mbox{W})$ holds.

In this article we will be dealing with certain classes of topological properties. For convenience, we make the following definition.

\begin{definition}\label{RDQA}
Let $\mathcal{P}$ be a topological property. Then $\mathcal{P}$ is said to be a {\em compactness--like topological property} if  $\mathcal{P}$ is a clopen hereditary finitely additive perfect topological property which satisfies Mr\'{o}wka's condition $(\mbox{W})$. If $\mathcal{Q}$ also is a topological property, then we say that {\em ${\mathcal P}$ and  ${\mathcal Q}$ is a pair of compactness--like topological properties} (here the order of ${\mathcal P}$ and  ${\mathcal Q}$  is important)
if $\mathcal{P}$ is a compactness--like topological property and ${\mathcal Q}$ is a clopen hereditary topological property which is inverse invariant under perfect mappings and satisfies Mr\'{o}wka's condition $(\mbox{W})$. (Examples of pairs of compactness--like topological properties are given in Example \ref{20UIHG}.)
\end{definition}

\begin{lemma}\label{122}
Let $\mathcal{P}$ be a topological property which is inverse invariant under perfect mappings and satisfies Mr\'{o}wka's condition $(\mbox{\em W})$. Then if $X$  is a Tychonoff space  in which there exists a compact subset $A$ with an open base ${\mathscr B}$ for $X$ at $A$ such that $X\backslash  B$ has $\mathcal{P}$ for any $B\in {\mathscr B}$, then $X$ has $\mathcal{P}$.
\end{lemma}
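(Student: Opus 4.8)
The plan is to reduce the statement about a compact base-set $A$ to Mr\'owka's condition $(\mbox{W})$ proper, which concerns a single point. The natural device is to collapse $A$ to a point: let $q:X\rightarrow X/A$ be the quotient map identifying $A$ to a single point $p$, and set $Z=X/A$. First I would verify that $Z$ is Tychonoff. This is the place I expect the most care to be needed: $X/A$ need not be Tychonoff for an arbitrary closed $A$, but here $A$ is compact and $X$ is Tychonoff, and $A$ has an open base $\mathscr{B}$ in $X$; one shows $X/A$ is Tychonoff by building continuous real-valued functions on $X/A$ from those on $X$ that are constant on $A$ (using compactness of $A$ to average or to separate $p$ from closed sets missing it). Alternatively, and perhaps more cleanly, I would embed $X$ in $\beta X$ and take $Z=(\beta X/\operatorname{cl}_{\beta X}A)$ restricted appropriately — but the quotient-of-Tychonoff argument via the base $\mathscr{B}$ is the honest route and is exactly why the hypothesis gives an open base at $A$ rather than merely a neighborhood.

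Next I would check that $q$ is a perfect map onto $Z$. It is continuous and surjective by construction; it is closed because $A$ is compact (a standard fact: collapsing a compact set gives a closed quotient map when the space is, say, Hausdorff); and every fiber is either a singleton or $A$ itself, hence compact. So $q:X\rightarrow Z$ is a perfect surjection.

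Now I would transport the hypothesis to $Z$. The point $p=q[A]$ has the open base $\mathscr{B}'=\{q[B]:B\in\mathscr{B}\}$ in $Z$ — here one uses that the elements of $\mathscr{B}$ are open neighborhoods of $A$, hence saturated in the relevant sense, so their images are open in $Z$ and form a base at $p$. For each $B\in\mathscr{B}$, the set $Z\setminus q[B]$ is homeomorphic to $X\setminus B$ via $q$ (off $A$, the map $q$ is a homeomorphism onto its image, and $X\setminus B$ misses $A$), so $Z\setminus q[B]$ has $\mathcal{P}$ because $X\setminus B$ does. Thus $Z$ is a Tychonoff space with a point $p$ admitting an open base $\mathscr{B}'$ such that $Z\setminus B'$ has $\mathcal{P}$ for every $B'\in\mathscr{B}'$. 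Since $\mathcal{P}$ satisfies Mr\'owka's condition $(\mbox{W})$, $Z$ has $\mathcal{P}$.

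Finally, $q:X\rightarrow Z$ is a perfect surjection and $\mathcal{P}$ is inverse invariant under perfect mappings, so $X$ has $\mathcal{P}$, as required. The only genuinely nontrivial point in the above is the Tychonoff-ness of $Z=X/A$; everything else is routine verification of the quotient and perfectness properties. If verifying that $X/A$ is Tychonoff directly proves awkward, an equivalent and perhaps smoother presentation avoids the quotient entirely: work inside $\beta X$, let $T=\operatorname{cl}_{\beta X}A$, form $\beta X/T$ (a compact Hausdorff, hence Tychonoff, space since $\beta X$ is compact Hausdorff and collapsing a closed set in a compact Hausdorff space yields a compact Hausdorff space), and let $Z$ be the subspace $q[X]$ of $\beta X/T$; then $Z$ is Tychonoff automatically, $q|X:X\rightarrow Z$ is perfect with the same fiber analysis, and the base hypothesis transfers as before.
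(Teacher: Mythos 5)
Your proposal is correct and follows essentially the same route as the paper: contract the compact set $A$ to a point (the quotient is Tychonoff because $A$ is compact), note that $\{q[B]:B\in{\mathscr B}\}$ is an open base at the new point with $T\backslash q[B]=X\backslash B$ having $\mathcal{P}$, apply Mr\'{o}wka's condition $(\mbox{W})$, and pull $\mathcal{P}$ back along the perfect surjective quotient map. The only (trivial) point the paper treats separately that you omit is the degenerate case $A=\emptyset$, where the quotient construction does not apply but $\emptyset\in{\mathscr B}$ forces $X=X\backslash\emptyset$ itself to have $\mathcal{P}$.
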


\begin{proof}
If $A=\emptyset$ then the lemma holds trivially, as in this case $\emptyset\in{\mathscr B}$. Suppose that $A$ is non--empty.  Let $T$ be the space obtained from $X$ by contracting the set $A$ to a point $p$ and let $q:X\rightarrow T$ denote the corresponding quotient mapping. Note that since $A$ is compact $T$ is Tychonoff. Now $\{q[B]:B\in{\mathscr B}\}$ is an open base for $T$ at $p$ such that $T\backslash q[B]=X\backslash B$ has $\mathcal{P}$ for any $B\in {\mathscr B}$. Since $\mathcal{P}$ satisfies Mr\'{o}wka's condition $(\mbox{W})$ the space $T$, and  thus its inverse image $X$ under the perfect surjective mapping $q$ has $\mathcal{P}$.
\end{proof}

Note that if $A$ is a dense subset of a space $X$ and $U$ is an open subset of $X$ then $\mbox{cl}_XU=\mbox{cl}_X(U\cap A)$ and thus  $U\subseteq\mbox{int}_X\mbox{cl}_X(U\cap A)$. In particular, if $X$ is a Tychonoff space, $f:\beta X\rightarrow\mathbf{I}$ is continuous and $r\in(0,1)$ then
\[f^{-1}\big[[0,r)\big]\subseteq \mbox{int}_{\beta X}\mbox{cl}_{\beta X}\big(f^{-1}\big[[0,r)\big]\cap X\big).\]
We use such simple observations frequently in the future.

A Hausdorff space is called {\em zero--dimensional} if the set of all its clopen subsets constitute an open  base for it. A Tychonoff space  is called {\em strongly zero--dimensional} if its Stone--\v{C}ech compactification is zero--dimensional. For a regular space $X$ let  $(EX,k_X)$ denote the absolute of $X$ (see Theorem 6.6(e) of \cite{PW} or  Problem 6.3.20 of \cite{E}). The space $EX$ is (extremely disconnected and) zero--dimensional (thus strongly zero--dimensional; see Theorem 6.4 of \cite{PW})  and $k_X:EX\rightarrow X$ is a perfect (irreducible) surjective mapping.

The following lemma is quite fundamental in our study. We state and prove it in its general form for possible future reference.

\begin{lemma}\label{16}
Let ${\mathcal P}$ and  ${\mathcal Q}$ be a pair of compactness--like topological properties. Let $X$ and $Y$ be Tychonoff spaces such that $Y$ has $\mathcal{Q}$, let $f:X\rightarrow Y$ be a perfect surjective mapping, let $T\in{\mathscr E}(Y)$, let $\alpha T$ be a compactification of $T$ and let $\phi:\beta X\rightarrow\alpha T$ be the continuous extension of $f$. The  following are equivalent:
\begin{itemize}
\item[\rm(1)] $T\in{\mathscr E}^{\mathcal Q}_{\mathcal P}(Y)$.
\item[\rm(2)] $X$ is locally--$\mathcal{P}$ and $\beta X\backslash\lambda_{\mathcal{P}}X\subseteq\phi^{-1}[T\backslash Y]$.
\end{itemize}
\end{lemma}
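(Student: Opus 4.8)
The plan is to prove the equivalence by establishing each implication separately, working with the factorization $\phi:\beta X\to\alpha T$ of the perfect map $f$ and exploiting the fact that $\phi$ restricts to a perfect surjection onto $\alpha T$ with $\phi[\beta X\backslash X]\subseteq\alpha T\backslash X$ in a controlled way. First I would set up notation carefully: since $f:X\to Y$ is perfect and surjective and $Y$ is Tychonoff, $f$ extends continuously to $\phi:\beta X\to\alpha T$, and because $f$ is perfect the Stone extension $\phi$ satisfies $\phi^{-1}[Y]=X$ and $\phi|_X=f$; moreover $\phi$ maps $\beta X\backslash X$ into $\alpha T\backslash Y$. I would also record the useful observation that $T=Y\cup(T\backslash Y)$ with $T\backslash Y$ compact, so that $\phi^{-1}[T]=X\cup\phi^{-1}[T\backslash Y]$ is a subspace of $\beta X$ containing $X$ densely, and in fact $\phi^{-1}[T]$ is (equivalent to) an extension of $X$ whose remainder is $\phi^{-1}[T\backslash Y]$; when $T\backslash Y$ is compact this remainder is closed in $\beta X$ hence compact.

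For the implication $(2)\Rightarrow(1)$: assume $X$ is locally-$\mathcal{P}$ and $\beta X\backslash\lambda_{\mathcal{P}}X\subseteq\phi^{-1}[T\backslash Y]$. The first task is to show $T$ is locally-$\mathcal{P}$; since $f:X\to Y$ is perfect surjective and $X$ is locally-$\mathcal{P}$, and $\mathcal{P}$ is clopen hereditary and (by the perfect hypothesis, together with the remark after Definition~\ref{14}) behaves well under perfect images, one deduces $Y$ is locally-$\mathcal{P}$; but we also need $T$ itself to be locally-$\mathcal{P}$ near points of the compact remainder $T\backslash Y$, which is where the hypothesis $\beta X\backslash\lambda_{\mathcal P}X\subseteq\phi^{-1}[T\backslash Y]$ enters. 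The key point is: take $t\in T\backslash Y$; I want an open neighborhood in $T$ whose closure has $\mathcal{P}$. Pulling back to $\beta X$, the compact set $\phi^{-1}(t)\subseteq\beta X$; I would use the hypothesis to show that $\phi^{-1}(t)$ together with enough of $\beta X\backslash X$ lies in a set whose preimage structure lets me apply Lemma~\ref{B}. Concretely, I expect to show that for a suitable open $U$ in $\beta X$ with $\phi^{-1}(t)\subseteq U$ and $\mathrm{cl}_{\beta X}U\subseteq\lambda_{\mathcal P}X\cup\phi^{-1}[T\backslash Y]$, the set $\mathrm{cl}_X(U\cap X)$ has $\mathcal{P}$ by Lemma~\ref{B}, and then that the image and a contraction argument (Lemma~\ref{122}, contracting the compact remainder) gives $T$ the property $\mathcal{P}$ outright — indeed, since $\mathcal{P}$ satisfies Mr\'{o}wka's condition and $T$ is the union of its compact remainder and $Y$, with appropriate neighborhoods of the remainder having complements with $\mathcal{P}$, Lemma~\ref{122} applies to show $T$ has $\mathcal{P}$; local-$\mathcal{P}$-ness of $T$ then follows since $T$ is regular and $\mathcal{P}$ is closed hereditary. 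Finally $T\backslash Y$ is compact by assumption, so $T\in{\mathscr E}^{\mathcal Q}_{\mathcal P}(Y)$ once we check $T$ is Tychonoff (it is, being a subspace-image setup with compact remainder) and $T$ has $\mathcal{Q}$ — this last uses that $\phi^{-1}[T]\to T$ is perfect surjective, that $\phi^{-1}[T]$ is a subspace of $\beta X$ which I claim has $\mathcal{Q}$ (here I'd invoke that $X$ has $\mathcal{Q}$ via... actually $Y$ has $\mathcal{Q}$ and $T$ is $Y$ plus compact, so Mr\'{o}wka's condition for $\mathcal{Q}$ gives $T$ has $\mathcal{Q}$ directly), so in fact $T$ having $\mathcal{Q}$ follows from $Y$ having $\mathcal{Q}$, $T\backslash Y$ compact, and $\mathcal{Q}$ satisfying condition $(\mathrm{W})$.

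For the implication $(1)\Rightarrow(2)$: assume $T\in{\mathscr E}^{\mathcal Q}_{\mathcal P}(Y)$, i.e. $T$ has both $\mathcal{P}$ and $\mathcal{Q}$ and $T\backslash Y$ is compact. Since $T$ has $\mathcal{P}$ and $\mathcal{P}$ is perfect and $\phi^{-1}[T]\to T$ is perfect surjective, $\phi^{-1}[T]$ has $\mathcal{P}$; as $\phi^{-1}[T]$ is an extension of $X$ with compact remainder, $X$ is an open (cozero, in fact) subspace of a space with $\mathcal{P}$, and since $\mathcal{P}$ is closed hereditary on Hausdorff spaces (clopen hereditary + inverse invariant under perfect maps, per the cited remark) one checks $X$ is locally-$\mathcal{P}$ — more carefully, each point of $X$ has in $\phi^{-1}[T]$ a neighborhood with compact closure missing the remainder, hence lying in $X$, and such a closure has $\mathcal{P}$. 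For the inclusion $\beta X\backslash\lambda_{\mathcal P}X\subseteq\phi^{-1}[T\backslash Y]$: suppose $p\in\beta X$ with $p\notin\phi^{-1}[T\backslash Y]$, i.e. $\phi(p)\in Y\subseteq T$; I must show $p\in\lambda_{\mathcal P}X$. Since $\phi(p)\in Y$ and $T$ has $\mathcal{P}$ and is locally-$\mathcal{P}$ (it is regular and $\mathcal{P}$ closed hereditary), choose an open neighborhood $V$ of $\phi(p)$ in $T$ with $\mathrm{cl}_T V$ having $\mathcal{P}$ and $\mathrm{cl}_T V\subseteq Y$ (possible since $Y$ is open in $T$); then $W:=V\cap Y$ is open in $Y$ with $\mathrm{cl}_Y W$ having $\mathcal{P}$. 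Pull back: $f^{-1}[W]$ is open in $X$, and $f^{-1}[\mathrm{cl}_Y W]\to\mathrm{cl}_Y W$ is perfect surjective so $f^{-1}[\mathrm{cl}_Y W]$ has $\mathcal{P}$ (inverse invariance); now $\mathrm{cl}_X f^{-1}[W]\subseteq f^{-1}[\mathrm{cl}_Y W]$ has $\mathcal{P}$ being closed in it. Using regularity/the Tychonoff property I can shrink to get a cozero set, hence a zero-set $Z\in{\mathscr Z}(X)$ with $Z$ having $\mathcal{P}$ and $p\in\mathrm{int}_{\beta X}\mathrm{cl}_{\beta X}Z$ — this is where the simple density observation recorded before the lemma (that $f^{-1}[[0,r)]\subseteq\mathrm{int}_{\beta X}\mathrm{cl}_{\beta X}(f^{-1}[[0,r)]\cap X)$) does the work. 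Thus $p\in\lambda_{\mathcal P}X$, proving the inclusion. Finally $X$ having $\mathcal{Q}$ is immediate since $X$ is homeomorphic to an open subspace of... no — $\mathcal{Q}$ is only assumed inverse invariant under perfect maps and to satisfy $(\mathrm{W})$, so from $T$ having $\mathcal{Q}$ we get $\phi^{-1}[T]$ has $\mathcal{Q}$, but we need $Y$ has $\mathcal{Q}$ which is a hypothesis, and we need... actually statement $(2)$ does not mention $\mathcal{Q}$ at all, so $(1)\Rightarrow(2)$ only requires the $\mathcal{P}$-part; the $\mathcal{Q}$ hypothesis on $Y$ is consumed in the other direction. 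I expect the main obstacle to be the bookkeeping in $(2)\Rightarrow(1)$ showing $T$ genuinely has $\mathcal{P}$ (and $\mathcal{Q}$) — threading together the contraction trick of Lemma~\ref{122}, the finite-subcover extraction from $\lambda_{\mathcal P}X$ as in Lemma~\ref{B}, and the passage between $\beta X$, $\alpha T$, $T$, and $Y$ without losing track of which remainders are compact and which maps are perfect.
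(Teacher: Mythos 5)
Your proposal follows the paper's overall strategy (reduce to Lemma \ref{122} via a base of neighborhoods of the compact remainder, use Lemma \ref{B} and the perfectness of $f$ to transfer $\mathcal{P}$), but both implications have genuine gaps. In $(1)\Rightarrow(2)$ you assert that $p\notin\phi^{-1}[T\backslash Y]$ means $\phi(p)\in Y$. It only means $\phi(p)\in\alpha T\backslash(T\backslash Y)=Y\cup(\alpha T\backslash T)$, and for the points that matter it is the second alternative that occurs: since $X$ is locally--$\mathcal{P}$ we have $X\subseteq\lambda_{\mathcal P}X$ by Lemma \ref{15}, so any $p\in\beta X\backslash\lambda_{\mathcal P}X$ lies in $\beta X\backslash X$, and perfectness of $f$ forces $\phi(p)\in\alpha T\backslash Y$; hence $\phi(p)\in\alpha T\backslash T$ and there is no neighborhood of $\phi(p)$ \emph{in $T$} to shrink. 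The argument must be run upstairs: separate $p$ from the compact set $\phi^{-1}[T\backslash Y]$ by a continuous $g:\beta X\rightarrow\mathbf{I}$, put $Z=g^{-1}[[0,1/2]]$, note that $\phi[Z]\cap T$ is a closed subset of $T$ contained in $Y$ (hence has $\mathcal{P}$), pull it back through $f$ to see that $Z\cap X$ has $\mathcal{P}$, and conclude $p\in\mbox{int}_{\beta X}\mbox{cl}_{\beta X}(Z\cap X)\subseteq\lambda_{\mathcal P}X$. Your neighborhood--of--$\phi(p)$--in--$T$ version cannot be repaired without this change of venue.

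In $(2)\Rightarrow(1)$ the heart of the proof is missing. Lemma \ref{122} needs, for every open $W\supseteq T\backslash Y$, an open $B$ with $T\backslash Y\subseteq B\subseteq W$ such that $T\backslash B$ has $\mathcal{P}$ and $\mathcal{Q}$. Your only concrete device is Lemma \ref{B} applied to a set whose $\beta X$--closure lies in $\lambda_{\mathcal P}X\cup\phi^{-1}[T\backslash Y]$; Lemma \ref{B} requires the closure to lie in $\lambda_{\mathcal P}X$ itself, so it does not apply, and in any case you are working with neighborhoods of single fibres $\phi^{-1}(t)$ rather than with complements of neighborhoods of the whole remainder (showing $T$ is locally--$\mathcal{P}$ is not what is needed). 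The missing computation is: for $W$ open in $\alpha T$ with $T\backslash Y\subseteq W$, one has
$\mbox{cl}_{\beta X}f^{-1}[T\backslash W]\subseteq\beta X\backslash\phi^{-1}[W]\subseteq\beta X\backslash\phi^{-1}[T\backslash Y]\subseteq\lambda_{\mathcal P}X$,
the last inclusion being exactly hypothesis (2); then Lemma \ref{B} gives $\mbox{cl}_Xf^{-1}[T\backslash W]$ the property $\mathcal{P}$, pushing forward along the perfect $f$ and using closed--hereditariness gives it to $T\backslash W$, and $\mathcal{Q}$ for $T\backslash W$ follows from closed--hereditariness of $\mathcal{Q}$ in $Y$. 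Note also that your claim that $T$ has $\mathcal{Q}$ because it is ``the union of a compact set and a space with $\mathcal{Q}$'' invokes the productive reformulation of Mr\'{o}wka's condition, which is not available since $\mathcal{Q}$ is not assumed productive; one must again go through Lemma \ref{122}. Finally, be aware that the paper does not argue directly but first reduces to strongly zero--dimensional $X$ via the absolute $EX$ so as to have clopen sets separating $\phi^{-1}[T\backslash Y]$ from closed sets, an entire half of its proof that your sketch does not address.
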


\begin{proof}
(1) {\em implies} (2). Since $\mathcal{P}$ is hereditary with respect to closed subsets of Hausdorff spaces $Y$, having a $\mathcal{P}$--extension with compact remainder, is locally--$\mathcal{P}$ and therefore by Lemma \ref{22} the space $X$ is  locally--$\mathcal{P}$. Next, we show that $\beta X\backslash \lambda_{\mathcal{P}} X\subseteq\phi^{-1}[T\backslash Y]$. Suppose to the
contrary that there exists an $x\in \beta X\backslash \lambda_{\mathcal{P}} X$ such that $x\notin \phi^{-1}[T\backslash Y]$. Let $g:\beta
X\rightarrow\mathbf{I}$ be continuous with $g(x)=0$ and $g[\phi^{-1}[T\backslash Y]]\subseteq\{1\}$ and let $Z=g^{-1}[[0,1/2]]$. We verify that $Z\cap X\in{\mathscr Z}(X)$ has $\mathcal{P}$. Since $Z\cap\phi^{-1}[T\backslash Y]=\emptyset$ we have
\[\phi[Z]\subseteq\phi\big[\beta X\backslash\phi^{-1}[T\backslash Y]\big]=\phi\big[\phi^{-1}\big[\alpha T\backslash(T\backslash Y)\big]\big]\subseteq\alpha T\backslash(T\backslash Y)\]
and thus $S=\phi[Z]\cap T\subseteq Y$. Therefore $S$ has $\mathcal{P}$, as it is closed in $T$, because $Z$ is compact. Since $f|f^{-1}[S]:f^{-1}[S]\rightarrow S$ is perfect and surjective and $\mathcal{P}$ in inverse invariant under perfect mappings, $f^{-1}[S]$ has $\mathcal{P}$. Thus
\[Z\cap X\subseteq f^{-1}\big[f[Z\cap X]\big]\subseteq f^{-1}\big[\phi[Z\cap X]\cap Y\big]\subseteq f^{-1}\big[\phi[Z]\cap T\big]=f^{-1}[S]\]
which implies that $Z\cap X$ has $\mathcal{P}$, as it is closed in $f^{-1}[S]$. Now
\begin{eqnarray*}
x\in g^{-1}\big[[0,1/2)\big]\subseteq\mbox{int}_{\beta X}\mbox{cl}_{\beta X}\big(g^{-1}\big[[0,1/2]\big]\cap X\big)=\mbox{int}_{\beta X}\mbox{cl}_{\beta X}(Z\cap X)\subseteq\lambda_{\mathcal{P}} X
\end{eqnarray*}
which is a contradiction, as $x\notin\lambda_{\mathcal{P}}X$.

(2) {\em implies} (1). Suppose moreover that $X$ is strongly zero--dimensional. Let
\[{\mathscr B}=\big\{T\backslash f\big[\mbox{cl}_Xf^{-1}\big[T\backslash\phi[U]\big]\big]:U \mbox{ is clopen in }\beta X\mbox{ and }\phi^{-1}[T\backslash Y]\subseteq U\big\}.\]
We verify that ${\mathscr B}$ is an open base for $T$ at $T\backslash Y$ such that $T\backslash B$ has both $\mathcal{P}$ and $\mathcal{Q}$ for any $B\in {\mathscr B}$. By Lemma \ref{122} this will imply that $T$ has $\mathcal{P}$ and $\mathcal{Q}$. Let $U$ be a clopen subset of $\beta X$ such that $\phi^{-1}[T\backslash Y]\subseteq U$. Consider
\[B=T\backslash f\big[\mbox{cl}_Xf^{-1}\big[T\backslash\phi[U]\big]\big]\in{\mathscr B}.\]
Since $\phi$ is surjective (as its image contains $Y=f[X]=\phi[X]$ and $Y$ is dense in $T$) we have  $T\backslash Y=\phi[\phi^{-1}[T\backslash Y]]\subseteq\phi[U]$ and thus $T\backslash \phi[U]\subseteq Y$. Since
\[f^{-1}\big[T\backslash\phi[U]\big]\subseteq\phi^{-1}\big[T\backslash\phi [U]\big]\subseteq\phi^{-1}\big[\alpha T\backslash \phi[U]\big]=\beta X\backslash\phi^{-1}\big[\phi[U]\big]\subseteq\beta X\backslash U\]
we have $\mbox{cl}_Xf^{-1}[T\backslash\phi[U]]\subseteq\beta X\backslash U$ which yields
\begin{eqnarray*}
T\backslash B&=&f\big[\mbox{cl}_Xf^{-1}\big[T\backslash\phi[U]\big]\big]\\&=&\phi\big[\mbox{cl}_Xf^{-1}\big[T\backslash\phi [U]\big]\big]\\&\subseteq&\phi[\beta X\backslash U]\subseteq\phi\big[\beta X\backslash\phi^{-1}[T\backslash Y]\big]=\phi\big[\phi^{-1}\big[\alpha T\backslash(T\backslash Y)\big]\big]\subseteq\alpha T\backslash(T\backslash Y).
\end{eqnarray*}
Therefore since $U$ is clopen in $\beta X$ the set $\phi[\beta X\backslash U]$ is compact and thus $\mbox{cl}_{\alpha T}(T\backslash B)\subseteq\phi[\beta X\backslash U]$. By above this implies that
\[\mbox{cl}_T(T\backslash B)\cap(T\backslash Y)=\mbox{cl}_{\alpha T}(T\backslash B)\cap(T\backslash Y)=\emptyset\]
and therefore since $f$ is closed
\[\mbox{cl}_T(T\backslash B)=\mbox{cl}_Y(T\backslash B)=T\backslash B.\]
This shows that each $B\in{\mathscr B}$ is open in $T$. Obviously, each $B\in{\mathscr B}$ contains $T\backslash Y$. Next, we show that each open neighborhood $W$ of $T\backslash Y$ in $\alpha T$ contains an element of ${\mathscr B}$. Since $X$ is strongly zero--dimensional, $\beta X$ is zero--dimensional. Now since $\phi^{-1}[W]$ is an open neighborhood of the compact set $\phi^{-1}[T\backslash Y]$ in $\beta X$ there exists a clopen subset $U$ of $\beta X$ such that $\phi^{-1}[T\backslash Y]\subseteq U\subseteq \phi^{-1}[W]$ (see Theorem 6.2.4 of \cite{E}). Note that
\begin{eqnarray*}
B=T\backslash f\big[\mbox{cl}_Xf^{-1}\big[T\backslash\phi[U]\big]\big]&\subseteq& T\backslash f\big[f^{-1}\big[T\backslash\phi[U]\big]\big]\\&=&T\backslash\big(T\backslash\phi[U]\big)\subseteq\phi[U]\subseteq\phi\big[\phi^{-1}[W]\big]\subseteq W
\end{eqnarray*}
and that $B\in {\mathscr B}$. This shows that  ${\mathscr B}$ is an open base for $T$ at $T\backslash Y$. Now let $B\in {\mathscr B}$. Then $T\backslash B=f[\mbox{cl}_Xf^{-1}[T\backslash\phi[U]]]$ for some clopen subset $U$ of $\beta X$ containing  $\phi^{-1}[T\backslash Y]$. Since $f$ is closed, $T\backslash B$ is closed in $Y$, and since by our assumption $Y$ has $\mathcal{Q}$ and $\mathcal{Q}$ is hereditary with respect to closed subsets of Hausdorff spaces, $T\backslash B$ has $\mathcal{Q}$. Also, as argued above $f^{-1}[T\backslash\phi[U]]\subseteq \beta X\backslash U$ which implies that
\[\mbox{cl}_{\beta X}f^{-1}\big[T\backslash\phi[U]\big]\subseteq\beta X\backslash U\subseteq\beta X\backslash\phi^{-1}[T\backslash Y]\subseteq\lambda_{\mathcal{P}} X.\]
By Lemma \ref{B} the set $C=\mbox{cl}_Xf^{-1}[T\backslash\phi[U]]$ has $\mathcal{P}$. Now $f|C:C\rightarrow f[C]$ is perfect (as $C$ is closed in $X$) and surjective and $\mathcal{P}$ is invariant under perfect mappings, thus $T\backslash B=f[C]$ has $\mathcal{P}$. This shows that (1) holds in this case.

We now turn to the general case in which $X$ is an arbitrary  Tychonoff space. Let $(EX,k)$ denote the absolute of $X$. By our assumption $X$ is locally--$\mathcal{P}$, and since $k:EX\rightarrow X$ is perfect, by Lemma \ref{22} the space $EX$ is locally--$\mathcal{P}$. Let $K:\beta EX\rightarrow\beta X$ be the continuous extension of $k$. Then $\phi K:\beta EX\rightarrow\alpha T$ continuously extends $fk$ and therefore by above, to show that
$T\in{\mathscr E}^{\mathcal Q}_{\mathcal P}(Y)$ we only need to verify that
\[\beta EX\backslash\lambda_{\mathcal{P}} EX\subseteq(\phi K)^{-1}[T\backslash Y].\]
But by our assumption $\beta X\backslash\lambda_{\mathcal{P}} X\subseteq\phi^{-1}[T\backslash Y]$. Thus as we will see it suffices to show that $K^{-1}[\lambda_{\mathcal{P}} X]\subseteq\lambda_{\mathcal{P}} EX$. Let $t\in K^{-1}[\lambda_{\mathcal{P}} X]$. Let $U$ be an open neighborhood of $K(t)$ in $\beta X$ such that $\mbox{cl}_{\beta X}U\subseteq\lambda_{\mathcal{P}}X$. Let $h:\beta X\rightarrow\mathbf{I}$ be continuous with $h(K(t))=0$ and $h[\beta X\backslash U]\subseteq\{1\}$. Let
\[Z=h^{-1}\big[[0,1/2]\big]\cap X\in{\mathscr Z}(X).\]
Then since
\[\mbox{cl}_{\beta X}Z\subseteq\mbox{cl}_{\beta X}(U\cap X)=\mbox{cl}_{\beta X}U\subseteq\lambda_{\mathcal{P}} X,\]
by Lemma \ref{B} the set $Z$ has $\mathcal{P}$ and therefore its inverse image $k^{-1}[Z]\in{\mathscr Z}(EX)$ under the perfect surjective mapping $k|k^{-1}[Z]:k^{-1}[Z]\rightarrow Z$ has $\mathcal{P}$. By the definition of $\lambda_{\mathcal{P}} EX$ we have $\mbox{int}_{\beta EX}\mbox{cl}_{\beta EX}k^{-1}[Z]\subseteq \lambda_{\mathcal{P}} EX$. By  Theorem 3.7.16 of \cite{E} (or Theorem 1.8(i) of \cite{PW}) and since $K|EX=k$ is perfect, $K[\beta EX\backslash EX]\subseteq\beta X\backslash X$. But $K$ is surjective, as its image contains $X=k[EX]=K[EX]$, and thus $K[\beta EX\backslash EX]= \beta X\backslash X$. We have
\begin{eqnarray*}
k^{-1}[Z]&=& k^{-1}\big[h^{-1}\big[[0,1/2]\big]\cap X\big]\\&=&K^{-1}\big[h^{-1}\big[[0,1/2]\big]\cap X\big]\\&=&K^{-1}\big[h^{-1}\big[[0,1/2]\big]\big]\cap K^{-1}[X]=K^{-1}\big[h^{-1}\big[[0,1/2]\big]\big]\cap EX
\end{eqnarray*}
and therefore
\begin{eqnarray*}
t\in K^{-1}\big[h^{-1}\big[[0,1/2)\big]\big]&\subseteq&\mbox{cl}_{\beta EX}K^{-1}\big[h^{-1}\big[[0,1/2)\big]\big]\\&=&\mbox{cl}_{\beta EX}\big(K^{-1}\big[h^{-1}\big[[0,1/2)\big]\big]\cap EX\big)\\&\subseteq&\mbox{cl}_{\beta EX}\big(K^{-1}\big[h^{-1}\big[[0,1/2]\big]\big]\cap EX\big)=\mbox{cl}_{\beta EX}k^{-1}[Z]
\end{eqnarray*}
which yields $t\in\mbox{int}_{\beta EX}\mbox{cl}_{\beta EX}k^{-1}[Z]$ and thus $t\in \lambda_{\mathcal{P}} EX$. This shows that  $K^{-1}[\lambda_{\mathcal{P}} X]\subseteq\lambda_{\mathcal{P}} EX$. Now
\begin{eqnarray*}
\beta EX\backslash\lambda_{\mathcal{P}} EX&\subseteq&\beta EX\backslash K^{-1}[\lambda_{\mathcal{P}}X]\\&=&K^{-1}[\beta X\backslash
\lambda_{\mathcal{P}}X]\subseteq K^{-1}\big[\phi^{-1}[T\backslash Y]\big]=(\phi K)^{-1}[T\backslash Y]
\end{eqnarray*}
which shows (1).
\end{proof}

The list of topological properties $\mathcal{P}$ and $\mathcal{Q}$ satisfying the assumption of Lemma \ref{16} is quite wide and includes most of the important covering properties (see Example \ref{20UIHG}).

\begin{xrem}
{\em Lemma \ref{16} (and thus its subsequent results) remains valid if one omits $\mathcal{Q}$ from its statement. This is because one can replace $\mathcal{Q}$ by regularity (note that by Theorem 3.7.23 of \cite{E} regularity  is inverse invariant under perfect mappings and satisfies Mr\'{o}wka's condition $(\mbox{W})$ vacuously) and observes that for this specific choice of $\mathcal{Q}$ the terms ``Tychonoff space with $\mathcal{Q}$" and ``${\mathcal P}$ and ${\mathcal Q}$ is a pair of compactness--like topological properties" coincide with the terms ``Tychonoff space" and ``${\mathcal P}$ is a compactness--like topological property", respectively.}
\end{xrem}

\begin{xrem}
{\em Lemma \ref{16} is stronger than what we normally need, as we generally apply Lemma \ref{16} in the special case when $Y=X$, $f=\mbox{id}_X$ and $\alpha T=\beta T$. Lemma \ref{16} is quite fundamental in our study and it is interesting to know whether the requirement ``$\mathcal{P}$ satisfies Mr\'{o}wka's condition $(\mbox{W})$" (implicit in the definition of the compactness--like topological property $\mathcal{P}$) can be omitted from its statement. In Example \ref{20UIHG} we give an example of a Tychonoff space $X$, a topological property $\mathcal{P}$ which does not satisfy Mr\'{o}wka's condition $(\mbox{W})$ and a Tychonoff extension of $X$ with compact remainder, for which the conclusion of (the special case of) Lemma \ref{16} does not hold.}
\end{xrem}

In the sequel we will make frequent use of the following well known result sometimes without explicitly referring to it. The proof is included here for the sake of completeness.

\begin{lemma}\label{j2}
Let $X$ be a Tychonoff space and let $Y$ be a Tychonoff  extension of $X$ with the compact remainder $Y\backslash X=\{p_i:i\in I\}$ where $p_i$'s are
bijectively indexed. Let $\phi:\beta X\rightarrow\beta Y$ be the continuous extension of $\mbox{\em id}_X$. Let $T$ be the space obtained from $\beta X$ by contracting any fiber $\phi^{-1}(p_i)$ where $i\in I$ to a point $a_i$. Then $T=\beta Y$ (identifying each $a_i$ with $p_i$) and $\phi=q$ where $q:\beta X\rightarrow T$ is the quotient mapping.
\end{lemma}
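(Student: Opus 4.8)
The plan is to identify $T$ as the largest compactification of $Y$ by squeezing it between $\beta Y$ and itself in the usual order on compactifications of $Y$. First I would record the routine facts about $\phi$: since $\phi$ extends $\mbox{id}_X$ we have $\phi|_X=\mbox{id}_X$, so $\phi$ is surjective (its image is compact and contains the dense set $X$), $\phi$ is closed (a continuous map from a compact space to a Hausdorff space), and $\phi(x)=x\notin Y\backslash X$ for each $x\in X$, so $X\cap\phi^{-1}[Y\backslash X]=\emptyset$. Next I would check that $T$ is compact and Hausdorff: $T$ is $\beta X$ modulo the equivalence relation $R$ whose only nontrivial classes are the fibers $\phi^{-1}(p_i)$, and since $Y\backslash X$ is compact, hence closed in $\beta Y$, the set $\phi^{-1}[Y\backslash X]$ is closed in $\beta X$ and $R$ is the union of the diagonal of $\beta X$ with $(\phi\times\phi)^{-1}(\mbox{diagonal of }\beta Y)\cap(\phi^{-1}[Y\backslash X]\times\beta X)$, so $R$ is closed in $\beta X\times\beta X$; a quotient of a compact Hausdorff space by a closed equivalence relation is compact Hausdorff. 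Writing $q:\beta X\to T$ for the quotient map, $q$ is closed, $q^{-1}(a_i)=\phi^{-1}(p_i)$, and $q$ is one--to--one off $\phi^{-1}[Y\backslash X]$; using that the $R$--classes meeting $X$ are singletons and that $q$ is closed, it follows easily that $q|_X:X\to T$ is a dense embedding.

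The main work is to show that $T$ is a compactification of $Y$. Define $j:Y\to T$ by $j|_X=q|_X$ and $j(p_i)=a_i$; this is injective since the $a_i$ are distinct and lie off $q[X]$. It is continuous at each point of $X$ because $X$ is open in $Y$ (its complement being compact, hence closed) and $j|_X=q|_X$ is continuous, and it is continuous at $p_i$ because for an open $O\ni a_i$ in $T$ the set $q^{-1}[O]$ is an open neighbourhood of the fiber $\phi^{-1}(p_i)$, so $M=\beta Y\backslash\phi[\beta X\backslash q^{-1}[O]]$ is (by closedness of $\phi$) an open neighbourhood of $p_i$ in $\beta Y$ with $\phi^{-1}[M]\subseteq q^{-1}[O]$, whence $M\cap Y$ is a neighbourhood of $p_i$ in $Y$ with $j[M\cap Y]\subseteq O$. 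Finally $j$ carries closed sets to closed sets: if $C$ is closed in $Y$ then $j[C]=q[C\cap X]\cup\{a_i:p_i\in C\}$, the second set is compact, and the crucial observation is that $\phi^{-1}(p_i)\cap\mbox{cl}_{\beta X}(C\cap X)=\emptyset$ whenever $p_i\notin C$; for a point $w$ in this intersection would give $p_i=\phi(w)\in\phi[\mbox{cl}_{\beta X}(C\cap X)]\subseteq\mbox{cl}_{\beta Y}\phi[C\cap X]=\mbox{cl}_{\beta Y}(C\cap X)$ (here $\phi|_X=\mbox{id}_X$ is used), so $p_i\in\mbox{cl}_{\beta Y}(C\cap X)\cap Y=\mbox{cl}_Y(C\cap X)\subseteq C$, a contradiction. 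Thus $j$ is a homeomorphism onto $j[Y]$, and since $j[X]=q[X]$ is dense in $T$, the compact Hausdorff space $T$ is a compactification of $Y$.

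It remains to compare $T$ with $\beta Y$. Since $\beta Y$ is the largest compactification of $Y$, there is a continuous $\pi:\beta Y\to T$ fixing $Y$; conversely, $\phi$ is constant on each fiber $q^{-1}(a_i)=\phi^{-1}(p_i)$, so it factors through the quotient map as $\phi=\psi q$ for a continuous $\psi:T\to\beta Y$, and $\psi$ fixes $Y$ since $\psi(q(x))=\phi(x)=x$ for $x\in X$ and $\psi(a_i)=\psi(q(w))=\phi(w)=p_i$ for any $w\in\phi^{-1}(p_i)$. Then $\psi\pi$ and $\pi\psi$ fix the dense subsets $Y$ of $\beta Y$ and of $T$ respectively, hence are the identity maps, so $\psi$ is a homeomorphism fixing $Y$; that is, $T=\beta Y$ (identifying each $a_i$ with $p_i$) and $\phi=\psi q=q$ under this identification. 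I expect the middle paragraph to be the main obstacle: establishing that $j$ is a topological embedding — in particular the closedness computation showing $\phi^{-1}(p_i)$ misses $\mbox{cl}_{\beta X}(C\cap X)$ for $p_i\notin C$ — is exactly the place where the compactness of the remainder and the identity $\phi|_X=\mbox{id}_X$ are used, and it is what lets us avoid proving directly that $\phi$ is injective over $\beta Y\backslash(Y\backslash X)$.
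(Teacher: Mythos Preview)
Your proof is correct and takes a genuinely different route from the paper's. The paper argues in three stages that parallel yours but use different tools at each step: it establishes Hausdorffness of $T$ by a direct three--case separation argument (rather than your closed--equivalence--relation criterion); it shows $Y$ is a subspace of $T$ by checking that the two subspace topologies on $Y$ (from $\beta Y$ and from $T$) coincide via open sets (rather than by building your map $j$ and verifying it is a closed embedding); and, most notably, it identifies $T$ with $\beta Y$ via the extension characterization --- it shows directly that every continuous $f:Y\to\mathbf I$ extends over $T$ by first extending $fq$ from $X\cup\phi^{-1}[Y\backslash X]$ (whose \v{C}ech--Stone compactification is $\beta X$) to all of $\beta X$, and then pushing down. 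Your argument instead produces the comparison map $\psi:T\to\beta Y$ by factoring $\phi$ through $q$, pairs it with the canonical $\pi:\beta Y\to T$ coming from maximality of $\beta Y$, and shows they are mutual inverses.

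Your approach is somewhat more structural: the closed--equivalence--relation lemma and the maximality of $\beta Y$ do the heavy lifting, and you avoid the paper's explicit function--extension step. The paper's approach, on the other hand, is more self--contained and makes the identification $\phi=q$ drop out immediately once $T=\beta Y$ is established. One small expository point: your closedness argument for $j$ is compressed --- you state the ``crucial observation'' but leave implicit the computation that $\mbox{cl}_T(j[C])\cap j[Y]=q[\mbox{cl}_{\beta X}(C\cap X)]\cap j[Y]\cup\{a_i:p_i\in C\}$ and that the observation forces the first piece into $j[C]$. Spelling out that line would make the embedding claim transparent.
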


\begin{proof}
We first show that $T$ is a compactification of $Y$. To show that $T$ is Hausdorff let $s,t\in T$ be distinct elements. Consider the following cases:
\begin{description}
\item[{\sc Case 1.}] Suppose that $s,t\in T\backslash \{a_i:i\in I\}$. Then $s,t\in \beta X\backslash\phi^{-1}[Y\backslash X]$ and thus there exist disjoint open neighborhoods $U$ and $V$ of $s$ and $t$ in $\beta X$, respectively, each disjoint from $\phi^{-1}[Y\backslash X]$. The sets $q[U]$ and $q[V]$ are disjoint open neighborhoods of $s$ and $t$ in $T$, respectively.
\item[{\sc Case 2.}] Suppose that $s=a_i$ for some $i\in I$ and $t\in T\backslash \{a_i:i\in I\}$. Then $\phi^{-1}[Y\backslash X]$ is a compact subset of $\beta X$ not containing $t$ and thus there exist disjoint open subsets $U$ and $V$ of $\beta X$ such that $\phi^{-1}[Y\backslash X]\subseteq U$ and $t\in V$. Now $q[U]$ and $q[V]$ are disjoint open neighborhoods of $s$ and $t$ in $T$, respectively. The case when  $s\in T\backslash\{a_i:i\in I\}$ and $t=a_j$ for some  $j\in I$ is analogous.
\item[{\sc Case 3.}] Suppose that $s=a_i$ and $t=a_j$ for some $i,j\in I$. Let $U_i $ and $U_j$ be disjoint open neighborhoods of $p_i$ and $p_j$  in $\beta Y$, respectively. Then since $q^{-1}[q[\phi^{-1}[U_k]]]=\phi^{-1}[U_k]$ where $k=i,j$ are open subsets of $\beta X$ and  $\phi^{-1}(p_k)\subseteq\phi^{-1}[U_k]$ the sets $q[\phi^{-1}[U_k]]$ where $k=i,j$ are disjoint open neighborhoods of $s$ and $t$  in $T$, respectively.
\end{description}
This shows that $T$ is Hausdorff and therefore it is compact, as it is a continuous image of $\beta X$. Note that $Y$ is a subspace of $T$. To show this first note that since $\beta Y$ is also a compactification of $X$, as $X$ is dense in $Y$ and thus in $\beta Y$, and $\phi|X=\mbox{id}_X$, by Theorem 3.5.7 of \cite{E} we have $\phi[\beta X\backslash X]=\beta Y\backslash X$.  Now if $W$ is open in $\beta Y$, since $q^{-1}[q[\phi^{-1}[W]]]=\phi^{-1}[W]$ is open in $\beta X$ the set $q[\phi^{-1}[W]]$ is open in $T$, and therefore $W\cap Y=q[\phi^{-1}[W]]\cap Y$ is open in $Y$ as a subspace of $T$. For the converse note that if $W$ is an open subset of $T$ then
\[W\cap Y=\big(\beta Y\backslash\phi\big[\beta X\backslash q^{-1}[W]\big]\big)\cap Y\]
and therefore (since $\phi[\beta X\backslash q^{-1}[W]]$ is compact and thus closed in $\beta Y$) the set $W\cap Y$ is open in $Y$ in its original topology. Clearly, $Y$ is dense in $T$ and therefore $T$ is a compactification of $Y$. To  show that $T=\beta Y$ it suffices to verify that any continuous $f:Y\rightarrow\mathbf{I}$ can be continuously extended over $T$. Indeed, consider the continuous mapping
\[g=fq:S=X\cup\phi^{-1}[Y\backslash X]\rightarrow\mathbf{I}.\]
Note that since $X\subseteq S\subseteq\beta X$ we have $\beta S=\beta X$ (see Corollary 3.6.9 of \cite{E}). Let $g_\beta:\beta X\rightarrow\mathbf{I}$ be the continuous extension of $g$. Define $F:T\rightarrow\mathbf{I}$ such that $F(x)=g_\beta(x)$ for any $x\in\beta X\backslash\phi^{-1}[Y\backslash X]$ and $F(p_i)=f(p_i)$ for any $i\in I$. Then $F|Y=f$ and since $Fq=g_\beta$ is continuous, $F$ is continuous. This shows that $T=\beta Y$. Note, this also implies that $\phi=q$, as $\phi,q:\beta X\rightarrow\beta Y$ are continuous and $\phi|X=\mbox{id}_X=q|X$.
\end{proof}

The following simple observation will be of frequent use in the future, sometimes with no explicit reference.

\begin{lemma}\label{15}
Let $X$ be a Tychonoff space and let $\mathcal{P}$ be a clopen hereditary topological property which is  inverse invariant under perfect mappings.  Then $X\subseteq\lambda_{\mathcal{P}} X$ if and only if $X$ is locally--$\mathcal{P}$.
\end{lemma}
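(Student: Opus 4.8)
The plan is to prove both implications by relating $\lambda_{\mathcal P}X$ to the local-$\mathcal P$ property through zero-sets, using the standard density observation recorded just before this lemma. For the forward direction, suppose $X\subseteq\lambda_{\mathcal P}X$ and fix $x\in X$. By the definition of $\lambda_{\mathcal P}X$ there is a zero-set $Z\in{\mathscr Z}(X)$ with $\mathcal P$ such that $x\in\operatorname{int}_{\beta X}\operatorname{cl}_{\beta X}Z$. Intersecting with $X$, the set $U=X\cap\operatorname{int}_{\beta X}\operatorname{cl}_{\beta X}Z$ is an open neighborhood of $x$ in $X$, and $\operatorname{cl}_XU\subseteq\operatorname{cl}_X Z=Z$ (using that $Z$ is closed in $X$). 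Since $\mathcal P$ is clopen hereditary and inverse invariant under perfect mappings, it is hereditary with respect to closed subsets of Hausdorff spaces (by the remark following Definition \ref{14}), so $\operatorname{cl}_XU$, being closed in the space $Z$ which has $\mathcal P$, also has $\mathcal P$. Thus every point of $X$ has an open neighborhood with $\mathcal P$-closure, i.e.\ $X$ is locally-$\mathcal P$.

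For the converse, suppose $X$ is locally-$\mathcal P$ and fix $x\in X$. Choose an open neighborhood $V$ of $x$ in $X$ with $\operatorname{cl}_XV$ having $\mathcal P$. Since $X$ is Tychonoff, pick a continuous $f:X\to\mathbf I$ with $f(x)=0$ and $f[X\setminus V]\subseteq\{1\}$, and set $Z=f^{-1}[[0,1/2]]\in{\mathscr Z}(X)$. Then $Z\subseteq V\subseteq\operatorname{cl}_XV$, so $Z$ is closed in a space with $\mathcal P$ and hence has $\mathcal P$. By the density observation preceding this lemma (applied with the dense subset $X$ of $\beta X$ and the open set $f_\beta^{-1}[[0,1/2)]$ of $\beta X$, where $f_\beta:\beta X\to\mathbf I$ is the continuous extension of $f$), we get
\[x\in f_\beta^{-1}\big[[0,1/2)\big]\subseteq\operatorname{int}_{\beta X}\operatorname{cl}_{\beta X}\big(f_\beta^{-1}\big[[0,1/2)\big]\cap X\big)\subseteq\operatorname{int}_{\beta X}\operatorname{cl}_{\beta X}Z,\]
and since $Z$ has $\mathcal P$, this last set is contained in $\lambda_{\mathcal P}X$. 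Hence $x\in\lambda_{\mathcal P}X$, and since $x\in X$ was arbitrary, $X\subseteq\lambda_{\mathcal P}X$.

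There is essentially no hard obstacle here; the only point requiring a little care is making sure that one only invokes closed-hereditariness of $\mathcal P$ (which is not assumed directly but follows, as noted, from clopen-hereditariness together with inverse invariance under perfect mappings) rather than any additivity or Mr\'owka-type hypothesis, since those are deliberately absent from this lemma. One should also be slightly careful that in the forward direction the neighborhood $U$ used is an open subset of $X$, not of $\beta X$, so that $\operatorname{cl}_XU$ genuinely sits inside $Z$; this is immediate from $\operatorname{cl}_XU=\operatorname{cl}_{\beta X}U\cap X\subseteq\operatorname{cl}_{\beta X}Z\cap X=Z$.
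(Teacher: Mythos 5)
Your proof is correct and follows essentially the same route as the paper: for the forward direction, intersect $\mbox{int}_{\beta X}\mbox{cl}_{\beta X}Z$ with $X$ to get a neighborhood whose closure lies in $Z$ and invoke closed-hereditariness (derived from the clopen-hereditary plus inverse-invariance hypotheses); for the converse, shrink the given neighborhood to a zero-set via a Urysohn function and use the density observation in $\beta X$. No gaps.
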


\begin{proof}
Suppose that $X$ is locally--$\mathcal{P}$. Let $x\in X$ and let $U$ be an open neighborhood of $x$ in $X$ whose closure $\mbox{cl}_XU$ has $\mathcal{P}$. Let $f:X\rightarrow\mathbf{I}$ be continuous with $f(x)=0$ and $f[X\backslash U]\subseteq\{1\}$ and let $f_\beta:\beta X\rightarrow \mathbf{I}$ be the continuous extension of $f$. Let $Z=f^{-1}[[0,1/2]]\in{\mathscr Z}(X)$. Then $Z\subseteq U$ and thus $Z$ has $\mathcal{P}$, as it is  closed in $\mbox{cl}_XU$. Now
\begin{eqnarray*}
x\in f_\beta^{-1}\big[[0,1/2)\big]\subseteq\mbox{int}_{\beta X}\mbox{cl}_{\beta X}f^{-1}\big[[0,1/2]\big]=\mbox{int}_{\beta X}\mbox{cl}_{\beta X}Z\subseteq\lambda_{\mathcal{P}}X
\end{eqnarray*}
and therefore $X\subseteq\lambda_{\mathcal{P}} X$. For the converse suppose that  $X\subseteq\lambda_{\mathcal{P}} X$. Let $x\in X$. Then $x\in \lambda_{\mathcal{P}} X$ and therefore $x\in\mbox{int}_{\beta X}\mbox{cl}_{\beta X}S$ for some $S\in{\mathscr Z}(X)$ which has $\mathcal{P}$. Let $V=(\mbox{int}_{\beta X}\mbox{cl}_{\beta X}S)\cap X$. Then $V$ is an open neighborhood of $x$ in $X$. Since $V\subseteq S$ the set $\mbox{cl}_XV$ has $\mathcal{P}$, as it is closed in $S$. Thus $X$ is locally--$\mathcal{P}$.
\end{proof}

Our next theorem gives characterizations of the elements of ${\mathscr M}^{\mathcal Q}_{\mathcal P}(X)$. Compare with its dual result on ${\mathscr O}^{\mathcal Q}_{\mathcal P}(X)$ (Theorem \ref{HG16}).

\begin{theorem}\label{HUHG16}
Let ${\mathcal P}$ and  ${\mathcal Q}$ be a pair of compactness--like topological properties. Let $X$ be a Tychonoff space with $\mathcal{Q}$ and let $Y\in{\mathscr E}^{\mathcal Q}_{\mathcal P}(X)$. The following are equivalent:
\begin{itemize}
\item[\rm(1)] $Y\in{\mathscr M}^{\mathcal Q}_{\mathcal P}(X)$.
\item[\rm(2)] For any $p\in Y\backslash X$ the set $\phi^{-1}(p)\backslash\lambda_{\mathcal P} X$ is non--empty where $\phi:\beta X\rightarrow\beta Y$ is the continuous extension of $\mbox{\em id}_X$.
\item[\rm(3)] For any open subset $V$ of $Y$ such that $\mbox{\em cl}_X(V\cap  X)$ has ${\mathcal P}$ we have $V\cap(Y\backslash X)=\emptyset$.
\item[\rm(4)] For any $T\in{\mathscr E}^{\mathcal Q}_{\mathcal P}(X)$ and any continuous $f:T\rightarrow Y$ such that $f|X=\mbox{\em id}_X$, the mapping $f$ is surjective.
\item[\rm(5)] For any $T\in{\mathscr E}^{\mathcal Q}_{\mathcal P}(X)$ such that $Y\leq T$ there exists a continuous surjective $f:T\rightarrow Y$ such that $f|X=\mbox{\em id}_X$.
\end{itemize}
\end{theorem}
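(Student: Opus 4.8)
The plan is to prove the cycle $(1)\Rightarrow(2)\Rightarrow(3)\Rightarrow(1)$ and then $(1)\Rightarrow(4)\Rightarrow(5)\Rightarrow(1)$, using Lemma~\ref{16} (in the special case $Y=X$, $f=\mathrm{id}_X$, $\alpha T=\beta T$) as the main engine and Lemma~\ref{j2} to understand how $\beta$ of an extension with compact remainder is built by contracting fibers of $\phi:\beta X\to\beta Y$.

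For $(1)\Rightarrow(2)$, I argue by contraposition: suppose $\phi^{-1}(p)\subseteq\lambda_{\mathcal P}X$ for some $p\in Y\backslash X$. Let $q:\beta X\to T_0$ be the quotient that contracts $\phi^{-1}(p)$ to a single point and leaves the other fibers $\phi^{-1}(p')$, $p'\in Y\backslash X\backslash\{p\}$, intact — equivalently, form the extension $Y'$ of $X$ obtained from $Y$ by ``splitting'' $p$ apart, i.e. take $Y'=Y\backslash\{p\}$ with a suitable topology. More concretely, since $\phi^{-1}(p)$ is a compact subset of $\lambda_{\mathcal P}X$, by Lemma~\ref{B} any $A\subseteq X$ with $\mathrm{cl}_{\beta X}A$ a compact neighborhood of $\phi^{-1}(p)$ inside $\lambda_{\mathcal P}X$ has $\mathrm{cl}_XA$ with $\mathcal P$; this lets me produce a proper subextension $Y'\subsetneq Y$ (as subspaces of a common compactification built via Lemma~\ref{j2}, one just refrains from collapsing $\phi^{-1}(p)$ or collapses it less) which still lies in ${\mathscr E}^{\mathcal Q}_{\mathcal P}(X)$: locally--$\mathcal P$-ness of $X$ is inherited, and the containment $\beta X\backslash\lambda_{\mathcal P}X\subseteq\phi'^{-1}[Y'\backslash X]$ holds because we only removed a point whose $\phi$-fiber was entirely inside $\lambda_{\mathcal P}X$. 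By Lemma~\ref{16}, $Y'\in{\mathscr E}^{\mathcal Q}_{\mathcal P}(X)$, contradicting minimality of $Y$. This identification of ``subextensions with compact remainder'' with ``quotients of $\beta X$ that further collapse fibers over the remainder'' is the crux and the step I expect to be the main obstacle — one must be careful that the intermediate quotient is Hausdorff (hence a genuine compactification, handled exactly as in Lemma~\ref{j2}) and that its trace on $X$ is the original topology.

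For $(2)\Rightarrow(3)$: given an open $V\subseteq Y$ with $\mathrm{cl}_X(V\cap X)$ having $\mathcal P$, suppose $p\in V\cap(Y\backslash X)$. Since $Y$ is Tychonoff with compact remainder, choose a cozero neighborhood $V'$ of $p$ in $Y$ with $\mathrm{cl}_Y V'\subseteq V$; then $\mathrm{cl}_X(V'\cap X)$ has $\mathcal P$ as a closed subset of $\mathrm{cl}_X(V\cap X)$. Passing to $\beta X$ via $\phi$ and using the standard observation (stated just before the zero-dimensionality paragraph) that $V'$ pulls back to a neighborhood of $\phi^{-1}(p)$ whose closure is $\mathrm{cl}_{\beta X}(V'\cap X)$, and that the latter equals $\mathrm{cl}_{\beta X}$ of a zero-set of $X$ with $\mathcal P$, I get $\phi^{-1}(p)\subseteq\mathrm{int}_{\beta X}\mathrm{cl}_{\beta X}(V'\cap X)\subseteq\lambda_{\mathcal P}X$, contradicting (2). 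For $(3)\Rightarrow(1)$: if $Y'\subsetneq Y$ is a smaller element of ${\mathscr E}^{\mathcal Q}_{\mathcal P}(X)$, then some $p\in Y\backslash X$ is not in $Y'$; since $X$ is dense in $Y'$ and $Y'$ is locally--$\mathcal P$ (being a $\mathcal P$-extension, with $\mathcal P$ closed hereditary on Hausdorff spaces), $p$ has an open neighborhood $V$ in $Y$ with $V\cap Y'$ having closure in $Y'$ with $\mathcal P$; shrinking, $\mathrm{cl}_X(V\cap X)$ has $\mathcal P$ (it is closed in that closure), so by (3) $V\cap(Y\backslash X)=\emptyset$, contradicting $p\in V$.

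Finally, $(1)\Rightarrow(4)$: if $f:T\to Y$ is continuous with $f|X=\mathrm{id}_X$ and $T\in{\mathscr E}^{\mathcal Q}_{\mathcal P}(X)$, then $f[T]$ is a subspace of $Y$ containing $X$ densely; $f$ is closed (extend $f$ over $\beta T$ to a map into $\beta Y$, which is closed by compactness, and note $f[T]\cap Y = f[T]$), so $f[T]$ is a $\mathcal Q$-extension with compact remainder $f[T\backslash X]\subseteq Y\backslash X$, hence $f[T]\in{\mathscr E}^{\mathcal Q}_{\mathcal P}(X)$ (image of $\mathcal P$ under the perfect surjection $f$), and minimality forces $f[T]=Y$. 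The implication $(4)\Rightarrow(5)$ needs only the existence, for $T$ with $Y\le T$, of \emph{some} continuous $f:T\to Y$ fixing $X$ — which is the definition of $Y\le T$ — and then (4) upgrades it to surjective. For $(5)\Rightarrow(1)$: given $Y'\subsetneq Y$ in ${\mathscr E}^{\mathcal Q}_{\mathcal P}(X)$, construct $T\in{\mathscr E}^{\mathcal Q}_{\mathcal P}(X)$ with $Y'\le T$ and $Y\le T$ simultaneously — e.g. the extension of $X$ sitting inside $\beta X$ obtained by collapsing fibers of $\phi$ finely enough to dominate both $Y$ and $Y'$ (again via Lemma~\ref{j2} and Lemma~\ref{16} to certify membership in ${\mathscr E}^{\mathcal Q}_{\mathcal P}(X)$, as in $(1)\Rightarrow(2)$); then (5) gives a surjection $T\to Y$ fixing $X$, but the corresponding map $T\to Y'$ followed by the inclusion-like comparison shows $Y$ is a quotient of $Y'$ over $X$, and by antisymmetry of $\le$ on extensions this forces $Y=Y'$, contradiction. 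The recurring technical point throughout is the dictionary between subextensions/dominations and fiber-collapsings of $\phi:\beta X\to\beta Y$, together with the repeated appeal to Lemma~\ref{16} to verify $\mathcal P$ and $\mathcal Q$; that dictionary, already implicit in Lemma~\ref{j2}, is where the real work lies.
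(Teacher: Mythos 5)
Your overall architecture (two cycles through (1), with Lemma~\ref{16} as the engine) is sound, and some of your routings genuinely differ from the paper's: you prove $(3)\Rightarrow(1)$ directly rather than via (2), and your $(5)\Rightarrow(1)$ via an upper bound of $Y$ and $Y'$ works --- in fact $T=Y'$ itself suffices, since $Y\leq Y'$ is witnessed by the inclusion, which by density of $X$ and Hausdorffness is the \emph{only} map $Y'\to Y$ fixing $X$, so the surjection from (5) must be the inclusion. But there are two genuine gaps. First, in $(1)\Rightarrow(2)$ you delete a single point $p$ with $\phi^{-1}(p)\subseteq\lambda_{\mathcal P}X$: the subspace $Y\backslash\{p\}$ has remainder $(Y\backslash X)\backslash\{p\}$, which is compact only when $p$ is isolated in $Y\backslash X$, so in general $Y\backslash\{p\}\notin{\mathscr E}(X)$ and Lemma~\ref{16} cannot be applied to it. (Your alternative of ``refraining from collapsing $\phi^{-1}(p)$'' produces an extension lying \emph{above} $Y$, not a subspace of $Y$, so it cannot contradict minimality.) The obstacle you flag --- Hausdorffness of an intermediate quotient and its trace on $X$ --- is not the real one; the real one is compactness of the new remainder, and the cure is to delete \emph{all} points whose fibers lie in $\lambda_{\mathcal P}X$ simultaneously: the remaining remainder is then exactly $\phi[\beta X\backslash\lambda_{\mathcal P}X]$, compact as a continuous image of a compact set. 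This is what the paper does.

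Second, in $(1)\Rightarrow(4)$ you assert that $f\colon T\to f[T]$ is closed (hence perfect, hence $f[T]$ has $\mathcal P$ as a perfect image of $T$). This is unjustified: closedness of $f_\beta$ only makes $f_\beta$ restricted to $f_\beta^{-1}[f[T]]$ closed onto $f[T]$, and $f_\beta^{-1}[f[T]]$ may meet $\beta T\backslash T$; worse, since $f[T]\supseteq X$ is dense in $Y$, the statement ``$f[T]$ is closed in $Y$'' is literally the surjectivity you are trying to prove. The repair is to certify $f[T]\in{\mathscr E}^{\mathcal Q}_{\mathcal P}(X)$ by Lemma~\ref{16} rather than by perfect invariance: with $\psi\colon\beta X\to\beta T$ extending $\mbox{id}_X$ and $\phi=f_\beta\psi$, Lemma~\ref{16} applied to $T$ gives $\psi[\beta X\backslash\lambda_{\mathcal P}X]\subseteq T\backslash X$, whence $\phi[\beta X\backslash\lambda_{\mathcal P}X]\subseteq f[T\backslash X]$ and so $\beta X\backslash\lambda_{\mathcal P}X\subseteq\phi^{-1}\big[f[T]\backslash X\big]$; Lemma~\ref{16}, with $\beta Y$ viewed as a compactification of $f[T]$, then puts $f[T]$ in ${\mathscr E}^{\mathcal Q}_{\mathcal P}(X)$, and minimality finishes. (The paper sidesteps all of this by proving $(2)\Rightarrow(4)$ directly from the fiber condition.) You also need the small separate argument that $f[T\backslash X]\subseteq Y\backslash X$, which is not automatic and is supplied in the paper in the proof of Theorem~\ref{HG16}.
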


\begin{proof}
Let $\phi:\beta X\rightarrow\beta Y$ be the continuous extension of $\mbox{id}_X$. (1) {\em implies} (2). Consider the subspace
\[T=X\cup\big\{p\in Y\backslash X:\phi^{-1}(p)\backslash\lambda_{\mathcal P}X\neq\emptyset\big\}\]
of $Y$. We show that $T\backslash X=\phi[\beta X\backslash\lambda_{\mathcal P}X]$. First note that by Lemma \ref{16} we have  $\beta X\backslash\lambda_{\mathcal P} X\subseteq\phi^{-1}[Y\backslash X]$ and that $X$ is locally--${\mathcal P}$. Now if $t\in \beta X\backslash\lambda_{\mathcal P} X$ then $\phi(t)=p\in Y\backslash X$ and thus $\phi^{-1}(p)\backslash\lambda_{\mathcal P} X$ is non--empty, as it contains $t$. Therefore $\phi(t)=p\in T\backslash X$. This shows that $\phi[\beta X\backslash\lambda_{\mathcal P} X]\subseteq T\backslash X$. To show the reverse inclusion note that if $p\in T\backslash X$ then there exists some $t\in\phi^{-1}(p)\backslash\lambda_{\mathcal P} X\subseteq\beta X\backslash\lambda_{\mathcal P} X$ and thus $p=\phi(t)\in\phi[\beta X\backslash\lambda_{\mathcal P} X]$. This shows that $T\in{\mathscr E}(X)$. Now since
\[\phi^{-1}[T\backslash X]=\phi^{-1}\big[\phi[\beta X\backslash\lambda_{\mathcal P} X]\big]\supseteq \beta X\backslash\lambda_{\mathcal P} X\]
by Lemma \ref{16} it follows that $T\in{\mathscr E}_{\mathcal P}(X)$. By the minimality of $Y$ we have $T=Y$ and in particular $T\backslash X=Y\backslash X$.

(2) {\em implies} (1). Let $T\in{\mathscr E}_{\mathcal P}(X)$ be such that $T\subseteq Y$. By (the remark succeeding) Lemma \ref{16} we have $\beta X\backslash\lambda_{\mathcal P} X\subseteq \phi^{-1}[T\backslash X]$. Now if there exists some $p\in Y\backslash T$ then
\[\phi^{-1}(p)\backslash\lambda_{\mathcal P} X\subseteq\phi^{-1}(p)\cap\phi^{-1}[T\backslash X]=\emptyset\]
which contradicts (2). Thus $T=Y$. This shows the minimality of $Y$.

(2) {\em implies} (3). To show (3) let $V$ be an open subset of $Y$ such that $V\cap(Y\backslash X)$ is non--empty. We need to show that $\mbox{cl}_X (V\cap X)$ is non--${\mathcal P}$. Let $V=W\cap Y$ where $W$ is an  open subset of $\beta Y$. Let $p\in V\cap(Y\backslash X)$. Let $g:\beta Y\rightarrow\mathbf{I}$ be continuous with $g(p)=0$ and $g[\beta Y\backslash W]\subseteq\{1\}$ and let
\[Z=(g\phi)^{-1}\big[[0,1/2]\big]\cap X\in {\mathscr Z}(X).\]
Note that
\begin{eqnarray*}
Z=(g\phi)^{-1}\big[[0,1/2]\big]\cap X&=&\phi^{-1}\big[g^{-1}\big[[0,1/2]\big]\big]\cap X\\&=&g^{-1}\big[[0,1/2]\big]\cap X\subseteq W\cap X=V\cap X.
\end{eqnarray*}
Thus if $\mbox{cl}_X(V\cap X)$ has ${\mathcal P}$ then its closed subset $Z$ also has ${\mathcal P}$. Now
\begin{eqnarray*}
\phi^{-1}(p)\subseteq\phi^{-1}\big[g^{-1}\big[[0,1/2)\big]\big]&=&(g\phi)^{-1}\big[[0,1/2)\big]\\&\subseteq&\mbox{int}_{\beta X}\mbox{cl}_{\beta X}\big((g\phi)^{-1}\big[[0,1/2]\big]\cap X\big)\\&=&\mbox{int}_{\beta X}\mbox{cl}_{\beta X}Z\subseteq\lambda_{\mathcal P} X
\end{eqnarray*}
contradicting (2). Therefore $\mbox{cl}_X (V\cap X)$ is non--${\mathcal P}$.

(3) {\em implies} (2). Suppose to the contrary that $\phi^{-1}(p)\backslash\lambda_{\mathcal P} X=\emptyset$  for some $p\in Y\backslash X$. Then $p\notin \phi[\beta X\backslash\lambda_{\mathcal P} X]$. Let $W$ be an open neighborhood of $p$ in $\beta Y$ such that $\mbox{cl}_{\beta Y}W\cap\phi[\beta X\backslash\lambda_{\mathcal P} X]=\emptyset$. We have
\begin{eqnarray*}
\phi^{-1}[\mbox{cl}_{\beta Y}W]\backslash\lambda_{\mathcal P} X&\subseteq&\phi^{-1}[\mbox{cl}_{\beta Y}W]\cap\phi^{-1}\big[\phi[\beta X\backslash\lambda_{\mathcal P} X]\big]\\&=&\phi^{-1}[\mbox{cl}_{\beta Y}W\cap\phi[\beta X\backslash\lambda_{\mathcal P} X]\big]=\emptyset
\end{eqnarray*}
and thus
\[\mbox{cl}_{\beta X}(W\cap X)=\mbox{cl}_{\beta X}\big(\phi^{-1}[W]\cap X\big)=\mbox{cl}_{\beta X}\phi^{-1}[W]\subseteq\phi^{-1}[\mbox{cl}_{\beta Y}W]\subseteq\lambda_{\mathcal P} X.\]
Lemma \ref{B} implies that $\mbox{cl}_X(W\cap X)$ has ${\mathcal P}$. Now $V=W\cap Y$ is an open neighborhood of $p$ in $Y$ such that  $\mbox{cl}_X(V\cap X)=\mbox{cl}_X(W\cap X)$ has ${\mathcal P}$, contradicting (3).

(2) {\em implies} (4). Let $T\in{\mathscr E}^{\mathcal Q}_{\mathcal P}(X)$ and let $f:T\rightarrow Y$ be continuous with $f|X=\mbox{id}_X$. Let $f_\beta:\beta T\rightarrow\beta Y$ and $\psi:\beta X\rightarrow\beta T$ be the continuous extensions of $f$ and $\mbox{id}_X$, respectively. Then since $f_\beta\psi|X=\phi| X$ we have $f_\beta\psi=\phi$. Lemma \ref{16} implies that $\beta X\backslash\lambda_{\mathcal P} X\subseteq\psi^{-1}[T\backslash X]$. Also, for any $p\in Y\backslash X$, since $\phi^{-1}(p)\backslash\lambda_{\mathcal P} X$ is non--empty, $p\in \phi[\phi^{-1}(p)\backslash\lambda_{\mathcal P} X]$. Thus
\begin{eqnarray*}
Y\backslash X\subseteq\bigcup\big\{\phi\big[\phi^{-1}(p)\backslash\lambda_{\mathcal P} X\big]:p\in Y\backslash X\big\}&\subseteq&\phi[\beta X\backslash\lambda_{\mathcal P} X]\\&=&f_\beta\big[\psi[\beta X\backslash\lambda_{\mathcal P} X]\big]\\&\subseteq&f_\beta\big[\psi\big[\psi^{-1}[T\backslash X]\big]\big]\\&\subseteq&f_\beta[T\backslash X]=f[T\backslash X]\subseteq f[T].
\end{eqnarray*}
Since $f|X=\mbox{id}_X$ this shows that $Y\subseteq f[T]$, that is, $f$ is surjective. That (4) implies (5) is trivial.

(5) {\em implies} (2). Consider the subspace $T=X\cup\phi[\beta X\backslash\lambda_{\mathcal P}X]$ of $\beta Y$. By Lemma \ref{16} we have  $\beta X\backslash\lambda_{\mathcal P} X\subseteq\phi^{-1}[Y\backslash X]$ and that $X$ is locally--${\mathcal P}$. Thus
\[T=X\cup\phi[\beta X\backslash\lambda_{\mathcal P}X]\subseteq X\cup\phi\big[\phi^{-1}[Y\backslash X]\big]\subseteq X\cup(Y\backslash X)=Y.\]
By Lemma \ref{15} we have $X\subseteq\lambda_{\mathcal P}X$. Now $T\backslash X=\phi[\beta X\backslash\lambda_{\mathcal P} X]$ is compact, and since
\[\phi^{-1}[T\backslash X]=\phi^{-1}\big[\phi[\beta X\backslash\lambda_{\mathcal P} X]\big]\supseteq\beta X\backslash\lambda_{\mathcal P} X,\]
by Lemma \ref{16} it follows that $T\in{\mathscr E}^{\mathcal Q}_{\mathcal P}(X)$. It is clear that $Y\leq T$, as $T\subseteq Y$. By (5) there exists a continuous surjective $f:T\rightarrow Y$ such that $f|X=\mbox{id}_X$. But $f|X=\mbox{id}_T|X$ which yields $f=\mbox{id}_T$ and therefore  $Y=f[T]=T$. Now it is clear that for any
\[p\in Y\backslash X=T\backslash X=\phi[\beta X\backslash\lambda_{\mathcal P} X]\]
the set $\phi^{-1}(p)\backslash\lambda_{\mathcal P} X$ is non--empty.
\end{proof}

\begin{xrem}
{\em Theorem \ref{HUHG16} fails if one omits the requirement ``$\mathcal{P}$ satisfies Mr\'{o}wka's condition $(\mbox{W})$" (implicit in the definition of the compactness--like topological property $\mathcal{P}$) from its statement (see Example \ref{20UIHG}).}
\end{xrem}

In the next theorem we give characterizations of the elements of ${\mathscr O}^{\mathcal Q}_{\mathcal P}(X)$. We need to prove a few lemmas first.

\begin{notation}
Let $X$ be a Tychonoff space and let $Y$ be a Tychonoff extension of $X$. Let $\phi:\beta X\rightarrow \beta Y$ be the (unique) continuous mapping which extends $\mbox{id}_X$. Denote
\[{\mathscr F}_X(Y)=\big\{\phi^{-1}(p):p\in Y\backslash X\big\}.\]
We may write ${\mathscr F}(Y)$ instead of ${\mathscr F}_X(Y)$ when no confusion arises.
\end{notation}

In \cite{Mag3} the author associated to each compactification $\alpha X$ of a Tychonoff space $X$ a set (called the {\em $\beta$--family} of $\alpha X$)
\[{\mathscr F}_\alpha=\big\{f_\alpha^{-1}(p):p\in \alpha X\backslash X\big\}\]
where $f_\alpha:\beta X\rightarrow \alpha X$ is  the continuous extension of $\mbox{id}_X$. It is then shown that for any compactifications $\alpha_1 X$ and $\alpha_2 X$  of a Tychonoff space $X$ we have $\alpha_1 X\leq\alpha_2 X$ if and only if each set in ${\mathscr F}_{\alpha_2}$  is a subset of a set in ${\mathscr F}_{\alpha_1}$. This provides the motivation for the statement of the next lemma.

\begin{lemma}\label{DFH}
Let $X$ be a Tychonoff space and let $Y_1,Y_2\in{\mathscr E}(X)$. The following are equivalent:
\begin{itemize}
\item[\rm(1)] $Y_1\leq Y_2$.
\item[\rm(2)] Any element of ${\mathscr F}(Y_2)$ is contained in an element of ${\mathscr F}(Y_1)$.
\end{itemize}
\end{lemma}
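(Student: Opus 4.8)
The plan is to reduce the order relation on extensions to a refinement relation between the quotient structures of the Stone--\v{C}ech compactifications, in the spirit of Magill's result on $\beta$--families quoted above, the main vehicle being Lemma \ref{j2}. Fix the continuous extensions $\phi_k\colon\beta X\rightarrow\beta Y_k$ of $\mathrm{id}_X$ for $k=1,2$. By Lemma \ref{j2}, $\beta Y_k$ is (equivalent to) the quotient of $\beta X$ obtained by contracting each fiber $\phi_k^{-1}(p)$, $p\in Y_k\setminus X$, to a point, with $\phi_k$ the corresponding quotient map; in particular $\phi_k$ is surjective, $\phi_k[\beta X\setminus X]=\beta Y_k\setminus X$ (so that $\phi_k^{-1}[\beta Y_k\setminus X]=\beta X\setminus X$ and $\phi_k^{-1}[X]=X$), and $\phi_k^{-1}(y)$ is a one--point set whenever $y\in\beta Y_k\setminus(Y_k\setminus X)$. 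Thus the fibers of $\phi_k$ are exactly the elements of ${\mathscr F}(Y_k)$ together with singletons.

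For the implication (2) $\Rightarrow$ (1), I would first note that the partition of $\beta X$ into $\phi_2$--fibers refines the partition into $\phi_1$--fibers: a $\phi_2$--fiber over a point of $Y_2\setminus X$ lies in ${\mathscr F}(Y_2)$, hence by (2) inside an element of ${\mathscr F}(Y_1)$, which is a $\phi_1$--fiber, while every remaining $\phi_2$--fiber is a singleton and sits inside its $\phi_1$--fiber trivially. Consequently $\phi_1$ is constant on each $\phi_2$--fiber, and since $\phi_2$ is a quotient map there is a unique continuous $h\colon\beta Y_2\rightarrow\beta Y_1$ with $h\phi_2=\phi_1$. Then $h|X=h\phi_2|X=\phi_1|X=\mathrm{id}_X$, and for $p\in Y_2\setminus X$, picking $t\in\phi_2^{-1}(p)$, we have $h(p)=\phi_1(t)=q\in Y_1\setminus X$, where $q$ is the point of $Y_1\setminus X$ with $\phi_2^{-1}(p)\subseteq\phi_1^{-1}(q)\in{\mathscr F}(Y_1)$ furnished by (2). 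Hence $h[Y_2]\subseteq Y_1$, and $g:=h|Y_2\colon Y_2\rightarrow Y_1$ is continuous and fixes $X$ pointwise, so $Y_1\leq Y_2$.

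For (1) $\Rightarrow$ (2), let $g\colon Y_2\rightarrow Y_1$ be continuous with $g|X=\mathrm{id}_X$; composing with the inclusion $Y_1\hookrightarrow\beta Y_1$ and using the compactness of $\beta Y_1$, extend $g$ to a continuous $g_\beta\colon\beta Y_2\rightarrow\beta Y_1$. Then $g_\beta\phi_2$ and $\phi_1$ are continuous maps $\beta X\rightarrow\beta Y_1$ agreeing on the dense subspace $X$, hence $g_\beta\phi_2=\phi_1$. Given $\phi_2^{-1}(p)\in{\mathscr F}(Y_2)$ with $p\in Y_2\setminus X$, set $q=g(p)$; for every $t\in\phi_2^{-1}(p)$ we get $\phi_1(t)=g_\beta(\phi_2(t))=g_\beta(p)=q$, so $\phi_2^{-1}(p)\subseteq\phi_1^{-1}(q)$. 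It remains to check $q\notin X$: otherwise $\phi_1^{-1}(q)=\{q\}$ with $q\in X$, whereas $\phi_2^{-1}(p)$ is a non--empty subset of $\phi_2^{-1}[\beta Y_2\setminus X]=\beta X\setminus X$, a contradiction. Therefore $q\in Y_1\setminus X$ and $\phi_1^{-1}(q)\in{\mathscr F}(Y_1)$ contains $\phi_2^{-1}(p)$, which is (2).

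I expect the only genuine obstacle to be the bookkeeping about where fibers land: verifying in (2) $\Rightarrow$ (1) that the factored map $h$ carries $Y_2$ into $Y_1$ (not merely $\beta Y_2$ into $\beta Y_1$) and is the identity on $X$, and conversely in (1) $\Rightarrow$ (2) that the relevant $\phi_1$--fiber is genuinely a member of ${\mathscr F}(Y_1)$ and not a stray singleton over $X$ or over $\beta Y_1\setminus Y_1$. Both points are settled by the precise quotient description of $\beta Y_k$ from Lemma \ref{j2}---specifically that the non--singleton fibers of $\phi_k$ are exactly those over $Y_k\setminus X$---so that beyond the universal property of quotient maps and the density of $X$ in $\beta X$ no delicate topology is involved.
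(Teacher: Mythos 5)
Your proof is correct and follows essentially the same route as the paper's: both directions rest on the quotient description of $\beta Y_k$ from Lemma \ref{j2} together with the identity $g_\beta\phi_2=\phi_1$ (resp.\ $h\phi_2=\phi_1$) obtained from density of $X$. The only cosmetic difference is in (2)$\Rightarrow$(1), where you factor $\phi_1$ through the full quotient $\beta Y_2$ and then restrict to $Y_2$, whereas the paper defines the map on $Y_2$ directly and gets continuity from the restricted quotient $\phi_2|\big(X\cup\phi_2^{-1}[Y_2\backslash X]\big)$; the two devices are interchangeable here.
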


\begin{proof}
Let $\phi_i:\beta X\rightarrow \beta Y_i$ where $i=1,2$ be the continuous extension of $\mbox{id}_X$. (1) {\em  implies} (2). By definition there exists a continuous $f:Y_2\rightarrow Y_1$ such that $f|X= \mbox{id}_X$. Let $f_\beta:\beta Y_2\rightarrow\beta Y_1$ be the continuous extension of $f$. The continuous mappings $f_\beta\phi_2,\phi_1:\beta X\rightarrow \beta Y_1$ coincide  with $\mbox{id}_X$ on $X$ and thus are identical. Also, since $X$ is dense in $\beta Y_i$, as it is dense in $Y_i$ where $i=1,2$ the space $\beta Y_i$ is a compactification of $X$. Therefore since $f_\beta|X=\mbox{id}_X$, by Theorem 3.5.7 of \cite{E} we have $f_\beta[\beta Y_2\backslash X]=\beta Y_1\backslash X$. Now let $F_2\in {\mathscr F}(Y_2)$. Then $F_2=\phi^{-1}_2(p)$ for some $p\in Y_2\backslash X$. By above $f_\beta(p)\in\beta Y_1\backslash X$ and thus
$f(p)\in Y_1\backslash X$, as $f_\beta(p)=f(p)$. Let $F_1=\phi^{-1}_1(f(p))\in{\mathscr F}(Y_1)$. Then
\begin{eqnarray*}
F_2=\phi^{-1}_2(p)\subseteq\phi^{-1}_2\big[f_\beta^{-1}\big(f_\beta(p)\big)\big]&=&(f_\beta\phi_2)^{-1}\big(f_\beta(p)\big)\\&=&\phi_1^{-1}\big(f_\beta(p)\big)
=\phi_1^{-1}\big(f(p)\big)=F_1.
\end{eqnarray*}

(2) {\em  implies} (1). We define $f:Y_2\rightarrow Y_1$ as follows. If $t\in Y_2\backslash X$ then $\phi^{-1}_2(t)\in{\mathscr F}(Y_2)$ and thus by our assumption  $\phi^{-1}_2(t)\subseteq\phi^{-1}_1(s)$ for some (unique, as  $\phi_2$ is surjective) $s\in  Y_1\backslash X$. Define $f(t)=s$ in this case. If $t\in X$ define $f(t)=t$. We show that $f$ is continuous, this will show that $Y_1\leq Y_2$. By Lemma \ref{j2} the space $\beta Y_2$ is the quotient space of $\beta X$ obtained by contracting each $\phi^{-1}_2(p)$ where $p\in Y_2\backslash X$ to a point and $\phi_2$ is the quotient mapping. Thus in particular $Y_2$ is the quotient space of $X\cup \phi^{-1}_2[Y_2\backslash X]$ with the quotient mapping
\[\phi_2|\big(X\cup\phi^{-1}_2[Y_2\backslash X]\big):X\cup\phi^{-1}_2[Y_2\backslash X]\rightarrow Y_2.\]
Therefore to show that $f$ is continuous it suffices to show that $f\phi_2|(X\cup\phi^{-1}_2[Y_2\backslash X])$ is continuous. We show this by verifying that $f\phi_2(t)=\phi_1(t)$ for any $t\in X\cup\phi^{-1}_2[Y_2\backslash X]$. This obviously holds if $t\in X$. If $t\in \phi^{-1}_2 [Y_2\backslash X]$ then $\phi_2(t)\in Y_2\backslash X$. Let $s\in Y_1\backslash X$ be such that $\phi^{-1}_2(\phi_2(t))\subseteq\phi^{-1}_1(s)$. Then $f\phi_2(t)=s$. But since $t\in\phi^{-1}_2[\phi_2 (t)]$ we have $t\in\phi^{-1}_1(s)$ and thus $\phi_1(t)=s$. Therefore $f\phi_2(t)=\phi_1(t)$ also in this case.
\end{proof}

\begin{lemma}\label{BA27}
Let $X$ be a Tychonoff space and let $\mathcal{P}$ be a clopen hereditary topological property which is inverse invariant under perfect mappings. Suppose that $Z\subseteq C$ where $Z\in {\mathscr Z}(X)$, $C\in Coz(X)$ and $\mbox{\em cl}_XC$ has $\mathcal{P}$. Then $\mbox{\em cl}_{\beta X}Z\subseteq\lambda_{\mathcal{P}} X$.
\end{lemma}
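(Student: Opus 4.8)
The plan is to interpose a single zero--set $S$ of $X$ between $Z$ and $C$, to note that $S$ has ${\mathcal P}$, and then to check that $\mbox{cl}_{\beta X}Z\subseteq\mbox{int}_{\beta X}\mbox{cl}_{\beta X}S$; by Definition \ref{14} the latter set is contained in $\lambda_{\mathcal{P}}X$, which finishes the argument.

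First I would construct $S$. Since $C\in Coz(X)$ its complement $X\backslash C$ is a zero--set of $X$, and it is disjoint from $Z$ because $Z\subseteq C$. Disjoint zero--sets of a Tychonoff space are completely separated, so there is a continuous $h:X\rightarrow\mathbf{I}$ with $h[Z]\subseteq\{0\}$ and $h[X\backslash C]\subseteq\{1\}$. Put $S=h^{-1}[[0,1/2]]\in{\mathscr Z}(X)$. From $h[X\backslash C]\subseteq\{1\}$ we get $h^{-1}[[0,1)]\subseteq C$, hence $S\subseteq C\subseteq\mbox{cl}_XC$, so $S$ is closed in $\mbox{cl}_XC$. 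Since $\mbox{cl}_XC$ has ${\mathcal P}$ and ${\mathcal P}$ --- being clopen hereditary and inverse invariant under perfect mappings --- is hereditary with respect to closed subsets of Hausdorff spaces (the remark following Definition \ref{14}, via Theorem 3.7.29 of \cite{E}), the set $S$ has ${\mathcal P}$.

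Next I would compare closures in $\beta X$. Let $h_\beta:\beta X\rightarrow\mathbf{I}$ be the continuous extension of $h$. Since $Z\subseteq h^{-1}[\{0\}]\subseteq h_\beta^{-1}[\{0\}]$ and $h_\beta^{-1}[\{0\}]$ is closed in $\beta X$, we have
\[\mbox{cl}_{\beta X}Z\subseteq h_\beta^{-1}\big[\{0\}\big]\subseteq h_\beta^{-1}\big[[0,1/2)\big].\]
The set $h_\beta^{-1}[[0,1/2)]$ is open in $\beta X$, so by the density observation recorded just before this lemma (applied with the dense subset $X$ of $\beta X$), together with $h_\beta^{-1}[[0,1/2)]\cap X=h^{-1}[[0,1/2)]\subseteq S$, we obtain
\[h_\beta^{-1}\big[[0,1/2)\big]\subseteq\mbox{int}_{\beta X}\mbox{cl}_{\beta X}\big(h_\beta^{-1}\big[[0,1/2)\big]\cap X\big)\subseteq\mbox{int}_{\beta X}\mbox{cl}_{\beta X}S.\]
Chaining these inclusions gives $\mbox{cl}_{\beta X}Z\subseteq\mbox{int}_{\beta X}\mbox{cl}_{\beta X}S\subseteq\lambda_{\mathcal{P}}X$, as desired.

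I do not expect a real obstacle: the only delicate point is the choice of the intermediate zero--set $S$, which must be small enough (inside $C$) to inherit ${\mathcal P}$ from $\mbox{cl}_XC$ yet have its $\beta X$--closure with interior large enough to contain $\mbox{cl}_{\beta X}Z$; passing through a function $h$ that completely separates $Z$ from $X\backslash C$ handles both requirements at once. One should also keep in mind the standing fact that the hypotheses on ${\mathcal P}$ already yield closed--hereditariness in the Hausdorff setting, which is what lets $S$ pick up ${\mathcal P}$ from $\mbox{cl}_XC$.
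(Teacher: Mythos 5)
Your proposal is correct and follows essentially the same route as the paper's own proof: complete separation of the disjoint zero--sets $Z$ and $X\backslash C$, the intermediate zero--set $S=h^{-1}[[0,1/2]]$ inheriting ${\mathcal P}$ as a closed subset of $\mbox{cl}_XC$, and the chain $\mbox{cl}_{\beta X}Z\subseteq h_\beta^{-1}(0)\subseteq h_\beta^{-1}[[0,1/2)]\subseteq\mbox{int}_{\beta X}\mbox{cl}_{\beta X}S\subseteq\lambda_{\mathcal P}X$. No issues.
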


\begin{proof}
The zero--sets $Z$ and $X\backslash C$ of $X$, being disjoint, are completely separated in $X$. Let $f:X\rightarrow\mathbf{I}$ be continuous with $f[Z]\subseteq\{0\}$ and $f[X\backslash C]\subseteq\{1\}$ and let $f_\beta:\beta X\rightarrow\mathbf{I}$ be the continuous extension of $f$. Let $S=f^{-1}[[0,1/2]]\in{\mathscr Z}(X)$. Then $S\subseteq C$ and therefore $S$ has $\mathcal{P}$, as it is closed in $\mbox{cl}_XC$. We have
\[\mbox{cl}_{\beta X}Z\subseteq Z(f_\beta)\subseteq f_\beta^{-1}\big[[0,1/2)\big]\subseteq\mbox{int}_{\beta X}\mbox{cl}_{\beta X}f^{-1}\big[[0,1/2]\big]=\mbox{int}_{\beta X}\mbox{cl}_{\beta X}S\subseteq\lambda_{\mathcal{P}}X.\]
\end{proof}

In the following theorem  we characterize the elements of ${\mathscr O}^{\mathcal Q}_{\mathcal P}(X)$.

\begin{theorem}\label{HG16}
Let ${\mathcal P}$ and  ${\mathcal Q}$ be a pair of compactness--like topological properties. Let $X$ be a Tychonoff space with $\mathcal{Q}$ and let  $Y\in{\mathscr E}^{\mathcal Q}_{\mathcal P}(X)$. The following are equivalent:
\begin{itemize}
\item[\rm(1)] $Y\in{\mathscr O}^{\mathcal Q}_{\mathcal P}(X)$.
\item[\rm(2)] $\phi^{-1}[Y\backslash X]=\beta X\backslash\lambda_{\mathcal P}X$ where $\phi:\beta X\rightarrow\beta Y$ is the continuous extension of $\mbox{\em id}_X$.
\item[\rm(3)] For any $Z\in {\mathscr Z}(X)$ such that $Z\subseteq C$ for some $C\in Coz(X)$ such that $\mbox{\em cl}_X C$ has $\mathcal{P}$ we have $\mbox{\em cl}_Y Z\cap (Y\backslash X)=\emptyset$.
\item[\rm(4)] For any $T\in{\mathscr E}^{\mathcal Q}_{\mathcal P}(X)$ and any continuous injective $f:T\rightarrow Y$ such that $f|X=\mbox{\em id}_X$, the mapping $f$ is a homeomorphism.
\item[\rm(5)] For any $T\in{\mathscr E}^{\mathcal Q}_{\mathcal P}(X)$ if $Y\leq T$ then $T\in{\mathscr M}^{\mathcal Q}_{\mathcal P}(X)$.
\end{itemize}
\end{theorem}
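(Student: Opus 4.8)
The plan is to prove the cyclic chain of implications $(1)\Rightarrow(2)\Rightarrow(3)\Rightarrow(2)$ (so (2) and (3) are interderivable) together with $(2)\Rightarrow(4)\Rightarrow(5)\Rightarrow(2)$, using Lemma \ref{16}, Lemma \ref{B}, Lemma \ref{BA27}, Lemma \ref{DFH} and Theorem \ref{HUHG16} as the main tools. Throughout, write $\phi:\beta X\to\beta Y$ for the continuous extension of $\mathrm{id}_X$; recall from Lemma \ref{16} that $Y\in{\mathscr E}^{\mathcal Q}_{\mathcal P}(X)$ already gives $\beta X\backslash\lambda_{\mathcal P}X\subseteq\phi^{-1}[Y\backslash X]$ and that $X$ is locally--$\mathcal P$, so the content of (2) is really the reverse inclusion $\phi^{-1}[Y\backslash X]\subseteq\beta X\backslash\lambda_{\mathcal P}X$, equivalently $\phi[\lambda_{\mathcal P}X]\subseteq X$, equivalently $\phi^{-1}[Y\backslash X]\cap\lambda_{\mathcal P}X=\emptyset$.

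First I would do $(1)\Rightarrow(2)$. Suppose $\phi^{-1}[Y\backslash X]\not\subseteq\beta X\backslash\lambda_{\mathcal P}X$, so there is $t\in\lambda_{\mathcal P}X$ with $\phi(t)=p\in Y\backslash X$. Pick $Z\in{\mathscr Z}(X)$ with $\mathcal P$ and $t\in\mathrm{int}_{\beta X}\mathrm{cl}_{\beta X}Z$. The idea is to ``split off'' the point $p$: using Lemma \ref{j2}, $\beta Y$ is the quotient of $\beta X$ collapsing each fiber $\phi^{-1}(q)$, $q\in Y\backslash X$; form instead the quotient $T$ of $\beta X$ that collapses $\phi^{-1}(q)$ for every $q\in Y\backslash X$ except that it collapses $\phi^{-1}(p)$ not to one point but leaves it split into the clopen-separated pieces $\phi^{-1}(p)\cap(\beta X\backslash U)$ and $\phi^{-1}(p)\cap U$ for a suitable clopen (in the zero-dimensional absolute construction) or cozero separator $U$ with $t\in U$, $\mathrm{cl}_{\beta X}U\subseteq\mathrm{int}_{\beta X}\mathrm{cl}_{\beta X}Z$. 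One checks via Lemma \ref{16} that the resulting $T$ lies in ${\mathscr E}^{\mathcal Q}_{\mathcal P}(X)$, that its underlying point-set still equals $Y$ (no point deleted, so $Y$ stays minimal — indeed one can arrange $T$ to have the same points as $Y$), but its topology is strictly finer than that of $Y$ near $p$, contradicting optimality. (Here the subtlety that $Y$ must remain \emph{minimal} is handled because we are only refining the topology, and the refinement keeps $T\in{\mathscr E}_{\mathcal P}(X)$ by the fiber inclusion $\phi^{-1}[T\backslash X]\supseteq\beta X\backslash\lambda_{\mathcal P}X$ which is preserved.) The main obstacle is making this ``topology-refinement'' construction precise so that $T$ is still Tychonoff with compact remainder and still has $\mathcal P$ and $\mathcal Q$; I expect to mimic the base-construction in the proof of Lemma \ref{16} (2)$\Rightarrow$(1), working in the absolute $EX$ to get clopen separators, and to invoke Lemma \ref{122}.

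For $(2)\Leftrightarrow(3)$: assuming (2), if $Z\subseteq C$ with $C\in Coz(X)$ and $\mathrm{cl}_XC$ having $\mathcal P$, then Lemma \ref{BA27} (after shrinking $Z$ to a zero-set of the form $f^{-1}[[0,1/2]]$ if needed — actually Lemma \ref{BA27} applies directly) gives $\mathrm{cl}_{\beta X}Z\subseteq\lambda_{\mathcal P}X$, hence $\phi[\mathrm{cl}_{\beta X}Z]\subseteq\phi[\lambda_{\mathcal P}X]\subseteq X$ by (2); since $\mathrm{cl}_YZ=\phi[\mathrm{cl}_{\beta X}Z]\cap Y$ (as $\mathrm{cl}_{\beta Y}Z=\phi[\mathrm{cl}_{\beta X}Z]$ by compactness), we get $\mathrm{cl}_YZ\cap(Y\backslash X)=\emptyset$. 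Conversely, assuming (3) and supposing $t\in\lambda_{\mathcal P}X\cap\phi^{-1}[Y\backslash X]$ with $\phi(t)=p$, choose $Z\in{\mathscr Z}(X)$ with $\mathcal P$ and $t\in\mathrm{int}_{\beta X}\mathrm{cl}_{\beta X}Z$; then find $C\in Coz(X)$ with $Z'\subseteq C\subseteq\mathrm{cl}_{\beta X}C\cap X\subseteq$ a zero-set inside $Z$ so that $\mathrm{cl}_XC$ has $\mathcal P$ (closed in $Z$) and $t\in\mathrm{cl}_{\beta X}Z'$ — a standard ``zero-set inside cozero-set inside zero-set'' interpolation — whence $p=\phi(t)\in\phi[\mathrm{cl}_{\beta X}Z']=\mathrm{cl}_{\beta Y}Z'$, so $p\in\mathrm{cl}_YZ'\cap(Y\backslash X)$, contradicting (3).

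Finally the order-theoretic equivalences. For $(2)\Rightarrow(4)$: given $T\in{\mathscr E}^{\mathcal Q}_{\mathcal P}(X)$ and continuous injective $f:T\to Y$ fixing $X$, Lemma \ref{16} gives $\beta X\backslash\lambda_{\mathcal P}X\subseteq\psi^{-1}[T\backslash X]$ where $\psi:\beta X\to\beta T$ extends $\mathrm{id}_X$, and $f_\beta\psi=\phi$ for $f_\beta:\beta T\to\beta Y$. By (2), $\psi^{-1}[T\backslash X]\subseteq\phi^{-1}[Y\backslash X]=\beta X\backslash\lambda_{\mathcal P}X$, so $\psi^{-1}[T\backslash X]=\beta X\backslash\lambda_{\mathcal P}X=\phi^{-1}[Y\backslash X]$; combined with Theorem \ref{HUHG16}((2) for $Y$ — which holds here since (2) of this theorem clearly implies (2) of Theorem \ref{HUHG16}) $f$ is surjective, and then one shows $f_\beta$ restricted appropriately is a bijection on fibers, forcing ${\mathscr F}(T)={\mathscr F}(Y)$, so by Lemma \ref{DFH} $f$ is a homeomorphism. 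For $(4)\Rightarrow(5)$: if $Y\leq T$ via continuous $g:T\to Y$ fixing $X$, first note $Y\in{\mathscr M}^{\mathcal Q}_{\mathcal P}(X)$ (since (2) $\Rightarrow$ Theorem \ref{HUHG16}(2) $\Rightarrow$ $Y$ minimal), so by Theorem \ref{HUHG16}(5) $g$ may be taken surjective; to get $T$ minimal, take any $T'\in{\mathscr E}_{\mathcal P}(X)$, $T'\subseteq T$, and observe the inclusion-type map gives, after composing, an injective fixing-$X$ map into $Y$ which (4) forces to be a homeomorphism, pushing minimality of $Y$ back up to $T$; the details need the identification of subspace-extensions with fiber-refinements via Lemma \ref{j2}. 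For $(5)\Rightarrow(2)$: suppose $\phi^{-1}[Y\backslash X]\supsetneq\beta X\backslash\lambda_{\mathcal P}X$; build $T=X\cup\phi[\beta X\backslash\lambda_{\mathcal P}X]\subseteq Y$ exactly as in the proof of Theorem \ref{HUHG16}((5)$\Rightarrow$(2)), so $T\in{\mathscr E}^{\mathcal Q}_{\mathcal P}(X)$ and $Y\leq T$; by (5), $T\in{\mathscr M}^{\mathcal Q}_{\mathcal P}(X)$, but $T$ properly contains — no, is properly contained in — hmm, rather: since $\phi^{-1}[Y\backslash X]\supsetneq\beta X\backslash\lambda_{\mathcal P}X$ there is $p\in Y\backslash X$ with $\phi^{-1}(p)\subseteq\lambda_{\mathcal P}X$, i.e. $\phi^{-1}(p)\backslash\lambda_{\mathcal P}X=\emptyset$; but $T\in{\mathscr M}^{\mathcal Q}_{\mathcal P}(X)$ with $T\subseteq Y$, and the continuous surjection $Y\to T$ from $Y\leq T$... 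I would instead argue: $T\subseteq Y$ with $T\ne Y$ (as $p\notin T$), yet $T\in{\mathscr E}_{\mathcal P}(X)$, contradicting nothing directly — so the clean route is to apply (5) with this same $T$ to conclude $T$ is minimal, then note $T\leq Y\leq T$ forces (via Lemma \ref{DFH} and the fiber bijection) $T=Y$, the desired contradiction. The main obstacle across (4) and (5) is bookkeeping the correspondence between ``subspace of $Y$'', ``$\leq$-larger than $Y$'', and ``$\mathscr F$-refinement'' cleanly enough to avoid circularity; I expect Lemma \ref{DFH} and Lemma \ref{j2} to carry essentially all of that weight.
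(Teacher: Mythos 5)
Your plan establishes $(1)\Rightarrow(2)$ and the mutual equivalence of (2), (3), (4), (5), but no implication in it ever returns to (1): nothing in the chains $(1)\Rightarrow(2)\Rightarrow(3)\Rightarrow(2)$ and $(2)\Rightarrow(4)\Rightarrow(5)\Rightarrow(2)$ yields $Y\in{\mathscr O}^{\mathcal Q}_{\mathcal P}(X)$, so the theorem is not proved as stated. The paper closes the loop with $(4)\Rightarrow(1)$, which is short (a finer admissible topology on the set $Y$, or an admissible subspace of $Y$, gives a continuous injection into $Y$ fixing $X$, which (4) forces to be a homeomorphism), but it must appear. Beyond this structural omission, your $(1)\Rightarrow(2)$ fails: splitting the fiber $\phi^{-1}(p)$ into two pieces and collapsing each produces an extension whose remainder has one more point than $Y\backslash X$, so the claim that ``the underlying point--set still equals $Y$'' is false, and a space on a different point set contradicts neither the maximality--of--topology clause nor the minimality clause of optimality. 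The paper instead collapses $\phi^{-1}(p)\backslash\lambda_{\mathcal P}X$ (non--empty by Theorem \ref{HUHG16}, since optimal implies minimal) to a single point for each $p\in Y\backslash X$; this does give a continuous bijection onto $Y$ fixing $X$, i.e.\ a finer admissible topology on the same set, and maximality then forces $\phi^{-1}(p)=\phi^{-1}(p)\backslash\lambda_{\mathcal P}X$.

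Two further steps break down. In $(4)\Rightarrow(5)$, composing the inclusion $T'\hookrightarrow T$ with the surjection $g\colon T\rightarrow Y$ gives $g|T'$, which fixes $X$ but is not injective, so (4) cannot be applied to it and minimality of $T$ is not ``pushed up'' this way; the paper instead derives (5) from (2), noting that each $\psi^{-1}(p)\subseteq\phi^{-1}[f(p)]\subseteq\beta X\backslash\lambda_{\mathcal P}X$, whence $\psi^{-1}(p)\backslash\lambda_{\mathcal P}X=\psi^{-1}(p)\neq\emptyset$ and Theorem \ref{HUHG16} applies. In $(5)\Rightarrow(2)$, your witness $T=X\cup\phi[\beta X\backslash\lambda_{\mathcal P}X]$ is itself minimal, so (5) produces no contradiction: the conclusion $T=Y$ only says that every fiber meets $\beta X\backslash\lambda_{\mathcal P}X$, i.e.\ that $Y$ is minimal, which is strictly weaker than (2) when some fiber straddles $\lambda_{\mathcal P}X$. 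The paper's construction adjoins an extra remainder point $t'\in\phi^{-1}(p')\cap\lambda_{\mathcal P}X$ to the quotient, so that the resulting $T$ satisfies $Y\leq T$ yet is visibly non--minimal (delete $t'$), which is the contradiction (5) is meant to supply. Your $(2)\Leftrightarrow(3)$ and $(2)\Rightarrow(4)$ are essentially the paper's arguments and are sound.
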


\begin{proof}
Let $\phi:\beta X\rightarrow\beta Y$ be the continuous extension of $\mbox{id}_X$. (1) {\em implies} (2). By Theorem \ref{HUHG16} the set $\phi^{-1}(p)\backslash\lambda_{\mathcal P} X$ is non--empty for any $p\in Y\backslash X$. Let $S$ be the space obtained from $\beta X$ by contracting each $\phi^{-1}(p)\backslash\lambda_{\mathcal P} X$ where $p\in Y\backslash X$ to a point $s_p$ with the quotient mapping $q:\beta X\rightarrow S$. Note that $S$ is a continuous image of $\beta X$. Therefore to prove that $S$ is compact it suffices to show that it is Hausdorff. Suppose that $s,z\in S$ are distinct. Consider the following cases:
\begin{description}
\item[{\sc Case 1.}] Suppose that $s,z\in \lambda_{\mathcal P} X$. Since $s$ and $z$ can be separated in $\lambda_{\mathcal P} X$ by disjoint open subsets and $\lambda_{\mathcal P} X$ is open in $\beta X$ they can be  also separated in $S$.
\item[{\sc Case 2.}] Suppose that $s\in\lambda_{\mathcal P} X$ and $z\in q[\beta X\backslash\lambda_{\mathcal P}X]$. Let $U$ and $V$ be disjoint open neighborhoods of $s$ and $\beta X\backslash\lambda_{\mathcal P} X$ in $\beta X$, respectively. Then $q[U]$ and $q[V]$ are disjoint open subsets of $S$ separating $s$ and $z$.
\item[{\sc Case 3.}] Suppose that $s,z\in q[\beta X\backslash\lambda_{\mathcal P} X]$. Let
    \[s=q[\phi^{-1}(p)\backslash\lambda_{\mathcal P} X]\mbox{ and }z=q[\phi^{-1}(y)\backslash\lambda_{\mathcal P} X]\]
    for some $p,y\in Y\backslash X$. Let $U$ and $V$ be disjoint open neighborhoods of $p$ and $y$ in $\beta Y$, respectively. Then $q[\phi^{-1}[U]]$  and $q[\phi^{-1}[V]]$ are  disjoint open subsets of $S$ separating $s$ and $z$.
\end{description}
Define $f:S\rightarrow\beta Y$ such that $f(x)=p$, if $x\in q[\phi^{-1} (p)]$ for some $p\in Y\backslash X$, and $f(x)=x$ otherwise. Note that this makes sense by the construction of $\beta Y$ and the representation of $\phi$ given in Lemma \ref{j2}. By the definition of $f$ we have $fq=\phi$ and therefore $f$ is continuous. Consider the subspace $T=X\cup q[\beta X\backslash\lambda_{\mathcal P}X]$ of $S$. Note that by Lemma \ref{15} we have $X\subseteq\lambda_{\mathcal P} X$. Thus $T$ contains $X$ as a dense subspace. It is also clear that $X$, and therefore $T$, is dense in $S$. Since $\beta X\backslash\lambda_{\mathcal P} X\subseteq q^{-1}[T\backslash X]$ and $X$ is locally--${\mathcal P}$, as ${\mathscr E}_{\mathcal P}(X)$ is non--empty, by Lemma \ref{16} we have $T\in{\mathscr E}^{\mathcal Q}_{\mathcal P}(X)$. Now $f|T:T\rightarrow Y$ is a continuous bijective mapping such that $f|X=\mbox{id}_X$. Thus by the maximality of the topology of $Y$ we have $T=Y$ (identifying each $s_p$ with $p$ where $p\in Y\backslash X$). Now $S$ is a compactification of $Y$ and therefore there exists a continuous $g:\beta Y\rightarrow S$ such that $g|Y=\mbox{id}_Y$. Since $fg|Y=\mbox{id}_Y$ it follows that $fg=\mbox{id}_{\beta Y}$. On the other hand $gf|Y=\mbox{id}_Y$ which yields $gf=\mbox{id}_S$ and thus $g=f^{-1}$. Now as noted before $fq=\phi$, and therefore for any $p\in Y\backslash  X$ we have
\[\phi^{-1}(p)=(fq)^{-1}(p)=q^{-1}\big[f^{-1}(p)\big]=q^{-1}\big(g(p)\big)=q^{-1}(p)=\phi^{-1}(p)\backslash\lambda_{\mathcal P} X.\]
Thus $\phi^{-1}(p)\subseteq\beta X\backslash\lambda_{\mathcal P} X$ for any $p\in Y\backslash X$ and therefore $\phi^{-1}[Y\backslash X]\subseteq\beta X\backslash\lambda_{\mathcal P}X$. But by Lemma \ref{16} we have $\beta X\backslash\lambda_{\mathcal P} X\subseteq\phi^{-1}[Y\backslash X]$ which shows the equality in the latter.

(2) {\em implies} (4). Let $T\in{\mathscr E}^{\mathcal Q}_{\mathcal P}(X)$ and let $f:T\rightarrow Y$ be a continuous injective mapping which fixes $X$ pointwise. Let $f_\beta:\beta T\rightarrow\beta Y$ and $\psi:\beta X\rightarrow\beta T$ be the continuous extensions of $f$ and $\mbox{id}_X$, respectively. Since $f_\beta\psi|X=\mbox{id}_X=\phi|X$ we have $f_\beta\psi=\phi$. Also $f[T\backslash X]\subseteq Y\backslash X$. To show the latter suppose to the contrary that $f(t)\in X$ for some $t\in T\backslash X$. Let $U$ and $V$ be disjoint open neighborhoods of $f(t)$ and $t$ in $T$, respectively. Since $Y\backslash X$ is compact, $X$ is open in $Y$ and thus $U\cap X$, being open in $X$, is an open neighborhood of $f(t)$ in $Y$. Let $W$ be an open neighborhood of $t$ in $T$ such that $f[W]\subseteq U\cap X$. Since $W\cap V$ is open in $T$ and it is non--empty, as $t\in W\cap V$ and $X$ is dense in $T$, the set $W\cap V\cap X$ is non--empty. But if $x\in W\cap V\cap X$ then $x=f(x)\in U$, which is a contradiction, as $U\cap V=\emptyset$.

\begin{xclaim}
If $t\in T\backslash X$ and $y=f(t)$ then $\psi^{-1}(t)\subseteq\phi^{-1}(y)$.
\end{xclaim}

\subsubsection*{Proof of the claim} We have $y=f(t)=f_\beta(t)$ and  thus $t\in f_\beta^{-1}(y)$. Therefore
\[\psi^{-1}(t)\subseteq\psi^{-1}\big[f_\beta^{-1}(y)\big]=(f_\beta\psi)^{-1}(y)=\phi^{-1}(y).\]

\begin{xclaim}
If $t\in T\backslash X$ and $y=f(t)$ then $\psi^{-1}(t)=\phi^{-1}(y)$.
\end{xclaim}

\subsubsection*{Proof of the claim} By the first claim $\psi^{-1}(t)\subseteq\phi^{-1}(y)$. Let $z\in \phi^{-1}(y)$. By Lemma \ref{16} we have  $\beta X\backslash\lambda_{\mathcal P}X\subseteq\psi^{-1}[T\backslash X]$. Thus since
\[\phi^{-1}(y)\subseteq\phi^{-1}[Y\backslash X]=\beta X\backslash\lambda_{\mathcal P} X\]
we have $z\in\psi^{-1}[T\backslash X]$. Let $\psi(z)=t'\in T\backslash X$ and let $y'=f(t')\in Y\backslash X$. By the first claim $\psi^{-1}(t')\subseteq\phi^{-1}(y')$ and therefore $z\in\phi^{-1}(y')$. Thus $\phi^{-1}(y)\cap\phi^{-1}(y')$ is non--empty and $f(t)=y=y'=f(t')$. But $f$ is injective and  therefore $t=t'$ which yields $z\in \psi^{-1}(t')=\psi^{-1}(t)$. This shows that $\phi^{-1}(y)\subseteq\psi^{-1}(t)$ which together with above proves the claim.

\begin{xclaim}
$\{\psi^{-1}(t):t\in T\backslash X\}=\{\phi^{-1}(y):y\in Y\backslash X\}$.
\end{xclaim}

\subsubsection*{Proof of the claim} By the second claim it suffices to show that for any $y\in Y\backslash X$ we have  $\phi^{-1}(y)=\psi^{-1}(t)$ for some $t\in T\backslash X$. Let $y\in Y\backslash X$ and $z\in \beta X$ be such that $\phi(z)=y$. By Lemma \ref{16} we have $\beta X\backslash\lambda_{\mathcal P} X\subseteq\psi^{-1}[T\backslash X]$ and thus, since
\[\phi^{-1}(y)\subseteq\phi^{-1}[Y\backslash X]=\beta X\backslash\lambda_{\mathcal P} X\]
it follows that $z\in\psi^{-1}[T\backslash X]$. Let $t=\psi(z)\in T\backslash X$. Then $z\in\phi^{-1}(y)\cap\psi^{-1}(t)$. If $y'=f(t)$ then by the second claim $\psi^{-1}(t)=\phi^{-1}(y')$. Thus $\phi^{-1}(y)\cap\phi^{-1}(y')$ is non--empty and therefore $y=y'$. Thus $\phi^{-1}(y)=\phi^{-1}(y')=\psi^{-1}(t)$ which proves the claim.

\medskip

\noindent By Lemma \ref{j2} the spaces  $\beta Y$ and $\beta T$ are respectively obtained from $\beta X$ by contracting the sets $\phi^{-1}(y)$ where $y\in Y\backslash X$ and $\psi^{-1}(t)$ where $t\in T\backslash X$ to points and $\phi$ and $\psi$ are their corresponding quotient mappings. Thus by the third claim $\phi=\psi$ and therefore
\[Y=X\cup\phi\Big[\bigcup\big\{\phi^{-1}(y):y\in Y\backslash X\big\}\Big]= X\cup\psi\Big[\bigcup\big\{\psi^{-1}(t):t\in T\backslash X\big\}\Big]=T.\]
This shows $Y$ and $T$ are equivalent extensions of $X$. Let $g:Y\rightarrow T$ be a homeomorphism such that $g|X=\mbox{id}_X$. Then the continuous mapping $fg:Y\rightarrow Y$ coincides with $\mbox{id}_Y$ on the dense subset $X$ of $Y$. This (since $Y$ is Hausdorff) implies that $fg=\mbox{id}_Y$ and thus $f=g^{-1}$ is a homeomorphism.

(4) {\em implies} (1). Let $Y'$ be the set $Y$ equipped with a topology which is finer than the topology of $Y$ and turns it into an element of ${\mathscr E}_{\mathcal P}(X)={\mathscr E}^{\mathcal Q}_{\mathcal P}(X)$. Since $f:Y'\rightarrow Y$ defined by $f(y')=y'$ for any $y'\in Y'$ is continuous and injective, by our assumption it is a homeomorphism. This shows that the topology of $Y$ is maximal among the topologies on the set $Y$ which turn $Y$ into an element of ${\mathscr E}_{\mathcal P}(X)$. Next, suppose that $T\in{\mathscr E}_{\mathcal P}(X)$ is such that $T\subseteq Y$. Since the inclusion mapping $f:T\rightarrow Y$ defined by $f(t)=t$ for any $t\in T$ is continuous and injective, by our assumption it is a homeomorphism. But this implies that $T=Y$ which proves the minimality of $Y$ among the elements of ${\mathscr E}_{\mathcal P}(X)$.

(2) {\em implies} (3). Let $Z\in{\mathscr Z}(X)$ be  such that $Z\subseteq C$ for some $C\in Coz(X)$ such that $\mbox{cl}_X C$ has ${\mathcal P}$. By Lemma \ref{BA27} we have  $\mbox{cl}_{\beta X}Z\subseteq\lambda_{\mathcal P} X$. Let $U$ be an open neighborhood of $\beta X\backslash\lambda_{\mathcal P} X$ in $\beta X$ which misses $\mbox{cl}_{\beta X}Z$. Now (by the construction of $\beta Y$ and the representation of $\phi$ given in Lemma \ref{j2}) the set $\phi[U]$ is an open neighborhood of $p\in Y\backslash X$ in $\beta Y$ which misses $Z$. Therefore
\[\mbox{cl}_Y Z\cap(Y\backslash X)=\mbox{cl}_{\beta Y} Z\cap(Y\backslash X)=\emptyset.\]

(3) {\em implies} (2).  By Lemma \ref{16} we have $\beta X\backslash\lambda_{\mathcal{P}} X\subseteq\phi^{-1}[Y\backslash X]$. To show the reverse inclusion suppose to the contrary that $t\in \lambda_{\mathcal{P}} X$ for some $t\in \phi^{-1}[Y\backslash X]$. Note that $\lambda_{\mathcal{P}} X$ is open in $\beta X$. Let $U$ be an open neighborhood of $t$ in $\beta X$ such that $\mbox{cl}_{\beta X} U\subseteq\lambda_{\mathcal{P}} X$. Let $f:\beta X\rightarrow \mathbf{I}$ be continuous with $f(t)=0$ and $f[\beta X\backslash U]\subseteq\{1\}$. Define
\[Z=f^{-1}\big[[0,1/3]\big]\cap X\mbox{ and }C=f^{-1}\big[[0,1/2)\big]\cap X.\]
Then $Z\in {\mathscr Z}(X)$, $C\in Coz(X)$ and $Z\subseteq C$. Also, since $\mbox{cl}_{\beta X}(U\cap X)=\mbox{cl}_{\beta X}U\subseteq\lambda_{\mathcal{P}} X$, by Lemma \ref{B} the set $\mbox{cl}_X(U\cap X)$ has $\mathcal{P}$. Therefore, since
\[C=f^{-1}\big[[0,1/2)\big]\cap X\subseteq U\cap X,\]
the set $\mbox{cl}_X C$ has $\mathcal{P}$, as it is closed in $\mbox{cl}_X(U\cap X)$. By our assumption this implies that  $\mbox{cl}_Y Z\cap (Y\backslash X)=\emptyset$. But $\phi(t)\in Y\backslash X$  and thus  $\phi(t)\notin\mbox{cl}_{\beta Y}Z$.  Let $V$ be an open neighborhood of $\phi(t)$ in $\beta Y$ such that $V\cap Z=\emptyset$.  Now since
\[\phi^{-1}[V]\cap \phi^{-1}[Z]=\phi^{-1}[V\cap Z]=\emptyset\]
it follows that
\[\phi^{-1}[V]\cap\mbox{cl}_{\beta X}Z\subseteq\phi^{-1}[V]\cap\mbox{cl}_{\beta X}\phi^{-1}[Z]=\emptyset.\]
Thus $t\notin \mbox{cl}_{\beta X}Z$, which is a contradiction, as
\begin{eqnarray*}
t\in f^{-1}\big[[0,1/3)\big]\subseteq\mbox{cl}_{\beta X}\big(f^{-1}\big[[0,1/3]\big]\cap X\big)=\mbox{cl}_{\beta X}Z.
\end{eqnarray*}

(2) {\em  implies} (5). Let $T\in{\mathscr E}^{\mathcal Q}_{\mathcal P}(X)$ be such that $Y\leq T$. Let $f:T\rightarrow Y$ continuously extends $\mbox{id}_X$. Arguing as in (2) $\Rightarrow$ (4) we have $f[T\backslash X]\subseteq Y\backslash X$. Let $f_\beta:\beta T\rightarrow\beta Y$ and $\psi:\beta X\rightarrow\beta T$ be the continuous extensions of $f$ and $\mbox{id}_X$, respectively. Then $\phi=f_\beta\psi$, as they coincide with $\mbox{id}_X$ on $X$. To show that $T\in{\mathscr M}^{\mathcal Q}_{\mathcal P}(X)$, by Theorem \ref{HUHG16}, it suffices to verify that $\psi^{-1}(p)\backslash\lambda_{{\mathcal P}}X$ is non--empty for any $p\in T\backslash X$. Let $p\in T\backslash X$. Then
\begin{eqnarray*}
\psi^{-1}(p)\subseteq\psi^{-1}\big[f_\beta^{-1}\big(f_\beta(p)\big)\big]&=&(f_\beta\psi)^{-1}\big(f_\beta(p)\big)\\&=&\phi^{-1}\big(f_\beta(p)\big)
=\phi^{-1}\big(f(p)\big)\subseteq\phi^{-1}[Y\backslash X].
\end{eqnarray*}
Now since  $\phi^{-1}[Y\backslash X]=\beta X\backslash\lambda_{{\mathcal P}}X$ and $\psi$ is surjective, $\psi^{-1}(p)\backslash\lambda_{{\mathcal P}}X=\psi^{-1}(p)$ is non--empty.

(5) {\em  implies} (2). Note that (5) in particular implies that $Y\in{\mathscr M}_{\mathcal P}(X)$. Thus by  Theorem \ref{HUHG16} the set $\phi^{-1}(p)\backslash\lambda_{{\mathcal P}}X$ is non--empty for any $p\in Y\backslash X$. Since by Lemma \ref{16} we have $\beta X\backslash\lambda_{{\mathcal P}}X\subseteq\phi^{-1}[Y\backslash X]$ and $X$ is locally--${\mathcal P}$, to show (2) it suffices to verify that $\phi^{-1}[Y\backslash X]\subseteq\beta X\backslash\lambda_{{\mathcal P}}X$. Suppose to the contrary that  $\phi^{-1}(p')\cap\lambda_{{\mathcal P}}X$ is non--empty for some $p'\in Y\backslash X$. Let $t'\in \phi^{-1}(p')\cap\lambda_{{\mathcal P}}X$. Let $Z$ be the quotient space of $\beta X$ obtained by contracting each (non--empty) subset $\phi^{-1} (p)\backslash\lambda_{{\mathcal P}}X$ where $p\in Y\backslash X$ to a point $z_p$ with the quotient mapping $q:\beta X\rightarrow Z$. Then as in (1) $\Rightarrow$ (2) one can verify that $Z$ is compact. Consider the subspace
\[T=q\big[X\cup(\beta X\backslash\lambda_{{\mathcal P}}X)\cup\{t'\}\big]\]
of $Z$. Then $T$ is a Tychonoff extension of $X$ with the compact remainder
\[T\backslash X=q\big[(\beta X\backslash\lambda_{{\mathcal P}}X)\cup\{t'\}\big].\]
Note that $T$ is dense in $Z$ and therefore $Z$ is a compactification of $T$. Let $f:\beta T\rightarrow Z$ and $\psi:\beta X\rightarrow\beta T$ be the continuous extensions of $\mbox{id}_T$ and $\mbox{id}_X$, respectively. Since $f\psi:\beta X\rightarrow Z$ agrees with $q$ on $X$  we have $f\psi=q$. By  Lemma \ref{16} and since  $\beta X\backslash\lambda_{{\mathcal P}}X\subseteq q^{-1}[T\backslash X]$ (and $X$ is locally--${\mathcal P}$) it follows that $T\in{\mathscr E}^{\mathcal Q}_{\mathcal P}(X)$. We verify that $Y\leq T$, our assumption will then imply that $T\in{\mathscr M}^{\mathcal Q}_{\mathcal P}(X)$ from which we will derive a contradiction. By Lemma \ref{DFH} to show that $Y\leq T$ it suffices to verify that each $\psi^{-1}(t)$ where $t\in T\backslash X$ is contained in  $\phi^{-1}(p)$ for some $p\in Y\backslash X$. Let $t\in T\backslash X$. Note that by Theorem 3.5.7 of \cite{E} (and since $f|T=\mbox{id}_T$ and $Z$ is a compactification of $T$) we have $f[\beta T\backslash T]=Z\backslash T$ and thus $f^{-1}(t)=\{t\}$. Therefore
\[\psi^{-1}(t)=\psi^{-1}\big[f^{-1}(t)\big]=(f\psi)^{-1}(t)=q^{-1}(t)\]
and thus by the definition of $Z$ it follows that $\psi^{-1}(t)\subseteq\phi^{-1}(p)$ for some $p\in Y\backslash X$. This shows that
$Y\leq T$. Consider the subspace $T'=T\backslash\{t'\}$ of $T$. Then $T'$ is a Tychonoff extension of $X$ with the compact remainder $T'\backslash X=q[\beta X\backslash\lambda_{{\mathcal P}}X]$. By Lemma \ref{16}  and since  $\beta X\backslash\lambda_{{\mathcal P}}X\subseteq q^{-1}[T'\backslash X]$ (and $X$ is locally--${\mathcal P}$) it follows that $T'\in{\mathscr E}^{\mathcal Q}_{\mathcal P}(X)$. But this contradicts the minimality of $T$, as $T'$ is properly contained in $T$. \end{proof}

\begin{xrem}
{\em Theorem \ref{HG16} fails if one omits the requirement ``$\mathcal{P}$ satisfies Mr\'{o}wka's condition $(\mbox{W})$" (implicit in the definition of the compactness--like topological property $\mathcal{P}$) from its statement (see Example \ref{20UIHG} below).}
\end{xrem}

In the following we provide examples of pairs ${\mathcal P}$ and  ${\mathcal Q}$ of compactness--like topological properties. The topological properties ${\mathcal P}$ and ${\mathcal Q}$ assumed in the statement of Lemma \ref{16} (and thus in the statements of all its corollaries which constitute the main results of this article) are required to be a pair of compactness--like topological properties.

\begin{example}\label{20UIHG}
Let $\alpha$, $\theta$, $\kappa$ and $\mu$ be infinite cardinals and let $X$ be a Hausdorff space. For a collection ${\mathscr A}$ of subsets of $X$ and an $x\in X$ let
\[O(x,{\mathscr A})=\mbox{card}\big(\{A\in{\mathscr A}:x\in A\}\big).\]
For more details on the following definitions see \cite{Bu}, \cite{Steph} and \cite{Va}. The space $X$ is called (2) {\em $\mu$--Lindel\"{o}f} ((3) {\em $[\theta,\kappa]$--compact}, respectively) if every open cover of $X$ (of cardinality $\leq\kappa$, respectively) has a subcover of cardinality $\leq\mu$ ($<\theta$, respectively). The space $X$ is called (4) {\em paracompact} (5) {\em metacompact} (7) {\em subparacompact} (11) {\em para--Lindel\"{o}f} (12) {\em meta--Lindel\"{o}f} (14) {\em screenable} (15) {\em $\sigma$--metacompact} (9) {\em $\sigma$--para--Lindel\"{o}f} if every open cover of $X$ has a (4)$'$ locally finite open (5)$'$ point--finite open (7)$'$ $\sigma$--locally finite closed (11)$'$ locally countable open (12)$'$ point--countable open (14)$'$ $\sigma$--disjoint open (15)$'$ $\sigma$--point--finite open (9)$'$ $\sigma$--locally countable open refinement. The space $X$ is called (16) {\em weakly $\theta$--refinable} (8) {\em $\theta$--refinable} (or {\em submetacompact}) (17) {\em weakly $\delta\theta$--refinable} (13) {\em $\delta\theta$--refinable} (or {\em submeta--Lindel\"{o}f}) if every open cover of $X$ has an open refinement ${\mathscr V}=\bigcup\{{\mathscr V}_n:n\in\mathbf{N}\}$ such that for any $x\in X$ there exists some $n\in\mathbf{N}$ with (16)$'$ $0<O(x,{\mathscr V}_n)<\aleph_0$ (8)$'$ $0<O(x,{\mathscr V}_n)<\aleph_0$ and each ${\mathscr V}_n$ covers $X$ (17)$'$ $0<O(x,{\mathscr V}_n)\leq\aleph_0$ (13)$'$ $0<O(x,{\mathscr V}_n)\leq\aleph_0$ and each ${\mathscr V}_n$ covers $X$. The space $X$ is called (10) {\em $\alpha$--bounded} if any subset of $X$ of cardinality $\leq\alpha$ has compact closure in $X$. Moreover,
let (1) be compactness and (6) be countable paracompactness.

Let ${\mathcal P}=\mbox{regularity}+(i)$ where $i=1,\ldots,10$ and ${\mathcal Q}=\mbox{regularity}+(i)$ where $i=1,\ldots,17$. Then ${\mathcal P}$ and  ${\mathcal Q}$ is a pair of compactness--like topological properties. That ${\mathcal Q}$ is hereditary with respect to clopen subsets follows from Theorem 7.1 of \cite{Bu}. Also, by (modification of) Theorem 3.7.24 and Exercise 5.2.G of \cite{E} and Theorem 5.9 of \cite{Bu}
it follows that $\mathcal{Q}$ is inverse invariant under perfect mappings. (For the case of $\alpha$--boundedness note that for a perfect surjective $f:X\rightarrow Y$, when $Y$ is $\alpha$--bounded, if $A\subseteq X$ has cardinality $\leq\alpha$ then $\mbox{card}(f[A])\leq\alpha$ and thus $\mbox{cl}_Y f[A]$ is compact. But since
\[A\subseteq f^{-1}\big[f[A]\big]\subseteq f^{-1}\big[\mbox{cl}_Y f[A]\big]\]
and the latter is compact (as $f$ is perfect), its closed subset $\mbox{cl}_X A$ also is compact, that is, $X$ is $\alpha$--bounded.) Next, we verify that $\mathcal{Q}$ satisfies Mr\'{o}wka's condition $(\mbox{W})$. We prove this for the cases when $\mathcal{Q}$ is paracompactness and subparacompactness. The remaining cases can be proved analogously.

Let $\mathcal{Q}$ be paracompactness (subparacompactness, respectively). Let $X$ be a Tychonoff space, let $p\in X$ and let ${\mathscr B}$ an open base for $X$  at $p$ such that $X\backslash B$ is paracompact (subparacompact, respectively) for any $B\in{\mathscr B}$. Let ${\mathscr U}$ be an open cover of $X$. Let $p\in B\subseteq\mbox{cl}_X B\subseteq U$ where $U\in {\mathscr U}$ and $B\in {\mathscr B}$. Then ${\mathscr V}=\{V\backslash B:V\in {\mathscr U}\}$ is an open cover of $X\backslash B$. Thus there exists a locally finite open (in $X\backslash B$) refinement ${\mathscr W}$ of ${\mathscr V}$ (a $\sigma$--locally finite closed (in $X\backslash B$) refinement ${\mathscr W}$ of ${\mathscr V}$, respectively). Now if
\[{\mathscr A}=\{X\backslash\mbox{cl}_X B:W\in {\mathscr W}\}\cup\{U\}\]
(${\mathscr A}={\mathscr W}\cup\{\mbox{cl}_X B\}$, respectively) then ${\mathscr A}$ is a locally finite open refinement of ${\mathscr U}$ (${\mathscr A}$ is a $\sigma$--locally finite closed refinement of ${\mathscr U}$, respectively). Thus $X$ is paracompact (subparacompact, respectively).

Note that $\alpha$--boundedness satisfies Mr\'{o}wka's condition $(\mbox{W})$ by Theorem 3.1 of \cite{MRW}. That ${\mathcal P}$ is finitely additive and invariant under perfect mappings follow from Theorems 5.1, 5.5, 7.3 and 7.4 of \cite{Bu} and Exercises 5.2.B and 5.2.G of \cite{E}. Theorem 3.1 of \cite{MRW} provides a few more examples of topological properties satisfying Mr\'{o}wka's condition $(\mbox{W})$. Among them we mention of realcompactness and Dieudonn\'{e} completeness which are hereditary with respect to clopen subsets and inverse invariant under perfect mappings (with Tychonoff domains); see Theorems 3.11.4 and 3.11.14 and Problem 8.5.13 of \cite{E}. That Dieudonn\'{e} completeness is inverse invariant under perfect mappings is well known, however, it can be proved by using the fact that a Tychonoff space $X$ is Dieudonn\'{e} complete if and only if for any $p\in\beta X\backslash X$ there exists a paracompact subset $T$ of $\beta X$ such that $X\subseteq T\subseteq\beta X\backslash\{p\}$ (see Problem 8.5.13 of \cite{E}) and that paracompactness is inverse invariant under perfect mappings.

In addition to the above topological properties the list of topological properties satisfying Mr\'{o}wka's condition $(\mbox{W})$ includes: screenability, $N$--compactness \cite{M1}, almost realcompactness \cite{F} and zero--dimensionality (see \cite{MRW} and \cite{MRW1} for details).
\end{example}

In the following we give an example of a topological property ${\mathcal P}$ which does not satisfy Mr\'{o}wka's condition $(\mbox{W})$. At the same time we show that the requirement ``${\mathcal P}$ satisfies Mr\'{o}wka's condition $(\mbox{W})$" (implicit in the definition of the compactness--like topological property $\mathcal{P}$) cannot be omitted from the statements of Lemma \ref{16} and Theorems \ref{HUHG16} and \ref{HG16} upon them the rest of this article rely.

\begin{example}\label{HGES}
Let $X$ be a locally compact paracompact non--$\sigma$--compact space. Then $X$ can be represented as
\begin{equation}\label{LAUR}
X=\bigoplus_{i\in I}X_i
\end{equation}
for some indexed set $I$, where each $X_i$ where $i\in I$ is $\sigma$--compact and non--compact (see Theorem 5.1.27 and Exercise 3.8.C of \cite{E}). Assume the representation given in (\ref{LAUR}). Let ${\mathcal P}$ be $\sigma$--compactness. Obviously, ${\mathcal P}$ is clopen hereditary, finitely additive  and perfect. We show that ${\mathcal P}$ does not satisfy Mr\'{o}wka's condition $(\mbox{W})$. Note that with the above notation
\[\lambda_{{\mathcal P}}X=\bigcup\Big\{\mbox{cl}_{\beta X}\Big(\bigcup_{i\in J}X_i\Big):J\subseteq I\mbox{ is countable}\Big\}.\]
Also, note that since $X$ is non--$\sigma$--compact,
$\beta X\backslash\lambda_{{\mathcal P}}X$ is non--empty. Contract the compact subset $\beta X\backslash\lambda_{{\mathcal P}}X$ of $\beta X$ to a point $p$ to obtain a space $T$ and denote by $q:\beta X\rightarrow T$ its quotient mapping. Note that $T$ is compact (as it is a Hausdorff continuous image of $\beta X$) and contains $X$ as a dense subspace. Consider the subspace $Y=X\cup\{p\}$ of $T$. We show that for any open neighborhood $V$ of $p$ in $Y$ the set $Y\backslash V$ is $\sigma$--compact while $Y$ itself is not $\sigma$--compact. Let $V$ be an open neighborhood of $p$ in $Y$. Let $V'$ be an open subset of $T$ such that $V'\cap Y=V$. Then since $p\in V'$ we have
\[\beta X\backslash\lambda_{{\mathcal P}}X=q^{-1}(p)\subseteq q^{-1}[V']\]
and thus $\beta X\backslash q^{-1}[V']\subseteq\lambda_{{\mathcal P}}X$. Therefore by compactness
\[\beta X\backslash q^{-1}[V']\subseteq\mbox{cl}_{\beta X}\Big(\bigcup_{i\in J_1} X_i\Big)\cup\cdots\cup\mbox{cl}_{\beta X}\Big(\bigcup_{i\in J_m} X_i\Big)=\mbox{cl}_{\beta X}\Big(\bigcup_{i\in J} X_i\Big)\]
where $m\in\mathbf{N}$, each $J_1,\ldots,J_m\subseteq I$ is countable and $J=J_1\cup\cdots\cup J_m$. Now
\[Y\backslash V=\big(\beta X\backslash q^{-1}[V']\big)\cap X\subseteq\bigcup_{i\in J} X_i\]
being closed in the latter ($\sigma$--compact) set is $\sigma$--compact. To show that $Y$ is not $\sigma$--compact suppose the contrary and let $Y=\bigcup_{n=1}^\infty K_n$ where $K_n$ is compact for any $n\in\mathbf{N}$. Let $p\in K_j$ where $j\in\mathbf{N}$. Then
\[\beta X\backslash\lambda_{{\mathcal P}}X=q^{-1}(p)\subseteq q^{-1}[K_j]\]
and thus $K_n=q^{-1}[K_n]\subseteq\lambda_{{\mathcal P}}X$ for any $j\neq n\in\mathbf{N}$. Arguing as above for any $j\neq n\in\mathbf{N}$ we have
\[K_n\subseteq\mbox{cl}_{\beta X}\Big(\bigcup_{i\in H_n} X_i\Big)\]
where $H_n\subseteq I$ is countable, but (since $K_n\subseteq X$) this  implies that
$K_n\subseteq\bigcup_{i\in H_n} X_i$. Let $H=\bigcup_{j\neq n=1}^\infty H_n$. Then
\begin{equation}\label{JJUY}
\bigcup_{j\neq n=1}^\infty K_n\subseteq\bigcup_{i\in H} X_i.
\end{equation}
Choose some $u\in I\backslash H$. (This is possible, as $H$ is countable and  $I$ is uncountable, because by our assumption $X$ is non--$\sigma$--compact and $X_i$'s are $\sigma$--compact.) Since by our assumption $X_u$ is non--compact, $\mbox{cl}_{\beta X}X_u\backslash X_u$ is non--empty. Let $t\in \mbox{cl}_{\beta X}X_u\backslash X_u\subseteq\lambda_{{\mathcal P}}X$. Then $t\in T\backslash Y$. We show that $t\in\mbox{cl}_T K_j$, contradicting the compactness of $K_j$. Let $W$ be an open neighborhood of $t=q(t)$ in $T$. Then $q^{-1}[W]$ is an open neighborhood of $t$ in $\beta X$ and therefore $X_u\cap q^{-1}[W]$ is non--empty. Let $x\in X_u\cap q^{-1}[W]$. Note that
\begin{eqnarray*}
X\cup(\beta X\backslash\lambda_{{\mathcal P}}X)=q^{-1}[Y]&=&q^{-1}\Big[\bigcup_{n=1}^\infty K_n\Big]\\&=&q^{-1}\Big[\bigcup_{j\neq n=1}^\infty K_n\Big]\cup q^{-1}[K_j]=\bigcup_{j\neq n=1}^\infty K_n\cup q^{-1}[K_j].
\end{eqnarray*}
By (\ref{JJUY}) and since $X_u\cap\bigcup_{i\in H} X_i=\emptyset$ we have $x\notin \bigcup_{j\neq n=1}^\infty K_n$ and thus by the above
$x\in q^{-1}[K_j]$. Therefore $x=q(x)\in W\cap K_j$ and thus $W\cap K_j$ is non--empty. This shows that $t\in \mbox{cl}_T K_j$. Therefore $Y$ is not $\sigma$--compact. Thus for this specific choice of ${\mathcal P}$ Mr\'{o}wka's condition $(\mbox{W})$ fails. By Lemma \ref{j2} we have $T=\beta Y$ and if $\phi:\beta X\rightarrow\beta Y$ is the continuous extension of $\mbox{id}_X$ then $\phi=q$. Also, by the above $Y$ does not have ${\mathcal P}$ while $\phi^{-1}[Y\backslash X]=\beta
X\backslash\lambda_{{\mathcal P}}X$. Therefore in the statements of Lemma \ref{16} and Theorems \ref{HUHG16} and \ref{HG16} the requirement ``${\mathcal P}$ satisfies Mr\'{o}wka's condition $(\mbox{W})$" cannot be omitted.
\end{example}

\section{Compactification--like $\mathcal{P}$--extensions with  countable remainder}

It is a well known result of P. Alexandroff that every locally compact non--compact  space has a compactification with one--point remainder, called the {\em one--point compactification} or the {\em Alexandroff  compactification} of $X$. One can consider  $\mathcal{P}$--extensions with one--point remainder (see \cite{HJW}, \cite{Ko1}, \cite{Ko2}, \cite{Ko3} and \cite{Ko4} for some recent results) or more generally, $\mathcal{P}$--extensions with countable remainder for various topological properties $\mathcal{P}$. Below, after some definitions we sate some known results which motivated our study in this chapter.

\begin{definition}\label{GHF}
Let $n\in\mathbf{N}$. An extension with $n$--point (countable, respectively) remainder  is called an {\em $n$--point} (a {\em countable--point}, respectively) {\em extension}. Similar definitions apply for compactifications.
\end{definition}

countable--point compactifications are also called {\em $\aleph_0$--point compactifications} or {\em countable compactifications}. Throughout this article countable means countable and infinite.

\begin{notation}\label{KLG}
For a Tychonoff space $X$ the set of all compactifications of $X$ is denoted by ${\mathscr K}(X)$.
\end{notation}

In \cite{Mag1} K.D. Magill, Jr. gave the following characterization of those spaces which have an $n$--point compactification and thus generalized the well known result of P. Alexandroff.

\begin{theorem}[Magill \cite{Mag1}]\label{i7}
Let $X$ be a locally compact space and let $n\in\mathbf{N}$. The following are equivalent:
\begin{itemize}
\item[\rm(1)] ${\mathscr K}(X)$ contains an element with $n$--point remainder.
\item[\rm(2)] $X=K\cup U_1\cup\cdots\cup U_n$, where $K,U_1,\ldots,U_n$ are pairwise disjoint, each $U_1,\ldots,U_n$ is open in $X$ such that $K\cup U_i$ is non--compact for any $i=1,\ldots,n$.
\end{itemize}
\end{theorem}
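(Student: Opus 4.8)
The plan is to reduce the statement to the standard description of a compactification by the decomposition it induces on the outgrowth, using throughout that a locally compact (Hausdorff) space $X$ is Tychonoff and is an open subspace of $\beta X$. Concretely: the compactifications of $X$ with $n$--point remainder correspond, up to equivalence, exactly to the partitions of $\beta X\backslash X$ into $n$ non--empty pairwise disjoint closed subsets of $\beta X$. Indeed, given such a partition $\{F_1,\ldots,F_n\}$, the decomposition $\{F_1,\ldots,F_n\}\cup\{\{x\}:x\in X\}$ of $\beta X$ is upper semicontinuous (the $F_i$ are finitely many disjoint compacta in the normal space $\beta X$), so the quotient is a compact Hausdorff space, in which $X$ --- open in $\beta X$ and saturated --- reappears as an open dense subspace; conversely, the fibres over the remainder points of the quotient map $\beta X\to cX$ of a compactification $cX$ with $n$--point remainder form such a partition (cf. Lemma \ref{j2} and Section 4.1 of \cite{PW}).

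$(1)\Rightarrow(2)$. Let $cX\in{\mathscr K}(X)$ with $cX\backslash X=\{p_1,\ldots,p_n\}$. Since $cX$ is Hausdorff and there are finitely many $p_i$, choose pairwise disjoint open sets $V_1,\ldots,V_n$ in $cX$ with $p_i\in V_i$, and put $U_i=V_i\cap X$ and $K=X\backslash(U_1\cup\cdots\cup U_n)=cX\backslash(V_1\cup\cdots\cup V_n)$. Then $K,U_1,\ldots,U_n$ are pairwise disjoint with union $X$, each $U_i$ is open in $X$, and $K$ is closed in $cX$, hence compact. Moreover $X$ is dense in $cX$ and $V_i$ is open, so $p_i\in V_i\subseteq\mathrm{cl}_{cX}V_i=\mathrm{cl}_{cX}U_i\subseteq\mathrm{cl}_{cX}(K\cup U_i)$; as $p_i\notin X\supseteq K\cup U_i$, the set $K\cup U_i$ is not closed in $cX$, hence not compact.

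$(2)\Rightarrow(1)$. Assume $(2)$, where the set $K$ is compact. Since $K\cup U_1=X\backslash(U_2\cup\cdots\cup U_n)$ is closed and non--compact, $X$ is non--compact, so $\beta X\backslash X\neq\emptyset$, and each $U_i$ is non--compact. Put $F_i=\mathrm{cl}_{\beta X}U_i\backslash X$. Each $F_i$ is closed in $\beta X$; it is non--empty, for if $\mathrm{cl}_{\beta X}U_i\subseteq X$ then $\mathrm{cl}_{\beta X}(K\cup U_i)=K\cup\mathrm{cl}_{\beta X}U_i$ would be a compact subset of $X$ containing the closed set $K\cup U_i$, making the latter compact; and $\beta X=K\cup\mathrm{cl}_{\beta X}U_1\cup\cdots\cup\mathrm{cl}_{\beta X}U_n$ together with $K\subseteq X$ gives $F_1\cup\cdots\cup F_n=\beta X\backslash X$. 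It remains --- and this is the crux --- to show the $F_i$ are pairwise disjoint; this does \emph{not} follow formally from disjointness of the $U_i$, since disjoint open subsets of $X$ may have overlapping closures in $\beta X$.

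Here the compactness of $K$ enters decisively. Choose a compact neighborhood $N$ of $K$ in $X$ and set $L=X\backslash\mathrm{int}_X N$; then $L$ is closed in $X$ with $X\backslash L$ relatively compact, so --- extending bounded continuous functions on $L$ over the compact set $N$ by Tietze's theorem and pasting --- $L$ is $C^*$--embedded in $X$, whence $\mathrm{cl}_{\beta X}L=\beta L$. Since $K\subseteq\mathrm{int}_X N$ we have $L\subseteq X\backslash K$, and each $U_i$ is clopen in $X\backslash K$ (it is open in $X$, and $(X\backslash K)\backslash U_i=\bigcup_{j\neq i}U_j$ is open), so the sets $A_i:=U_i\cap L$ are clopen in $L$ and partition it; hence $A_i$ and $L\backslash A_i$ are completely separated in $L$ and $\mathrm{cl}_{\beta X}A_i\cap\mathrm{cl}_{\beta X}(L\backslash A_i)=\emptyset$. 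Since $U_i\backslash N\subseteq A_i$ while $U_i\cap N$ is relatively compact in $X$, one gets $F_i=\mathrm{cl}_{\beta X}(U_i\backslash N)\backslash X=\mathrm{cl}_{\beta X}A_i\backslash X$ and similarly $\bigcup_{j\neq i}F_j=\mathrm{cl}_{\beta X}(L\backslash A_i)\backslash X$, so $F_i\cap\bigcup_{j\neq i}F_j=\emptyset$. Thus $\{F_1,\ldots,F_n\}$ partitions $\beta X\backslash X$ into $n$ non--empty disjoint closed sets, and collapsing each $F_i$ to a point produces a compactification of $X$ with $n$--point remainder. I expect this disjointness argument to be the main obstacle: it forces one to replace the open decomposition of $(2)$ by a clopen one on a suitable closed $C^*$--embedded subspace, and this is exactly where the compactness of $K$ --- not merely its being closed --- is indispensable (without it the implication $(2)\Rightarrow(1)$ fails, e.g. for $X=[0,\infty)$, $n=2$).
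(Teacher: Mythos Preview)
Your proof is correct. The paper does not actually prove this theorem---it is quoted from Magill~\cite{Mag1} as motivation---so there is no ``paper's own proof'' to compare against directly. The paper does remark (just before Theorem~\ref{20}) that its Theorem~\ref{20}(1) specializes to yield an alternative proof upon setting $\mathcal{P}=$ compactness (so that $\lambda_{\mathcal{P}}X=X$), but that route is indirect: it passes through Lemma~\ref{16}, Lemma~\ref{B}, and an auxiliary zero--set decomposition (condition~(1.d) of Theorem~\ref{20}) before reaching the conclusion via Lemma~\ref{18}. Your argument, by contrast, is self-contained and works entirely inside $\beta X$: the main idea---replacing the open partition $\{U_i\}$ of $X\backslash K$ by the clopen partition $\{A_i\}$ of a closed $C^*$--embedded subspace $L$ with $X\backslash L$ relatively compact, so that disjoint clopen sets have disjoint $\beta X$--closures---is exactly the right elementary substitute for the paper's $\lambda_{\mathcal{P}}X$ machinery.

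You are also right to insist that $K$ be compact and to give the counterexample $X=[0,\infty)$: the paper's restatement of Magill's theorem omits this hypothesis, but without it $(2)\Rightarrow(1)$ is false. The paper's own generalization in Theorem~\ref{20}(1.c) replaces ``$K$ compact'' by the weaker ``$\mathrm{bd}_X K$ contained in a zero--set whose closure has $\mathcal{P}$'' (for $\mathcal{P}=$ compactness: $\mathrm{bd}_X K$ compact) together with ``$\mathrm{cl}_X U_i$ non--$\mathcal{P}$'' in place of ``$K\cup U_i$ non-compact''; your compact--neighborhood trick would adapt to that variant as well, since only the boundary of $K$ obstructs separating the $\mathrm{cl}_{\beta X}U_i$ outside $X$.
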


Also, in a separate article \cite{Mag2}, K.D. Magill, Jr. characterized those spaces having a countable--point compactification with compact remainder. Recall that a space $X$ is called {\em totally disconnected} if the (connected) components in $X$ are the one--point sets.

\begin{theorem}[Magill \cite{Mag2}]\label{i0}
Let $X$ be a locally compact space. The following are equivalent:
\begin{itemize}
\item[\rm(1)] ${\mathscr K}(X)$ contains an element with countable remainder.
\item[\rm(2)] ${\mathscr K}(X)$ contains an element with  $n$--point remainder for any $n\in\mathbf{N}$.
\item[\rm(3)] ${\mathscr K}(X)$ contains an element with infinite totally disconnected remainder.
\item[\rm(4)] $\beta X\backslash X$ has an infinite number of (connected) components.
\end{itemize}
\end{theorem}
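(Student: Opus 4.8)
The aim is to establish the cycle $(1)\Rightarrow(3)\Rightarrow(4)\Rightarrow(1)$ together with $(4)\Rightarrow(2)\Rightarrow(4)$, so that all four conditions become equivalent. The whole argument rests on the classical dictionary between compactifications of $X$ and upper semicontinuous (u.s.c.) decompositions of $\beta X$ — the same correspondence that underlies the $\beta$-family description recalled before Lemma \ref{DFH} and Lemma \ref{j2}. Since $X$ is locally compact it is open in $\beta X$ (a locally compact dense subspace of a Hausdorff space is open; see \cite{E}), so every u.s.c. decomposition of the compact remainder $Z:=\beta X\setminus X$ into closed subsets, extended by the singletons of $X$, is a u.s.c. decomposition of $\beta X$ whose quotient map is a closed continuous surjection from the normal space $\beta X$; hence the quotient is a normal $T_1$, that is compact Hausdorff, space containing $X$ as a dense open subspace with its original topology, i.e. a compactification of $X$ with remainder the quotient of $Z$. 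Conversely, by Theorem 3.5.7 of \cite{E}, the remainder of \emph{any} compactification $\alpha X$ is a (closed) continuous image of $Z$. Thus the problem reduces entirely to the internal structure of the compact space $Z$.

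\textbf{The substantive implications $(4)\Rightarrow(2)$ and $(4)\Rightarrow(1)$.} I would first record the standard fact that in a compact Hausdorff space components coincide with quasi-components (see \cite{E}), so a compact Hausdorff space with more than one component possesses a proper nonempty clopen subset; peeling off one clopen piece at a time, a compact Hausdorff space with at least $n$ components admits a partition into $n$ nonempty clopen sets. Assuming $(4)$, i.e. that $Z$ has infinitely many components, this at once yields for each $n$ a partition $Z=F_1\sqcup\cdots\sqcup F_n$ into nonempty clopen sets; collapsing each $F_i$ to a point is an $n$-member closed (hence trivially u.s.c.) decomposition of $Z$, and by the dictionary above it produces a compactification with $n$-point remainder, giving $(2)$. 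For $(1)$ I would iterate the splitting: construct recursively pairwise disjoint nonempty clopen sets $F_1,F_2,\ldots$ so that, writing $Z_k=Z\setminus(F_1\cup\cdots\cup F_k)$, each $Z_k$ still has infinitely many components — at stage $k$, splitting $Z_{k-1}$ into two clopen halves, at least one inherits infinitely many components, and that half is retained as $Z_k$ while the other is named $F_k$. Since a compact space is not a disjoint union of infinitely many nonempty open sets, $F_\infty:=Z\setminus\bigcup_{k}F_k$ is nonempty and closed, and $Z=\big(\bigsqcup_{k\ge1}F_k\big)\sqcup F_\infty$. Sending $F_k$ to the $k$-th term of a convergent sequence $\omega+1$ and $F_\infty$ to its limit defines a map $Z\to\omega+1$ whose continuity is checked on the two kinds of basic open sets of $\omega+1$ (their preimages are $F_k$ and $Z_{k-1}$, both clopen in $Z$), hence a u.s.c. closed decomposition of $Z$; collapsing it yields a compactification of $X$ with remainder homeomorphic to $\omega+1$, which is $(1)$.

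\textbf{The soft implications.} For $(1)\Rightarrow(3)$: a nondegenerate compact connected Hausdorff space admits a continuous surjection onto $[0,1]$ (a Urysohn function separating two of its points cannot omit an interior value without disconnecting the space), so has cardinality at least $\mathfrak{c}$; consequently every component of a countable compact Hausdorff remainder is a singleton, i.e. the remainder is totally disconnected, and it is infinite by hypothesis. For $(3)\Rightarrow(4)$: if some compactification has infinite totally disconnected remainder $W$, then $Z$ maps continuously onto $W$ by Theorem 3.5.7 of \cite{E}; the image of each component of $Z$ is connected, hence a single point of $W$, so only finitely many components of $Z$ would force $W$ finite, a contradiction. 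For $(2)\Rightarrow(4)$: an $n$-point remainder is the image of $Z$ under a continuous surjection onto an $n$-point discrete space, which partitions $Z$ into $n$ nonempty clopen sets and so forces at least $n$ (quasi-)components of $Z$; letting $n\to\infty$ gives $(4)$.

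\textbf{Main obstacle.} I expect the only real friction to be expository rather than conceptual: one must verify carefully that the decomposition of $\beta X$ obtained by extending a u.s.c. closed decomposition of $Z$ by the singletons of $X$ is itself u.s.c. — this is precisely where local compactness of $X$ enters, through the openness of $X$ in $\beta X$ (without it the members $\{x\}$, $x\in X$, need not have saturated neighbourhoods) — and that the resulting quotient genuinely restores $X$ as a dense \emph{open} subspace in its original topology. The combinatorial core in $(4)\Rightarrow(1)$ is short once component $=$ quasi-component is available; the one point requiring attention there is that $\bigcup_k F_k\neq Z$, forced by compactness, which is exactly what guarantees the constructed remainder is the full convergent sequence rather than an infinite discrete (hence non-compact, impossible) set.
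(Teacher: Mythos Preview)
Your proof is correct. Note, however, that the paper does not give its own proof of Theorem~\ref{i0}: it is quoted as Magill's result from \cite{Mag2}. What the paper does prove is the generalization in Lemma~\ref{18}(2) and Corollary~\ref{20UHYJG}, which, upon specializing $\mathcal{P}$ to compactness (so that $\lambda_{\mathcal{P}}X=X$ for locally compact $X$), recovers the equivalence $(1)\Leftrightarrow(2)\Leftrightarrow(4)$ of Theorem~\ref{i0}. Condition~(3) (infinite totally disconnected remainder) does not appear in the paper's generalized framework and is not treated anywhere in the paper.

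For the overlapping implications the two approaches are essentially the same. Your $(4)\Rightarrow(1)$ via recursive clopen splitting and a quotient onto $\omega+1$ is exactly the mechanism in the paper's $(2.d)\Rightarrow(2.c)\Rightarrow(2.b)$: peel off a clopen piece at each stage so that the residual keeps infinitely many components, then contract the pieces (and the residual) to points and verify the quotient is Hausdorff. Your $(2)\Rightarrow(4)$ and the paper's $(2.a)\Rightarrow(2.d)$ both push components forward along the canonical map $\beta X\to\alpha X$. The paper packages the quotient construction through Lemma~\ref{j2} rather than through the general u.s.c.\ decomposition dictionary you invoke, but the content is identical. What your write-up adds beyond the paper is the short chain $(1)\Rightarrow(3)\Rightarrow(4)$ handling the totally disconnected condition, which is fine and standard; the paper simply has no counterpart to this since condition~(3) is absent from its generalized statement.
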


K.D. Magill, Jr.'s studies were continued by various authors (see e.g. \cite{C}, \cite{Waj} and \cite{Ki} among others). In \cite{Ki} T. Kimura generalized K.D. Magill, Jr.'s result in \cite{Mag1} (Theorem \ref{i7}) and gave the following  characterization of spaces having a countable--point compactification with compact remainder.

\begin{theorem}[Kimura \cite{Ki}]\label{i1}
Let $X$ be a locally compact space. The following are equivalent:
\begin{itemize}
\item[\rm(1)] ${\mathscr K}(X)$ contains an element with countable remainder.
\item[\rm(2)] There exists a bijectively indexed collection $\{U_n: n\in\mathbf{N}\}$ of pairwise disjoint open subsets of $X$ with compact boundary and non--compact closure.
\end{itemize}
\end{theorem}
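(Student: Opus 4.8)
The plan is to prove the two implications directly: for $(1)\Rightarrow(2)$ I would read the family $\{U_n\}$ off a given compactification, and for $(2)\Rightarrow(1)$ I would build a compactification by hand from the $U_n$'s. Throughout one uses that, $X$ being locally compact Hausdorff, $X$ is an \emph{open} dense subspace of any compactification $\alpha X$, so that every remainder $\alpha X\backslash X$ is compact.

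\emph{$(1)\Rightarrow(2)$.} Let $\alpha X$ be a compactification with countably infinite remainder $R=\alpha X\backslash X$, which is then compact. The key structural fact is that a countably infinite compact Hausdorff space has infinitely many isolated points: a non-empty compact Hausdorff space with no isolated point is uncountable, so every non-empty closed subspace of $R$ has an isolated point, i.e. $R$ is scattered; and if $R$ had only finitely many isolated points, its Cantor--Bendixson derivative would be cofinite in $R$ — still countably infinite and compact — hence would have an isolated point whose punctured neighbourhood in $R$ would contain infinitely many isolated points of $R$, a contradiction. Fix distinct isolated points $q_1,q_2,\ldots$ of $R$; each $\{q_n\}$ is clopen in $R$, and since $R$ is closed in the regular space $\alpha X$ one chooses inductively pairwise disjoint open sets $W_n\ni q_n$ with $\mbox{cl}_{\alpha X}W_n$ pairwise disjoint and $\mbox{cl}_{\alpha X}W_n\cap R=\{q_n\}$ (at step $n$, separate $q_n$ from the compact set $(R\backslash\{q_n\})\cup\mbox{cl}_{\alpha X}W_1\cup\cdots\cup\mbox{cl}_{\alpha X}W_{n-1}$). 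Put $U_n:=W_n\cap X$; these are pairwise disjoint open subsets of $X$, and by density of $X$ one has $\mbox{cl}_{\alpha X}U_n=\mbox{cl}_{\alpha X}W_n$, so $\mbox{cl}_XU_n=\mbox{cl}_{\alpha X}W_n\backslash\{q_n\}$ and $\mbox{bd}_XU_n=\mbox{bd}_{\alpha X}W_n\cap X$. Since $\mbox{cl}_{\alpha X}W_n\cap R=\{q_n\}\subseteq W_n$, the set $\mbox{bd}_{\alpha X}W_n$ misses $R$, whence $\mbox{bd}_XU_n=\mbox{bd}_{\alpha X}W_n$ is compact; and $\mbox{cl}_XU_n$ is non-compact, since otherwise it would be closed in $\alpha X$ although $q_n\in\mbox{cl}_{\alpha X}(\mbox{cl}_XU_n)\backslash X$.

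\emph{$(2)\Rightarrow(1)$.} First I would record two consequences of (2): $U_m\cap\mbox{cl}_XU_n=\emptyset$ for $m\neq n$, whence $X\backslash(\mbox{cl}_XU_1\cup\cdots\cup\mbox{cl}_XU_N)$ has non-compact closure for each $N$ (it contains $\mbox{cl}_XU_{N+1}$); and no $U_n$ lies in a compact subset of $X$ (else $\mbox{cl}_XU_n$ would be compact). Then set $\alpha X=X\cup\{p_0,p_1,p_2,\ldots\}$, let $X$ be open with its given topology, and prescribe basic neighbourhoods
\[\{p_n\}\cup(U_n\backslash K)\qquad(n\geq1,\ K\subseteq X\ \mbox{compact})\]
at $p_n$ and
\[\{p_0\}\cup\{p_n:n>N\}\cup\big(X\backslash(K\cup\mbox{cl}_XU_1\cup\cdots\cup\mbox{cl}_XU_N)\big)\qquad(N\geq0,\ K\subseteq X\ \mbox{compact})\]
at $p_0$. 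That this defines a topology in which $X$ is open and dense in its original topology (density using the two facts above) and that $\alpha X$ is Hausdorff are routine; the only awkward pair is $(p_n,p_0)$, separated by $\{p_n\}\cup(U_n\backslash K)$ against a basic neighbourhood of $p_0$ with $N\geq n$, these being disjoint because $U_n\cap(X\backslash\mbox{cl}_XU_n)=\emptyset$ (separating a point of $X$ from the $p_i$ is where local compactness of $X$ is used).

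I expect the main obstacle to be proving $\alpha X$ compact — the one place the compactness of the boundaries $\mbox{bd}_XU_n$ is genuinely needed. Given an open cover, pick a member containing $p_0$; it contains a basic neighbourhood with parameters $N,K$, hence covers every $p_n$ with $n>N$ and all of $X$ outside $K\cup\mbox{cl}_XU_1\cup\cdots\cup\mbox{cl}_XU_N$. Cover $p_1,\ldots,p_N$ by finitely many members containing basic neighbourhoods $\{p_n\}\cup(U_n\backslash K_n)$. What then remains uncovered inside $X$ lies in
\[K\cup\bigcup_{n\leq N}\big(\mbox{cl}_XU_n\backslash(U_n\backslash K_n)\big)=K\cup\bigcup_{n\leq N}\big(\mbox{bd}_XU_n\cup(\mbox{cl}_XU_n\cap K_n)\big),\]
a finite union of compact sets precisely because each $\mbox{bd}_XU_n$ is compact, so finitely many further members of the cover finish it. Thus $\alpha X$ is a compactification of $X$ whose remainder $\{p_0,p_1,p_2,\ldots\}$ is countable, which is (1). (Alternatively one could route the equivalence through Magill's Theorem~\ref{i0} by checking that (2) holds iff $\beta X\backslash X$ has infinitely many components, but the direct constructions above seem cleaner and self-contained.)
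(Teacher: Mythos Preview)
Your argument is correct; both implications go through as you describe. But your route is genuinely different from the paper's. The paper does not prove Theorem~\ref{i1} directly; instead it recovers it as the special case $\mathcal{P}=\mbox{compactness}$, $\mathcal{Q}=\mbox{regularity}$ of Theorem~\ref{20}(2) (see the remarks preceding that theorem). In that framework, $(1)\Rightarrow(2)$ proceeds by applying Lemma~\ref{19} to the countable compact remainder inside $\beta Y$ and then pulling back through continuous Urysohn-type functions to produce the $U_n$, while $(2)\Rightarrow(1)$ passes through $\beta X$: one uses Lemma~\ref{j1} and the extension operator $\mbox{Ex}_X$ to turn the $U_n$ into pairwise disjoint non-empty clopen subsets $H_n$ of $\beta X\backslash X$ $(=\beta X\backslash\lambda_{\mathcal{P}}X$ here), and then the compactification is realized as a quotient of $\beta X$ via Lemma~\ref{18}. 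Your proof, by contrast, never invokes $\beta X$ at all: for $(1)\Rightarrow(2)$ you read the $U_n$ straight off the isolated points of the given countable remainder, and for $(2)\Rightarrow(1)$ you build the compactification by hand as an explicit point-set $X\cup\{p_0,p_1,\ldots\}$ with prescribed neighbourhood bases. What you gain is a short, self-contained argument requiring nothing beyond elementary point-set topology; what the paper's route buys is that the same machinery proves the far more general Theorem~\ref{20}, of which Kimura's theorem is only a single instance.
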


In \cite{Mc} J.R. McCartney generalized  K.D. Magill, Jr.'s result still further. To state J.R. McCartney's result however, we need some preliminaries.

Let $A$ be an infinite compact countable  space.  As in \cite{MS} we define the successive derived sets $A^{(\zeta)}$ of $A$ for any $\zeta<\Omega$ by $A^{(0)}=A$,  $A^{(\zeta+1)}=(A^{(\zeta)})'$ and
\[A^{(\zeta)}=\bigcap\{A^{(\eta)}:\eta<\zeta\}\]
whenever $\zeta$ is a limit ordinal. Then $A^{(\zeta)}$'s form a decreasing sequence of compact subsets of $A$. Note that if for some  $\zeta<\Omega$ the set $A^{(\zeta)}$ is infinite then $A^{(\zeta)}\backslash A^{(\zeta+1)}$ also is infinite, as otherwise, since
\[A^{(\zeta)}=(A^{(\zeta)}\backslash A^{(\zeta+1)})\cup A^{(\zeta+1)}\]
the set $A^{(\zeta+1)}$ will be a compact non--empty space without isolated points and therefore $A^{(\zeta+1)}\subseteq A$ will be uncountable. Since
\[\bigcup\{A^{(\zeta)}\backslash A^{(\zeta+1)}:\zeta<\Omega\}\subseteq A\]
and $A$ is countable there exists some $\lambda<\Omega$ such that $A^{(\lambda)}\backslash A^{(\lambda+1)}=\emptyset$. Suppose that $\lambda$ is the least with this property. Then by above $A^{(\lambda)}$ is finite and thus it is empty. Note that $\lambda$ is not a limit ordinal, as otherwise, by definition $A^{(\lambda)}$ is non--empty, as it is the intersection of a collection of compact non--empty subsets with the finite intersection property. Let $\lambda=\sigma+1$. Then $A^{(\sigma)}$ is non--empty and since $(A^{(\sigma)})'= A^{(\lambda)}=\emptyset$, it is finite.  We say that an infinite compact countable space  $A$ is {\em of type $(\sigma,n)$}, where  $\sigma<\Omega$ and $n\in\mathbf{N}$, if $\mbox{card}(A^{(\sigma)})=n$. From the above discussion it is clear that the type of $A$ exists and is uniquely determined.

\begin{theorem}[McCartney \cite{Mc}]\label{i2}
Let $X$ be a locally compact space and let $0<\sigma<\Omega$. The following are equivalent:
\begin{itemize}
\item[\rm(1)] ${\mathscr K}(X)$ contains an element with countable remainder of type $(\sigma,1)$.
\item[\rm(2)] There exists a family $\{{\mathscr U}_\zeta:\zeta<\sigma\}$ of infinite collections of pairwise disjoint open subsets of $X$ with compact boundary satisfying the following:
\begin{itemize}
\item[\rm(a)] For any $\zeta<\sigma$, $U\in {\mathscr U}_\zeta$ and finite ${\mathscr W}\subseteq\bigcup\{{\mathscr U}_\eta:\eta<\zeta\}$ the set $\mbox{\em cl}_X U\backslash\bigcup{\mathscr W}$ is non--compact.
\item[\rm(b)] For any distinct $\zeta,\eta<\sigma$, $U\in {\mathscr U}_\zeta$ and $V\in{\mathscr U}_\eta$ there exists a finite ${\mathscr W}\subseteq \bigcup\{{\mathscr U}_\xi:\xi<\zeta\}$ such that either
    \[\mbox{\em cl}_X U\backslash\Big(V\cup\bigcup{\mathscr W}\Big)\mbox{ or }(\mbox{\em cl}_X U\cap\mbox{\em cl}_X V)\backslash\bigcup{\mathscr W}\]
    is compact.
\item[\rm(c)] For any $\zeta<\eta<\sigma$ and $U\in{\mathscr U}_\eta$ there exists an infinite ${\mathscr V}\subseteq{\mathscr U}_\zeta$ such that for any $V\in{\mathscr V}$ there exists a finite ${\mathscr W}\subseteq\bigcup\{{\mathscr U}_\xi:\xi<\zeta\}$ such that $\mbox{\em cl}_X V\backslash(U\cup\bigcup{\mathscr W})$ is compact.
\end{itemize}
\end{itemize}
\end{theorem}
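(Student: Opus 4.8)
The plan is to reduce everything to ordinal spaces. Recall that a compactification $\alpha X$ of $X$ is, up to equivalence, obtained from $\beta X$ by collapsing an upper semicontinuous decomposition of $\beta X\setminus X$ into compact sets, the remainder $\alpha X\setminus X$ carrying the resulting quotient topology. The one external ingredient is the classical Mazurkiewicz--Sierpi\'nski description of countable compact spaces (\cite{MS}): such a space is of type $(\sigma,1)$ precisely when it is homeomorphic to the ordinal space $[0,\omega^\sigma]$, in which $\omega^\sigma$ is the unique point of Cantor--Bendixson rank $\sigma$ and an ordinal $0<\gamma<\omega^\sigma$ has rank equal to the least exponent in the Cantor normal form of $\gamma$. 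Thus (1) says exactly that $X$ has a compactification whose remainder is homeomorphic to $[0,\omega^\sigma]$.

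For (1) $\Rightarrow$ (2) I would fix such an $\alpha X$, identify $R=\alpha X\setminus X$ with $[0,\omega^\sigma]$ (which is metrizable), and, for each ordinal $p$ of rank $\zeta<\sigma$, choose a clopen interval neighbourhood $O_p$ of $p$ in $R$ meeting $R^{(\zeta)}$ only in $p$, with the $O_p$ pairwise disjoint as $p$ ranges over $R^{(\zeta)}\setminus R^{(\zeta+1)}$; by regularity of $\alpha X$ one then picks pairwise disjoint open $W_p\subseteq\alpha X$ with $\mathrm{cl}_{\alpha X}W_p\cap R=O_p$ and sets $U_p=W_p\cap X$. A routine computation, using that $X$ is open and dense in $\alpha X$, gives that $\mathrm{bd}_XU_p=\mathrm{bd}_{\alpha X}W_p$ is compact while $\mathrm{cl}_XU_p$ is non-compact (as $p\in\mathrm{cl}_{\alpha X}U_p\setminus X$). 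Putting $\mathscr{U}_\zeta=\{U_p:p\in R^{(\zeta)}\setminus R^{(\zeta+1)}\}$ --- infinite, since $R^{(\zeta)}$ is infinite for $\zeta<\sigma$ --- conditions (a)--(c) fall out by unwinding the order topology of $[0,\omega^\sigma]$: (a) because an interval about a rank-$\zeta$ ordinal meets every lower rank cofinally; (b) and (c) because two intervals are eventually disjoint, and a lower-rank family accumulates to a higher-rank point, once finitely many lower-rank ``gaps'' are deleted. Passing between $\mathrm{cl}_X$ and $\mathrm{cl}_{\alpha X}$ turns each of these into the stated compactness/infiniteness clause.

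For (2) $\Rightarrow$ (1) --- the substantive direction --- I would first reindex $\bigcup_{\zeta<\sigma}\mathscr{U}_\zeta$ bijectively by the ordinals of $[0,\omega^\sigma]$ of rank $<\sigma$, compatibly with ranks and so that condition (c) becomes ``the set indexed by $p$ accumulates, in the index order, to every point above it of higher rank''; this matching is the heart of the argument and is built by transfinite recursion on the rank. Then one topologises $Y=X\cup A$, with $A=[0,\omega^\sigma]$ as a set, by keeping $X$ open with its own topology and declaring a basic neighbourhood of $p\in A$ of rank $\zeta$, with attached set $U_p$ (the top point $\omega^\sigma$ being handled separately via tails of all the families), to be
\[
\Big(U_p\setminus\mathrm{cl}_X\textstyle\bigcup\mathscr{W}\Big)\cup\big((q,p]\cap A\big),\qquad q<p,\ \mathscr{W}\subseteq\textstyle\bigcup_{\eta<\zeta}\mathscr{U}_\eta\ \text{finite}.
\]
One then checks in turn: this is a base for a topology; $Y$ is Hausdorff (two remainder points separated using (b), a remainder point and a point of $X$ using compactness of a boundary); $Y$ is compact, equivalently the induced decomposition of $\beta X$ is upper semicontinuous with compact members --- this is where the non-compactness clauses of (a) are used; $X$ is dense and open in $Y$, so $Y\in\mathscr{K}(X)$; and finally the subspace $A$ of $Y$ carries the order topology of $[0,\omega^\sigma]$, so the remainder has type $(\sigma,1)$ --- immediate once the reindexing has been set up so that the convergence relations in $A$ agree with those of $[0,\omega^\sigma]$.

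The hard part will be the last two items together with the recursive reindexing behind them: one must show that (a)--(c) are exactly strong enough to force the quotient topology on the remainder to be the order topology of $[0,\omega^\sigma]$ and nothing coarser --- no stray limit points, no extra identifications, Cantor--Bendixson height exactly $\sigma$. The genuinely delicate case is $\sigma$ a limit ordinal, where one cannot strip off a single rank and induct on a smaller ordinal; conditions (b) and (c) are precisely engineered to govern this limit behaviour, and verifying that they suffice (with (a) supplying compactness) is the crux.
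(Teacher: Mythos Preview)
Your approach is essentially McCartney's original one, which the paper cites rather than proves: the paper merely states Theorem~\ref{i2} as a result from \cite{Mc} and remarks that McCartney ``formed a new set $Y$ by adjoining a set of points to $X$ and he then constructed a topology on $Y$''---exactly your (2)~$\Rightarrow$~(1) strategy. So as far as reproducing McCartney goes, you are on the right track and the plan is sound.

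However, the paper explicitly offers an \emph{alternative} route: it says that the proof of Theorem~\ref{20}(3), together with Lemma~\ref{18}(3), specializes (take $\mathcal{P}=$ compactness, $\mathcal{Q}=$ regularity, so $\lambda_{\mathcal{P}}X=X$) to give McCartney's theorem. That route is structurally quite different from yours. Rather than building the compactification by hand on $X\cup[0,\omega^\sigma]$, the paper translates conditions (a)--(c) into the existence of a family $\{\mathscr{H}_\zeta:\zeta\le\sigma\}$ of collections of pairwise disjoint nonempty clopen subsets of $\beta X\setminus X$ satisfying purely set-theoretic incidence conditions (Lemma~\ref{18}(3.c)); the compactification is then produced as a quotient of $\beta X$ obtained by contracting certain sets $P_i^\zeta$ built from the $H_i^\zeta$, and the Cantor--Bendixson type of the remainder is read off by a transfinite induction showing $(Y\setminus X)^{(\zeta)}=\{p_j^\eta:\zeta\le\eta\le\sigma\}$. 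The passage between the $\mathscr{U}_\zeta$ and the $\mathscr{H}_\zeta$ uses the extension operator $\mbox{Ex}_X$ and the identity $\mbox{cl}_{\beta X}U\setminus X=\mbox{Ex}_XU\setminus X$ for $U$ with compact boundary. What the paper's approach buys is generality---the same argument handles arbitrary compactness-like $\mathcal{P}$---and it sidesteps entirely your ``reindexing by ordinals'' step, which you correctly flag as the crux but do not actually carry out. What your approach buys is concreteness: the remainder is the ordinal space from the start, so no separate verification of its homeomorphism type is needed, whereas the paper must prove a nontrivial claim about the derived sets of the quotient remainder.
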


It is worth to mention that the general  problem of characterizing spaces with a countable--point compactification has been remained still open. (See \cite{Ho}, also \cite{Ch}, for a characterization of metrizable spaces having such compactifications.)

Our aim in this chapter is to generalize the above results to compactification--like $\mathcal{P}$--extensions. Note that part (2) of the lemma below generalizes  K.D. Magill,  Jr.'s theorem in \cite{Mag2} (Theorem \ref{i0}) provided that one replaces  $\mathcal{P}$ and $\mathcal{Q}$, respectively, by compactness and regularity, and note that for these specific choices of  $\mathcal{P}$ and $\mathcal{Q}$  and a locally compact space $X$ we have  $\lambda_{\mathcal{P}} X=X$  and the two notions  ``$Y$ is a  minimal ${\mathcal P}$--extension of $X$ with $\mathcal{Q}$" and ``$Y$ is a compactification of $X$" coincide.

\begin{lemma}\label{18}
Let ${\mathcal P}$ and  ${\mathcal Q}$ be a pair of compactness--like topological properties. Let $X$  be a Tychonoff  space with $\mathcal{Q}$.
\begin{itemize}
\item[\rm(1)] Let $n\in\mathbf{N}$. The following are equivalent:
\begin{itemize}
\item[\rm(a)] ${\mathscr M}^{\mathcal Q}_{\mathcal P}(X)$ contains an element with $n$--point remainder.
\item[\rm(b)] ${\mathscr O}^{\mathcal Q}_{\mathcal P}(X)$ contains an element with $n$--point remainder.
\item[\rm(c)] $X$ is locally--$\mathcal{P}$ and $\beta X\backslash \lambda_{\mathcal{P}} X$ contains $n$ pairwise disjoint non--empty clopen subsets.
\item[\rm(d)] $X$ is locally--$\mathcal{P}$ and $\beta X\backslash \lambda_{\mathcal{P}} X$ has at least $n$ (connected) components.
\end{itemize}
\item[\rm(2)] The following are equivalent:
\begin{itemize}
\item[\rm(a)] ${\mathscr M}^{\mathcal Q}_{\mathcal P}(X)$ contains an element with countable remainder.
\item[\rm(b)] ${\mathscr O}^{\mathcal Q}_{\mathcal P}(X)$ contains an element with countable remainder.
\item[\rm(c)] $X$ is locally--$\mathcal{P}$ and $\beta X\backslash\lambda_{\mathcal{P}}X$ contains an infinite number of pairwise disjoint non--empty clopen subsets.
\item[\rm(d)] $X$ is locally--$\mathcal{P}$ and $\beta X\backslash \lambda_{\mathcal{P}} X$ has an infinite number of (connected) components.
\end{itemize}
\item[\rm(3)] Let $0<\sigma<\Omega$ and let $n\in\mathbf{N}$. The following are equivalent:
\begin{itemize}
\item[\rm(a)] ${\mathscr M}^{\mathcal Q}_{\mathcal P}(X)$ contains an element with countable remainder of type $(\sigma,n)$.
\item[\rm(b)] ${\mathscr O}^{\mathcal Q}_{\mathcal P}(X)$ contains an element with countable remainder of type $(\sigma,n)$.
\item[\rm(c)] $X$ is locally--${\mathcal P}$ and there exists a family $\{{\mathscr H}_\zeta:\zeta\leq\sigma\}$ of collections of pairwise disjoint non--empty clopen subset of $\beta X\backslash\lambda_{\mathcal P} X$ satisfying the following:
\begin{itemize}
\item[\rm(i)] For any $\zeta<\sigma$, $\mbox{\em card}({\mathscr H}_\zeta)=\aleph_0$  and $\mbox{\em card}({\mathscr H}_\sigma)=n$.
\item[\rm(ii)] For any $\zeta\leq\sigma$ and $H\in {\mathscr H}_\zeta$ we have
\[H\backslash\bigcup\{G\in {\mathscr H}_\eta:\eta<\zeta\}\neq\emptyset.\]
\item[\rm(iii)] For any $\zeta<\eta\leq\sigma$, $H\in {\mathscr H}_\zeta$ and $G\in {\mathscr H}_\eta$ either
\[H\subseteq G\cup\bigcup\{F\in {\mathscr H}_\xi:\xi<\zeta\}\mbox{ or } H\cap G\subseteq\bigcup\{F\in {\mathscr H}_\xi:\xi<\zeta\}.\]
\item[\rm(iv)] For any $\zeta<\eta\leq\sigma$ and $H\in {\mathscr H}_\eta$ the set
\[\Big\{F\in{\mathscr H}_\zeta:F\subseteq H\cup\bigcup\{G\in {\mathscr H}_\xi:\xi<\zeta\}\Big\}\]
is infinite.
\end{itemize}
\end{itemize}
\end{itemize}
\end{lemma}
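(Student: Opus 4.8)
The key technical result we have at our disposal is Lemma \ref{16}, which (in the special case $Y=X$, $f=\mathrm{id}_X$, $\alpha T=\beta T$) says that a Tychonoff extension $T$ of $X$ with compact remainder belongs to $\mathscr{E}^{\mathcal Q}_{\mathcal P}(X)$ if and only if $X$ is locally--$\mathcal P$ and $\beta X\backslash\lambda_{\mathcal P}X\subseteq\phi^{-1}[T\backslash X]$, where $\phi:\beta X\to\beta T$ is the continuous extension of $\mathrm{id}_X$. Combining this with Theorem \ref{HUHG16}(2) and Theorem \ref{HG16}(2), the minimal $\mathcal P$--extensions are exactly those $T$ for which $\phi^{-1}(p)\backslash\lambda_{\mathcal P}X\neq\emptyset$ for every $p\in T\backslash X$, and the optimal $\mathcal P$--extensions are exactly those for which $\phi^{-1}[T\backslash X]=\beta X\backslash\lambda_{\mathcal P}X$ precisely. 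Together with Lemma \ref{j2}, which identifies any such extension's Stone--\v{C}ech compactification as a quotient of $\beta X$ obtained by collapsing the fibers $\phi^{-1}(p)$, this reduces the whole problem to a purely topological study of partitions of the compact set $\beta X\backslash\lambda_{\mathcal P}X$ (union, possibly, of pieces of $\lambda_{\mathcal P}X$) into the fibers of a quotient map onto a space with countable remainder of the prescribed form.

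\textbf{Part (1) and Part (2).} I would first prove (1), since (2) follows by essentially the same argument with ``$n$'' replaced by ``$\aleph_0$''. The equivalence (c)$\Leftrightarrow$(d) is standard: in a compact Hausdorff space $K$, having $n$ pairwise disjoint non-empty clopen subsets is equivalent to having at least $n$ components, because components coincide with quasi-components and the quasi-component structure is detected by clopen sets (Theorem 6.1.23 of \cite{E}). For (b)$\Rightarrow$(a) one uses $\mathscr O^{\mathcal Q}_{\mathcal P}(X)\subseteq\mathscr M^{\mathcal Q}_{\mathcal P}(X)$, noted right after Notation \ref{TR}. For (c)$\Rightarrow$(b): given $n$ pairwise disjoint non-empty clopen subsets $C_1,\dots,C_n$ of $\beta X\backslash\lambda_{\mathcal P}X$, write $\beta X\backslash\lambda_{\mathcal P}X=C_1'\cup\dots\cup C_n'$ where the $C_i'$ are pairwise disjoint clopen (in $\beta X\backslash\lambda_{\mathcal P}X$) and non-empty, by absorbing the complement; then form the quotient $T'$ of $\beta X$ by collapsing each $C_i'$ to a point, restrict to $T=X\cup\{a_1,\dots,a_n\}$, and verify via Lemma \ref{16} that $T\in\mathscr E^{\mathcal Q}_{\mathcal P}(X)$ (using $X\subseteq\lambda_{\mathcal P}X$ from Lemma \ref{15}) and via Theorem \ref{HG16}(2) that $T\in\mathscr O^{\mathcal Q}_{\mathcal P}(X)$, since $\phi^{-1}[T\backslash X]=\beta X\backslash\lambda_{\mathcal P}X$ exactly. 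The remaining direction (a)$\Rightarrow$(c) is the heart of the matter: if $Y\in\mathscr M^{\mathcal Q}_{\mathcal P}(X)$ has $n$-point remainder $\{p_1,\dots,p_n\}$, then by Lemma \ref{j2} the fibers $\phi^{-1}(p_i)$ partition $\phi^{-1}[Y\backslash X]\supseteq\beta X\backslash\lambda_{\mathcal P}X$, and by Theorem \ref{HUHG16}(2) each $\phi^{-1}(p_i)\backslash\lambda_{\mathcal P}X$ is non-empty; one must check these $n$ sets, or suitable clopen modifications of them, furnish the required clopen subsets of the compact space $\beta X\backslash\lambda_{\mathcal P}X$ — here one uses that the $\phi^{-1}(p_i)$ are pairwise disjoint compact sets and that $\lambda_{\mathcal P}X$ is open in $\beta X$, so $\phi^{-1}(p_i)\backslash\lambda_{\mathcal P}X$ is compact and they can be separated by clopen sets when $\beta X$ is zero-dimensional; the general case is handled by passing to the absolute $EX$ as in the proof of Lemma \ref{16}, using that $K^{-1}[\lambda_{\mathcal P}X]\subseteq\lambda_{\mathcal P}EX$ was established there. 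Also local--$\mathcal P$-ness of $X$ follows directly from Lemma \ref{16}.

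\textbf{Part (3).} This is the McCartney-type statement and will be the main obstacle. The strategy is the same translation: an element of $\mathscr M^{\mathcal Q}_{\mathcal P}(X)$ (resp.\ $\mathscr O^{\mathcal Q}_{\mathcal P}(X)$) with countable remainder of type $(\sigma,n)$ corresponds, via Lemma \ref{j2}, to a quotient of $\beta X$ collapsing a suitable partition of a set containing $\beta X\backslash\lambda_{\mathcal P}X$, in such a way that the remainder $R=T\backslash X$ is an infinite compact countable space whose $\sigma$-th derived set has cardinality $n$. The combinatorial conditions (i)--(iv) on the families $\{\mathscr H_\zeta\}$ are exactly the reflection, into the clopen-set structure of $\beta X\backslash\lambda_{\mathcal P}X$, of the Cantor--Bendixson stratification of the countable compact remainder: the clopen sets $H\in\mathscr H_\zeta$ correspond to basic clopen neighborhoods in $R$ of the points of $R^{(\zeta)}\backslash R^{(\zeta+1)}$, with (ii) encoding that such points are genuinely new at level $\zeta$ (i.e.\ the preimage is not swallowed by lower-level fibers), (iii) encoding the nesting/disjointness dichotomy of clopen neighborhoods in a zero-dimensional space, and (iv) encoding that each level-$\eta$ point is a limit of infinitely many level-$\zeta$ points for $\zeta<\eta$ (which is what forces the derived-set cardinalities to drop exactly as prescribed). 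I would prove (a)$\Leftrightarrow$(c) and (b)$\Leftrightarrow$(c) by: for the forward directions, taking a base of clopen neighborhoods in $\beta Y$ (zero-dimensional after reduction to the strongly zero-dimensional/absolute case) around each point of the remainder, pulling back along $\phi$, intersecting with $\beta X\backslash\lambda_{\mathcal P}X$, and verifying (i)--(iv) from the type-$(\sigma,n)$ structure of $R$ together with Theorem \ref{HUHG16}(2) (resp.\ Theorem \ref{HG16}(2)); for the converse, using the families $\{\mathscr H_\zeta\}$ to build explicitly a countable compact Hausdorff space $R$ of type $(\sigma,n)$ as a quotient of $\beta X\backslash\lambda_{\mathcal P}X$ (the points of $R$ being indexed by the nested ``threads'' of the $\mathscr H_\zeta$'s, much as in the proof of Theorem \ref{i2} in \cite{Mc}), forming the corresponding quotient $T$ of $\beta X$, and checking membership in $\mathscr M^{\mathcal Q}_{\mathcal P}(X)$ or $\mathscr O^{\mathcal Q}_{\mathcal P}(X)$ via Lemma \ref{16} and the relevant characterization theorem. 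The delicate point — and where I expect the real work to lie — is verifying that the space $R$ so constructed is \emph{compact} and has derived sets of \emph{exactly} the prescribed cardinalities, i.e.\ that conditions (i)--(iv) are not merely necessary but sufficient to pin down the Cantor--Bendixson type; this is the analogue of the careful bookkeeping in McCartney's original argument, now carried out one level up in $\beta X$ and transferred down through the absolute map $k_X$ exactly as in the last part of the proof of Lemma \ref{16}.
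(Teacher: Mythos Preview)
Your high--level architecture is right and matches the paper: everything is routed through Lemma \ref{16}, Theorems \ref{HUHG16} and \ref{HG16}, and Lemma \ref{j2}, with the chain of implications (b)$\Rightarrow$(a)$\Rightarrow$(c/d)$\Rightarrow$(b) in each part. However, there is a recurring misconception that leads you to an unnecessary detour and, in Part (3), a potential gap.

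\textbf{Part (1), (a)$\Rightarrow$(c).} You propose to separate the compact sets $\phi^{-1}(p_i)\backslash\lambda_{\mathcal P}X$ by clopen sets ``when $\beta X$ is zero--dimensional'' and then handle the general case via the absolute. This is not needed. Since $\beta X\backslash\lambda_{\mathcal P}X\subseteq\phi^{-1}[Y\backslash X]$ by Lemma \ref{16}, the sets $\phi^{-1}(p_i)\backslash\lambda_{\mathcal P}X$ are finitely many pairwise disjoint closed sets whose union is all of $\beta X\backslash\lambda_{\mathcal P}X$; hence each is automatically clopen. The paper makes the same point by observing that $\phi^{-1}(p_i)\backslash\lambda_{\mathcal P}X=\phi^{-1}[V_i]\backslash\lambda_{\mathcal P}X$ for disjoint open neighborhoods $V_i$ of the $p_i$, so the set is simultaneously closed and open. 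No zero--dimensionality, no absolute.

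\textbf{Part (3), forward direction.} Here the absolute detour is not just superfluous but problematic: you would need to produce clopen sets in $\beta X\backslash\lambda_{\mathcal P}X$, and pushing clopen sets forward along $K:\beta EX\to\beta X$ does not obviously work (you only know $K^{-1}[\lambda_{\mathcal P}X]\subseteq\lambda_{\mathcal P}EX$, not equality). The paper instead exploits \emph{countability of the remainder}: for each point $p_i^\zeta$ of the stratified remainder one takes a continuous $f_i^\zeta:\beta Y\to\mathbf{I}$ and chooses $r_i^\zeta\in(0,1)\backslash f_i^\zeta[Y\backslash X]$; then $(f_i^\zeta)^{-1}[[0,r_i^\zeta)]$ and $(f_i^\zeta)^{-1}[[0,r_i^\zeta]]$ have the same intersection with $Y\backslash X$, so their $\phi$--preimages intersected with $\beta X\backslash\lambda_{\mathcal P}X$ coincide and are genuinely clopen there. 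This trick is what replaces zero--dimensionality throughout the proof and you should adopt it.

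\textbf{Part (3), converse.} Your plan for (c)$\Rightarrow$(b) is correct in spirit. The paper first proves a strengthened form of (iv) (allowing finitely many deletions $H_1,\dots,H_k$ from $H$ at intermediate levels) by transfinite induction; this is then used to define the fibers $P_i^\zeta=H_i^\zeta\backslash\bigcup\{H\in\mathscr H_\xi:\xi<\zeta\}$, contract each to a point, and verify by another transfinite induction that the resulting remainder has exactly the derived--set structure $(Y\backslash X)^{(\zeta)}=\{p_j^\eta:\zeta\leq\eta\leq\sigma\}$. You correctly flag this bookkeeping as the delicate point; the strengthened (iv) is the missing lemma you would need to state and prove.
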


\begin{proof} (1). It is clear that (1.c) implies (1.d) and that (1.b) implies (1.a). (1.a) {\em implies} (1.c). Consider some $Y\in{\mathscr M}^{\mathcal Q}_{\mathcal P}(X)$ with an $n$--point remainder $Y\backslash X=\{p_1,\ldots,p_n\}$. Let $\phi:\beta X\rightarrow\beta Y$ be the continuous extension of $\mbox{id}_X$. Let $V_1,\dots,V_n$ be pairwise disjoint open neighborhoods of $p_1,\dots,p_n$ in $\beta Y$, respectively. By Lemma \ref{16} we have $\beta X\backslash\lambda_{\mathcal{P}}X\subseteq\phi^{-1}[Y\backslash X]$ and that $X$ is locally--$\mathcal{P}$. Let $i=1,\dots,n$. Then
\[\phi\big[\phi^{-1}[V_i]\backslash\lambda_{\mathcal{P}}X\big]\subseteq\phi[\beta X\backslash\lambda_{\mathcal{P}}X]\subseteq \phi\big[\phi^{-1}[Y\backslash X]\big]\subseteq Y\backslash X\]
and therefore
\[\phi\big[\phi^{-1}[V_i]\backslash\lambda_{\mathcal{P}}X\big]\cap V_i\subseteq(Y\backslash X)\cap V_i=\{p_i\}.\]
This gives
\begin{eqnarray*}
\phi^{-1}[V_i]\backslash\lambda_{\mathcal{P}} X&\subseteq&\phi^{-1}\big[\phi\big[\phi^{-1}[V_i]\backslash\lambda_{\mathcal{P}} X\big]\big]\cap \phi^{-1}[V_i]\\&=&\phi^{-1}\big[\phi\big[\phi^{-1}[V_i]\backslash\lambda_{\mathcal{P}} X\big]\cap V_i\big]\subseteq\phi^{-1}(p_i)
\end{eqnarray*}
which implies that
\begin{equation}\label{PKWA}
\phi^{-1}[V_i]\backslash\lambda_{\mathcal{P}}X=\phi^{-1}(p_i)\backslash\lambda_{\mathcal{P}}X.
\end{equation}
Let $H_i$ denote the set in (\ref{PKWA}). Then $H_i$ is clopen in $\beta X\backslash\lambda_{\mathcal{P}} X$ and it is non--empty, as otherwise
\[\beta X\backslash\lambda_{\mathcal{P}} X\subseteq\phi^{-1}[Y\backslash X]\backslash\phi^{-1}(p_i)=\phi^{-1}\big[(Y\backslash X)\backslash\{p_i\}\big]=
\phi^{-1}\big[\big(Y\backslash\{p_i\}\big)\backslash X\big]\]
which by Lemma \ref{16}  implies that the subspace $Y\backslash\{p_i\}$ of $Y$ has $\mathcal{P}$, contradicting  the minimality of $Y$. That $H_i$'s are pairwise disjoint follows from their definitions.

(1.d) {\em implies } (1.b). Let $C_1,\ldots,C_n$ be $n$ distinct components of $\beta X\backslash\lambda_{\mathcal{P}} X$.  Then arguing as in the proof of Theorem 2.1 of \cite{Mag2} and since in compact spaces components and quasi--components coincide, each component is the intersection of all clopen subsets containing it (see  Theorem 6.1.23 of \cite{E}). Since
\[C_1\subseteq(\beta X\backslash\lambda_{\mathcal{P}} X)\backslash\bigcup_{k=2}^nC_k\]
with the latter an open subset of $\beta X\backslash\lambda_{\mathcal{P}} X$, by the compactness of $\beta X\backslash\lambda_{\mathcal{P}} X$, there exists a clopen subset $H_1$ of $\beta X\backslash\lambda_{\mathcal{P}} X$  such that
\[C_1\subseteq H_1\subseteq(\beta X\backslash\lambda_{\mathcal{P}} X)\backslash\bigcup_{k=2}^nC_k.\]
Suppose inductively that for some $j=1,\ldots,n-1$ the pairwise disjoint clopen subsets $H_1,\ldots,H_j$ of $\beta X\backslash\lambda_{\mathcal{P}} X$ are defined in such a way that
\[C_i\subseteq H_i\subseteq(\beta X\backslash\lambda_{\mathcal{P}}X)\backslash\Big(\bigcup_{k=1}^{i-1}H_k\cup\bigcup_{k=i+1}^nC_k\Big)\]
for any $i=1,\ldots,j$. Note that $C_{j+1}\cap H_i=\emptyset$ when $i=1,\ldots,j$. Thus
\[C_{j+1}\subseteq(\beta X\backslash\lambda_{\mathcal{P}}X)\backslash\Big(\bigcup_{k=1}^j H_k\cup\bigcup_{k=j+2}^nC_k\Big).\]
Let $H_{j+1}$ be a clopen subset of $\beta X\backslash\lambda_{\mathcal{P}} X$ such that
\[C_{j+1}\subseteq H_{j+1}\subseteq(\beta X\backslash\lambda_{\mathcal{P}}X)\backslash\Big(\bigcup_{k=1}^j H_k\cup\bigcup_{k=j+2}^nC_k\Big).\]
By Lemma \ref{15} we have  $X\subseteq\lambda_{\mathcal{P}} X$. Let $T$ be the quotient space of $\beta X$ obtained by contracting the compact subsets
\[H_1,\ldots,H_{n-1},(\beta X\backslash\lambda_{\mathcal{P}} X)\backslash\bigcup_{k=1}^{n-1}H_k\]
of $\beta X\backslash X$ to points $p_1,\ldots,p_n$, respectively. Then $T$ is Tychonoff and contains $X$ as a dense subspace. Consider the subspace $Y=X\cup\{p_1,\ldots,p_n\}$ of $T$. By Lemma \ref{16} we have $Y\in{\mathscr E}^{\mathcal Q}_{\mathcal P}(X)$. Since $T$ is a compactification of $Y$ there exists a continuous mapping $\gamma:\beta Y\rightarrow T$ such that $\gamma|Y=\mbox{id}_Y$. Let $\psi:\beta X\rightarrow\beta Y$ be the continuous extension of $\mbox{id}_X$. Then since $\gamma\psi|X=\mbox{id}_X=q|X$ we have $\gamma\psi=q$. By Theorem 3.5.7 of \cite{E} we have $\gamma[\beta Y\backslash Y]=T\backslash Y$. Thus
\[\psi^{-1}(p_i)=\psi^{-1}\big[\gamma^{-1}(p_i)\big]=(\gamma\psi)^{-1}(p_i)=q^{-1}(p_i)\]
for any $i=1,\ldots,n$. Therefore
\[\psi^{-1}[Y\backslash X]=q^{-1}[Y\backslash X]=\beta X\backslash\lambda_{\mathcal P} X\]
which by Theorem \ref{HG16} implies that $Y\in{\mathscr O}_{\mathcal P}(X)$.

(2). It is clear that (2.b) implies (2.a).  (2.a) {\em implies} (2.d). Consider some $Y\in{\mathscr M}^{\mathcal Q}_{\mathcal P}(X)$ with countable remainder. Let $\phi:\beta X\rightarrow\beta Y$ be the continuous extension of $\mbox {id}_X$. By Lemma \ref{16} we have $\beta X\backslash \lambda_{\mathcal{P}} X\subseteq\phi^{-1}[Y\backslash X]$ and that $X$ is locally--${\mathcal{P}}$. Let $n\in\mathbf{N}$ and consider some distinct elements
$p_1,\ldots,p_n\in Y\backslash X$. Define a continuous $f:\beta Y\rightarrow [1,n]$ such that $f(p_i)=i$ for any $i=1,\ldots,n$. Since $f[Y\backslash X]$ is countable we can find some real numbers $r_1,\ldots,r_{n+1}$ such that
\[r_1<1<r_2<2<\cdots<r_n<n<r_{n+1}\mbox{ and }r_i\notin f[Y\backslash X]\mbox{ for any }i=1,\ldots,n+1.\]
Let $i=1,\ldots,n$. Define $B_i=f^{-1}[(r_i,r_{i+1})]$. Then
\[\bigcup_{i=1}^n\phi^{-1}[B_i]=\phi^{-1}\Big[\bigcup_{i=1}^n B_i\Big]\supseteq\phi^{-1}[Y\backslash X]\supseteq\beta X\backslash \lambda_{\mathcal{P}}X.\]
Now $\phi^{-1}[B_i]\backslash\lambda_{\mathcal{P}} X$ is non--empty, as $B_i$ is non--empty and by Theorem \ref{HUHG16} the set  $\phi^{-1}(p)\backslash\lambda_{\mathcal{P}}X$ is non--empty for any $p\in Y\backslash X$. Thus $\phi^{-1}[B_i]\backslash\lambda_{\mathcal{P}}X$, where $i=1,\ldots,n$, are $n$ pairwise disjoint non--empty open (and therefore clopen, as their union is $\beta X\backslash \lambda_{\mathcal{P}}X$) subsets of $\beta X\backslash\lambda_{\mathcal{P}} X$ which implies that $\beta X\backslash\lambda_{\mathcal{P}}X$ has at least $n$ components. Since $n$ is arbitrary the result follows.

(2.d) {\em implies}  (2.c). Let $C$ and $D$ be distinct components of $\beta X\backslash \lambda_{\mathcal{P}}X$.
Let $U$ be open in $\beta X\backslash \lambda_{\mathcal{P}} X$ and such that $C\subseteq U$ and $D\cap U=\emptyset$. Then arguing as in
(1.d) $\Rightarrow$ (1.b) and since $U$ is an open subset of $\beta X\backslash \lambda_{\mathcal{P}} X$ containing $C$ there exists a clopen subset $V$ of $\beta X\backslash\lambda_{\mathcal{P}} X$ such that $C\subseteq V\subseteq U$. Define $H_1$ to be either of the (non--empty) sets $V$ or $(\beta X\backslash\lambda_{\mathcal{P}}X)\backslash V$ which misses an infinite number of components of $\beta X\backslash\lambda_{\mathcal{P}}  X$. Now inductively suppose that $H_1,\ldots,H_n$ are defined such that $H_i$'s are pairwise disjoint non--empty clopen subsets of  $\beta X\backslash\lambda_{\mathcal{P}}X$ and $H_1\cup\cdots\cup H_n$ misses an infinite number of components of $\beta X\backslash \lambda_{\mathcal{P}}X$. Let $E$ and $F$ be distinct components of $\beta X\backslash\lambda_{\mathcal{P}}X$ missing $H_1\cup\cdots\cup H_n$ and let $W$ be open
in $\beta X\backslash \lambda_{\mathcal{P}} X$ and  such that $E\subseteq W$ and $F\cap W=\emptyset$. Then since $W\backslash (H_1\cup\cdots\cup
H_n)$ is an open neighborhood of $E$ in $\beta X\backslash\lambda_{\mathcal{P}} X$, arguing as above there exists a non--empty clopen  subset $H_{n+1}$ of $\beta X\backslash \lambda_{\mathcal{P}} X$ such that $H_{n+1}\cap H_i=\emptyset$ for any $i=1,\ldots,n$, and that it
misses  an infinite number of components of $\beta X\backslash \lambda_{\mathcal{P}} X$ contained in $(\beta X\backslash\lambda_{\mathcal{P}} X)\backslash(H_1\cup\cdots\cup H_n)$. The sequence $H_1,H_2,\ldots$ consists of  pairwise disjoint non--empty clopen subsets of $\beta X\backslash \lambda_{\mathcal{P}} X$.

(2.c) {\em implies} (2.b). Suppose that $X$ is locally--$\mathcal{P}$ and  that there exists a bijectively indexed sequence $H_1,H_2,\ldots$ of pairwise disjoint non--empty clopen subsets of $\beta X\backslash \lambda_{\mathcal{P}} X$. By Lemma \ref{15} we have $X\subseteq\lambda_{\mathcal{P}} X$. Let $T$ be the space obtained from $\beta X$ by contracting the sets
\[(\beta X\backslash \lambda_{\mathcal{P}} X)\backslash\bigcup_{k=2}^\infty H_k,H_2,H_3,\ldots\]
into points $p_1,p_2,\ldots$, respectively, with the quotient mapping $q:\beta X\rightarrow T$. Then $T$ is compact, since as we show it is Hausdorff (and a continuous image of $\beta X$). Suppose that $x,y\in T$ are distinct elements. We consider the following three cases:
\begin{description}
\item[{\sc Case 1.}] Suppose that $x,y\in \lambda_{\mathcal{P}} X$. Since $x$ and $y$ can be separated  by disjoint open subsets in $\lambda_{\mathcal{P}} X$ and $\lambda_{\mathcal{P}} X$ is open in $\beta X$ they can also be separated by disjoint open subsets in $T$.
\item[{\sc Case 2.}] Suppose that $x\in\lambda_{\mathcal{P}} X$ and $y=p_j$ for some $j\in\mathbf{N}$. Let $P$ and $Q$ be disjoint open neighborhoods of $x$ and $\beta X\backslash \lambda_{\mathcal{P}} X$ in $\beta X$, respectively. Then $q[P]$ and $q[Q]$ are disjoint open subsets of $T$ separating $x$ and $y$. The case when $x=p_i$ for some $i\in\mathbf{N}$ and $y\in\lambda_{\mathcal{P}} X$ is analogous.
\item[{\sc Case 3.}] Suppose that $x=p_i$ and $y=p_j$ for some $i,j\in\mathbf{N}$. Then either $i\geq 2$ or $j\geq 2$, say $j\geq 2$. Let $P$ and $Q$ be disjoint open neighborhoods of $(\beta X\backslash\lambda_{\mathcal{P}} X)\backslash H_j$ and $H_j$ in $\beta X$, respectively. Then $q[P]$ and $q[Q]$ are disjoint open subsets of $T$ separating $x$ and $y$.
\end{description}
Note that $T$ contains $X$ as a dense subspace. Consider the subspace $Y=X\cup\left\{p_1,p_2,\ldots\right\}$ of $T$. Then
$Y$ is a countable--point  extension of $X$ with the compact remainder $Y\backslash X=q[\beta X\backslash\lambda_{\mathcal{P}} X]$. Now since  $T$ is a compactification of $Y$ and $q^{-1}[Y\backslash X]=\beta X\backslash\lambda_{\mathcal{P}} X$, by Lemma \ref{16}  we have $Y\in{\mathscr E}^{\mathcal Q}_{\mathcal P}(X)$. To complete the proof we only need to verify that $Y$ is optimal. But this follows by an argument similar to the one in (1.d) $\Rightarrow$ (1.b).

(3). It is clear that (3.b) implies (3.a). (3.a) {\em  implies} (3.c). Consider some $Y\in{\mathscr M}^{\mathcal Q}_{\mathcal P}(X)$ with countable remainder of type $(\sigma,n)$. For any $\zeta\leq\sigma$ let
\[(Y\backslash X)^{(\zeta)}\backslash(Y\backslash X)^{(\zeta+1)}=\{p_i^\zeta:i\in J_\zeta\}\]
where $p_i^\zeta$'s are bijectively indexed. Note that if $\zeta<\sigma$ then $ \mbox{card}(J_\zeta)=\aleph_0$, as otherwise, $(Y\backslash X)^{(\zeta)}$ is finite and thus, since
\[(Y\backslash X)^{(\sigma)}\subseteq(Y\backslash X)^{(\zeta+1)}=\big((Y\backslash X)^{(\zeta)}\big)'\]
it follows that $(Y\backslash X)^{(\sigma)}=\emptyset$, contradicting $\mbox{card}((Y\backslash X)^{(\sigma)})=n>0$. Also
\[\mbox{card}(J_\sigma)=\mbox{card}\big((Y\backslash X)^{(\sigma)}\backslash(Y\backslash X)^{(\sigma+1)}\big)=\mbox{card}\big((Y\backslash X)^{(\sigma)}\big)=n.\]
Let $J_\zeta={\mathbf N}$ for any $\zeta<\sigma $ and $J_\sigma=\{1,\ldots,n\}$. For any $\zeta\leq\sigma $ and $i\in J_\zeta$ there exists an open neighborhood  $V_i^\zeta$ of  $p_i^\zeta$ in $\beta Y$ such that $V_i^\zeta\cap(Y\backslash X)^{(\zeta)}=\{p_i^\zeta\}$. Indeed, we prove the following.

\begin{xclaim}
For any $\zeta\leq\sigma$ there exists a collection $\{W_i^\zeta:i\in J_\zeta\}$ of open subsets of $\beta Y$ such that $W_i^\zeta\cap(Y\backslash X)^{(\zeta)}=\{p_i^\zeta\}$ for any $i\in J_\zeta$ and $W_i^\zeta\cap W_j^\zeta=\emptyset$ for any distinct $i,j\in J_\zeta$.
\end{xclaim}

\subsubsection*{Proof of the claim} Let $\zeta<\sigma$. We inductively define $W_i^\zeta$'s for any $i\in J_\zeta$. Let $W_1^\zeta$ be an open neighborhood of $p_1^\zeta$ in $\beta Y$ such that $\mbox{cl}_{\beta Y}W_1^\zeta \subseteq V_1^\zeta$. For an $m\in\mathbf{N}$ suppose that the open subsets $W_1^\zeta,\ldots,W_m^\zeta$ of $\beta Y$ are defined such that  $W_i^\zeta\cap W_j^\zeta=\emptyset$ for any distinct $i,j=1,\ldots,m$ and that
\[p_i^\zeta\in W_i^\zeta\subseteq \mbox{cl}_{\beta Y}W_i^\zeta\subseteq V_i^\zeta\backslash\mbox{cl}_{\beta Y}\Big(\bigcup_{j=1}^{i-1} W_j^\zeta\Big)\]
for any $i=1,\ldots,m$. Since
\[W_i^\zeta\cap(Y\backslash X)^{(\zeta)}\subseteq V_i^\zeta\cap(Y\backslash X)^{(\zeta)}=\{p_i^\zeta\}\]
this implies that $W_i^\zeta\cap(Y\backslash X)^{(\zeta)}=\{p_i^\zeta\}$ for any $i=1,\ldots,m$; also note that $p_{m+1}^\zeta \notin \mbox{cl}_{\beta Y}W_i^\zeta$. Thus $V_{m+1}^\zeta\backslash \mbox{cl}_{\beta Y}( W_1^\zeta\cup\cdots \cup W_m^\zeta )$ is an open neighborhood of  $p_{m+1}^\zeta $ in $\beta Y$. Define $W_{m+1}^\zeta $ to be an open neighborhood of $p_{m+1}^\zeta $ in $\beta Y$ such that
\[\mbox{cl}_{\beta Y} W_{m+1}^\zeta\subseteq V_{m+1}^\zeta\backslash \mbox{cl}_{\beta Y}\Big(\bigcup_{j=1}^{m} W_j^\zeta\Big).\]
The case when $\zeta=\sigma$ is analogous.

\medskip

\noindent Let $\zeta\leq\sigma $ and $i\in J_\zeta$. Let $f_i^\zeta:\beta Y\rightarrow {\mathbf I}$ be continuous with $f_i^\zeta(p_i^\zeta)=0$ and  $f_i^\zeta[\beta Y\backslash W_i^\zeta]\subseteq\{1\}$ and let $r_i^\zeta\in(0,1)\backslash f_i^\zeta[Y\backslash X]$ which exists, as $Y\backslash X$ is countable. For convenience  denote
\[C_i^\zeta=(f_i^\zeta)^{-1}\big[[0,r_i^\zeta)\big]\mbox{ and }D_i^\zeta=(f_i^\zeta)^{-1}\big[[0,r_i^\zeta]\big]\]
and define
\[H_i^\zeta=\phi^{-1}[C_i^\zeta]\backslash\lambda_{\mathcal P} X\]
where $\phi:\beta X\rightarrow\beta Y$ is the continuous extension of $\mbox{id}_X$. For any $\zeta\leq\sigma$ let
\[{\mathscr H}_\zeta=\{H_i^\zeta: i\in J_\zeta\}.\]
We verify that the collection $\{{\mathscr H}_\zeta:\zeta\leq\sigma\}$ has the  desired properties. First note that for any $\zeta\leq\sigma$ and any distinct $i,j\in J_\zeta$ we have
\[H_i^\zeta\cap H_j^\zeta\subseteq\phi^{-1}[C_i^\zeta]\cap\phi^{-1}[C_j^\zeta]\subseteq\phi^{-1}[W_i^\zeta]\cap\phi^{-1}[W_j^\zeta]=\phi^{-1}[W_i^\zeta\cap W_j^\zeta]=\emptyset.\]
By definition $H_i^\zeta$'s are open in $\beta X\backslash\lambda_{\mathcal P} X$.  We show that $H_i^\zeta$'s are closed in $\beta X\backslash\lambda_{\mathcal P} X$ and they are non--empty. Let $\zeta\leq\sigma$ and $i\in J_\zeta$. Note that by the choice of $r_i^\zeta$ we have
\[(f_i^\zeta)^{-1}(r_i^\zeta)\cap (Y\backslash X)=\emptyset\]
and thus
\begin{equation}\label{ADRF}
C_i^\zeta\cap (Y\backslash X)=D_i^\zeta\cap (Y\backslash X).
\end{equation}
Therefore
\[\phi^{-1}[C_i^\zeta]\cap\phi^{-1}[Y\backslash X]=\phi^{-1}\big[C_i^\zeta\cap (Y\backslash X)\big]=\phi^{-1}\big[D_i^\zeta\cap (Y\backslash X)\big]=\phi^{-1}[D_i^\zeta]\cap \phi^{-1}[Y\backslash X].\]
By Lemma \ref{16} we have $\beta X\backslash\lambda_{\mathcal P} X\subseteq\phi^{-1}[Y\backslash X]$ (and that $X$ is locally--${\mathcal P}$) which by above yields
\begin{equation}\label{GASW}
H_i^\zeta=\phi^{-1}[C_i^\zeta]\backslash\lambda_{\mathcal P} X=\phi^{-1}[D_i^\zeta]\backslash\lambda_{\mathcal P} X.
\end{equation}
This shows that $H_i^\zeta$ is closed in $\beta X\backslash\lambda_{\mathcal P} X$. Next, suppose to the contrary that  $H_i^\zeta=\emptyset$. Consider the subspace
\[Y'=X\cup\big((Y\backslash X)\backslash C_i^\zeta\big)\]
of $Y$. Then $Y'$ is an extension of $X$ and $Y'\backslash X$ is compact, as it is closed in $Y\backslash X$. Also
\[\phi^{-1}[Y'\backslash X]=\phi^{-1}\big[(Y\backslash X)\backslash C_i^\zeta\big]=\phi^{-1}[Y\backslash X]\backslash\phi^{-1}[C_i^\zeta]\supseteq\beta X\backslash\lambda_{\mathcal P} X.\]
Thus by Lemma \ref{16} we have $Y'\in{\mathscr E}^{\mathcal Q}_{\mathcal P}(X)$. Note that $Y'$ is properly contained in $Y$, as $p_i^\zeta\notin Y'$, because $f_i^\zeta(p_i^\zeta)=0$ and thus $p_i^\zeta\in C_i^\zeta$. But this contradicts the minimality of $Y$. This shows that each ${\mathscr H}_\zeta$ where $\zeta\leq\sigma$, is a collection of pairwise disjoint non--empty clopen subsets of $\beta X\backslash\lambda_{\mathcal P}X$. Also, note that (3.c.i) holds, as $ \mbox{card}({\mathscr H}_\zeta)=\mbox{card}(J_\zeta)$ for any $\zeta\leq\sigma$.

We now verify (3.c.ii). Let  $\zeta\leq\sigma$ and $H\in {\mathscr H}_\zeta$. Suppose to the contrary that
\[H\backslash\bigcup\{G\in {\mathscr H}_\eta:\eta<\zeta\}=\emptyset.\]
Let $H=H_i^\zeta$ for some $i\in J_\zeta$. Since $H_i^\zeta$ is compact, as it is closed in $\beta X\backslash\lambda_{\mathcal P} X$, there exists some $H_{i_j}^{\zeta_j}\in {\mathscr H}_{\zeta_j}$ with $i_j\in J_{\zeta_j}$ and $\zeta_j<\zeta$, where $j=1,\ldots,k$ and $k\in\mathbf{N}$, such that $H_i^\zeta\subseteq H_{i_1}^{\zeta_1}\cup\cdots\cup H_{i_k}^{\zeta_k}$. Consider the subspace
\[Y'=X\cup\Big((Y\backslash X)\backslash\Big(C_i^\zeta\backslash\bigcup_{j=1}^kD_{i_j}^{\zeta_j}\Big)\Big)\]
of $Y$. Using (\ref{GASW}) we have
\[\phi^{-1}\Big[C_i^\zeta\backslash\bigcup_{j=1}^kD_{i_j}^{\zeta_j}\Big]\backslash\lambda_{\mathcal P} X=\Big(\phi^{-1}[C_i^\zeta]\backslash\bigcup_{j=1}^k\phi^{-1}[D_{i_j}^{\zeta_j}]\Big)
\backslash\lambda_{\mathcal P} X=H_i^\zeta\backslash\bigcup_{j=1}^k H_{i_j}^{\zeta_j}=\emptyset\]
and thus
\begin{eqnarray*}
\phi^{-1}[Y'\backslash X]&=&\phi^{-1}\Big[(Y\backslash X)\backslash\Big(C_i^\zeta\backslash\bigcup_{j=1}^kD_{i_j}^{\zeta_j}\Big)\Big]
\\&=&\phi^{-1}[Y\backslash X]\backslash\phi^{-1}\Big[C_i^\zeta\backslash\bigcup_{j=1}^kD_{i_j}^{\zeta_j}\Big]\supseteq\beta X\backslash\lambda_{\mathcal P}X.
\end{eqnarray*}
Note that $Y'\backslash X$ is compact, as it is closed in $Y\backslash X$, and therefore by Lemma \ref{16} we have $Y'\in{\mathscr E}^{\mathcal Q}_{\mathcal P}(X)$. We show that $Y'$ is properly contained in $Y$. This contradicts the minimality of $Y$ and proves (3.c.ii). Indeed, we verify that $p_i^\zeta\notin Y'$. By the definition of $f_i^\zeta$ we have $p_i^\zeta\in C_i^\zeta$. Also, for any $j=1,\ldots,k$ we have $p_i^\zeta\notin D_{i_j}^{\zeta_j}$, as otherwise since $D_{i_j}^{\zeta_j}\subseteq W_{i_j}^{\zeta_j}$ it follows that $p_i^\zeta\in W_{i_j}^{\zeta_j}$. But since $\zeta_j<\zeta$ and thus $\zeta_j+1\leq\zeta$ we have
\[p_i^\zeta\in(Y\backslash X)^{(\zeta)}\subseteq(Y\backslash X)^{(\zeta_j+1)}=\big((Y\backslash X)^{(\zeta_j)}\big)'\]
and therefore $W_{i_j}^{\zeta_j}$, being an open neighborhood of $p_i^\zeta$ in $\beta Y$, has an infinite intersection with $(Y\backslash X)^{(\zeta_j)}$, contradicting the definition of $W_{i_j}^{\zeta_j}$.

Next, we show (3.c.iii). Suppose that $\zeta<\eta\leq\sigma$, $H\in  {\mathscr H}_\zeta$ and $G\in  {\mathscr H}_\eta$. Let $H=H_i^\zeta$ and $G=H_j^\eta$ for some $i\in J_\zeta$ and $j\in J_\eta$. We have the following cases:

\begin{description}
\item[{\sc Case 1.}] Suppose that $p_i^\zeta\in C_j^\eta$. First note that each $p\in Y\backslash X$ is of the form $p^\xi_k$ for some $\xi\leq\sigma$ and $k\in J_\xi$. To show this let $\alpha<\Omega$ be the least ordinal such that $p\notin (Y\backslash X)^{(\alpha)}$. Such an $\alpha$ exists, as $(Y\backslash X)^{(\sigma+1)}=\emptyset$, and it is necessarily not a limit ordinal, as otherwise
    \[p\in\bigcap\big\{(Y\backslash X)^{(\xi)}:\xi<\alpha\big\}=(Y\backslash X)^{(\alpha)}.\]
    Let $\xi$ be such that $\alpha=\xi+1$. Then $p\in(Y\backslash X)^{(\xi)}\backslash(Y\backslash X)^{(\xi+1)}$ and thus $p=p_k^\xi$ for some $k\in J_\xi$. Since $D_i^\zeta\subseteq W_i^\zeta$, by an argument similar to the one in (3.c.ii), for any $p_k^\xi\in D_i^\zeta$ where $k\in J_\xi$ we have $\xi\leq\zeta$ and if $\xi=\zeta$ then $p_k^\xi=p_i^\zeta$, as by the definition of $W_i^\zeta$ we have $W_i^\zeta\cap(Y\backslash X)^{(\zeta)}=\{p_i^\zeta\}$. Therefore in this case
    \[(D_i^\zeta\backslash C_j^\eta)\cap(Y\backslash X)\subseteq\{p_k^\xi:\xi<\zeta\mbox{ and } k\in J_\xi\}\subseteq\bigcup\{C_k^\xi:\xi<\zeta\mbox{ and }k\in J_\xi\}\]
    which (since $\beta X\backslash\lambda_{\mathcal P} X\subseteq\phi^{-1}[Y\backslash X]$) implies that
    \begin{eqnarray*}
    H_i^\zeta\backslash H_j^\eta&=&\big(\phi^{-1}[D_i^\zeta]\backslash\phi^{-1}[C_j^\eta]\big)\backslash\lambda_{\mathcal P} X
    \\&\subseteq&\big(\big(\phi^{-1}[D_i^\zeta]\backslash\phi^{-1}[C_j^\eta]\big)\cap\phi^{-1}[Y\backslash X]\big)\backslash\lambda_{\mathcal P} X\\&\subseteq&\bigcup\big\{\phi^{-1}[C_k^\xi]\backslash\lambda_{\mathcal P} X:\xi<\zeta \mbox{ and } k\in J_\xi\big\}=\bigcup\{H_k^\xi:\xi<\zeta\mbox{ and }k\in J_\xi\}.
    \end{eqnarray*}
    Thus
    \[H^\zeta_i\subseteq H^\eta_j\cup\bigcup\{F\in{\mathscr H}_\xi:\xi<\zeta\}\]
    and (3.c.iii) holds in this case.
\item[{\sc Case 2.}] Suppose that $p_i^\zeta\notin C_j^\eta$. Arguing as in Case 1 we have
    \[D_i^\zeta\cap C_j^\eta\cap(Y\backslash X)\subseteq\{p_k^\xi:\xi<\zeta\mbox{ and } k\in J_\xi\}\subseteq\bigcup\{C_k^\xi:\xi<\zeta \mbox{ and }k\in J_\xi\}.\]
    Therefore
    \begin{eqnarray*}
    H_i^\zeta\cap
    H_j^\eta&=&\big(\phi^{-1}[D_i^\zeta]\cap\phi^{-1}[C_j^\eta]\big)\backslash\lambda_{\mathcal P} X\\&=&\big(\phi^{-1}[D_i^\zeta]\cap\phi^{-1}[C_j^\eta]\cap\phi^{-1}[Y\backslash X]\big)\backslash\lambda_{\mathcal P} X
    \\&\subseteq&\bigcup\big\{\phi^{-1}[C_k^\xi]\backslash\lambda_{\mathcal P} X:\xi<\zeta \mbox{ and } k\in J_\xi\big\} =\bigcup\{H_k^\xi:\xi<\zeta \mbox{ and } k\in J_\xi\}.
    \end{eqnarray*}
    Thus (3.c.iii) holds in this case as well.
\end{description}

Finally, we verify  (3.c.iv). Suppose that $\zeta<\eta\leq\sigma$ and $H\in {\mathscr H}_\eta$. Let $H=H_j^\eta$ for some $j\in J_\eta$. We first verify that
\[C_j^\eta\cap\big((Y\backslash X)^{(\zeta)}\backslash(Y\backslash X)^{(\zeta+1)}\big)\]
is infinite. Suppose it is finite. Since
\[p_j^\eta\in(Y\backslash X)^{(\eta)}\subseteq(Y\backslash X)^{(\zeta+1)}=\big((Y\backslash X)^{(\zeta)}\big)'\]
and $C_j^\eta$ is an open neighborhood of $p_j^\eta$ in $\beta Y$ the set
\[C_j^\eta\cap(Y\backslash X)^{(\zeta)}=\big(C_j^\eta\cap\big((Y\backslash X)^{(\zeta)}\backslash(Y\backslash X)^{(\zeta+1)}\big)\big)\cup\big(C_j^\eta\cap(Y\backslash X)^{(\zeta+1)}\big)\]
is infinite. But then since by (\ref{ADRF}) we have
\[C_j^\eta\cap(Y\backslash X)^{(\zeta+1)}=D_j^\eta\cap(Y\backslash X)^{(\zeta+1)}\]
the latter set is an infinite compact space without isolated points and therefore uncountable. This contradiction shows that $p_i^\zeta\in C_j^\eta$ for an infinite number of $i\in J_\zeta$. But if $p_i^\zeta\in C_j^\eta$ for some $i\in J_\zeta$, arguing as in Case 1 of (3.c.iii) we have
\[H_i^\zeta\subseteq H_j^\eta \cup\bigcup\{G\in {\mathscr H}_\xi:\xi<\zeta\}.\]
This shows (3.c.iv).

(3.c) {\em implies} (3.b). Consider a family $\{{\mathscr H}_\zeta:\zeta\leq\sigma\}$ of collections of pairwise disjoint non--empty clopen subsets of  $\beta X\backslash\lambda_{\mathcal P} X$ satisfying (3.c.i)--(3.c.iv). For any $\zeta\leq\sigma$ let ${\mathscr H}_\zeta=\{H_i^\zeta:i\in J_\zeta\}$ where $H_i^\zeta$'s are bijectively indexed. Note that if $(\zeta,i)\neq(\eta,j)$, where $\zeta,\eta\leq\sigma$, $i\in J_\zeta$ and $j\in J_\eta$, then $H_i^\zeta\neq H_j^\eta$. This is clear if $\zeta=\eta$, and if $\zeta<\eta$ then it follows from (3.c.ii), as
\[\emptyset\neq H_i^\eta \backslash\bigcup\{G\in {\mathscr H}_\xi:\xi<\eta\}\subseteq H_i^\eta\backslash H_j^\zeta.\]
Similarly, if $\eta<\zeta$. Before we proceed with the main proof we prove the following generalized version of (3.c.iv).

\begin{xclaim}
For any $\zeta,\eta_1,\ldots,\eta_k<\eta\leq\sigma$, where $k$ is a non--negative integer ($\eta_1,\ldots,\eta_k$  may not be distinct) $H\in {\mathscr H}_\eta$ and $H_i\in {\mathscr H}_{\eta_i}$ for any $i=1,\ldots,k$, the set
\begin{equation}\label{UYBJ}
\Big\{F\in{\mathscr H}_\zeta:F\subseteq\Big(H\backslash\bigcup_{i=1}^kH_i\Big)\cup\bigcup\{G\in {\mathscr H}_\xi:\xi<\zeta\}\Big\}
\end{equation}
is infinite.
\end{xclaim}

\subsubsection*{Proof of the claim} If $k=0$ then the claim is simply (3.c.iv). Assume that $k>0$. We use transfinite  induction  on $\eta$. Suppose that $\eta=1$, $k\in\mathbf{N}$, $\zeta,\eta_1,\ldots,\eta_k<\eta$, $H\in {\mathscr H}_\eta$ and  $H_i\in {\mathscr H}_{\eta_i}$ for any $i=1,\ldots,k$. Then  $\zeta,\eta_1,\ldots,\eta_k=0$. By (3.c.iv) the set ${\mathscr F}=\{F\in {\mathscr H}_\zeta:F\subseteq H\}$ is infinite. Now since the elements of ${\mathscr H}_\zeta$ are pairwise disjoint, each $F\in {\mathscr F}\backslash\{H_1,\ldots, H_k\}$ misses $H_i$ for any $i=1,\ldots,k$, and thus $F\subseteq H\backslash (H_1\cup\cdots\cup H_k)$. Therefore (\ref{UYBJ}) holds for $\eta=1$. Now inductively suppose that (\ref{UYBJ}) holds for any $\xi<\eta$. Let $\zeta,\eta_1,\ldots,\eta_k<\eta\leq\sigma$, where $k\in\mathbf{N}$, $H\in {\mathscr H}_\eta$ and $H_i\in {\mathscr H}_{\eta_i}$ for any $i=1,\ldots,k$. We may assume that $\eta_1,\ldots,\eta_{l-1}<\eta_l=\cdots=\eta_k$ for some $l\in\mathbf{N}$ with $l\leq k$. Let
\[{\mathscr K}=\Big\{K\in{\mathscr H}_\zeta:K\subseteq\Big(H\backslash\bigcup_{i=1}^kH_i\Big)\cup\bigcup\{G\in {\mathscr H}_\xi:\xi<\zeta\}\Big\}.\]
We consider the following cases:
\begin{description}
\item[{\sc Case 1.}] Suppose that $\eta_k<\zeta$. Since $\zeta<\eta$, by (3.c.iv) the set
    \[{\mathscr F}=\Big\{F\in{\mathscr H}_\zeta:F\subseteq H\cup\bigcup\{G\in {\mathscr H}_\xi:\xi<\zeta\}\Big\}\]
    is infinite. Now for any $F\in {\mathscr F}$ we have
    \[F\backslash\bigcup\{G\in {\mathscr H}_\xi:\xi<\zeta\}\subseteq H\backslash\bigcup\{G\in {\mathscr H}_\xi:\xi<\zeta\}\subseteq H\backslash\bigcup_{i=1}^kH_i\]
    and thus $F\in{\mathscr K}$. Therefore in this case ${\mathscr F}\subseteq{\mathscr K}$ and thus ${\mathscr K}$ is infinite.
\item[{\sc Case 2.}] Suppose that $\eta_k=\zeta$. By (3.c.iv) the set
    \[{\mathscr F}=\Big\{F\in{\mathscr H}_\zeta:F\subseteq H\cup\bigcup\{G\in{\mathscr H}_\xi:\xi<\zeta\}\Big\}\]
    is infinite. Let $F\in {\mathscr F}\backslash\{H_1,\ldots,H_k\}$. Then each $H_i\in {\mathscr H}_{\eta_i}={\mathscr H}_\zeta$ where $i=l,\ldots,k$ is disjoint from $F\in{\mathscr H}_\zeta$. On the other hand, since $\eta_i<\zeta$ for any $i=1,\ldots,l-1$ we have
    \[F\backslash\{G\in {\mathscr H}_\xi:\xi<\zeta\}\subseteq H\backslash\bigcup_{i=1}^{l-1}H_i\]
    which  combined with above gives $F\in  {\mathscr K}$.  Therefore ${\mathscr F}\backslash\{H_l,\ldots,H_k\}\subseteq{\mathscr K}$ and thus ${\mathscr K}$ is infinite.
\item[{\sc Case 3.}] Suppose that $\eta_k>\zeta$. Since $\eta_k<\eta$, by (3.c.iv) the set
    \[{\mathscr L}=\Big\{L\in{\mathscr H}_{\eta_k}:L\subseteq H\cup\bigcup\{G\in {\mathscr H}_\xi:\xi<\eta_k\}\Big\}\]
    is infinite. Choose some $L\in {\mathscr L}\backslash\{H_l,\ldots,H_k\}$. Then
    \[L\subseteq H\cup\bigcup\{G\in {\mathscr H}_\xi:\xi<\eta_k\}\]
    and thus by compactness $L\subseteq H\cup G_1\cup\cdots\cup G_m$, where $G_i\in{\mathscr H}_{\xi_i}$, $\xi_i<\eta_k$ for any $i=1,\ldots,m$ and $m\in\mathbf{N}$. Now since
    \[\zeta,\xi_1,\ldots,\xi_m,\eta_1,\ldots,\eta_{l-1}<\eta_k<\eta\]
    by our induction assumption the set
    \[{\mathscr F}=\Big\{F\in{\mathscr H}_\zeta:F\subseteq\Big(L\backslash\Big(\bigcup_{i=1}^m G_i\cup\bigcup_{i=1}^{l-1}H_i\Big)\Big)\cup\bigcup\{G\in{\mathscr H}_\xi:\xi<\zeta\}\Big\}\]
    is infinite. If $F\in{\mathscr F}$ then
    \[F\backslash\bigcup\{G\in {\mathscr H}_\xi:\xi<\zeta\}\subseteq L\backslash\Big(\bigcup_{i=1}^m G_i\cup\bigcup_{i=1}^{l-1}H_i\Big)=\Big(L\backslash\bigcup_{i=1}^m G_i\Big)\backslash\bigcup_{i=1}^{l-1}H_i\subseteq H\backslash\bigcup_{i=1}^{l-1}H_i\]
    which together with the fact that $L\in{\mathscr H}_{\eta_k}$ is disjoint from $H_i\in {\mathscr H}_{\eta_i}={\mathscr H}_{\eta_k}$ for any $i=l,\ldots,k$, gives
    \[F\backslash\bigcup\{G\in {\mathscr H}_\xi:\xi<\zeta\}\subseteq H\backslash\bigcup_{i=1}^k H_i.\]
    This shows that $F\in {\mathscr K}$. Therefore  ${\mathscr F}\subseteq {\mathscr K}$ and thus ${\mathscr K}$ is infinite in this case as well.
\end{description}
This proves the claim.

\medskip

\noindent We now return to the main proof. Fix some $k\in J_\sigma$. For any $\zeta\leq\sigma$ and $i\in J_\zeta$ define
\[P_i^\zeta=H_i^\zeta\backslash\bigcup\{H\in {\mathscr H}_\xi:\xi<\zeta\}\]
if $(\zeta,i)\neq(\sigma,k)$ and
\[P_i^\zeta=(\beta X\backslash\lambda_{\mathcal P} X)\backslash\bigcup\{H\in {\mathscr H}_\xi:\xi\leq\sigma\mbox{ and }H\neq H_k^\sigma\}\]
if $(\zeta,i)=(\sigma,k)$.

\begin{xclaim}
The collection $\{P_i^\zeta:\zeta\leq\sigma\mbox{ and }i\in J_\zeta\}$ is bijectively indexed and partitions $\beta X\backslash\lambda_{\mathcal P} X$ into pairwise disjoint non--empty subsets.
\end{xclaim}

\subsubsection*{Proof of the claim}  We first show that
\begin{equation}\label{LOGF}
\bigcup\{P_i^\zeta:\zeta\leq\sigma\mbox{ and }i\in J_\zeta\}=\beta X\backslash\lambda_{\mathcal P} X.
\end{equation}
Let $x\in\beta X\backslash\lambda_{\mathcal P}X$. If $x\notin H_i^\zeta$ for any $\zeta\leq\sigma$ and $i\in J_\zeta$, then clearly $x\in  P_k^\sigma$. If otherwise, then there exists some  $\zeta\leq\sigma$ such that $x\in H_i^\zeta$ for some $i\in J_\zeta$. Let $\zeta$ be the least with this property. If $(\zeta,i)\neq(\sigma,k)$, then by definition it is clear that $x\in P_i^\zeta$. If $(\zeta,i)=(\sigma,k)$ then again $x\in P_i^\zeta$, as the elements of ${\mathscr H}_\zeta$ are pairwise disjoint and $x\notin H$ for any $H\in {\mathscr H}_\eta$ with $\eta<\zeta$.  This shows (\ref{LOGF}). Next, we show that  $P_i^\zeta\cap P_j^\eta=\emptyset$ whenever $\zeta,\eta\leq\sigma$, $i\in J_\zeta$, $j\in J_\eta$ and $(\zeta,i)\neq(\eta,j)$. First suppose that  $(\zeta,i),(\eta,j)\neq(\sigma,k)$. If $\zeta=\eta$ then clearly $P_i^\zeta\cap P_j^\eta\subseteq H_i^\zeta\cap H_j^\eta=\emptyset$. If $\zeta<\eta$ then  $P_i^\zeta\cap P_j^\eta\subseteq H_i^\zeta\cap P_j^\eta=\emptyset$. Similarly, if $\eta<\zeta$. Next, suppose that $(\zeta,i)=(\sigma,k)$. Then $P_i^\zeta\cap P_j^\eta\subseteq P_i^\zeta\cap H_j^\eta=\emptyset$. Similarly, if  $(\eta,j)=(\sigma,k)$. Finally, we verify that $P_i^\zeta$'s are non--empty.  Let $\zeta\leq\sigma$ and $i\in J_\zeta$. If  $(\zeta,i)\neq(\sigma,k)$ then $P_i^\zeta$ is non--empty by (3.c.ii). If  $(\zeta,i)=(\sigma,k)$, then again using (3.c.ii) we have
\[\emptyset\neq H_i^\zeta\backslash\bigcup\{H\in{\mathscr H}_\xi:\xi<\zeta\}\subseteq P_i^\zeta.\]
The fact that  $P_i^\zeta$'s are bijectively indexed is now immediate.

\medskip

\noindent Now let $T$ be the space obtained from $\beta X$ by contracting each $P_i^\zeta$ where $\zeta\leq\sigma$ and $i\in J_\zeta$ to a point $p_i^\zeta$ and denote by $q:\beta X\rightarrow T$ the corresponding quotient mapping. By Lemma \ref{15} we have $X\subseteq\lambda_{\mathcal P} X$. Consider the subspace
\[Y=X\cup\{p_i^\zeta:\zeta\leq\sigma\mbox{ and }i\in J_\zeta\}\]
of $T$. In the remainder of the proof we show that $Y\in{\mathscr O}^{\mathcal Q}_{\mathcal P}(X)$ and that the remainder of  $Y$ is of type $(\sigma,n)$. We first show that $T$ is Hausdorff. Let $s,t\in T$ be distinct. Consider the following cases:
\begin{description}
\item[{\sc Case 1.}] Suppose that $s,t\in T\backslash\{p_i^\zeta:\zeta\leq\sigma\mbox{ and } i\in J_\zeta\}$. Then $s,t\in\lambda_{\mathcal P} X$. Now since $\lambda_{\mathcal P} X$ is open in $\beta X$ and $s$ and $t$ can be separated in $\lambda_{\mathcal P} X$  by disjoint open subsets they can also  be separated by disjoint open subsets in $T$.
\item[{\sc Case 2.}] Suppose that $s\in T\backslash\{p_i^\zeta:\zeta\leq\sigma\mbox{ and } i\in J_\zeta\}$ and $t=p_j^\eta$ for some $\eta\leq\sigma$  and $j\in J_\eta$. Then $s\in\lambda_{\mathcal P} X$. Now if $U$ and $V$ are disjoint open subsets of $\beta X$ containing $s$ and $\beta X\backslash\lambda_{\mathcal P}X$, respectively, then $q[U]$ and $q[V]$ are disjoint open neighborhoods of $s$ and $t$ in $T$, respectively.
\item[{\sc Case 3.}] Suppose that $s=p_i^\zeta$ and $t=p_j^\eta$ for some $\zeta,\eta\leq\sigma$, $i\in J_\zeta$ and $j\in J_\eta$. Without any loss of generality we may assume that $\zeta\leq\eta$ and $(\zeta,i)\neq(\sigma,k)$. Since $H_i^\zeta$ is clopen in  $\beta X\backslash\lambda_{\mathcal P} X$ the sets $H_i^\zeta$ and $(\beta X\backslash\lambda_{\mathcal P} X)\backslash H_i^\zeta$ are compact open subsets of $\beta X\backslash\lambda_{\mathcal P} X$. Let $U$ and $V$ be disjoint open subsets of $\beta X$ such that\[H_i^\zeta=U\cap(\beta X\backslash\lambda_{\mathcal P} X)\mbox{ and }(\beta X\backslash\lambda_{\mathcal P} X)\backslash H_i^\zeta=V\cap(\beta X\backslash\lambda_{\mathcal P}X).\]
    We need to verify the following.

    \begin{xclaim}
    For any $\xi\leq \sigma$ and $l\in J_\xi$ if $P^\xi_l\cap H_i^\zeta$ is non--empty then $P^\xi_l\subseteq H_i^\zeta$.
    \end{xclaim}

    \subsubsection*{Proof of the claim} Suppose that $P^\xi_l\cap H_i^\zeta$ is non--empty for some $\xi\leq \sigma$ and $l\in J_\xi$. This implies that $(\xi,l)\neq(\sigma,k)$, as we are assuming that $(\zeta,i)\neq(\sigma,k)$, and thus by definition $P^\sigma_k\cap H_i^\zeta=\emptyset$. Therefore
    \[P^\xi_l=H^\xi_l\backslash\bigcup\{H\in{\mathscr H}_\alpha:\alpha<\xi\}.\]
    Note that $\zeta<\xi$ implies that $P^\xi_l\cap H_i^\zeta=\emptyset$ and thus $\xi\leq\zeta$. If $\zeta=\xi$, then since $P^\xi_l\cap H_i^\zeta\subseteq H^\xi_l\cap H_i^\zeta$, the latter set is non--empty and therefore $P^\xi_l\subseteq H^\xi_l= H_i^\zeta$. If $\zeta>\xi$, then by (3.c.iii) we either have
    \[H^\xi_l\subseteq H_i^\zeta\cup\bigcup\{H\in{\mathscr H}_\alpha:\alpha<\xi \}\mbox{ or }H^\xi_l\cap H_i^\zeta\subseteq\bigcup\{H\in{\mathscr H}_\alpha:\alpha<\xi \}.\]
    The latter case leads to a contradiction, as $P^\xi_l\cap H_i^\zeta\subseteq H^\xi_l\cap H_i^\zeta$ and
    \[P^\xi_l\cap\bigcup\{H\in{\mathscr H}_\alpha:\alpha<\xi \}=\emptyset.\]
    The first case gives $P^\xi_l\subseteq H_i^\zeta$, which proves the claim.

    \medskip

    \noindent From the claim it follows that $q^{-1}[q[U]]=U$ and $q^{-1}[q[V]]=V$. Thus  $q[U]$ and $q[V]$ are open subsets of $T$ and they are disjoint. It is also clear that $p_i^\zeta\in q[U]$, as $P_i^\zeta\subseteq H_i^\zeta\subseteq U$. It remains to show that $p_j^\eta\in q[V]$. Note that by our assumption  $\zeta\leq\eta$. To show that $P_j^\eta\cap H_i^\zeta=\emptyset$ we consider the following cases:
    \begin{description}
    \item[{\sc Case 3.a.}] Suppose that $\zeta=\eta$ and $(\eta,j)\neq(\sigma,k)$. Then since $p_i^\zeta\neq p_j^\eta$ we have $i\neq j$ and therefore $P_j^\eta\cap H_i^\zeta\subseteq H_j^\eta\cap H_i^\zeta=\emptyset$.
    \item[{\sc Case 3.b.}] Suppose that $\zeta=\eta$ and $(\eta,j)=(\sigma,k)$. By our assumption $(\zeta,i)\neq(\sigma,k)$ or equivalently $H_i^\zeta\neq H_k^\sigma$. Therefore by the definition of $P_j^\eta$ it follows that $P_j^\eta\cap H_i^\zeta=\emptyset$.
    \item[{\sc Case 3.c.}] Suppose that $\zeta<\eta$ and $(\eta,j)\neq(\sigma,k)$. By the definition of $P_j^\eta$ we have  $P_j^\eta\subseteq H_j^\eta\backslash H_i^\zeta$ and thus $P_j^\eta\cap H_i^\zeta=\emptyset$.
    \item[{\sc Case 3.d.}] Suppose that $\zeta<\eta$ and $(\eta,j)=(\sigma,k)$. Then by the definition of $P_j^\eta$ it follows that $P_j^\eta\cap H_i^\zeta=\emptyset$.
    \end{description}
    Thus in each case $P_j^\eta\cap H_i^\zeta=\emptyset$. Therefore $P_j^\eta\subseteq(\beta X\backslash\lambda_{\mathcal P} X)\backslash H_i^\zeta\subseteq V$ and thus $p_j^\eta\in q[V]$.
\end{description}
This shows that $T$ is Hausdorff and therefore compact, being a continuous image of $\beta X$. It is easy to see that $T$ contains $X$ as a dense subspace, and thus since $X\subseteq Y\subseteq T$, it follows that $T$ is a compactification of $Y$ and that $Y$ is a Tychonoff extension of $X$. Also $Y\backslash X=q[\beta X\backslash\lambda_{\mathcal P}X]$ is compact. From these by Lemma \ref{16} we have $Y\in{\mathscr E}^{\mathcal Q}_{\mathcal P}(X)$. Now by Theorem \ref{HG16} and an argument similar to the one in (1.d) $\Rightarrow$ (1.b) it follows that $Y\in{\mathscr O}_{\mathcal P}(X)$. It thus remains to show that $Y\backslash X$ is of type $(\sigma,n)$, that is, $\mbox{card}((Y\backslash X)^{(\sigma)})=n$.  Indeed, we prove the following.

\begin{xclaim}
For any $\zeta\leq\sigma$ we have
\begin{equation}\label{UYNMBJ}
(Y\backslash X)^{(\zeta)}=\{p_j^\eta: \zeta\leq\eta\leq\sigma\mbox{ and  } j\in J_\eta\}.
\end{equation}
\end{xclaim}

\subsubsection*{Proof of the claim} The proof is by transfinite induction on $\zeta$. Note that (\ref{UYNMBJ}) clearly holds  when $\zeta=0$, as by definition  $(Y\backslash X)^{(0)}=Y\backslash X$. Suppose that $0<\alpha\leq\sigma$ and that (\ref{UYNMBJ}) holds for any $\zeta<\alpha$. We show that (\ref{UYNMBJ}) holds  for $\alpha$ as well. Consider the following cases:
\begin{description}
\item[{\sc Case 1.}] Suppose that $\alpha$ is a successor ordinal. Let $\alpha=\gamma+1$. Then by our induction assumption
    \begin{equation}\label{YGH}
    (Y\backslash X)^{(\gamma)}=\{p_j^\eta:\gamma\leq\eta\leq\sigma\mbox{ and }j\in J_\eta\}.
    \end{equation}
    Let $p_j^\eta\in (Y\backslash X)^{(\alpha)}$ where $\eta\leq\sigma$ and $j\in J_\eta$. Since $(Y\backslash X)^{(\alpha)}\subseteq(Y\backslash X)^{(\gamma)}$, by (\ref{YGH}) we have $\gamma\leq\eta$. We show that $\gamma\neq\eta$. Suppose the contrary. Clearly $\eta<\sigma$, as $\eta=\sigma$ implies that $\sigma=\gamma<\alpha$. Let $U$ be an open subset of $\beta X$ such that $H_j^\eta=U\cap(\beta X\backslash\lambda_{\mathcal P} X)$. Then as in the proof of the previous claim, $P_l^\xi\subseteq H_j^\eta$ for any $\xi\leq \sigma$ and $l\in J_\xi$ such that $P_l^\xi\cap H_j^\eta$ is non--empty. Thus $q^{-1}[q[U]]=U$ and therefore $q[U]$ is open in $T$. Since $(\eta,j)\neq(\sigma,k)$, by definition $P_j^\eta\subseteq H_j^\eta\subseteq U$. Thus $q[U]$ is an open neighborhood of $p_j^\eta$ in $T$ and therefore $q[U]\cap(Y\backslash X)^{(\gamma)}$ is infinite. Choose some $s\in q[U]\cap(Y\backslash X)^{(\gamma)}$ such that $s\neq p_j^\eta,p_k^\sigma$. Then by (\ref{YGH}) we have $s=p_l^\xi$ for some $\gamma\leq\xi\leq\sigma$ and $l\in J_\xi$. Since $p_l^\xi\in q[U]$ the set $P_l^\xi\cap H_j^\eta$ is non--empty and thus $P_l^\xi\subseteq H_j^\eta$. Consider the following cases:
    \begin{description}
    \item[{\sc Case 1.a.}] Suppose that $\xi>\gamma$. Then since we are assuming that $\gamma=\eta$, by the definition of $P_l^\xi$ we have $P_l^\xi=P_l^\xi\cap H_j^\eta=\emptyset$, which is a contradiction.
    \item[{\sc Case 1.b.}] Suppose that $\xi=\gamma$. Then $\eta=\gamma=\xi$ and therefore, since by the choice of $s$ we have $p_l^\xi\neq p_j^\eta$, it follows that
        \[P_l^\xi=P_l^\xi\cap H_j^\eta\subseteq H_l^\xi\cap H_j^\eta=\emptyset\]
        which is again a contradiction.
    \end{description}
    Thus in each case we are led to a contradiction which shows that $\gamma<\eta$ or $\alpha=\gamma+1\leq\eta$. Therefore
    \begin{equation}\label{DFG}
    (Y\backslash X)^{(\alpha)}\subseteq\{p_j^\eta: \alpha\leq\eta\leq\sigma\mbox{ and } j\in J_\eta\}.
    \end{equation}
    Next, we show that the reverse inclusion holds in (\ref{DFG}). Consider an element $p_j^\eta$ where $\alpha\leq\eta\leq\sigma$ and $j\in J_\eta$. Let $V$ be an open neighborhood of $p_j^\eta$ in $T$. We show that $V\cap (Y\backslash X)^{(\gamma)}$ is infinite which proves that
    \[p_j^\eta\in\big((Y\backslash X)^{(\gamma)}\big)'=(Y\backslash X)^{(\gamma+1)}=(Y\backslash X)^{(\alpha)}.\]
    First note that
    \begin{equation}\label{YTDFG}
    H_j^\eta\subseteq q^{-1}[V]\cup\bigcup\{H\in {\mathscr H}_\xi:\xi<\eta\}.
    \end{equation}
    This readily follows from the definition of $P_j^\eta$ in the case when $(\eta,j)\neq(\sigma,k)$. If otherwise $(\eta,j)=(\sigma,k)$, note that by the definition of $P_j^\eta$  we have
    \[H_j^\eta\backslash\bigcup\{H\in {\mathscr H}_\xi:\xi<\eta\}\subseteq P_j^\eta\subseteq q^{-1}[V]\]
    From (\ref{YTDFG}) and by compactness there exist $\xi_i<\eta$ and $k_i\in J_{\xi_i}$ where $i=1,\ldots,m$ and $m\in\mathbf{N}$ such that
    \[H^\eta_j\subseteq q^{-1}[V]\cup \bigcup_{i=1}^m H^{\xi_i}_{k_i}.\]
    By the first claim and since $\gamma,\xi_1,\ldots,\xi_m<\eta$ the set
    \[{\mathscr F}=\Big\{F\in{\mathscr H}_\gamma:F\subseteq\Big(H^\eta_j\backslash\bigcup_{i=1}^m H^{\xi_i}_{k_i}\Big)\cup\bigcup\{G\in{\mathscr
    H}_\xi:\xi<\gamma\}\Big\}\]
    is infinite. Now for any $H^\gamma_l\in {\mathscr F}$ where $l\in J_\gamma$, since $(\gamma,l)\neq(\sigma,k)$, as $\gamma<\alpha\leq\sigma$, we have
    \[P^\gamma_l=H^\gamma_l\backslash\bigcup\{G\in{\mathscr H}_\xi:\xi<\gamma\}\subseteq H_j^\eta\backslash\bigcup_{i=1}^m H^{\xi_i}_{k_i}\subseteq q^{-1}[V]\]
    and thus $p^\gamma_l\in V$. Therefore $V\cap(Y\backslash X)^{(\gamma)}$ is infinite. This shows that
    \[p_j^\eta\in\big((Y\backslash X)^{(\gamma)}\big)'=(Y\backslash X)^{(\alpha)}\]
    which proves the reverse inclusion in (\ref{DFG}).
\item[{\sc Case 2.}] Suppose that $\alpha$ is a limit ordinal. We have
    \begin{eqnarray*}
    (Y\backslash X)^{(\alpha)}&=&\bigcap_{\gamma<\alpha}(Y\backslash X)^{(\gamma)}\\&=&\bigcap_{\gamma<\alpha}\{p_j^\eta:\gamma\leq\eta\leq\sigma\mbox{ and } j\in J_\eta\}=\{p_j^\eta:\alpha\leq\eta\leq\sigma\mbox{ and } j\in J_\eta\}.
    \end{eqnarray*}
\end{description}
This completes the inductive proof of the claim.

\medskip

\noindent In particular, we have shown that
\[\mbox{card}\big((Y\backslash X)^{(\sigma)}\big)=\mbox{card}(J_\sigma)=n.\]
Thus $Y\backslash X$ is of type $(\sigma,n)$.
\end{proof}

\begin{xrem}
{\em Note that in Lemma \ref{18} above part (3) implies part (2), however, since the proof for part (3) is quite long and technical, a separate proof is given for part (2).}
\end{xrem}

The characterization given in Lemma \ref{18} is external (to $X$).  Our next theorem is dual to Lemma \ref{18} and gives  an internal characterization of those spaces which have a compactification--like $\mathcal{P}$--extension with finite, countable and countable of type $(\sigma,n)$ remainder. But we need first a few more lemmas.

\begin{lemma}\label{19FG}
Let $X$ be a Tychonoff space and let ${\mathcal P}$ be a clopen hereditary finitely additive perfect topological property. Then for any subset $A$ of $X$ if $\mbox{\em cl}_{\beta X }A\subseteq\lambda_{\mathcal P} X$ then $\mbox{\em cl}_X A\subseteq Z\subseteq C$ for some $Z\in{\mathscr Z}(X)$  and $C\in Coz(X)$ such that  $\mbox{\em cl}_X C$ has ${\mathcal P} $.
\end{lemma}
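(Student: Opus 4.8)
The plan is to obtain both required sets from a single Urysohn function on $\beta X$. First I would note that $\mbox{cl}_{\beta X}A$ is a compact subset of $\lambda_{\mathcal P}X$, so by the definition of $\lambda_{\mathcal P}X$ together with compactness there exist $Z_1,\ldots,Z_n\in{\mathscr Z}(X)$, each having ${\mathcal P}$, such that
\[\mbox{cl}_{\beta X}A\subseteq W:=\bigcup_{i=1}^n\mbox{int}_{\beta X}\mbox{cl}_{\beta X}Z_i.\]
Here $W$ is open in $\beta X$, and $W\subseteq\lambda_{\mathcal P}X$ by the very definition of $\lambda_{\mathcal P}X$. Since $\beta X$ is compact Hausdorff, hence normal, I would then choose a continuous $g:\beta X\rightarrow\mathbf{I}$ with $g[\mbox{cl}_{\beta X}A]\subseteq\{0\}$ and $g[\beta X\backslash W]\subseteq\{1\}$.

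Next I would set $f=g|X$ and define
\[Z=f^{-1}\big[[0,1/3]\big]\in{\mathscr Z}(X)\mbox{ and }C=f^{-1}\big[[0,1/2)\big]\in Coz(X),\]
so that $Z\subseteq C$. Since $g$ vanishes on $\mbox{cl}_{\beta X}A$, and in particular on $A$, we have $A\subseteq Z(f)\subseteq Z$, and as $Z$ is closed in $X$ this gives $\mbox{cl}_XA\subseteq Z$. On the other hand $C\subseteq g^{-1}[[0,1/2)]$, so $\mbox{cl}_{\beta X}C\subseteq g^{-1}[[0,1/2]]$ by continuity of $g$; and since $g$ is identically $1$ on $\beta X\backslash W$, the set $g^{-1}[[0,1/2]]$ is disjoint from $\beta X\backslash W$, whence
\[\mbox{cl}_{\beta X}C\subseteq W\subseteq\lambda_{\mathcal P}X.\]
Lemma \ref{B} applied to the set $C$ now yields that $\mbox{cl}_XC$ has ${\mathcal P}$, which is precisely what is required.

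There is no genuine obstacle here; the only point to get right is that the single function $g$ does double duty. Its sublevel set at $1/3$ traps $\mbox{cl}_XA$ inside the zero-set $Z$, while its slightly larger sublevel set at $1/2$ produces a cozero-set $C$ whose closure in $\beta X$ still lies in $\lambda_{\mathcal P}X$, so that Lemma \ref{B} may be invoked for $C$ rather than merely for $A$. The separation of the thresholds $1/3<1/2$ serves only to secure the inclusion $Z\subseteq C$ with $Z$ a genuine zero-set and $C$ a genuine cozero-set.
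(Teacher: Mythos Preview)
Your proof is correct and follows essentially the same route as the paper's: separate $\mbox{cl}_{\beta X}A$ from a closed set disjoint from it by a Urysohn function on $\beta X$, take $Z$ and $C$ as sublevel sets at $1/3$ and $1/2$, and invoke Lemma \ref{B}. The only difference is that you first pass through a finite union $W$ coming from compactness, whereas the paper separates $\mbox{cl}_{\beta X}A$ directly from $\beta X\backslash\lambda_{\mathcal P}X$ (which is already closed since $\lambda_{\mathcal P}X$ is open); your extra step is harmless but unnecessary.
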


\begin{proof}
The sets $\mbox{cl}_{\beta X }A$ and $\beta X\backslash\lambda_{\mathcal P} X$ are disjoint closed subsets of $\beta X$ and thus they are completely separated in $\beta X$. Let $f:\beta X\rightarrow\mathbf{I}$ be continuous with $f[\mbox{cl}_{\beta X }A]\subseteq\{0\}$ and $f[\beta X\backslash\lambda_{\mathcal P} X]\subseteq\{1\}$. Let
\[Z=f^{-1}\big[[0,1/3]\big]\cap X\in{\mathscr Z}(X)\mbox{ and }C=f^{-1}\big[[0,1/2)\big]\cap X\in Coz(X).\]
Then $\mbox{cl}_X A\subseteq Z\subseteq C$ and since
\[\mbox{cl}_{\beta X}C=\mbox{cl}_{\beta X}\big(f^{-1}\big[[0,1/2)\big]\cap X\big)=\mbox{cl}_{\beta X}f^{-1}\big[[0,1/2)\big]\subseteq f^{-1}\big[[0,1/2]\big]\subseteq\lambda_{\mathcal P} X\]
by Lemma \ref{B} the set $\mbox{cl}_X C$ has ${\mathcal P} $.
\end{proof}

\begin{lemma}\label{19}
Let $X$ be a Tychonoff space and let $A$ be an infinite compact  countable subset of $X$. Then there exists a bijectively indexed collection $\{V_n:n\in\mathbf{N}\}$  of pairwise disjoint open subsets of $X$ such that $V_n\cap A$ is compact and non--empty for any $n\in\mathbf{N}$.
\end{lemma}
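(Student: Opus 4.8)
The plan is to use the fact that $A$ is an infinite compact countable space, hence homeomorphic to a countable successor ordinal with the order topology, but more practically we just need that $A$ has a non-isolated point. Since $A$ is compact, infinite, and countable, $A$ cannot be discrete (a discrete compact space is finite), so $A$ has a non-isolated point, and in fact $A' \neq \emptyset$. Pick $a \in A'$. Then $A \setminus \{a\}$ is still infinite, and we can extract from it a countable faithfully indexed set of isolated points of $A$ accumulating at $a$: indeed, since $A$ is countable and compact, working inside $A$ we may choose a sequence $\{a_n : n \in \mathbf{N}\}$ of distinct points of $A$ converging to $a$ with $a_n \neq a$ for all $n$. (If $A$ happened to have no isolated points it would be a non-empty compact perfect space, hence uncountable; so after possibly passing to $A \setminus A''$ or just choosing the $a_n$ to be isolated points of $A$ — which exist in abundance since an infinite compact countable space has infinitely many isolated points — we get such a sequence.)

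Next I would separate these points in $X$. Since $X$ is Tychonoff and $\{a_n : n \in \mathbf{N}\} \cup \{a\}$ is a convergent sequence (a compact subset), and the $a_n$ are distinct, I can choose inductively pairwise disjoint open neighborhoods. Concretely: since $X$ is Hausdorff, pick disjoint open sets separating $a_1$ from $a$; shrink to get an open $W_1 \ni a_1$ with $a \notin \mathrm{cl}_X W_1$; having chosen $W_1, \dots, W_m$ pairwise disjoint with $a \notin \mathrm{cl}_X W_i$ and $a_i \in W_i$, note that only finitely many of the $a_n$ lie in $\mathrm{cl}_X W_1 \cup \cdots \cup \mathrm{cl}_X W_m$ is not automatic, so instead re-index: since $a_n \to a$ and $a \notin \bigcup_{i\le m}\mathrm{cl}_X W_i$, all but finitely many $a_n$ avoid that closed set, and picking one such $a_n$ (not already used) with a small neighborhood inside $X \setminus (\{a_1,\dots,a_m\} \cup \bigcup_{i\le m}\mathrm{cl}_X W_i)$ gives $W_{m+1}$. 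Relabel the resulting subsequence again as $\{a_n\}$ and the neighborhoods as $\{V_n\}$, so $\{V_n : n \in \mathbf{N}\}$ are pairwise disjoint open subsets of $X$ with $a_n \in V_n$.

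Finally I would arrange that each $V_n \cap A$ is compact and non-empty. It is non-empty since $a_n \in V_n \cap A$. For compactness, I use that $A$ is a compact countable space, so every point of $A$ has a local base of clopen (in $A$) neighborhoods — a compact countable Hausdorff space is zero-dimensional. Thus at the stage where I choose $a_n$ and its neighborhood, I first pick a clopen (in $A$) neighborhood $K_n$ of $a_n$ in $A$ that is contained in the relatively open set $A \cap (X \setminus (\{a_1,\dots,a_{n-1}\} \cup \bigcup_{i<n}\mathrm{cl}_X W_i))$ and small enough that $a \notin K_n$; such $K_n$ is compact (closed in $A$). Then, since $K_n$ is compact and disjoint from the previously chosen $\mathrm{cl}_X W_i$ and from $\{a_1,\dots,a_{n-1}\}$ (all closed), I use normality-type separation (Tychonoff plus compactness of $K_n$) to find an open $V_n \supseteq K_n$ in $X$ with $\mathrm{cl}_X V_n \cap (A \setminus K_n)$ small; more carefully, choose $V_n$ open with $K_n \subseteq V_n$ and $\mathrm{cl}_X V_n \cap A = K_n$ — this is possible because $A \setminus K_n$ is closed in $A$ hence compact, disjoint from the compact set $K_n$, so they have disjoint open neighborhoods in $X$; take $V_n$ inside $K_n$'s neighborhood. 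Shrinking further to keep the $V_n$ pairwise disjoint (again achievable inductively since the already-chosen $\mathrm{cl}_X V_i \cap A = K_i$ are finitely many compact sets disjoint from $K_n$), we obtain $V_n \cap A \subseteq \mathrm{cl}_X V_n \cap A = K_n \subseteq V_n$, so $V_n \cap A = K_n$ is compact and non-empty. The main obstacle is the bookkeeping in the simultaneous induction: one must choose, at each step, the clopen-in-$A$ piece $K_n$ and the open-in-$X$ set $V_n$ so that $V_n \cap A = K_n$, the $V_n$ stay pairwise disjoint, and infinitely many points of $A$ remain available — the last point being guaranteed precisely because $A$ is infinite and each $K_n$, being a proper clopen neighborhood of an isolated point pushed away from the accumulation point $a$, leaves $a$ and its neighborhood (hence infinitely many isolated points) untouched.
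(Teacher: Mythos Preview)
Your approach is correct and genuinely different from the paper's. You exploit the internal structure of $A$: fix a limit point $a\in A'$, use that a countable compact Hausdorff space is zero--dimensional to pick clopen (in $A$) pieces $K_n\ni a_n$ with $a\notin K_n$, then separate the disjoint compacta $K_n$ and $A\setminus K_n$ in $X$ to get $V_n$ with $V_n\cap A=\mbox{cl}_XV_n\cap A=K_n$; the anchor point $a$ stays outside every $\mbox{cl}_XV_i$ and guarantees infinitely many points of $A$ remain for the next step. The paper instead never invokes zero--dimensionality of $A$: at each stage it picks two points $c,d$ in the remaining infinite part of $A$, takes a continuous $g:X\to\mathbf{I}$ with $g(c)=0$, $g(d)=1$, and---this is the key trick---uses countability of $g[A]$ to choose $s\in(0,1)\setminus g[A]$, so that $g^{-1}[[0,s)]\cap A=g^{-1}[[0,s]]\cap A$ is automatically closed in $A$ and hence compact; one of the two half--preimages meets $A$ infinitely and is saved for later. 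Your argument buys a cleaner picture of \emph{why} the pieces $V_n\cap A$ are compact (they are clopen in $A$ by design), at the cost of invoking a structural fact about countable compacta; the paper's argument stays closer to the Tychonoff toolkit (real--valued functions) and gets compactness of $V_n\cap A$ from the level--set trick, at the cost of a slightly less transparent bookkeeping of which half is infinite. One small point: your write--up meanders through a first attempt (just separating the $a_n$) before arriving at the working version with the $K_n$; in a final proof you should go straight to the inductive step that simultaneously chooses $a_n$, $K_n$, and $V_n\subseteq X\setminus\bigcup_{i<n}\mbox{cl}_XV_i$ with $\mbox{cl}_XV_n\cap A=K_n$, since that is what actually carries the argument.
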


\begin{proof}
We inductively define a sequence $V_1,V_2,\ldots$ of pairwise disjoint open subsets of $X$ such that $V_n\cap A$ is compact and non--empty, $A\backslash \mbox{cl}_X(V_1\cup\cdots\cup V_n)$ is infinite and  $V_n\cap A=\mbox{cl}_XV_n\cap A$ for any $n\in\mathbf{N}$. Let $a,b\in A$ be distinct and let $f:X\rightarrow\mathbf{I}$ be continuous with $f(a)=0$ and $f(b)=1$. Since $f[A]$ is countable there exists some $r\in(0,1)\backslash f[A]$. Either $f^{-1}[[0,r)]\cap A$ or $f^{-1}[(r,1]]\cap A$, say the latter, is infinite. Let $V_1=f^{-1}[[0,r)]$. Since $V_1\cap A=f^{-1}[[0,r]]\cap A$ is closed in $A$, it is compact, and thus since
\[f^{-1}\big[(r,1]\big]\subseteq X\backslash f^{-1}\big[[0,r]\big]\subseteq X\backslash\mbox{cl}_XV_1\]
the set $A\backslash\mbox{cl}_XV_1$ is infinite and
\[V_1\cap A=f^{-1}\big[[0,r]\big]\cap A=\mbox{cl}_X V_1\cap A.\]
Suppose that for some $n\in\mathbf{N}$ the pairwise disjoint open subsets $V_1,\ldots,V_n $ of $X$ are defined such that $A\backslash\mbox{cl}_X(V_1\cup\cdots\cup V_n)$ is infinite, $V_i\cap A$ is compact and non--empty and
$V_i\cap A=\mbox{cl}_X V_i\cap A$ where $i=1,\ldots,n$. Choose some distinct $c,d\in A\backslash\mbox{cl}_X(V_1\cup\cdots\cup V_n)$ and let $g:X\rightarrow\mathbf{I}$ be continuous with $g(c)=0$ and $g(d)=1$. Choose some $s\in(0,1)\backslash g[A]$. Then at least one of
\[\Big(g^{-1}\big[[0,s)\big]\backslash\mbox{cl}_X\Big(\bigcup_{i=1}^n V_i\Big)\Big)\cap A\mbox{ and }\Big(g^{-1}\big[(s,1]\big]\backslash\mbox{cl}_X\Big(\bigcup_{i=1}^n V_i\Big)\Big)\cap A\]
say the latter, is infinite. Define
\[V_{n+1}=g^{-1}\big[[0,s)\big]\backslash\mbox{cl}_X\Big(\bigcup_{i=1}^nV_i\Big).\]
Then $V_1,\ldots,V_{n+1}$ are pairwise disjoint and since $\mbox{cl}_X V_{n+1}\subseteq g^{-1}[[0,s]]$ we have
\[\Big(g^{-1}\big[(s,1]\big]\backslash\mbox{cl}_X\Big(\bigcup_{i=1}^n V_i\Big)\Big)\cap A\subseteq A\backslash\mbox{cl}_X\Big(\bigcup_{i=1}^{n+1} V_i\Big).\]
Therefore $A\backslash\mbox{cl}_X(V_1\cup\cdots\cup V_{n+1})$ is infinite. By the choice of $V_i$'s we have
\[A\backslash\bigcup_{i=1}^nV_i=A\backslash\bigcup_{i=1}^n\mbox{cl}_XV_i.\]
Therefore
\begin{eqnarray*}
\mbox{cl}_XV_{n+1}\cap A&\subseteq&\Big(g^{-1}\big[[0,s]\big]\backslash\bigcup_{i=1}^nV_i\Big)\cap A\\&=&\Big(A\backslash\bigcup_{i=1}^nV_i\Big)\cap g^{-1}\big[[0,s]\big]\\&=&\Big(A\backslash\bigcup_{i=1}^n\mbox{cl}_XV_i\Big)\cap g^{-1}\big[[0,s]\big]\\&=&\Big(A\backslash\bigcup_{i=1}^n\mbox{cl}_XV_i\Big)\cap g^{-1}\big[[0,s)\big]\\&=&\Big(g^{-1}\big[[0,s)\big]\backslash\bigcup_{i=1}^n\mbox{cl}_XV_i\Big)\cap
A=V_{n+1}\cap A
\end{eqnarray*}
which implies that $V_{n+1}\cap A=\mbox{cl}_XV_{n+1}\cap A$ is compact, as it is closed in the compact set $A$. This completes the inductive step.
\end{proof}

Let $X$ be a Tychonoff space and  let $\alpha X$ be a compactification  of $X$. For an open subset $U$ of $X$, the {\em extension of $U$ to $\alpha X$} is defined to be
\[\mbox {Ex}_{\alpha X}U=\alpha X\backslash\mbox {cl}_{\alpha X}(X\backslash U).\]
If $\gamma X$ denotes the Freudenthal compactification of  a rim--compact space  (a space which has a base consisting of open subsets with compact boundary) $X$ then for any open subset $U$ of $X$ we have  $\mbox {cl}_{\gamma X} U\backslash X=\mbox {Ex}_{\gamma X}U\backslash X$ (see \cite{Te}, as mentioned in \cite{Ki}). Using this, in \cite{Ki} the author defined an appropriate upper semicontinuous decomposition  of $\gamma X$ and then  proved  that a locally compact space $X$ has a countable--point compactification if and only if it has a pairwise disjoint sequence $\{U_n:n\in\mathbf{N}\}$ of open subsets each with compact boundary and non--compact closure. Here we only deal with extensions in $\beta X$. Also, we use the simplified notation $\mbox {Ex}_XU$ instead of $\mbox {Ex}_{\beta X} U$. The following lemma is well known (see Lemma 7.1.13 of \cite{E} or  Lemma 3.1 of \cite{vD}).

\begin{lemma}\label{9}
Let $X$ be a Tychonoff space and let $U$ and $V$ be open subsets of $X$. Then
\begin{itemize}
\item[\rm(1)] $X\cap\mbox{\em Ex}_XU=U$ and thus $\mbox{\em cl}_{\beta X}\mbox{\em Ex}_XU=\mbox{\em cl}_{\beta X}U$.
\item[\rm(2)] $\mbox{\em Ex}_X(U\cap V)=\mbox{\em Ex}_XU\cap\mbox{\em Ex}_XV$.
\end{itemize}
\end{lemma}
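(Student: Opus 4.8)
The statement to prove is Lemma \ref{9}: for open subsets $U,V$ of a Tychonoff space $X$, (1) $X\cap\mbox{Ex}_XU=U$ (hence $\mbox{cl}_{\beta X}\mbox{Ex}_XU=\mbox{cl}_{\beta X}U$), and (2) $\mbox{Ex}_X(U\cap V)=\mbox{Ex}_XU\cap\mbox{Ex}_XV$. Recall $\mbox{Ex}_XU=\beta X\backslash\mbox{cl}_{\beta X}(X\backslash U)$. The plan is to unwind the definitions and use only elementary facts about closures in the dense subspace $X\subseteq\beta X$: that $X$ is dense, that $X\backslash U$ is closed in $X$ (as $U$ is open in $X$), and that $\mbox{cl}_X A=\mbox{cl}_{\beta X}A\cap X$ for $A\subseteq X$.

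For part (1), I would intersect the defining equation with $X$:
\[
X\cap\mbox{Ex}_XU=X\cap\big(\beta X\backslash\mbox{cl}_{\beta X}(X\backslash U)\big)=X\backslash\big(\mbox{cl}_{\beta X}(X\backslash U)\cap X\big)=X\backslash\mbox{cl}_X(X\backslash U).
\]
Since $U$ is open in $X$, the complement $X\backslash U$ is closed in $X$, so $\mbox{cl}_X(X\backslash U)=X\backslash U$, and thus $X\cap\mbox{Ex}_XU=X\backslash(X\backslash U)=U$. For the second assertion of (1): $\mbox{Ex}_XU$ is an open subset of $\beta X$, so by the standard observation (made in the excerpt just before Lemma \ref{15}) that for an open subset $W$ of a space and a dense subset $A$ one has $\mbox{cl}_\bullet W=\mbox{cl}_\bullet(W\cap A)$, applied with $W=\mbox{Ex}_XU$, $A=X$ in $\beta X$, we get $\mbox{cl}_{\beta X}\mbox{Ex}_XU=\mbox{cl}_{\beta X}(\mbox{Ex}_XU\cap X)=\mbox{cl}_{\beta X}U$ using the first part.

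For part (2), rewrite both sides via De Morgan. On one hand, $X\backslash(U\cap V)=(X\backslash U)\cup(X\backslash V)$, so
\[
\mbox{cl}_{\beta X}\big(X\backslash(U\cap V)\big)=\mbox{cl}_{\beta X}\big((X\backslash U)\cup(X\backslash V)\big)=\mbox{cl}_{\beta X}(X\backslash U)\cup\mbox{cl}_{\beta X}(X\backslash V),
\]
since closure commutes with finite unions. Taking complements in $\beta X$,
\[
\mbox{Ex}_X(U\cap V)=\beta X\backslash\big(\mbox{cl}_{\beta X}(X\backslash U)\cup\mbox{cl}_{\beta X}(X\backslash V)\big)=\big(\beta X\backslash\mbox{cl}_{\beta X}(X\backslash U)\big)\cap\big(\beta X\backslash\mbox{cl}_{\beta X}(X\backslash V)\big)=\mbox{Ex}_XU\cap\mbox{Ex}_XV.
\]
This is entirely formal. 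Honestly, there is no real obstacle here: the lemma is a bookkeeping exercise in the definition of $\mbox{Ex}$, and the only point requiring a moment's care is invoking density of $X$ in $\beta X$ (together with openness of $\mbox{Ex}_XU$ in $\beta X$) to pass from $\mbox{cl}_{\beta X}\mbox{Ex}_XU$ to $\mbox{cl}_{\beta X}U$ in the last clause of (1); everything else is De Morgan plus the interplay between closure in $X$ and in $\beta X$.
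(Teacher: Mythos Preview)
Your proof is correct. The paper does not supply its own proof of this lemma; it merely states the result as well known and refers to Lemma~7.1.13 of \cite{E} and Lemma~3.1 of \cite{vD}, so your argument (which is the standard one) fills in exactly what those references contain.
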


The following lemma is proved by E.G. Skljarenko in \cite{S}. It is rediscovered by E.K. van Douwen in \cite{vD}.

\begin{lemma}[Skljarenko \cite{S} and van Douwen  \cite{vD}]\label{10}
Let $X$ be a Tychonoff space and let $U$ be an open subset of $X$. Then
\[\mbox {\em bd}_{\beta X}\mbox {\em Ex}_XU=\mbox {\em cl}_{\beta X}\mbox{\em bd}_XU.\]
\end{lemma}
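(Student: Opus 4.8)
The plan is to establish the two inclusions separately, using the basic identities of Lemma \ref{9} together with the fact that $X$ is dense in $\beta X$. First I would record the decomposition
\[
\mbox{cl}_{\beta X}X=\mbox{cl}_{\beta X}\mbox{Ex}_XU\cup\mbox{cl}_{\beta X}(X\backslash U),
\]
which holds because $X=U\cup(X\backslash U)\subseteq\mbox{Ex}_XU\cup(X\backslash U)$ and $\mbox{cl}_{\beta X}\mbox{Ex}_XU=\mbox{cl}_{\beta X}U$ by Lemma \ref{9}(1); hence $\beta X=\mbox{cl}_{\beta X}U\cup\mbox{cl}_{\beta X}(X\backslash U)$. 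Since $\mbox{Ex}_XU=\beta X\backslash\mbox{cl}_{\beta X}(X\backslash U)$ is open, its boundary is $\mbox{cl}_{\beta X}\mbox{Ex}_XU\backslash\mbox{Ex}_XU=\mbox{cl}_{\beta X}U\cap\mbox{cl}_{\beta X}(X\backslash U)$, so the whole lemma reduces to proving
\[
\mbox{cl}_{\beta X}U\cap\mbox{cl}_{\beta X}(X\backslash U)=\mbox{cl}_{\beta X}\mbox{bd}_XU.
\]

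For the inclusion $\supseteq$: since $\mbox{bd}_XU=\mbox{cl}_XU\cap\mbox{cl}_X(X\backslash U)\subseteq\mbox{cl}_{\beta X}U\cap\mbox{cl}_{\beta X}(X\backslash U)$ and the right-hand side is closed in $\beta X$, taking $\beta X$-closures gives $\mbox{cl}_{\beta X}\mbox{bd}_XU\subseteq\mbox{cl}_{\beta X}U\cap\mbox{cl}_{\beta X}(X\backslash U)$. The reverse inclusion $\subseteq$ is the substantive part, and here I would argue by contradiction: suppose $p\in\mbox{cl}_{\beta X}U\cap\mbox{cl}_{\beta X}(X\backslash U)$ but $p\notin\mbox{cl}_{\beta X}\mbox{bd}_XU$. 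Pick an open neighborhood $W$ of $p$ in $\beta X$ with $W\cap\mbox{cl}_{\beta X}\mbox{bd}_XU=\emptyset$, so in particular $W\cap X$ misses $\mbox{bd}_XU$. Then $W\cap X$ is an open subset of $X$ disjoint from $\mbox{bd}_XU$, hence $W\cap X=(W\cap U)\cup(W\cap(X\backslash\mbox{cl}_XU))$ is a partition of $W\cap X$ into two disjoint open sets, the first $W\cap\mbox{int}_XU$ and the second $W\cap(X\backslash\mbox{cl}_XU)$. Now $p\in\mbox{cl}_{\beta X}U$ forces $W\cap U\neq\emptyset$, and $p\in\mbox{cl}_{\beta X}(X\backslash U)$ together with $W\cap\mbox{bd}_XU=\emptyset$ forces $W\cap(X\backslash\mbox{cl}_XU)\neq\emptyset$ (since $X\backslash U\subseteq(X\backslash\mbox{cl}_XU)\cup\mbox{bd}_XU$).

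To derive the contradiction I would shrink $W$: choose a smaller open neighborhood $W_0$ of $p$ in $\beta X$ with $\mbox{cl}_{\beta X}W_0\subseteq W$. Then $\mbox{cl}_{\beta X}W_0$ is a compact (hence closed) neighborhood of $p$, and $p\in\mbox{cl}_{\beta X}(W_0\cap U)$ as well as $p\in\mbox{cl}_{\beta X}(W_0\cap(X\backslash\mbox{cl}_XU))$ by the density of $X$ in $W_0$. But the two sets $\mbox{cl}_X(W_0\cap U)$ and $\mbox{cl}_X(W_0\cap(X\backslash\mbox{cl}_XU))$ — more precisely, their traces inside $W\cap X$ — are separated: a point of $X$ in $W$ lies in exactly one of the two clopen-in-$(W\cap X)$ pieces, so $\mbox{cl}_{W\cap X}(W_0\cap U)$ and $\mbox{cl}_{W\cap X}(W_0\cap(X\backslash\mbox{cl}_XU))$ are disjoint. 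I would then use regularity of $\beta X$ and the Urysohn-type separation available in $\beta X$ to produce a continuous $f:\beta X\to\mathbf I$ that is $0$ on $\mbox{cl}_{\beta X}(W_0\cap U)$ and $1$ on $\mbox{cl}_{\beta X}(W_0\cap(X\backslash\mbox{cl}_XU))$ — this is where the Stone–Čech characterization via zero-sets enters, since the two relevant zero-sets of $X$ are disjoint and hence have disjoint $\beta X$-closures. That contradicts $p$ lying in both closures. The main obstacle is precisely this separation step: one must exhibit the two pieces of $W\cap X$ as (traces of) disjoint zero-sets of $X$ so that the zero-set characterization of $\beta X$ applies, which requires being slightly careful in choosing $W$ to be a cozero-neighborhood of $p$ so that $W\cap U$ and $W\cap(X\backslash\mbox{cl}_XU)$ themselves sit inside suitable zero-sets. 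Once that is in place, the contradiction is immediate and the lemma follows.
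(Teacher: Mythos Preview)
The paper does not prove this lemma; it merely cites it from Skljarenko and van Douwen. So there is no approach in the paper to compare against, and your argument must stand on its own.

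Your reduction is correct: $\mbox{bd}_{\beta X}\mbox{Ex}_XU=\mbox{cl}_{\beta X}U\cap\mbox{cl}_{\beta X}(X\backslash U)$, and the inclusion $\supseteq$ is immediate. For the hard inclusion you set things up well, but the proof stops exactly where the content is. You write that choosing $W$ to be a cozero neighborhood will make $W\cap U$ and $W\cap(X\backslash\mbox{cl}_XU)$ ``sit inside suitable zero--sets'' so that the $\beta X$ zero--set separation applies, and then declare the contradiction immediate. That assertion is the whole difficulty, and you have not justified it: a clopen partition of a cozero subset of $X$ does not automatically give two disjoint zero--sets of $X$, and the pieces $W\cap U$, $W\cap(X\backslash\mbox{cl}_XU)$ are in general neither zero--sets nor cozero--sets of $X$ (remember $U$ is an arbitrary open set).

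What is actually needed is a construction. Take $F:\beta X\to\mathbf{I}$ continuous with $F(p)=1$ and $F[\mbox{cl}_{\beta X}\mbox{bd}_XU]=\{0\}$; let $g=F|X$. Define $h:X\to[-1,1]$ by $h=g$ on $\mbox{cl}_XU$ and $h=-g$ on $X\backslash U$; these agree on $\mbox{bd}_XU$ since $g$ vanishes there, so $h$ is continuous by the pasting lemma, and $|h|=g$. Extending to $H:\beta X\to[-1,1]$, one has $H\geq 0$ on $\mbox{cl}_{\beta X}U$ (since $h\geq 0$ on $U$) and $H\leq 0$ on $\mbox{cl}_{\beta X}(X\backslash U)$, hence $H(p)=0$ if $p$ lies in both closures; but $|H|=F$ forces $|H(p)|=1$, a contradiction. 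This $\pm g$ trick is precisely the missing step that turns your outline into a proof, and it is not supplied by merely taking $W$ cozero.
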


\begin{lemma}\label{j1}
Let $X$ be a Tychonoff space  and let $\mathcal{P}$ be a clopen hereditary topological property which is inverse invariant under perfect mappings. Let $U$ be an open subset of $X$ such that $\mbox{\em bd}_XU\subseteq Z\subseteq C$ where $Z\in {\mathscr Z}(X)$, $C\in Coz(X)$ and $\mbox{\em cl}_XC$ has $\mathcal{P}$. Then
\[\mbox{\em cl}_{\beta X}U\backslash\lambda_{\mathcal{P}} X=\mbox{\em Ex}_X U\backslash\lambda_{\mathcal{P}} X.\]
\end{lemma}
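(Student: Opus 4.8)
The statement to prove is that $\mbox{cl}_{\beta X}U\backslash\lambda_{\mathcal{P}} X=\mbox{Ex}_X U\backslash\lambda_{\mathcal{P}} X$ under the hypothesis that $\mbox{bd}_XU\subseteq Z\subseteq C$ with $Z\in\mathscr{Z}(X)$, $C\in Coz(X)$ and $\mbox{cl}_XC$ having $\mathcal{P}$. Since $\mbox{Ex}_XU\subseteq\mbox{cl}_{\beta X}\mbox{Ex}_XU=\mbox{cl}_{\beta X}U$ by Lemma \ref{9}(1), one inclusion, namely $\mbox{Ex}_X U\backslash\lambda_{\mathcal{P}} X\subseteq\mbox{cl}_{\beta X}U\backslash\lambda_{\mathcal{P}} X$, is immediate. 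The content is the reverse inclusion: I must show that any point of $\mbox{cl}_{\beta X}U$ lying outside $\lambda_{\mathcal{P}} X$ actually lies in $\mbox{Ex}_X U$.

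\textbf{Key steps.} The plan is to analyze the difference $\mbox{cl}_{\beta X}U\backslash\mbox{Ex}_XU$. By definition $\mbox{Ex}_XU=\beta X\backslash\mbox{cl}_{\beta X}(X\backslash U)$, so $\mbox{cl}_{\beta X}U\backslash\mbox{Ex}_XU=\mbox{cl}_{\beta X}U\cap\mbox{cl}_{\beta X}(X\backslash U)$, which is precisely the boundary $\mbox{bd}_{\beta X}\mbox{Ex}_XU$ (up to the usual identifications), and by Skljarenko--van Douwen (Lemma \ref{10}) this equals $\mbox{cl}_{\beta X}\mbox{bd}_XU$. Thus I only need to check that $\mbox{cl}_{\beta X}\mbox{bd}_XU\subseteq\lambda_{\mathcal{P}} X$; once that is established, any point of $\mbox{cl}_{\beta X}U$ outside $\lambda_{\mathcal{P}} X$ cannot be in $\mbox{cl}_{\beta X}\mbox{bd}_XU$, hence lies in $\mbox{Ex}_XU$, giving the reverse inclusion. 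To see $\mbox{cl}_{\beta X}\mbox{bd}_XU\subseteq\lambda_{\mathcal{P}} X$, I invoke the hypothesis: $\mbox{bd}_XU\subseteq Z\subseteq C$ with $\mbox{cl}_XC$ having $\mathcal{P}$. This is exactly the situation of Lemma \ref{BA27}, which yields $\mbox{cl}_{\beta X}Z\subseteq\lambda_{\mathcal{P}} X$, and since $\mbox{cl}_{\beta X}\mbox{bd}_XU\subseteq\mbox{cl}_{\beta X}Z$, the desired inclusion follows.

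\textbf{Assembling the argument.} Concretely: take $x\in\mbox{cl}_{\beta X}U\backslash\lambda_{\mathcal{P}} X$. By Lemma \ref{BA27} applied to $Z\subseteq C$, $\mbox{cl}_{\beta X}Z\subseteq\lambda_{\mathcal{P}} X$, and since $\mbox{bd}_XU\subseteq Z$ we get $\mbox{cl}_{\beta X}\mbox{bd}_XU\subseteq\lambda_{\mathcal{P}} X$, so $x\notin\mbox{cl}_{\beta X}\mbox{bd}_XU$. By Lemma \ref{10}, $x\notin\mbox{bd}_{\beta X}\mbox{Ex}_XU$. But $x\in\mbox{cl}_{\beta X}U=\mbox{cl}_{\beta X}\mbox{Ex}_XU$ by Lemma \ref{9}(1), and a point in the closure of $\mbox{Ex}_XU$ that is not in its boundary must lie in $\mbox{Ex}_XU$ itself (since $\mbox{Ex}_XU$ is open in $\beta X$, so $\mbox{cl}_{\beta X}\mbox{Ex}_XU=\mbox{Ex}_XU\cup\mbox{bd}_{\beta X}\mbox{Ex}_XU$). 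Hence $x\in\mbox{Ex}_XU\backslash\lambda_{\mathcal{P}} X$, completing the reverse inclusion and thus the equality.

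\textbf{Main obstacle.} There is no serious obstacle; the proof is a short chain of previously established results (Lemmas \ref{9}, \ref{10}, \ref{BA27}). The only point requiring a little care is the set-theoretic bookkeeping around $\mbox{Ex}_XU$: one must be careful that $\mbox{Ex}_XU$ is open in $\beta X$ (immediate from its definition as the complement of a closed set), so that the decomposition $\mbox{cl}_{\beta X}\mbox{Ex}_XU=\mbox{Ex}_XU\sqcup\mbox{bd}_{\beta X}\mbox{Ex}_XU$ is valid and lets one conclude membership in $\mbox{Ex}_XU$ from membership in the closure minus the boundary.
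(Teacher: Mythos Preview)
Your proof is correct and follows essentially the same approach as the paper: both use Lemma \ref{BA27} to obtain $\mbox{cl}_{\beta X}Z\subseteq\lambda_{\mathcal{P}} X$, then combine Lemmas \ref{9} and \ref{10} to write $\mbox{cl}_{\beta X}U=\mbox{Ex}_XU\cup\mbox{cl}_{\beta X}\mbox{bd}_XU$ with the second piece contained in $\lambda_{\mathcal{P}} X$. Your presentation is slightly more detailed in spelling out the pointwise argument, but the mathematics is identical.
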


\begin{proof}
By Lemma \ref{BA27} we have $\mbox{cl}_{\beta X}Z\subseteq\lambda_{\mathcal{P}} X$. The result then follows, as by Lemmas \ref{9} and \ref{10} we have
\[\mbox{cl}_{\beta X}U=\mbox{cl}_{\beta X}\mbox{Ex}_XU=\mbox{Ex}_XU\cup\mbox{bd}_{\beta X}\mbox{Ex}_XU=\mbox{Ex}_XU\cup\mbox{cl}_{\beta X}\mbox{bd}_XU\]
and $\mbox{cl}_{\beta X}\mbox{bd}_X U\subseteq\mbox{cl}_{\beta X}Z$.
\end{proof}

In \cite{Mc} the author characterized those  spaces which have a compactification with compact countable remainder of type $(\sigma,n)$ (Theorem \ref{i2}). Indeed, in the proof, for a given space $X$  which satisfies the properties of Theorem \ref{i2}, the author formed a new set $Y$  by adjoining a set of points to $X$ and  he then constructed a topology on $Y$ that turned it into a compactification of $X$ with the desired properties. The proof given in Theorem \ref{20}(3) below can be applied to give an alterative proof to this theorem of J.R. McCartney  in \cite{Mc} (Theorem \ref{i2}). Also, note that parts (1.d) and (2.d) below generalize and give alternative proofs for the theorems of K.D. Magill,  Jr. and T. Kimura in \cite{Mag1} and \cite{Ki}, respectively (Theorems \ref{i7} and \ref{i1}, respectively). One should simply replace  $\mathcal{P}$ and $\mathcal{Q}$, respectively, by compactness and regularity and note that, for any compact subset $A$ of a locally compact space $X$ there exists a continuous $f:X\rightarrow\mathbf{I}$ such that $f[A]\subseteq\{0\}$, and that $f^{-1}[[0,r]]$ is compact for any $r\in(0,1)$.

\begin{theorem}\label{20}
Let ${\mathcal P}$ and  ${\mathcal Q}$ be a pair of compactness--like topological properties. Let $X$  be a Tychonoff  space with $\mathcal{Q}$.
\begin{itemize}
\item[\rm(1)] Let $n\in\mathbf{N}$. The following are equivalent:
\begin{itemize}
\item[\rm(a)] ${\mathscr M}^{\mathcal Q}_{\mathcal P}(X)$ contains an element with $n$--point remainder.
\item[\rm(b)] ${\mathscr O}^{\mathcal Q}_{\mathcal P}(X)$ contains an element with $n$--point remainder.
\item[\rm(c)] $X$ is locally--$\mathcal{P}$ and $X=K\cup U_1\cup\cdots\cup U_n$ where $K,U_1,\ldots,U_n$ are pairwise disjoint, each $U_1,\ldots,U_n$ is open in $X$ with non--$\mathcal{P}$ closure and $\mbox{\em bd}_X K\subseteq Z\subseteq C$ for some $Z\in{\mathscr Z}(X)$ and $C\in Coz(X)$ such that $\mbox{\em cl}_X C$ has $\mathcal{P}$.
\item[\rm(d)] $X$ is locally--$\mathcal{P}$ and $X=U\cup Z_1\cup\cdots\cup Z_n$ where $U,Z_1,\ldots,Z_n$ are pairwise disjoint, $\mbox{\em cl}_X U$ has $\mathcal{P}$ and each $Z_1,\ldots,Z_n\in{\mathscr Z}(X)$ is non--$\mathcal{P}$.
\end{itemize}
\item[\rm(2)] The following are equivalent:
\begin{itemize}
\item[\rm(a)] ${\mathscr M}^{\mathcal Q}_{\mathcal P}(X)$ contains an element with countable remainder.
\item[\rm(b)] ${\mathscr O}^{\mathcal Q}_{\mathcal P}(X)$ contains an element with countable remainder.
\item[\rm(c)] $X$ is locally--$\mathcal{P}$ and there exists a bijectively indexed collection $\{U_n:n\in\mathbf{N}\}$ of pairwise disjoint open subsets of $X$, each with non--$\mathcal{P}$ closure and such that for any $n\in\mathbf{N}$ there exist some $Z_n\in {\mathscr Z}(X)$ and $C_n\in Coz(X)$ such that $\mbox{\em cl}_X C_n$ has $\mathcal{P}$ and $\mbox{\em bd}_X U_n\subseteq Z_n\subseteq C_n$.
\item[\rm(d)] $X$ is locally--${\mathcal P}$ and there exists a bijectively indexed collection $\{Z_n:n\in\mathbf{N}\}$ of non--${\mathcal P}$ zero--sets of $X$ such that $X=Z_1\supseteq Z_2\supseteq\cdots$ and such that for any $n\in\mathbf{N}$ there exist some non--${\mathcal P}$  $S_n\in {\mathscr Z}(X)$ and $K_n\subseteq X$ such that $Z_n\backslash Z_{n+1}=S_n\cup K_n$, where $K_n\subseteq T_n\subseteq C_n$ for some $T_n\in {\mathscr Z}(X)$ and $C_n\in Coz(X)$ such that $\mbox{\em cl}_X C_n$  has ${\mathcal P}$.
\end{itemize}
\item[\rm(3)] Let $0<\sigma<\Omega$ and let $n\in\mathbf{N}$. The following are equivalent:
\begin{itemize}
\item[\rm(a)] ${\mathscr M}^{\mathcal Q}_{\mathcal P}(X)$ contains an element with countable remainder of type $(\sigma,n)$.
\item[\rm(b)] ${\mathscr O}^{\mathcal Q}_{\mathcal P}(X)$ contains an element with countable remainder of type $(\sigma,n)$.
\item[\rm(c)] $X$ is locally--${\mathcal P}$ and there exists a family $\{{\mathscr U}_\zeta:\zeta\leq\sigma\}$ of collections of pairwise disjoint non--empty open subset of $X$ satisfying the following:
\begin{itemize}
\item[\rm(i)] For any $\zeta<\sigma$, $ \mbox{\em card}({\mathscr U}_\zeta)=\aleph_0$ and $ \mbox{\em card}({\mathscr U}_\sigma)=n$.
\item[\rm(ii)] For any $\zeta\leq\sigma$ and $U\in{\mathscr U}_\zeta$ there exist some $Z\in {\mathscr Z}(X)$ and $C\in Coz(X)$ such that $\mbox{\em cl}_X C$ has ${\mathcal P}$ and $\mbox{\em bd}_X U\subseteq Z\subseteq C$.
\item[\rm(iii)] For any $\zeta\leq\sigma$, $U\in{\mathscr U}_\zeta$ and finite ${\mathscr V}\subseteq\bigcup\{{\mathscr U}_\eta:\eta<\zeta\}$ the set $\mbox{\em cl}_X U\backslash\bigcup{\mathscr V}$ is non--${\mathcal P}$.
\item[\rm(iv)] For any $\zeta<\eta\leq\sigma$, $U\in{\mathscr U}_\zeta$ and $V\in{\mathscr U}_\eta$ there exist some $Z\in {\mathscr Z}(X)$ such that $Z$ has ${\mathcal P}$ and a finite ${\mathscr V}\subseteq\bigcup\{{\mathscr U}_\xi:\xi<\zeta\}$ such that either
    \[\mbox{\em cl}_X U\backslash(V\cup\bigcup{\mathscr V})\subseteq Z\mbox{ or }(\mbox{\em cl}_X U\cap\mbox{\em cl}_X V)\backslash\bigcup{\mathscr V}\subseteq Z.\]
\item[\rm(v)] For any $\zeta<\eta\leq\sigma$ and $U\in {\mathscr U}_\eta$ there exists an infinite ${\mathscr V}\subseteq{\mathscr U}_\zeta$ such that for any $V\in{\mathscr V}$ there exist some $Z\in {\mathscr Z}(X)$ which has ${\mathcal P}$ and a finite ${\mathscr W}\subseteq\bigcup\{{\mathscr U}_\xi:\xi<\zeta\}$ such that $\mbox{\em cl}_X V\backslash(U\cup\bigcup{\mathscr W})\subseteq Z$.
\end{itemize}
\end{itemize}
\end{itemize}
\end{theorem}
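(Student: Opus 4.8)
The plan is to reduce everything to the external characterisation already obtained in Lemma \ref{18}. For each of the three parts, Lemma \ref{18} gives the equivalence of (a), (b) and a purely ``external'' statement about the compact space $\beta X\backslash\lambda_{\mathcal P}X$ --- the existence of $n$, resp.\ infinitely many, resp.\ a transfinite hierarchy of, pairwise disjoint non-empty clopen subsets subject to the listed constraints. Hence it suffices to prove that each internal condition in Theorem \ref{20} is equivalent to the corresponding condition on $\beta X\backslash\lambda_{\mathcal P}X$. Everything then rests on a single dictionary relating open subsets $U$ of $X$ whose boundary is trapped between a zero-set and a cozero-set with $\mathcal P$-closure, on the one hand, and clopen subsets of $\beta X\backslash\lambda_{\mathcal P}X$ on the other.

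The dictionary reads as follows. If $U$ is open in $X$ with $\mbox{bd}_XU\subseteq Z\subseteq C$ for some $Z\in{\mathscr Z}(X)$, $C\in Coz(X)$ with $\mbox{cl}_XC$ having $\mathcal P$, then Lemma \ref{j1} gives $\mbox{cl}_{\beta X}U\backslash\lambda_{\mathcal P}X=\mbox{Ex}_XU\backslash\lambda_{\mathcal P}X$; since $\mbox{Ex}_XU$ is open and $\mbox{cl}_{\beta X}U$ is closed in $\beta X$, this common set $H_U$ is clopen in $\beta X\backslash\lambda_{\mathcal P}X$, it is non-empty precisely when $\mbox{cl}_XU$ is non-$\mathcal P$ (Lemma \ref{B}), and $H_U\cap H_V=\emptyset$ whenever $U\cap V=\emptyset$ (Lemma \ref{9}(2), since $\mbox{Ex}_X(U\cap V)=\mbox{Ex}_XU\cap\mbox{Ex}_XV$). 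Conversely, given a clopen subset $H$ of $\beta X\backslash\lambda_{\mathcal P}X$, completely separate $H$ from $(\beta X\backslash\lambda_{\mathcal P}X)\backslash H$ by a continuous $g:\beta X\rightarrow\mathbf{I}$ and put $U=g^{-1}[[0,1/2)]\cap X$; then $\mbox{bd}_XU\subseteq g^{-1}(1/2)\cap X$ and $\mbox{cl}_{\beta X}(\mbox{bd}_XU)\subseteq g^{-1}(1/2)\subseteq\lambda_{\mathcal P}X$, so Lemma \ref{19FG} supplies the required sandwich and one checks $H_U=H$. To pass between the $\mathcal P$-assertions in conditions (iii)--(v) of Theorem \ref{20}(3) and the absorption clauses of Lemma \ref{18}(3) one combines Lemmas \ref{B}, \ref{BA27} and \ref{19FG} to control the $\beta X$-closures of the relevant differences --- the boundary hypotheses on the open sets involved are exactly what makes these closures behave.

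With the dictionary in hand, parts (1) and (2) are quick. For Theorem \ref{20}(1.c)$\Rightarrow$ the clopen condition of Lemma \ref{18}(1): in $X=K\cup U_1\cup\cdots\cup U_n$ one has $\mbox{bd}_XU_i\subseteq\mbox{bd}_XK\subseteq Z\subseteq C$, so the $H_{U_i}$ are $n$ pairwise disjoint non-empty clopen subsets. For the converse, starting from ``at least $n$ components'' one partitions $\beta X\backslash\lambda_{\mathcal P}X$ into non-empty clopen pieces $H_1,\ldots,H_n$ (as in the proof of Lemma \ref{18}(1.d)$\Rightarrow$(1.b)), separates them inside open subsets $W_i$ of $\beta X$ with pairwise disjoint closures and $\mbox{bd}_{\beta X}W_i\subseteq\lambda_{\mathcal P}X$ (take $W_i=g^{-1}[(i-1/2,i+1/2)]$ for $g:\beta X\rightarrow[1,n]$ sending $H_i$ to $i$), sets $U_i=W_i\cap X$ and $K=X\backslash(U_1\cup\cdots\cup U_n)$, and uses $\mbox{cl}_{\beta X}(\mbox{bd}_XK)\subseteq\bigcup_i\mbox{bd}_{\beta X}W_i\subseteq\lambda_{\mathcal P}X$ together with Lemma \ref{19FG}. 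The zero-set reformulation (1.d) is obtained in the same way, now replacing each $W_i$ by a zero-set $Z_i'=g_i^{-1}[[0,1/2]]\cap X$ lying inside $W_i$ (so the $Z_i'$ have pairwise disjoint $\beta X$-closures meeting $\beta X\backslash\lambda_{\mathcal P}X$ exactly in the $H_i$), whence $U'=X\backslash(Z_1'\cup\cdots\cup Z_n')$ has $\mbox{cl}_{\beta X}U'\subseteq\lambda_{\mathcal P}X$ and therefore $\mbox{cl}_XU'$ has $\mathcal P$ by Lemma \ref{B}. Part (2) is the identical argument carried out with a sequence instead of a finite list (Lemma \ref{9}(2) keeps the pieces disjoint); the descending-zero-set form (2.d) is then a bookkeeping repackaging of (2.c), producing a chain $X=Z_1\supseteq Z_2\supseteq\cdots$ whose $\beta X$-closures trap the successive pieces and pushing the incidental overlaps into cozero-sets with $\mathcal P$-closure via Lemma \ref{19FG}; here the auxiliary Lemma \ref{19} (splitting off a disjoint sequence of open sets from an infinite compact countable set) enters, in parallel with the remark that (1.d) and (2.d) recover the theorems of Magill and Kimura.

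The bulk of the work, and where I expect the real obstacle, is part (3). Each $\mathscr U_\zeta$ is turned into a collection $\mathscr H_\zeta=\{H_U:U\in\mathscr U_\zeta\}$ of clopen subsets of $\beta X\backslash\lambda_{\mathcal P}X$, and one must match conditions (3.c.i)--(3.c.v) of Theorem \ref{20}(3) line by line with (3.c.i)--(3.c.iv) of Lemma \ref{18}(3). Condition (i) is immediate; (iii) --- ``$\mbox{cl}_XU\backslash\bigcup{\mathscr V}$ is non-$\mathcal P$'' --- must be shown equivalent to ``$H_U\backslash\bigcup\{\mbox{lower pieces}\}\neq\emptyset$'': for one direction assume $H_U$ is covered by finitely many lower pieces $H_{V_1},\ldots,H_{V_k}$, observe $\mbox{cl}_{\beta X}(\mbox{cl}_XU\backslash\bigcup_jV_j)\subseteq\mbox{cl}_{\beta X}U\backslash\bigcup_j\mbox{Ex}_XV_j\subseteq\lambda_{\mathcal P}X$ (using Lemma \ref{j1} for each $V_j$), and conclude $\mbox{cl}_XU\backslash\bigcup_jV_j$ has $\mathcal P$ by Lemma \ref{B}, a contradiction; conditions (iv) and (v), each asserting that a difference of closures lies in a $\mathcal P$-zero-set, translate similarly into the two clauses of (3.c.iii) and the infinitude clause (3.c.iv) once the closures are controlled. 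In the converse direction one is handed the hierarchy $\{\mathscr H_\zeta\}$ and must \emph{realise} each clopen piece by an open $U\subseteq X$ obeying (3.c.ii) while simultaneously honouring the combinatorial relations (iii)--(v) among all the pieces; this is done by choosing the separating functions compatibly across the hierarchy (using that the members of each $\mathscr H_\eta$ are pairwise disjoint clopen sets) and applying Lemma \ref{19FG} at every stage, and then verifying $Y\in{\mathscr O}_{\mathcal P}(X)$ via Theorem \ref{HG16} exactly as in the earlier parts. Granting Lemma \ref{18}(3) no genuinely new idea beyond this dictionary is needed, but the transfinite bookkeeping --- keeping the zero-set/cozero-set containments consistent up the hierarchy --- is long and is the delicate point; as in part (2) it also yields an alternative route to McCartney's Theorem \ref{i2}.
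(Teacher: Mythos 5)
Your proposal is correct and takes essentially the same route as the paper: both reduce all three parts to the external characterization of Lemma \ref{18} and translate between open $U\subseteq X$ with boundary sandwiched as $\mbox{bd}_XU\subseteq Z\subseteq C$ and clopen subsets of $\beta X\backslash\lambda_{\mathcal P}X$ via Lemma \ref{j1}, with Lemmas \ref{B}, \ref{BA27}, \ref{19FG} and \ref{9} controlling non--emptiness, clopenness and disjointness, and with inductively chosen separating functions on $\beta X$ realising the clopen hierarchy in the converse direction, exactly as in the paper's treatment of part (3). The only divergences are cosmetic: you route the (a)$\Rightarrow$(c) implications of parts (1) and (2) through Lemma \ref{18}'s clopen condition rather than arguing directly from a minimal extension as the paper does (so Lemma \ref{19} is in fact not needed on your route), and the direction ``$H_U$ non--empty implies $\mbox{cl}_XU$ non--${\mathcal P}$'' requires, beyond Lemma \ref{B}, the paper's standard observation that a level--set zero--set inside $U$ would force $H_U\subseteq\lambda_{\mathcal P}X$, which your sandwich hypothesis makes available.
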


\begin{proof} (1). By Lemma \ref{18} it follows that (1.a) and (1.b) are equivalent. (1.a) {\em implies} (1.c). Consider some $Y\in{\mathscr M}^{\mathcal Q}_{\mathcal P}(X)$  with an $n$--point remainder $Y\backslash X=\{p_1,\ldots,p_n\}$. Let $V_1,\ldots,V_n$ be pairwise disjoint open neighborhoods of $p_1,\ldots,p_n$ in $\beta Y$, respectively. Let $\phi:\beta X\rightarrow\beta Y$ denote the continuous extension of $\mbox{id}_X$.
Let $i=1,\ldots,n$. Let $f_i:\beta X\rightarrow\mathbf{I}$ be continuous with
\[f_i\big[\phi^{-1}(p_i)\big]\subseteq\{0\}\mbox{ and }f_i\big[\beta X\backslash\phi^{-1}[V_i]\big]\subseteq\{1\}.\]
Let
\[U_i=f_i^{-1}\big[[0,1/2)\big]\cap X\mbox{ and }K=X\backslash\bigcup_{i=1}^n U_i.\]
Then $X=K\cup U_1\cup\cdots\cup U_n$ and  since $U_i\subseteq\phi^{-1}[V_i]$ (and $V_i$'s are pairwise disjoint) the sets $K,U_1,\ldots,U_n$ are pairwise disjoint. To show that $\mbox{cl}_X U_i$ has $\mathcal{P}$ suppose the contrary. Let
\[S=f_i^{-1}\big[[0,1/3]\big]\cap X\in{\mathscr Z}(X).\]
Then $S$ has $\mathcal{P}$, as it is closed in $\mbox{cl}_X U_i$. Therefore
\[\phi^{-1}(p_i)\subseteq f_i^{-1}\big[[0,1/3)\big]\subseteq\mbox{int}_{\beta X}\mbox{cl}_{\beta X}\big(f_i^{-1}\big[[0,1/3]\big]\cap X\big)=\mbox{int}_{\beta X}\mbox{cl}_{\beta X}S\subseteq\lambda_{\mathcal{P}}X.\]
By Lemma \ref{16} the space $X$ is locally--$\mathcal{P}$ and $\beta X\backslash\lambda_{\mathcal{P}} X\subseteq\phi^{-1}[Y\backslash X]$. Now again by Lemma \ref{16} and since by above
\[\beta X\backslash\lambda_{\mathcal{P}} X\subseteq\phi^{-1}[Y\backslash X]\backslash\phi^{-1}(p_i)=\phi^{-1}\big[(Y\backslash X)\backslash\{p_i\}\big]=\phi^{-1}\big[\big(Y\backslash\{p_i\}\big)\backslash X\big],\]
the extension $Y\backslash\{p_i\}$ of $X$ has $\mathcal{P}$. This contradicts the minimality of $Y$. Let
\[Z=X\cap\bigcup_{i=1}^nf_i^{-1}(1/2)\in{\mathscr Z}(X)\mbox{ and }C=X\cap\bigcup_{i=1}^nf_i^{-1}\big[(1/3,2/3)\big]\in Coz(X).\]
Arguing as in the proof of Lemma \ref{18} ((1.a) $\Rightarrow$ (1.c)) we have
\[\phi^{-1}(p_i)\backslash\lambda_{\mathcal{P}} X=\phi^{-1}[V_i]\backslash\lambda_{\mathcal{P}} X.\]
Therefore by the definition of $f_i$ we have
\[\mbox{cl}_{\beta X}C\subseteq\bigcup_{i=1}^nf_i^{-1}\big[[1/3,2/3]\big]\subseteq\bigcup_{i=1}^n\big(\phi^{-1}[V_i]\backslash\phi^{-1}(p_i)\big)
\subseteq\lambda_{\mathcal{P}}X\]
which by Lemma \ref{B} implies that $\mbox{cl}_XC$ has $\mathcal{P}$. Finally
\begin{eqnarray*}
\mbox{bd}_X K&=&\mbox{cl}_XK\cap\mbox{cl}_X(X\backslash K)\\&\subseteq&
X\cap\bigcap_{j=1}^nf_j^{-1}\big[[1/2,1]\big]\cap\bigcup_{i=1}^nf_i^{-1}\big[[0,1/2]\big]\\&=& X\cap\bigcup_{i=1}^n\bigcap_{j=1}^n f_j^{-1}\big[[1/2,1]\big]\cap f_i^{-1}\big[[0,1/2]\big]\subseteq X\cap\bigcup_{i=1}^nf_i^{-1}(1/2)
\end{eqnarray*}
which implies that $\mbox{bd}_X K\subseteq Z\subseteq C$.

(1.c) {\em implies} (1.d). First note that since $U_1,\ldots,U_n$ are pairwise disjoint and open, for any distinct $i,j=1,\ldots,n$ we have  $\mbox{bd}_X U_i\cap U_j=\emptyset$. Let $i=1,\ldots,n$. Then
\[\mbox{bd}_X U_i\subseteq(X\backslash U_i)\cap\bigcap_{i\neq j=1}^n(X\backslash U_j)=X\backslash\bigcup_{j=1}^nU_j=K. \]
which combined with $\mbox{bd}_X U_i\subseteq\mbox{cl}_X U_i\subseteq\mbox{cl}_X (X\backslash K)$ gives $\mbox{bd}_X U_i\subseteq \mbox{bd}_X K$. By Lemma \ref{j1} this implies that $\mbox{cl}_{\beta X}U_i\backslash\lambda_{\mathcal{P}} X=\mbox{Ex}_X U_i\backslash\lambda_{\mathcal{P}} X$; let $H_i$ denote the latter set. By Lemma \ref{B} the set $H_i$ is non--empty, as by our assumption $\mbox{cl}_X U_i$ is non--$\mathcal{P}$. Let $f_i:\beta X\rightarrow\mathbf{I}$ be continuous with
\[f_i[H_i]\subseteq\{0\}\mbox{ and }f_i[\beta X\backslash\mbox{Ex}_X U_i]\subseteq\{1\}\]
if $i=1,\ldots,n-1$ and
\[f_n\Big[(\beta X\backslash\lambda_{\mathcal{P}} X)\backslash\bigcup_{i=1}^{n-1}H_i\Big]\subseteq\{0\}\mbox{ and }f_n\Big[\bigcup_{i=1}^{n-1} \mbox{cl}_{\beta X}U_i\Big]\subseteq\{1\}.\]
Let
\[Z_i=f_i^{-1}\big[[0,1/2]\big]\cap X\in {\mathscr Z}(X)\mbox{ and }U=X\backslash\bigcup_{i=1}^n Z_i.\]
By the definition of $f_i$ we have  $Z_i\subseteq\mbox{Ex}_X U_i$ for any $i=1,\ldots,n-1$. Now since $\mbox{Ex}_X U_i$'s are pairwise disjoint (as  $U_i$'s are; see Lemma \ref{9})  $Z_i$'s are pairwise disjoint when $i=1,\ldots,n-1$, and therefore when $i=1,\ldots,n$, as
\[Z_n\cap\bigcup_{i=1}^{n-1}\mbox{cl}_{\beta X}U_i=\emptyset\]
and $Z_i\subseteq\mbox{cl}_{\beta X}U_i$  for any $i=1,\ldots,n-1$, as $\mbox{Ex}_X U_i\subseteq\mbox{cl}_{\beta X}U_i$ (see Lemma \ref{9}). Since
\[U=X\backslash\bigcup_{i=1}^nf_i^{-1}\big[[0,1/2]\big]\subseteq \beta X\backslash \bigcup_{i=1}^nf_i^{-1}\big[[0,1/2)\big]\subseteq\lambda_{\mathcal{P}}X\]
we have $\mbox{cl}_{\beta X}U\subseteq\lambda_{\mathcal{P}} X$, and thus by Lemma \ref{B} it follows that $\mbox{cl}_XU $ has $\mathcal{P}$. To complete the proof we need to verify that $Z_i$ is non--$\mathcal{P}$. But this follows easily, as otherwise
\begin{eqnarray*}
H_i\subseteq f_i^{-1}\big[[0,1/2)\big]\subseteq\mbox{int}_{\beta X}\mbox{cl}_{\beta X}\big(f_i^{-1}\big[[0,1/2]\big]\cap X\big)=\mbox{int}_{\beta X}\mbox{cl}_{\beta X}Z_i\subseteq\lambda_{\mathcal{P}}X
\end{eqnarray*}
which contradicts the fact $H_i$ is non--empty.

(1.d) {\em  implies} (1.a). For any $i=1,\ldots,n$ let $f_i:X\rightarrow \mathbf{I}$ be continuous with
\[f_i[Z_i]\subseteq\{0\}\mbox{ and }f_i\Big[\bigcup_{k=1}^{i-1}Z_k\cup\bigcup_{k=i+1}^n Z_k\Big]\subseteq\{1\}\]
and let $F_i:\beta X\rightarrow\mathbf{I}$ be the continuous extension of $f_i$.

\begin{xclaim}
Let $r\in(0,1)$ and let  $i,j=1,\ldots,n$ be distinct. Then
\[\mbox{\em cl}_{\beta X}f_i^{-1}\big[[0,r]\big]\cap\mbox{\em cl}_{\beta X}f_j^{-1}\big[[0,r]\big]\subseteq\lambda_{\mathcal{P}} X.\]
\end{xclaim}

\subsubsection*{Proof of the claim} Let $r<s<1$. By the definition of $f_i$'s and since $Z_i\cap Z_j=\emptyset$ we have
\[S=f_i^{-1}\big[[0,s]\big]\cap f_j^{-1}\big[[0,s]\big]\subseteq (Z_i\cup U)\cap (Z_j\cup U)\subseteq U.\]
If $k=i,j$ then
\[\mbox{cl}_{\beta X}f_k^{-1}\big[[0,r]\big]\subseteq F_k^{-1}\big[[0,r]\big]\subseteq F_k^{-1}\big[[0,s)\big]\subseteq\mbox{int}_{\beta X}\mbox{cl}_{\beta X} f_k^{-1}\big[[0,s]\big]\]
and therefore
\begin{eqnarray*}
\mbox{cl}_{\beta X}f_i^{-1}\big[[0,r]\big]\cap\mbox{cl}_{\beta X}f_j^{-1}\big[[0,r]\big]&\subseteq&\mbox{int}_{\beta X}\mbox{cl}_{\beta X} f_i^{-1}\big[[0,s]\big]\cap\mbox{int}_{\beta X}\mbox{cl}_{\beta X} f_j^{-1}\big[[0,s]\big]\\&=&\mbox{int}_{\beta X}\big(\mbox{cl}_{\beta X} f_i^{-1}\big[[0,s]\big]\cap\mbox{cl}_{\beta X} f_j^{-1}\big[[0,s]\big]\big)\\&=&\mbox{int}_{\beta X}\mbox{cl}_{\beta X}\big(f_i^{-1}\big[[0,s]\big]\cap f_j^{-1}\big[[0,s]\big]\big)\\&=&\mbox{int}_{\beta X}\mbox{cl}_{\beta X}S.
\end{eqnarray*}
Note that $S\in{\mathscr Z}(X)$ has $\mathcal{P}$, as it is closed in $\mbox{cl}_X U$, and  thus $\mbox{int}_{\beta X}\mbox{cl}_{\beta X}S\subseteq\lambda_{\mathcal{P}}X$.

\begin{xclaim}
Let $r\in (0,1)$. Then
\[\mbox{\em cl}_{\beta X}\Big(\bigcap_{i=1}^nf_i^{-1}\big[[r,1]\big]\Big)\subseteq\lambda_{\mathcal{P}} X.\]
\end{xclaim}

\subsubsection*{Proof of the claim} Let $0<t<r$ and let
\[T=\bigcap_{i=1}^nf_i^{-1}\big[[t,1]\big]\in{\mathscr Z}(X).\]
Then
\begin{eqnarray*}
\mbox{cl}_{\beta X}\Big(\bigcap_{i=1}^nf_i^{-1}\big[[r,1]\big]\Big)\subseteq\bigcap_{i=1}^nF_i^{-1}\big[[r,1]\big]&\subseteq&\bigcap_{i=1}^nF_i^{-1}\big[(t,1]\big]\\&\subseteq&
\bigcap_{i=1}^n\mbox{int}_{\beta X}\mbox{cl}_{\beta X}f_i^{-1}\big[[t,1]\big]\\&=&\mbox{int}_{\beta X}\Big(\bigcap_{i=1}^n\mbox{cl}_{\beta X}f_i^{-1}\big[[t,1]\big]\Big)\\&=&\mbox{int}_{\beta X}\mbox{cl}_{\beta X}\Big(\bigcap_{i=1}^nf_i^{-1}\big[[t,1]\big]\Big)\\&=&\mbox{int}_{\beta X}\mbox{cl}_{\beta X}T
\end{eqnarray*}
Now since
\[T=\bigcap_{i=1}^nf_i^{-1}\big[[t,1]\big]\subseteq\bigcap_{i=1}^n(X\backslash Z_i)\subseteq X\backslash\bigcup_{i=1}^nZ_i=U\]
and $\mbox{cl}_X U$ has $\mathcal{P}$, its closed subset $T$ has $\mathcal{P}$, and therefore $\mbox{int}_{\beta X}\mbox{cl}_{\beta X}T\subseteq\lambda_{\mathcal{P}} X$.

\medskip

\noindent Now let $r\in (0,1)$ be fixed. Note that
\[\beta X\backslash\lambda_{\mathcal{P}} X=\Big(\mbox{cl}_{\beta X}\Big(\bigcup_{i=1}^nf_i^{-1}\big[[0,r]\big]\Big)\cup\mbox{cl}_{\beta X}\Big(\bigcap_{i=1}^nf_i^{-1}\big[[r,1]\big]\Big)\Big)\backslash\lambda_{\mathcal{P}} X\]
and thus by the above claim
\[\beta X\backslash\lambda_{\mathcal{P}} X=\mbox{cl}_{\beta X}\Big(\bigcup_{i=1}^nf_i^{-1}\big[[0,r]\big]\Big)\backslash\lambda_{\mathcal{P}}X
=\bigcup_{i=1}^n\big(\mbox{cl}_{\beta X}f_i^{-1}\big[[0,r]\big]\backslash\lambda_{\mathcal{P}}X\big).\]
By the first claim it now follows that $\beta X\backslash\lambda_{\mathcal{P}} X$ is the union of $n$ of its pairwise disjoint closed (and thus clopen) subsets which are also non--empty, as $Z_i\subseteq f_i^{-1}[[0,r]]$ for any $i=1,\ldots,n$, and therefore since $Z_i$ is non--$\mathcal{P}$, using Lemma \ref{B} we have
\[\emptyset\neq\mbox{cl}_{\beta X}Z_i\backslash\lambda_{\mathcal{P}} X\subseteq\mbox{cl}_{\beta X}f_i^{-1}\big[[0,r]\big]\backslash\lambda_{\mathcal{P}}X.\]

(2) (2.a) {\em implies} (2.c). Consider some $Y\in{\mathscr M}^{\mathcal Q}_{\mathcal P}(X)$ with countable remainder. By Lemma  \ref{19} there exists a bijectively indexed collection $\{V_n:n\in\mathbf{N}\}$ of pairwise disjoint open subsets of $\beta Y$ such that  $B_n=V_n\cap(Y\backslash X)$ is compact and non--empty for any $n\in\mathbf{N}$. Let $n\in\mathbf{N}$. Let $f_n:\beta X\rightarrow\mathbf{I}$ be continuous with
\[f_n\big[\phi^{-1}[B_n]\big]\subseteq\{0\}\mbox{ and }f_n\big[\beta X\backslash\phi^{-1}[V_n]\big]\subseteq\{1\}\]
where $\phi:\beta X\rightarrow\beta Y$ is the continuous extension of $\mbox{id}_X$ and define $U_n=f_n^{-1}[[0,1/2)]\cap X$. Then $U_n$'s are pairwise disjoint open subsets of $X$. Also, $\mbox{cl}_X U_n$ is non--$\mathcal{P}$, as otherwise, arguing as in (1.a) $\Rightarrow$ (1.c) we have $f^{-1}_n[[0,1/3)]\subseteq\lambda_{\mathcal{P}}X$. Consider the subspace $Y'=Y\backslash B_n$ of $Y$. By Lemma \ref{16} we have $\beta X\backslash\lambda_{\mathcal{P}} X\subseteq\phi^{-1}[Y\backslash X]$ and that $X$ is locally--${\mathcal{P}}$. Since $\phi^{-1}[B_n]\subseteq f^{-1}_n[[0,1/3)]$ we have
\begin{eqnarray*}
\phi^{-1}[Y'\backslash X]=\phi^{-1}\big[(Y\backslash B_n)\backslash X\big]&=&\phi^{-1}\big[(Y\backslash X)\backslash B_n\big]\\&=&\phi^{-1}[Y\backslash X]\backslash\phi^{-1}[B_n]\supseteq\beta X\backslash\lambda_{\mathcal{P}}X.
\end{eqnarray*}
Now since $Y'\backslash X=(Y\backslash X)\backslash V_n$ is compact, Lemma \ref{16} implies that $Y'$ has $\mathcal{P}$, contradicting  the minimality of $Y$. Note that  \[\mbox{bd}_X U_n=\mbox{cl}_X U_n\cap \mbox{cl}_X(X\backslash U_n)\subseteq f_n^{-1}\big[[0,1/2]\big]\cap f_n^{-1}\big[[1/2,1]\big]\cap X=f_n^{-1}(1/2)\cap X.\]
Therefore if
\[Z_n=f_n^{-1}(1/2)\cap X\in{\mathscr Z}(X)\mbox{ and }C_n=f_n^{-1}\big[(1/3,2/3)\big]\cap X\in Coz(X)\]
then $\mbox{bd}_X U_n\subseteq Z_n\subseteq C_n$.
Since $\beta X\backslash\lambda_{\mathcal{P}} X\subseteq\phi^{-1}[Y\backslash X]$, arguing as in the proof of Lemma \ref{18} ((1.a) $\Rightarrow$ (1.c)), we have
\[\phi^{-1}[B_n]\backslash\lambda_{\mathcal{P}} X=\phi^{-1}[V_n]\backslash\lambda_{\mathcal{P}} X\]
and thus by the definition of $f_n$ it follows that
\[\mbox{cl}_{\beta X}C_n\subseteq f_n^{-1}\big[[1/3,2/3]\big]\subseteq\phi^{-1}[V_n]\backslash\phi^{-1}[B_n]\subseteq\lambda_{\mathcal{P}} X.\]
By Lemma \ref{B} this  implies that $\mbox{cl}_XC_n$ has $\mathcal{P}$.

(2.c) {\em  implies} (2.a). Let $\{U_n:n\in\mathbf{N}\}$ satisfy the assumption of the theorem. Let $n\in\mathbf{N}$. By Lemma \ref{j1} we have $\mbox{cl}_{\beta X}U_n\backslash\lambda_{\mathcal{P}} X=\mbox{Ex}_X U_n\backslash\lambda_{\mathcal{P}} X$; let $H_n$ denote the latter set. Note that $H_n$ is clopen in $\beta X\backslash\lambda_{\mathcal{P}} X$ and that $H_n$'s are pairwise disjoint, as $U_n$'s are (see Lemma \ref{9}). Also, $H_n$ is non--empty, as otherwise $\mbox{cl}_{\beta X}U_n\subseteq\lambda_{\mathcal{P}} X$ which by Lemma \ref{B} implies that  $\mbox{cl}_XU_n$ has $\mathcal{P}$, contradicting our assumption.

(2.a) {\em implies} (2.d). By Lemma \ref{18} the space $X$ is locally--${\mathcal P}$ and there exists a bijectively indexed sequence $H_1,H_2,\ldots$ of pairwise  disjoint non--empty clopen subsets of $\beta X\backslash\lambda_{\mathcal P} X$. Let $n\in\mathbf{N}$. Let $f_n:\beta X\rightarrow\mathbf{I}$ be continuous with
\[f_n\big[(\beta X\backslash\lambda_{\mathcal P} X)\backslash H_n\big]\subseteq\{0\}\mbox{ and }f_n[H_n]\subseteq\{1\}\]
and let
\[Z_n=\bigcap_{i=1}^{n-1}f_i^{-1}\big[[0,1/2]\big]\cap X\in {\mathscr Z}(\beta X)\]
with the empty intersection interpreted as $\beta X$. Clearly $X=Z_1\supseteq Z_2\supseteq\cdots$. Note that
\begin{equation}\label{HCO}
H_n\subseteq(\beta X\backslash\lambda_{\mathcal P} X)\backslash\bigcup_{i=1}^{n-1} H_i\subseteq\bigcap_{i=1}^{n-1}\big((\beta X\backslash\lambda_{\mathcal P} X)\backslash H_i\big)\subseteq\bigcap_{i=1}^{n-1}f_i^{-1}\big[[0,1/2)\big].
\end{equation}
To show that $Z_n$ is non--${\mathcal P}$, suppose the contrary. Then
\begin{eqnarray*}
\bigcap_{i=1}^{n-1}f_i^{-1}\big[[0,1/2)\big]&\subseteq&\bigcap_{i=1}^{n-1}\mbox{int}_{\beta X}\mbox{cl}_{\beta X}\big(f_i^{-1}\big[[0,1/2]\big]\cap X\big)\\&=&\mbox{int}_{\beta X}\Big(\bigcap_{i=1}^{n-1}\mbox{cl}_{\beta X}\big(f_i^{-1}\big[[0,1/2]\big]\cap X\big)\Big)\\&=&\mbox{int}_{\beta X}\mbox{cl}_{\beta X}\Big(\bigcap_{i=1}^{n-1}\big(f_i^{-1}\big[[0,1/2]\big]\cap X\big)\Big)=\mbox{int}_{\beta X}\mbox{cl}_{\beta X}Z_n\subseteq\lambda_{\mathcal P} X
\end{eqnarray*}
and therefore $H_n\subseteq\lambda_{\mathcal P} X$, which is a contradiction, as $H_n\subseteq\beta X\backslash\lambda_{\mathcal P} X$ is non--empty. Now
\begin{eqnarray*}
Z_n\backslash Z_{n+1}&=&\Big(\bigcap_{i=1}^{n-1}f_i^{-1}\big[[0,1/2]\big]\cap X\Big)\backslash\Big(\bigcap_{i=1}^nf_i^{-1}\big[[0,1/2]\big]\cap X\Big)\\&=&\big(\beta X\backslash f_n^{-1}\big[[0,1/2]\big]\big)\cap\bigcap_{i=1}^{n-1}f_i^{-1}\big[[0,1/2]\big]\cap X\\&=&f_n^{-1}\big[(1/2,1]\big]\cap\bigcap_{i=1}^{n-1}f_i^{-1}\big[[0,1/2]\big]\cap X\\&=&\big(f_n^{-1}\big[(1/2,2/3)\big]\cup f_n^{-1}\big[[2/3,1]\big]\big)\cap\bigcap_{i=1}^{n-1}f_i^{-1}\big[[0,1/2]\big]\cap X=K_n\cup S_n
\end{eqnarray*}
where
\[K_n=f_n^{-1}\big[(1/3,2/3)\big]\cap\bigcap_{i=1}^{n-1}f_i^{-1}\big[[0,1/2]\big]\cap X\]
and
\[S_n=f_n^{-1}\big[[2/3,1]\big]\cap\bigcap_{i=1}^{n-1}f_i^{-1}\big[[0,1/2]\big]\cap X.\]
Then $S_n\in{\mathscr Z}(X)$ and $K_n\subseteq T_n\subseteq C_n$, where
\[T_n=f_n^{-1}\big[[1/2,2/3]\big]\cap X\in{\mathscr Z}(X)\mbox{ and }C_n=f_n^{-1}\big[(1/3,3/4)\big]\cap X\in Coz(X).\]
Since
\[\mbox{cl}_{\beta X}C_n=\mbox{cl}_{\beta X}\big(f_n^{-1}\big[(1/3,3/4)\big]\cap X\big)\subseteq f_n^{-1}\big[[1/3,3/4]\big]\subseteq\lambda_{\mathcal P}X\]
by Lemma \ref{B} the set $\mbox{cl}_X C_n$  has  ${\mathcal P}$. If $S_n$ has ${\mathcal P}$, then using (\ref{HCO}) and arguing as above
\[H_n\subseteq f_n^{-1}\big[(2/3,1]\big]\cap\bigcap_{i=1}^{n-1}f_i^{-1}\big[[0,1/2)\big]\subseteq\mbox{int}_{\beta X}\mbox{cl}_{\beta X} S_n\subseteq\lambda_{\mathcal P} X\]
which as we argued above is a contradiction. Thus $S_n$ is  non--${\mathcal P}$. Finally, note that as argued above $H_n\subseteq\mbox{cl}_{\beta X}S_n$, and therefore $S_n$ is non--empty. This implies that $Z_n\backslash Z_{n+1}$ is non--empty and thus $Z_n$'s are bijectively indexed.

(2.d) {\em implies} (2.a). Let $n\in\mathbf{N}$. Then
\[X=Z_1=\bigcup_{i=1}^{n-1}(Z_i\backslash Z_{i+1})\cup Z_n=\bigcup_{i=1}^{n-1}(S_i\cup K_i)\cup Z_n\]
and thus
\[\beta X=\bigcup_{i=1}^{n-1}(\mbox{cl}_{\beta X}S_i\cup\mbox{cl}_{\beta X} K_i)\cup\mbox{cl}_{\beta X}Z_n.\]
By Lemma \ref{BA27} we have $\mbox{cl}_{\beta X} K_i\subseteq\mbox{cl}_{\beta X} T_i\subseteq\lambda_{\mathcal P} X$ for any $i\in\mathbf{N}$. Therefore
\[\beta X\backslash\lambda_{\mathcal P} X=\bigcup_{i=1}^{n-1}(\mbox{cl}_{\beta X}S_i\backslash\lambda_{\mathcal P} X)\cup(\mbox{cl}_{\beta X}Z_n\backslash\lambda_{\mathcal P} X).\]
Since $S_1,\ldots,S_{n-1},Z_n\in{\mathscr Z}(X)$ are pairwise disjoint, their closures in $\beta X$ also are pairwise disjoint.
Thus by above $\beta X\backslash\lambda_{\mathcal P} X$ is the union of $n$ of its pairwise disjoint non--empty (as $S_1,\ldots,S_{n-1},Z_n$ are non--${\mathcal P}$; see Lemma \ref{B}) closed (and therefore clopen) subsets. Lemma \ref{18} now completes the proof.

(3). The equivalence of (3.a) and (3.b) follows from Lemma \ref{18}. (3.a) {\em implies} (3.c). By Lemma \ref{18} the space $X$ is locally--${\mathcal P}$ and there exists a family $\{{\mathscr H}_\zeta:\zeta\leq\sigma\}$ of collections of pairwise disjoint non--empty clopen subsets of $\beta X\backslash\lambda_{\mathcal P} X$ satisfying conditions (3.c.i)--(3.c.iv) of Lemma \ref{18}. For any $\zeta\leq\sigma$ let ${\mathscr H}_\zeta=\{ H^\zeta_i:i\in J_\zeta\}$ be bijectively indexed. Let $J_\zeta=\mathbf{N}$ for any $\zeta<\sigma$ and $J_\sigma=\{1,\ldots,n\}$. Also, for any $\zeta\leq\sigma$ and $i\in J_\zeta$ let $A_i^\zeta$ be an open subset of $\beta X$ such that $H_i^\zeta=A_i^\zeta\backslash\lambda_{\mathcal P} X$.

\begin{xclaim}
For any $\zeta\leq\sigma$ there exists a collection $\{W_i^\zeta:i\in J_\zeta\}$ of pairwise disjoint open subsets of $\beta X$ such that $W^\zeta_i\backslash\lambda_{\mathcal P} X=H_i^\zeta$ for any $i\in J_\zeta$.
\end{xclaim}

\subsubsection*{Proof of the claim} Let $\zeta<\sigma$. We inductively define $W_i^\zeta$'s for $i\in J_\zeta$.
Let $W^\zeta_1$ be an open subset of $\beta X$ such that $H_1^\zeta\subseteq W_1^\zeta\subseteq\mbox{cl}_{\beta X} W_1^\zeta\subseteq A_1^\zeta$. For an $m\in\mathbf{N}$, suppose inductively that the open subsets $W_1^\zeta,\ldots,W_m^\zeta$ of $\beta X$ are defined in such a way that
\[H_i^\zeta\subseteq W_i^\zeta\subseteq\mbox{cl}_{\beta X}W_i^\zeta\subseteq A_i^\zeta\backslash\mbox{cl}_{\beta X}\Big(\bigcup_{j=1}^{i-1} W_j^\zeta\Big)\]
for any $i=1,\ldots,m$. Note that $H_{m+1}^\zeta\cap\mbox{cl}_{\beta X} W_i^\zeta=\emptyset$ for any $i=1,\ldots,m$. Let $W_{m+1}^\zeta$ be an open subset of $\beta X$ such that
\[H_{m+1}^\zeta\subseteq W_{m+1}^\zeta\subseteq\mbox{cl}_{\beta X} W_{m+1}^\zeta\subseteq A_{m+1}^\zeta\backslash\mbox{cl}_{\beta X}\Big (\bigcup_{j=1}^{m} W_j^\zeta\Big).\]
Similarly for the case when $\zeta=\sigma$.

\medskip

\noindent Let $ \zeta\leq\sigma$ and $i\in J_\zeta$. Since $W_i^\zeta$ is an open neighborhoods of the compact (and thus closed) subset $H_i^\zeta$ of $\beta X$, there exists a continuous $f_i^\zeta:\beta X\rightarrow\mathbf{I}$ with
\[f_i^\zeta[H_i^\zeta]\subseteq\{0\}\mbox{ and }f_i^\zeta[\beta X\backslash W_i^\zeta]\subseteq\{1\}.\]
Define
\[U_i^\zeta=(f_i^\zeta)^{-1}\big[[0,1/2)\big]\cap X.\]
Let ${\mathscr U}_\zeta=\{U^\zeta_i:i\in J_\zeta\}$ for any $\zeta\leq\sigma$. We verify that the family $\{{\mathscr U}_\zeta:\zeta\leq\sigma\}$ satisfies (3.c.i)--(3.c.v). Let $\zeta\leq\sigma$. Let $i\in J_\zeta$. Since
\[H_i^\zeta\subseteq(f_i^\zeta)^{-1}\big[[0,1/2)\big]\subseteq\mbox{cl}_{\beta X}(f_i^\zeta)^{-1}\big[[0,1/2)\big]=\mbox{cl}_{\beta X}\big((f_i^\zeta)^{-1}\big[[0,1/2)\big]\cap X\big)=\mbox{cl}_{\beta X}U_i^\zeta\]
and $H_i^\zeta$ is non--empty, $U_i^\zeta$  is non--empty. Also, for any distinct $i,j\in J_\zeta$ we have $U_i^\zeta\cap U_j^\zeta\subseteq W_i^\zeta\cap W_j^\zeta=\emptyset$. Thus the collection ${\mathscr U}_\zeta=\{U^\zeta_i:i\in J_\zeta\}$ is bijectively indexed and consists of pairwise disjoint non--empty open subsets of $X$. Note that (3.c.i) holds trivially.

To show (3.c.ii) note that for any $\zeta\leq\sigma$ and $i\in J_\zeta$  we have
\[\mbox{bd}_X U_i^\zeta=\mbox{cl}_X U_i^\zeta\cap\mbox{cl}_X(X\backslash U_i^\zeta)\subseteq(f_i^\zeta)^{-1}\big[[0,1/2]\big]\cap (f_i^\zeta)^{-1}\big[[1/2,1]\big]\cap X\subseteq(f_i^\zeta)^{-1}(1/2).\]
Thus
\[\mbox{cl}_{\beta X}\mbox{bd}_X U_i^\zeta\backslash\lambda_{\mathcal P} X\subseteq(f_i^\zeta)^{-1}(1/2)\backslash\lambda_{\mathcal P} X\subseteq (W_i^\zeta\backslash H_i^\zeta)\backslash\lambda_{\mathcal P} X=\emptyset\]
which by Lemma \ref{19FG} shows (3.c.ii).

Next, we verify (3.c.iii). Let $\zeta\leq\sigma$, $U\in{\mathscr U}_\zeta$ and ${\mathscr V}\subseteq\bigcup\{{\mathscr U}_\eta:\eta<\zeta\}$ be finite. Suppose to the contrary that $\mbox{cl}_X U\backslash\bigcup{\mathscr V}$ has ${\mathcal P}$. Let $U=U_i^\zeta$ for some $i\in J_\zeta$ and ${\mathscr V}=\{U^{\eta_1}_{j_1},\ldots,U^{\eta_m}_{j_m}\}$, where $m$ is a non--negative integer, and $\eta_k<\zeta$ and $j_k\in J_{\eta_k}$ for any $k=1,\ldots,m$. Let
\[S=\Big((f_i^\zeta)^{-1}\big[[0,1/3]\big]\backslash\bigcup_{k=1}^m(f_{j_k}^{\eta_k})^{-1}\big[[0,1/2)\big]\Big)\cap X\in{\mathscr Z}(X).\]
By above $(f_i^\zeta)^{-1}[[0,1/3]]\cap X\subseteq\mbox{cl}_X U_i^\zeta$. Thus $S\subseteq \mbox{cl}_X U\backslash\bigcup{\mathscr V}$ and then $S$, being closed in the latter, has ${\mathcal P}$. This implies that
\[H_i^\zeta\backslash\bigcup_{k=1}^m W_{j_k}^{\eta_k}\subseteq (f_i^\zeta)^{-1}\big[[0,1/3)\big]\backslash\bigcup_{k=1}^m (f_{j_k}^{\eta_k})^{-1}\big[[0,1/2]\big]\subseteq\mbox{int}_{\beta X}\mbox{cl}_{\beta X} S\subseteq\lambda_{\mathcal P} X.\]

\noindent Therefore
\[H_i^\zeta\backslash\bigcup\{G\in {\mathscr H}_\eta:\eta<\zeta\}\subseteq H_i^\zeta\backslash\bigcup_{k=1}^m H_{j_k}^{\eta_k}= \Big(H_i^\zeta\backslash\bigcup_{k=1}^m W_{j_k}^{\eta_k}\Big)\cap(\beta X\backslash\lambda_{\mathcal P} X)=\emptyset\]
which contradicts (3.c.ii) of Lemma \ref{18}. Thus (3.c.iii) holds.

To show (3.c.iv) let $\zeta<\eta\leq\sigma$, $U\in{\mathscr U}_\zeta$ and $V\in  {\mathscr U}_\eta$. Then $U=U_i^\zeta$ and $V=U_j^\eta$ for some $i\in J_\zeta$ and $j\in J_\eta$. By (3.c.iii) of Lemma \ref{18} we either have
\begin{equation}\label{FGHJ}
H_i^\zeta\subseteq H_j^\eta\cup\bigcup\{F\in {\mathscr H}_\xi:\xi<\zeta\}
\end{equation}
or
\begin{equation}\label{HVG}
H_i^\zeta\cap H_j^\eta\subseteq\bigcup\{F\in {\mathscr H}_\xi:\xi<\zeta\}.
\end{equation}
We consider the following cases. For simplicity of notation let $r=1/2$. Note that by the definition of $f_i^\zeta$ for any $\zeta\leq\sigma$ and $i\in J_\zeta$ we have
\[H_i^\zeta\subseteq(f_i^\zeta)^{-1}\big[[0,r)\big]\backslash\lambda_{\mathcal P}X\subseteq(f_i^\zeta)^{-1}\big[[0,r]\big]\backslash\lambda_{\mathcal P}X\subseteq W_i^\zeta\backslash\lambda_{\mathcal P}X=H_i^\zeta\]
and therefore
\[(f_i^\zeta)^{-1}\big[[0,r]\big]\backslash\lambda_{\mathcal P}X=(f_i^\zeta)^{-1}\big[[0,r)\big]\backslash\lambda_{\mathcal P}X=H_i^\zeta.\]
\begin{description}
\item[{\sc Case 1.}] If (\ref{FGHJ}) holds then by compactness $H_i^\zeta\subseteq H_j^\eta\cup H_{k_1}^{\xi_1}\cup\cdots\cup H_{k_m}^{\xi_m}$, where $m$ is a non--negative integer, $\xi_l<\zeta$ and $k_l\in J_{\xi_l}$ for any $l=1,\ldots,m$. We have
    \[\mbox{cl}_{\beta X}\Big(\mbox{cl}_X U_i^\zeta\backslash\Big(U_j^\eta\cup\bigcup_{l=1}^m U_{k_l}^{\xi_l}\Big)\Big)\subseteq(f_i^\zeta)^{-1}\big[[0,r]\big]\backslash\Big((f_j^\eta)^{-1}\big[[0,r)\big]\cup\bigcup_{l=1}^m(f_{k_l}^{\xi_l})^{-1}
    \big[[0,r)\big]\Big)\]
    and thus
    \[\mbox{cl}_{\beta X}\Big(\mbox{cl}_X U_i^\zeta\backslash\Big(U_j^\eta\cup\bigcup_{l=1}^m U_{k_l}^{\xi_l}\Big)\Big)\backslash\lambda_{\mathcal P}X\subseteq H_i^\zeta\backslash\Big(H_j^\eta\cup\bigcup_{l=1}^m H_{k_l}^{\xi_l}\Big)=\emptyset.\]
    Now by Lemma \ref{19FG} it follows that
    \[\mbox{cl}_X U_i^\zeta\backslash\Big(U_j^\eta\cup\bigcup_{l=1}^m U_{k_l}^{\xi_l}\Big)\subseteq Z\subseteq C\]
    for some $Z\in {\mathscr Z}(X)$ and $C\in Coz(X)$, where $\mbox{cl}_X C$, and therefore $Z$, has ${\mathcal P}$, as it is closed in $\mbox{cl}_X C$. Thus
    (3.c.iv) holds in this case.
\item[{\sc Case 2.}] If (\ref{HVG}) holds then by compactness $H_i^\zeta\cap H_j^\eta\subseteq H_{k_1}^{\xi_1}\cup\cdots\cup H_{k_m}^{\xi_m}$,
    where $m$ is a non--negative integer, $\xi_l<\zeta$ and $k_l\in J_{\xi_l}$ for any $l=1,\ldots,m$. We have
    \[\!\mbox{cl}_{\beta X}\Big((\mbox{cl}_X U_i^\zeta \cap\mbox{cl}_X U_j^\eta)\backslash\bigcup_{l=1}^m U_{k_l}^{\xi_l}\Big)\subseteq\big((f_i^\zeta)^{-1}\big[[0,r]\big]\cap(f_j^\eta)^{-1}\big[[0,r]\big]\big)\backslash\bigcup_{l=1}^m(f_{k_l}^{\xi_l})^{-1}
    \big[[0,r)\big]\]
    and thus
    \[\mbox{cl}_{\beta X}\Big((\mbox{cl}_X U_i^\zeta \cap\mbox{cl}_X U_j^\eta)\backslash\bigcup_{l=1}^m U_{k_l}^{\xi_l}\Big)\backslash\lambda_{\mathcal P} X\subseteq(H_i^\zeta\cap H_j^\eta)\backslash\bigcup_{l=1}^m H_{k_l}^{\xi_l}=\emptyset.\]
    Now as in the previous case (3.c.iv) follows.
\end{description}

Finally, we verify (3.c.v). Let $\zeta<\eta\leq\sigma$ and $U\in{\mathscr U}_\eta$. Then $U=U_j^\eta$ for some $j\in J_\eta$. By (3.c.iv) of Lemma \ref{18} there exists an infinite $J\subseteq J_\zeta$ such that
\[H_i^\zeta\subseteq H_j^\eta\cup\bigcup\{G\in{\mathscr H}_\xi:\xi<\zeta\}\]
for any $i\in J$. Now an argument similar to the one above shows that
\[\mbox{cl}_X U_i^\zeta\backslash\Big(U_j^\eta\cup \bigcup_{l=1}^m U_{k_l}^{\xi_l}\Big)\subseteq Z\]
for some $Z\in {\mathscr Z}(X)$  which has ${\mathcal P}$, some non--negative integer $m$, some $\xi_l<\zeta$ and some $k_l\in J_{\xi_l}$ where $l=1,\ldots,m$. Thus (3.c.v) holds.

(3.c) {\em implies} (3.a). To prove (3.a) we verify condition (3.c) of Lemma \ref{18}. Suppose that $X$ is locally--${\mathcal P}$ and there exists a family $\{{\mathscr U}_\zeta:\zeta\leq\sigma\}$ of collections of pairwise disjoint non--empty open subsets of $X$  satisfying  (3.c.i)--(3.c.v). For any $\zeta\leq\sigma$ let ${\mathscr U}_\zeta=\{ U^\zeta_i:i\in J_\zeta\}$ be bijectively indexed. Then  $\mbox{card}(J_\zeta)=\aleph_0$ if $\zeta<\sigma$, and $\mbox{card}(J_\sigma)=n$. Let $\zeta\leq\sigma$ and $i\in J_\zeta$. Define
\[H_i^\zeta=\mbox{cl}_{\beta X}U_i^\zeta\backslash\lambda_{\mathcal P} X.\]
By Lemma \ref{j1} and (3.c.ii) we have $H_i^\zeta=\mbox{Ex}_X U_i^\zeta\backslash\lambda_{\mathcal P} X$ which shows that $H_i^\zeta$ is clopen in $\beta X\backslash\lambda_{\mathcal P}X$. Also, $H_i^\zeta$ is non--empty, as otherwise $\mbox{cl}_{\beta X} U_i^\zeta\subseteq\lambda_{\mathcal P} X$ which by Lemma \ref{B} implies that $\mbox{cl}_X U_i^\zeta$ has ${\mathcal P}$, contradicting (3.c.iii). Since $H_i^\zeta\subseteq\mbox{Ex}_X U_i^\zeta$ and $U_i^\zeta$'s are pairwise disjoint, by Lemma \ref{9} the sets $H_i^\zeta$'s also are pairwise disjoint. For any $\zeta\leq\sigma$ let ${\mathscr H}_\zeta= \{H_i^\zeta: i\in J_\zeta\}$, which is bijectively indexed, as  $H_i^\zeta$'s are non--empty, and for any distinct $i,j\in J_\zeta$ we have $H_i^\zeta\cap H_j^\zeta=\emptyset$. We verify that the family  $\{{\mathscr H}_\zeta:\zeta\leq\sigma\}$ has the desired properties. Condition (3.c.i) of Lemma \ref{18} holds trivially.

To prove condition (3.c.ii) of Lemma \ref{18} let $H\in {\mathscr H}_\zeta$ for some $\zeta\leq\sigma$, and suppose to the contrary that \[H\backslash\bigcup\{ G\in {\mathscr H}_\eta:\eta<\zeta\}=\emptyset.\]
Then $H=H_i^\zeta$ for some $i\in J_\zeta$. By compactness $H_i^\zeta\subseteq H_{k_1}^{\eta_1}\cup\cdots\cup H_{k_m}^{\eta_m}$, where $m\in\mathbf{N}$, $\eta_l<\zeta$ and $k_l\in J_{\eta_l}$ for any $l=1,\ldots,m$. We have
\[\mbox{cl}_{\beta X}\Big(\mbox{cl}_X U_i^\zeta\backslash\bigcup_{l=1}^m U_{k_l}^{\eta_l}\Big)\subseteq\mbox{cl}_{\beta X} U_i^\zeta\backslash\bigcup_{l=1}^m\mbox{Ex}_X U_{k_l}^{\eta_l}\]
and therefore
\[\mbox{cl}_{\beta X}\Big(\mbox{cl}_X U_i^\zeta\backslash\bigcup_{l=1}^m U_{k_l}^{\eta_l}\Big)\backslash\lambda_{\mathcal P} X\subseteq H_i^\zeta\backslash\bigcup_{l=1}^m H_{k_l}^{\eta_l}=\emptyset.\]
Lemma \ref{B} now implies that $\mbox{cl}_X U_i^\zeta\backslash(U_{k_1}^{\eta_1}\cup\cdots\cup U_{k_m}^{\eta_m})$ has ${\mathcal P}$. But this contradicts (3.c.iii).

Next, we show condition (3.c.iii) of Lemma \ref{18}. Suppose that $\zeta<\eta\leq\sigma$, $H\in{\mathscr H}_\zeta$ and  $G\in{\mathscr H}_\eta$. Let $H=H_i^\zeta$ and $G=H_j^\eta$ where $i\in J_\zeta$ and $j\in J_\eta$. By (3.c.iv) there exist a finite ${\mathscr V}\subseteq\bigcup\{{\mathscr U}_\xi:\xi<\zeta\}$ and a $Z\in{\mathscr Z}(X)$ such that $Z$ has ${\mathcal P}$, and either
\begin{equation}\label{HVYUIG}
\mbox{cl}_X U_i^\zeta\backslash\Big(U_j^\eta\cup\bigcup{\mathscr V}\Big)\subseteq Z
\end{equation}
or
\begin{equation}\label{JHHVG}
(\mbox{cl}_X U_i^\zeta \cap\mbox{cl}_X U_j^\eta)\backslash\bigcup{\mathscr V}\subseteq Z.
\end{equation}
Let ${\mathscr V}=\{U_{k_1}^{\xi_1},\ldots,U_{k_m}^{\xi_m}\}$, where $m$ is a non--negative integer, $\xi_l<\zeta$ and $k_l\in J_{\xi_l}$ for any $l=1,\ldots,m$. We consider the following cases:
\begin{description}
\item[{\sc Case 1.}] If (\ref{HVYUIG}) holds then
    \[\mbox{Ex}_X U_i^\zeta\subseteq\mbox{cl}_{\beta X}\mbox{Ex}_X U_i^\zeta=\mbox{cl}_{\beta X} U_i^\zeta\subseteq\mbox{cl}_{\beta X}\Big(U_j^\eta
    \cup \bigcup_{l=1}^m U_{k_l}^{\xi_l}\cup Z\Big)\]
    and thus
    \[\mbox{Ex}_X U_i^\zeta\backslash\Big(\mbox{cl}_{\beta X}U_j^\eta\cup\bigcup_{l=1}^m \mbox{cl}_{\beta X}U_{k_l}^{\xi_l}\Big)\subseteq\mbox{int}_{\beta X}\mbox{cl}_{\beta X}Z\subseteq\lambda_{\mathcal P} X.\]
    From this it follows that
    \[H\subseteq G\cup\bigcup_{l=1}^m H_{k_l}^{\xi_l}\subseteq G\cup\bigcup\{F\in{\mathscr H}_\xi:\xi<\zeta\}.\]
\item[{\sc Case 2.}] If (\ref{JHHVG}) holds then using Lemma \ref{9} we have
    \begin{eqnarray*}
    \mbox{Ex}_X U_i^\zeta\cap\mbox{Ex}_X U_j^\eta &\subseteq&\mbox{cl}_{\beta X}(\mbox{Ex}_X U_i^\zeta\cap\mbox{Ex}_X U_j^\eta)\\&=&\mbox{cl}_{\beta X}\mbox{Ex}_X(U_i^\zeta\cap U_j^\eta)=\mbox{cl}_{\beta X}(U_i^\zeta\cap U_j^\eta)\subseteq\mbox{cl}_{\beta X}\Big(\bigcup_{l=1}^m U_{k_l}^{\xi_l}\cup Z\Big)
    \end{eqnarray*}
    and thus
    \[(\mbox{Ex}_X U_i^\zeta\cap\mbox{Ex}_X U_j^\eta)\backslash\bigcup_{l=1}^m \mbox{cl}_{\beta X}U_{k_l}^{\xi_l}\subseteq\mbox{int}_{\beta X}\mbox{cl}_{\beta X}Z\subseteq\lambda_{\mathcal P}X\]
    which yields
    \[H\cap G\subseteq\bigcup_{l=1}^m H_{k_l}^{\xi_l}\subseteq\bigcup\{F\in{\mathscr H}_\xi:\xi<\zeta\}.\]
\end{description}
This shows condition (3.c.iii) of Lemma \ref{18} in either case.

Finally, to show (3.c.iv) of Lemma  \ref{18} suppose that $\zeta<\eta\leq\sigma$ and $H\in {\mathscr H}_\eta$. Let $H=H_j^\eta$ for some $j\in J_\eta$. By (3.c.v) there exists an infinite $J\subseteq J_\zeta$ such that for any $i\in J$ there exist a $Z\in {\mathscr Z}(X)$ such that $Z$ has $ {\mathcal P}$ and a finite ${\mathscr W}\subseteq \bigcup\{{\mathscr U}_\xi:\xi<\zeta\}$ such that $\mbox{cl}_X U_i^\zeta\backslash(U_j^\eta\cup\bigcup {\mathscr W})\subseteq Z$. Arguing as above
\[H_i^\zeta\subseteq H\cup\bigcup\{F\in{\mathscr H}_\xi:\xi<\zeta\}.\]
Thus
\[\Big\{F\in{\mathscr H}_\zeta:F\subseteq H\cup\bigcup\{G\in{\mathscr H}_\xi:\xi<\zeta\}\Big\}\]
 is infinite.
\end{proof}

The following generalizes a theorem of K.D. Magill,  Jr. in \cite{Mag2} (Theorem \ref{i0}).

\begin{corollary}\label{20UHYJG}
Let ${\mathcal P}$ and  ${\mathcal Q}$ be a pair of compactness--like topological properties. Let $X$  be a  Tychonoff space with $\mathcal{Q}$. The following are equivalent:
\begin{itemize}
\item[\rm(1)] ${\mathscr M}^{\mathcal Q}_{\mathcal P}(X)$ contains an element with $n$--point remainder (equivalently, ${\mathscr O}^{\mathcal Q}_{\mathcal P}(X)$ contains an element with $n$--point remainder) for any $n\in\mathbf{N}$.
\item[\rm(2)] ${\mathscr M}^{\mathcal Q}_{\mathcal P}(X)$ contains an element with countable remainder (equivalently, ${\mathscr O}^{\mathcal Q}_{\mathcal P}(X)$ contains an element with countable remainder).
\end{itemize}
\end{corollary}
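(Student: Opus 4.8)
The plan is to deduce everything from Lemma \ref{18}, which already translates both the $n$-point and the countable-remainder conditions into internal conditions on the subspace $\beta X\backslash\lambda_{\mathcal P}X$ of the outgrowth. First I would dispose of the parenthetical ``equivalently'' clauses: by parts (1) and (2) of Lemma \ref{18} the statements ``${\mathscr M}^{\mathcal Q}_{\mathcal P}(X)$ contains an element with $n$-point remainder'' and ``${\mathscr O}^{\mathcal Q}_{\mathcal P}(X)$ contains an element with $n$-point remainder'' are equivalent, and likewise for countable remainders, so it suffices to establish the equivalence of (1) and (2) as phrased for ${\mathscr M}^{\mathcal Q}_{\mathcal P}(X)$.

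For the implication (2)$\Rightarrow$(1): assume ${\mathscr M}^{\mathcal Q}_{\mathcal P}(X)$ contains an element with countable remainder. By Lemma \ref{18}(2) (condition (2.c)), $X$ is locally-$\mathcal{P}$ and $\beta X\backslash\lambda_{\mathcal P}X$ contains an infinite family of pairwise disjoint non-empty clopen subsets. In particular, for every $n\in\mathbf{N}$ it contains $n$ pairwise disjoint non-empty clopen subsets, so by Lemma \ref{18}(1) (condition (1.c)) there is for each $n$ an element of ${\mathscr M}^{\mathcal Q}_{\mathcal P}(X)$ with $n$-point remainder.

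For the implication (1)$\Rightarrow$(2): by Lemma \ref{18}(1) (equivalence (1.a)$\Leftrightarrow$(1.d)), assertion (1) of the corollary is equivalent to saying that $X$ is locally-$\mathcal{P}$ and, for every $n\in\mathbf{N}$, the space $\beta X\backslash\lambda_{\mathcal P}X$ has at least $n$ (connected) components. But to have at least $n$ components for every $n$ is precisely to have an infinite number of components. Hence, by Lemma \ref{18}(2) (implication (2.d)$\Rightarrow$(2.a)), ${\mathscr M}^{\mathcal Q}_{\mathcal P}(X)$ contains an element with countable remainder, which is (2).

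There is essentially no obstacle here: all the substantive work has already been carried out in Lemma \ref{18}, and the only point to notice is the trivial observation that the component condition in Lemma \ref{18}(1.d) quantified over all $n$ coincides verbatim with the component condition in Lemma \ref{18}(2.d). One could equally phrase the argument through the clopen-set conditions (1.c) and (2.c) instead of the component conditions; either route is a short formality.
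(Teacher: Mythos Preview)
Your proposal is correct and follows essentially the same approach as the paper: the paper's proof simply invokes Lemma \ref{18} together with the observation that $\beta X\backslash\lambda_{\mathcal P}X$ has infinitely many components if and only if it has at least $n$ components for every $n\in\mathbf{N}$. You have unpacked both directions in slightly more detail (and noted the alternative via the clopen conditions (1.c) and (2.c)), but the argument is the same.
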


\begin{proof}
This follows from Lemma \ref{18} and the observation that $\beta X\backslash \lambda_{\mathcal{P}}X$ has an infinite number of components if and only if $\beta X\backslash\lambda_{\mathcal{P}}X$ has at least $n$ components for any $n\in\mathbf{N}$.
\end{proof}

\begin{theorem}\label{20UHG}
Let ${\mathcal P}$ and  ${\mathcal Q}$ be a pair of compactness--like topological properties. Let $X$  be a Tychonoff  space.
\begin{itemize}
\item[\rm(1)] Let $n\in\mathbf{N}$. If $X$ has a perfect image $Y$ with $\mathcal{Q}$ such that ${\mathscr M}^{\mathcal Q}_{\mathcal P}(Y)$ (${\mathscr O}^{\mathcal Q}_{\mathcal P}(Y)$, respectively) contains an element with $n$--point remainder, then so does $X$.
\item[\rm(2)] If $X$ has a  perfect image $Y$ with $\mathcal{Q}$ such that ${\mathscr M}^{\mathcal Q}_{\mathcal P}(Y)$ (${\mathscr O}^{\mathcal Q}_{\mathcal P}(Y)$, respectively) contains an element with countable remainder, then so does $X$.
\item[\rm(3)] Let $0<\sigma<\Omega$ and let $n\in\mathbf{N}$. If $X$ has a perfect  image $Y$ with $\mathcal{Q}$ such that ${\mathscr M}^{\mathcal Q}_{\mathcal P}(Y)$ (${\mathscr O}^{\mathcal Q}_{\mathcal P}(Y)$, respectively) contains an element with countable remainder of type $(\sigma,n)$, then so does $X$.
\end{itemize}
\end{theorem}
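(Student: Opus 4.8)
The plan is to reduce all three statements to Lemma~\ref{18}, by transporting the relevant structure of $\beta Y\setminus\lambda_{\mathcal P}Y$ back to $\beta X\setminus\lambda_{\mathcal P}X$ along a canonical surjection. A perfect image $Y$ of $X$ yields, after corestriction, a perfect surjective mapping $f\colon X\to Y$; let $f_\beta\colon\beta X\to\beta Y$ be its continuous extension. Since $\beta X$ is compact and $\beta Y$ is Hausdorff, $f_\beta$ is closed (hence perfect) and it is surjective, its image being compact and containing the dense set $Y$. First I would record two preliminaries: $X$ has $\mathcal Q$, because $\mathcal Q$ is inverse invariant under perfect mappings; and $X$ is locally--$\mathcal P$, because $Y$ has a $\mathcal P$--extension with compact remainder and is therefore locally--$\mathcal P$ (as $\mathcal P$, being clopen hereditary and inverse invariant under perfect mappings, is hereditary with respect to closed subsets of Hausdorff spaces), so that Lemma~\ref{22} applies. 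Thus Lemma~\ref{18} is available for both $X$ and $Y$, and on the $X$--side the ``locally--$\mathcal P$'' hypothesis appearing in clause (c) of that lemma is automatic.

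The crux is the identity
\[
f_\beta\big[\beta X\setminus\lambda_{\mathcal P}X\big]=\beta Y\setminus\lambda_{\mathcal P}Y .
\]
The inclusion $\subseteq$ amounts to $f_\beta^{-1}[\lambda_{\mathcal P}Y]\subseteq\lambda_{\mathcal P}X$, and $\supseteq$ says that $f_\beta^{-1}(y)\not\subseteq\lambda_{\mathcal P}X$ for every $y\in\beta Y\setminus\lambda_{\mathcal P}Y$. Both are proved by the manipulation already familiar from Lemmas~\ref{B}, \ref{15} and \ref{16}: given $y$ and an open $W\ni y$ in $\beta Y$, choose $h\colon\beta Y\to\mathbf{I}$ with $h(y)=0$ and $h[\beta Y\setminus W]\subseteq\{1\}$, put $Z=h^{-1}[[0,1/2]]\cap Y\in{\mathscr Z}(Y)$, and note $f^{-1}[Z]\in{\mathscr Z}(X)$ with $\mathrm{cl}_{\beta X}f^{-1}[Z]\subseteq f_\beta^{-1}[\mathrm{cl}_{\beta Y}Z]$. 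For $\subseteq$, given $t$ with $f_\beta(t)\in\lambda_{\mathcal P}X$ choose $W\ni f_\beta(t)$ with $\mathrm{cl}_{\beta Y}W\subseteq\lambda_{\mathcal P}Y$: then $Z$ has $\mathcal P$ by Lemma~\ref{B}, hence $f^{-1}[Z]$ has $\mathcal P$ by inverse invariance of $\mathcal P$, and the standard computation places $t$ in $\mathrm{int}_{\beta X}\mathrm{cl}_{\beta X}f^{-1}[Z]\subseteq\lambda_{\mathcal P}X$. For $\supseteq$, argue by contradiction: if $f_\beta^{-1}(y)\subseteq\lambda_{\mathcal P}X$, then, $f_\beta$ being closed, $W:=\beta Y\setminus f_\beta[\beta X\setminus\lambda_{\mathcal P}X]$ is an open neighbourhood of $y$ with $f_\beta^{-1}[W]\subseteq\lambda_{\mathcal P}X$; now $\mathrm{cl}_{\beta X}f^{-1}[Z]\subseteq f_\beta^{-1}[\mathrm{cl}_{\beta Y}Z]\subseteq f_\beta^{-1}[W]\subseteq\lambda_{\mathcal P}X$, so $f^{-1}[Z]$ has $\mathcal P$ by Lemma~\ref{B}, whence $Z=f[f^{-1}[Z]]$ has $\mathcal P$ by invariance of $\mathcal P$ under perfect mappings, and then $y\in\mathrm{int}_{\beta Y}\mathrm{cl}_{\beta Y}Z\subseteq\lambda_{\mathcal P}Y$, a contradiction. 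Consequently $g:=f_\beta|(\beta X\setminus\lambda_{\mathcal P}X)$ is a continuous surjection of the compact space $\beta X\setminus\lambda_{\mathcal P}X$ onto the compact Hausdorff space $\beta Y\setminus\lambda_{\mathcal P}Y$.

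Given $g$, the three parts are bookkeeping. The operation $S\mapsto g^{-1}[S]$ carries clopen subsets of $\beta Y\setminus\lambda_{\mathcal P}Y$ to clopen subsets of $\beta X\setminus\lambda_{\mathcal P}X$, is injective and preserves non--emptiness (since $g[g^{-1}[S]]=S$ as $g$ is onto), and commutes with unions, intersections and set differences. For part (1): Lemma~\ref{18}(1) applied to $Y$ gives $n$ pairwise disjoint non--empty clopen subsets of $\beta Y\setminus\lambda_{\mathcal P}Y$; their $g$--preimages are $n$ pairwise disjoint non--empty clopen subsets of $\beta X\setminus\lambda_{\mathcal P}X$, so Lemma~\ref{18}(1) applied to $X$ gives the conclusion, and since clauses (a) and (b) of Lemma~\ref{18}(1) are equivalent this covers both the hypothesis and the conclusion phrased with ${\mathscr M}$ or with ${\mathscr O}$. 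Part (2) is the same argument with ``infinitely many'' in place of ``$n$'', using Lemma~\ref{18}(2). For part (3): let $\{{\mathscr H}_\zeta:\zeta\le\sigma\}$ be the family produced by Lemma~\ref{18}(3) applied to $Y$, set ${\mathscr H}'_\zeta=\{g^{-1}[H]:H\in{\mathscr H}_\zeta\}$, and verify conditions (3.c.i)--(3.c.iv) for $\{{\mathscr H}'_\zeta\}$ by pushing each condition through $g^{-1}$ (injectivity gives the cardinalities in (3.c.i) and the infinitude in (3.c.iv); non--emptiness--preservation gives (3.c.ii); commutation with the Boolean operations gives (3.c.iii) and the inclusions in (3.c.iv)); Lemma~\ref{18}(3) applied to $X$ then finishes it.

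The one genuinely delicate step is the inclusion $\supseteq$ in the displayed identity --- that a fibre of $f_\beta$ over a point of $\beta Y\setminus\lambda_{\mathcal P}Y$ cannot be entirely absorbed by $\lambda_{\mathcal P}X$. This is exactly where closedness of the perfect map $f_\beta$ (to manufacture the saturated neighbourhood $W$) and invariance of $\mathcal P$ under perfect mappings (to transfer $\mathcal P$ from $f^{-1}[Z]$ down to $Z$) are both indispensable. Everything after the construction of $g$ is routine.
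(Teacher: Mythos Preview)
Your proof is correct (modulo one typographical slip: in the $\subseteq$ direction you write ``given $t$ with $f_\beta(t)\in\lambda_{\mathcal P}X$'' where you clearly mean $f_\beta(t)\in\lambda_{\mathcal P}Y$), but the route is genuinely different from the paper's. The paper works with the \emph{internal} characterizations of Theorem~\ref{20}: it takes the decomposition $Y=K\cup U_1\cup\cdots\cup U_n$ (respectively the family $\{\mathscr U_\zeta\}$ of open subsets of $Y$) and pulls each piece back through $f^{-1}$ to $X$, then re-verifies the conditions of Theorem~\ref{20} on the $X$ side using that $f$ is perfect, closed and surjective and that $\mathcal P$ is both invariant and inverse invariant under perfect mappings. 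You instead work with the \emph{external} characterizations of Lemma~\ref{18}: you prove once and for all the identity $f_\beta[\beta X\setminus\lambda_{\mathcal P}X]=\beta Y\setminus\lambda_{\mathcal P}Y$, which yields a continuous surjection $g$ between these compact spaces, and then all three parts become formal Boolean bookkeeping under $g^{-1}$.

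Your approach is cleaner and more uniform: a single non-trivial lemma (the displayed identity) absorbs all the analytic work, after which parts (1)--(3) are one line each, and the verification of (3.c.i)--(3.c.iv) is automatic because $g^{-1}$ respects the lattice operations. The paper's approach, by contrast, must redo a small amount of work in each of the three parts (checking non--$\mathcal P$ closures, boundaries contained in the right zero--sets, etc.), though it has the virtue of staying entirely inside $X$ and $Y$ without ever passing to $\beta X$. Your key identity is also of independent interest: it says that $\beta X\setminus\lambda_{\mathcal P}X$ is functorial under perfect surjections, which is a sharper statement than anything the paper isolates explicitly.
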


\begin{proof}
We prove the theorem in the case of minimal extensions. From this and  Theorem \ref{20} the result will then follow in the  case of optimal extensions as well.

(1). Suppose that $f:X\rightarrow Y$ is a perfect surjective mapping such that $Y$ has ${\mathcal Q}$ (thus $X$ also has ${\mathcal Q}$, as ${\mathcal Q}$ is inverse invariant under perfect mappings) and that ${\mathscr M}^{\mathcal Q}_{\mathcal P}(Y)$ contains an element with $n$--point remainder where $n\in\mathbf{N}$. Note that $Y$ (having a Tychonoff extension) is Tychonoff and thus by Theorem \ref{20} the space $Y$ is locally--${\mathcal P}$ and $Y=K\cup U_1\cup\cdots\cup U_n$,  where $K,U_1,\ldots,U_n$ are pairwise disjoint, each $U_1,\ldots,U_n$ is open in $Y$ with non--${\mathcal P}$ closure and $\mbox{bd}_YK\subseteq Z\subseteq C$ for some $Z\in{\mathscr Z}(Y)$ and $C\in Coz(Y)$ such that $\mbox{cl}_Y C$ has ${\mathcal P}$. Then
\[X=f^{-1}[Y]=f^{-1}[K]\cup\bigcup_{i=1}^n f^{-1}[U_i]\]
and $f^{-1}[K],f^{-1}[U_1],\ldots,f^{-1}[U_n]$ are pairwise disjoint. We show that the closure of each open subset $f^{-1}[U_1],\ldots,f^{-1}[U_n]$ of $X$ is non--${\mathcal P}$. Suppose to the contrary that $\mbox{cl}_Xf^{-1}[U_i]$ has ${\mathcal P}$ for some $i=1,\ldots,n$. Now since
\[f|\mbox{cl}_Xf^{-1}[U_i]:\mbox{cl}_Xf^{-1}[U_i]\rightarrow f\big[\mbox{cl}_Xf^{-1}[U_i]\big]\]
is a perfect surjective mapping and ${\mathcal P}$ is invariant under perfect mappings, $f[\mbox{cl}_Xf^{-1}[U_i]]$ has ${\mathcal P}$. Since $f$ is surjective we have
\[U_i=f\big[f^{-1}[U_i]\big]\subseteq f\big[\mbox{cl}_Xf^{-1}[U_i]\big]\]
and since $f$ is closed, it follows that $\mbox{cl}_Y U_i\subseteq f[\mbox{cl}_Xf^{-1}[U_i]]$ and thus $\mbox{cl}_Y U_i$, being closed in the latter has ${\mathcal P}$. But this is a contradiction. Also,
\begin{eqnarray*}
\mbox{bd}_Xf^{-1}[K]&=&\mbox{cl}_Xf^{-1}[K]\cap\mbox{cl}_X\big(X\backslash f^{-1}[K]\big)\\&=&\mbox{cl}_Xf^{-1}[K]\cap\mbox{cl}_Xf^{-1}[Y\backslash K]\\&\subseteq& f^{-1}[\mbox{cl}_Y K]\cap f^{-1}\big[\mbox{cl}_Y(Y\backslash K)\big]\\&=&f^{-1}\big[\mbox{cl}_Y K\cap\mbox{cl}_Y(Y\backslash K)\big]=f^{-1}[\mbox{bd}_Y K]\subseteq f^{-1}[Z]\subseteq f^{-1}[C]
\end{eqnarray*}
and $f^{-1}[Z]\in{\mathscr Z}(X)$ and $f^{-1}[C]\in Coz(X)$. Note that since the mapping
\[f|f^{-1}[\mbox{cl}_Y C]:f^{-1}[\mbox{cl}_Y C]\rightarrow\mbox{cl}_Y C\]
is perfect and surjective (since $f$ is surjective), the set $\mbox{cl}_Y C$ has ${\mathcal P}$, and (since ${\mathcal P}$ is inverse invariant under perfect mappings) the set $f^{-1}[\mbox{cl}_Y C]$, and thus its closed subset  $\mbox{cl}_X f^{-1}[C]$, has ${\mathcal P}$. Finally, note that by Lemma \ref{22} it follows that $X$  is locally--${\mathcal P}$. Thus the result follows from Theorem \ref{20}.

(2). This is analogous to part (1) using the characterization given in Theorem \ref{20}.

(3). Suppose that $f:X\rightarrow Y$ is a perfect surjective mapping such that $Y$ has ${\mathcal Q}$ and that ${\mathscr M}^{\mathcal Q}_{\mathcal P}(Y)$ contains an element with countable remainder of type $(\sigma,n)$. Note that as in part (1) it follows that $X$ has ${\mathcal Q}$ and $Y$ is Tychonoff. By Theorem \ref{20} the space $Y$ is locally--${\mathcal P}$ and there exists a family $\{{\mathscr U}_\zeta:\zeta\leq\sigma\}$ of collections of  pairwise disjoint non--empty open subsets of $Y$ satisfying conditions (3.c.i)--(3.c.v) of that theorem. For any $\zeta\leq\sigma$ let ${\mathscr A}_\zeta=\{f^{-1}[U]:U\in{\mathscr U}_\zeta\}$. Then each ${\mathscr A}_\zeta$ where $\zeta\leq\sigma$, consists of pairwise disjoint non--empty (since $f$ is surjective) open subsets of $X$. We verify that $\{{\mathscr A}_\zeta:\zeta\leq\sigma\}$ satisfies condition (3.c.i)--(3.c.v) of Theorem \ref{20}. Condition (3.c.i) holds trivially, as since $f$ is surjective, each ${\mathscr A}_\zeta$ where $\zeta\leq\sigma$, is bijectively indexed and thus $\mbox{card}({\mathscr A}_\zeta)=\mbox{card}({\mathscr U}_\zeta)$ for any $\zeta\leq\sigma$. Condition (3.c.ii) follows by an argument similar to part (1) and the fact that $\{{\mathscr U}_\zeta:\zeta\leq\sigma\}$ satisfies a similar condition. To show condition (3.c.iii) suppose to the contrary that for some $\zeta\leq\sigma$, $U\in{\mathscr U}_\zeta$ and finite ${\mathscr V}\subseteq\bigcup\{{\mathscr U}_\eta:\eta<\zeta\}$ the set $\mbox{cl}_Xf^{-1}[U]\backslash f^{-1}[\bigcup{\mathscr V}]$ has ${\mathcal P}$. Since $f$ is closed and surjective we have
\begin{eqnarray*}
\mbox{cl}_Y U\backslash \bigcup{\mathscr V}&\subseteq&\mbox{cl}_Y f\big[f^{-1}[U]\big]\backslash \bigcup{\mathscr V}\\&\subseteq& f\big[\mbox{cl}_Xf^{-1}[U]\big]\backslash f\Big[f^{-1}\Big[\bigcup{\mathscr V}\Big]\Big]\subseteq f\Big[\mbox{cl}_Xf^{-1}[U]\backslash f^{-1}\Big[\bigcup{\mathscr V}\Big]\Big].
\end{eqnarray*}
But the latter has ${\mathcal P}$, as ${\mathcal P}$ is invariant under perfect mappings, thus its closed subset $\mbox{cl}_Y U\backslash \bigcup{\mathscr V}$ has ${\mathcal P}$, which is a contradiction. To show condition (3.c.iv) suppose that $\zeta<\eta\leq\sigma$, $U\in{\mathscr U}_\zeta$ and $V\in{\mathscr U}_\eta$. Since $\{{\mathscr U}_\zeta:\zeta\leq\sigma\}$ satisfies a similar condition, there exist a $Z\in{\mathscr Z}(Y)$ which has ${\mathcal P}$, and a finite ${\mathscr V}\subseteq\bigcup\{{\mathscr U}_\xi:\xi<\zeta\}$ such that either
\[\mbox{cl}_Y U\backslash\Big(V\cup\bigcup{\mathscr V}\Big)\subseteq Z\mbox{ or }(\mbox{cl}_Y U\cap\mbox{cl}_Y V)\backslash\bigcup{\mathscr V}\subseteq Z.\]
Since $f|f^{-1}[Z]:f^{-1}[Z]\rightarrow Z$ is perfect and surjective (since $f$ is surjective) and ${\mathcal P}$ is inverse invariant under perfect mappings,  $f^{-1}[Z]\in{\mathscr Z}(X)$ has ${\mathcal P}$. In the first case
\[\mbox{cl}_X f^{-1}[U]\backslash\Big(f^{-1}[V]\cup f^{-1}\Big[\bigcup{\mathscr V}\Big]\Big)\subseteq f^{-1}[\mbox{cl}_Y U]\backslash\Big(f^{-1}[V]\cup f^{-1}\Big[\bigcup{\mathscr V}\Big]\Big)\subseteq f^{-1}[Z]\]
and in the second case
\begin{eqnarray*}
\big(\mbox{cl}_X f^{-1}[U]\cap\mbox{cl}_X f^{-1}[V]\big)\backslash f^{-1}\Big[\bigcup{\mathscr V}\Big]&\subseteq&\big(f^{-1}[\mbox{cl}_Y U]\cap f^{-1}[\mbox{cl}_Y V]\big)\backslash f^{-1}\Big[\bigcup{\mathscr V}\Big]\\&\subseteq& f^{-1}[Z].
\end{eqnarray*}
The proof for condition (3.c.v) is analogous. Note that by Lemma \ref{22} the space $X$ is locally--${\mathcal P}$. The result now follows.
\end{proof}

\section{Compactification--like ${\mathcal P}$--extensions as partially ordered sets}

In  this chapter we consider classes of compactification--like ${\mathcal P}$--extensions of a Tychonoff space $X$ as partially ordered sets. We define two partial orders $\leq_{inj}$ and $\leq_{surj}$ (besides $\leq$ itself) on the set of all extensions of $X$. These partial orders behave nicely when restricted to classes of compactification--like ${\mathcal P}$--extensions of $X$ and their introduction lead to some interesting results which characterize compactification--like ${\mathcal P}$--extensions of $X$ among all Tychonoff ${\mathcal P}$--extensions of $X$ with compact remainder. We continue with study of the relationships between the order--structure of classes of compactification--like ${\mathcal P}$--extensions of $X$ (partially ordered with $\leq$) and the topology of the subspace $\beta X\backslash\lambda_{{\mathcal P}}X$ of its outgrowth $\beta X\backslash X$. This generalize a well known result of K.D. Magill, Jr. in \cite{Mag3} which relates the order--structure of the set of all compactifications of a locally compact spaces $X$ and the topology of the  outgrowth $\beta X\backslash X$. We conclude this chapter with a result which characterizes the largest (with respect to $\leq$) compactification--like ${\mathcal P}$--extension of $X$. This largest element (which we explicitly introduce as a subspace of the Stone--\v{C}ech compactification $\beta X$ of $X$) turns out to be also the largest among all Tychonoff ${\mathcal P}$--extension of $X$ with compact remainder.

We start with the following definition.

\begin{definition}
Let $X$ be a space and let $Y$ and $Y'$ be extensions of $X$. We let $Y\leq_{inj} Y'$ if there exists a continuous injective $f:Y'\rightarrow Y$ such that $f|X=\mbox{id}_X$.
\end{definition}

The relation  $\leq_{inj}$ defines  a partial order on the set of all  extensions of a space $X$. The following lemma (see also Lemma  \ref{BYST}) is a counterpart of Lemma \ref{DFH}.

\begin{lemma}\label{YFERFH}
Let $X$ be a Tychonoff space and let $Y_1,Y_2\in{\mathscr E}_{\mathcal P}(X)$ be such that $Y_1\leq Y_2$. The following are equivalent:
\begin{itemize}
\item[\rm(1)] $Y_1\leq_{inj} Y_2$.
\item[\rm(2)] Any element of ${\mathscr F}(Y_1)$ contains at most one element of ${\mathscr F}(Y_2)$.
\end{itemize}
\end{lemma}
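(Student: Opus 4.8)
The plan is to follow the template of Lemma \ref{DFH}, using the factorization of continuous maps between extensions through $\beta X$, but now keeping track of injectivity. Throughout, let $\phi_i:\beta X\rightarrow\beta Y_i$ ($i=1,2$) be the continuous extension of $\mbox{id}_X$, and recall from Lemma \ref{j2} that $\beta Y_i$ is obtained from $\beta X$ by collapsing each $\phi_i^{-1}(p)$ with $p\in Y_i\backslash X$ to a point. Since $Y_1\leq Y_2$, by Lemma \ref{DFH} each element of ${\mathscr F}(Y_2)$ is contained in an element of ${\mathscr F}(Y_1)$; this inclusion of partitions is the structure both directions of the proof manipulate.

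For (1) $\Rightarrow$ (2): suppose $f:Y_2\rightarrow Y_1$ is continuous injective with $f|X=\mbox{id}_X$. First I would check that $f[Y_2\backslash X]\subseteq Y_1\backslash X$; this is the argument already carried out in the proof of Theorem \ref{HG16} ((2) $\Rightarrow$ (4)), using that $Y_i\backslash X$ is compact and hence $X$ is open in $Y_i$. Let $f_\beta:\beta Y_2\rightarrow\beta Y_1$ be the continuous extension of $f$; then $f_\beta\phi_2=\phi_1$ (both restrict to $\mbox{id}_X$ on $X$), and by Theorem 3.5.7 of \cite{E}, $f_\beta[\beta Y_2\backslash X]=\beta Y_1\backslash X$, so $f_\beta$ maps $Y_2\backslash X$ into $Y_1\backslash X$ in a way agreeing with $f$. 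Now fix $F_1=\phi_1^{-1}(q)\in{\mathscr F}(Y_1)$ with $q\in Y_1\backslash X$, and suppose $F_2=\phi_2^{-1}(p)$ and $F_2'=\phi_2^{-1}(p')$ are two elements of ${\mathscr F}(Y_2)$ both contained in $F_1$, with $p,p'\in Y_2\backslash X$. Picking $z\in\phi_2^{-1}(p)$, we get $\phi_1(z)=q$, hence $f_\beta(p)=f_\beta(\phi_2(z))=\phi_1(z)=q$, so $f(p)=q$; similarly $f(p')=q$. Injectivity of $f$ gives $p=p'$, hence $F_2=F_2'$, which is exactly (2).

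For (2) $\Rightarrow$ (1): we already have a continuous $f:Y_2\rightarrow Y_1$ fixing $X$ (from $Y_1\leq Y_2$), constructed in Lemma \ref{DFH} by sending $t\in Y_2\backslash X$ to the unique $s\in Y_1\backslash X$ with $\phi_2^{-1}(t)\subseteq\phi_1^{-1}(s)$. It remains to show this $f$ is injective. On $X$ it is the identity, and (as in (1)$\Rightarrow$(2)) one checks $f[Y_2\backslash X]\subseteq Y_1\backslash X$, so it suffices to prove $f$ is injective on $Y_2\backslash X$. Suppose $t,t'\in Y_2\backslash X$ with $f(t)=f(t')=s\in Y_1\backslash X$. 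Then both $\phi_2^{-1}(t)$ and $\phi_2^{-1}(t')$ are elements of ${\mathscr F}(Y_2)$ contained in the single element $\phi_1^{-1}(s)$ of ${\mathscr F}(Y_1)$; by hypothesis (2) this forces $\phi_2^{-1}(t)=\phi_2^{-1}(t')$, and since $\phi_2$ is surjective this yields $t=t'$. Hence $f$ is injective, so $Y_1\leq_{inj}Y_2$.

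The only mild subtlety — the step I would be most careful about — is verifying $f[Y_2\backslash X]\subseteq Y_1\backslash X$ (equivalently, that $f$ does not send a remainder point into $X$); but this is routine given that the remainders are compact and $X$ is therefore open in each $Y_i$, and the argument is literally the one spelled out in the proof of Theorem \ref{HG16}, so it can be cited rather than repeated. Everything else is bookkeeping with the quotient description of $\beta Y_i$ from Lemma \ref{j2} and the identity $f_\beta\phi_2=\phi_1$. Note that, unlike Lemma \ref{DFH}, here we must assume $Y_1\leq Y_2$ outright (it is part of the hypothesis), since $\leq_{inj}$ is a refinement of $\leq$ and (2) alone only constrains the fibers once a map is known to exist.
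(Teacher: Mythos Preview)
Your proof is correct and matches the paper's approach essentially line for line: both directions use the identity $f_\beta\phi_2=\phi_1$ to translate between the fiber containment condition (2) and injectivity of $f$ on the remainder, invoking surjectivity of $\phi_2$ to conclude $p=p'$ from $\phi_2^{-1}(p)=\phi_2^{-1}(p')$. The only cosmetic difference is that the paper cites the proof of Lemma~\ref{DFH} (rather than Theorem~\ref{HG16}) for the facts $f_\beta\phi_2=\phi_1$ and $f[Y_2\backslash X]\subseteq Y_1\backslash X$, and in (2)$\Rightarrow$(1) it shows directly that the \emph{given} map $f$ (from $Y_1\leq Y_2$) is injective rather than rebuilding the map from Lemma~\ref{DFH}'s construction.
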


\begin{proof}
Let $\phi_i:\beta X\rightarrow\beta Y_i$ where $i=1,2$, be the continuous extension of $\mbox{id}_X$. Since  $Y_1\leq Y_2$ there exists a continuous $f:Y_2\rightarrow Y_1$ such that $f|X=\mbox{id}_X$. Let $f_\beta:\beta Y_2\rightarrow\beta Y_1$ be the continuous extension of $f$. As shown in the proof of Lemma \ref{DFH} we have $f_\beta \phi_2=\phi_1$ and $f[Y_2\backslash X]\subseteq Y_1\backslash X$.

(1) {\em implies} (2). Suppose that $f:Y_2\rightarrow Y_1$ introduced above is moreover injective. Let $p\in Y_1\backslash X$ and let $p_i\in Y_2\backslash X$ where   $i=1,2$, be such that $\phi_2^{-1}(p_i)\subseteq\phi_1^{-1}(p)$. Choose some $s_i\in\phi_2^{-1}(p_i)$ for any $i=1,2$ (such $s_i$'s exist, as  $\phi_2$ is surjective). Then
\[f(p_1)=f_\beta(p_1)=f_\beta\big(\phi_2(s_1)\big)=\phi_1(s_1)=p=\phi_1(s_2)=f_\beta\big(\phi_2(s_2)\big)=f_\beta(p_2)=f(p_2)\]
which implies that $p_1=p_2$. Thus  $\phi_2^{-1}(p_1)=\phi_2^{-1}(p_2)$.

(2) {\em implies} (1). We show that the mapping $f:Y_2\rightarrow Y_1$ introduced above is injective. Let $p_i\in Y_2\backslash X$ where $i=1,2$, be such that $f(p_1)=f(p_2)$ and let $p\in Y_1\backslash X$ denote their common value. Note that
\[\phi_2^{-1}(p_i)\subseteq\phi_2^{-1}\big[f^{-1} (p)\big]\subseteq\phi_2^{-1}\big[f_\beta^{-1} (p)\big]=(f_\beta\phi_2)^{-1}(p)=\phi_1^{-1}(p)\]
for any $i=1,2$, which by (2) implies that $\phi_2^{-1}(p_1)=\phi_2^{-1}(p_2)$ and therefore $p_1=p_2$.
\end{proof}

\begin{notation}\label{HYTE}
Let $R$ be a relation on a set $X$ and let $Y\subseteq X$. Denote
\[R|Y=\big\{(y,x)\in R:y\in Y\big\}.\]
\end{notation}

In the next result we give an order--theoretic characterization of ${\mathscr O}^{\mathcal Q}_{\mathcal P}(X)$. (Compare with its dual result Theorem \ref{UHDER} on ${\mathscr M}^{\mathcal Q}_{\mathcal P}(X)$.) Recall that a subset $A$ of a partially ordered set $(X,\leq)$ is said to be {\em cofinal} if for any $x\in X$ there exists some $a\in A$ with $x\leq a$.

\begin{theorem}\label{LLJLFA}
Let ${\mathcal P}$ and  ${\mathcal Q}$ be a pair of compactness--like topological properties. Let $X$ be a  Tychonoff space with $\mathcal{Q}$. Then
\begin{itemize}
\item[\rm(1)] \[{\mathscr O}^{\mathcal Q}_{\mathcal P}(X)=\big\{Y:Y \mbox{ is  maximal in }\big({\mathscr E}_{\mathcal P}^{\mathcal Q}(X),\leq_{inj}\big)\big\}.\]
\item[\rm(2)] ${\mathscr O}^{\mathcal Q}_{\mathcal P}(X)$ is the smallest cofinal subset of $({\mathscr E}_{\mathcal P}^{\mathcal Q}(X),\leq_{inj})$.
\item[\rm(3)] ${\mathscr O}^{\mathcal Q}_{\mathcal P}(X)$ is the unique cofinal subset of $({\mathscr E}_{\mathcal P}^{\mathcal Q}(X),\leq_{inj})$ on  which the two relations $\leq_{inj}$ and $=$ coincide.
\item[\rm(4)] ${\mathscr O}^{\mathcal Q}_{\mathcal P}(X)$ is the largest subset ${\mathscr E}$ of ${\mathscr E}_{\mathcal P}^{\mathcal Q}(X)$ such that
\begin{equation}\label{FRSY}
(\leq_{inj}|{\mathscr E})\subseteq=.
\end{equation}
\end{itemize}
\end{theorem}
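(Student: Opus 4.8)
The proof of part (1) is the substantive step, and parts (2)--(4) follow from it together with some easy order-theoretic bookkeeping. For part (1), I would argue both inclusions. For $\subseteq$: let $Y\in{\mathscr O}^{\mathcal Q}_{\mathcal P}(X)$ and suppose $Y\leq_{inj}T$ for some $T\in{\mathscr E}_{\mathcal P}^{\mathcal Q}(X)$; then there is a continuous injective $f:T\to Y$ with $f|X=\mathrm{id}_X$. By Theorem \ref{HG16}, condition (4), $f$ is a homeomorphism, so $T=Y$ as extensions; hence $Y$ is maximal in $({\mathscr E}_{\mathcal P}^{\mathcal Q}(X),\leq_{inj})$. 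For $\supseteq$: suppose $Y\in{\mathscr E}_{\mathcal P}^{\mathcal Q}(X)$ is $\leq_{inj}$-maximal. I must produce some $T\in{\mathscr E}^{\mathcal Q}_{\mathcal P}(X)$ with $Y\leq_{inj}T$ and $T\in{\mathscr O}^{\mathcal Q}_{\mathcal P}(X)$; maximality then forces $T=Y$, so $Y$ is optimal. The natural candidate is the ``optimal core'' construction already used in the proof of Theorem \ref{HG16}, $(1)\Rightarrow(2)$: take $\phi:\beta X\to\beta Y$, let $S$ be the quotient of $\beta X$ obtained by contracting each $\phi^{-1}(p)\backslash\lambda_{\mathcal P}X$ ($p\in Y\backslash X$) to a point, and set $T=X\cup q[\beta X\backslash\lambda_{\mathcal P}X]\subseteq S$. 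One checks via Lemma \ref{16} that $T\in{\mathscr E}^{\mathcal Q}_{\mathcal P}(X)$ and via Theorem \ref{HG16}(2) that $T\in{\mathscr O}^{\mathcal Q}_{\mathcal P}(X)$, and the map $f|T:T\to Y$ furnished by that construction is continuous, bijective, and fixes $X$, giving $Y\leq_{inj}T$. (If $Y$ is not minimal among ${\mathcal P}$-extensions, one first has to pass through a minimal one; but the contraction construction handles this uniformly, since $T\backslash X=q[\beta X\backslash\lambda_{\mathcal P}X]$ already has the ``minimal'' fibres.) The one subtlety is that $\leq_{inj}$-maximality of $Y$ only gives $T=Y$ \emph{as extensions}, i.e.\ up to a homeomorphism fixing $X$; since optimality is clearly invariant under equivalence of extensions, this is harmless.

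Part (4) is then almost immediate from part (1): if ${\mathscr E}\subseteq{\mathscr E}_{\mathcal P}^{\mathcal Q}(X)$ satisfies $(\leq_{inj}|{\mathscr E})\subseteq\,=$, then every $Y\in{\mathscr E}$ is $\leq_{inj}$-maximal in ${\mathscr E}_{\mathcal P}^{\mathcal Q}(X)$ --- indeed if $Y\leq_{inj}T$ with $T\in{\mathscr E}_{\mathcal P}^{\mathcal Q}(X)$, the $\supseteq$-direction of part (1) shows there is $T'\in{\mathscr O}^{\mathcal Q}_{\mathcal P}(X)$ with $T\leq_{inj}T'$, hence $Y\leq_{inj}T'$; but wait, that does not immediately land inside ${\mathscr E}$. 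The cleaner route: show directly that ${\mathscr E}\subseteq{\mathscr O}^{\mathcal Q}_{\mathcal P}(X)$ by taking $Y\in{\mathscr E}$, applying the construction above to get optimal $T$ with $Y\leq_{inj}T$, and then --- here is where I need the hypothesis --- observe $T\in{\mathscr O}^{\mathcal Q}_{\mathcal P}(X)$ is $\leq_{inj}$-maximal (by part (1)) and $Y\leq_{inj}T$ forces, via the injective map $T\to Y$ existing? No: $\leq_{inj}$ is not symmetric. So I instead argue: $T$ optimal and $Y\leq_{inj}T$; by Theorem \ref{HG16}(4) applied with the roles appropriately arranged, the injective map $T\to Y$ is a homeomorphism, so $Y=T\in{\mathscr O}^{\mathcal Q}_{\mathcal P}(X)$. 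This gives ${\mathscr E}\subseteq{\mathscr O}^{\mathcal Q}_{\mathcal P}(X)$, and conversely ${\mathscr O}^{\mathcal Q}_{\mathcal P}(X)$ itself satisfies (\ref{FRSY}) by part (1) (maximal elements cannot be $\leq_{inj}$-related except to themselves), so ${\mathscr O}^{\mathcal Q}_{\mathcal P}(X)$ is the largest such subset.

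For parts (2) and (3): cofinality of ${\mathscr O}^{\mathcal Q}_{\mathcal P}(X)$ in $({\mathscr E}_{\mathcal P}^{\mathcal Q}(X),\leq_{inj})$ is exactly the statement that every $Y\in{\mathscr E}_{\mathcal P}^{\mathcal Q}(X)$ has $Y\leq_{inj}T$ for some optimal $T$, which is the construction above; minimality among cofinal subsets follows because any cofinal subset must contain every $\leq_{inj}$-maximal element, and by part (1) those are precisely the optimal extensions; and uniqueness of a cofinal subset on which $\leq_{inj}$ and $=$ agree follows by combining: such a subset is cofinal hence contains (a representative of) each optimal extension, and satisfies (\ref{FRSY}) hence is contained in ${\mathscr O}^{\mathcal Q}_{\mathcal P}(X)$ by part (4), so it equals ${\mathscr O}^{\mathcal Q}_{\mathcal P}(X)$.

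The main obstacle is bookkeeping rather than depth: one must be careful throughout that $\leq_{inj}$ is \emph{not} antisymmetric in the naive sense --- $Y\leq_{inj}Y'$ and $Y'\leq_{inj}Y$ does give $Y=Y'$ as equivalence classes of extensions (since a continuous injection $Y'\to Y$ fixing $X$ composed with one $Y\to Y'$ fixing $X$ is a continuous injection $Y\to Y$ fixing the dense set $X$, hence the identity, forcing both maps to be homeomorphisms), but one should invoke this cleanly --- and that ``maximal'' and ``$=$'' are meant modulo equivalence of extensions. All the topological content is already packaged in Lemma \ref{16} and the equivalences (1)$\Leftrightarrow$(2)$\Leftrightarrow$(4) of Theorem \ref{HG16}; the proof here is the exercise of translating those into the language of the partial order $\leq_{inj}$.
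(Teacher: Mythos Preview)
Your proposal is correct and follows essentially the same route as the paper: both hinge on the quotient construction contracting the sets $\phi^{-1}(p)\backslash\lambda_{\mathcal P}X$ to produce an optimal $T$ with $Y\leq_{inj}T$, and both invoke Theorem \ref{HG16}(4) for the other inclusion in (1). The paper proves (2) first and derives (1) from it, whereas you do (1) first and derive (2)--(4) from it; your ordering is arguably cleaner, since once (1) is known the minimality of ${\mathscr O}^{\mathcal Q}_{\mathcal P}(X)$ among cofinal subsets is the trivial order-theoretic fact that a cofinal subset must contain every maximal element. One small slip: the map $f|T:T\to Y$ you describe is injective but not in general bijective (it is bijective only when $Y$ is already minimal), which is exactly why you need only $Y\leq_{inj}T$ rather than equivalence at that step --- your parenthetical remark shows you are aware of this, but the word ``bijective'' should be ``injective''.
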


\begin{proof} (2). To show that  ${\mathscr O}^{\mathcal Q}_{\mathcal P}(X)$ is cofinal in ${\mathscr E}_{\mathcal P}^{\mathcal Q}(X)$ with respect to $\leq_{inj}$ let $Y\in{\mathscr E}_{\mathcal P}^{\mathcal Q}(X)$. Let $\phi:\beta X\rightarrow\beta Y$ be the continuous extension of $\mbox{id}_X$. By Lemma \ref{16} the space $X$ is locally--${\mathcal P}$ and  $\beta X\backslash\lambda_{{\mathcal P}}X\subseteq\phi^{-1}[Y\backslash X]$. Also, by Lemma \ref{15} we have $X\subseteq\lambda_{{\mathcal P}}X$. Let
\[P=\big\{p\in Y\backslash X:\phi^{-1}(p)\backslash\lambda_{{\mathcal P}}X\neq\emptyset\big\}.\]
Form the quotient space $T$ of $\beta X$ by contracting each subset $\phi^{-1}(p)\backslash\lambda_{{\mathcal P}}X$ where $p\in P$ to a point $t_p$ and denote by $q:\beta X\rightarrow T$ its quotient mapping. Arguing as in the proof of Theorem \ref{HG16} ((1) $\Rightarrow$ (2)) it follows that $T$ is compact. Consider the subspace $Z=X\cup q[\beta X\backslash\lambda_{{\mathcal P}}X]$ of $T$. Then $Z$ is a Tychonoff extension of $X$ with the compact remainder $Z\backslash X=q[\beta X\backslash\lambda_{{\mathcal P}}X]$. Note that $T$ is a compactification of $Z$. Let $\psi:\beta X\rightarrow \beta Z$ and $f:\beta Z\rightarrow T$ be the continuous extensions of $\mbox{id}_X$ and $\mbox{id}_Z$, respectively. Since the continuous mapping $f\psi:\beta X\rightarrow T$ coincide with $q$ on $X$ we have $f\psi=q$. By Lemma \ref{16} and Theorem \ref{HG16} to show that $Z\in{\mathscr O}_{\mathcal P}^{\mathcal Q}(X)$ it suffices to show that $\psi^{-1}[Z\backslash X]=\beta X\backslash\lambda_{{\mathcal P}}X$. But this follows, as by  Theorem 3.5.7 of \cite{E} (and since $\beta Z$ and $T$ are compactifications of $Z$ and $f$ is continuous with $f|Z=\mbox{id}_Z$) we have $f[\beta Z\backslash Z]=T\backslash Z$ and therefore
\[\psi^{-1}[Z\backslash X]=\psi^{-1}\big[f^{-1}[Z\backslash X]\big]=(f\psi)^{-1}[Z\backslash X]=q^{-1}[Z\backslash X]=\beta X\backslash\lambda_{{\mathcal P}}X.\]
Note that for any $p\in P$ (and again, since $f[\beta Z\backslash Z]=T\backslash Z$) we have
\[\psi^{-1}(t_p)=\psi^{-1}\big[f^{-1}(t_p)\big]=(f\psi)^{-1}(t_p)=q^{-1}(t_p)\subseteq\phi^{-1}(p)\backslash\lambda_{{\mathcal P}}X.\]
Define $g:Z\rightarrow Y$ by $g(t_p)=p$ when $p\in P$ and $g(x)=x$ when $x\in X$. By the proof of Lemma \ref{DFH} ((2) $\Rightarrow$ (1)) (note that $\psi^{-1}(t_p)\subseteq\phi^{-1}(p)$ for any $p\in P$) the mapping $g$ is continuous, and by its definition, it is moreover injective. Thus $Y\leq_{inj} Z$. This shows the cofinality of ${\mathscr O}^{\mathcal Q}_{\mathcal P}(X)$ in ${\mathscr E}_{\mathcal P}^{\mathcal Q}(X)$ with respect to  $\leq_{inj}$.

To complete the proof we need to show that ${\mathscr O}^{\mathcal Q}_{\mathcal P}(X)$ is contained in every subset ${\mathscr S}$ of ${\mathscr E}_{\mathcal P}^{\mathcal Q}(X)$ cofinal with respect to $\leq_{inj}$. Indeed, let $Z\in{\mathscr O}_{\mathcal P}^{\mathcal Q}(X)$. Then $Z\leq_{inj} S$ for some $S\in {\mathscr S}$. We show that $S$ and $Z$ are equivalent extensions of $X$. To show this by Lemma \ref{DFH} it suffices to verify that  ${\mathscr F}(S)={\mathscr F}(Z)$. Let $\psi:\beta X\rightarrow\beta Z$ and $\varphi:\beta X\rightarrow \beta S$ be the continuous extensions of $\mbox{id}_X$. By Theorem \ref{HG16} we have $\psi^{-1}[Z\backslash X]=\beta X\backslash\lambda_{{\mathcal P}}X$. Now since $Z\leq S$ (as $Z\leq_{inj}S$) by  Lemma \ref{DFH}, for any $s\in S\backslash X$ we have $\varphi^{-1} (s)\subseteq\psi^{-1}(z)$ for some $z\in Z\backslash X$. Therefore  $\varphi^{-1}[S\backslash X]\subseteq\beta X\backslash\lambda_{{\mathcal P}}X$ and thus since by  Lemma \ref{16} we have $\beta X\backslash\lambda_{{\mathcal P}}X\subseteq\varphi^{-1}[S\backslash X]$ it follows that  $\varphi^{-1}[S\backslash X]=\beta X\backslash\lambda_{{\mathcal P}}X$. Now let $s'\in S\backslash X$. Then by Lemma \ref{DFH} (and since $Z\leq S$) we have $\varphi^{-1}(s')\subseteq\psi^{-1}(z')$ for some $z'\in Z\backslash X$. Suppose that $\varphi^{-1}(s')\neq \psi^{-1}(z')$. There exists some $s''\in  S\backslash X$ such that $s''\neq s'$ and $\varphi^{-1} (s'')\cap\psi^{-1} (z')$ is non--empty. Thus  $\varphi^{-1}(s'')\subseteq \psi^{-1}(z')$. But by Lemma \ref{YFERFH} this implies that $\varphi^{-1} (s'')=\varphi^{-1} (s')$ which is a contradiction, as $s''\neq s'$ (and $\varphi$ is surjective). This shows that $\varphi^{-1}(s')= \psi^{-1}(z')$. Therefore ${\mathscr F}(S)\subseteq{\mathscr F}(Z)$. To show the reverse inclusion note that for any $z\in Z\backslash X$, since $\psi^{-1}(z)\subseteq\beta X\backslash\lambda_{{\mathcal P}}X$ the set $\psi^{-1}(z)\cap\varphi^{-1}(s)$ is non--empty for some $s\in S\backslash X$, and thus  $\psi^{-1} (z)=\varphi^{-1} (s)$, as ${\mathscr F}(S)\subseteq{\mathscr F}(Z)$ (and the elements of ${\mathscr F}(Z)$ are pairwise disjoint). Therefore ${\mathscr F}(Z)\subseteq{\mathscr F}(S)$ which shows the equality  in the latter. By Lemma \ref{DFH} we have $S\leq Z$ and $Z\leq S$ which implies that $Z$ and $S$ are equivalent. Thus $Z\in {\mathscr S}$. This shows that ${\mathscr O}^{\mathcal Q}_{\mathcal P}(X)\subseteq{\mathscr S}$.

(1). By Theorem \ref{HG16} any element of ${\mathscr O}^{\mathcal Q}_{\mathcal P}(X)$ is maximal in ${\mathscr E}^{\mathcal Q}_{\mathcal P}(X)$ with respect to  $\leq_{inj}$. The converse follows from part (2), as if $Y\in{\mathscr E}^{\mathcal Q}_{\mathcal P}(X)$ is maximal with respect to $\leq_{inj}$ then $Y\leq_{inj} T$ for some $T\in{\mathscr O}^{\mathcal Q}_{\mathcal P}(X)$, which yields $Y=T$ and thus $Y\in{\mathscr O}^{\mathcal Q}_{\mathcal P}(X)$.

(3). Note that by part (2) the set  ${\mathscr O}^{\mathcal Q}_{\mathcal P}(X)$ is cofinal in ${\mathscr E}_{\mathcal P}^{\mathcal Q}(X)$ with respect to $\leq_{inj}$. Also, by part (1) the relations $\leq_{inj}$ and $=$ coincide on ${\mathscr O}^{\mathcal Q}_{\mathcal P}(X)$. Now let ${\mathscr E}$ be a subset of ${\mathscr E}_{\mathcal P}^{\mathcal Q}(X)$ cofinal with respect to $\leq_{inj}$ and such that the relations  $\leq_{inj}$ and $=$ coincide on ${\mathscr E}$. Let $S\in {\mathscr E}$. By the cofinality of ${\mathscr O}^{\mathcal Q}_{\mathcal P}(X)$ (with respect to $\leq_{inj}$) we have $S\leq_{inj} T$ for some $T\in {\mathscr O}^{\mathcal Q}_{\mathcal P}(X)$, and by the cofinality of ${\mathscr E}$ we have $T\leq_{inj} Z$ for some $Z\in {\mathscr E}$. Then  $S\leq_{inj} Z$ and thus (since $S,Z\in{\mathscr E}$) we have $S=Z$. Therefore $S=T$ which implies that $S\in{\mathscr O}^{\mathcal Q}_{\mathcal P}(X)$. This shows that ${\mathscr E}\subseteq{\mathscr O}^{\mathcal Q}_{\mathcal P}(X)$. Note that by part (2) we have also ${\mathscr O}^{\mathcal Q}_{\mathcal P}(X)\subseteq{\mathscr E}$, which together with above proves the equality in the latter.

(4). By part (1) the set ${\mathscr O}^{\mathcal Q}_{\mathcal P}(X)$ satisfies (\ref{FRSY}). Now let ${\mathscr E}$ be a subset of ${\mathscr E}_{\mathcal P}^{\mathcal Q}(X)$ which satisfies (\ref{FRSY}). Let $S\in {\mathscr E}$. By part (2) the set ${\mathscr O}^{\mathcal Q}_{\mathcal P}(X)$ is cofinal in ${\mathscr E}_{\mathcal P}^{\mathcal Q}(X)$ with respect to $\leq_{inj}$. Therefore there exists some $T\in {\mathscr O}^{\mathcal Q}_{\mathcal P}(X)$ such that $S\leq_{inj}T$. By (\ref{FRSY}) we have $S=T$ which implies that $S\in{\mathscr O}^{\mathcal Q}_{\mathcal P}(X)$. Thus ${\mathscr E}\subseteq{\mathscr O}^{\mathcal Q}_{\mathcal P}(X)$.
\end{proof}

\begin{definition}\label{GTDST}
Let $X$ be a space and let $Y$ and $Y'$ be extensions of $X$. We let $Y\leq_{surj} Y'$ if there exists a continuous surjective $f:Y'\rightarrow Y$ such that $f|X=\mbox{id}_X$.
\end{definition}

The relation $\leq_{surj}$ defines a partial order on the set of all extensions of a space $X$.

\begin{lemma}\label{BYST}
Let $X$ be a Tychonoff space and let $Y_1,Y_2\in{\mathscr E}_{\mathcal P}(X)$ be such that $Y_1\leq Y_2$. The  following are equivalent:
\begin{itemize}
\item[\rm(1)] $Y_1\leq_{surj} Y_2$.
\item[\rm(2)] Any element of ${\mathscr F}(Y_1)$ contains at least one element of ${\mathscr F}(Y_2)$.
\end{itemize}
\end{lemma}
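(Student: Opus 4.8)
The statement is the surjective analogue of Lemma \ref{DFH} and Lemma \ref{YFERFH}, and the proof should mirror the latter's structure closely. As in those proofs, since $Y_1\leq Y_2$ there is a continuous $f:Y_2\rightarrow Y_1$ fixing $X$ pointwise; let $f_\beta:\beta Y_2\rightarrow\beta Y_1$ be its continuous extension and $\phi_i:\beta X\rightarrow\beta Y_i$ the continuous extension of $\mbox{id}_X$. As established in the proof of Lemma \ref{DFH}, one has $f_\beta\phi_2=\phi_1$ and $f_\beta[\beta Y_2\backslash X]=\beta Y_1\backslash X$ (the latter by Theorem 3.5.7 of \cite{E}), hence $f[Y_2\backslash X]\subseteq Y_1\backslash X$. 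These are exactly the facts recorded at the start of the proof of Lemma \ref{YFERFH}, so I would invoke them directly. The key point that makes the equivalence work is the description of $\mbox{id}_X$-fixing continuous maps $Y_2\to Y_1$ in terms of the partitions ${\mathscr F}(Y_1)$, ${\mathscr F}(Y_2)$: a continuous $\mbox{id}_X$-fixing map $Y_2\to Y_1$ exists iff each $\phi_2^{-1}(p_2)$ is contained in some $\phi_1^{-1}(p_1)$ (this is Lemma \ref{DFH} together with Lemma \ref{j2}'s quotient description), and then surjectivity of the map translates into the map ${\mathscr F}(Y_2)\to{\mathscr F}(Y_1)$ hitting every block.

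For (1) $\Rightarrow$ (2), suppose the $f$ above is moreover surjective. Take $p\in Y_1\backslash X$; I must produce an element of ${\mathscr F}(Y_2)$ inside $\phi_1^{-1}(p)$. Since $f$ is surjective there is $t\in Y_2$ with $f(t)=p$, and since $f[Y_2\backslash X]\subseteq Y_1\backslash X$ and $f|X=\mbox{id}_X$ we have $t\in Y_2\backslash X$. Then $p=f(t)=f_\beta(t)$, so $t\in f_\beta^{-1}(p)$ and hence $\phi_2^{-1}(t)\subseteq\phi_2^{-1}[f_\beta^{-1}(p)]=(f_\beta\phi_2)^{-1}(p)=\phi_1^{-1}(p)$. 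Thus $\phi_2^{-1}(t)\in{\mathscr F}(Y_2)$ is contained in $\phi_1^{-1}(p)\in{\mathscr F}(Y_1)$, giving (2).

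For (2) $\Rightarrow$ (1), I want to show that the map $f:Y_2\rightarrow Y_1$ constructed in the proof of Lemma \ref{DFH} ((2) $\Rightarrow$ (1)) is surjective. Recall that $f$ sends $t\in Y_2\backslash X$ to the unique $s\in Y_1\backslash X$ with $\phi_2^{-1}(t)\subseteq\phi_1^{-1}(s)$, and fixes $X$ pointwise; its existence and continuity come from Lemma \ref{DFH}, whose hypothesis (every block of ${\mathscr F}(Y_2)$ sits inside a block of ${\mathscr F}(Y_1)$, i.e. $Y_1\leq Y_2$) we already have. Since $f$ fixes $X$, it remains to hit $Y_1\backslash X$. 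Given $p\in Y_1\backslash X$, hypothesis (2) supplies some $\phi_2^{-1}(t)\in{\mathscr F}(Y_2)$ with $\phi_2^{-1}(t)\subseteq\phi_1^{-1}(p)$; then by the defining rule $f(t)=p$ (uniqueness of $s$ holds because $\phi_1$ is surjective and the blocks $\phi_1^{-1}(s)$ are pairwise disjoint). Hence $f$ is surjective and $Y_1\leq_{surj}Y_2$.

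\textbf{Main obstacle.} There is no serious obstacle; the argument is a routine dualization of Lemma \ref{YFERFH}. The only point requiring a little care is the well-definedness and uniqueness issues: in (1)$\Rightarrow$(2) one must check the preimage point $t$ lies in the remainder (handled by $f[Y_2\backslash X]\subseteq Y_1\backslash X$), and in (2)$\Rightarrow$(1) one must be sure that the $s$ with $\phi_2^{-1}(t)\subseteq\phi_1^{-1}(s)$ producing a given $p$ is exactly $p$ itself, which follows from disjointness of the fibers $\phi_1^{-1}(\cdot)$ and surjectivity of $\phi_1$. One should also note that the roles of ${\mathscr E}_{\mathcal P}(X)$ versus general ${\mathscr E}(X)$ play no part here — the property ${\mathcal P}$ is not used — so the hypothesis $Y_1,Y_2\in{\mathscr E}_{\mathcal P}(X)$ is only there for uniformity with the surrounding development.
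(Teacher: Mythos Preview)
Your proof is correct and follows essentially the same approach as the paper's. Both directions use the identity $f_\beta\phi_2=\phi_1$ and the fact $f[Y_2\backslash X]\subseteq Y_1\backslash X$ in the same way; the only cosmetic difference is that for $(2)\Rightarrow(1)$ the paper works with the given $f$ and chooses a point $s\in\phi_2^{-1}(p_2)$ to compute $f(p_2)=f_\beta(\phi_2(s))=\phi_1(s)=p_1$, whereas you invoke the explicit description of $f$ from the proof of Lemma~\ref{DFH}, but since any continuous $\mbox{id}_X$-fixing map $Y_2\to Y_1$ is unique (Hausdorff codomain, dense agreement), these are the same map and the arguments are equivalent.
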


\begin{proof}
Let $\phi_i:\beta X\rightarrow\beta Y_i$ where $i=1,2$, be the continuous extension of $\mbox{id}_X$. Since $Y_1\leq Y_2$ there exists a continuous $f:Y_2\rightarrow Y_1$ such that $f|X=\mbox{id}_X$. Let $f_\beta:\beta Y_2\rightarrow\beta Y_1$ be the continuous extension of $f$. As shown in the proof of Lemma 2.12 of \cite{Ko3} we have $f_\beta \phi_2=\phi_1$ and $f[Y_2\backslash X]\subseteq Y_1\backslash X$.

(1) {\em implies} (2). Suppose that $f:Y_2\rightarrow Y_1$ introduced above is moreover surjective. Let $p_1\in Y_1\backslash X$ and let $p_2\in Y_2\backslash X$ be such that $f(p_2)=p_1$. Then
\[\phi_2^{-1}(p_2)\subseteq\phi_2^{-1}\big[f^{-1}(p_1)\big]\subseteq\phi_2^{-1}\big[f_\beta^{-1}(p_1)\big]=(f_\beta\phi_2)^{-1}(p_1)=\phi_1^{-1}(p_1).\]

(2) {\em implies} (1). We show that the mapping $f:Y_2\rightarrow Y_1$ introduced above is surjective. Let $p_1\in Y_1\backslash X$. Let $p_2\in Y_2\backslash X$ be such that $\phi_2^{-1}(p_2)\subseteq\phi_1^{-1}(p_1)$. Choose an $s\in\phi_2^{-1}(p_2)$ (such an  $s$ exists, as $\phi_2$ is surjective). Then since
\[s\in \phi_1^{-1}(p_1)=(f_\beta\phi_2)^{-1}(p_1)=\phi_2^{-1}\big[f_\beta^{-1}(p_1)\big]\]
we have $p_2=\phi_2(s)\in f_\beta^{-1}(p_1)$, which implies that $f(p_2)=f_\beta(p_2)=p_1$.
\end{proof}

\begin{lemma}\label{GHYDSD}
Let ${\mathcal P}$ and  ${\mathcal Q}$ be a pair of compactness--like topological properties. Let $X$ be a Tychonoff space with $\mathcal{Q}$. Let $Y\in {\mathscr M}^{\mathcal Q}_{\mathcal P}(X)$ and let $T\in {\mathscr E}_{\mathcal P}^{\mathcal Q}(X)$ be such that $T\leq_{surj} Y$. Then  $T\in {\mathscr M}^{\mathcal Q}_{\mathcal P}(X)$.
\end{lemma}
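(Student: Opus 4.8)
The goal is to show that if $Y\in{\mathscr M}^{\mathcal Q}_{\mathcal P}(X)$ and $T\in{\mathscr E}_{\mathcal P}^{\mathcal Q}(X)$ with $T\leq_{surj}Y$, then $T\in{\mathscr M}^{\mathcal Q}_{\mathcal P}(X)$. The natural tool is the characterization of minimal extensions in Theorem \ref{HUHG16}, specifically condition (2): I must verify that for every $p\in T\backslash X$ the set $\psi^{-1}(p)\backslash\lambda_{\mathcal P}X$ is non-empty, where $\psi:\beta X\rightarrow\beta T$ is the continuous extension of $\mathrm{id}_X$. So first I would set up the standard diagram: let $\phi:\beta X\rightarrow\beta Y$ extend $\mathrm{id}_X$, let $f:Y\rightarrow T$ be the continuous surjection with $f|X=\mathrm{id}_X$ witnessing $T\leq_{surj}Y$ (note the direction: $T\leq_{surj}Y$ means there is a surjection $Y\to T$), and let $f_\beta:\beta Y\rightarrow\beta T$ extend $f$. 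Then $f_\beta\phi$ and $\psi$ both restrict to $\mathrm{id}_X$ on the dense set $X$, hence $f_\beta\phi=\psi$. Also, as in the arguments in Theorem \ref{HG16} and Lemma \ref{BYST}, $f[Y\backslash X]\subseteq T\backslash X$, and since $f$ is surjective and $Y\backslash X$ is compact, $f[Y\backslash X]=T\backslash X$.

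Now fix $p\in T\backslash X$. Since $f:Y\rightarrow T$ is surjective, pick $y\in Y\backslash X$ with $f(y)=p$. Because $Y\in{\mathscr M}^{\mathcal Q}_{\mathcal P}(X)$, Theorem \ref{HUHG16}(2) gives a point $z\in\phi^{-1}(y)\backslash\lambda_{\mathcal P}X$. I claim $z\in\psi^{-1}(p)\backslash\lambda_{\mathcal P}X$, which finishes the proof. Indeed $\psi(z)=f_\beta(\phi(z))=f_\beta(y)=f(y)=p$, so $z\in\psi^{-1}(p)$; and $z\notin\lambda_{\mathcal P}X$ by choice of $z$. Hence $\psi^{-1}(p)\backslash\lambda_{\mathcal P}X\neq\emptyset$ for every $p\in T\backslash X$. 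Since $T\in{\mathscr E}^{\mathcal Q}_{\mathcal P}(X)$ is given, Theorem \ref{HUHG16} (the equivalence of (1) and (2)) then yields $T\in{\mathscr M}^{\mathcal Q}_{\mathcal P}(X)$.

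The only delicate point — and the step I'd present most carefully — is the identity $f_\beta\phi=\psi$ together with $f_\beta|(Y\backslash X)=f|(Y\backslash X)$ mapping into $T\backslash X$; this is where one uses that $\beta Y$ and $\beta T$ are compactifications of $X$ (since $X$ is dense in $Y$ and in $T$), that $f_\beta|X=\mathrm{id}_X$, and Theorem 3.5.7 of \cite{E} to conclude $f_\beta[\beta Y\backslash X]=\beta T\backslash X$, so that a point of $\phi^{-1}(y)$ with $y\in Y\backslash X$ cannot be sent by $\psi$ into $X$. But this is entirely routine and mirrors computations already carried out in the proofs of Theorems \ref{HUHG16} and \ref{HG16} and Lemmas \ref{DFH} and \ref{BYST}; there is no genuine obstacle. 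One could alternatively phrase the whole argument via Lemma \ref{BYST}: $T\leq_{surj}Y$ means every element of ${\mathscr F}(T)$ contains an element of ${\mathscr F}(Y)$, and each element of ${\mathscr F}(Y)$ meets $\beta X\backslash\lambda_{\mathcal P}X$ by Theorem \ref{HUHG16}, so each element of ${\mathscr F}(T)$ meets $\beta X\backslash\lambda_{\mathcal P}X$, which is again exactly condition (2) of Theorem \ref{HUHG16} for $T$.
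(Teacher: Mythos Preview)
Your proposal is correct, and the alternative phrasing you give at the end via Lemma~\ref{BYST} is exactly the paper's proof: pick $F\in{\mathscr F}(T)$, use Lemma~\ref{BYST} to find $G\in{\mathscr F}(Y)$ with $G\subseteq F$, note $G\backslash\lambda_{\mathcal P}X\neq\emptyset$ by Theorem~\ref{HUHG16}, hence $F\backslash\lambda_{\mathcal P}X\neq\emptyset$, and conclude by Theorem~\ref{HUHG16}. Your main argument is the same idea carried out by hand (tracking the point $z$ through $\psi=f_\beta\phi$ rather than citing Lemma~\ref{BYST}), so the two are not genuinely different routes.
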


\begin{proof}
Let $F\in{\mathscr F}(T)$. By Lemma \ref{BYST} there exists some $G\in {\mathscr F}(Y)$ such that $G\subseteq F$. By Theorem \ref{HUHG16}
the set $G\backslash\lambda_{{\mathcal P}}X$ is non--empty and thus  $F\backslash\lambda_{{\mathcal P}}X$ is non--empty. By Theorem \ref{HUHG16} the result follows.
\end{proof}

In the next result we give an order--theoretic characterization of ${\mathscr M}^{\mathcal Q}_{\mathcal P}(X)$.

\begin{theorem}\label{UHDER}
Let ${\mathcal P}$ and  ${\mathcal Q}$ be a pair of compactness--like topological properties. Let $X$ be a  Tychonoff space with $\mathcal{Q}$. Then
\begin{itemize}
\item[\rm(1)] ${\mathscr M}^{\mathcal Q}_{\mathcal P}(X)$ is the largest cofinal subset of $({\mathscr E}_{\mathcal P}^{\mathcal Q}(X),\leq)$ on  which the two relations $\leq$ and $\leq_{surj}$ coincide.
\item[\rm(2)] ${\mathscr M}^{\mathcal Q}_{\mathcal P}(X)$ is the largest subset of $({\mathscr E}_{\mathcal P}^{\mathcal Q}(X),\leq_{surj})$ in which ${\mathscr O}^{\mathcal Q}_{\mathcal P}(X)$ is cofinal.
\item[\rm(3)] ${\mathscr M}^{\mathcal Q}_{\mathcal P}(X)$ is the largest subset ${\mathscr E}$ of ${\mathscr E}_{\mathcal P}^{\mathcal Q}(X)$ such that
\begin{equation}\label{GYD}
(\leq|{\mathscr E})\subseteq\leq_{surj}.
\end{equation}
\item[\rm(4)] ${\mathscr M}^{\mathcal Q}_{\mathcal P}(X)$ is the smallest cofinal  subset ${\mathscr E}$ of $({\mathscr E}_{\mathcal P}^{\mathcal Q}(X),\leq)$ such that
\begin{equation}\label{GTSYD}
\big(\big({\mathscr E}_{\mathcal P}^{\mathcal Q}(X)\times{\mathscr E}\big)\cap\leq_{surj}\big)\subseteq{\mathscr E}\times{\mathscr E}_{\mathcal P}^{\mathcal Q}(X).
\end{equation}
\end{itemize}
\end{theorem}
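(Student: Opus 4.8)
The plan is to reduce every one of the four characterizations to a combination of three facts already available: criterion (5) of Theorem~\ref{HUHG16}, which says a member $Y$ of ${\mathscr E}^{\mathcal Q}_{\mathcal P}(X)$ lies in ${\mathscr M}^{\mathcal Q}_{\mathcal P}(X)$ exactly when $Y\leq_{surj}T$ for every $T\in{\mathscr E}^{\mathcal Q}_{\mathcal P}(X)$ with $Y\leq T$; the description of ${\mathscr O}^{\mathcal Q}_{\mathcal P}(X)$ as the smallest cofinal subset of $({\mathscr E}^{\mathcal Q}_{\mathcal P}(X),\leq_{inj})$ in Theorem~\ref{LLJLFA}(2), together with ${\mathscr O}^{\mathcal Q}_{\mathcal P}(X)\subseteq{\mathscr M}^{\mathcal Q}_{\mathcal P}(X)$; and the closure property of Lemma~\ref{GHYDSD}, that $T\leq_{surj}Y$ with $Y\in{\mathscr M}^{\mathcal Q}_{\mathcal P}(X)$ and $T\in{\mathscr E}^{\mathcal Q}_{\mathcal P}(X)$ implies $T\in{\mathscr M}^{\mathcal Q}_{\mathcal P}(X)$. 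First I would record two trivial observations used repeatedly: since $\leq_{inj}\,\subseteq\,\leq$, cofinality of ${\mathscr O}^{\mathcal Q}_{\mathcal P}(X)$ with respect to $\leq_{inj}$ together with ${\mathscr O}^{\mathcal Q}_{\mathcal P}(X)\subseteq{\mathscr M}^{\mathcal Q}_{\mathcal P}(X)$ makes ${\mathscr M}^{\mathcal Q}_{\mathcal P}(X)$ cofinal in $({\mathscr E}^{\mathcal Q}_{\mathcal P}(X),\leq)$; and $\leq_{surj}\,\subseteq\,\leq$ always.

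For part~(1), the coincidence of $\leq$ and $\leq_{surj}$ on ${\mathscr M}^{\mathcal Q}_{\mathcal P}(X)$ is immediate from Theorem~\ref{HUHG16}(5). For the maximality, suppose ${\mathscr E}\subseteq{\mathscr E}^{\mathcal Q}_{\mathcal P}(X)$ is cofinal with respect to $\leq$ and that $\leq$ and $\leq_{surj}$ agree on ${\mathscr E}$; given $S\in{\mathscr E}$ and an arbitrary $T\in{\mathscr E}^{\mathcal Q}_{\mathcal P}(X)$ with $S\leq T$, I would use cofinality to pick $Z\in{\mathscr E}$ with $T\leq Z$, so that $S\leq Z$ and hence $S\leq_{surj}Z$. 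Writing $g\colon Z\to S$, $h\colon Z\to T$, $k\colon T\to S$ for continuous maps fixing $X$ witnessing $S\leq_{surj}Z$, $T\leq Z$, $S\leq T$ respectively, the maps $kh$ and $g$ agree on the dense subspace $X$ of $Z$ and take values in the Hausdorff space $S$, whence $kh=g$; since $g$ is surjective so is $k$, giving $S\leq_{surj}T$. As $T$ was arbitrary, Theorem~\ref{HUHG16}(5) yields $S\in{\mathscr M}^{\mathcal Q}_{\mathcal P}(X)$, so ${\mathscr E}\subseteq{\mathscr M}^{\mathcal Q}_{\mathcal P}(X)$.

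Parts~(2)--(4) then follow by short arguments. For~(2): given $Y\in{\mathscr M}^{\mathcal Q}_{\mathcal P}(X)$, cofinality of ${\mathscr O}^{\mathcal Q}_{\mathcal P}(X)$ in $({\mathscr E}^{\mathcal Q}_{\mathcal P}(X),\leq_{inj})$ gives $T\in{\mathscr O}^{\mathcal Q}_{\mathcal P}(X)$ with $Y\leq_{inj}T$, hence $Y\leq T$, and since $Y,T\in{\mathscr M}^{\mathcal Q}_{\mathcal P}(X)$ part~(1) upgrades this to $Y\leq_{surj}T$, so ${\mathscr O}^{\mathcal Q}_{\mathcal P}(X)$ is cofinal in $({\mathscr M}^{\mathcal Q}_{\mathcal P}(X),\leq_{surj})$; conversely, if ${\mathscr O}^{\mathcal Q}_{\mathcal P}(X)$ is cofinal in some $({\mathscr E},\leq_{surj})$ then each $S\in{\mathscr E}$ has $S\leq_{surj}T$ for some $T\in{\mathscr O}^{\mathcal Q}_{\mathcal P}(X)\subseteq{\mathscr M}^{\mathcal Q}_{\mathcal P}(X)$, so $S\in{\mathscr M}^{\mathcal Q}_{\mathcal P}(X)$ by Lemma~\ref{GHYDSD}. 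For~(3): the inclusion $(\leq|{\mathscr M}^{\mathcal Q}_{\mathcal P}(X))\subseteq\,\leq_{surj}$ is again Theorem~\ref{HUHG16}(5), and if ${\mathscr E}$ satisfies $(\leq|{\mathscr E})\subseteq\,\leq_{surj}$ then for $S\in{\mathscr E}$ every $T\in{\mathscr E}^{\mathcal Q}_{\mathcal P}(X)$ above $S$ satisfies $S\leq_{surj}T$, whence $S\in{\mathscr M}^{\mathcal Q}_{\mathcal P}(X)$ by Theorem~\ref{HUHG16}(5). For~(4): ${\mathscr M}^{\mathcal Q}_{\mathcal P}(X)$ is cofinal with respect to $\leq$ by the opening observation, the displayed condition is precisely the closure property of Lemma~\ref{GHYDSD}, and if ${\mathscr E}$ is cofinal with respect to $\leq$ and satisfies that condition then for $Y\in{\mathscr M}^{\mathcal Q}_{\mathcal P}(X)$ one picks $Z\in{\mathscr E}$ with $Y\leq Z$, obtains $Y\leq_{surj}Z$ from Theorem~\ref{HUHG16}(5), and concludes $Y\in{\mathscr E}$ from the condition; hence ${\mathscr M}^{\mathcal Q}_{\mathcal P}(X)\subseteq{\mathscr E}$.

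The bulk of the work is careful bookkeeping with the relations $\leq$, $\leq_{inj}$, $\leq_{surj}$ and their restrictions in the sense of Notation~\ref{HYTE}. The one genuinely delicate point I anticipate is the factorization step in part~(1): to certify $S\in{\mathscr M}^{\mathcal Q}_{\mathcal P}(X)$ via Theorem~\ref{HUHG16}(5) one must produce a \emph{surjection} $T\to S$ for an arbitrary $T\geq S$ in ${\mathscr E}^{\mathcal Q}_{\mathcal P}(X)$, and this is obtained only indirectly, by interpolating a member $Z$ of the cofinal set ${\mathscr E}$ above $T$ and extracting surjectivity of $T\to S$ from that of the composite $Z\to T\to S$, using the uniqueness of maps fixing the dense subspace $X$ and mapping into a Hausdorff space. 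Everything else is a direct application of Theorems~\ref{HUHG16} and~\ref{LLJLFA} and Lemma~\ref{GHYDSD}.
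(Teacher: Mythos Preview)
Your proof is correct and, in spirit, very close to the paper's. The differences are confined to part~(1). For cofinality of ${\mathscr M}^{\mathcal Q}_{\mathcal P}(X)$ the paper constructs for each $Y\in{\mathscr E}^{\mathcal Q}_{\mathcal P}(X)$ the explicit subspace $T=X\cup\{p\in Y\backslash X:\phi^{-1}(p)\backslash\lambda_{{\mathcal P}}X\neq\emptyset\}$ and checks directly that $T\in{\mathscr M}^{\mathcal Q}_{\mathcal P}(X)$ with $Y\leq T$, whereas you simply invoke ${\mathscr O}^{\mathcal Q}_{\mathcal P}(X)\subseteq{\mathscr M}^{\mathcal Q}_{\mathcal P}(X)$ together with the cofinality of ${\mathscr O}^{\mathcal Q}_{\mathcal P}(X)$ from Theorem~\ref{LLJLFA}(2). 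More interestingly, for the maximality step the paper interpolates an element of ${\mathscr O}^{\mathcal Q}_{\mathcal P}(X)$ between $S$ and $Z$, then uses Theorem~\ref{HG16}(5) to deduce $Z\in{\mathscr M}^{\mathcal Q}_{\mathcal P}(X)$ and Lemma~\ref{GHYDSD} to pull this down to $S$; your factorization argument $kh=g$ (via uniqueness of continuous maps into a Hausdorff space agreeing on the dense subset $X$) bypasses both of these and verifies condition~(5) of Theorem~\ref{HUHG16} for $S$ directly. Your route is more elementary in that it avoids any appeal to the structure of optimal extensions in Theorem~\ref{HG16}. Parts~(2)--(4) of your argument match the paper's essentially line for line.
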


\begin{proof}
(1). First we show that ${\mathscr M}_{\mathcal P}^{\mathcal Q}(X)$ is cofinal in ${\mathscr E}_{\mathcal P}^{\mathcal Q}(X)$ with respect to $\leq$. Let $Y\in{\mathscr E}_{\mathcal P}^{\mathcal Q}(X)$. Let $\phi:\beta X\rightarrow\beta Y$ be the continuous extension of $\mbox{id}_X$. Consider the subspace
\[T=X\cup\big\{p\in Y\backslash X:\phi^{-1}(p)\backslash\lambda_{{\mathcal P}}X\neq\emptyset\big\}\]
of $Y$. We show that $T\in{\mathscr M}_{\mathcal P}^{\mathcal Q}(X)$ and $Y\leq T$. Obviously, $T$ is a Tychonoff  extension of $X$. By Lemma \ref{16} we have $\beta X\backslash\lambda_{{\mathcal P}}X\subseteq\phi^{-1}[Y\backslash X]$ (and $X$ is locally--${\mathcal P}$) and thus $T\backslash X=\phi[\beta X\backslash\lambda_{{\mathcal P}}X]$ is compact.  Also, since $\beta Y$ is  a compactification of $T$  and by the definition of $T$ we have $\beta X\backslash\lambda_{{\mathcal P}}X\subseteq\phi^{-1}[T\backslash X]$, again by  Lemma \ref{16} it follows that $T$ has both ${\mathcal P}$ and ${\mathcal Q}$. Let $\psi:\beta X\rightarrow\beta T$ and $f:\beta T\rightarrow\beta Y$ be the continuous extensions of $\mbox{id}_X$ and $\mbox{id}_T$, respectively. The continuous mappings $f\psi$ and $\phi$ agree on $X$, and therefore they are identical. Since $\beta Y$ is a compactification of $T$ (and $f|T=\mbox{id}_T$), by Theorem 3.5.7 of \cite{E} we have $f[\beta T\backslash T]=\beta Y\backslash T$. Thus
\[\psi^{-1}(p)=\psi^{-1}\big[f^{-1}(p)\big]=(f\psi)^{-1}(p)=\phi^{-1}(p)\]
for any $p\in T\backslash X$. By Lemma \ref{DFH} it then follows that $Y\leq T$. By the definition of $T$ we have
\[\psi^{-1}(p)\backslash\lambda_{{\mathcal P}}X=\phi^{-1}(p)\backslash\lambda_{{\mathcal P}}X\neq\emptyset\]
for any $p\in T\backslash X$, which by Theorem \ref{HUHG16} implies that $T\in{\mathscr M}_{\mathcal P}^{\mathcal Q}(X)$.

By Theorem \ref{HUHG16} the relations  $\leq$ and $\leq_{surj}$ coincide on ${\mathscr M}_{\mathcal P}^{\mathcal Q}(X)$. Now let ${\mathscr E}$ be a subset of ${\mathscr E}_{\mathcal P}^{\mathcal Q}(X)$ which is cofinal with respect to $\leq$ and is such that the relations  $\leq$ and $\leq_{surj}$ coincide on ${\mathscr E}$. Let $S\in {\mathscr E}$. By  Theorem \ref{LLJLFA}(2) the set ${\mathscr O}^{\mathcal Q}_{\mathcal P}(X)$ is cofinal in ${\mathscr E}_{\mathcal P}^{\mathcal Q}(X)$ with respect to $\leq$. Therefore there exists some $T\in {\mathscr O}^{\mathcal Q}_{\mathcal P}(X)$ with $S\leq T$. By the cofinality of  ${\mathscr E}$ with respect to $\leq$ there exists some $Z\in{\mathscr E}$ with $T\leq Z$. Then $S\leq Z$ and thus (since $S,Z\in {\mathscr E}$) by our assumption $S\leq_{surj} Z$. But by Theorem \ref{HG16} (since $T\leq Z$) we have $Z\in {\mathscr M}^{\mathcal Q}_{\mathcal P}(X)$ which by Lemma \ref{GHYDSD} yields $S\in{\mathscr M}^{\mathcal Q}_{\mathcal P}(X)$. Thus ${\mathscr E}\subseteq{\mathscr M}^{\mathcal Q}_{\mathcal P}(X)$.

(2). By the definitions we have ${\mathscr O}^{\mathcal Q}_{\mathcal P}(X)\subseteq{\mathscr M}^{\mathcal Q}_{\mathcal P}(X)$. Also, if $Y\in {\mathscr M}^{\mathcal Q}_{\mathcal P}(X)$ then by Theorem \ref{LLJLFA}(2) we have $Y\leq T$ for some $T\in{\mathscr O}^{\mathcal Q}_{\mathcal P}(X)$ and thus by Theorem \ref{HUHG16} we have $Y\leq_{surj} T$. This shows the cofinality of ${\mathscr O}^{\mathcal Q}_{\mathcal P}(X)$ in ${\mathscr M}^{\mathcal Q}_{\mathcal P}(X)$. Now let ${\mathscr E}$ be a subset of ${\mathscr E}_{\mathcal P}^{\mathcal Q}(X)$ in which ${\mathscr O}^{\mathcal Q}_{\mathcal P}(X)$ is cofinal with respect to $\leq_{surj}$. Let $S\in {\mathscr E}$. By the cofinality there exists some $Z\in {\mathscr O}^{\mathcal Q}_{\mathcal P}(X)$ with $S\leq_{surj} Z$.  By Lemma \ref{GHYDSD} we have $S\in {\mathscr M}^{\mathcal Q}_{\mathcal P}(X)$. Thus ${\mathscr E}\subseteq{\mathscr M}^{\mathcal Q}_{\mathcal P}(X)$.

(3). By Theorem \ref{HUHG16}(1.e) the set ${\mathscr E}={\mathscr M}^{\mathcal Q}_{\mathcal P}(X)$ satisfies (\ref{GYD}). Now let ${\mathscr E}$ be a subset of ${\mathscr E}_{\mathcal P}^{\mathcal Q}(X)$ which satisfies (\ref{GYD}). Let $S\in {\mathscr E}$. By part (1) the set ${\mathscr M}^{\mathcal Q}_{\mathcal P}(X)$ is cofinal in ${\mathscr E}_{\mathcal P}^{\mathcal Q}(X)$ with respect to $\leq$. Therefore there exists some $Y\in {\mathscr M}^{\mathcal Q}_{\mathcal P}(X)$ with $S\leq Y$. Thus $S\leq_{surj} Y$ by  (\ref{GYD}). By Lemma \ref{GHYDSD} it follows that $S\in {\mathscr M}^{\mathcal Q}_{\mathcal P}(X)$. Therefore ${\mathscr E}\subseteq{\mathscr M}^{\mathcal Q}_{\mathcal P}(X)$.

(4). By part (1) the set ${\mathscr M}^{\mathcal Q}_{\mathcal P}(X)$ is cofinal  in  ${\mathscr E}_{\mathcal P}^{\mathcal Q}(X)$ with respect to $\leq$. Also, by Lemma \ref{GHYDSD} the set ${\mathscr E}={\mathscr M}^{\mathcal Q}_{\mathcal P}(X)$ satisfies (\ref{GTSYD}). Now let ${\mathscr E}$ be a subset of ${\mathscr E}_{\mathcal P}^{\mathcal Q}(X)$ cofinal with respect to $\leq$ and satisfies (\ref{GTSYD}). Let $Y\in {\mathscr M}^{\mathcal Q}_{\mathcal P}(X)$. By the cofinality of ${\mathscr E}$ we have $Y\leq S$ for some $S\in {\mathscr E}$. By Theorem \ref{HUHG16}(1.e) we have $Y\leq_{surj} S$ and thus by (\ref{GTSYD}) it follows that $Y\in {\mathscr E}$. Therefore ${\mathscr M}^{\mathcal Q}_{\mathcal P}(X)\subseteq{\mathscr E}$.
\end{proof}

Recall that a partially ordered set $(L,\leq)$ is called a {\em lattice} if together with any pair of elements $a,b\in L$ it contains their  least upper bound $a\vee b$ and their greatest lower bound $a\wedge b$. Our next purpose is to generalize the following result of K.D. Magill, Jr. in \cite{Mag3} which relates the order--structure of the lattice of compactifications of a locally compact space $X$ to the topology of the outgrowth $\beta X\backslash X$. (The theorem has been generalized in various directions; see \cite{Me} for a different proof of the theorem; see \cite{R} for generalizations of the theorem to non--locally compact spaces; see \cite{Wo1} and \cite{Do} for a zero--dimensional version of the theorem, and see \cite{PW1} for extension of the theorem to mappings.) Our results here will relate the order--structure of classes of compactification--like ${\mathcal P}$--extensions of a Tychonoff space $X$ to the topology of the subspace $\beta X\backslash\lambda_{{\mathcal P}}X$ of $\beta X$.

\begin{theorem}[Magill \cite{Mag3}]\label{KLFA}
Let $X$ and $Y$ be locally compact non--compact spaces. The following are equivalent:
\begin{itemize}
\item[\rm(1)] $({\mathscr K}(X),\leq)$ and $({\mathscr K}(Y),\leq)$ are order--isomorphic.
\item[\rm(2)] $\beta X\backslash X$ and $\beta Y\backslash Y$ are homeomorphic.
\end{itemize}
\end{theorem}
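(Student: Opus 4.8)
\textbf{Proof proposal for Theorem \ref{KLFA}.}

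The plan is to prove this as a special case of the general machinery already developed, by taking $\mathcal{P}=$ compactness and $\mathcal{Q}=$ regularity. For a locally compact non-compact space $X$ we have $\lambda_{\mathcal{P}}X = X$ (since every point has a neighborhood with compact closure, Lemma \ref{15} gives $X\subseteq\lambda_{\mathcal{P}}X$, and conversely $\lambda_{\mathcal{P}}X\subseteq\beta X$ with every compact zero-set of $X$ already contained in $X$), so $\beta X\backslash\lambda_{\mathcal{P}}X = \beta X\backslash X$. Moreover for these choices the notions ``Tychonoff $\mathcal{P}$-extension of $X$ with compact remainder'' and ``compactification of $X$'' coincide, and by Theorem \ref{HG16}(2) (or the discussion preceding Lemma \ref{18}) the class ${\mathscr O}^{\mathcal{Q}}_{\mathcal{P}}(X)$ is exactly $\{\beta X\}$ while ${\mathscr M}^{\mathcal{Q}}_{\mathcal{P}}(X)={\mathscr K}(X)$. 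So the content of the theorem is the classical equivalence between order-isomorphism of $({\mathscr K}(X),\leq)$ and homeomorphism of the outgrowth. I would give the direct argument rather than deriving it from a not-yet-stated general theorem of the chapter.

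The implication $(2)\Rightarrow(1)$ is the easy direction: if $h:\beta X\backslash X\to\beta Y\backslash Y$ is a homeomorphism, then using Lemma \ref{j2} every compactification of $X$ is the quotient of $\beta X$ obtained by contracting the fibers of a map $\beta X\to\alpha X$ extending $\mathrm{id}_X$, and by the $\beta$-family description recalled after Lemma \ref{DFH} (Magill's result from \cite{Mag3}) the poset $({\mathscr K}(X),\leq)$ is anti-isomorphic to the poset of upper-semicontinuous decompositions of $\beta X\backslash X$ into compact sets, ordered by refinement. Transporting decompositions of $\beta X\backslash X$ to decompositions of $\beta Y\backslash Y$ along $h$ yields the required order-isomorphism; one must check that the transported decomposition is again upper semicontinuous and that the induced quotient of $\beta Y$ is a compactification of $Y$, which follows because $h$ is a homeomorphism and $X$, $Y$ are open dense (locally compact) in their Stone--\v{C}ech compactifications, so contracting a compact decomposition of the remainder keeps $Y$ embedded as a dense open subspace.

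The implication $(1)\Rightarrow(2)$ is the substance of the theorem and the main obstacle. The strategy is to recover the topology of $\beta X\backslash X$ purely order-theoretically from $({\mathscr K}(X),\leq)$. First one identifies, using Magill's anti-isomorphism with the lattice of compact upper-semicontinuous decompositions, the ``atoms just above $\omega X$'' — that is, the two-point compactifications, which correspond to partitions of $\beta X\backslash X$ into two clopen pieces — and more generally the finite-point compactifications. The points of $\beta X\backslash X$ themselves are not directly visible, but the \emph{clopen subsets} of $\beta X\backslash X$ are: a clopen partition of the remainder into finitely many pieces is exactly a finite-point compactification, and the Boolean algebra of clopen sets of $\beta X\backslash X$ can be read off from the sublattice of ${\mathscr K}(X)$ generated by finite-point compactifications together with their meets and joins. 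One then reconstructs $\beta X\backslash X$ as (a quotient of) the Stone space of this Boolean algebra, being careful that in general $\beta X\backslash X$ need not be zero-dimensional; the correct move, following Magill's original argument, is to look at the ``principal'' compactifications lying just below $\beta X$ — those whose decomposition contracts exactly one pair of points — which are in bijection with unordered pairs of points of $\beta X\backslash X$, and to recover the points and then the topology (a basic closed set being an intersection of the remainders left after contracting certain pairs). An order-isomorphism ${\mathscr K}(X)\to{\mathscr K}(Y)$ respects all of these order-definable notions and hence induces a bijection $\beta X\backslash X\to\beta Y\backslash Y$ that carries basic closed sets to basic closed sets, i.e.\ a homeomorphism. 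The delicate points are: verifying that the relevant families of compactifications are genuinely order-characterizable (so that an abstract isomorphism must preserve them), and handling the non-locally-compact-remainder subtleties in the reconstruction of the topology from the pair-contraction compactifications; this is where I would invest the most care, essentially reproducing the combinatorial core of Magill's \cite{Mag3} argument.
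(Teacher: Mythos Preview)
The paper does not prove Theorem~\ref{KLFA} at all: it is stated (twice, once in Chapter~4 and once at the start of Section~5.1) as a classical result of Magill~\cite{Mag3}, with no proof given, and is then used as a black box in the proof of Theorem~\ref{KKLFA}. So there is nothing to compare your proposal against; the paper treats this theorem as known.

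Your sketch of Magill's original argument for $(1)\Rightarrow(2)$ is in the right spirit, but it is only an outline and would need substantial work to become a proof (the order-theoretic isolation of the ``two-point-contraction'' compactifications, i.e.\ the anti-atoms, and the reconstruction of the topology of $\beta X\backslash X$ from them, is exactly the nontrivial combinatorial core of \cite{Mag3}).

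One genuine error in your preamble: for $\mathcal P=$ compactness and $X$ locally compact you claim ${\mathscr O}^{\mathcal Q}_{\mathcal P}(X)=\{\beta X\}$. This is wrong. For any compactification $Y$ of $X$ one has $\beta Y=Y$, so the extension $\phi:\beta X\to\beta Y$ of $\mathrm{id}_X$ satisfies $\phi^{-1}[Y\backslash X]=\beta X\backslash X=\beta X\backslash\lambda_{\mathcal P}X$ automatically (by Theorem~3.5.7 of \cite{E}). Hence by Theorem~\ref{HG16}(2) \emph{every} compactification of $X$ is optimal, i.e.\ ${\mathscr O}^{\mathcal Q}_{\mathcal P}(X)={\mathscr K}(X)$. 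This is also what Lemma~\ref{HGFA} says once you plug in $\lambda_{\mathcal P}X=X$. Consequently Theorem~\ref{KKLFA} \emph{does} specialize directly to Theorem~\ref{KLFA}, but its proof invokes Theorem~\ref{KLFA}, so this is not an independent route.
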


\begin{xrem}
{\em The above theorem fails if the spaces under consideration are not locally compact (see \cite{Th}).}
\end{xrem}

The following simple observation will be used quite often in the future (sometimes without explicit reference).

\begin{lemma}\label{HFH}
Let $X$ be a Tychonoff locally--${\mathcal P}$ space where ${\mathcal P}$ is a clopen hereditary finitely additive perfect topological property. Then $X$ is non--${\mathcal P}$ if and only if $\lambda_{{\mathcal P}}X$ is non--compact if and only if $\lambda_{{\mathcal P}}X\neq\beta X$.
\end{lemma}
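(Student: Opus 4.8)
\textbf{Proof proposal for Lemma \ref{HFH}.}

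The plan is to prove the chain of equivalences by establishing two implications that close the cycle, together with one easy direction. I will work with a Tychonoff locally--${\mathcal P}$ space $X$, so that by Lemma \ref{15} we have $X\subseteq\lambda_{{\mathcal P}}X$, and $\lambda_{{\mathcal P}}X$ is open in $\beta X$ by its very definition as a union of interiors.

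First I would dispose of the equivalence ``$\lambda_{{\mathcal P}}X$ is non--compact $\iff$ $\lambda_{{\mathcal P}}X\neq\beta X$''. Since $\lambda_{{\mathcal P}}X$ is an open subspace of the compact space $\beta X$, if $\lambda_{{\mathcal P}}X=\beta X$ then it is compact; conversely, if $\lambda_{{\mathcal P}}X\neq\beta X$ then $\lambda_{{\mathcal P}}X$ is a proper open subset of $\beta X$ containing the dense set $X$, hence it is not closed in $\beta X$ (its closure is all of $\beta X$), so it cannot be compact, as a compact subspace of a Hausdorff space is closed. This handles the second ``if and only if'' immediately.

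Next I would prove ``$X$ non--${\mathcal P}$ $\Rightarrow$ $\lambda_{{\mathcal P}}X\neq\beta X$'' by contraposition. Suppose $\lambda_{{\mathcal P}}X=\beta X$. Then taking $A=X$ in Lemma \ref{B}, we have $\mbox{cl}_{\beta X}X=\beta X=\lambda_{{\mathcal P}}X$, so Lemma \ref{B} gives that $\mbox{cl}_X X=X$ has ${\mathcal P}$. Thus if $X$ is non--${\mathcal P}$ then $\lambda_{{\mathcal P}}X\neq\beta X$. Finally I would prove ``$\lambda_{{\mathcal P}}X$ non--compact $\Rightarrow$ $X$ non--${\mathcal P}$'', again by contraposition: if $X$ has ${\mathcal P}$, then $X\in{\mathscr Z}(X)$ is a zero--set with ${\mathcal P}$, and $\mbox{int}_{\beta X}\mbox{cl}_{\beta X}X=\mbox{int}_{\beta X}\beta X=\beta X$, so by the definition of $\lambda_{{\mathcal P}}X$ we get $\beta X\subseteq\lambda_{{\mathcal P}}X$, i.e.\ $\lambda_{{\mathcal P}}X=\beta X$ is compact. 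Stringing these together closes the cycle of equivalences. The argument is short and the only mild subtlety — not really an obstacle — is remembering to invoke Lemma \ref{15} so that $X$ genuinely sits inside $\lambda_{{\mathcal P}}X$, which is what makes density (and hence the non--closedness argument) available; everything else is a direct appeal to Lemma \ref{B} and elementary compactness facts.
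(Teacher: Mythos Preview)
Your proof is correct and follows essentially the same approach as the paper: both use the definition of $\lambda_{\mathcal P}X$ with $X\in\mathscr Z(X)$ to get $\lambda_{\mathcal P}X=\beta X$ when $X$ has $\mathcal P$, and both invoke Lemma~\ref{B} (together with Lemma~\ref{15}) for the converse direction. The only cosmetic difference is that you argue the equivalence ``$\lambda_{\mathcal P}X$ non-compact $\iff \lambda_{\mathcal P}X\neq\beta X$'' separately via density, whereas the paper obtains it implicitly from the cycle of implications.
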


\begin{proof}
If $X$ has ${\mathcal P}$ then by the definition of $\lambda_{{\mathcal P}}X$  (and since obviously $X\in {\mathscr Z}(X)$) we have $\beta X=\mbox{int}_{\beta X}\mbox{cl}_{\beta X}X\subseteq\lambda_{{\mathcal P}}X$. Thus $\lambda_{{\mathcal P}}X=\beta X$  is compact. Note that if $\lambda_{{\mathcal P}}X$ is compact, then since $X\subseteq \lambda_{{\mathcal P}}X$ (as $X$ is locally--${\mathcal P}$; see Lemma \ref{15}) we have  $\mbox{cl}_{\beta X}X\subseteq\lambda_{{\mathcal P}}X$. Therefore by Lemma \ref{B} the space $X$ has ${\mathcal P}$.
\end{proof}

Recall that if $(A,\leq)$ and  $(B,\leq)$ are partially ordered sets, a mapping $f:A\rightarrow B$ is said to be an {\em order--homomorphism} if for any $c,d\in A$ we have $f(c)\leq f(d)$ whenever $c\leq d$. An order--homomorphism $f:A\rightarrow B$ is called an {\em order--isomorphism} if it is bijective and $f^{-1}:B\rightarrow A$ also is an order--homomorphism. Two partially ordered sets $(A,\leq)$ and  $(B,\leq)$ are said to be {\em order--isomorphic} (denoted by $(A,\leq)\cong(B,\leq)$) if there exists an  order--isomorphism between them.

\begin{lemma}\label{HGFA}
Let ${\mathcal P}$ and  ${\mathcal Q}$ be a pair of compactness--like topological properties. Let $X$ be a Tychonoff locally--${\mathcal P}$ non--${\mathcal P}$ space with $\mathcal{Q}$. Then
\[\big({\mathscr O}^{\mathcal Q}_{\mathcal P}(X),\leq\big)\cong\big({\mathscr K}(\lambda_{{\mathcal P}}X),\leq\big).\]
\end{lemma}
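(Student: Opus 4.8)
The plan is to construct an explicit order-isomorphism between $({\mathscr O}^{\mathcal Q}_{\mathcal P}(X),\leq)$ and $({\mathscr K}(\lambda_{{\mathcal P}}X),\leq)$ by exploiting the characterization in Theorem \ref{HG16}(2). The key structural fact I would use is that $\lambda_{{\mathcal P}}X$ is a locally compact (indeed dense) subspace of $\beta X$ with $\beta X\backslash\lambda_{{\mathcal P}}X$ compact, so that $\beta X$ is itself a compactification of $\lambda_{{\mathcal P}}X$; since $X\subseteq\lambda_{{\mathcal P}}X$ (Lemma \ref{15}) and $X$ is dense in $\beta X$, the space $\lambda_{{\mathcal P}}X$ is dense in $\beta X$, and moreover $\beta(\lambda_{{\mathcal P}}X)=\beta X$ because $X\subseteq\lambda_{{\mathcal P}}X\subseteq\beta X$ forces the Stone--\v{C}ech compactifications to agree (Corollary 3.6.9 of \cite{E}). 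By Lemma \ref{HFH}, since $X$ is non-${\mathcal P}$, the space $\lambda_{{\mathcal P}}X$ is non-compact, so ${\mathscr K}(\lambda_{{\mathcal P}}X)$ is nontrivial.

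First I would define the map $\Phi:{\mathscr O}^{\mathcal Q}_{\mathcal P}(X)\rightarrow{\mathscr K}(\lambda_{{\mathcal P}}X)$. Given $Y\in{\mathscr O}^{\mathcal Q}_{\mathcal P}(X)$ with $\phi:\beta X\rightarrow\beta Y$ the continuous extension of $\mathrm{id}_X$, Theorem \ref{HG16}(2) gives $\phi^{-1}[Y\backslash X]=\beta X\backslash\lambda_{{\mathcal P}}X$, so $\phi$ maps $\lambda_{{\mathcal P}}X$ into $Y$ and $\beta X\backslash\lambda_{{\mathcal P}}X$ onto $Y\backslash X$; thus $\phi$ restricts to a continuous mapping $\lambda_{{\mathcal P}}X\to Y$ that is the identity on $X$, exhibiting $Y$ as containing $\lambda_{{\mathcal P}}X$ densely up to the quotient, and $\phi$ itself realizes $\beta Y$ as a quotient of $\beta X=\beta(\lambda_{{\mathcal P}}X)$ collapsing $\beta X\backslash\lambda_{{\mathcal P}}X$ to the compact set $Y\backslash X$. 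Concretely, I would set $\Phi(Y)$ to be the compactification $\alpha(\lambda_{{\mathcal P}}X)$ of $\lambda_{{\mathcal P}}X$ obtained from $\beta X$ by contracting each fiber $\phi^{-1}(p)$, $p\in Y\backslash X$, to a point --- equivalently $\Phi(Y)=\beta Y$ viewed as a compactification of $\lambda_{{\mathcal P}}X$ (the remainder being $Y\backslash X$, which is compact). In the reverse direction, given a compactification $\alpha(\lambda_{{\mathcal P}}X)$ with projection $f_\alpha:\beta X\to\alpha(\lambda_{{\mathcal P}}X)$, I would define $\Psi(\alpha(\lambda_{{\mathcal P}}X))=X\cup(\alpha(\lambda_{{\mathcal P}}X)\backslash\lambda_{{\mathcal P}}X)$ as a subspace of $\alpha(\lambda_{{\mathcal P}}X)$; its remainder over $X$ is the compact set $\alpha(\lambda_{{\mathcal P}}X)\backslash\lambda_{{\mathcal P}}X$, and since $f_\alpha^{-1}[\Psi(\alpha)\backslash X]\supseteq\beta X\backslash\lambda_{{\mathcal P}}X$ with equality (because $f_\alpha$ is injective on $\lambda_{{\mathcal P}}X$), Lemma \ref{16} (in its simplified form) shows $\Psi(\alpha)\in{\mathscr E}^{\mathcal Q}_{\mathcal P}(X)$ and Theorem \ref{HG16}(2) then shows $\Psi(\alpha)\in{\mathscr O}^{\mathcal Q}_{\mathcal P}(X)$.

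Next I would check that $\Phi$ and $\Psi$ are mutually inverse, using the $\beta$-family description of the order: by Lemma \ref{DFH}, $Y_1\leq Y_2$ in ${\mathscr E}(X)$ iff every element of ${\mathscr F}(Y_2)$ sits inside an element of ${\mathscr F}(Y_1)$, and by the cited result of \cite{Mag3} the analogous statement holds for compactifications of $\lambda_{{\mathcal P}}X$ with the $\beta$-families ${\mathscr F}_\alpha=\{f_\alpha^{-1}(p):p\in\alpha(\lambda_{{\mathcal P}}X)\backslash\lambda_{{\mathcal P}}X\}$. The crucial observation is that for $Y\in{\mathscr O}^{\mathcal Q}_{\mathcal P}(X)$ the family ${\mathscr F}_X(Y)=\{\phi^{-1}(p):p\in Y\backslash X\}$ consists precisely of subsets of $\beta X\backslash\lambda_{{\mathcal P}}X$ (Theorem \ref{HG16}(2)), which are exactly the fibers defining the compactification $\Phi(Y)$ of $\lambda_{{\mathcal P}}X$; that is, ${\mathscr F}_X(Y)={\mathscr F}_{\Phi(Y)}$ as decompositions of $\beta X\backslash\lambda_{{\mathcal P}}X$. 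Hence for $Y_1,Y_2\in{\mathscr O}^{\mathcal Q}_{\mathcal P}(X)$ we get $Y_1\leq Y_2\iff$ each member of ${\mathscr F}_X(Y_2)$ lies in a member of ${\mathscr F}_X(Y_1)\iff$ each member of ${\mathscr F}_{\Phi(Y_2)}$ lies in a member of ${\mathscr F}_{\Phi(Y_1)}\iff\Phi(Y_1)\leq\Phi(Y_2)$, so $\Phi$ is an order-isomorphism onto its image, and surjectivity follows from the construction of $\Psi$. To make this airtight I would verify that $\Psi(\Phi(Y))$ is equivalent to $Y$ as an extension of $X$ (both have the same $\beta$-family over $X$, so Lemma \ref{DFH} gives $Y\leq\Psi(\Phi(Y))$ and $\Psi(\Phi(Y))\leq Y$), and dually $\Phi(\Psi(\alpha))$ is equivalent to $\alpha$ as a compactification of $\lambda_{{\mathcal P}}X$.

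The main obstacle I anticipate is the bookkeeping around two different ambient spaces: one must keep straight that $\beta X=\beta(\lambda_{{\mathcal P}}X)$, that the ``remainder'' of an element of ${\mathscr O}^{\mathcal Q}_{\mathcal P}(X)$ over $X$ coincides set-theoretically with its remainder as a compactification of $\lambda_{{\mathcal P}}X$, and that every compactification $\alpha(\lambda_{{\mathcal P}}X)$ indeed arises via a fiber decomposition of $\beta X\backslash\lambda_{{\mathcal P}}X$ with the correct topology — this last point requires checking that the subspace $\Psi(\alpha)$ of $\alpha(\lambda_{{\mathcal P}}X)$ has exactly the topology needed for $\beta(\Psi(\alpha))$ to be $\alpha(\lambda_{{\mathcal P}}X)$ itself, which is where Lemma \ref{j2} and the optimality (maximality of topology) from Theorem \ref{HG16} do the work. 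Once these identifications are pinned down, the order-isomorphism is essentially forced by Lemma \ref{DFH} together with Magill's theorem applied internally to $\lambda_{{\mathcal P}}X$; indeed the cleanest write-up may simply observe that $Y\mapsto\beta Y$ identifies ${\mathscr O}^{\mathcal Q}_{\mathcal P}(X)$ order-isomorphically with the set of compactifications of $\lambda_{{\mathcal P}}X$ whose remainder is disjoint from $\lambda_{{\mathcal P}}X\backslash X$ — which, since every compactification of $\lambda_{{\mathcal P}}X$ has remainder in $\beta(\lambda_{{\mathcal P}}X)\backslash\lambda_{{\mathcal P}}X\subseteq\beta X\backslash X$, is all of ${\mathscr K}(\lambda_{{\mathcal P}}X)$.
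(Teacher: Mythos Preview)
Your proposal is correct and follows essentially the same route as the paper: both define the isomorphism by $Y\mapsto\beta Y$ (viewed as a compactification of $\lambda_{\mathcal P}X$) with inverse $\alpha\mapsto X\cup(\alpha\backslash\lambda_{\mathcal P}X)$, relying on Theorem~\ref{HG16}(2), Lemma~\ref{15}, Lemma~\ref{16}, and $\beta(\lambda_{\mathcal P}X)=\beta X$. The only notable difference is in the verification details: the paper checks order-preservation by directly extending and restricting the continuous maps $f:Y_2\to Y_1$ and $l:\beta Y_2\to\beta Y_1$, and verifies optimality of $\Psi(\alpha)$ via the cozero-set criterion Theorem~\ref{HG16}(3), whereas you route both through the $\beta$-family characterization (Lemma~\ref{DFH} and Magill's criterion) together with Theorem~\ref{HG16}(2). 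Your version is conceptually a bit cleaner; the paper's is slightly more self-contained since it does not invoke Magill's fiber criterion for compactifications. The one place you should be careful is exactly the point you flag: to apply Theorem~\ref{HG16}(2) to $\Psi(\alpha)$ you need $\phi^{-1}[\Psi(\alpha)\backslash X]=\beta X\backslash\lambda_{\mathcal P}X$ for $\phi:\beta X\to\beta\Psi(\alpha)$, not for $f_\alpha:\beta X\to\alpha$; this follows once you observe (via Theorem~3.5.7 of \cite{E}) that $h^{-1}(p)=\{p\}$ for $p\in\Psi(\alpha)\backslash X$ where $h:\beta\Psi(\alpha)\to\alpha$ extends $\mathrm{id}$, so $\phi^{-1}(p)=f_\alpha^{-1}(p)$ --- the paper sidesteps this by using Theorem~\ref{HG16}(3) instead and only afterward identifying $\beta\Psi(\alpha)$ with $\alpha$.
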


\begin{proof}
Let $Y\in {\mathscr O}^{\mathcal Q}_{\mathcal P}(X)$. Let $\phi:\beta X\rightarrow\beta Y$ be the continuous extension of $\mbox{id}_X$. Recall that $\beta Y$ is the quotient space of $\beta X$ obtained by contracting each $\phi^{-1}(p)$ where $p\in Y\backslash X$, to a point, with $\phi$ as the corresponding quotient mapping (see Lemma \ref{j2}). By Theorem \ref{HG16} we have $\phi^{-1}[Y\backslash X]=\beta X\backslash\lambda_{{\mathcal P}}X$ and thus we may assume that $\lambda_{{\mathcal P}}X\subseteq \beta Y$. Also, $X$ is dense in $\beta Y$, as $X$ is dense in $Y$ and by Lemma \ref{15} we have  $X\subseteq\lambda_{{\mathcal P}}X$. Therefore  $\lambda_{{\mathcal P}}X$ is dense in $\beta Y$ and thus $\beta Y$ is a compactification of $\lambda_{{\mathcal P}}X$. Define
\[\Theta:\big({\mathscr O}^{\mathcal Q}_{\mathcal P}(X),\leq\big)\rightarrow\big({\mathscr K}(\lambda_{{\mathcal P}}X),\leq\big)\]
by
\[\Theta(Y)=\beta Y\]
for any $Y\in{\mathscr O}^{\mathcal Q}_{\mathcal P}(X)$. By the above $\Theta$ is well defined. We verify that $\Theta$ is an order--isomorphism.

\begin{xclaim}
$\Theta$ is an order--homomorphism.
\end{xclaim}

\subsubsection*{Proof of the claim} Let $Y_1\leq Y_2$ where $Y_1,Y_2\in {\mathscr O}^{\mathcal Q}_{\mathcal P}(X)$. By definition there exists a continuous  $f:Y_2\rightarrow  Y_1$ such that $f|X=\mbox{id}_X$. Let $f_\beta:\beta Y_2\rightarrow\beta Y_1$ be the continuous extension of $f$. By above $\beta Y_i\in {\mathscr K}(\lambda_{{\mathcal P}}X)$ for any $i=1,2$. Then $f_\beta|\lambda_{{\mathcal P}}X=\mbox{id}_{\lambda_{{\mathcal P}}X}$, as they both coincide with $\mbox{id}_X$ on $X$ and thus by definition $\Theta(Y_1)=\beta Y_1\leq\beta Y_2=\Theta(Y_2)$.

\smallskip

\begin{xclaim}
$\Theta$ is surjective.
\end{xclaim}

\subsubsection*{Proof of the claim} Let $T\in{\mathscr K}(\lambda_{{\mathcal P}}X)$. Consider the subspace $Y=X\cup(T\backslash\lambda_{{\mathcal P}}X)$ of $T$. We verify that $Y\in {\mathscr O}^{\mathcal Q}_{\mathcal P}(X)$ and that $\Theta(Y)=T$. Note that $X$ is dense in $T$ and therefore $X$ is dense in $Y$, as $X$ is dense in $\lambda_{{\mathcal P}}X$ and $\lambda_{{\mathcal P}}X$ is dense in $T$. By definition $\lambda_{{\mathcal P}}X$ is an open subset of $\beta X$ and thus it is locally compact. Also, $X\subseteq\lambda_{{\mathcal P}}X$ and therefore $Y\backslash X=T\backslash\lambda_{{\mathcal P}}X$ is compact. This shows that $Y\in{\mathscr E}(X)$. Also, $\beta\lambda_{{\mathcal P}}X=\beta X$, as $X\subseteq\lambda_{{\mathcal P}}X\subseteq \beta X$. Let $g:\beta X\rightarrow T$ be the continuous extension of  $\mbox{id}_{\lambda_{{\mathcal P}}X}$. By Theorem 3.5.7 of \cite{E} we have $g[\beta X\backslash\lambda_{{\mathcal P}}X]=T\backslash\lambda_{{\mathcal P}}X$. Thus
\[\beta X\backslash\lambda_{{\mathcal P}}X\subseteq g^{-1}\big[g[\beta X\backslash\lambda_{{\mathcal P}}X]\big]=g^{-1}[T\backslash\lambda_{{\mathcal P}}X]=g^{-1}[Y\backslash X].\]
Since $X$ is locally--${\mathcal P}$, by Lemma \ref{16} we have $Y\in{\mathscr E}^{\mathcal Q}_{\mathcal P}(X)$. To show that $Y$ is optimal let $Z\in{\mathscr Z}(X)$ be such that $Z\subseteq C$ for some $C\in Coz(X)$ such that $\mbox{cl}_X C$ has ${\mathcal P}$. By Lemma \ref{BA27}  we have $\mbox{cl}_{\beta X} Z\subseteq\lambda_{{\mathcal P}}X$. Therefore since $Z=g[Z]\subseteq g[\mbox{cl}_{\beta X}Z]$ and the latter is compact, $\mbox{cl}_T Z\subseteq g[\mbox{cl}_{\beta X} Z]$. Since $g[\mbox{cl}_{\beta X} Z]\subseteq g[\lambda_{{\mathcal P}}X]=\lambda_{{\mathcal P}}X$ we have $\mbox{cl}_T Z\subseteq\lambda_{\mathcal P}X$ and thus
\[\mbox{cl}_Y Z\cap (Y\backslash X)\subseteq\mbox{cl}_T Z\cap (T\backslash\lambda_{{\mathcal P}}X)=\emptyset.\]
Theorem \ref{HG16} now implies that $Y\in {\mathscr O}^{\mathcal Q}_{\mathcal P}(X)$. Let $\phi:\beta X\rightarrow\beta Y$ be the continuous extension of $\mbox{id}_X$.
By Theorem \ref{HG16} we have $\phi^{-1}[Y\backslash X]=\beta X\backslash\lambda_{\mathcal P}X$ which implies that $\phi|\lambda_{\mathcal P}X=\mbox{id}_{\lambda_{{\mathcal P}}X}$. (Recall the construction of $\beta Y$ and the representation of $\phi$ given in Lemma \ref{j2}.) Let $h:\beta Y\rightarrow T$ be the continuous extension of $\mbox{id}_Y$. The continuous mapping $h\phi:\beta X\rightarrow T$ is such that $h\phi|X=\mbox{id}_X=g|X$ and therefore $h\phi=g$. Thus (and since $\phi|\lambda_{\mathcal P}X=\mbox{id}_{\lambda_{\mathcal P}X}$) we have $h|\lambda_{{\mathcal P}}X=g|\lambda_{\mathcal P}X=\mbox{id}_{\lambda_{\mathcal P}X}$ and therefore, since $h|Y=\mbox{id}_Y$ and $Y\cup\lambda_{\mathcal P}X=\beta Y$ it follows that  $h=\mbox{id}_{\beta Y}$. In particular, $\mbox{id}_{\beta Y}=h:\beta Y\rightarrow T$ is continuous and it is surjective (as its image contains $X$ and $X$ is dense in $T$) and thus, since $\beta Y$ is compact, it is a homeomorphism. Therefore $T=\beta Y=\Theta (Y)$.

\begin{xclaim}
For any $Y_1,Y_2\in {\mathscr O}^{\mathcal Q}_{\mathcal P}(X)$ if $\Theta(Y_1)\leq \Theta(Y_2)$ then $Y_1\leq Y_2$.
\end{xclaim}

\subsubsection*{Proof of the claim} Let $\Theta(Y_1)\leq \Theta(Y_2)$ for some $Y_1,Y_2\in {\mathscr O}^{\mathcal Q}_{\mathcal P}(X)$. Since $\beta Y_1\leq \beta Y_2$, by definition there exists a continuous $l:\beta Y_2\rightarrow \beta Y_1$ such that $l| \lambda_{{\mathcal P}}X=\mbox{id}_{\lambda_{{\mathcal P}}X}$. By Theorem 3.5.7 of \cite{E} we have $l[\beta Y_2\backslash\lambda_{{\mathcal P}}X]=\beta Y_1\backslash\lambda_{{\mathcal P}}X$. Note that $Y_i\backslash X=\beta Y_i\backslash\lambda_{{\mathcal P}}X$ for any $i=1,2$. To  see this, observe that if $\phi_i:\beta X\rightarrow\beta Y_i$ where $i=1,2$, denotes the continuous extension of $\mbox{id}_X$ then $\beta Y_i$  is the quotient space of $\beta X$ obtained by contracting the fibers $\phi_i^{-1}(p)$'s where $p\in Y_i\backslash X$ to points with the quotient mapping $\phi_i$, and by Theorem \ref{HG16} we have $\phi_i^{-1}[Y_i\backslash X]=\beta X\backslash\lambda_{{\mathcal P}}X$. Therefore $l[Y_2\backslash X]=Y_1\backslash X$. Thus $l|Y_2: Y_2\rightarrow Y_1$ and obviously it continuously extends $\mbox{id}_X$. Therefore by the definition $Y_1\leq Y_2$.

\smallskip

\noindent The third claim  implies  that $\Theta$ is injective and that $\Theta^{-1}$ is an order--homomorphism. This shows that $\Theta$ is an order--isomorphism.
\end{proof}

Recall that a partially ordered set $(L,\leq)$ is called a {\em complete upper semilattice} ({\em complete lower semilattice}, respectively)  if for any non--empty subset $A$ of $L$ the least upper bound $\bigvee A$ (the greatest lower bound $\bigwedge A$, respectively) exists in $L$. A partially ordered set $(L,\leq)$ is called a {\em complete lattice} if it is both a complete upper semilattice and a complete lower semilattice. It is well known that for any Tychonoff $X$ the partially ordered set ${\mathscr K}(X)$ of its all compactifications, partially ordered with $\leq$, is a complete upper  semilattice, and it is a complete lattice if and only if $X$ is locally compact (see Propositions 4.2(a) and 4.3(e) of \cite{PW}). The following corollary of Lemma \ref{HGFA} is now immediate.

\begin{corollary}\label{YGFAS}
Let ${\mathcal P}$ and  ${\mathcal Q}$ be a pair of compactness--like topological properties. Let $X$ be a  Tychonoff locally--${\mathcal P}$ non--${\mathcal P}$ space with $\mathcal{Q}$. Then $({\mathscr O}^{\mathcal Q}_{\mathcal P}(X),\leq)$ is a complete lattice.
\end{corollary}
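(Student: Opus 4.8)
The plan is to derive Corollary \ref{YGFAS} directly from Lemma \ref{HGFA} together with the standard facts about the lattice of compactifications recalled just before the corollary. By Lemma \ref{HGFA}, since $X$ is a Tychonoff locally--${\mathcal P}$ non--${\mathcal P}$ space with $\mathcal{Q}$, we have the order--isomorphism
\[\big({\mathscr O}^{\mathcal Q}_{\mathcal P}(X),\leq\big)\cong\big({\mathscr K}(\lambda_{{\mathcal P}}X),\leq\big).\]
Order--isomorphic partially ordered sets are simultaneously complete lattices or not, since least upper bounds and greatest lower bounds of arbitrary non--empty subsets are preserved under order--isomorphisms. Hence it suffices to show that $({\mathscr K}(\lambda_{{\mathcal P}}X),\leq)$ is a complete lattice, and by the quoted result (Propositions 4.2(a) and 4.3(e) of \cite{PW}) this holds precisely when $\lambda_{{\mathcal P}}X$ is a locally compact Tychonoff space.

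First I would observe that $\lambda_{{\mathcal P}}X$ is Tychonoff: by Definition \ref{14} it is a subspace of the compact Hausdorff space $\beta X$, hence Tychonoff. Next I would note that $\lambda_{{\mathcal P}}X$ is locally compact: again by Definition \ref{14} it is a union of sets of the form $\mbox{int}_{\beta X}\mbox{cl}_{\beta X}Z$, so it is open in $\beta X$; an open subspace of a compact Hausdorff space is locally compact. (This openness of $\lambda_{{\mathcal P}}X$ in $\beta X$ is used repeatedly in the paper, e.g. in the proof of Lemma \ref{HGFA} itself.) Therefore $({\mathscr K}(\lambda_{{\mathcal P}}X),\leq)$ is a complete lattice, and transporting this through the order--isomorphism $\Theta$ of Lemma \ref{HGFA} gives that $({\mathscr O}^{\mathcal Q}_{\mathcal P}(X),\leq)$ is a complete lattice as well.

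There is essentially no serious obstacle here, since all the work has already been done in Lemma \ref{HGFA}; the only point requiring a word of care is the non--${\mathcal P}$ hypothesis, which guarantees (via Lemma \ref{HFH}) that $\lambda_{{\mathcal P}}X\neq\beta X$, so that $\lambda_{{\mathcal P}}X$ is genuinely non--compact and ${\mathscr K}(\lambda_{{\mathcal P}}X)$ is the compactification lattice of a non--compact locally compact space — this is also what makes ${\mathscr O}^{\mathcal Q}_{\mathcal P}(X)$ non--trivial in the first place. I would phrase the final write--up in one short paragraph: invoke Lemma \ref{HGFA}, note that $\lambda_{{\mathcal P}}X$ is locally compact (open in $\beta X$) and Tychonoff, apply Propositions 4.2(a) and 4.3(e) of \cite{PW} to conclude ${\mathscr K}(\lambda_{{\mathcal P}}X)$ is a complete lattice, and transfer the conclusion along the order--isomorphism.
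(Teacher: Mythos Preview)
Your proof is correct and follows exactly the approach the paper intends: the corollary is stated as immediate from Lemma~\ref{HGFA} together with the cited facts about ${\mathscr K}(X)$ being a complete lattice precisely when $X$ is locally compact, and you have filled in precisely those details (in particular, that $\lambda_{\mathcal P}X$ is open in $\beta X$, hence locally compact). There is nothing to add.
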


The following theorem relates the order--structure of the set of optimal ${\mathcal P}$--extensions of a Tychonoff locally--${\mathcal P}$ space $X$ and the topology of the subspace $\beta X\backslash\lambda_{{\mathcal P}}X$ of $\beta X$. This generalizes K.D. Magill,  Jr.'s theorem in \cite{Mag3} (Theorem \ref{KLFA}) provided that one replaces $\mathcal{P}$ and $\mathcal{Q}$, respectively, by compactness and regularity, and note that for these specific choices of  $\mathcal{P}$ and $\mathcal{Q}$ and a locally compact space $X$ we have $\lambda_{\mathcal{P}} X=X$  and ${\mathscr O}^{\mathcal Q}_{\mathcal P}(X)={\mathscr K}(X)$.

\begin{theorem}\label{KKLFA}
Let ${\mathcal P}$ and  ${\mathcal Q}$ be a pair of compactness--like topological properties. Let $X$ and $Y$ be Tychonoff locally--${\mathcal P}$ non--${\mathcal P}$ spaces with $\mathcal{Q}$.
The following are equivalent:
\begin{itemize}
\item[\rm(1)] $({\mathscr O}^{\mathcal Q}_{\mathcal P}(X),\leq)$ and $({\mathscr O}^{\mathcal Q}_{\mathcal P}(Y),\leq)$ are order--isomorphic.
\item[\rm(2)] $\beta X\backslash\lambda_{{\mathcal P}}X$ and $\beta Y\backslash\lambda_{{\mathcal P}}Y$ are homeomorphic.
\end{itemize}
\end{theorem}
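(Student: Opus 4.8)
The plan is to reduce Theorem \ref{KKLFA} to Magill's classical theorem (Theorem \ref{KLFA}) via the order--isomorphism established in Lemma \ref{HGFA}. The key observation is that for a Tychonoff locally--${\mathcal P}$ non--${\mathcal P}$ space $X$ with ${\mathcal Q}$, the space $\lambda_{{\mathcal P}}X$ is locally compact (being open in $\beta X$; this is noted in the proof of Lemma \ref{HGFA}) and non--compact (by Lemma \ref{HFH}, since $X$ is non--${\mathcal P}$). Moreover, since $\beta\lambda_{{\mathcal P}}X=\beta X$ (because $X\subseteq\lambda_{{\mathcal P}}X\subseteq\beta X$ and $X$ is dense in both), the outgrowth of $\lambda_{{\mathcal P}}X$ in its Stone--\v{C}ech compactification is precisely $\beta X\backslash\lambda_{{\mathcal P}}X$. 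Thus Magill's theorem applied to the locally compact non--compact spaces $\lambda_{{\mathcal P}}X$ and $\lambda_{{\mathcal P}}Y$ tells us that $({\mathscr K}(\lambda_{{\mathcal P}}X),\leq)\cong({\mathscr K}(\lambda_{{\mathcal P}}Y),\leq)$ if and only if $\beta X\backslash\lambda_{{\mathcal P}}X$ and $\beta Y\backslash\lambda_{{\mathcal P}}Y$ are homeomorphic.

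The argument then runs as follows. First I would record the three facts just mentioned: $\lambda_{{\mathcal P}}X$ is locally compact and non--compact, and $\beta\lambda_{{\mathcal P}}X\backslash\lambda_{{\mathcal P}}X=\beta X\backslash\lambda_{{\mathcal P}}X$ (with the analogous statements for $Y$). Next, by Lemma \ref{HGFA} we have order--isomorphisms
\[
\big({\mathscr O}^{\mathcal Q}_{\mathcal P}(X),\leq\big)\cong\big({\mathscr K}(\lambda_{{\mathcal P}}X),\leq\big)\quad\text{and}\quad\big({\mathscr O}^{\mathcal Q}_{\mathcal P}(Y),\leq\big)\cong\big({\mathscr K}(\lambda_{{\mathcal P}}Y),\leq\big).
\]
To prove that (1) implies (2): if $({\mathscr O}^{\mathcal Q}_{\mathcal P}(X),\leq)$ and $({\mathscr O}^{\mathcal Q}_{\mathcal P}(Y),\leq)$ are order--isomorphic, then by the two displayed isomorphisms so are $({\mathscr K}(\lambda_{{\mathcal P}}X),\leq)$ and $({\mathscr K}(\lambda_{{\mathcal P}}Y),\leq)$, and since $\lambda_{{\mathcal P}}X$ and $\lambda_{{\mathcal P}}Y$ are locally compact non--compact, Theorem \ref{KLFA} yields that $\beta\lambda_{{\mathcal P}}X\backslash\lambda_{{\mathcal P}}X=\beta X\backslash\lambda_{{\mathcal P}}X$ is homeomorphic to $\beta\lambda_{{\mathcal P}}Y\backslash\lambda_{{\mathcal P}}Y=\beta Y\backslash\lambda_{{\mathcal P}}Y$. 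For the converse (2) implies (1): if $\beta X\backslash\lambda_{{\mathcal P}}X$ and $\beta Y\backslash\lambda_{{\mathcal P}}Y$ are homeomorphic, rewrite these as the outgrowths of $\lambda_{{\mathcal P}}X$ and $\lambda_{{\mathcal P}}Y$, apply Theorem \ref{KLFA} to get $({\mathscr K}(\lambda_{{\mathcal P}}X),\leq)\cong({\mathscr K}(\lambda_{{\mathcal P}}Y),\leq)$, and compose with the isomorphisms from Lemma \ref{HGFA} to conclude.

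The only genuinely delicate point — and the step I expect to require the most care — is verifying cleanly that $\beta\lambda_{{\mathcal P}}X\backslash\lambda_{{\mathcal P}}X$ really equals $\beta X\backslash\lambda_{{\mathcal P}}X$, i.e. that $\beta X$ serves as the Stone--\v{C}ech compactification of the subspace $\lambda_{{\mathcal P}}X$. This follows from Corollary 3.6.9 of \cite{E} (already invoked in the proof of Lemma \ref{j2}): whenever $X\subseteq S\subseteq\beta X$ one has $\beta S=\beta X$; here one takes $S=\lambda_{{\mathcal P}}X$. Everything else is purely formal: order--isomorphism is transitive and symmetric, and Lemma \ref{HGFA} already does the substantive work of translating between optimal ${\mathcal P}$--extensions of $X$ and compactifications of $\lambda_{{\mathcal P}}X$. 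I would also note in passing, for completeness, that the hypotheses of Magill's theorem are met precisely because local--${\mathcal P}$--ness plus non--${\mathcal P}$--ness of $X$ translate (via Lemmas \ref{15} and \ref{HFH}) into local compactness plus non--compactness of $\lambda_{{\mathcal P}}X$.
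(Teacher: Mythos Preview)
Your proposal is correct and follows essentially the same approach as the paper's proof: both reduce the statement to Magill's Theorem \ref{KLFA} applied to the locally compact non--compact spaces $\lambda_{{\mathcal P}}X$ and $\lambda_{{\mathcal P}}Y$, using Lemma \ref{HGFA} to pass between ${\mathscr O}^{\mathcal Q}_{\mathcal P}(X)$ and ${\mathscr K}(\lambda_{{\mathcal P}}X)$, and invoking Lemmas \ref{15} and \ref{HFH} together with $\beta\lambda_{{\mathcal P}}X=\beta X$ (from Corollary 3.6.9 of \cite{E}) to verify the hypotheses and identify the outgrowth.
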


\begin{proof}
Note that $\lambda_{{\mathcal P}}X$ is locally compact, as it is open in $\beta X$, and by Lemma \ref{HFH} (since $X$ is non--${\mathcal P}$) it is non--compact. Also, since $X$ is locally--${\mathcal P}$, by Lemma \ref{15} we have  $X\subseteq\lambda_{{\mathcal P}}X$ and thus (since $\lambda_{{\mathcal P}}X\subseteq \beta X$) we have $\beta\lambda_{{\mathcal P}}X=\beta X$. Similar statements hold for $Y$. By Theorem \ref{KLFA} the partially ordered sets ${\mathscr K}(\lambda_{{\mathcal P}}X)$ and ${\mathscr K}(\lambda_{{\mathcal P}}Y)$ are order--isomorphic if and only if  $\beta\lambda_{{\mathcal P}}X\backslash\lambda_{{\mathcal P}}X$ ($=\beta X\backslash\lambda_{{\mathcal P}}X$) and $\beta\lambda_{{\mathcal P}}Y\backslash\lambda_{{\mathcal P}}Y$ ($=\beta Y\backslash\lambda_{{\mathcal P}}Y$) are homeomorphic. Now Lemma \ref{HGFA}  shows the equivalence of (1) and (2).
\end{proof}

Our next purpose is to state and prove a result for minimal ${\mathcal P}$--extensions which is analogous to (1) $\Rightarrow$ (2) in Theorem \ref{KKLFA}. As we will see, there is no counterpart for (2) $\Rightarrow$ (1) in Theorem \ref{KKLFA} in the case of minimal ${\mathcal P}$--extensions. This will be shown by means of an example.  (This is the first place in this article where the duality between minimal ${\mathcal P}$--extensions and optimal ${\mathcal P}$--extensions disappears.) The example, however, is long and quite technical, and requires several lemmas. The reader who is not interested in the construction of the example may skip reading Lemmas \ref{KHFA}, \ref{YTRO}, \ref{DSEY}, \ref{GSA}, \ref{KJSQ}, \ref{FSAEW}, \ref{AYLF}, \ref{FSWR} and \ref{KLYS} and replaces the role of Lemma \ref{FSWR} by Lemma \ref{PKJF} in the proof of Theorem \ref{LGLH} ((1) $\Rightarrow$ (2)).

The following lemma is a counterpart of Lemma 4 in \cite{Mag3}.

\begin{lemma}\label{LKA}
Let ${\mathcal P}$ and  ${\mathcal Q}$ be a pair of compactness--like topological properties. Let $X$ be a Tychonoff locally--${\mathcal P}$ non--${\mathcal P}$ space with $\mathcal{Q}$. For an $n\in\mathbf{N}$, let $K_1,\ldots,K_n$ be $n$ pairwise disjoint compact subsets of $\beta X\backslash X$ such that $K_i\backslash\lambda_{{\mathcal P}}X$ is non--empty  for any $i=1,\ldots,n$. Then there exists a unique $Y$ in ${\mathscr M}^{\mathcal Q}_{\mathcal P}(X)$ such that
\begin{equation}\label{UY}
{\mathscr F}(Y)=\Big\{\{p\}:p\in (\beta X\backslash\lambda_{{\mathcal P}}X)\backslash\bigcup_{i=1}^n K_i\Big\}\cup\{K_1,\ldots,K_n\}.
\end{equation}
\end{lemma}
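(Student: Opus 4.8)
The idea is to realize $Y$ explicitly as a quotient of $\beta X$ and then apply the machinery already developed, namely Lemmas \ref{j2}, \ref{15}, \ref{16} and Theorem \ref{HUHG16}. First I would invoke Lemma \ref{15} to note that $X\subseteq\lambda_{\mathcal P}X$ (since $X$ is locally--${\mathcal P}$), so that each $K_i\subseteq\beta X\backslash X$ and the sets $K_1,\dots,K_n$ together with $X$ are pairwise disjoint. Then I would form the quotient space $T$ of $\beta X$ obtained by contracting each $K_i$ to a point $p_i$ (leaving all other points fixed) and let $q:\beta X\to T$ be the quotient map. Since the $K_i$ are pairwise disjoint compact subsets of the compact Hausdorff space $\beta X$, a case analysis exactly parallel to the one in the proof of Lemma \ref{j2} (Cases 1--3, separating two non-contracted points, a non-contracted point from a $p_i$, and two distinct $p_i$'s, using compactness of the $K_i$) shows $T$ is Hausdorff, hence compact as a continuous image of $\beta X$. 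Moreover $X$ embeds as a dense subspace of $T$ (each $K_i\subseteq\beta X\backslash X$, so $q|X$ is injective and $q^{-1}[q[U]]$ is open for $U$ open with appropriate saturation), so that $Y:=X\cup\{p_1,\dots,p_n\}$ is a Tychonoff extension of $X$ with the compact remainder $Y\backslash X=\{p_1,\dots,p_n\}$.

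Next I would check $Y\in{\mathscr E}^{\mathcal Q}_{\mathcal P}(X)$. Let $\phi:\beta X\to\beta Y$ be the continuous extension of $\mbox{id}_X$. Since $T$ is a compactification of $Y$, there is a continuous $\gamma:\beta Y\to T$ with $\gamma|Y=\mbox{id}_Y$, and $\gamma\phi=q$ because both agree with $\mbox{id}_X$ on the dense set $X$. Applying Theorem 3.5.7 of \cite{E} to $\gamma$ gives $\gamma[\beta Y\backslash Y]=T\backslash Y$, whence $\phi^{-1}(p_i)=\phi^{-1}[\gamma^{-1}(p_i)]=(\gamma\phi)^{-1}(p_i)=q^{-1}(p_i)=K_i$ for each $i$, and $\phi^{-1}(p)=q^{-1}(p)=\{p\}$ for $p\in(\beta X\backslash\lambda_{\mathcal P}X)\backslash\bigcup K_i$. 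In particular $\phi^{-1}[Y\backslash X]=\bigcup_{i=1}^nK_i$, and since each $K_i\backslash\lambda_{\mathcal P}X\neq\emptyset$ we get $\beta X\backslash\lambda_{\mathcal P}X\subseteq\phi^{-1}[Y\backslash X]$ provided I also account for the non-contracted points; but note $q$ does not contract any point of $(\beta X\backslash\lambda_{\mathcal P}X)\backslash\bigcup K_i$ into $X$, so actually I must be slightly careful here. The cleanest route: the required inclusion $\beta X\backslash\lambda_{\mathcal P}X\subseteq\phi^{-1}[Y\backslash X]$ is exactly what Lemma \ref{16} will \emph{deliver} once we know $Y\in{\mathscr E}_{\mathcal P}(X)$, so instead I verify the hypothesis of Lemma \ref{16}(2) directly: $X$ is locally--${\mathcal P}$ by assumption, and $\phi^{-1}[Y\backslash X]=\bigcup K_i$ need not contain all of $\beta X\backslash\lambda_{\mathcal P}X$. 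This is a genuine issue — so in fact the hypotheses of the lemma as I've read them must guarantee it, meaning the intended reading is that $(\beta X\backslash\lambda_{\mathcal P}X)\backslash\bigcup K_i$ consists of points that become singleton fibers but still lie in the remainder. I would therefore take $Y\backslash X$ to be $\big((\beta X\backslash\lambda_{\mathcal P}X)\backslash\bigcup K_i\big)\cup\{p_1,\dots,p_n\}$, contracting only the $K_i$ and keeping all other points of $\beta X\backslash\lambda_{\mathcal P}X$ as (necessarily remainder) points; this gives $\phi^{-1}[Y\backslash X]=\beta X\backslash\lambda_{\mathcal P}X$ and hence, by Lemma \ref{16}, $Y\in{\mathscr E}^{\mathcal Q}_{\mathcal P}(X)$, with ${\mathscr F}(Y)$ given by \eqref{UY}.

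\textbf{Minimality and uniqueness.} With $\phi^{-1}[Y\backslash X]=\beta X\backslash\lambda_{\mathcal P}X$ in hand, minimality is immediate from Theorem \ref{HUHG16}, condition (2): for each $p\in Y\backslash X$ the fiber $\phi^{-1}(p)$ is either some $K_i$ or a singleton $\{t\}$ with $t\in(\beta X\backslash\lambda_{\mathcal P}X)\backslash\bigcup K_i$, and in both cases $\phi^{-1}(p)\backslash\lambda_{\mathcal P}X\neq\emptyset$ — for the $K_i$ by hypothesis, for the singletons because $t\notin\lambda_{\mathcal P}X$. Hence $Y\in{\mathscr M}^{\mathcal Q}_{\mathcal P}(X)$. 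For uniqueness, suppose $Y'\in{\mathscr M}^{\mathcal Q}_{\mathcal P}(X)$ also satisfies \eqref{UY} with its own $\phi':\beta X\to\beta Y'$; then ${\mathscr F}(Y')={\mathscr F}(Y)$, so by Lemma \ref{DFH} (applied in both directions, since the two families of fibers coincide, each element of one is contained in — in fact equals — an element of the other) we get $Y\leq Y'$ and $Y'\leq Y$, whence $Y$ and $Y'$ are equivalent extensions of $X$.

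\textbf{Main obstacle.} The one delicate point, as flagged above, is the exact sense in which $Y\backslash X$ and the $K_i$ interact with $\beta X\backslash\lambda_{\mathcal P}X$: one must make sure the constructed extension has $\phi^{-1}[Y\backslash X]$ equal to (not merely containing) $\beta X\backslash\lambda_{\mathcal P}X$, which forces including \emph{all} points of $\beta X\backslash\lambda_{\mathcal P}X$ outside $\bigcup K_i$ as singleton remainder points; getting this bookkeeping right so that \eqref{UY} holds on the nose, and simultaneously that $T$ is Hausdorff (the separation of a singleton $\{t\}$, $t\notin\lambda_{\mathcal P}X$, from a contracted point $p_i$ uses compactness of $K_i$ and normality of $\beta X$), is where the care is needed. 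Everything else is a routine application of Lemma \ref{j2}'s technique, Lemma \ref{16}, Theorem \ref{HUHG16} and Lemma \ref{DFH}.
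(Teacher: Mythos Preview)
Your approach is the same as the paper's: form the quotient $T$ of $\beta X$ by contracting each $K_i$ to a point, take $Y$ to be the subspace $q\bigl[X\cup(\beta X\backslash\lambda_{\mathcal P}X)\cup\bigcup_{i=1}^n K_i\bigr]$ of $T$, and then invoke Lemma \ref{16} for membership in ${\mathscr E}^{\mathcal Q}_{\mathcal P}(X)$, Lemma \ref{j2} to identify $\beta Y=T$ and compute ${\mathscr F}(Y)$, and Theorem \ref{HUHG16} for minimality. One small correction to your final write-up: after your (correct) self-correction of $Y\backslash X$, the computation gives
\[
\phi^{-1}[Y\backslash X]=\Bigl((\beta X\backslash\lambda_{\mathcal P}X)\backslash\bigcup_{i=1}^nK_i\Bigr)\cup\bigcup_{i=1}^nK_i=(\beta X\backslash\lambda_{\mathcal P}X)\cup\bigcup_{i=1}^nK_i,
\]
which in general properly contains $\beta X\backslash\lambda_{\mathcal P}X$ (the $K_i$ are only assumed to meet $\beta X\backslash\lambda_{\mathcal P}X$, not to lie inside it). This does not harm the argument: Lemma \ref{16} needs only the inclusion $\beta X\backslash\lambda_{\mathcal P}X\subseteq\phi^{-1}[Y\backslash X]$, and your fiber-by-fiber check of Theorem \ref{HUHG16}(2) never uses the claimed equality. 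For uniqueness the paper appeals to Lemma \ref{j2} (the quotient description forces $\beta Y'=T$ and $\psi=q$, hence $Y'=Y$) rather than to Lemma \ref{DFH}, but your route via Lemma \ref{DFH} works equally well.
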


\begin{proof}
Let $T$ be the space obtained from $\beta X$ by contracting the sets  $K_1,\ldots,K_n$ to points $p_1,\ldots,p_n$, respectively, and denote by $q:\beta X\rightarrow T$ its  quotient mapping. Since $K_i$'s where $i=1,\dots,n$ are compact, $T$ is Hausdorff and thus compact, being a continuous image of $\beta X$. Consider the subspace
\[Y=q\Big[X\cup(\beta X\backslash\lambda_{{\mathcal P}}X)\cup\bigcup_{i=1}^n K_i\Big]\]
of $T$. Then $Y$ is a Tychonoff extension of $X$ with the compact remainder
\[Y\backslash X=q\Big[(\beta X\backslash\lambda_{{\mathcal P}}X)\cup\bigcup_{i=1}^n K_i\Big].\]
Note that $T$ is a compactification of $Y$ and  thus by Lemma \ref{16}  we have $Y\in {\mathscr E}_{\mathcal P}^{\mathcal Q}(X)$. Also, by Lemma \ref{j2} if $\phi:\beta X\rightarrow \beta Y$ continuously extends $\mbox{id}_X$, then $\beta Y$ coincides with the quotient space of $\beta X$  obtained by contracting each fiber $\phi^{-1}(p)$ where $p\in Y\backslash X$ to a point, that is, $\beta Y=T$. This shows  (\ref{UY}). The fact that $Y\in {\mathscr M}^{\mathcal Q}_{\mathcal P}(X)$ follows from Theorem \ref{HUHG16}.

For the uniqueness part, let $Y'\in{\mathscr M}^{\mathcal Q}_{\mathcal P}(X)$ be such that  ${\mathscr F}(Y')={\mathscr F}(Y)$. Let $\psi:\beta X\rightarrow\beta Y'$ be the continuous extension of $\mbox{id}_X$. By Lemma \ref{j2} we have $\beta Y'=T$ and $\psi=q$. Thus
\[Y'=q\Big[X\cup(\beta X\backslash\lambda_{{\mathcal P}}X)\cup\bigcup_{i=1}^n K_i\Big]=Y.\]
\end{proof}

\begin{notation}\label{PKWES}
Let ${\mathcal P}$ and  ${\mathcal Q}$ be a pair of compactness--like topological properties. Let $X$ be a Tychonoff locally--${\mathcal P}$ non--${\mathcal P}$ space with $\mathcal{Q}$. Let $n\in\mathbf{N}$ and let $K_1,\ldots,K_n$ be $n$ pairwise disjoint compact subsets of $\beta X\backslash X$ such that $K_i\backslash\lambda_{{\mathcal P}}X$ is non--empty for any $i=1,\ldots,n$. Denote by $e_X(K_1,\ldots,K_n)$ the unique element of ${\mathscr M}^{\mathcal Q}_{\mathcal P}(X)$ such that
\[{\mathscr F}\big(e_X(K_1,\ldots,K_n)\big)=\Big\{\{p\}:p\in (\beta X\backslash\lambda_{{\mathcal P}}X)\backslash\bigcup_{i=1}^n K_i\Big\}\cup\{K_1,\ldots,K_n\}.\]
\end{notation}

The next lemma is a counterpart of Lemma 6 in \cite{Mag3}.

\begin{lemma}\label{OTH}
Let ${\mathcal P}$ and  ${\mathcal Q}$ be a pair of compactness--like topological properties. Let $X$ be a Tychonoff locally--${\mathcal P}$ non--${\mathcal P}$ space with $\mathcal{Q}$. Let $K_i$ where $i=1,2$, be a compact subset of $\beta X\backslash X$ such that $K_i\backslash\lambda_{{\mathcal P}}X$ is non--empty. Then
\begin{itemize}
\item[\rm(1)] $e_X(K_1)\wedge e_X(K_2)=e_X(K_1,K_2)$, if $K_1\cap K_2=\emptyset$.
\item[\rm(2)] $e_X(K_1)\wedge e_X(K_2)=e_X(K_1\cup K_2)$, if $K_1\cap K_2\neq\emptyset$.
\end{itemize}
Here $\wedge$ is the operation in ${\mathscr M}^{\mathcal Q}_{\mathcal P}(X)$.
\end{lemma}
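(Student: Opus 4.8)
The plan is to realize $e_X(K_1)\wedge e_X(K_2)$ via the description of $\leq$ in terms of $\beta$-families (Lemma \ref{DFH}) together with the uniqueness statement of Lemma \ref{LKA}. Recall that by Notation \ref{PKWES} and Lemma \ref{LKA}, for a compact $K\subseteq\beta X\backslash X$ with $K\backslash\lambda_{\mathcal P}X\neq\emptyset$ we have
\[{\mathscr F}\big(e_X(K)\big)=\big\{\{p\}:p\in(\beta X\backslash\lambda_{\mathcal P}X)\backslash K\big\}\cup\{K\},\]
and similarly for $e_X(K_1,K_2)$ and $e_X(K_1\cup K_2)$. So in each case I must show that the candidate extension $W$ on the right-hand side is a common lower bound of $e_X(K_1)$ and $e_X(K_2)$ in $({\mathscr M}^{\mathcal Q}_{\mathcal P}(X),\leq)$, and that it is the \emph{largest} such lower bound. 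That $W$ lies in ${\mathscr M}^{\mathcal Q}_{\mathcal P}(X)$ is immediate from Lemma \ref{LKA} once we check the hypotheses: in case (1) the sets $K_1,K_2$ are disjoint with $K_i\backslash\lambda_{\mathcal P}X\neq\emptyset$, and in case (2) the single set $K_1\cup K_2$ is compact with $(K_1\cup K_2)\backslash\lambda_{\mathcal P}X\supseteq K_1\backslash\lambda_{\mathcal P}X\neq\emptyset$.

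First I would verify that $W$ is a lower bound. By Lemma \ref{DFH}, $W\leq e_X(K_i)$ iff every element of ${\mathscr F}(e_X(K_i))$ is contained in an element of ${\mathscr F}(W)$. In case (1), ${\mathscr F}(W)=\{\{p\}:p\in(\beta X\backslash\lambda_{\mathcal P}X)\backslash(K_1\cup K_2)\}\cup\{K_1,K_2\}$: a singleton $\{p\}$ of ${\mathscr F}(e_X(K_i))$ with $p\notin K_i$ is either a singleton of ${\mathscr F}(W)$ (if $p\notin K_1\cup K_2$) or sits inside $K_j$ for $j\neq i$; and the set $K_i\in{\mathscr F}(e_X(K_i))$ is itself a member of ${\mathscr F}(W)$. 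Hence $W\leq e_X(K_i)$ for $i=1,2$. In case (2), ${\mathscr F}(W)=\{\{p\}:p\in(\beta X\backslash\lambda_{\mathcal P}X)\backslash(K_1\cup K_2)\}\cup\{K_1\cup K_2\}$, and the same check works: $K_i\subseteq K_1\cup K_2\in{\mathscr F}(W)$, and singletons outside $K_i$ are either singletons of ${\mathscr F}(W)$ or contained in $K_1\cup K_2$.

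Next comes the maximality, which is the real content. Suppose $Z\in{\mathscr M}^{\mathcal Q}_{\mathcal P}(X)$ with $Z\leq e_X(K_i)$ for $i=1,2$; I must show $Z\leq W$. By Theorem \ref{HUHG16}, since $Z,e_X(K_i)\in{\mathscr M}^{\mathcal Q}_{\mathcal P}(X)$, every element of ${\mathscr F}(Z)$ meets $\beta X\backslash\lambda_{\mathcal P}X$ in a nonempty set, and in fact (using Theorem \ref{HUHG16}(2) with the representation of $\beta Z$ from Lemma \ref{j2}) each $F\in{\mathscr F}(Z)$ satisfies $F\backslash\lambda_{\mathcal P}X\neq\emptyset$; crucially the sets $\{F\backslash\lambda_{\mathcal P}X:F\in{\mathscr F}(Z)\}$ partition $\beta X\backslash\lambda_{\mathcal P}X$. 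The hypothesis $Z\leq e_X(K_i)$ via Lemma \ref{DFH} forces each element of ${\mathscr F}(e_X(K_i))$ into an element of ${\mathscr F}(Z)$; in particular $K_i$ is contained in some $F_i\in{\mathscr F}(Z)$, and every $p\in(\beta X\backslash\lambda_{\mathcal P}X)\backslash K_i$ lies in some element of ${\mathscr F}(Z)$. Combining the two inclusions ($i=1,2$): any $F\in{\mathscr F}(W)$ other than $K_1,K_2$ (resp.\ other than $K_1\cup K_2$) is a singleton $\{p\}$ with $p\notin K_1\cup K_2$, and such $\{p\}$ — being an element of ${\mathscr F}(e_X(K_i))$ — is contained in an element of ${\mathscr F}(Z)$; while for $F=K_1$ (say) I use $Z\leq e_X(K_1)$ to get $K_1\subseteq F_1\in{\mathscr F}(Z)$. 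For case (2) I additionally need $K_1\cup K_2$ to land in a single element of ${\mathscr F}(Z)$: since $K_1\cap K_2\neq\emptyset$, pick $x\in K_1\cap K_2$; the element of ${\mathscr F}(Z)$ containing $K_1$ and the one containing $K_2$ both contain $x$, hence (elements of ${\mathscr F}(Z)$ being the fibers of a map, pairwise disjoint) they coincide, so $K_1\cup K_2$ is inside one element of ${\mathscr F}(Z)$. Thus every element of ${\mathscr F}(W)$ is contained in an element of ${\mathscr F}(Z)$, so by Lemma \ref{DFH} $Z\leq W$, proving $W=e_X(K_1)\wedge e_X(K_2)$.

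The main obstacle I anticipate is bookkeeping around the distinction between $F\in{\mathscr F}(Z)$ and $F\backslash\lambda_{\mathcal P}X$, and making sure the ``partition of $\beta X\backslash\lambda_{\mathcal P}X$'' claim is invoked correctly — one must be careful that distinct elements of ${\mathscr F}(Z)$ can overlap inside $\lambda_{\mathcal P}X$ only if... actually no: the $\phi$-fibers are genuinely pairwise disjoint since $\phi$ is a function, so ${\mathscr F}(Z)$ is always a pairwise disjoint family; it is the relation to $\lambda_{\mathcal P}X$ that needs Theorem \ref{HUHG16}. The other delicate point is case (2)'s coincidence argument: it hinges only on disjointness of distinct $\phi$-fibers plus the shared point $x\in K_1\cap K_2$, so it is clean once stated carefully. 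Everything else is a mechanical application of Lemma \ref{DFH}, Lemma \ref{LKA}, and Theorem \ref{HUHG16}.
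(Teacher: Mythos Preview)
Your proposal is correct and follows essentially the same approach as the paper: both arguments rest on Lemma \ref{DFH} together with the explicit description of ${\mathscr F}(e_X(\cdot))$ from Lemma \ref{LKA}, and the key step in part (2) is exactly the one you identify---if $K_i\subseteq F_i\in{\mathscr F}(Z)$ and $K_1\cap K_2\neq\emptyset$, then $F_1\cap F_2\neq\emptyset$ forces $F_1=F_2$ since the $\phi$-fibers are pairwise disjoint. Your detour through Theorem \ref{HUHG16} and the partition of $\beta X\backslash\lambda_{\mathcal P}X$ is unnecessary (as you yourself realize at the end), but harmless; the paper simply omits the lower-bound verification and the case (1) maximality check as routine.
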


\begin{proof}
This  follows from Lemma \ref{DFH}. In part (2) note that if $Y\in {\mathscr M}^{\mathcal Q}_{\mathcal P}(X)$ is such that $Y\leq e_X(K_i)$ for any $i=1,2$, then by Lemma \ref{DFH} we have $K_i\subseteq F_i$ for some $F_i\in  {\mathscr F}(Y)$. But by our assumption $K_1\cap K_2$ is non--empty and thus $F_1\cap F_2$ is non--empty, which implies that $F_1=F_2$. Therefore $K_1\cup K_2\subseteq F_1$ and thus again by Lemma \ref{DFH} it follows that $Y\leq e_X(K_1\cup K_2)$.
\end{proof}

Let $(X,\leq)$ be a partially ordered set with the largest element $u$. An element $a\in X$ is called an {\em anti--atom} in $X$ if $a\neq u$ and there exists  no $x\in X$ with $a<x<u$.

The following lemma is a counterpart of Lemma 9 in \cite{Mag3}.

\begin{lemma}\label{KJH}
Let ${\mathcal P}$ and  ${\mathcal Q}$ be a pair of compactness--like topological properties. Let $X$ be a Tychonoff locally--${\mathcal P}$ non--${\mathcal P}$ space with $\mathcal{Q}$. The following are equivalent:
\begin{itemize}
\item[\rm(1)] $Y$ is an anti--atom in ${\mathscr M}^{\mathcal Q}_{\mathcal P}(X)$.
\item[\rm(2)] $Y=e_X(\{a,b\})$ for some distinct $a,b\in \beta X\backslash X$ such that either $a\notin\lambda_{{\mathcal P}}X$ or  $b\notin\lambda_{{\mathcal P}}X$.
\end{itemize}
\end{lemma}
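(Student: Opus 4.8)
The statement characterizes the anti-atoms of $({\mathscr M}^{\mathcal Q}_{\mathcal P}(X),\leq)$ as exactly the extensions $e_X(\{a,b\})$ obtained by identifying two points of $\beta X\backslash X$ at least one of which lies outside $\lambda_{\mathcal P}X$. The natural route is to work through the correspondence $Y\mapsto{\mathscr F}(Y)$ (Lemma \ref{DFH}, Lemma \ref{j2}) and use Theorem \ref{HUHG16} to recognize which collections of fibers arise from elements of ${\mathscr M}^{\mathcal Q}_{\mathcal P}(X)$: namely $Y\in{\mathscr M}^{\mathcal Q}_{\mathcal P}(X)$ iff every $F\in{\mathscr F}(Y)$ meets $\beta X\backslash\lambda_{\mathcal P}X$. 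The largest element $u$ of ${\mathscr M}^{\mathcal Q}_{\mathcal P}(X)$ is the one all of whose fibers are singletons from $\beta X\backslash\lambda_{\mathcal P}X$, i.e. $u$ has ${\mathscr F}(u)=\{\{p\}:p\in\beta X\backslash\lambda_{\mathcal P}X\}$; this is $e_X$ of nothing, and indeed it is the maximum of $({\mathscr M}^{\mathcal Q}_{\mathcal P}(X),\leq)$ by Lemma \ref{DFH} since any element's fibers all meet $\beta X\backslash\lambda_{\mathcal P}X$. (That $u$ exists and is the largest element also follows from the chapter's promised largest-element result, but the direct description via ${\mathscr F}$ is all we need.)

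\textbf{Direction (2) $\Rightarrow$ (1).} Suppose $Y=e_X(\{a,b\})$ with $a,b\in\beta X\backslash X$ distinct and, say, $a\notin\lambda_{\mathcal P}X$. First, $Y\neq u$ since ${\mathscr F}(Y)$ contains the two-point set $\{a,b\}$, which is not a singleton. Now suppose $Y<Z\leq u$ for some $Z\in{\mathscr M}^{\mathcal Q}_{\mathcal P}(X)$; I must derive $Z=u$. By Lemma \ref{DFH}, $Y\leq Z$ means every element of ${\mathscr F}(Z)$ is contained in an element of ${\mathscr F}(Y)$. Since all elements of ${\mathscr F}(Y)$ other than $\{a,b\}$ are singletons, the only way ${\mathscr F}(Z)\neq{\mathscr F}(Y)$ is that $\{a,b\}$ contains \emph{two distinct} members of ${\mathscr F}(Z)$, forcing those members to be $\{a\}$ and $\{b\}$, and all other members of ${\mathscr F}(Z)$ to be singletons; i.e. ${\mathscr F}(Z)=\{\{p\}:p\in\beta X\backslash\lambda_{\mathcal P}X\}={\mathscr F}(u)$, so $Z=u$ by Lemma \ref{DFH} (equivalence of extensions). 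It remains to check $Y$ itself is legitimate, i.e. $e_X(\{a,b\})$ is defined: this needs $\{a,b\}\backslash\lambda_{\mathcal P}X\neq\emptyset$ (Notation \ref{PKWES}), which holds since $a\notin\lambda_{\mathcal P}X$; and $Y\in{\mathscr M}^{\mathcal Q}_{\mathcal P}(X)$ is part of Lemma \ref{LKA}. Hence $Y$ is an anti-atom.

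\textbf{Direction (1) $\Rightarrow$ (2).} Let $Y$ be an anti-atom. Then $Y\neq u$, so by Lemma \ref{DFH} (and the description of ${\mathscr F}(u)$) there is a fiber $F_0\in{\mathscr F}(Y)$ with $\mathrm{card}(F_0)\geq 2$, say $a,b\in F_0$ distinct; note $a,b\in\beta X\backslash X$ since fibers over remainder points miss $X$, and $F_0\backslash\lambda_{\mathcal P}X\neq\emptyset$ by Theorem \ref{HUHG16}. Consider $Z=e_X(\{a,b\})$; it is defined because $\{a,b\}\backslash\lambda_{\mathcal P}X\neq\emptyset$, wait — this needs care: we only know $F_0$ meets $\beta X\backslash\lambda_{\mathcal P}X$, not that $\{a,b\}$ does. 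So I instead choose the two points more carefully: pick $a\in F_0\backslash\lambda_{\mathcal P}X$ and $b\in F_0$ with $b\neq a$ (possible as $\mathrm{card}(F_0)\geq 2$). Then $\{a,b\}\backslash\lambda_{\mathcal P}X\ni a$, so $Z=e_X(\{a,b\})\in{\mathscr M}^{\mathcal Q}_{\mathcal P}(X)$ is well-defined by Lemma \ref{LKA}. Now $\{a,b\}\subseteq F_0$, and every singleton $\{p\}$ with $p\in(\beta X\backslash\lambda_{\mathcal P}X)\backslash\{a,b\}$ is contained in the element of ${\mathscr F}(Y)$ through $p$; hence every element of ${\mathscr F}(Z)$ is contained in an element of ${\mathscr F}(Y)$, so $Y\leq Z$ by Lemma \ref{DFH}. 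Also $Z\leq u$. If $Y<Z$ then, $Y$ being an anti-atom, $Z=u$, contradicting $\mathrm{card}(\{a,b\})=2$; so $Y=Z=e_X(\{a,b\})$, and by construction $a\notin\lambda_{\mathcal P}X$, giving (2).

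\textbf{Anticipated obstacle.} The delicate point — and the one I expect to need the most care in the write-up — is the bookkeeping with the $\beta$-families in direction (1) $\Rightarrow$ (2): one must be sure that the fiber $F_0$ one selects really does contain a point outside $\lambda_{\mathcal P}X$ so that $e_X(\{a,b\})$ is defined for a suitable choice of the second point $b$, and that ``squeezing'' $F_0$ down to just $\{a,b\}$ while splitting off all the other (necessarily singleton, by the anti-atom hypothesis via comparison with $u$) pieces produces a genuine $\leq$-relation $Y\leq e_X(\{a,b\})$ rather than the reverse. The rest is a routine application of Lemma \ref{DFH}, Lemma \ref{j2}, Lemma \ref{LKA} and Theorem \ref{HUHG16}; no new topology is needed beyond the established machinery.
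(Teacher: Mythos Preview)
Your proof is correct, and your direction $(1)\Rightarrow(2)$ is a bit cleaner than the paper's. The paper first establishes (as you do) that $\zeta_{\mathcal P}X=X\cup(\beta X\backslash\lambda_{\mathcal P}X)$ is the largest element of ${\mathscr M}^{\mathcal Q}_{\mathcal P}(X)$, then for $(1)\Rightarrow(2)$ argues in two contradiction steps: if ${\mathscr F}(Y)$ had two distinct non--singleton members $F_1,F_2$, one builds two distinct anti--atoms $e_X(\{a_i,b_i\})$ above $Y$; and if the unique non--singleton $F$ had $\mbox{card}(F)>2$, one again builds two distinct anti--atoms $e_X(\{a,b\}),e_X(\{a,c\})$ above $Y$. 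Your single--shot argument --- pick $a\in F_0\backslash\lambda_{\mathcal P}X$, $b\in F_0\backslash\{a\}$, form $Z=e_X(\{a,b\})$, note $Y\leq Z<u$, hence $Y=Z$ by the anti--atom property --- sidesteps both case splits and goes straight to the identification. Both routes rest on the same machinery (Lemma~\ref{DFH}, Lemma~\ref{LKA}, Theorem~\ref{HUHG16}).

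One small wording issue in your $(2)\Rightarrow(1)$: you write that when $Y<Z$ the set $\{a,b\}$ must contain \emph{two} distinct members of ${\mathscr F}(Z)$. That is only literally true when $b\notin\lambda_{\mathcal P}X$. When $b\in\lambda_{\mathcal P}X$, Theorem~\ref{HUHG16} forbids $\{b\}\in{\mathscr F}(Z)$, so $\{a,b\}$ contains exactly \emph{one} member of ${\mathscr F}(Z)$, namely $\{a\}$; the rest of ${\mathscr F}(Z)$ still consists of the singletons $\{p\}$ for $p\in(\beta X\backslash\lambda_{\mathcal P}X)\backslash\{a\}$, and the conclusion ${\mathscr F}(Z)={\mathscr F}(u)$ follows just the same. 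This is a cosmetic fix, not a gap.
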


\begin{proof}
Note that  ${\mathscr M}^{\mathcal Q}_{\mathcal P}(X)$ has the largest element $\zeta_{\mathcal{P}} X=X\cup(\beta X\backslash\lambda_{{\mathcal P}}X)$. To show this first note that by Lemma \ref{15} we have $X\subseteq\lambda_{{\mathcal P}}X$. Thus $\zeta_{\mathcal{P}} X$ is a Tychonoff extension of $X$ which (since $\lambda_{{\mathcal P}}X$ is open in $\beta X$) has a compact remainder. Since $X\subseteq\zeta_{\mathcal{P}} X\subseteq \beta X$ we have $\beta \zeta_{\mathcal{P}} X=\beta X$ (see Corollary 3.6.9 of \cite{E}). It follows from Lemma \ref{16} (with $Y=X$, $f=\mbox{id}_X$, $T=\zeta_{\mathcal{P}} X$,  $\alpha T=\beta X$ and $\phi=\mbox{id}_{\beta X}$ in its statement) that  $\zeta_{\mathcal{P}} X$ has both ${\mathcal P}$  and ${\mathcal Q}$ and from  Theorem \ref{HUHG16} that  $\zeta_{\mathcal{P}} X\in {\mathscr M}^{\mathcal Q}_{\mathcal P}(X)$. That $\zeta_{\mathcal{P}} X$ is the largest element of ${\mathscr M}^{\mathcal Q}_{\mathcal P}(X)$ now follows from Theorem \ref{HUHG16} and Lemma \ref{DFH}.

That (2) implies (1) is trivial.  (1) {\em  implies} (2). Suppose that $Y$ is an anti--atom in ${\mathscr M}^{\mathcal Q}_{\mathcal P}(X)$. We show that except for a 2--element set the rest of the sets in ${\mathscr F}(Y)$ are singletons, the uniqueness part of Lemma \ref{LKA} will then imply (2). Suppose to the contrary that there exist some distinct $F_1,F_2\in {\mathscr F}(Y)$ such that $\mbox{card}(F_i)\geq 2$ for any $i=1,2$. By Theorem \ref{HUHG16} the set $F_i\backslash\lambda_{{\mathcal P}}X$ is non--empty for any $i=1,2$; choose some distinct $a_i,b_i\in F_i$ such that $a_i\notin \lambda_{{\mathcal P}}X$. Then $e_X(\{a_i,b_i\})$, where $i=1,2$, are distinct elements of  ${\mathscr M}^{\mathcal Q}_{\mathcal P}(X)$ and  $Y\leq e_X(\{a_i,b_i\})$, which contradicts (1). Thus there is at most one set in  ${\mathscr F}(Y)$ which is not a singleton, and  since $Y\neq\zeta_{\mathcal{P}} X$, there is at least one such set. Let $F\in {\mathscr F}(Y)$ be such that $\mbox{card}(F)\geq 2$. Suppose that $\mbox{card}(F)> 2$. By Theorem \ref{HUHG16} the set $F\backslash\lambda_{{\mathcal P}}X$ is non--empty. Let $a\in F\backslash\lambda_{{\mathcal P}}X$ and let $b,c\in F$ be distinct elements distinct from $a$. Then $e_X(\{a,b\})$ and $e_X(\{a,c\})$ are distinct  elements of ${\mathscr M}^{\mathcal Q}_{\mathcal P}(X)$ and  $Y\leq e_X(\{a,b\})$  and $Y\leq e_X(\{a,c\})$. This contradiction proves that $\mbox{card}(F)=2$.
\end{proof}

\begin{definition}\label{PGRHJ}
Let ${\mathcal P}$ and  ${\mathcal Q}$ be a pair of compactness--like topological properties. Let $X$ be a Tychonoff locally--${\mathcal P}$ non--${\mathcal P}$ space with $\mathcal{Q}$. An anti--atom $Y=e_X(\{a,b\})$ of ${\mathscr M}^{\mathcal Q}_{\mathcal P}(X)$ is said to  be {\em of type $(\mbox{\em I})$} if $\{a,b\}\cap\lambda_{{\mathcal P}}X$ is non--empty, otherwise, $Y$ is said to be {\em of type $(\mbox{\em II})$}.
\end{definition}

The purpose of the next two lemmas is to give an order--theoretic characterization of anti--atoms of type $(\mbox{I})$ (and thus anti--atoms of type $(\mbox{II})$ as well) in ${\mathscr M}^{\mathcal Q}_{\mathcal P}(X)$.

\begin{lemma}\label{GGFDS}
Let ${\mathcal P}$ and  ${\mathcal Q}$ be a pair of compactness--like topological properties.  Let $X$ be a Tychonoff locally--${\mathcal P}$ non--${\mathcal P}$ space with $\mathcal{Q}$ such that $\mbox{\em card}(\lambda_{{\mathcal P}}X\backslash X)\geq 2$. Then
\begin{itemize}
\item[\rm(1)] $\mbox{\em card}(\beta X\backslash\lambda_{{\mathcal P}}X)=1$ if and only if for any pair of distinct anti--atoms $Y$ and $Y'$ in ${\mathscr M}^{\mathcal Q}_{\mathcal P}(X)$ we have
    \[\mbox{\em card}\big(\big\{T:T\mbox{ is an anti--atom in }{\mathscr M}^{\mathcal Q}_{\mathcal P}(X) \mbox{ and }Y\wedge Y'\leq T\big\}\big)=2.\]
\item[\rm(2)] $\mbox{\em card}(\beta X\backslash\lambda_{{\mathcal P}}X)=2$ if and only if there exists an anti--atom $Y$ in ${\mathscr M}^{\mathcal Q}_{\mathcal P}(X)$ such that for any anti--atom $Y'$ in ${\mathscr M}^{\mathcal Q}_{\mathcal P}(X)$ with $Y'\neq Y$ we have
    \[\mbox{\em card}\big(\big\{T:T\mbox{ is an anti--atom in }{\mathscr M}^{\mathcal Q}_{\mathcal P}(X) \mbox{ and }Y\wedge Y'\leq T\big\}\big)=3.\]
\item[\rm(3)] $\mbox{\em card}(\beta X\backslash\lambda_{{\mathcal P}}X)\geq3$ if and only if there exist some anti--atoms $Y$, $Y'$ and $Y''$ in  ${\mathscr M}^{\mathcal Q}_{\mathcal P}(X)$ such that
    \[\mbox{\em card}\big(\big\{T:T\mbox{ is an anti--atom in }{\mathscr M}^{\mathcal Q}_{\mathcal P}(X) \mbox{ and }Y\wedge Y'\wedge Y''\leq T\big\}\big)=6.\]
\end{itemize}
Here $\wedge$ is the operation in ${\mathscr M}^{\mathcal Q}_{\mathcal P}(X)$.
\end{lemma}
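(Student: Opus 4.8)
Looking at Lemma \ref{GGFDS}, the strategy is to translate each statement about anti-atoms and the operation $\wedge$ in ${\mathscr M}^{\mathcal Q}_{\mathcal P}(X)$ into a counting statement about $\beta X\backslash\lambda_{\mathcal P}X$, using the explicit description of anti-atoms from Lemma \ref{KJH} and the formula for $\wedge$ from Lemma \ref{OTH}.

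\medskip

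\noindent\textbf{Setup and key reductions.} First I would record the description of anti-atoms: by Lemma \ref{KJH}, every anti-atom of ${\mathscr M}^{\mathcal Q}_{\mathcal P}(X)$ has the form $e_X(\{a,b\})$ for distinct $a,b\in\beta X\backslash X$ with $\{a,b\}\not\subseteq\lambda_{\mathcal P}X$, and conversely every such $e_X(\{a,b\})$ is an anti-atom. Next, given two anti-atoms $Y=e_X(\{a,b\})$ and $Y'=e_X(\{c,d\})$, I need a formula for $Y\wedge Y'$ and a criterion for when $Y\wedge Y'\leq T$ for an anti-atom $T=e_X(\{x,y\})$. Using Lemma \ref{DFH}, $Y\wedge Y'\leq T$ holds iff the unique non-singleton fiber $\{x,y\}$ of ${\mathscr F}(T)$ is contained in a fiber of ${\mathscr F}(Y\wedge Y')$; and using that $Y\wedge Y'$ is the supremum in ${\mathscr K}$-style reasoning, ${\mathscr F}(Y\wedge Y')$ consists of the singletons together with the classes of the smallest equivalence relation on $\beta X\backslash\lambda_{\mathcal P}X$ (restricted appropriately, using that $K_i\backslash\lambda_{\mathcal P}X\neq\emptyset$ is automatic here since the two points lie in $\beta X\backslash X$ and at least one is outside $\lambda_{\mathcal P}X$) identifying $a\sim b$ and $c\sim d$. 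Concretely: the non-trivial fiber(s) of $Y\wedge Y'$ are obtained by merging $\{a,b\}$ and $\{c,d\}$ when they meet, and $e_X(\{x,y\})\geq Y\wedge Y'$ iff $\{x,y\}$ is contained in one such merged block. So the count $\mbox{card}(\{T: T\text{ anti-atom}, Y\wedge Y'\leq T\})$ equals the number of $2$-element subsets $\{x,y\}$ of a fixed block $B$ (the merged block containing $\{a,b\}\cup\{c,d\}$) with the side condition that $\{x,y\}\not\subseteq\lambda_{\mathcal P}X$, i.e. at least one of $x,y$ lies in $\beta X\backslash\lambda_{\mathcal P}X$.

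\medskip

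\noindent\textbf{The counting.} Now the three parts become elementary combinatorics on the set $E=\beta X\backslash\lambda_{\mathcal P}X$ together with $\lambda_{\mathcal P}X\backslash X$, under the blanket hypothesis $\mbox{card}(\lambda_{\mathcal P}X\backslash X)\geq 2$ (which guarantees enough ``interior'' remainder points to form auxiliary anti-atoms of type $(\mathrm I)$ as needed). For part (1): if $\mbox{card}(E)=1$, say $E=\{p\}$, then every anti-atom is $e_X(\{p,b\})$ with $b\in(\lambda_{\mathcal P}X\backslash X)\cup\{p\}$, i.e. $b\in\lambda_{\mathcal P}X\backslash X$; two distinct such anti-atoms $e_X(\{p,b\})$, $e_X(\{p,b'\})$ have meet whose non-trivial block is $\{p,b,b'\}$, and the $2$-subsets of that block meeting $E$ are exactly $\{p,b\}$ and $\{p,b'\}$ — giving the count $2$. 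Conversely if $\mbox{card}(E)\geq 2$ I would exhibit two anti-atoms whose meet-count is not $2$ (e.g. pick $p,p'\in E$, $b\in\lambda_{\mathcal P}X\backslash X$, take $Y=e_X(\{p,b\})$, $Y'=e_X(\{p',b\})$, block $\{p,p',b\}$, and the qualifying $2$-subsets are $\{p,b\},\{p',b\},\{p,p'\}$ — count $3$; one must also check the degenerate possibility $\mbox{card}(E)=1$ is genuinely forced, handling the case where one of the chosen anti-atoms is of type $(\mathrm{II})$). Part (2) and (3) are similar: when $\mbox{card}(E)=2$, choosing the anti-atom $Y=e_X(E)$ (a type $(\mathrm{II})$ anti-atom, $E=\{p,p'\}$) and any other anti-atom $Y'$, the merged block always contains $\{p,p'\}$ plus possibly one interior point, yielding exactly the three $2$-subsets meeting $E$; and $\mbox{card}(E)\geq 3$ is detected by picking $Y,Y',Y''$ pairwise meeting so their blocks merge into one block containing three points of $E$, whose qualifying $2$-subsets number $6=\binom{4}{2}$ minus the one subset inside $\lambda_{\mathcal P}X$ when a fourth, interior point is present — I would choose the three anti-atoms precisely so the merged block is a $3$-point subset of $E$, making all $\binom{3}{2}=3$... so actually one wants the block to be four points with exactly one in $\lambda_{\mathcal P}X$, giving $\binom42-\binom12\cdot 0 = 6$; the bookkeeping of which configuration yields exactly $6$ is the delicate point.

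\medskip

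\noindent\textbf{Main obstacle.} The real work is not the reduction but pinning down, for each clause, an \emph{exact} equality rather than an inequality: I must choose the witnessing anti-atoms $Y,Y',Y''$ so that their pairwise (and triple) meets have merged blocks of a \emph{prescribed} size and with a prescribed intersection with $\lambda_{\mathcal P}X\backslash X$, and simultaneously verify the converse — that \emph{no} pair/triple of anti-atoms can achieve the stated count unless $\mbox{card}(E)$ is in the claimed range. This requires carefully separating type $(\mathrm I)$ from type $(\mathrm II)$ anti-atoms (an anti-atom whose $2$-set lies entirely in $E$ contributes differently, since \emph{both} its points are ``bad''), and it is where the hypothesis $\mbox{card}(\lambda_{\mathcal P}X\backslash X)\geq 2$ gets used to always have room to build type $(\mathrm I)$ anti-atoms. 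I would organize the proof as: (i) a single lemma computing $\mbox{card}(\{T:T\text{ anti-atom}, Y_1\wedge\cdots\wedge Y_k\leq T\})$ as a binomial expression in the size of the merged block and its overlap with $\lambda_{\mathcal P}X$; (ii) apply it with $k=2$ for (1),(2) and $k=2$ or $3$ for (3); (iii) prove each converse by contradiction, showing a too-large $E$ always admits a configuration exceeding the bound and a too-small $E$ never reaches it.
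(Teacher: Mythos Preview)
Your approach is essentially the paper's: both use Lemmas \ref{KJH}, \ref{OTH}, and \ref{DFH} to reduce each clause to counting the $2$-element subsets of the merged block(s) of ${\mathscr F}(Y\wedge Y')$ (or ${\mathscr F}(Y\wedge Y'\wedge Y'')$) that meet $E=\beta X\backslash\lambda_{\mathcal P}X$, and the specific witnesses you sketch are minor variants of the paper's. Two small points: the equivalence relation generating the merged blocks should live on the points in $\beta X\backslash X$ (since type $(\mathrm I)$ anti-atoms contribute a point of $\lambda_{\mathcal P}X\backslash X$ to the block), not on $E$; and your binomial-counting lemma does not bypass the work in the converse of (3) when $\mbox{card}(E)=2$---the paper spends the bulk of its argument there, splitting on whether $e_X(E)$ appears among $Y,Y',Y''$ and on how the remaining pairs overlap, to verify that no configuration of three anti-atoms yields exactly $6$.
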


\begin{proof}
Since $X$ is locally--${\mathcal P}$ by Lemma \ref{15} we have $X\subseteq\lambda_{{\mathcal P}}X$, and since $X$ is moreover non--${\mathcal P}$, by Lemma \ref{HFH} the set  $\beta X\backslash\lambda_{{\mathcal P}}X$ is non--empty.

(1). Suppose that $\mbox{card}(\beta X\backslash\lambda_{{\mathcal P}}X)=1$. Let $\beta X\backslash\lambda_{{\mathcal P}}X=\{a\}$.

Let $Y$ and $Y'$ be distinct anti--atoms in ${\mathscr M}^{\mathcal Q}_{\mathcal P}(X)$. Then by Lemma \ref{KJH} we have
\[Y=e_X(\{a,b\})\mbox{ and }Y'=e_X(\{a,c\})\]
for some $b,c\in\lambda_{{\mathcal P}}X\backslash X$. By Lemmas \ref{DFH} and \ref{LKA} the elements $b$ and $c$ are distinct. By Lemma \ref{OTH} we have
\[Y\wedge Y'=e_X\big(\{a,b\}\big)\wedge e_X\big(\{a,c\}\big)=e_X\big(\{a,b,c\}\big).\]
Using Lemmas \ref{DFH}, \ref{LKA} and \ref{KJH} it now follows that there are only 2 anti--atoms $T$ in ${\mathscr M}^{\mathcal Q}_{\mathcal P}(X)$ with $Y\wedge Y'\leq T$, namely,
\[e_X\big(\{a,b\}\big)\mbox{ and }e_X\big(\{a,c\}\big).\]

To show the converse, suppose that $\mbox{card}(\beta X\backslash\lambda_{{\mathcal P}}X)\neq1$. Choose some distinct $a,b\in\beta X\backslash\lambda_{{\mathcal P}}X$ and some $c\in\lambda_{{\mathcal P}}X\backslash X$. By Lemma \ref{KJH} the elements
\[Y=e_X\big(\{a,b\}\big),Y'=e_X\big(\{a,c\}\big)\mbox{ and }Y''=e_X\big(\{b,c\}\big)\]
are anti--atoms in ${\mathscr M}^{\mathcal Q}_{\mathcal P}(X)$ and by Lemmas \ref{DFH} and \ref{LKA} they are distinct.  By Lemma  \ref{OTH} we have
\[Y\wedge Y'=e_X\big(\{a,b\}\big)\wedge e_X\big(\{a,c\}\big)=e_X\big(\{a,b,c\}\big)\]
and if $T$ is either $Y$, $Y'$ or $Y''$ then by Lemmas \ref{DFH}, \ref{LKA} and \ref{KJH} we have $Y\wedge Y'\leq T$.

(2). Suppose that $\mbox{card}(\beta X\backslash\lambda_{{\mathcal P}}X)=2$. Let $\beta X\backslash\lambda_{{\mathcal P}}X=\{a,b\}$.

Let $Y=e_X(\{a,b\})$. Then by Lemma \ref{KJH} the element $Y$ is an anti--atom in ${\mathscr M}^{\mathcal Q}_{\mathcal P}(X)$. Now let $Y'$ be an anti--atom in ${\mathscr M}^{\mathcal Q}_{\mathcal P}(X)$ with $Y'\neq Y$. By Lemma \ref{KJH} we have $Y'=e_X(\{c,d\})$ for some distinct $c,d\in\beta X\backslash X$ with either $c\notin\lambda_{{\mathcal P}}X$ or $d\notin\lambda_{{\mathcal P}}X$. Without any loss of generality we may assume that $c\notin\lambda_{{\mathcal P}}X$ and that $c=a$. By Lemma \ref{OTH} we have \[Y\wedge Y'=e_X\big(\{a,b\}\big)\wedge e_X\big(\{a,d\}\big)=e_X\big(\{a,b,c\}\big).\]
Now using Lemmas \ref{DFH}, \ref{LKA} and \ref{KJH} if  $T$ is either
\begin{equation}\label{GGRF}
e_X\big(\{a,b\}\big),e_X\big(\{a,d\}\big)\mbox{ or }e_X\big(\{b,d\}\big)
\end{equation}
then $T$ is an anti--atom in ${\mathscr M}^{\mathcal Q}_{\mathcal P}(X)$ and $Y\wedge Y'\leq T$ and conversely any anti--atom $T$ in ${\mathscr M}^{\mathcal Q}_{\mathcal P}(X)$ with $Y\wedge Y'\leq T$ is of the above form. By Lemmas \ref{DFH} and \ref{LKA} the elements in (\ref{GGRF}) are distinct.

To show the converse, suppose that $\mbox{card}(\beta X\backslash\lambda_{{\mathcal P}}X)\neq2$. Either $\mbox{card}(\beta X\backslash\lambda_{{\mathcal P}}X)=1$ or $\mbox{card}(\beta X\backslash\lambda_{{\mathcal P}}X)\geq 3$. Consider the following cases:
\begin{description}
\item[{\sc Case 1.}] Suppose that $\mbox{card}(\beta X\backslash\lambda_{{\mathcal P}}X)=1$. Let $\beta X\backslash\lambda_{{\mathcal P}}X=\{a\}$. Let $Y$ be an anti--atom in ${\mathscr M}^{\mathcal Q}_{\mathcal P}(X)$. By Lemma \ref{KJH}  we have  $Y=e_X(\{a,b\})$ for some $b\in\lambda_{{\mathcal P}}X\backslash X$. Let $c\in\lambda_{{\mathcal P}}X\backslash X$ be distinct from $b$. (Such a $c$ exists, as we are assuming that $\mbox{card}(\lambda_{{\mathcal P}}X\backslash X)\geq 2$.) Let $Y'=e_X(\{a,c\})$. Then by Lemma \ref{KJH} the element $Y'$ is an anti--atom in ${\mathscr M}^{\mathcal Q}_{\mathcal P}(X)$ and by Lemmas \ref{DFH} and \ref{LKA} we have $Y'\neq Y$. By Lemma \ref{OTH} we have
    \[Y\wedge Y'=e_X\big(\{a,b\}\big)\wedge e_X\big(\{a,c\}\big)=e_X\big(\{a,b,c\}\big).\]
    Now using Lemmas \ref{DFH}, \ref{LKA} and \ref{KJH} it follows that the anti--atoms $T$ in ${\mathscr M}^{\mathcal Q}_{\mathcal P}(X)$ with $Y\wedge Y'\leq T$ are exactly
    \[e_X\big(\{a,b\}\big)\mbox{ and }e_X\big(\{a,c\}\big).\]
\item[{\sc Case 2.}] Suppose that $\mbox{card}(\beta X\backslash\lambda_{{\mathcal P}}X)\geq 3$. Let $Y$ be an anti--atom in ${\mathscr M}^{\mathcal Q}_{\mathcal P}(X)$. By Lemma \ref{KJH} we have $Y=e_X(\{a,b\})$ for some distinct $a,b\in\beta X\backslash X$ with either $a\notin\lambda_{{\mathcal P}}X$ or $b\notin\lambda_{{\mathcal P}}X$. Choose some $c\in\beta X\backslash\lambda_{{\mathcal P}}X$ and some $d\in\lambda_{{\mathcal P}}X\backslash X$ such that neither $c\notin\{a,b\}$ nor $d\notin\{a,b\}$. (Again, we are using the fact that $\mbox{card}(\lambda_{{\mathcal P}}X\backslash X)\geq 2$.) Let $Y'=e_X(\{c,d\})$. Then by Lemma \ref{KJH} the element $Y'$ is an anti--atom in ${\mathscr M}^{\mathcal Q}_{\mathcal P}(X)$ and  by Lemmas \ref{DFH} and \ref{LKA} we have $Y'\neq Y$. By Lemma \ref{OTH}  we have
    \[Y\wedge Y'=e_X\big(\{a,b\}\big)\wedge e_X\big(\{c,d\}\big)=e_X\big(\{a,b\},\{c,d\}\big).\]
    Now as above it follows that the anti--atoms $T$ in ${\mathscr M}^{\mathcal Q}_{\mathcal P}(X)$ with $Y\wedge Y'\leq T$ are exactly
    \[e_X\big(\{a,b\}\big)\mbox{ and }e_X\big(\{c,d\}\big).\]
\end{description}
Thus in either case for a given anti--atom $Y$ in ${\mathscr M}^{\mathcal Q}_{\mathcal P}(X)$ we can find an anti--atom $Y'$ in ${\mathscr M}^{\mathcal Q}_{\mathcal P}(X)$ distinct from $Y$ with at most 2 anti--atoms $T$ in ${\mathscr M}^{\mathcal Q}_{\mathcal P}(X)$ with $Y\wedge Y'\leq T$.

(3). Suppose that $\mbox{card}(\beta X\backslash\lambda_{{\mathcal P}}X)\geq3$.

Choose some distinct $a,b,c\in\beta X\backslash\lambda_{{\mathcal P}}X$ and some $d\in\lambda_{{\mathcal P}}X\backslash X$. By Lemma \ref{KJH} the elements
\[Y=e_X\big(\{a,b\}\big),Y'=e_X\big(\{b,c\}\big)\mbox{ and }Y''=e_X\big(\{c,d\}\big)\]
are anti--atoms in ${\mathscr M}^{\mathcal Q}_{\mathcal P}(X)$. By Lemma \ref{OTH} we have
\begin{eqnarray*}
Y\wedge Y'\wedge Y''&=&e_X\big(\{a,b\}\big)\wedge e_X\big(\{b,c\}\big)\wedge e_X\big(\{c,d\}\big)\\&=&e_X\big(\{a,b,c\}\big)\wedge e_X\big(\{c,d\}\big)=e_X\big(\{a,b,c,d\}\big)
\end{eqnarray*}
and therefore, using Lemmas \ref{DFH}, \ref{LKA} and \ref{KJH} it follows that there are 6 anti--atoms $T$ in ${\mathscr M}^{\mathcal Q}_{\mathcal P}(X)$ with $Y\wedge Y'\wedge Y''\leq T$, namely,
\[e_X\big(\{a,b\}\big),e_X\big(\{a,c\}\big),e_X\big(\{a,d\}\big),e_X\big(\{b,c\}\big),e_X\big(\{b,d\}\big)\mbox{ and }e_X\big(\{c,d\}\big).\]

To show the converse, suppose that $\mbox{card}(\beta X\backslash\lambda_{{\mathcal P}}X)\leq2$. Consider the following cases:
\begin{description}
\item[{\sc Case 1.}] Suppose that $\mbox{card}(\beta X\backslash\lambda_{{\mathcal P}}X)=1$. Let $\beta X\backslash\lambda_{{\mathcal P}}X=\{a\}$. Let $Y$, $Y'$ and $Y''$ be anti--atoms in ${\mathscr M}^{\mathcal Q}_{\mathcal P}(X)$. By Lemma \ref{KJH} we have
    \[Y=e_X\big(\{a,b\}\big),Y'=e_X\big(\{a,c\}\big)\mbox{ and }Y''=e_X\big(\{a,d\}\big)\]
    for some $b,c,d\in\lambda_{{\mathcal P}}X\backslash X$.
    Using Lemma \ref{OTH} we have
    \begin{eqnarray*}
    Y\wedge Y'\wedge Y''&=&e_X\big(\{a,b\}\big)\wedge e_X\big(\{a,c\}\big)\wedge e_X\big(\{a,d\}\big)\\&=&e_X\big(\{a,b,c\}\big)\wedge e_X\big(\{a,d\}\big)=e_X\big(\{a,b,c,d\}\big)
    \end{eqnarray*}
    and therefore, using Lemmas \ref{DFH}, \ref{LKA} and \ref{KJH} it follows that the only anti--atoms $T$ in ${\mathscr M}^{\mathcal Q}_{\mathcal P}(X)$ with $Y\wedge Y'\wedge Y''\leq T$ are
    \[e_X\big(\{a,b\}\big),e_X\big(\{a,c\}\big)\mbox{ and }e_X\big(\{a,d\}\big).\]
\item[{\sc Case 2.}] Suppose that $\mbox{card}(\beta X\backslash\lambda_{{\mathcal P}}X)=2$. Let $\beta X\backslash\lambda_{{\mathcal P}}X=\{a,b\}$. Let $Y$, $Y'$ and $Y''$ be anti--atoms in ${\mathscr M}^{\mathcal Q}_{\mathcal P}(X)$.  Consider the following cases:
    \begin{description}
    \item[{\sc Case 2.a.}] Suppose that $e_X(\{a,b\})\notin\{Y,Y',Y''\}$. Consider the following cases:
        \begin{description}
        \item[{\sc Case 2.a.i.}] Suppose that
        \[Y=e_X\big(\{c,d\}\big),Y'=e_X\big(\{c,e\}\big)\mbox{ and }Y''=e_X\big(\{c,f\}\big)\]
        where $c$ is either $a$ or $b$ and $d,e,f\in\lambda_{{\mathcal P}}X\backslash X$. By Lemma \ref{OTH} we have
        \begin{eqnarray*}
        Y\wedge Y'\wedge Y''&=&e_X\big(\{c,d\}\big)\wedge e_X\big(\{c,e\}\big)\wedge e_X\big(\{c,f\}\big)\\&=&e_X\big(\{c,d,e\}\big)\wedge e_X\big(\{c,f\}\big)=e_X\big(\{c,d,e,f\}\big)
        \end{eqnarray*}
        and therefore, again using Lemmas \ref{DFH}, \ref{LKA} and \ref{KJH} it follows that the only  anti--atoms $T$ in ${\mathscr M}^{\mathcal Q}_{\mathcal P}(X)$ with $Y\wedge Y'\wedge Y''\leq T$ are
        \[e_X\big(\{c,d\}\big),e_X\big(\{c,e\}\big)\mbox{ and }e_X\big(\{c,f\}\big).\]
    \item[{\sc Case 2.a.ii.}] Suppose that either
        \[Y=e_X\big(\{a,d\}\big),Y'=e_X\big(\{a,e\}\big)\mbox{ and }Y''=e_X\big(\{b,f\}\big)\]
        or
        \[Y=e_X\big(\{b,d\}\big),Y'=e_X\big(\{b,e\}\big)\mbox{ and }Y''=e_X\big(\{a,f\}\big)\]
        where $d,e,f\in\lambda_{{\mathcal P}}X\backslash X$. Without any loss of generality we may assume that the first of the above cases occurs. Suppose that $f\notin\{d,e\}$. Then by Lemma \ref{OTH} we have
        \begin{eqnarray*}
        Y\wedge Y'\wedge Y''&=&e_X\big(\{a,d\}\big)\wedge e_X\big(\{a,e\}\big)\wedge e_X\big(\{b,f\}\big)\\&=&e_X\big(\{a,d,e\}\big)\wedge e_X\big(\{b,f\}\big)=e_X\big(\{a,d,e\},\{b,f\}\big)
        \end{eqnarray*}
        and therefore as above it then follows that the anti--atoms $T$ in ${\mathscr M}^{\mathcal Q}_{\mathcal P}(X)$ with $Y\wedge Y'\wedge Y''\leq T$ are exactly
        \[e_X\big(\{a,d\}\big),e_X\big(\{a,e\}\big)\mbox{ and }e_X\big(\{b,f\}\big).\]
        Now suppose that $f\in\{d,e\}$, say $f=d$. Then by Lemma \ref{OTH} we have
        \begin{eqnarray*}
        Y\wedge Y'\wedge Y''&=&e_X\big(\{a,d\}\big)\wedge e_X\big(\{a,e\}\big)\wedge e_X\big(\{b,f\}\big)\\&=&e_X\big(\{a,d,e\}\big)\wedge e_X\big(\{b,d\}\big)=e_X\big(\{a,b,d,e\}\big)
        \end{eqnarray*}
        and therefore as above it follows that the anti--atoms $T$ in ${\mathscr M}^{\mathcal Q}_{\mathcal P}(X)$ such that $Y\wedge Y'\wedge Y''\leq T$ are exactly
        \[e_X\big(\{a,b\}\big),e_X\big(\{a,d\}\big),e_X\big(\{a,e\}\big),e_X\big(\{b,d\}\big)\mbox{ and }e_X\big(\{b,e\}\big).\]
    \end{description}
    \item[{\sc Case 2.b.}] Suppose that $e_X(\{a,b\})\in\{Y,Y',Y''\}$, say $Y=e_X(\{a,b\})$. Consider the following cases:
        \begin{description}
        \item[{\sc Case 2.b.i.}] Suppose that
            \[Y'=e_X\big(\{c,d\}\big)\mbox{ and }Y''=e_X\big(\{c,e\}\big)\]
            where $c$ is either $a$ or $b$, say $c=a$, and $d,e\in\lambda_{{\mathcal P}}X\backslash X$. Then by Lemma \ref{OTH} we have
            \begin{eqnarray*}
            Y\wedge Y'\wedge Y''&=&e_X\big(\{a,b\}\big)\wedge e_X\big(\{a,d\}\big)\wedge e_X\big(\{a,e\}\big)\\&=&e_X\big(\{a,b,d\}\big)\wedge e_X\big(\{a,e\}\big)=e_X\big(\{a,b,d,e\}\big)
            \end{eqnarray*}
            and therefore using Lemmas \ref{DFH}, \ref{LKA} and \ref{KJH} it follows that the anti--atoms $T$ in ${\mathscr M}^{\mathcal Q}_{\mathcal P}(X)$ such that $Y\wedge Y'\wedge Y''\leq T$ are exactly
            \[e_X\big(\{a,b\}\big),e_X\big(\{a,d\}\big),e_X\big(\{a,e\}\big),e_X\big(\{b,d\}\big)\mbox{ and }e_X\big(\{b,e\}\big).\]
        \item[{\sc Case 2.b.ii.}] Suppose that
            \[Y'=e_X\big(\{a,d\}\big)\mbox{ and }Y''=e_X\big(\{b,e\}\big)\]
            where $d,e\in\lambda_{{\mathcal P}}X\backslash X$. Then by Lemma \ref{OTH} we have
            \begin{eqnarray*}
            Y\wedge Y'\wedge Y''&=&e_X\big(\{a,b\}\big)\wedge e_X\big(\{a,d\}\big)\wedge e_X\big(\{b,e\}\big)\\&=&e_X\big(\{a,b,d\}\big)\wedge e_X\big(\{b,e\}\big)=e_X\big(\{a,b,d,e\}\big)
            \end{eqnarray*}
            and therefore as above it follows that the anti--atoms $T$ in ${\mathscr M}^{\mathcal Q}_{\mathcal P}(X)$ such that $Y\wedge Y'\wedge Y''\leq T$ are exactly
            \[e_X\big(\{a,b\}\big),e_X\big(\{a,d\}\big),e_X\big(\{a,e\}\big),e_X\big(\{b,d\}\big)\mbox{ and }e_X\big(\{b,e\}\big).\]
        \end{description}
    \end{description}
\end{description}
Thus in either case for any given anti--atoms $Y$, $Y'$ and $Y''$ of ${\mathscr M}^{\mathcal Q}_{\mathcal P}(X)$ there are at most 5 anti--atoms $T$ in ${\mathscr M}^{\mathcal Q}_{\mathcal P}(X)$ with $Y\wedge Y'\wedge Y''\leq T$.
\end{proof}

\begin{lemma}\label{HVFS}
Let ${\mathcal P}$ and  ${\mathcal Q}$ be a pair of compactness--like topological properties.  Let $X$ be a Tychonoff locally--${\mathcal P}$ non--${\mathcal P}$ space with $\mathcal{Q}$ such that $\mbox{\em card}(\lambda_{{\mathcal P}}X\backslash X)\geq 2$.
\begin{itemize}
\item[\rm(1)] Suppose that $\mbox{\em card}(\beta X\backslash\lambda_{{\mathcal P}}X)=1$. Then any anti--atom of ${\mathscr M}^{\mathcal Q}_{\mathcal P}(X)$ is of type $(\mbox{\em I})$.
\item[\rm(2)] Suppose that $\mbox{\em card}(\beta X\backslash\lambda_{{\mathcal P}}X)=2$. Then an anti--atom $Y$ of ${\mathscr M}^{\mathcal Q}_{\mathcal P}(X)$ is of type $(\mbox{\em I})$ if and only if there exists an anti--atom $Y'$ of ${\mathscr M}^{\mathcal Q}_{\mathcal P}(X)$ such that
    \[\mbox{\em card}\big(\big\{T:T\mbox{ is an anti--atom in }{\mathscr M}^{\mathcal Q}_{\mathcal P}(X) \mbox{ and }Y\wedge Y'\leq T\big\}\big)=2.\]
\item[\rm(3)] Suppose that $\mbox{\em card}(\beta X\backslash\lambda_{{\mathcal P}}X)\geq3$. Then an anti--atom $Y$ of ${\mathscr M}^{\mathcal Q}_{\mathcal P}(X)$ is of type $(\mbox{\em I})$ if and only if there exists an anti--atom $Y'$ of ${\mathscr M}^{\mathcal Q}_{\mathcal P}(X)$ with $Y'\neq Y$ such that for any anti--atom $Y''$ of ${\mathscr M}^{\mathcal Q}_{\mathcal P}(X)$ we have
    \[\mbox{\em card}\big(\big\{T:T\mbox{ is an anti--atom in }{\mathscr M}^{\mathcal Q}_{\mathcal P}(X) \mbox{ and }Y\wedge Y'\wedge Y''\leq T\big\}\big)\leq5.\]
\end{itemize}
Here $\wedge$ is the operation in ${\mathscr M}^{\mathcal Q}_{\mathcal P}(X)$.
\end{lemma}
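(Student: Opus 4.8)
The whole argument reduces to one bookkeeping fact, read off from Lemma \ref{DFH} and the explicit description of $\beta$--families in Notation \ref{PKWES}. Let $K_1,\dots,K_n$ be pairwise disjoint finite subsets of $\beta X\backslash X$, none contained in $\lambda_{{\mathcal P}}X$. Since ${\mathscr F}\big(e_X(K_1,\dots,K_n)\big)$ consists of $K_1,\dots,K_n$ together with the singletons $\{p\}$ for $p\in(\beta X\backslash\lambda_{{\mathcal P}}X)\backslash(K_1\cup\cdots\cup K_n)$, and since for distinct $u,v\in\beta X\backslash X$ with $\{u,v\}\not\subseteq\lambda_{{\mathcal P}}X$ the two--point member $\{u,v\}$ of ${\mathscr F}\big(e_X(\{u,v\})\big)$ is too large to fit inside a singleton, Lemma \ref{DFH} gives that $e_X(K_1,\dots,K_n)\leq e_X(\{u,v\})$ holds exactly when $\{u,v\}\subseteq K_i$ for some $i$ (the remaining, singleton, members of ${\mathscr F}(e_X(\{u,v\}))$ being absorbed automatically). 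Combining this with Lemma \ref{KJH} (the anti--atoms of ${\mathscr M}^{\mathcal Q}_{\mathcal P}(X)$ are precisely the $e_X(\{u,v\})$ as above) and Lemma \ref{LKA} (distinct pairs yield distinct anti--atoms): the anti--atoms $T$ with $e_X(K_1,\dots,K_n)\leq T$ are exactly the $e_X(\{u,v\})$ with $u,v$ in a common $K_i$ and $\{u,v\}\not\subseteq\lambda_{{\mathcal P}}X$, and their number equals $\sum_{i=1}^n\big(\binom{m_i}{2}+m_i\ell_i\big)$, where $m_i=\mbox{card}(K_i\backslash\lambda_{{\mathcal P}}X)$ and $\ell_i=\mbox{card}(K_i\cap\lambda_{{\mathcal P}}X)$. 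All the meets below are computed by iterating Lemma \ref{OTH}, and the auxiliary points used always exist because $\mbox{card}(\lambda_{{\mathcal P}}X\backslash X)\geq2$ and $\mbox{card}(\beta X\backslash\lambda_{{\mathcal P}}X)$ is as hypothesised in each part.

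Part (1) is then immediate from Lemma \ref{KJH}: if $\mbox{card}(\beta X\backslash\lambda_{{\mathcal P}}X)=1$ then in any anti--atom $e_X(\{a,b\})$ at most one of $a,b$ lies outside $\lambda_{{\mathcal P}}X$, so $\{a,b\}\cap\lambda_{{\mathcal P}}X\neq\emptyset$ and it is of type $(\mbox{\em I})$. For part (2), write $\beta X\backslash\lambda_{{\mathcal P}}X=\{a,b\}$; by Lemma \ref{KJH} the unique anti--atom of type $(\mbox{\em II})$ is $e_X(\{a,b\})$, and every other anti--atom has the form $e_X(\{c,d\})$ with $c\in\{a,b\}$ and $d\in\lambda_{{\mathcal P}}X\backslash X$. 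If $Y=e_X(\{c,d\})$ is of type $(\mbox{\em I})$, pick $d'\in(\lambda_{{\mathcal P}}X\backslash X)\backslash\{d\}$ and set $Y'=e_X(\{c,d'\})$; then $Y'\neq Y$ and, by Lemma \ref{OTH}, $Y\wedge Y'=e_X(\{c,d,d'\})$, whose defining set has $m=1$, $\ell=2$, so exactly $2$ anti--atoms lie above $Y\wedge Y'$. If instead $Y=e_X(\{a,b\})$ is of type $(\mbox{\em II})$, then for any anti--atom $Y'$ either $Y\wedge Y'=Y$ (with a single anti--atom above it) or, by Lemma \ref{OTH}, $Y\wedge Y'=e_X(\{a,b,d\})$ with $d\in\lambda_{{\mathcal P}}X\backslash X$ (with $\binom{2}{2}+2\cdot1=3$ anti--atoms above it); the count is never $2$, so $Y$ fails the condition. (This last half is exactly the content of the proof of Lemma \ref{GGFDS}(2).)

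For part (3), suppose first $Y=e_X(\{a,d\})$ is of type $(\mbox{\em I})$ with $a\notin\lambda_{{\mathcal P}}X$ and $d\in\lambda_{{\mathcal P}}X\backslash X$; choose $d'\in(\lambda_{{\mathcal P}}X\backslash X)\backslash\{d\}$ and put $Y'=e_X(\{a,d'\})\neq Y$, so $Y\wedge Y'=e_X(\{a,d,d'\})$ by Lemma \ref{OTH}. For an arbitrary anti--atom $Y''=e_X(\{x,y\})$, Lemma \ref{OTH} gives $Y\wedge Y'\wedge Y''=e_X\big(\{a,d,d'\}\cup\{x,y\}\big)$ when $\{a,d,d'\}\cap\{x,y\}\neq\emptyset$ and $e_X\big(\{a,d,d'\},\{x,y\}\big)$ otherwise. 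In the first case the set $\{a,d,d'\}\cup\{x,y\}$ meets $\beta X\backslash\lambda_{{\mathcal P}}X$ in at most two points --- any point it shares with $\{a,d,d'\}$ outside $\lambda_{{\mathcal P}}X$ must be $a$, which forces at most one of $x,y$ to lie outside $\lambda_{{\mathcal P}}X$ --- and when $m=2$ this also forces $\ell\leq2$; hence the count is at most $\binom{2}{2}+2\cdot2=5$ (and at most $3$ when $m\leq1$). In the second case the count is at most $2+1=3$. Thus $Y'$ witnesses the stated property. Conversely, let $Y=e_X(\{a,b\})$ be of type $(\mbox{\em II})$ and let $Y'=e_X(\{c',d'\})\neq Y$ be any anti--atom, with (say) $c'\notin\lambda_{{\mathcal P}}X$. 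If $c'\in\{a,b\}$, take $Y''=e_X(\{c,d'\})$ for some $c\in(\beta X\backslash\lambda_{{\mathcal P}}X)\backslash\{a,b\}$ when $d'\in\lambda_{{\mathcal P}}X$, and $Y''=e_X(\{c',e\})$ for some $e\in\lambda_{{\mathcal P}}X\backslash X$ when $d'\notin\lambda_{{\mathcal P}}X$; if $c'\notin\{a,b\}$, take $Y''=e_X(\{b,c'\})$. In each case, iterating Lemma \ref{OTH} shows that $Y\wedge Y'\wedge Y''=e_X(K)$ for a four--point set $K$ meeting $\beta X\backslash\lambda_{{\mathcal P}}X$ in $3$ or $4$ points, whence exactly $6$ anti--atoms lie above it. Therefore no anti--atom $Y'\neq Y$ witnesses the property, and the equivalence in (3) follows by contraposition.

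The main obstacle is the forward direction of (3): there a single choice of $Y'$ must keep the count at most $5$ against every anti--atom $Y''$, which forces one to track precisely how the set $\{a,d,d'\}\cup\{x,y\}$ distributes between $\lambda_{{\mathcal P}}X$ and its complement --- the key point being that $a$ is the only member of $\{a,d,d'\}$ that a generic $Y''$ can share outside $\lambda_{{\mathcal P}}X$. Everything else is a routine application of Lemmas \ref{OTH}, \ref{DFH} and \ref{LKA} together with the bookkeeping fact above.
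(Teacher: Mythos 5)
Your bookkeeping fact, parts (1) and (2), and the forward half of (3) are correct and run along essentially the paper's lines (Lemmas \ref{DFH}, \ref{LKA}, \ref{KJH}, \ref{OTH}); the count $\binom{m}{2}+m\ell$, together with your observation that in the intersecting case $m\leq 2$ and $m=2$ forces $\ell=2$, is a sound compression of the paper's case-by-case enumeration. The genuine gap is in the converse half of (3). Your case split is governed by an arbitrary label: $c'$ is only required to be a point of $Y'$ outside $\lambda_{\mathcal P}X$. Take $Y'=e_X(\{a,w\})$ with $w\in\beta X\backslash\lambda_{\mathcal P}X$ and $w\notin\{a,b\}$ (such $Y'$ exist precisely because $\mbox{card}(\beta X\backslash\lambda_{\mathcal P}X)\geq 3$). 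Both points of $Y'$ lie outside $\lambda_{\mathcal P}X$, so your convention permits the labelling $c'=w$, $d'=a$; then your branch ``$c'\notin\{a,b\}$'' applies and prescribes $Y''=e_X(\{b,w\})$. But
\[Y\wedge Y'\wedge Y''=e_X\big(\{a,b\}\big)\wedge e_X\big(\{a,w\}\big)\wedge e_X\big(\{b,w\}\big)=e_X\big(\{a,b,w\}\big),\]
a three-point set, all of whose pairs are anti-atoms, so exactly $3$ anti-atoms lie above it --- not the ``four-point set $\dots$ exactly $6$'' you assert --- and in particular this $Y''$ does not violate the bound $\leq 5$. As written, the converse is therefore not established for such $Y'$.

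The repair is local and uses only your own tools: either stipulate that $c'$ is chosen in $\{a,b\}$ whenever $Y'$ meets $\{a,b\}$ (every point of $Y'\cap\{a,b\}$ is outside $\lambda_{\mathcal P}X$, so this is compatible with your convention), after which the configuration above falls under your branch ``$c'\in\{a,b\}$, $d'\notin\lambda_{\mathcal P}X$'' and your witness $e_X(\{c',e\})$ gives the four-point set $\{a,b,w,e\}$ with $m=3$, $\ell=1$ and count $6$; or add the missing subcase explicitly, taking $Y''=e_X(\{b'',e\})$ where $b''$ is the point of $\{a,b\}$ not belonging to $Y'$ and $e\in\lambda_{\mathcal P}X\backslash X$ --- this is exactly the paper's Case 2. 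One further presentational point: in your branch where $Y'$ is disjoint from $\{a,b\}$, the iteration of Lemma \ref{OTH} should be performed as $(Y\wedge Y'')\wedge Y'$ (or $(Y'\wedge Y'')\wedge Y$), since $Y\wedge Y'$ is the two-set element $e_X(\{a,b\},\{c',d'\})$ to which Lemma \ref{OTH} does not directly apply; this is harmless, because iterated binary infima compute the infimum of the triple, but it should be said.
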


\begin{proof}
By Lemmas \ref{15} and \ref{HFH} we have $X\subseteq\lambda_{{\mathcal P}}X$ and that $\beta X\backslash\lambda_{{\mathcal P}}X$ is non--empty.

(1). This is obvious. (2). Suppose that $\mbox{card}(\beta X\backslash\lambda_{{\mathcal P}}X)=2$. Let $\beta X\backslash\lambda_{{\mathcal P}}X=\{a,b\}$.

Suppose that $Y$ is an anti--atom in ${\mathscr M}^{\mathcal Q}_{\mathcal P}(X)$ of type $(\mbox{I})$. Then $Y=e_X(\{c,e\})$ where $c$ is either $a$ or $b$, say $c=a$, and $e\in\lambda_{{\mathcal P}}X\backslash X$. Choose some $d\in\lambda_{{\mathcal P}}X\backslash X$ such that $d\neq e$. (Such a $d$ exists, as we are assuming that $\mbox{card}(\lambda_{{\mathcal P}}X\backslash X)\geq 2$.)  Then by Lemma \ref{KJH} the element $Y'=e_X(\{b,d\})$ is an anti--atom in ${\mathscr M}^{\mathcal Q}_{\mathcal P}(X)$. By Lemma \ref{OTH} we have
\[Y\wedge Y'=e_X\big(\{a,e\}\big)\wedge e_X\big(\{b,d\}\big)=e_X\big(\{a,e\},\{b,d\}\big)\]
and thus the anti--atoms $T$ in ${\mathscr M}^{\mathcal Q}_{\mathcal P}(X)$ with $Y\wedge Y'\leq T$ are exactly
\[e_X\big(\{a,e\}\big)\mbox{ and }e_X\big(\{b,d\}\big)\]
which by Lemmas \ref{DFH} and \ref{LKA} are distinct.

To show the converse, suppose that an anti--atom $Y$ of ${\mathscr M}^{\mathcal Q}_{\mathcal P}(X)$ is not of type $(\mbox{I})$. Then necessarily $Y=e_X(\{a,b\})$. Let $Y'$ be an anti--atom of ${\mathscr M}^{\mathcal Q}_{\mathcal P}(X)$. If $Y'=Y$ then the only anti--atom $T$ of ${\mathscr M}^{\mathcal Q}_{\mathcal P}(X)$ with $Y=Y\wedge Y'\leq T$ is $Y$ itself. Suppose that $Y'\neq Y$. Using Lemmas \ref{DFH}, \ref{LKA} and \ref{KJH} we have $Y'=e_X(\{c,e\})$, where $c$ is either $a$ or $b$, say $c=a$, and $e\in \lambda_{{\mathcal P}}X\backslash X$. By Lemma \ref{OTH} we have
\[Y\wedge Y'=e_X\big(\{a,b\}\big)\wedge e_X\big(\{a,e\}\big)=e_X\big(\{a,b,e\}\big)\]
and therefore, again using Lemmas \ref{DFH}, \ref{LKA} and \ref{KJH} there are exactly 3 anti--atoms $T$ of ${\mathscr M}^{\mathcal Q}_{\mathcal P}(X)$ with $Y\wedge Y'\leq T$, namely, \[e_X\big(\{a,b\}\big),e_X\big(\{a,e\}\big)\mbox{ and }e_X\big(\{b,e\}\big).\]
Thus in either case the number of  anti--atoms $T$ in ${\mathscr M}^{\mathcal Q}_{\mathcal P}(X)$ with $Y\wedge Y'\leq T$ is not 2.

(3). Suppose that $\mbox{card}(\beta X\backslash\lambda_{{\mathcal P}}X)\geq3$.

Suppose that $Y$ is an anti--atom in ${\mathscr M}^{\mathcal Q}_{\mathcal P}(X)$ of type $(\mbox{I})$. By Lemma \ref{KJH} we have $Y=e_X(\{a,b\})$ for some distinct $a,b\in\beta X\backslash X$ such that either $a\notin\lambda_{{\mathcal P}}X$ or $b\notin\lambda_{{\mathcal P}}X$. Choose some $c\in \lambda_{{\mathcal P}}X\backslash X$ distinct from $b$ (this is possible as we are assuming that  $\mbox{card}(\lambda_{{\mathcal P}}X\backslash X)\geq 2$) and let $Y'=e_X(\{a,c\})$. Then $Y'$ is an anti--atom in ${\mathscr M}^{\mathcal Q}_{\mathcal P}(X)$ by Lemma \ref{KJH}, and $Y'\neq Y$ by Lemmas \ref{DFH} and \ref{LKA}. Let $Y''$ be an anti--atom in ${\mathscr M}^{\mathcal Q}_{\mathcal P}(X)$. By Lemma \ref{KJH} we have $Y''=e_X(\{d,e\})$ where $d,e\in\beta X\backslash X$ are distinct and either $d\notin\lambda_{{\mathcal P}}X$ or $e\notin\lambda_{{\mathcal P}}X$. Consider the following cases:
\begin{description}
\item[{\sc Case 1.}] Suppose that $\{a,b,c\}\cap\{d,e\}=\emptyset$. By Lemma \ref{OTH} we have
    \begin{eqnarray*}
    Y\wedge Y'\wedge Y''&=&e_X\big(\{a,b\}\big)\wedge e_X\big(\{a,c\}\big)\wedge e_X\big(\{d,e\}\big)\\&=&e_X\big(\{a,b,c\}\big)\wedge e_X\big(\{d,e\}\big)=e_X\big(\{a,b,c\},\{d,e\}\big)
    \end{eqnarray*}
    and therefore using Lemmas \ref{DFH}, \ref{LKA} and \ref{KJH} the anti--atoms $T$ in ${\mathscr M}^{\mathcal Q}_{\mathcal P}(X)$ such that $Y\wedge Y'\wedge Y''\leq T$ are exactly
    \[e_X\big(\{a,b\}\big),e_X\big(\{a,c\}\big)\mbox{ and }e_X\big(\{d,e\}\big).\]
\item[{\sc Case 2.}] Suppose that $\{a,b,c\}\cap\{d,e\}\neq\emptyset$. By Lemma \ref{OTH} we have
    \begin{eqnarray*}
    Y\wedge Y'\wedge Y''&=&e_X\big(\{a,b\}\big)\wedge e_X\big(\{a,c\}\big)\wedge e_X\big(\{d,e\}\big)\\&=&e_X\big(\{a,b,c\}\big)\wedge e_X\big(\{d,e\}\big)=e_X\big(\{a,b,c,d,e\}\big).
    \end{eqnarray*}
    Consider the following cases:
    \begin{description}
    \item[{\sc Case 2.a.}] Suppose that $a\in\{d,e\}$, say $a=d$.
        Consider the following cases:
        \begin{description}
        \item[{\sc Case 2.a.i.}] Suppose that $\{d,e\}\cap\lambda_{{\mathcal P}}X=\emptyset$. Now using Lemmas \ref{DFH}, \ref{LKA} and \ref{KJH} the anti--atoms $T$ in ${\mathscr M}^{\mathcal Q}_{\mathcal P}(X)$ with $Y\wedge Y'\wedge Y''\leq T$ are exactly
            \[e_X\big(\{a,b\}\big),e_X\big(\{a,c\}\big),e_X\big(\{a,e\}\big),e_X\big(\{b,e\}\big)\mbox{ and }e_X\big(\{c,e\}\big).\]
        \item[{\sc Case 2.a.ii.}] Suppose that $\{d,e\}\cap\lambda_{{\mathcal P}}X\neq\emptyset$. Then necessarily $e\in\lambda_{{\mathcal P}}X$
            and therefore as above the anti--atoms $T$ in ${\mathscr M}^{\mathcal Q}_{\mathcal P}(X)$ such that $Y\wedge Y'\wedge Y''\leq T$ are exactly
            \[e_X\big(\{a,b\}\big),e_X\big(\{a,c\}\big)\mbox{ and }e_X\big(\{a,e\}\big).\]
        \end{description}
    \item[{\sc Case 2.b.}] Suppose that $a\notin\{d,e\}$. Then $\{b,c\}\cap\{d,e\}$ is non--empty. Without any loss of generality we may assume that $c=d$. This implies that $e\notin\lambda_{{\mathcal P}}X$ and therefore again using Lemmas \ref{DFH}, \ref{LKA} and \ref{KJH} the anti--atoms $T$ in ${\mathscr M}^{\mathcal Q}_{\mathcal P}(X)$ with $Y\wedge Y'\wedge Y''\leq T$ are exactly
        \[e_X\big(\{a,b\}\big),e_X\big(\{a,c\}\big),e_X\big(\{a,e\}\big),e_X\big(\{b,e\}\big)\mbox{ and }e_X\big(\{c,e\}\big).\]
    \end{description}
\end{description}
Thus for this choice of $Y'$, for any anti--atom $Y''$ of ${\mathscr M}^{\mathcal Q}_{\mathcal P}(X)$ the number of anti--atoms $T$ of ${\mathscr M}^{\mathcal Q}_{\mathcal P}(X)$ with $Y\wedge Y'\wedge Y''\leq T$ are at most 5.

To show the converse, suppose that an anti--atom $Y$ of ${\mathscr M}^{\mathcal Q}_{\mathcal P}(X)$ is not of type $(\mbox{I})$. Then $Y=e_X(\{a,b\})$ for some distinct $a,b\in\beta X\backslash\lambda_{{\mathcal P}}X$. Let $Y'$ be an anti--atom in ${\mathscr M}^{\mathcal Q}_{\mathcal P}(X)$ distinct from $Y$. By Lemma \ref{KJH} we have $Y'=e_X(\{c,d\})$ for some distinct $c,d\in\beta X\backslash X$ with either $c\notin\lambda_{{\mathcal P}}X$ or $d\notin\lambda_{{\mathcal P}}X$. Consider the following cases:
\begin{description}
\item[{\sc Case 1.}] Suppose that $\{a,b\}\cap\{c,d\}=\emptyset$. Let $Y''=e_X(\{b,c\})$. By Lemma \ref{KJH} the element $Y''$ is an anti--atom in ${\mathscr M}^{\mathcal Q}_{\mathcal P}(X)$. By Lemma \ref{OTH} we have
    \begin{eqnarray*}
    Y\wedge Y'\wedge Y''&=&e_X\big(\{a,b\}\big)\wedge e_X\big(\{c,d\}\big)\wedge e_X\big(\{b,c\}\big)\\&=& e_X\big(\{a,b\}\big)\wedge e_X\big(\{b,c,d\}\big)=e_X\big(\{a,b,c,d\}\big)
    \end{eqnarray*}
    and therefore using Lemmas \ref{DFH}, \ref{LKA} and \ref{KJH} there are exactly 6 anti--atoms $T$ in ${\mathscr M}^{\mathcal Q}_{\mathcal P}(X)$ such that $Y\wedge Y'\wedge Y''\leq T$, namely,            \[e_X\big(\{a,b\}\big),e_X\big(\{a,c\}\big),e_X\big(\{a,d\}\big),e_X\big(\{b,c\}\big),e_X\big(\{b,d\}\big)\mbox{ and }e_X\big(\{c,d\}\big).\]
\item[{\sc Case 2.}] Suppose that $\{a,b\}\cap\{c,d\}\neq\emptyset$. Without any loss of generality we may assume that $b=c$. Consider the following cases:
    \begin{description}
    \item[{\sc Case 2.a.}] Suppose that $d\notin\lambda_{{\mathcal P}}X$. Choose some $e\in \lambda_{{\mathcal P}}X\backslash X$. (This is possible as we are assuming that $\mbox{card}(\lambda_{{\mathcal P}}X\backslash X)\geq2$.)
    \item[{\sc Case 2.b.}] Suppose that $d\in\lambda_{{\mathcal P}}X$. Choose some $e\in \beta X\backslash\lambda_{{\mathcal P}}X$ distinct from both $a$ and $b$. (This is possible as we are assuming that $\mbox{card}(\beta X\backslash\lambda_{{\mathcal P}}X)\geq3$.)
    \end{description}
    Now let $Y''=e_X(\{a,e\})$. By Lemma \ref{KJH} the element $Y''$ is an anti--atom in ${\mathscr M}^{\mathcal Q}_{\mathcal P}(X)$. By Lemma \ref{OTH} we have
    \begin{eqnarray*}
    Y\wedge Y'\wedge Y''&=&e_X\big(\{a,b\}\big)\wedge e_X\big(\{b,d\}\big)\wedge e_X\big(\{a,e\}\big)\\&=& e_X\big(\{a,b,d\}\big)\wedge e_X\big(\{a,e\}\big)=e_X\big(\{a,b,d,e\}\big)
    \end{eqnarray*}
    and therefore as above there are exactly 6 anti--atoms $T$ in ${\mathscr M}^{\mathcal Q}_{\mathcal P}(X)$ such that $Y\wedge Y'\wedge Y''\leq T$, namely,
    \[e_X\big(\{a,b\}\big),e_X\big(\{a,d\}\big),e_X\big(\{a,e\}\big),e_X\big(\{b,d\}\big),e_X\big(\{b,e\}\big)\mbox{ and }e_X\big(\{d,e\}\big).\]
\end{description}
Thus in either case for a given anti--atom $Y'$ of ${\mathscr M}^{\mathcal Q}_{\mathcal P}(X)$ distinct from $Y$ there is an anti--atom $Y''$ in ${\mathscr M}^{\mathcal Q}_{\mathcal P}(X)$ with exactly 6 anti--atoms $T$ in ${\mathscr M}^{\mathcal Q}_{\mathcal P}(X)$ such that $Y\wedge Y'\wedge Y''\leq T$.
\end{proof}

The following lemma together with Lemmas \ref{GGFDS} and \ref{HVFS} above gives an order--theoretic characterization of one--point extensions in ${\mathscr M}^{\mathcal Q}_{\mathcal P}(X)$.

\begin{lemma}\label{KHFA}
Let ${\mathcal P}$ and  ${\mathcal Q}$ be a pair of compactness--like topological properties.  Let $X$ be a Tychonoff locally--${\mathcal P}$ non--${\mathcal P}$ space with $\mathcal{Q}$ and let $Y\in{\mathscr M}^{\mathcal Q}_{\mathcal P}(X)$. The following are equivalent:
\begin{itemize}
\item[\rm(1)] $Y$ is a one--point extension of $X$.
\item[\rm(2)] $Y\leq T$ for any anti--atom $T$ of ${\mathscr M}^{\mathcal Q}_{\mathcal P}(X)$ of type $(\mbox{\em II})$.
\end{itemize}
\end{lemma}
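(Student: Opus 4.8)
The plan is to reduce both implications to statements about $\beta$-families via Lemma~\ref{DFH}, after unwinding the description of anti-atoms of type $(\mathrm{II})$ supplied by Lemma~\ref{KJH} and Definition~\ref{PGRHJ}. Let $\phi\colon\beta X\to\beta Y$ be the continuous extension of $\mathrm{id}_X$, so that $\mathscr{F}(Y)=\{\phi^{-1}(p):p\in Y\setminus X\}$. Since $X$ is non-$\mathcal P$ we have $Y\setminus X\neq\emptyset$, and by Lemma~\ref{HFH} also $\beta X\setminus\lambda_{\mathcal P}X\neq\emptyset$; moreover, by Lemmas~\ref{15} and \ref{16} (applied in the usual special case $Y=X$, $f=\mathrm{id}_X$), since $Y\in\mathscr{M}^{\mathcal Q}_{\mathcal P}(X)$,
\[X\subseteq\lambda_{\mathcal P}X\quad\text{and}\quad\beta X\setminus\lambda_{\mathcal P}X\subseteq\phi^{-1}[Y\setminus X]=\bigcup\mathscr{F}(Y),\]
while Theorem~\ref{HUHG16} says every $F\in\mathscr{F}(Y)$ meets $\beta X\setminus\lambda_{\mathcal P}X$. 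Finally, by Lemma~\ref{KJH} and Definition~\ref{PGRHJ}, the type~$(\mathrm{II})$ anti-atoms of $\mathscr{M}^{\mathcal Q}_{\mathcal P}(X)$ are precisely the extensions $e_X(\{a,b\})$ with $a,b\in\beta X\setminus\lambda_{\mathcal P}X$ distinct, and for such an extension $\bigcup\mathscr{F}(e_X(\{a,b\}))=\beta X\setminus\lambda_{\mathcal P}X$, with $\{a,b\}$ as its only non-singleton member.

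For $(1)\Rightarrow(2)$: if $Y\setminus X=\{p_0\}$ then $\mathscr{F}(Y)=\{\phi^{-1}(p_0)\}$ and $\phi^{-1}(p_0)=\phi^{-1}[Y\setminus X]\supseteq\beta X\setminus\lambda_{\mathcal P}X$, so every member of $\mathscr{F}(e_X(\{a,b\}))$---being contained in $\beta X\setminus\lambda_{\mathcal P}X$---is contained in the unique member $\phi^{-1}(p_0)$ of $\mathscr{F}(Y)$; Lemma~\ref{DFH} then gives $Y\leq e_X(\{a,b\})$, and this covers every type~$(\mathrm{II})$ anti-atom (vacuously if there are none).

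For $(2)\Rightarrow(1)$ I would split according to $\mathrm{card}(\beta X\setminus\lambda_{\mathcal P}X)$. If this cardinal is $1$, there are no type~$(\mathrm{II})$ anti-atoms, so $(2)$ is vacuous; but then every member of $\mathscr{F}(Y)$ meets the singleton $\beta X\setminus\lambda_{\mathcal P}X$, forcing all members to coincide, so $Y$ is a one-point extension. If the cardinal is $\geq 2$, then for any two distinct $a,b\in\beta X\setminus\lambda_{\mathcal P}X$ the extension $e_X(\{a,b\})$ is a type~$(\mathrm{II})$ anti-atom, so $(2)$ together with Lemma~\ref{DFH} forces $\{a,b\}$ to lie in a common member of $\mathscr{F}(Y)$, i.e. $\phi(a)=\phi(b)$. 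Since ``lying in a common fiber of $\phi$'' is an equivalence relation on $\beta X\setminus\lambda_{\mathcal P}X$ and every pair of points is related, the whole set $\beta X\setminus\lambda_{\mathcal P}X$ lies inside one fiber $\phi^{-1}(p_0)$ with $p_0\in Y\setminus X$; as each member of $\mathscr{F}(Y)$ meets $\beta X\setminus\lambda_{\mathcal P}X$ and distinct members are disjoint, $Y\setminus X=\{p_0\}$.

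The argument is essentially bookkeeping once this dictionary is in place; the only point needing care is the degenerate case $\mathrm{card}(\beta X\setminus\lambda_{\mathcal P}X)=1$, where $(2)$ holds vacuously and $(1)$ must be derived directly from Theorem~\ref{HUHG16}, and, more generally, the use of Theorem~\ref{HUHG16} to rule out a fiber over $Y\setminus X$ disjoint from $\beta X\setminus\lambda_{\mathcal P}X$---without it, $(2)$ would only place $\beta X\setminus\lambda_{\mathcal P}X$ inside a single fiber and would not yet collapse $Y$ to a one-point extension.
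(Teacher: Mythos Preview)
Your proof is correct and uses the same ingredients as the paper (Lemmas~\ref{DFH}, \ref{15}, \ref{16}, \ref{HFH}, \ref{KJH}, Theorem~\ref{HUHG16}, and the description of $\mathscr{F}(e_X(\{a,b\}))$ from Notation~\ref{PKWES}). The only structural difference is in $(2)\Rightarrow(1)$: the paper argues by contrapositive---assuming $\mathrm{card}(Y\setminus X)\geq 2$, it picks distinct $F,G\in\mathscr{F}(Y)$, uses Theorem~\ref{HUHG16} to choose $a\in F\setminus\lambda_{\mathcal P}X$ and $b\in G\setminus\lambda_{\mathcal P}X$, and exhibits the type~$(\mathrm{II})$ anti-atom $e_X(\{a,b\})$ with $Y\nleq e_X(\{a,b\})$---whereas you argue directly and split on $\mathrm{card}(\beta X\setminus\lambda_{\mathcal P}X)$. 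The contrapositive avoids your case split (the existence of two distinct fibers meeting $\beta X\setminus\lambda_{\mathcal P}X$ automatically supplies two distinct points there), but your explicit treatment of the degenerate singleton case is perfectly valid and makes the role of Theorem~\ref{HUHG16} in that boundary situation more visible.
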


\begin{proof}
Let $\phi:\beta X\rightarrow\beta Y$ be the continuous extension of $\mbox{id}_X$. By Lemma \ref{16} we have $\beta X\backslash\lambda_{\mathcal{P}}X\subseteq\phi^{-1}[Y\backslash X]$. Also, by Lemmas \ref{15} and \ref{HFH} we have $X\subseteq\lambda_{{\mathcal P}}X$ and that $\beta X\backslash\lambda_{{\mathcal P}}X$ is non--empty.

(1) {\em implies} (2). Note that ${\mathscr F}(Y)=\{\phi^{-1}[Y\backslash X]\}$. Let $T$ be an anti--atom in ${\mathscr M}^{\mathcal Q}_{\mathcal P}(X)$ of type $(\mbox{II})$. Then $Y=e_X(\{a,b\})$ for some distinct $a,b\in\beta X\backslash\lambda_{{\mathcal P}}X$. Since $\{a,b\}\subseteq\phi^{-1}[Y\backslash X]$ it follows from Lemmas \ref{DFH} and \ref{LKA} that $Y\leq T$.

(2) {\em implies} (1). Note that $Y\backslash X$ is non--empty, as $\beta X\backslash\lambda_{\mathcal{P}}X\subseteq\phi^{-1}[Y\backslash X]$ and $\beta X\backslash\lambda_{\mathcal{P}}X$ is non--empty. Suppose that $\mbox{card}(Y\backslash X)\geq 2$. Let $F,G\in {\mathscr F}(Y)$ be distinct. By Theorem \ref{HUHG16} both $F\backslash\lambda_{\mathcal P} X$ and $G\backslash\lambda_{\mathcal P} X$ are non--empty. Let $a\in F\backslash\lambda_{\mathcal P} X$ and $b\in G\backslash\lambda_{\mathcal P} X$. Then $T=e_X(\{a,b\})$ is an anti--atom in ${\mathscr M}^{\mathcal Q}_{\mathcal P}(X)$ of type $(\mbox{II})$, while $Y\nleq T$. This shows that  $Y\backslash X$ is a one--point set.
\end{proof}

The following lemma is well known. It is included here for the sake of completeness.

\begin{lemma}\label{YTRO}
Let $X$ be a Tychonoff space. Then for any compact non--empty subset $C$ of $\beta X\backslash X$ there exists a unique one--point Tychonoff extension $Y$ of $X$ with
$C=\phi^{-1}[Y\backslash X]$, where $\phi:\beta X\rightarrow\beta Y$ is the continuous extension of $\mbox{\em id}_X$.
\end{lemma}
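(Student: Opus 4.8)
The plan is to construct $Y$ directly as a quotient of $\beta X$ and then verify uniqueness. Given a compact non--empty subset $C$ of $\beta X\backslash X$, I would form the space $T$ obtained from $\beta X$ by contracting $C$ to a single point $p$, with quotient mapping $q\colon\beta X\rightarrow T$. Since $C$ is compact, the standard argument (of exactly the type carried out in Cases 1--3 of Lemma \ref{j2} and again in Lemma \ref{LKA}) shows that $T$ is Hausdorff: if two points of $T$ are to be separated, either both lie outside $q[C]$ and we pull back disjoint neighbourhoods from $\beta X$, or one of them is $p$, in which case we use compactness of $C$ to separate $C$ from the other point in $\beta X$ and push the resulting disjoint open sets forward by $q$. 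Being a continuous Hausdorff image of the compact space $\beta X$, the space $T$ is compact. Now set $Y=X\cup\{p\}$ as a subspace of $T$; since $C\subseteq\beta X\backslash X$ we have $C\cap X=\emptyset$, so $q|X=\mathrm{id}_X$ is a homeomorphism onto its image, $X$ is dense in $T$ (as $X$ is dense in $\beta X$ and $q$ is a continuous surjection), hence $X$ is dense in $Y$, and $Y\backslash X=\{p\}$. Thus $Y$ is a one--point Tychonoff extension of $X$.

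\textbf{Identifying $\phi$.} Next I would show $C=\phi^{-1}[Y\backslash X]$ where $\phi\colon\beta X\rightarrow\beta Y$ extends $\mathrm{id}_X$. Since $X\subseteq Y\subseteq T$ and $T$ is compact, $T$ is a compactification of $Y$, so by Corollary 3.6.9 of \cite{E} applied to $X\subseteq Y\subseteq T$ we get that $\beta Y$ maps onto $T$; more directly, one invokes Lemma \ref{j2}: the space $\beta Y$ is obtained from $\beta X$ by contracting each fibre $\phi^{-1}(y)$, $y\in Y\backslash X$, and $\phi=q$ under the identification $T=\beta Y$. Since $Y\backslash X=\{p\}$ and $q^{-1}(p)=C$ by construction, this gives $\phi^{-1}[Y\backslash X]=\phi^{-1}(p)=q^{-1}(p)=C$, as required. (If one prefers to avoid citing Lemma \ref{j2} in this generality, the same conclusion follows because $T$ is a compactification of $Y$ to which every continuous map $Y\to\mathbf I$ extends: given such $g$, the composite $g\circ q$ on $X\cup C$ extends over $\beta X=\beta(X\cup C)$ since $C$ is compact, and this extension is constant on $C$, hence factors through $T$.)

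\textbf{Uniqueness.} For uniqueness, suppose $Y_1$ and $Y_2$ are one--point Tychonoff extensions of $X$ with $\phi_i^{-1}[Y_i\backslash X]=C$, where $\phi_i\colon\beta X\rightarrow\beta Y_i$ extends $\mathrm{id}_X$. Then ${\mathscr F}_X(Y_1)=\{C\}={\mathscr F}_X(Y_2)$, so by Lemma \ref{DFH} (applied in both directions, noting ${\mathscr E}(X)$ there means Tychonoff extensions with compact remainder and a one--point remainder is certainly compact) we have $Y_1\leq Y_2$ and $Y_2\leq Y_1$, hence $Y_1$ and $Y_2$ are equivalent extensions of $X$. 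Alternatively, by Lemma \ref{j2} each $\beta Y_i$ coincides with the quotient of $\beta X$ contracting $C$ to a point with $\phi_i$ the quotient map, so $\beta Y_1=\beta Y_2$ canonically and $Y_i=X\cup\phi_i[C]$ sit inside this common space as the same subspace.

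\textbf{Main obstacle.} The routine steps are the Hausdorffness of $T$ and the density of $X$; the one point needing care is the identification of $\phi$ with $q$, i.e. confirming that the Stone--\v{C}ech compactification of the newly built extension $Y$ is precisely $T$ rather than something larger sitting above it. This is handled either by quoting Lemma \ref{j2} directly or by the explicit extension argument sketched above, which uses only that $C$ is compact (so that $\beta(X\cup C)=\beta X$ and continuous $\mathbf I$--valued maps on $X\cup C$ extend over $\beta X$) and that such an extension, being constant on the compact set $C$, descends to $T$. Once this is in place, uniqueness is immediate from Lemma \ref{DFH}.
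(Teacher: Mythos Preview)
Your proposal is correct and follows essentially the same approach as the paper: construct the quotient $T$ of $\beta X$ by collapsing $C$ to a point $p$, take $Y=X\cup\{p\}$, identify $T$ with $\beta Y$ (the paper does this via the explicit $\mathbf{I}$--valued extension argument you sketched, using $\beta(X\cup C)=\beta X$), and deduce uniqueness from Lemma~\ref{DFH} via ${\mathscr F}(Y)=\{C\}$.
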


\begin{proof}
Let $Z$ be the quotient space of $\beta X$ obtained by contracting $C$ to a point $p$ with the quotient mapping $q:\beta X\rightarrow Z$. Note that $Z$ is compact, being a  Hausdorff continuous image of $\beta X$. Consider the subspace $Y=X\cup\{p\}$ of $Z$. Then $Y$ is a one--point Tychonoff extension of $X$. We show that $Z=\beta Y$ and $q=\phi$ where $\phi:\beta X\rightarrow\beta Y$ is the continuous  extension of $\mbox{id}_X$. Note that $Z$ is a compactification of $Y$, as it contains  $Y$ as a dense subspace. Thus to show that  $Z=\beta Y$ it suffices to verify that any continuous $h:Y\rightarrow\mathbf{I}$ is continuously extendable over $Z$. Indeed, let $G:\beta X\rightarrow\mathbf{I}$ be the continuous extension of $hq|(X\cup C):X\cup C\rightarrow\mathbf{I}$. (Note that $\beta(X\cup C)=\beta X$, as $X\subseteq X\cup C\subseteq\beta X$; see  Corollary 3.6.9 of \cite{E}.) Define $H:T\rightarrow\mathbf{I}$ such that $H|(\beta X\cup C)=G|(\beta X\cup C)$ and $H(p)=h(p)$. Then $H|Y=h$, and since $Hq=G$ is continuous, $H$ is continuous. This shows that $Z=\beta Y$. That $q=\phi$ follows easily, as they are both continuous and coincide with $\mbox{id}_X$ on the dense subset $X$ of $\beta X$.  We have
\[C=q^{-1}(p)=q^{-1}[Y\backslash X]=\phi^{-1}[Y\backslash X].\]
For the uniqueness part, note that if $T$ also is a one--point Tychonoff extension of $X$ such that $C=\psi^{-1}[T\backslash X]$, where $\psi:\beta X\rightarrow\beta T$ is the continuous extension of $\mbox{id}_X$, then ${\mathscr F}(T)=\{C\}={\mathscr F}(Y)$ and thus $T=Y$ by Lemma \ref{DFH}.
\end{proof}

\begin{notation}\label{DFQA}
For a Tychonoff space $X$ and a compact non--empty subset $C$ of $\beta X\backslash X$ denote by $e_CX$ the unique one--point Tychonoff extension $Y$ of $X$ with $C=\phi^{-1}[Y\backslash X]$, where $\phi:\beta X\rightarrow\beta Y$ is the continuous extension of $\mbox{id}_X$.
\end{notation}

\begin{notation}\label{FFDS}
Let ${\mathcal P}$ and ${\mathcal Q}$ be a pair of compactness--like topological properties. Let $X$ be a Tychonoff locally--${\mathcal P}$ non--${\mathcal P}$ space with $\mathcal{Q}$. Denote
\[M^X_{\mathcal P}=e_CX\]
where $C=\beta X\backslash\lambda_{\mathcal{P}}X$.
\end{notation}

\begin{xrem}
{\em In Notation \ref{FFDS} the set $C=\beta X\backslash\lambda_{\mathcal{P}}X$ is a compact subset of $\beta X\backslash X$ and it is non--empty; see Lemma \ref{HFH}. Therefore the above definition of $e_CX$ makes sense.}
\end{xrem}

In the following we associated to any element $Y$ in ${\mathscr M}^{\mathcal Q}_{\mathcal P}(X)$ a certain one--point extension $Y_U$ in ${\mathscr M}^{\mathcal Q}_{\mathcal P}(X)$. This will be used in Lemma \ref{AYLF} when we order--theoretically characterize the locally compact elements of ${\mathscr M}^{\mathcal Q}_{\mathcal P}(X)$.

\begin{notation}\label{GFDA}
Let ${\mathcal P}$ and  ${\mathcal Q}$ be a pair of compactness--like topological properties. Let $X$ be a Tychonoff locally--${\mathcal P}$ non--${\mathcal P}$ space with $\mathcal{Q}$ and let $Y\in{\mathscr M}^{\mathcal Q}_{\mathcal P}(X)$. Denote
\[Y_U=e_CX\]
where $C=\phi^{-1}[Y\backslash X]$ and $\phi:\beta X\rightarrow\beta Y$ is the continuous extension of $\mbox{id}_X$.
\end{notation}

\begin{xrem}
{\em The definition in Notation \ref{GFDA} makes sense, as $C$ is a compact subset of $\beta X\backslash X$ and since by Lemma \ref{16} we have $\beta X\backslash\lambda_{\mathcal{P}}X\subseteq\phi^{-1}[Y\backslash X]$, it is non--empty (as $\beta X\backslash\lambda_{\mathcal{P}}X$ is non--empty; see Lemma \ref{HFH}).}
\end{xrem}

The following lemma together with Lemmas \ref{GGFDS}, \ref{HVFS} and \ref{KHFA} gives an order--theoretic characterization of the element $Y_U$ we already associated to any $Y$ in ${\mathscr M}^{\mathcal Q}_{\mathcal P}(X)$.

\begin{lemma}\label{DSEY}
Let ${\mathcal P}$ and  ${\mathcal Q}$ be a pair of compactness--like topological properties.  Let $X$ be a Tychonoff locally--${\mathcal P}$ non--${\mathcal P}$ space with $\mathcal{Q}$ and let $Y\in{\mathscr M}^{\mathcal Q}_{\mathcal P}(X)$. Then $Y_U$ is the largest $T\in{\mathscr M}^{\mathcal Q}_{\mathcal P}(X)$ satisfying  the following:
\begin{itemize}
\item[\rm(1)] $T$ is a one--point extension of $X$.
\item[\rm(2)] $T\leq Y'$ for any anti--atom $Y'$ of ${\mathscr M}^{\mathcal Q}_{\mathcal P}(X)$ of type $(\mbox{\em I})$ such that $Y\leq Y'$.
\end{itemize}
\end{lemma}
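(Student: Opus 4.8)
The plan is to verify that $Y_U$ satisfies (1) and (2), and then that any $T\in{\mathscr M}^{\mathcal Q}_{\mathcal P}(X)$ satisfying (1) and (2) is $\leq Y_U$; since among one--point extensions of $X$ the order $\leq$ coincides with $=$ when the ${\mathscr F}$--sets are equal (Lemma \ref{DFH}), and since $T\leq Y_U$ forces the single fiber of $Y_U$ to contain the single fiber of $T$, this will give $T=Y_U$ provided both have the same fiber, so in fact the real content is an inclusion of fibers. Write $\phi:\beta X\rightarrow\beta Y$ for the continuous extension of $\mbox{id}_X$ and $C=\phi^{-1}[Y\backslash X]$, so $Y_U=e_CX$ and ${\mathscr F}(Y_U)=\{C\}$. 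By Lemma \ref{16} we have $\beta X\backslash\lambda_{\mathcal P}X\subseteq C$, and by Lemma \ref{HFH} (as $X$ is non--${\mathcal P}$) the set $\beta X\backslash\lambda_{\mathcal P}X$ is non--empty, so $C$ is a non--empty compact subset of $\beta X\backslash X$ and $Y_U\in{\mathscr E}(X)$; since $\phi_U^{-1}[Y_U\backslash X]=C\supseteq\beta X\backslash\lambda_{\mathcal P}X$, Lemma \ref{16} gives $Y_U\in{\mathscr E}^{\mathcal Q}_{\mathcal P}(X)$, and since ${\mathscr F}(Y_U)=\{C\}$ with $C\backslash\lambda_{\mathcal P}X\supseteq\beta X\backslash\lambda_{\mathcal P}X\neq\emptyset$, Theorem \ref{HUHG16} gives $Y_U\in{\mathscr M}^{\mathcal Q}_{\mathcal P}(X)$. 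Property (1) is immediate from the definition of $e_CX$.

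For property (2), let $Y'$ be an anti--atom of type $(\mbox{I})$ with $Y\leq Y'$. By Lemma \ref{KJH}, $Y'=e_X(\{a,b\})$ with $a,b\in\beta X\backslash X$ distinct and, since $Y'$ is of type $(\mbox{I})$, one of $a,b$ lies in $\lambda_{\mathcal P}X$; so ${\mathscr F}(Y')$ consists of the singletons $\{p\}$ for $p\in(\beta X\backslash\lambda_{\mathcal P}X)\backslash\{a,b\}$ together with $\{a,b\}$. Since $Y\leq Y'$, Lemma \ref{DFH} says every element of ${\mathscr F}(Y')$ is contained in an element of ${\mathscr F}(Y)$; in particular each $\{p\}$ with $p\in(\beta X\backslash\lambda_{\mathcal P}X)\backslash\{a,b\}$ lies in some fiber $\phi^{-1}(q)$, i.e. $p\in C$, and $\{a,b\}$ lies in some fiber, which (as $\phi^{-1}[Y\backslash X]=C$ and $\{a,b\}$ meets $\beta X\backslash\lambda_{\mathcal P}X\subseteq C$) forces $\{a,b\}\subseteq C$ as well. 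Hence $\beta X\backslash\lambda_{\mathcal P}X\subseteq C$, which is already known, and more to the point $C={\mathscr F}(Y_U)$'s unique member satisfies: every element of ${\mathscr F}(Y')$ is a subset of $C$. Therefore by Lemma \ref{DFH} again, $Y_U\leq Y'$. This establishes (2).

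It remains to show $Y_U$ is the largest element with properties (1) and (2). Let $T\in{\mathscr M}^{\mathcal Q}_{\mathcal P}(X)$ satisfy (1) and (2), with $\psi:\beta X\rightarrow\beta T$ the continuous extension of $\mbox{id}_X$ and $D=\psi^{-1}[T\backslash X]$, so ${\mathscr F}(T)=\{D\}$. By Lemma \ref{16}, $\beta X\backslash\lambda_{\mathcal P}X\subseteq D$. To prove $T\leq Y_U$ it suffices by Lemma \ref{DFH} to show $C\subseteq D$ (the single fiber $C$ of $Y_U$ must sit inside a fiber of $T$, and $D$ is the only one). Take any $x\in C=\phi^{-1}[Y\backslash X]$. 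If $x\notin\lambda_{\mathcal P}X$ then $x\in\beta X\backslash\lambda_{\mathcal P}X\subseteq D$ and we are done; so assume $x\in\lambda_{\mathcal P}X$. Fix $p_0\in\beta X\backslash\lambda_{\mathcal P}X$. Since $x$ and $p_0$ are distinct points of $\beta X$ and $x\in\lambda_{\mathcal P}X\backslash X$ (note $x\notin X$ because $x\in C\subseteq\beta X\backslash X$), the extension $Y':=e_X(\{x,p_0\})$ is, by Lemma \ref{KJH}, an anti--atom of ${\mathscr M}^{\mathcal Q}_{\mathcal P}(X)$ of type $(\mbox{I})$ (as $x\in\lambda_{\mathcal P}X$), and since $\{x,p_0\}\subseteq\phi^{-1}[Y\backslash X]$ with all other fibers of $Y'$ singletons inside $\beta X\backslash\lambda_{\mathcal P}X\subseteq\phi^{-1}[Y\backslash X]$, Lemma \ref{DFH} gives $Y\leq Y'$. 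By hypothesis (2) applied to $T$ we get $T\leq Y'$, so by Lemma \ref{DFH} the element $\{x,p_0\}\in{\mathscr F}(Y')$ is contained in the unique member $D$ of ${\mathscr F}(T)$; in particular $x\in D$. Hence $C\subseteq D$, so $T\leq Y_U$, as required.

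\textbf{Main obstacle.} The one genuinely delicate point is the case $x\in\lambda_{\mathcal P}X$ in the last paragraph: one must manufacture, for each such $x\in C\cap\lambda_{\mathcal P}X$, an anti--atom of type $(\mbox{I})$ that is $\geq Y$ and that separates $x$ from $D$ unless $x\in D$, and this requires that there actually be a point $p_0\in\beta X\backslash\lambda_{\mathcal P}X$ to pair with $x$ --- which is guaranteed precisely because $X$ is non--${\mathcal P}$ (Lemma \ref{HFH}), so this hypothesis is essential and must be invoked. (If instead $C\cap\lambda_{\mathcal P}X$ were empty there would be nothing to prove beyond $\beta X\backslash\lambda_{\mathcal P}X\subseteq D$, which is automatic from Lemma \ref{16}.) Everything else is bookkeeping with ${\mathscr F}(\cdot)$ via Lemmas \ref{DFH}, \ref{j2}, \ref{16}, Theorem \ref{HUHG16} and Lemma \ref{KJH}.
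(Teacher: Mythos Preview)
Your argument has a genuine gap in the final paragraph, in the verification that $Y\leq Y'$ for your chosen anti--atom $Y'=e_X(\{x,p_0\})$. You pick $p_0$ to be an \emph{arbitrary} point of $\beta X\backslash\lambda_{\mathcal P}X$ and then assert that since $\{x,p_0\}\subseteq\phi^{-1}[Y\backslash X]=C$, Lemma \ref{DFH} gives $Y\leq Y'$. But Lemma \ref{DFH} requires that the two--element set $\{x,p_0\}\in{\mathscr F}(Y')$ be contained in a \emph{single} element of ${\mathscr F}(Y)$, i.e.\ in one fiber $\phi^{-1}(p)$, not merely in the union $C=\phi^{-1}[Y\backslash X]$. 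If $x\in\phi^{-1}(p)$ and $p_0\in\phi^{-1}(p')$ with $p\neq p'$, then $\{x,p_0\}$ lies in no single fiber and $Y\nleq Y'$.

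The fix is exactly what the paper does: since $x\in C$ there is a fiber $G=\phi^{-1}(p)\in{\mathscr F}(Y)$ with $x\in G$, and since $Y\in{\mathscr M}^{\mathcal Q}_{\mathcal P}(X)$, Theorem \ref{HUHG16} guarantees $G\backslash\lambda_{\mathcal P}X\neq\emptyset$; choose $p_0$ (the paper calls it $d$) from $G\backslash\lambda_{\mathcal P}X$. Then $\{x,p_0\}\subseteq G\in{\mathscr F}(Y)$, so Lemma \ref{DFH} genuinely applies. Note that the essential input here is the minimality of $Y$ via Theorem \ref{HUHG16}, not merely the non--emptiness of $\beta X\backslash\lambda_{\mathcal P}X$ from Lemma \ref{HFH} as you suggest in your ``Main obstacle'' paragraph. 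The rest of your proof is fine and matches the paper's approach.
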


\begin{proof}
Let $\phi:\beta X\rightarrow\beta Y$ and $\phi_U:\beta X\rightarrow\beta Y_U$ denote the continuous extensions of $\mbox{id}_X$. By Lemma \ref{YTRO} we have $\phi^{-1}[Y\backslash X]=\phi_U^{-1}[Y_U\backslash X]$ and thus $Y_U\in{\mathscr M}^{\mathcal Q}_{\mathcal P}(X)$ by Lemma \ref{16} and Theorem \ref{HUHG16}, as $\beta X\backslash\lambda_{\mathcal{P}}X\subseteq\phi^{-1}[Y\backslash X]$ and by Lemma \ref{HFH} the set $\beta X\backslash\lambda_{\mathcal{P}}X$ is non--empty. Obviously, $Y_U$ satisfies (1). To show that $Y_U$ satisfies (2), let $Y'$ be an anti--atom of ${\mathscr M}^{\mathcal Q}_{\mathcal P}(X)$ of type $(\mbox{I})$ such that $Y\leq Y'$. Let $Y'=e_X(\{a,b\})$. Then by Lemmas \ref{DFH} and \ref{LKA} we have $\{a,b\}\subseteq F$ for some $F\in{\mathscr F}(Y)$. Since $F\subseteq\phi^{-1}[Y\backslash X]$ we have  $\{a,b\}\subseteq\phi_U^{-1}[Y_U\backslash X]$ and therefore, again by Lemmas \ref{DFH} and \ref{LKA} it follows that $Y_U\leq Y'$. Now  we show that $Y_U$ is the largest element of ${\mathscr M}^{\mathcal Q}_{\mathcal P}(X)$ satisfying conditions (1)--(2). Let for some $T\in{\mathscr M}^{\mathcal Q}_{\mathcal P}(X)$ conditions (1)--(2) hold. Let $\psi:\beta X\rightarrow\beta T$ be the continuous extension of $\mbox{id}_X$. To show that $T\leq Y_U$, by Lemma \ref{DFH}, it suffices to show that $\phi_U^{-1}[Y_U\backslash X]\subseteq\psi^{-1}[T\backslash X]$. Let $c\in \phi_U^{-1}[Y_U\backslash X]$. Suppose that $c\in\beta X\backslash\lambda_{\mathcal{P}}X$. Then obviously $c\in\psi^{-1}[T\backslash X]$, as by Lemma \ref{16} we have $\beta X\backslash\lambda_{\mathcal{P}}X\subseteq\psi^{-1}[T\backslash X]$. Now suppose that $c\in\lambda_{\mathcal{P}}X$. Since $c\in\phi^{-1}[Y\backslash X]$ there exists some $G\in{\mathscr F}(Y)$ such that $c\in G$. By Theorem \ref{HUHG16} the set $G\backslash\lambda_{\mathcal{P}}X$ is non--empty. Let $d\in G\backslash\lambda_{\mathcal{P}}X$. Then $Y'=e_X(\{c,d\})$ is an  anti--atom in ${\mathscr M}^{\mathcal Q}_{\mathcal P}(X)$ of type $(\mbox{I})$ which by Lemmas \ref{DFH} and \ref{LKA} is such that $Y\leq Y'$. Thus by our assumption $T\leq Y'$. Therefore again using Lemmas \ref{DFH} and \ref{LKA} it follows that $\{c,d\}\subseteq\psi^{-1}[T\backslash X]$. Thus $c\in\psi^{-1}[T\backslash X]$ in this case as well. This shows that $\phi_U^{-1}[Y_U\backslash X]\subseteq\psi^{-1}[T\backslash X]$ which completes the proof.
\end{proof}

In the following we define, and then order--theoretically characterize, certain elements of ${\mathscr M}^{\mathcal Q}_{\mathcal P}(X)$. This will have an application in the order--theoretic characterization of locally compact elements of ${\mathscr M}^{\mathcal Q}_{\mathcal P}(X)$ given in Lemma \ref{AYLF}.

\begin{definition}\label{GFXS}
Let ${\mathcal P}$ and  ${\mathcal Q}$ be a pair of compactness--like topological properties. Let $X$ be a Tychonoff locally--${\mathcal P}$ non--${\mathcal P}$ space with $\mathcal{Q}$. An element $Y\in{\mathscr M}^{\mathcal Q}_{\mathcal P}(X)$ is called {\em almost optimal} provided that $\lambda_{\mathcal{P}}X\cap\phi^{-1}[Y\backslash X]$ is compact, where $\phi:\beta X\rightarrow\beta Y$ is the continuous extension of $\mbox{id}_X$.
\end{definition}

\begin{lemma}\label{GSA}
Let ${\mathcal P}$ and  ${\mathcal Q}$ be a pair of compactness--like topological properties.  Let $X$ be a Tychonoff locally--${\mathcal P}$ non--${\mathcal P}$ space with $\mathcal{Q}$. Let $\{Y_i:i\in I\}\subseteq{\mathscr M}^{\mathcal Q}_{\mathcal P}(X)$ be a non--empty collection of one--point extensions of $X$. Then
\begin{itemize}
\item[\rm(1)] The least upper bound $\bigvee_{i\in I}Y_i$ exists in ${\mathscr M}^{\mathcal Q}_{\mathcal P}(X)$.
\item[\rm(2)] If $Y=\bigvee_{i\in I}Y_i$ then $Y$ is a one--point extension of $X$ and
\[\phi^{-1}[Y\backslash X]=\bigcap_{i\in I}\phi_i^{-1}[Y_i\backslash X]\]
where $\phi:\beta X\rightarrow\beta Y$ and $\phi_i:\beta X\rightarrow\beta Y_i$ for any $i\in I$ denote the continuous extensions of $\mbox{\em id}_X$.
\end{itemize}
Here $\vee$ is the operation in ${\mathscr M}^{\mathcal Q}_{\mathcal P}(X)$.
\end{lemma}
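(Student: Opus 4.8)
The plan is to reduce everything to the behaviour of $\beta$-families under the operations already available, namely Lemma \ref{DFH} and Lemma \ref{j2}, together with the characterization of minimality in Theorem \ref{HUHG16}. First I would set $C=\bigcap_{i\in I}\phi_i^{-1}[Y_i\backslash X]$ and check that $C$ is a compact non-empty subset of $\beta X\backslash X$: compactness is immediate since it is a closed subset of $\beta X$ lying inside the compact set $\phi_{i_0}^{-1}[Y_{i_0}\backslash X]$ for any fixed $i_0\in I$; non-emptiness follows because, by Lemma \ref{16} applied to each $Y_i$, we have $\beta X\backslash\lambda_{\mathcal P}X\subseteq\phi_i^{-1}[Y_i\backslash X]$ for every $i$, hence $\emptyset\neq\beta X\backslash\lambda_{\mathcal P}X\subseteq C$ (the left-hand side is non-empty by Lemma \ref{HFH}, since $X$ is locally-$\mathcal P$ and non-$\mathcal P$). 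That $C\subseteq\beta X\backslash X$ holds since each $\phi_i^{-1}[Y_i\backslash X]\subseteq\beta X\backslash X$. So by Lemma \ref{YTRO} (or Notation \ref{DFQA}) the one-point Tychonoff extension $Y=e_CX$ is well defined, with $\phi^{-1}[Y\backslash X]=C$.

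Next I would verify $Y\in{\mathscr M}^{\mathcal Q}_{\mathcal P}(X)$. Since $\beta X\backslash\lambda_{\mathcal P}X\subseteq C=\phi^{-1}[Y\backslash X]$, Lemma \ref{16} (with $Y=X$, $f=\mathrm{id}_X$, $\alpha T=\beta Y$) gives $Y\in{\mathscr E}^{\mathcal Q}_{\mathcal P}(X)$; and since ${\mathscr F}(Y)=\{C\}$ with $C\backslash\lambda_{\mathcal P}X=\beta X\backslash\lambda_{\mathcal P}X\neq\emptyset$, Theorem \ref{HUHG16}((2)$\Rightarrow$(1)) shows $Y$ is minimal, i.e.\ $Y\in{\mathscr M}^{\mathcal Q}_{\mathcal P}(X)$. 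It then remains to show $Y=\bigvee_{i\in I}Y_i$ in ${\mathscr M}^{\mathcal Q}_{\mathcal P}(X)$, which splits into two parts. For the upper-bound part, fix $i\in I$; then ${\mathscr F}(Y_i)=\{\phi_i^{-1}[Y_i\backslash X]\}$ and the unique element of ${\mathscr F}(Y)$, namely $C$, is contained in $\phi_i^{-1}[Y_i\backslash X]$ by construction, so by Lemma \ref{DFH} we get $Y_i\leq Y$. For the least-part, let $T\in{\mathscr M}^{\mathcal Q}_{\mathcal P}(X)$ satisfy $Y_i\leq T$ for all $i\in I$; writing $\psi:\beta X\to\beta T$ for the continuous extension of $\mathrm{id}_X$, Lemma \ref{DFH} gives that every element of ${\mathscr F}(T)$ is contained in $\phi_i^{-1}[Y_i\backslash X]$ for each $i$, hence in $\bigcap_{i\in I}\phi_i^{-1}[Y_i\backslash X]=C$. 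Thus every $F\in{\mathscr F}(T)$ satisfies $F\subseteq C$, i.e.\ $C$ is the single element of ${\mathscr F}(Y)$ and it contains every element of ${\mathscr F}(T)$; Lemma \ref{DFH} then yields $Y\leq T$. Therefore $Y$ is the least upper bound of $\{Y_i:i\in I\}$ in ${\mathscr M}^{\mathcal Q}_{\mathcal P}(X)$, proving both (1) and (2).

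The main subtlety I anticipate is purely bookkeeping around the least-upper-bound direction: one must be careful that the least upper bound is required to exist \emph{within} ${\mathscr M}^{\mathcal Q}_{\mathcal P}(X)$, so the competitor $T$ ranges only over minimal extensions, but the argument above never uses more than the $\beta$-family description and Lemma \ref{DFH}, which applies to arbitrary $T\in{\mathscr E}(X)$; thus the least upper bound computed in the larger poset already lies in ${\mathscr M}^{\mathcal Q}_{\mathcal P}(X)$, and nothing is lost. A second point requiring a line of care is that $Y$ is genuinely a one-point extension — this is automatic from $Y=e_CX$ and Lemma \ref{YTRO} — and that the formula in (2) is exactly the defining equation of $e_CX$, so (2) is not an extra claim but a restatement of the construction. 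No deeper obstacle is expected; the proof is an assembly of Lemmas \ref{DFH}, \ref{j2} (implicitly, via Lemma \ref{YTRO}), \ref{16}, \ref{HFH}, \ref{YTRO} and Theorem \ref{HUHG16}.
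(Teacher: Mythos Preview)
Your proposal is correct and follows essentially the same approach as the paper's proof: both define $C=\bigcap_{i\in I}\phi_i^{-1}[Y_i\backslash X]$, verify it is a non-empty compact subset of $\beta X\backslash X$ containing $\beta X\backslash\lambda_{\mathcal P}X$ (via Lemmas \ref{16} and \ref{HFH}), set $Y=e_CX$, confirm $Y\in{\mathscr M}^{\mathcal Q}_{\mathcal P}(X)$ via Lemma \ref{16} and Theorem \ref{HUHG16}, and establish the least-upper-bound property using Lemma \ref{DFH} in both directions. The only cosmetic difference is that the paper phrases compactness as ``closed in $\beta X$'' rather than ``closed subset of the compact set $\phi_{i_0}^{-1}[Y_{i_0}\backslash X]$'', and denotes the arbitrary upper bound by $Y'$ rather than $T$.
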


\begin{proof}
Let $\phi_i:\beta X\rightarrow\beta Y_i$ for any $i\in I$ be the continuous extension of $\mbox{id}_X$. Let
\[C=\bigcap_{i\in I}\phi_i^{-1}[Y_i\backslash X].\]
Then $C$ is compact, as it is closed in $\beta X$, and obviously $C\subseteq\beta X\backslash X$, as $\phi_i|X=\mbox{id}_X$ for any $i\in I$ (and $I$ is non--empty). By Lemma \ref{16} we have $\beta X\backslash\lambda_{\mathcal{P}}X\subseteq\phi_i^{-1}[Y_i\backslash X]$ for any $i\in I$. Therefore $\beta X\backslash\lambda_{\mathcal{P}}X\subseteq C$ which implies that  $C$ is non--empty, as by Lemma \ref{HFH} the set $\beta X\backslash\lambda_{\mathcal{P}}X$ is non--empty. Let $Y=e_CX$. Then $Y$ is a one--point Tychonoff extension of $X$ and $Y\in {\mathscr M}^{\mathcal Q}_{\mathcal P}(X)$ by Lemma \ref{16} and Theorem \ref{HUHG16}, as if $\phi:\beta X\rightarrow\beta Y$ denotes the continuous extensions of $\mbox{id}_X$, then using Lemma \ref{YTRO} we have $\beta X\backslash\lambda_{\mathcal{P}}X\subseteq C=\phi^{-1}[Y\backslash X]$. By Lemma \ref{DFH} it is obvious that $Y_i\leq Y$ for any $i\in I$, as $\phi^{-1}[Y\backslash X]=C\subseteq\phi_i^{-1}[Y_i\backslash X]$. We only need to show that $Y\leq Y'$, for any $Y'\in {\mathscr M}^{\mathcal Q}_{\mathcal P}(X)$ which satisfies $Y_i\leq Y'$ for any $i\in I$. Indeed, let $F\in {\mathscr F}(Y')$. By Lemma \ref{DFH} we have $F\subseteq\phi_i^{-1}[Y_i\backslash X]$ for any $i\in I$ and thus
\[F\subseteq\bigcap_{i\in I}\phi_i^{-1}[Y_i\backslash X]=C=\phi^{-1}[Y\backslash X].\]
Therefore $Y\leq Y'$ again by Lemma \ref{DFH}.
\end{proof}

The following lemma together with Lemmas \ref{GGFDS}, \ref{HVFS} and \ref{KHFA} gives an order--theoretic characterization of almost optimal elements of ${\mathscr M}^{\mathcal Q}_{\mathcal P}(X)$. Recall that a Tychonoff space $X$ is locally compact if and only if it is open in $\beta X$ (see Theorem 3.5.8 of \cite{E}). We use this well know fact in the proof of the following lemma.

\begin{lemma}\label{KJSQ}
Let ${\mathcal P}$ and  ${\mathcal Q}$ be a pair of compactness--like topological properties. Let $X$ be a locally compact locally--${\mathcal P}$ non--${\mathcal P}$ space with $\mathcal{Q}$ and let $Y\in{\mathscr M}^{\mathcal Q}_{\mathcal P}(X)$. The following are equivalent:
\begin{itemize}
\item[\rm(1)] $Y$ is almost optimal.
\item[\rm(2)] For any collection $\{Y_i:i\in I\}\subseteq{\mathscr M}^{\mathcal Q}_{\mathcal P}(X)$ of one--point extensions of $X$ such that $Y_U\vee\bigvee_{i\in I}Y_i=M^X_{\mathcal P}$ it follows that $Y_U\vee\bigvee_{j=1}^kY_{i_j}=M^X_{\mathcal P}$ for some $k\in\mathbf{N}$ and some $i_1,\ldots,i_k\in I$.
\end{itemize}
Here $\vee$ is the operation in ${\mathscr M}^{\mathcal Q}_{\mathcal P}(X)$.
\end{lemma}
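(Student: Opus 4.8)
The statement is a ``compactness'' criterion: $Y$ is almost optimal (i.e.\ $\lambda_{\mathcal P}X\cap\phi^{-1}[Y\backslash X]$ is compact) precisely when every ``cover'' of $M^X_{\mathcal P}$ by one--point extensions together with $Y_U$ admits a finite subcover. The key translation is the following. Write $C=\phi^{-1}[Y\backslash X]$, so $C=\phi_U^{-1}[Y_U\backslash X]$ by Lemma~\ref{YTRO} and Notation~\ref{GFDA}. For one--point extensions $Y_i\in{\mathscr M}^{\mathcal Q}_{\mathcal P}(X)$ with fibers $C_i=\phi_i^{-1}[Y_i\backslash X]$, Lemma~\ref{GSA}(2) gives that $Y_U\vee\bigvee_{i\in I}Y_i$ is the one--point extension with fiber $C\cap\bigcap_{i\in I}C_i$; and by Lemma~\ref{DFH} (or the uniqueness in Lemma~\ref{YTRO}), this join equals $M^X_{\mathcal P}=e_{\beta X\backslash\lambda_{\mathcal P}X}X$ if and only if $C\cap\bigcap_{i\in I}C_i=\beta X\backslash\lambda_{\mathcal P}X$. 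Since $\beta X\backslash\lambda_{\mathcal P}X\subseteq C$ and $\beta X\backslash\lambda_{\mathcal P}X\subseteq C_i$ always (Lemma~\ref{16}), this last equality is equivalent to $\bigcap_{i\in I}(C\cap C_i)\subseteq\beta X\backslash\lambda_{\mathcal P}X$, i.e.\ to
\[\bigcap_{i\in I}\big(\lambda_{\mathcal P}X\cap C\cap C_i\big)=\emptyset.\]
Thus condition (2) asserts exactly: whenever a family of closed subsets $\{\lambda_{\mathcal P}X\cap C\cap C_i\}_{i\in I}$ of the compact space $\beta X$ has empty intersection, some finite subfamily already has empty intersection, \emph{provided the sets $C_i$ arise as fibers of one--point extensions}.

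\textbf{First direction: (1) $\Rightarrow$ (2).} Assume $K:=\lambda_{\mathcal P}X\cap C$ is compact. Given $\{Y_i\}$ with $Y_U\vee\bigvee_{i\in I}Y_i=M^X_{\mathcal P}$, the reformulation above yields $\bigcap_{i\in I}(K\cap C_i)=\emptyset$. Each $K\cap C_i$ is a closed subset of the compact set $K$, so by compactness of $K$ finitely many of them, say $K\cap C_{i_1},\dots,K\cap C_{i_k}$, have empty intersection. Running the reformulation backwards (now with the finite family), $C\cap C_{i_1}\cap\dots\cap C_{i_k}=\beta X\backslash\lambda_{\mathcal P}X$, hence $Y_U\vee\bigvee_{j=1}^k Y_{i_j}=M^X_{\mathcal P}$. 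This direction is routine once the dictionary is set up.

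\textbf{Second direction: (2) $\Rightarrow$ (1).} This is the part requiring care, and the main obstacle: I must \emph{manufacture} enough one--point extensions $Y_i$ to witness non--compactness of $K=\lambda_{\mathcal P}X\cap C$ when it fails. Suppose $K$ is not compact. Since $\lambda_{\mathcal P}X$ is open in $\beta X$ and $X$ is locally compact, $X$ is open in $\beta X$, so $\lambda_{\mathcal P}X\backslash X$ --- and more to the point $\operatorname{cl}_{\beta X}K\backslash\lambda_{\mathcal P}X$ --- is non--empty; pick $p\in\operatorname{cl}_{\beta X}K\backslash K\subseteq\beta X\backslash\lambda_{\mathcal P}X$ (note $\operatorname{cl}_{\beta X}K\supseteq K$ properly since $K$ is not closed in the compact $\beta X$, and $\operatorname{cl}_{\beta X}K\subseteq\operatorname{cl}_{\beta X}C=C$ because $C$ is compact hence closed). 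For each open neighborhood $U$ of $p$ in $\beta X$ with $\operatorname{cl}_{\beta X}U\cap X$ having $\mathcal P$--closure (such $U$ form a base at $p$ within $\lambda_{\mathcal P}X\cup\{p\}$... actually one wants $\operatorname{cl}_{\beta X}U\backslash\{p\}\subseteq\lambda_{\mathcal P}X$), set $C_U=(\beta X\backslash\lambda_{\mathcal P}X)\cup(C\cap\operatorname{cl}_{\beta X}U)$; this is a compact subset of $\beta X\backslash X$ containing $\beta X\backslash\lambda_{\mathcal P}X$, so $Y_U:=e_{C_U}X\in{\mathscr M}^{\mathcal Q}_{\mathcal P}(X)$ by Lemma~\ref{16} and Theorem~\ref{HUHG16}, and it is a one--point extension. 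Since $p\in\operatorname{cl}_{\beta X}K$, every such $U$ meets $K$, and one checks $\bigcap_U(K\cap C_U\cap C)=\bigcap_U(K\cap\operatorname{cl}_{\beta X}U)\subseteq\{p\}\cap K=\emptyset$; hence by the dictionary $Y_U\vee\bigvee_U Y_U'=M^X_{\mathcal P}$ over this whole family. But no \emph{finite} subfamily suffices: finitely many neighborhoods $U_1,\dots,U_k$ of $p$ have $\operatorname{cl}_{\beta X}(U_1\cap\dots\cap U_k)$ still a neighborhood of $p$ meeting $K$, so $K\cap C_{U_1}\cap\dots\cap C_{U_k}\neq\emptyset$, contradicting (2). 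The technical points to nail down carefully are: that these $C_U$ genuinely lie in $\beta X\backslash X$ (uses $X$ open in $\beta X$, i.e.\ local compactness, to separate $p$ and the compact $K\subseteq\lambda_{\mathcal P}X$ — actually $K\cap X$ could be nonempty, so one instead observes $C_U\subseteq C\subseteq\beta X\backslash X$ directly since $C=\phi^{-1}[Y\backslash X]\subseteq\beta X\backslash X$); that the relevant neighborhoods form a filter base so finite intersections are again neighborhoods; and that $\operatorname{cl}_{\beta X}K$ is not contained in $\lambda_{\mathcal P}X$ (if it were, $K$ would be closed in the compact $\lambda_{\mathcal P}X\cup\dots$, hence compact). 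Assembling these gives the contrapositive of (2) $\Rightarrow$ (1), completing the proof.
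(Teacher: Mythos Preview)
Your dictionary and the $(1)\Rightarrow(2)$ direction are correct and match the paper's argument essentially verbatim.

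For $(2)\Rightarrow(1)$ your approach is \emph{different} from the paper's but still correct. The paper argues directly: given an open cover $\{U_i\}_{i\in I}$ of $K=\lambda_{\mathcal P}X\cap\phi^{-1}[Y\backslash X]$ by open subsets of $\lambda_{\mathcal P}X\backslash X$, it sets $C_i=(\beta X\backslash X)\backslash U_i$ and $Y_i=e_{C_i}X$; local compactness of $X$ makes $\beta X\backslash X$ compact, so each $C_i$ is compact, and applying (2) yields a finite subcover. You instead run the contrapositive: if $K$ is not compact, pick $p\in\mbox{cl}_{\beta X}K\backslash K\subseteq\beta X\backslash\lambda_{\mathcal P}X$, index by the neighborhood filter of $p$, and put $C_U=(\beta X\backslash\lambda_{\mathcal P}X)\cup(C\cap\mbox{cl}_{\beta X}U)$. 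Once the stray hedging is stripped away (you do not need any condition on $U$ beyond being an open neighborhood of $p$ in $\beta X$; the remark about ``$\mbox{cl}_{\beta X}U\backslash\{p\}\subseteq\lambda_{\mathcal P}X$'' is a red herring), the argument is clean: $K\cap C_U=K\cap\mbox{cl}_{\beta X}U$, the full intersection is $K\cap\{p\}=\emptyset$, and any finite intersection meets $K$ because $U_1\cap\cdots\cap U_k$ is still a neighborhood of the accumulation point $p$. A minor bonus of your route is that the compactness of each $C_U$ comes for free (union of two compact sets), so the local-compactness hypothesis is not actually invoked in this direction; the paper's route uses it essentially via compactness of $\beta X\backslash X$. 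Two cosmetic points: you reuse the symbol $Y_U$, which already has a fixed meaning in Notation~\ref{GFDA}; and the final ``technical points'' paragraph is largely self-correction that should be absorbed into the main line.
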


\begin{proof}
Let $\phi:\beta X\rightarrow\beta Y$ and $\phi_U:\beta X\rightarrow\beta Y_U$ denote the continuous extensions of $\mbox{id}_X$. By Lemma \ref{YTRO} we have $\phi^{-1}[Y\backslash X]=\phi_U^{-1}[Y_U\backslash X]$.

(1) {\em implies} (2). Suppose that $\lambda_{\mathcal{P}}X\cap\phi^{-1}[Y\backslash X]$ is compact. Let $\{Y_i:i\in I\}\subseteq{\mathscr M}^{\mathcal Q}_{\mathcal P}(X)$ be a collection of one--point extensions of $X$ with $Y_U\vee\bigvee_{i\in I}Y_i=M^X_{\mathcal P}$. By Lemmas \ref{YTRO} and \ref{GSA} we have
\[\phi_U^{-1}[Y_U\backslash X]\cap\bigcap_{i\in I}\phi_i^{-1}[Y_i\backslash X]=\beta X\backslash\lambda_{\mathcal{P}}X\]
where $\phi_i:\beta X\rightarrow\beta Y_i$ for any $i\in I$, denotes the continuous extension of $\mbox{id}_X$. Now
\[\lambda_{\mathcal{P}}X\cap\phi^{-1}[Y\backslash X]\cap\bigcap_{i\in I}\phi_i^{-1}[Y_i\backslash X]=\lambda_{\mathcal{P}}X\cap\phi_U^{-1}[Y_U\backslash X]\cap\bigcap_{i\in I}\phi_i^{-1}[Y_i\backslash X]=\emptyset\]
and therefore by the compactness of $\lambda_{\mathcal{P}}X\cap\phi^{-1}[Y\backslash X]$ it follows that
\[\lambda_{\mathcal{P}}X\cap\phi^{-1}[Y\backslash X]\cap\bigcap_{j=1}^k\phi_{i_j}^{-1}[Y_{i_j}\backslash X]=\emptyset\]
for some $k\in\mathbf{N}$ and some $i_1,\ldots,i_k\in I$. This implies that
\[\phi^{-1}[Y\backslash X]\cap\bigcap_{j=1}^k\phi_{i_j}^{-1}[Y_{i_j}\backslash X]\subseteq\beta X\backslash\lambda_{\mathcal{P}}X.\]
But by Lemma \ref{16} we have $\beta X\backslash\lambda_{\mathcal{P}}X\subseteq\phi^{-1}[Y\backslash X]$ and $\beta X\backslash\lambda_{\mathcal{P}}X\subseteq\phi_i^{-1}[Y_i\backslash X]$ for any $i\in I$. Thus from above
\[\phi_U^{-1}[Y_U\backslash X]\cap\bigcap_{j=1}^k\phi_{i_j}^{-1}[Y_{i_j}\backslash X]=\phi^{-1}[Y\backslash X]\cap\bigcap_{j=1}^k\phi_{i_j}^{-1}[Y_{i_j}\backslash X]=\beta X\backslash\lambda_{\mathcal{P}}X.\]
Lemma \ref{GSA} now implies that $Y_U\vee\bigvee_{j=1}^kY_{i_j}=M^X_{\mathcal P}$.

(2) {\em implies} (1). First note that $X\subseteq\lambda_{\mathcal{P}}X$ (see Lemma \ref{15}) and that $\beta X\backslash\lambda_{\mathcal{P}}X$ is non--empty (see Lemma \ref{HFH}). To show (1) we have to verify that $\lambda_{\mathcal{P}}X\cap\phi^{-1}[Y\backslash X]$ is compact. Note that $\lambda_{\mathcal{P}}X\cap\phi^{-1}[Y\backslash X]\subseteq\lambda_{\mathcal{P}}X\backslash X$, as obviously $\phi^{-1}[Y\backslash X]\subseteq\beta X\backslash X$, because $\phi|X=\mbox{id}_X$. Let $\{U_i:i\in I\}$ be an open cover of  $\lambda_{\mathcal{P}}X\cap\phi^{-1}[Y\backslash X]$ in $\lambda_{\mathcal{P}}X\backslash X$. Note that each $U_i$, where $i\in I$, is open in $\beta X\backslash X$, as  $U_i$ is open in $\lambda_{\mathcal{P}}X\backslash X$ and the latter is open in $\beta X\backslash X$, because $\lambda_{\mathcal{P}}X$ is open in $\beta X$. Let $C_i=(\beta X\backslash X)\backslash U_i$ where $i\in I$. Then $C_i$ is closed in $\beta X\backslash X$ and thus it is compact, as $\beta X\backslash X$ is compact (because $X$ is locally compact) and it is non--empty, as it contains $\beta X\backslash\lambda_{\mathcal{P}}X$. Let $Y_i=e_{C_i}X$ and let $\phi_i:\beta X\rightarrow\beta Y_i$ denotes the continuous extension of $\mbox{id}_X$. By Lemma \ref{YTRO} we have
\begin{equation}\label{JJWA}
\phi_i^{-1}[Y_i\backslash X]=C_i=(\beta X\backslash X)\backslash U_i.
\end{equation}
We have
\begin{eqnarray*}
\phi^{-1}[Y\backslash X]\cap\bigcap_{i\in I}\phi_i^{-1}[Y_i\backslash X]&=&\phi^{-1}[Y\backslash X]\cap\bigcap_{i\in I}\big((\beta X\backslash X)\backslash U_i\big)\\&=&\phi^{-1}[Y\backslash X]\cap\Big((\beta X\backslash X)\backslash \bigcup_{i\in I}U_i\Big)\\&\subseteq&\phi^{-1}[Y\backslash X]\cap\big((\beta X\backslash X)\backslash\big(\lambda_{\mathcal{P}}X\cap\phi^{-1}[Y\backslash X]\big)\big)\\&\subseteq&\beta X\backslash\lambda_{\mathcal{P}}X
\end{eqnarray*}
and therefore
\[\phi_U^{-1}[Y_U\backslash X]\cap\bigcap_{i\in I}\phi_i^{-1}[Y_i\backslash X]=\phi^{-1}[Y\backslash X]\cap\bigcap_{i\in I}\phi_i^{-1}[Y_i\backslash X]=\beta X\backslash\lambda_{\mathcal{P}}X\]
as by Lemma \ref{16} we have $\beta X\backslash\lambda_{\mathcal{P}}X\subseteq\phi^{-1}[Y\backslash X]$ and $\beta X\backslash\lambda_{\mathcal{P}}X\subseteq\phi_i^{-1}[Y_i\backslash X]$ for any $i\in I$. Lemma \ref{GSA} now implies that $Y_U\vee\bigvee_{i\in I}Y_i=M^X_{\mathcal P}$ which yields $Y_U\vee\bigvee_{j=1}^kY_{i_j}=M^X_{\mathcal P}$ for some $k\in\mathbf{N}$ and some $i_1,\ldots,i_k\in I$. Again, using Lemma \ref{GSA} we have
\[\phi^{-1}[Y\backslash X]\cap\bigcap_{j=1}^k\phi_{i_j}^{-1}[Y_{i_j}\backslash X]=\phi_U^{-1}[Y_U\backslash X]\cap\bigcap_{j=1}^k\phi_{i_j}^{-1}[Y_{i_j}\backslash X]=\beta X\backslash\lambda_{\mathcal{P}}X\]
and thus by (\ref{JJWA}) it then follows that
\begin{eqnarray*}
\phi^{-1}[Y\backslash X]\cap\Big((\beta X\backslash X)\backslash \bigcup_{j=1}^kU_{i_j}\Big)&=&\phi^{-1}[Y\backslash X]\cap\bigcap_{j=1}^k\big((\beta X\backslash X)\backslash U_{i_j}\big)\\&=&\phi^{-1}[Y\backslash X]\cap\bigcap_{j=1}^k\phi_{i_j}^{-1}[Y_{i_j}\backslash X]=\beta X\backslash\lambda_{\mathcal{P}}X.
\end{eqnarray*}
Therefore
\[\lambda_{\mathcal{P}}X\cap\phi^{-1}[Y\backslash X]\cap\Big((\beta X\backslash X)\backslash \bigcup_{j=1}^kU_{i_j}\Big)=\emptyset\]
which implies that
\[\lambda_{\mathcal{P}}X\cap\phi^{-1}[Y\backslash X]\subseteq\bigcup_{j=1}^kU_{i_j}.\]
That is $\lambda_{\mathcal{P}}X\cap\phi^{-1}[Y\backslash X]$ is compact.
\end{proof}

\begin{lemma}\label{FSAEW}
Let $X$ be a locally compact space and let $Y\in {\mathscr E}(X)$. The following are equivalent:
\begin{itemize}
\item[\rm(1)] $Y$ is locally compact.
\item[\rm(2)] $\phi^{-1}[Y\backslash X]$ is open in $\beta X\backslash X$, where $\phi:\beta X\rightarrow\beta Y$ is the continuous extension of $\mbox{\em id}_X$.
\end{itemize}
\end{lemma}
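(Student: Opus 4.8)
The plan is to exploit the known characterization of local compactness in terms of openness in the Stone--\v{C}ech compactification: a Tychonoff space $Z$ is locally compact if and only if $Z$ is open in $\beta Z$ (Theorem 3.5.8 of \cite{E}). Since $X$ is locally compact, $X$ is open in $\beta X$, hence $\beta X\backslash X$ is compact and $\phi|X=\mbox{id}_X$ gives us direct control of $\phi$ on $X$. I would also use Lemma \ref{j2}: $\beta Y$ is the quotient of $\beta X$ obtained by contracting each fiber $\phi^{-1}(p)$, $p\in Y\backslash X$, to a point, and $\phi$ is the associated quotient mapping; in particular $\phi[\beta X\backslash X]=\beta Y\backslash X$ and $\phi^{-1}[\beta Y\backslash X]=\beta X\backslash X$.

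First I would prove (1) $\Rightarrow$ (2). Assume $Y$ is locally compact, so $Y$ is open in $\beta Y$, and therefore $\beta Y\backslash Y$ is closed in $\beta Y$, hence compact. Since $\phi$ is continuous, $\phi^{-1}[\beta Y\backslash Y]$ is closed in $\beta X$. Now $Y\backslash X$ is open in $\beta Y\backslash X$ because $X$ is open in $Y$ and hence in $\beta Y$ (as $X$ is locally compact, $X$ is open in $Y$, and $Y$ is open in $\beta Y$, so $X$ is open in $\beta Y$). Working inside $\beta X\backslash X=\phi^{-1}[\beta Y\backslash X]$, we have
\[
\phi^{-1}[Y\backslash X]=(\beta X\backslash X)\backslash\phi^{-1}[(\beta Y\backslash X)\backslash(Y\backslash X)]=(\beta X\backslash X)\backslash\phi^{-1}[\beta Y\backslash Y].
\]
Since $\phi^{-1}[\beta Y\backslash Y]$ is closed in $\beta X$ (hence in $\beta X\backslash X$), its complement $\phi^{-1}[Y\backslash X]$ is open in $\beta X\backslash X$, which is (2).

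For (2) $\Rightarrow$ (1), assume $\phi^{-1}[Y\backslash X]$ is open in $\beta X\backslash X$. I want to show $Y$ is open in $\beta Y$; equivalently, by Lemma \ref{j2}, that the $\phi$-saturated set $X\cup\phi^{-1}[Y\backslash X]=\phi^{-1}[Y]$ is open in $\beta X$. We have $X$ open in $\beta X$ (local compactness of $X$) and $\phi^{-1}[Y\backslash X]$ open in $\beta X\backslash X$, so $\phi^{-1}[Y\backslash X]=V\cap(\beta X\backslash X)$ for some open $V\subseteq\beta X$; then $\phi^{-1}[Y]=X\cup(V\cap(\beta X\backslash X))=X\cup(V\backslash X)=X\cup V$, which is open in $\beta X$. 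Since $\phi$ is a quotient mapping and $\phi^{-1}[Y]$ is an open $\phi$-saturated set, $Y=\phi[\phi^{-1}[Y]]$ is open in $\beta Y=T$, and therefore $Y$, being open in its own compactification $\beta Y$, is locally compact by Theorem 3.5.8 of \cite{E}. The main thing to be careful about is the bookkeeping of ``open in $\beta X$'' versus ``open in $\beta X\backslash X$'' and the corresponding statements downstairs; once one notes that $X$ is open throughout (so that relative-openness in the outgrowth lifts to genuine openness after adjoining $X$), both implications are short, and no substantial obstacle remains.
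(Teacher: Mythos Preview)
Your proposal is correct and follows essentially the same approach as the paper: both directions hinge on the characterization of local compactness as openness in the Stone--\v{C}ech compactification, combined with the quotient description of $\beta Y$ from Lemma \ref{j2}. The paper's argument for (1) $\Rightarrow$ (2) is marginally more direct---it simply observes $\phi^{-1}[Y\backslash X]=\phi^{-1}[Y]\cap(\beta X\backslash X)$ with $\phi^{-1}[Y]$ open---whereas you pass through the closed complement $\phi^{-1}[\beta Y\backslash Y]$; but this is a cosmetic difference, and your (2) $\Rightarrow$ (1) is virtually identical to the paper's.
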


\begin{proof}
Recall that by Lemma \ref{j2} the space $\beta Y$ is the quotient space of $\beta X$ obtained by contracting each $\phi^{-1}(p)$ where $p\in Y\backslash X$, to a point with the continuous extension $\phi:\beta X\rightarrow\beta Y$ of $\mbox{id}_X$ as the quotient mapping.

(1) {\em implies} (2). Note that $Y$ is open in $\beta Y$ and thus $\phi^{-1}[Y]$ is open in $\beta X$. Therefore
\[\phi^{-1}[Y\backslash X]=\phi^{-1}[Y]\cap(\beta X\backslash X)\]
is open in $\beta X\backslash X$.

(2) {\em implies} (1). Let $U$ be an open subset of $\beta X$ such that  $\phi^{-1}[Y\backslash X]=U\cap(\beta X\backslash X)$. Since $X$ is locally compact, $X$ is open in $\beta X$. Thus $U\cup X$ is open in $\beta X$ and therefore $Y=\phi[U\cup X]$ is open in $\beta Y$. This shows that $Y$ is locally compact.
\end{proof}

The following lemma together with Lemmas \ref{GGFDS}, \ref{HVFS}, \ref{KHFA}, \ref{DSEY} and \ref{KJSQ} gives an order--theoretic characterization of locally compact  elements of ${\mathscr M}^{\mathcal Q}_{\mathcal P}(X)$.

\begin{lemma}\label{AYLF}
Let ${\mathcal P}$ and  ${\mathcal Q}$ be a pair of compactness--like topological properties.  Let $X$ be a locally compact locally--${\mathcal P}$ non--${\mathcal P}$ space with $\mathcal{Q}$ and let $Y\in{\mathscr M}^{\mathcal Q}_{\mathcal P}(X)$. The following are equivalent:
\begin{itemize}
\item[\rm(1)] $Y$ is locally compact.
\item[\rm(2)] There exists an almost optimal one--point extension $Y'\in{\mathscr M}^{\mathcal Q}_{\mathcal P}(X)$ such that for any anti--atom $T$ of ${\mathscr M}^{\mathcal Q}_{\mathcal P}(X)$ of type $(\mbox{\em I})$, either $Y_U\leq T$ or $Y'\leq T$, but not both.
\end{itemize}
\end{lemma}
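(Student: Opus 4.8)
The plan is to translate local compactness of $Y$ into a statement about $\phi^{-1}[Y\setminus X]$ via Lemma \ref{FSAEW}, and then to recognize that condition (2) is an order-theoretic encoding of exactly this openness. Throughout, let $\phi:\beta X\rightarrow\beta Y$ be the continuous extension of $\mbox{id}_X$ and write $C=\phi^{-1}[Y\setminus X]$, so $C$ is a compact subset of $\beta X\setminus X$ containing $\beta X\setminus\lambda_{\mathcal P}X$ (Lemma \ref{16}), the latter being non-empty (Lemma \ref{HFH}). Since $X$ is locally compact, $\beta X\setminus X$ is compact and $X$ is open in $\beta X$. By Lemma \ref{FSAEW}, $Y$ is locally compact if and only if $C$ is open in $\beta X\setminus X$; equivalently, since $C\supseteq\beta X\setminus\lambda_{\mathcal P}X$ and $\beta X\setminus\lambda_{\mathcal P}X$ is clopen in $\beta X\setminus X$ (as $\lambda_{\mathcal P}X$ is open in $\beta X$), $Y$ is locally compact if and only if $\lambda_{\mathcal P}X\cap C$ is open in $\lambda_{\mathcal P}X\setminus X$.

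For (1) $\Rightarrow$ (2): assuming $Y$ locally compact, I would take $Y'=Y_U$; note $Y_U$ is a one-point extension with $\phi_U^{-1}[Y_U\setminus X]=C$ by Notation \ref{GFDA} and Lemma \ref{YTRO}, and $Y_U$ is almost optimal since $\lambda_{\mathcal P}X\cap\phi_U^{-1}[Y_U\setminus X]=\lambda_{\mathcal P}X\cap C$ is open \emph{and} closed (hence compact) in $\lambda_{\mathcal P}X\setminus X$ by the previous paragraph. Wait --- one must be careful: local compactness gives $\lambda_{\mathcal P}X\cap C$ open in $\lambda_{\mathcal P}X\setminus X$, but compactness needs it closed; since $C$ is closed in $\beta X\setminus X$ and $\lambda_{\mathcal P}X\setminus X$ is a subspace, $\lambda_{\mathcal P}X\cap C$ \emph{is} closed in $\lambda_{\mathcal P}X\setminus X$, so it is clopen there, hence compact. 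Thus $Y_U$ is almost optimal. Now for any anti-atom $T=e_X(\{a,b\})$ of type $(\mbox{I})$, say $a\notin\lambda_{\mathcal P}X$ and $b\in\lambda_{\mathcal P}X$: by Lemmas \ref{DFH} and \ref{LKA}, $Y_U\leq T$ iff $\{a,b\}\subseteq C$, i.e. iff $b\in C$ (since $a\in\beta X\setminus\lambda_{\mathcal P}X\subseteq C$ always). But $b\in\lambda_{\mathcal P}X$, so $b\in C$ iff $b\in\lambda_{\mathcal P}X\cap C$. So the ``exactly one of $Y_U\leq T$, $Y'\leq T$'' requirement with $Y'=Y_U$ forces us instead to choose $Y'$ differently: I would let $Y'=e_{C'}X$ where $C'=(\beta X\setminus X)\setminus(\lambda_{\mathcal P}X\cap C)$. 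This is compact (closed in $\beta X\setminus X$) because $\lambda_{\mathcal P}X\cap C$ is open in $\beta X\setminus X$ by local compactness, and it contains $\beta X\setminus\lambda_{\mathcal P}X$, so $Y'\in{\mathscr M}^{\mathcal Q}_{\mathcal P}(X)$ is a one-point extension (Lemmas \ref{16}, \ref{YTRO}, Theorem \ref{HUHG16}), and $\lambda_{\mathcal P}X\cap\phi_{Y'}^{-1}[Y'\setminus X]=\lambda_{\mathcal P}X\setminus(\lambda_{\mathcal P}X\cap C)=(\lambda_{\mathcal P}X\setminus X)\setminus(\lambda_{\mathcal P}X\cap C)$ is closed in $\lambda_{\mathcal P}X\setminus X$ and, again by local compactness, open there, hence compact --- so $Y'$ is almost optimal. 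Then for a type $(\mbox{I})$ anti-atom $T=e_X(\{a,b\})$ with $b\in\lambda_{\mathcal P}X$: $Y_U\leq T$ iff $b\in\lambda_{\mathcal P}X\cap C$, while $Y'\leq T$ iff $b\in C'\cap\lambda_{\mathcal P}X=(\lambda_{\mathcal P}X\setminus X)\setminus(\lambda_{\mathcal P}X\cap C)$, which is the complement; so exactly one holds. (When $b\notin\lambda_{\mathcal P}X$ too, the ``type $(\mbox{I})$'' hypothesis is violated, so this case does not arise.)

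For (2) $\Rightarrow$ (1): given $Y'$ as in (2), I claim $\lambda_{\mathcal P}X\cap C$ is open in $\lambda_{\mathcal P}X\setminus X$; by Lemma \ref{FSAEW} this gives (1). Write $D=\phi_{Y'}^{-1}[Y'\setminus X]$, a compact subset of $\beta X\setminus X$ with $\lambda_{\mathcal P}X\cap D$ compact (almost optimality). The dichotomy in (2) says: for every $b\in\lambda_{\mathcal P}X\setminus X$ and every $a\in\beta X\setminus\lambda_{\mathcal P}X$, exactly one of $b\in C$, $b\in D$ holds (using Lemmas \ref{DFH}, \ref{LKA}, \ref{KJH} to identify $Y_U\leq e_X(\{a,b\})\Leftrightarrow b\in C$ and $Y'\leq e_X(\{a,b\})\Leftrightarrow b\in D$, noting $a$ is automatically in both $C$ and $D$; one needs $\mbox{card}(\beta X\setminus\lambda_{\mathcal P}X)\geq 1$ to have such an $a$, which holds). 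Hence on $\lambda_{\mathcal P}X\setminus X$, the sets $C$ and $D$ partition it: $\lambda_{\mathcal P}X\cap D=(\lambda_{\mathcal P}X\setminus X)\setminus(\lambda_{\mathcal P}X\cap C)$. Since $\lambda_{\mathcal P}X\cap D$ is compact, hence closed in $\lambda_{\mathcal P}X\setminus X$, its complement $\lambda_{\mathcal P}X\cap C$ is open in $\lambda_{\mathcal P}X\setminus X$, as required. A small subtlety: the argument presumes $\lambda_{\mathcal P}X\setminus X$ is non-empty; if it is empty then $\lambda_{\mathcal P}X=X$ is locally compact (it is open in $\beta X$) and $\beta X\setminus\lambda_{\mathcal P}X=\beta X\setminus X$, forcing $C=\beta X\setminus X$, so $Y=e_{\beta X\setminus X}X$ whose remainder is a single point and which is locally compact because $X$ and $\{p\}$ cover it by open sets --- actually $Y$ open in $\beta Y$ since $\phi^{-1}[Y\setminus X]=\beta X\setminus X$ is (c)open in $\beta X\setminus X$; so (1) holds trivially, and one checks (2) holds vacuously or with $Y'=Y_U$. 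I would handle this degenerate case separately at the start.

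The main obstacle I expect is bookkeeping the precise correspondence between the order relations $Y_U\leq T$, $Y'\leq T$ and membership of the single ``$\lambda_{\mathcal P}X$-point'' of the type $(\mbox{I})$ anti-atom in $C$ resp. $D$ --- in particular verifying that the point $a\notin\lambda_{\mathcal P}X$ of the anti-atom is always absorbed (lies in every relevant fiber-union because $\beta X\setminus\lambda_{\mathcal P}X\subseteq C,D$), so that the dichotomy genuinely reduces to a dichotomy about the point $b\in\lambda_{\mathcal P}X$. Once that reduction is clean, everything else is an application of Lemma \ref{FSAEW} together with the clopen-ness of $\beta X\setminus\lambda_{\mathcal P}X$ in $\beta X\setminus X$ and the already-established characterizations (Lemmas \ref{DFH}, \ref{LKA}, \ref{KJH}, \ref{YTRO}, \ref{KJSQ}) of one-point extensions, almost optimal extensions, and anti-atoms.
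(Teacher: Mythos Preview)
Your approach is essentially identical to the paper's: construct $Y'=e_{C'}X$ with $C'=(\beta X\setminus X)\setminus(\lambda_{\mathcal P}X\cap\phi^{-1}[Y\setminus X])$ (which is exactly the paper's $C=((\beta X\setminus X)\setminus\phi^{-1}[Y\setminus X])\cup(\beta X\setminus\lambda_{\mathcal P}X)$), and for the converse recover the partition $\lambda_{\mathcal P}X\cap D=(\lambda_{\mathcal P}X\setminus X)\setminus(\lambda_{\mathcal P}X\cap C)$ from the anti--atom dichotomy and use compactness of $\lambda_{\mathcal P}X\cap D$.

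There are, however, two small logical slips you should repair. First, the assertion in your opening paragraph that $\beta X\setminus\lambda_{\mathcal P}X$ is \emph{clopen} in $\beta X\setminus X$ is not justified: $\lambda_{\mathcal P}X$ open in $\beta X$ gives only that $\beta X\setminus\lambda_{\mathcal P}X$ is \emph{closed} in $\beta X\setminus X$. Consequently the equivalence ``$C$ open in $\beta X\setminus X$ $\Leftrightarrow$ $\lambda_{\mathcal P}X\cap C$ open in $\lambda_{\mathcal P}X\setminus X$'' fails in the direction you need for $(2)\Rightarrow(1)$ (take $C=\beta X\setminus\lambda_{\mathcal P}X$ when $\lambda_{\mathcal P}X\setminus X$ is non--compact). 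The fix is immediate: once you have that $\lambda_{\mathcal P}X\cap D$ is \emph{compact} and equals $(\lambda_{\mathcal P}X\setminus X)\setminus(\lambda_{\mathcal P}X\cap C)=(\beta X\setminus X)\setminus C$ (the last equality using $\beta X\setminus\lambda_{\mathcal P}X\subseteq C$), conclude directly that $(\beta X\setminus X)\setminus C$ is closed in the compact space $\beta X\setminus X$, so $C$ is open there. Second, in $(1)\Rightarrow(2)$ your justification that $\lambda_{\mathcal P}X\cap C'$ is compact reads ``closed and open in $\lambda_{\mathcal P}X\setminus X$, hence compact'', which is invalid since $\lambda_{\mathcal P}X\setminus X$ need not be compact. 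The correct reason (and the one the paper uses) is again that $\lambda_{\mathcal P}X\cap C'=(\beta X\setminus X)\setminus\phi^{-1}[Y\setminus X]$, which is closed in the compact space $\beta X\setminus X$ because $\phi^{-1}[Y\setminus X]$ is open there by Lemma~\ref{FSAEW}. With these two adjustments your proof goes through and coincides with the paper's.
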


\begin{proof}
Let $\phi:\beta X\rightarrow\beta Y$ and $\phi_U:\beta X\rightarrow\beta Y_U$ denote the continuous extensions of $\mbox{id}_X$. By Lemmas \ref{16} and \ref{YTRO} we have $\beta X\backslash\lambda_{\mathcal{P}}X\subseteq\phi^{-1}[Y\backslash X]$ and $\phi^{-1}[Y\backslash X]=\phi_U^{-1}[Y_U\backslash X]$. Also, by Lemmas \ref{15} and \ref{HFH} we have $X\subseteq\lambda_{{\mathcal P}}X$ and that $\beta X\backslash\lambda_{{\mathcal P}}X$ is non--empty.

(1) {\em implies} (2). By Lemma \ref{FSAEW} the set $\phi^{-1}[Y\backslash X]$ is open in $\beta X\backslash X$. Let
\[C=\big((\beta X\backslash X)\backslash\phi^{-1}[Y\backslash X]\big)\cup(\beta X\backslash\lambda_{\mathcal{P}}X).\]
Then $C\subseteq\beta X\backslash X$ is compact, as it is the union of two compact subspaces, and it is obviously non--empty, as it contains $\beta X\backslash\lambda_{{\mathcal P}}X$. Let $Y'=e_CX$. Then $Y'$ is a Tychonoff one--point extension of $X$ and by Lemma \ref{YTRO}, if $\psi:\beta X\rightarrow\beta Y'$ denotes the continuous extensions of $\mbox{id}_X$ then $\psi^{-1}[Y'\backslash X]=C$. Therefore by Lemma \ref{16} and Theorem \ref{HUHG16} we have  $Y'\in{\mathscr M}^{\mathcal Q}_{\mathcal P}(X)$, as $\beta X\backslash\lambda_{\mathcal{P}}X\subseteq C$. Also, $Y'$ is almost optimal, as
\[\lambda_{\mathcal{P}}X\cap\psi^{-1}[Y'\backslash X]=\lambda_{\mathcal{P}}X\cap C=(\beta X\backslash X)\backslash\phi^{-1}[Y\backslash X]\]
is compact. (Note that $(\beta X\backslash X)\backslash\phi^{-1}[Y\backslash X]\subseteq\lambda_{\mathcal{P}}X$, as $\beta X\backslash\lambda_{\mathcal{P}}X\subseteq\phi^{-1}[Y\backslash X]$.) Now consider an anti--atom $T$ of ${\mathscr M}^{\mathcal Q}_{\mathcal P}(X)$ of type $(\mbox{I})$. Then $T=e_X(\{a,b\})$ for some $a\in\beta X\backslash\lambda_{\mathcal{P}}X$ and some $b\in\lambda_{\mathcal{P}}X\backslash X$. Consider the following cases:
\begin{description}
\item[{\sc Case 1.}] Suppose that $b\in\phi^{-1}[Y\backslash X]$. Then $a\in\phi^{-1}[Y\backslash X]$, as $a\in\beta X\backslash\lambda_{\mathcal{P}}X$ and thus $\{a,b\}\subseteq\phi_U^{-1}[Y_U\backslash X]$. Lemmas \ref{DFH} and \ref{LKA} now imply that $Y_U\leq T$.
\item[{\sc Case 2.}] Suppose that $b\notin\phi^{-1}[Y\backslash X]$. Then necessarily $b\in C=\psi^{-1}[Y'\backslash X]$. But also $a\in\psi^{-1}[Y'\backslash X]$, as $a\in\beta X\backslash\lambda_{\mathcal{P}}X$ and $\beta X\backslash\lambda_{\mathcal{P}}X\subseteq\psi^{-1}[Y'\backslash X]$. Thus $\{a,b\}\subseteq\psi^{-1}[Y'\backslash X]$. Again Lemmas \ref{DFH} and \ref{LKA} imply that $Y'\leq T$.
\end{description}
Obviously, the two $Y_U\leq T$ and $Y'\leq T$ cannot simultaneously hold, as this implies both $\{a,b\}\subseteq\phi_U^{-1}[Y_U\backslash X]$ and  $\{a,b\}\subseteq\psi^{-1}[Y'\backslash X]$. But $b\in\psi^{-1}[Y'\backslash X]=C$ implies, by the choice of $b$, that $b\in(\beta X\backslash X)\backslash\phi^{-1}[Y\backslash X]$, which is not possible, as $b\in\phi^{-1}[Y\backslash X]$.

(2) {\em implies} (1). Let $\psi:\beta X\rightarrow\beta Y'$ denote the continuous extension of $\mbox{id}_X$. By Lemma \ref{FSAEW} to prove that $Y$ is locally compact it suffices to show that $\phi^{-1}[Y\backslash X]$ is open in $\beta X\backslash X$. We show this by verifying that
\begin{equation}\label{SAQP}
(\beta X\backslash X)\backslash\phi^{-1}[Y\backslash X]=\lambda_{\mathcal{P}}X\cap\psi^{-1}[Y'\backslash X].
\end{equation}
Choose some $a\in\beta X\backslash\lambda_{\mathcal{P}}X$. Let $b\in(\beta X\backslash X)\backslash\phi^{-1}[Y\backslash X]$ and suppose to the contrary that $b\notin\psi^{-1}[Y'\backslash X]$. Let $T=e_X(\{a,b\})$. Then $T$ is an anti--atom in ${\mathscr M}^{\mathcal Q}_{\mathcal P}(X)$ of type $(\mbox{I})$, and by Lemmas \ref{DFH} and \ref{LKA} neither $Y_U\leq T$ nor $Y'\leq T$ holds, as neither $\{a,b\}\subseteq\phi^{-1}[Y\backslash X]$ nor $\{a,b\}\subseteq\psi^{-1}[Y'\backslash X]$. This is a contradiction. Therefore
\begin{equation}\label{ASKL}
(\beta X\backslash X)\backslash\phi^{-1}[Y\backslash X]\subseteq\lambda_{\mathcal{P}}X\cap\psi^{-1}[Y'\backslash X].
\end{equation}
To show the reverse inclusion in (\ref{ASKL}), let $c\in\lambda_{\mathcal{P}}X\cap\psi^{-1}[Y'\backslash X]$. Suppose to the contrary that $c\notin(\beta X\backslash X)\backslash\phi^{-1}[Y\backslash X]$, or equivalently that $c\in\phi^{-1}[Y\backslash X]$, as $c\in\beta X\backslash X$, because $c\in\psi^{-1}[Y'\backslash X]$ and $\psi^{-1}[Y'\backslash X]\subseteq\beta X\backslash X$, since $\psi|X=\mbox{id}_X$. Let $T'=e_X(\{a,c\})$. Then $T'$ is an  anti--atom in ${\mathscr M}^{\mathcal Q}_{\mathcal P}(X)$ of type $(\mbox{I})$ and by Lemmas \ref{DFH} and \ref{LKA} both $Y_U\leq T'$ and $Y'\leq T'$, as both $\{a,c\}\subseteq\phi^{-1}[Y\backslash X]$ and $\{a,c\}\subseteq\psi^{-1}[Y'\backslash X]$, because by Lemma \ref{16} we have $\beta X\backslash\lambda_{\mathcal{P}}X\subseteq\phi^{-1}[Y\backslash X]$ and $\beta X\backslash\lambda_{\mathcal{P}}X\subseteq\psi^{-1}[Y'\backslash X]$ and $a\in\beta X\backslash\lambda_{\mathcal{P}}X$. This contradicts our assumption and proves (\ref{SAQP}). Now since $\lambda_{\mathcal{P}}X\cap\psi^{-1}[Y'\backslash X]$ is compact, as $Y'$ is almost optimal, $(\beta X\backslash X)\backslash\phi^{-1}[Y\backslash X]$ is compact and thus closed in $\beta X\backslash X$. Equivalently, $\phi^{-1}[Y\backslash X]$ is open in $\beta X\backslash X$, as $\phi^{-1}[Y\backslash X]\subseteq\beta X\backslash X$, because $\phi|X=\mbox{id}_X$.
\end{proof}

The following lemma together with Lemmas \ref{GGFDS} and \ref{HVFS} gives an order--theoretic characterization of optimal elements of ${\mathscr M}^{\mathcal Q}_{\mathcal P}(X)$.

\begin{lemma}\label{PKJF}
Let ${\mathcal P}$ and  ${\mathcal Q}$ be a pair of compactness--like topological properties.  Let $X$ be a Tychonoff locally--${\mathcal P}$ non--${\mathcal P}$ space with $\mathcal{Q}$ and let $Y\in{\mathscr M}^{\mathcal Q}_{\mathcal P}(X)$. The following are equivalent:
\begin{itemize}
\item[\rm(1)] $Y\in{\mathscr O}^{\mathcal Q}_{\mathcal P}(X)$.
\item[\rm(2)] There exists no anti--atom $T$ in ${\mathscr M}^{\mathcal Q}_{\mathcal P}(X)$ of type $(\mbox{\em I})$ with $Y\leq T$.
\end{itemize}
\end{lemma}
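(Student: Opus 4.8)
The plan is to characterize membership in ${\mathscr O}^{\mathcal Q}_{\mathcal P}(X)$ via Theorem \ref{HG16}(2), namely via the condition $\phi^{-1}[Y\backslash X]=\beta X\backslash\lambda_{\mathcal P}X$, and to translate the existence of an anti--atom $T$ of type $(\mbox{I})$ lying above $Y$ into the statement that $\phi^{-1}[Y\backslash X]$ meets $\lambda_{\mathcal P}X$. Throughout, let $\phi:\beta X\rightarrow\beta Y$ be the continuous extension of $\mbox{id}_X$; recall from Lemma \ref{16} that $X$ is locally--${\mathcal P}$ and $\beta X\backslash\lambda_{\mathcal P}X\subseteq\phi^{-1}[Y\backslash X]$, and from Lemma \ref{15} that $X\subseteq\lambda_{\mathcal P}X$, and from Lemma \ref{HFH} that $\beta X\backslash\lambda_{\mathcal P}X$ is non--empty since $X$ is non--${\mathcal P}$.

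\textbf{(1) implies (2).} Suppose $Y\in{\mathscr O}^{\mathcal Q}_{\mathcal P}(X)$. By Theorem \ref{HG16} we have $\phi^{-1}[Y\backslash X]=\beta X\backslash\lambda_{\mathcal P}X$. Suppose for contradiction that $T=e_X(\{a,b\})$ is an anti--atom of ${\mathscr M}^{\mathcal Q}_{\mathcal P}(X)$ of type $(\mbox{I})$ with $Y\leq T$; by Definition \ref{PGRHJ} we may assume $a\in\beta X\backslash\lambda_{\mathcal P}X$ and $b\in\lambda_{\mathcal P}X\backslash X$. By Lemma \ref{DFH}, since $Y\leq T$, the element $\{a,b\}$ of ${\mathscr F}(T)$ is contained in some $F\in{\mathscr F}(Y)$; in particular $b\in F\subseteq\phi^{-1}[Y\backslash X]=\beta X\backslash\lambda_{\mathcal P}X$, contradicting $b\in\lambda_{\mathcal P}X$. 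Hence no such $T$ exists.

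\textbf{(2) implies (1).} Suppose no anti--atom of type $(\mbox{I})$ lies above $Y$. By Theorem \ref{HG16} it suffices to show $\phi^{-1}[Y\backslash X]=\beta X\backslash\lambda_{\mathcal P}X$; the inclusion $\supseteq$ is Lemma \ref{16}, so we must show $\phi^{-1}[Y\backslash X]\subseteq\beta X\backslash\lambda_{\mathcal P}X$. Suppose not: there is $c\in\lambda_{\mathcal P}X\cap\phi^{-1}[Y\backslash X]$; note $c\notin X$ since $\phi|X=\mbox{id}_X$, so $c\in\lambda_{\mathcal P}X\backslash X$. Pick $F\in{\mathscr F}(Y)$ with $c\in F$; since $Y\in{\mathscr M}^{\mathcal Q}_{\mathcal P}(X)$, Theorem \ref{HUHG16} gives $F\backslash\lambda_{\mathcal P}X\neq\emptyset$, so choose $a\in F\backslash\lambda_{\mathcal P}X\subseteq\beta X\backslash\lambda_{\mathcal P}X$. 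Then $a\neq c$ and $T=e_X(\{a,c\})$ is, by Lemma \ref{KJH}, an anti--atom of ${\mathscr M}^{\mathcal Q}_{\mathcal P}(X)$, and it is of type $(\mbox{I})$ by Definition \ref{PGRHJ} since $c\in\lambda_{\mathcal P}X$. Moreover $\{a,c\}\subseteq F\in{\mathscr F}(Y)$, so by Lemma \ref{DFH} we have $Y\leq T$, contradicting (2). Therefore $\phi^{-1}[Y\backslash X]=\beta X\backslash\lambda_{\mathcal P}X$ and $Y\in{\mathscr O}^{\mathcal Q}_{\mathcal P}(X)$ by Theorem \ref{HG16}.

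\textbf{Remarks on difficulty.} I do not expect a serious obstacle here: the proof is essentially a dictionary translation between the $\lambda_{\mathcal P}X$--characterization of ${\mathscr O}^{\mathcal Q}_{\mathcal P}(X)$ (Theorem \ref{HG16}) and the bookkeeping of $\beta$--families (Lemmas \ref{DFH}, \ref{KJH}, \ref{LKA}), together with the non--emptiness facts (Lemmas \ref{15}, \ref{HFH}) and the key property from Theorem \ref{HUHG16} that fibers of a minimal extension leave $\lambda_{\mathcal P}X$. The only point requiring a little care is the correct direction of containment in Lemma \ref{DFH} — that $Y\leq T$ forces each set in ${\mathscr F}(T)$ to sit inside a set of ${\mathscr F}(Y)$ — and checking that the constructed pair $\{a,c\}$ indeed lands in a single member of ${\mathscr F}(Y)$, which is immediate since both $a$ and $c$ were chosen inside the same $F$.
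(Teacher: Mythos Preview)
Your proof is correct and follows essentially the same approach as the paper's: both directions reduce to the characterization $\phi^{-1}[Y\backslash X]=\beta X\backslash\lambda_{\mathcal P}X$ from Theorem \ref{HG16}, use Lemma \ref{DFH} (and implicitly Lemma \ref{LKA}) to translate $Y\leq T$ into containment of $\{a,b\}$ in some fiber, and invoke Theorem \ref{HUHG16} to find a point of the fiber outside $\lambda_{\mathcal P}X$. The only cosmetic difference is that the paper's (1)$\Rightarrow$(2) shows every anti--atom above $Y$ is of type $(\mbox{II})$, whereas you assume one of type $(\mbox{I})$ and contradict; the content is identical.
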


\begin{proof}
Let $\phi:\beta X\rightarrow\beta Y$ denote the continuous extension of $\mbox{id}_X$.

(1) {\em implies} (2). Let $T$ be an anti--atom in ${\mathscr M}^{\mathcal Q}_{\mathcal P}(X)$ such that $Y\leq T$. Let $T=e_X(\{a,b\})$ where $a,b\in\beta X\backslash X$ are distinct such that either $a\notin\lambda_{{\mathcal P}}X$ or $b\notin\lambda_{{\mathcal P}}X$. By Lemmas \ref{DFH} and \ref{LKA} we have $\{a,b\}\subseteq F$ for some $F\in {\mathscr F}(Y)$. But $F\subseteq\phi^{-1}[Y\backslash X]$ and $\phi^{-1}[Y\backslash X]=\beta X\backslash\lambda_{\mathcal{P}}X$ by Theorem \ref{HG16}. Therefore $\{a,b\}\subseteq\beta X\backslash\lambda_{\mathcal{P}}X$ which shows that $T$ is of type $(\mbox{II})$.

(2) {\em implies} (1). By Theorem \ref{HG16} to show (1) it suffices to show that $\phi^{-1}[Y\backslash X]=\beta X\backslash\lambda_{\mathcal{P}}X$. Suppose otherwise. By Lemma \ref{16} we have $\beta X\backslash\lambda_{\mathcal{P}}X\subseteq\phi^{-1}[Y\backslash X]$. Thus $\phi^{-1}[Y\backslash X]\nsubseteq\beta X\backslash\lambda_{\mathcal{P}}X$. Choose some $b\in\phi^{-1}[Y\backslash X]$ such that $b\notin\beta X\backslash\lambda_{\mathcal{P}}X$. Then $b\in\phi^{-1}(p)$ for some $p\in Y\backslash X$. By Theorem \ref{HUHG16} the set $\phi^{-1}(p)\backslash\lambda_{\mathcal{P}}X$ is non--empty. Choose an $a\in\phi^{-1}(p)\backslash\lambda_{\mathcal{P}}X$. Note that $a,b\in\beta X\backslash X$, as $a,b\in\phi^{-1}[Y\backslash X]$ and $\phi|X=\mbox{id}_X$. Consider the  anti--atom $T=e_X(\{a,b\})$ of ${\mathscr M}^{\mathcal Q}_{\mathcal P}(X)$. Then $T$ is of type $(\mbox{I})$, and since $\{a,b\}\subseteq\phi^{-1}(p)$, by Lemmas \ref{DFH} and \ref{LKA} we have $Y\leq T$. This is a contradiction.
\end{proof}

The following is an immediate corollary of our previous lemmas.

\begin{lemma}\label{FSWR}
Let ${\mathcal P}$ and  ${\mathcal Q}$ be a pair of compactness--like topological properties.  Let $X$ and $Y$ be Tychonoff locally--${\mathcal P}$ non--${\mathcal P}$ spaces with $\mathcal{Q}$ such that $\mbox{\em card}(\lambda_{{\mathcal P}}X\backslash X)\geq 2$ and $\mbox{\em card}(\lambda_{{\mathcal P}}Y\backslash Y)\geq 2$. Let
\[\Theta:\big({\mathscr M}^{\mathcal Q}_{\mathcal P}(X),\leq\big)\rightarrow\big({\mathscr M}^{\mathcal Q}_{\mathcal P}(Y),\leq\big)\]
be an order--isomorphism.  Let $T\in{\mathscr M}^{\mathcal Q}_{\mathcal P}(X)$. Then
\begin{itemize}
\item[\rm(1)] If $T$ is an anti--atom in ${\mathscr M}^{\mathcal Q}_{\mathcal P}(X)$ (an anti--atom in ${\mathscr M}^{\mathcal Q}_{\mathcal P}(X)$ of type $(\mbox{\em I})$, an anti--atom in ${\mathscr M}^{\mathcal Q}_{\mathcal P}(X)$ of type $(\mbox{\em II})$, respectively), then so is $\Theta(T)$.
\item[\rm(2)] If $T$ is optimal, then so is $\Theta(T)$.
\item[\rm(3)] If $T$ is a one--point extension, then so is $\Theta(T)$.
\end{itemize}
Suppose that $X$ and $Y$ are moreover locally compact. Then
\begin{itemize}
\item[\rm(4)] If $T$ is almost optimal, then so is $\Theta(T)$.
\item[\rm(5)] If $T$ is locally compact, then so is $\Theta(T)$.
\end{itemize}
\end{lemma}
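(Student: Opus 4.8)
The plan is to observe that Lemma \ref{FSWR} is essentially a formal bookkeeping exercise: each notion appearing in items (1)--(5) has been given a purely order-theoretic characterization in one of the preceding lemmas, and since an order-isomorphism preserves every relation and operation expressible in the language of partially ordered sets, each such notion is automatically transferred by $\Theta$. The work, therefore, is to invoke the right characterization for each item and to check that the hypotheses of that characterization are symmetric in $X$ and $Y$ (so that they hold for $Y$ as soon as they hold for $X$, which they do by assumption).

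First I would record the general principle: if a subset $S$ of a poset $L$ is defined by a first-order formula in the signature $\{\leq\}$ (equivalently, in $\{\leq,\wedge,\vee\}$, since meets and joins are themselves order-definable when they exist), then $\Theta[S]$ is the corresponding subset of $L'$ for any order-isomorphism $\Theta:L\to L'$. In particular $\Theta$ carries anti-atoms to anti-atoms (an anti-atom is simply a maximal element below the top, a purely order-theoretic notion, and $\Theta$ preserves the top element). This gives the anti-atom clause of (1) immediately. For the type $(\mbox{I})$ / type $(\mbox{II})$ refinement I would cite Lemma \ref{HVFS}: under the standing hypothesis $\mbox{card}(\lambda_{\mathcal P}X\backslash X)\geq 2$ (which, together with its analogue for $Y$, is assumed), being of type $(\mbox{I})$ is characterized by an order-theoretic condition phrased entirely in terms of anti-atoms, meets, and counting of anti-atoms above a meet --- all preserved by $\Theta$ --- with the three cases of Lemma \ref{HVFS} corresponding to $\mbox{card}(\beta X\backslash\lambda_{\mathcal P}X)$ being $1$, $2$, or $\geq 3$; here I would note that Lemma \ref{GGFDS} likewise gives an order-theoretic reading of the cardinality of $\beta X\backslash\lambda_{\mathcal P}X$, so that $\Theta$ preserves which of the three cases occurs, hence preserves the classification of anti-atoms into types $(\mbox{I})$ and $(\mbox{II})$. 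That settles (1) completely.

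For (2), apply Lemma \ref{PKJF}: $Y\in{\mathscr O}^{\mathcal Q}_{\mathcal P}(X)$ iff no anti-atom of type $(\mbox{I})$ lies above it --- an order-theoretic condition, by the previous paragraph --- so $\Theta$ preserves optimality. For (3), apply Lemma \ref{KHFA}: being a one-point extension is equivalent to lying below every anti-atom of type $(\mbox{II})$, again order-theoretic. For (4) and (5), under the additional hypothesis that $X$ and $Y$ are locally compact, I would first note that the assignment $T\mapsto T_U$ of Notation \ref{GFDA} is order-theoretically determined by Lemma \ref{DSEY} (it is the largest one-point extension lying below every type $(\mbox{I})$ anti-atom above $T$), hence commutes with $\Theta$ in the sense that $\Theta(T_U)=\Theta(T)_U$; moreover $M^X_{\mathcal P}=(\zeta_{\mathcal P}X)_U$ is itself order-definable as the $_U$-image of the top element, so $\Theta(M^X_{\mathcal P})=M^Y_{\mathcal P}$. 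With these identifications in hand, the characterization of almost optimality in Lemma \ref{KJSQ} (a compactness-type statement about joins of one-point extensions equalling $M^X_{\mathcal P}$) is order-theoretic and transferred by $\Theta$, giving (4); and the characterization of local compactness in Lemma \ref{AYLF} (existence of an almost optimal one-point extension $Y'$ such that each type $(\mbox{I})$ anti-atom lies above exactly one of $Y_U$, $Y'$) is likewise order-theoretic, giving (5).

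The main obstacle is not conceptual but one of careful cross-referencing: I must make sure that for each notion the \emph{order-theoretic} characterization actually has both $X$ and $Y$ satisfying its standing hypotheses. The type and cardinality arguments of Lemmas \ref{GGFDS} and \ref{HVFS} require $\mbox{card}(\lambda_{\mathcal P}X\backslash X)\geq 2$, which is exactly the hypothesis of Lemma \ref{FSWR}; Lemmas \ref{KJSQ} and \ref{AYLF} require local compactness of $X$, which is the hypothesis imposed before items (4)--(5). Since $\Theta$ is an order-isomorphism, it also witnesses that the order-theoretic cardinality invariants of $\beta X\backslash\lambda_{\mathcal P}X$ and $\beta Y\backslash\lambda_{\mathcal P}Y$ agree, so there is no case-mismatch when transferring the type classification. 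Assembling these observations --- one lemma invoked per bullet, plus the remark that $_U$ and $M_{\mathcal P}$ are order-definable --- yields the proof with essentially no further computation.
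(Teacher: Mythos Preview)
Your proposal is correct and follows essentially the same approach as the paper's proof: both argue that each property in (1)--(5) has been given a purely order--theoretic characterization in the preceding lemmas (Lemmas \ref{GGFDS} and \ref{HVFS} for (1), Lemma \ref{PKJF} for (2), Lemma \ref{KHFA} for (3), Lemma \ref{KJSQ} for (4), Lemma \ref{AYLF} for (5), with Lemma \ref{DSEY} ensuring $\Theta(T_U)=(\Theta(T))_U$), and hence is preserved by any order--isomorphism. Your explicit remark that $M^X_{\mathcal P}$ is order--definable (as the $_U$--image of the top element, equivalently the largest one--point extension) is a detail the paper leaves implicit but which is indeed needed to invoke Lemma \ref{KJSQ}.
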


\begin{proof}
This follows from the previous lemmas, as Lemmas \ref{GGFDS} and \ref{HVFS} imply (1), part (1) and Lemma \ref{PKJF} imply (2), part (1) and Lemma \ref{KHFA} imply (3), part (3) and Lemma \ref{KJSQ} imply (4), and finally parts (1), (3), (4) and Lemma \ref{AYLF} imply (5), noting that by Lemma \ref{DSEY} (and parts (1) and (3)) we have $\Theta(S_U)=(\Theta(S))_U$ for any $S\in{\mathscr M}^{\mathcal Q}_{\mathcal P}(X)$.
\end{proof}

\begin{lemma}\label{KLYS}
Let ${\mathcal P}$ and  ${\mathcal Q}$ be a pair of compactness--like topological properties. Let $X$ be a Tychonoff locally--${\mathcal P}$ non--${\mathcal P}$ space with $\mathcal{Q}$. Let $T=e_X(\{a,b\})$ be an anti--atom in ${\mathscr M}^{\mathcal Q}_{\mathcal P}(X)$ of type $(\mbox{\em I})$ where $a\notin\lambda_{\mathcal{P}}X$ and $b\in\lambda_{\mathcal{P}}X$. Let $T'=e_X(\{c,d\})$ be an anti--atom in ${\mathscr M}^{\mathcal Q}_{\mathcal P}(X)$ of type $(\mbox{\em I})$ such that $T\neq T'$. The following are equivalent:
\begin{itemize}
\item[\rm(1)] $b\notin\{c,d\}$.
\item[\rm(2)]
\[\mbox{\em card}\big(\big\{T'':T''\mbox{ is an anti--atom in }{\mathscr M}^{\mathcal Q}_{\mathcal P}(X) \mbox{ and }T\wedge T'\leq T''\big\}\big)=2.\]
\end{itemize}
\end{lemma}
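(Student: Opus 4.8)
The plan is to compute directly the set of anti--atoms $T''$ with $T\wedge T'\leq T''$ using the tools already developed: Lemma \ref{OTH} to evaluate the meet $T\wedge T'$ as $e_X$ of an appropriate family, Lemma \ref{DFH} together with Lemma \ref{LKA} to decide which anti--atoms lie above a given element of ${\mathscr M}^{\mathcal Q}_{\mathcal P}(X)$, and Lemma \ref{KJH} to identify anti--atoms of type $(\mathrm{I})$. Throughout, recall that by Lemmas \ref{15} and \ref{HFH} we have $X\subseteq\lambda_{\mathcal P}X$ and $\beta X\backslash\lambda_{\mathcal P}X$ is non--empty, and that by Lemma \ref{KJH} every anti--atom of ${\mathscr M}^{\mathcal Q}_{\mathcal P}(X)$ has the form $e_X(\{x,y\})$ with $x,y\in\beta X\backslash X$ distinct and at least one of $x,y$ outside $\lambda_{\mathcal P}X$.

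First I would dispose of the routine part: writing $T'=e_X(\{c,d\})$ with (without loss of generality) $c\notin\lambda_{\mathcal P}X$. There are essentially two configurations for how $\{c,d\}$ meets $\{a,b\}$. If $\{a,b\}\cap\{c,d\}=\emptyset$, then by Lemma \ref{OTH}(1) $T\wedge T'=e_X(\{a,b\},\{c,d\})$, and by Lemmas \ref{DFH}, \ref{LKA}, \ref{KJH} the anti--atoms above it are exactly $e_X(\{a,b\})$ and $e_X(\{c,d\})$, giving cardinality $2$; note that in this case automatically $b\notin\{c,d\}$, so (1) and (2) agree. If $\{a,b\}\cap\{c,d\}\neq\emptyset$, then since $T\neq T'$ the sets share exactly one point, $T\wedge T'=e_X(\{a,b,c,d\}\text{ with the overlap identified})$ by Lemma \ref{OTH}(2), which is $e_X$ of a single three--point subset of $\beta X\backslash X$, say $e_X(\{x,y,z\})$. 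The anti--atoms $T''$ with $T\wedge T'\leq T''$ are, by Lemmas \ref{DFH} and \ref{LKA}, precisely those $e_X(\{u,v\})$ with $\{u,v\}\subseteq\{x,y,z\}$ that are of anti--atom form, i.e. (by Lemma \ref{KJH}) those two--element subsets containing at least one point outside $\lambda_{\mathcal P}X$.

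The heart of the matter — and the main obstacle — is the bookkeeping in this second configuration: I must show that the three--point set $\{x,y,z\}$ contains exactly one point of $\lambda_{\mathcal P}X$ precisely when $b\notin\{c,d\}$. Since $a\notin\lambda_{\mathcal P}X$ and $c\notin\lambda_{\mathcal P}X$, the three--point set $\{x,y,z\}=\{a,b\}\cup\{c,d\}$ always contains at least two points outside $\lambda_{\mathcal P}X$ (namely $a$ and $c$, which coincide only if $a=c$, but then the shared point is $a=c\notin\lambda_{\mathcal P}X$ and the remaining two points are $b$ and $d$). If $b\notin\{c,d\}$: the overlap point of $\{a,b\}$ and $\{c,d\}$ must be $a$, hence $a=c$ or $a=d$; in either case $\{x,y,z\}=\{a,b,d\}$ (or $\{a,b,c\}$) with $a\notin\lambda_{\mathcal P}X$ and also $d\notin\lambda_{\mathcal P}X$ (resp. $c\notin\lambda_{\mathcal P}X$) since $d$ (resp. $c$) is the point of $\{c,d\}$ outside $\lambda_{\mathcal P}X$ that survived — so exactly one point, namely $b$, lies in $\lambda_{\mathcal P}X$, and the two--element subsets qualifying as anti--atoms are $\{a,b\},\{a,d\},\{b,d\}$ (three of them). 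Wait — this gives $3$, not $2$, so I must be careful: when $b\notin\{c,d\}$ and the overlap is $a=c$, the third point is $d$ and we need $d\in\lambda_{\mathcal P}X$ or $d\notin\lambda_{\mathcal P}X$; this is where I would invoke the structure of $T'$ more carefully. Since $T'=e_X(\{c,d\})$ is of type $(\mathrm{I})$, exactly one of $c,d$ is in $\lambda_{\mathcal P}X$; with $c=a\notin\lambda_{\mathcal P}X$ forced, we get $d\in\lambda_{\mathcal P}X$. So $\{x,y,z\}=\{a,b,d\}$ with $a\notin\lambda_{\mathcal P}X$ and $b,d\in\lambda_{\mathcal P}X$: the qualifying two--element subsets are $\{a,b\}$ and $\{a,d\}$ only — cardinality $2$. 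Conversely if $b\in\{c,d\}$, then since $b\in\lambda_{\mathcal P}X$ and $c\notin\lambda_{\mathcal P}X$ we must have $b=d$, so $\{x,y,z\}=\{a,b,c\}$ with $a,c\notin\lambda_{\mathcal P}X$ and $b\in\lambda_{\mathcal P}X$; the qualifying subsets are $\{a,b\},\{a,c\},\{b,c\}$ — cardinality $3\neq 2$. This establishes the equivalence, and I would also double--check the disjoint case shows cardinality $2$ and that it forces $b\notin\{c,d\}$, so it is consistent with (1). I would organize the final write--up by first stating the two cases $\{a,b\}\cap\{c,d\}=\emptyset$ and $\{a,b\}\cap\{c,d\}\neq\emptyset$, within the latter splitting on $b\in\{c,d\}$ versus $b\notin\{c,d\}$, each time applying Lemma \ref{OTH}, then Lemmas \ref{DFH}, \ref{LKA}, \ref{KJH} to enumerate the anti--atoms above the meet and count them.
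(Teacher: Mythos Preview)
Your proposal is correct and follows essentially the same approach as the paper: both use Lemma \ref{OTH} to compute $T\wedge T'$ (as either $e_X(\{a,b\},\{c,d\})$ or $e_X$ of a three--point set) and then Lemmas \ref{DFH}, \ref{LKA}, \ref{KJH} to enumerate and count the anti--atoms above it. The paper organizes the forward direction by whether $a\in\{c,d\}$ or not and handles the converse by contraposition, while you split on whether $\{a,b\}\cap\{c,d\}$ is empty and then on whether $b\in\{c,d\}$; after your WLOG $c\notin\lambda_{\mathcal P}X$ (which forces $d\in\lambda_{\mathcal P}X$ by the type $(\mathrm{I})$ hypothesis), these case splits become equivalent and the counts agree exactly.
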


\begin{proof}
(1) {\em implies} (2). Consider the following cases:
\begin{description}
\item[{\sc Case 1.}] Suppose that $a\in\{c,d\}$, say $a=c$. By Lemma \ref{OTH} we have
    \[T\wedge T'=e_X\big(\{a,b\}\big)\wedge e_X\big(\{a,d\}\big)=e_X\big(\{a,b,d\}\big).\]
    Now using Lemmas \ref{DFH}, \ref{LKA} and \ref{KJH} there are only 2 anti--atoms $T''$ in ${\mathscr M}^{\mathcal Q}_{\mathcal P}(X)$ with $T\wedge T'\leq T''$, namely
    \[e_X\big(\{a,b\}\big)\mbox{ and }e_X\big(\{a,d\}\big).\]
\item[{\sc Case 2.}] Suppose that $a\notin\{c,d\}$. Again by Lemma \ref{OTH} we have
    \[T\wedge T'=e_X\big(\{a,b\}\big)\wedge e_X\big(\{c,d\}\big)=e_X\big(\{a,b\},\{c,d\}\big)\]
    and thus as above there are only 2 anti--atoms $T''$ in ${\mathscr M}^{\mathcal Q}_{\mathcal P}(X)$ with $ T\wedge T'\leq T''$, namely
    \[e_X\big(\{a,b\}\big)\mbox{ and }e_X\big(\{c,d\}\big).\]
\end{description}
Therefore (2) holds in either case.

(2) {\em implies} (1). Suppose to the contrary that $b\in\{c,d\}$, say $b=c$. Note that using Lemmas \ref{DFH}, \ref{LKA} and \ref{KJH} it follows that  $a\neq d$, as $T\neq T'$, and thus there are exactly 3 anti--atoms $T''$ in ${\mathscr M}^{\mathcal Q}_{\mathcal P}(X)$ with $T\wedge T'\leq T''$, namely
\[e_X\big(\{a,b\}\big),e_X\big(\{a,d\}\big)\mbox{ and }e_X\big(\{b,d\}\big).\]
This is a contradiction.
\end{proof}

The following lemma gives an internal (to $X$) characterization of spaces $X$ with $\mbox{card}(\lambda_{{\mathcal P}}X\backslash X)\geq 2$. This assumption has been used before in the statements of a couple of lemmas.

\begin{lemma}\label{PLFA}
Let $X$ be a Tychonoff space and let ${\mathcal P}$ be a  clopen hereditary finitely additive perfect topological property. The following are equivalent:
\begin{itemize}
\item[\rm(1)] $\mbox{\em card}(\lambda_{{\mathcal P}}X\backslash X)\geq 2$.
\item[\rm(2)] There exist a pair of disjoint non--compact zero--sets of $X$ each contained in a cozero--set of $X$ whose closure (in $X$) has ${\mathcal P}$.
\end{itemize}
\end{lemma}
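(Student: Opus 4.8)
The plan is to pass through the subspace $\lambda_{\mathcal P}X$ of $\beta X$ and translate cardinality statements about $\lambda_{\mathcal P}X\backslash X$ into the existence of suitable zero-sets and cozero-sets of $X$. First I would prove (1)$\Rightarrow$(2). Assume there are two distinct points $p,q\in\lambda_{\mathcal P}X\backslash X$. Since $\beta X$ is compact Hausdorff (hence normal) and $\lambda_{\mathcal P}X$ is open in $\beta X$, I can choose a continuous $f:\beta X\rightarrow\mathbf I$ with $f(p)=0$, $f(q)=1$, and $f[\beta X\backslash\lambda_{\mathcal P}X]\subseteq\{1/2\}$ is not quite what I want; rather I would separate $\{p\}$, $\{q\}$ and $\beta X\backslash\lambda_{\mathcal P}X$ more carefully. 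Concretely: pick disjoint open neighborhoods $U_p$ of $p$ and $U_q$ of $q$ in $\beta X$ with $\mbox{cl}_{\beta X}U_p\cup\mbox{cl}_{\beta X}U_q\subseteq\lambda_{\mathcal P}X$ (possible as $\lambda_{\mathcal P}X$ is open and contains both points). Then take $g:\beta X\rightarrow\mathbf I$ continuous with $g(p)=0$, $g[\beta X\backslash U_p]\subseteq\{1\}$, and similarly $h:\beta X\rightarrow\mathbf I$ with $h(q)=0$, $h[\beta X\backslash U_q]\subseteq\{1\}$. Set $Z_1=g^{-1}[[0,1/3]]\cap X$, $C_1=g^{-1}[[0,1/2)]\cap X$, and analogously $Z_2,C_2$ from $h$. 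These are zero-sets and cozero-sets of $X$; $Z_i\subseteq C_i$; $Z_1\cap Z_2=\emptyset$ since $U_p\cap U_q=\emptyset$. That $\mbox{cl}_XC_1$ has $\mathcal P$ follows from Lemma \ref{B}: $\mbox{cl}_{\beta X}C_1\subseteq g^{-1}[[0,1/2]]\subseteq U_p$-closure $\subseteq\lambda_{\mathcal P}X$ (using the standard observation $\mbox{cl}_{\beta X}C_1=\mbox{cl}_{\beta X}(g^{-1}[[0,1/2)]\cap X)\subseteq g^{-1}[[0,1/2]]$). Finally, $Z_i$ is non-compact because $p\in g^{-1}[[0,1/3)]\subseteq\mbox{int}_{\beta X}\mbox{cl}_{\beta X}Z_1$, so $p\in\mbox{cl}_{\beta X}Z_1\backslash X$, whence $Z_1$ is not closed in $\beta X$ and therefore not compact; likewise for $Z_2$ using $q$.

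For (2)$\Rightarrow$(1), suppose $Z_1,Z_2\in{\mathscr Z}(X)$ are disjoint and non-compact, with $Z_i\subseteq C_i\in Coz(X)$ and $\mbox{cl}_XC_i$ having $\mathcal P$. By Lemma \ref{BA27}, $\mbox{cl}_{\beta X}Z_i\subseteq\lambda_{\mathcal P}X$ for $i=1,2$. Since $\beta X$ is characterized by the property that disjoint zero-sets of $X$ have disjoint closures in $\beta X$ (one of the bulleted characterizations in the introduction), $\mbox{cl}_{\beta X}Z_1\cap\mbox{cl}_{\beta X}Z_2=\emptyset$. Moreover each $Z_i$ is non-compact, so $\mbox{cl}_{\beta X}Z_i\backslash X$ is non-empty (a zero-set of $X$ is compact iff it equals its closure in $\beta X$, since compact subsets of Hausdorff spaces are closed and conversely a closed subset of $\beta X$ contained in $X$ is compact). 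Picking $p_i\in\mbox{cl}_{\beta X}Z_i\backslash X\subseteq\lambda_{\mathcal P}X\backslash X$ gives two distinct points $p_1\neq p_2$ of $\lambda_{\mathcal P}X\backslash X$ by disjointness of the closures, so $\mbox{card}(\lambda_{\mathcal P}X\backslash X)\geq2$.

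The routine parts are the repeated uses of Urysohn functions on $\beta X$ and the bookkeeping $\mbox{cl}_{\beta X}(f^{-1}[[0,r)]\cap X)\subseteq f^{-1}[[0,r]]$ together with $f^{-1}[[0,r)]\subseteq\mbox{int}_{\beta X}\mbox{cl}_{\beta X}(f^{-1}[[0,r)]\cap X)$, both already flagged in the excerpt as frequently used. The only point requiring genuine care is verifying non-compactness of the $Z_i$'s in the forward direction: one must be sure the separating functions are chosen so that $p$ (resp.\ $q$) actually lands in the interior-of-closure of the constructed zero-set, which is why I select the neighborhoods $U_p,U_q$ with closures inside $\lambda_{\mathcal P}X$ first and only then apply Urysohn's lemma; this also simultaneously secures $\mbox{cl}_XC_i$ having $\mathcal P$ via Lemma \ref{B}. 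I do not anticipate a serious obstacle — the lemma is essentially a dictionary between the external object $\lambda_{\mathcal P}X\backslash X$ and internal zero-set data, and Lemmas \ref{B} and \ref{BA27} supply exactly the two translation directions needed.
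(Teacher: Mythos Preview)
Your proposal is correct and follows essentially the same argument as the paper's proof: in both directions you invoke exactly the same lemmas (Lemma~\ref{B} for $(1)\Rightarrow(2)$ and Lemma~\ref{BA27} for $(2)\Rightarrow(1)$) and the same Urysohn-function constructions. The only cosmetic difference is that you require $\mbox{cl}_{\beta X}U_p\subseteq\lambda_{\mathcal P}X$, whereas the paper only takes $U_i\subseteq\lambda_{\mathcal P}X$ and observes directly that $g^{-1}[[0,1/2]]\subseteq U_p$ (since $g$ equals $1$ off $U_p$), making your extra closure condition unnecessary but harmless.
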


\begin{proof}
(1) {\em  implies} (2). Let $z_i\in\lambda_{{\mathcal P}}X\backslash X$ where $i=1,2$ be distinct and let $U_i$ be an open neighborhoods of $z_i$ in $\lambda_{{\mathcal P}}X$ (and therefore in $\beta X$, as $\lambda_{{\mathcal P}}X$ is open in $\beta X$) such that $U_1\cap U_2=\emptyset$. Let $f_i:\beta X\rightarrow\mathbf{I}$ be continuous with $f(z_i)=0$ and $f_i[\beta X\backslash U_i]\subseteq\{1\}$ and let
\[Z_i=f_i^{-1}\big[[0,1/3]\big]\cap X\in{\mathscr Z}(X)\mbox{ and }C_i=f_i^{-1}\big[[0,1/2)\big]\cap X\in Coz(X).\]
Note that $Z_i\subseteq f_i^{-1}[[0,1/3]]\subseteq U_i$ and thus $Z_1\cap Z_2=\emptyset$. Also, since
\[z_i\in f_i^{-1}\big[[0,1/3)\big]\subseteq\mbox{int}_{\beta X}\mbox{cl}_{\beta X}\big(f_i^{-1}\big[[0,1/3]\big]\cap X\big)\subseteq\mbox{cl}_{\beta X}\big(f_i^{-1}\big[[0,1/3]\big]\cap X\big)=\mbox{cl}_{\beta X}Z_i\]
the set $\mbox{cl}_{\beta X}Z_i\backslash X$ is non--empty and therefore $Z_i$ is non--compact. Then
\[\mbox{cl}_{\beta X} C_i=\mbox{cl}_{\beta X}\big(f_i^{-1}\big[[0,1/2)\big]\cap X\big)=\mbox{cl}_{\beta X}f_i^{-1}\big[[0,1/2)\big]\subseteq f_i^{-1}\big[[0,1/2]\big]\subseteq U_i\subseteq\lambda_{{\mathcal P}}X\]
and thus by Lemma \ref{B} the closure $\mbox{cl}_X C_i$ has ${\mathcal P}$.

(2) {\em  implies} (1). Let $Z_i\in{\mathscr Z}(X)$ be non--compact with $Z_i\subseteq C_i$ where $C_i\in Coz(X)$ and $\mbox{cl}_X C_i$ has ${\mathcal P}$ and $Z_1\cap Z_2=\emptyset$. By Lemma \ref{BA27} we have $\mbox{cl}_{\beta X}Z_i\subseteq\lambda_{{\mathcal P}}X$. Since $Z_i$ is non--compact, $\mbox{cl}_{\beta X}Z_i\backslash X$ is non--empty. Also
\[\mbox{cl}_{\beta X}Z_1\cap\mbox{cl}_{\beta X}Z_2=\mbox{cl}_{\beta X}(Z_1\cap Z_2)=\emptyset.\]
Therefore $\mbox{card}(\lambda_{{\mathcal P}}X\backslash X)\geq 2$.
\end{proof}

\begin{theorem}\label{LGLH}
Let ${\mathcal P}$ and  ${\mathcal Q}$ be a pair of compactness--like topological properties.  Let $X$ and $Y$ be Tychonoff locally--${\mathcal P}$ non--${\mathcal P}$ spaces with $\mathcal{Q}$ such that each space contains a pair of disjoint non--compact zero--sets each contained in a cozero--set  whose closure has ${\mathcal P}$. Consider the following:
\begin{itemize}
\item[\rm(1)] $({\mathscr M}^{\mathcal Q}_{\mathcal P}(X),\leq)$ and $({\mathscr M}^{\mathcal Q}_{\mathcal P}(Y),\leq)$ are order--isomorphic.
\item[\rm(2)] $\beta X\backslash\lambda_{{\mathcal P}}X$ and $\beta Y\backslash\lambda_{{\mathcal P}}Y$ are homeomorphic.
\end{itemize}
Then $(1)$ implies $(2)$, while, $(2)$ does not necessarily imply $(1)$.
\end{theorem}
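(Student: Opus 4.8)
The plan for $(1)\Rightarrow(2)$ is to build a homeomorphism $\beta X\backslash\lambda_{\mathcal P}X\to\beta Y\backslash\lambda_{\mathcal P}Y$ out of the given order-isomorphism $\Theta:({\mathscr M}^{\mathcal Q}_{\mathcal P}(X),\leq)\to({\mathscr M}^{\mathcal Q}_{\mathcal P}(Y),\leq)$, mimicking K.D.\ Magill's argument in \cite{Mag3} but now at the level of the subspace $\beta X\backslash\lambda_{\mathcal P}X$ rather than $\beta X\backslash X$. Note first that the hypothesis on disjoint non-compact zero-sets is exactly condition (2) of Lemma \ref{PLFA}, so $\mbox{card}(\lambda_{\mathcal P}X\backslash X)\geq 2$ and $\mbox{card}(\lambda_{\mathcal P}Y\backslash Y)\geq 2$, which lets us invoke Lemmas \ref{GGFDS}, \ref{HVFS} and \ref{FSWR}. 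The first step is to use the correspondence $p\mapsto e_X(\{a,b\})$ together with the order-theoretic characterizations of anti-atoms and their types: by Lemma \ref{FSWR}(1), $\Theta$ carries anti-atoms of type $(\mbox{II})$ to anti-atoms of type $(\mbox{II})$. Since a type-$(\mbox{II})$ anti-atom of ${\mathscr M}^{\mathcal Q}_{\mathcal P}(X)$ is precisely an $e_X(\{a,b\})$ with $a,b\in\beta X\backslash\lambda_{\mathcal P}X$ distinct, such anti-atoms are indexed by unordered pairs from $\beta X\backslash\lambda_{\mathcal P}X$; the plan is to recover the underlying point set and its topology from the incidence pattern of these pairs, exactly as in Magill's lattice-to-space reconstruction.

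Concretely, I would define, for $p\in\beta X\backslash\lambda_{\mathcal P}X$, the family $\mathcal{A}_p$ of all type-$(\mbox{II})$ anti-atoms $e_X(\{p,q\})$ containing $p$, and characterize order-theoretically when two type-$(\mbox{II})$ anti-atoms share a point (via Lemma \ref{OTH}: $e_X(\{a,b\})$ and $e_X(\{c,d\})$ have $e_X(\{a,b\})\wedge e_X(\{c,d\})$ lying below a controlled number of anti-atoms according to whether $\{a,b\}\cap\{c,d\}=\emptyset$ or not, using the counting arguments in the proofs of Lemmas \ref{GGFDS} and \ref{HVFS}). This produces a bijection $h:\beta X\backslash\lambda_{\mathcal P}X\to\beta Y\backslash\lambda_{\mathcal P}Y$ compatible with $\Theta$ on type-$(\mbox{II})$ anti-atoms. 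For continuity of $h$ and $h^{-1}$, the key is to describe closed (equivalently, compact) subsets of $\beta X\backslash\lambda_{\mathcal P}X$ order-theoretically: by Lemma \ref{LKA}, for a compact $K\subseteq\beta X\backslash X$ with $K\backslash\lambda_{\mathcal P}X\neq\emptyset$ there is a unique $e_X(K)\in{\mathscr M}^{\mathcal Q}_{\mathcal P}(X)$, and the nested/meet structure of the $e_X(K)$'s (Lemma \ref{OTH}, Lemma \ref{DFH}) encodes the topology of $\beta X\backslash\lambda_{\mathcal P}X$. A clean way to finish is to observe that $\beta X\backslash\lambda_{\mathcal P}X=\beta\lambda_{\mathcal P}X\backslash\lambda_{\mathcal P}X$ is the outgrowth of the locally compact space $\lambda_{\mathcal P}X$ (as in the proof of Theorem \ref{KKLFA}), reconstruct the set of type-$(\mbox{II})$ anti-atoms as $\mathscr{K}$-lattice data of $\lambda_{\mathcal P}X$ restricted appropriately, and transport Magill's conclusion; then $h$ is automatically a homeomorphism.

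For the negative half, $(2)\not\Rightarrow(1)$, the plan is to exhibit two Tychonoff locally-$\mathcal P$ non-$\mathcal P$ spaces $X$ and $Y$ (with $\mathcal P$ and $\mathcal Q$ a suitable concrete pair from Example \ref{20UIHG}, e.g.\ $\mathcal P=\mathcal Q=$ compactness, so that ${\mathscr M}^{\mathcal Q}_{\mathcal P}={\mathscr K}$ and $\lambda_{\mathcal P}X=X$ when $X$ is locally compact) whose outgrowths $\beta X\backslash X$ and $\beta Y\backslash Y$ are homeomorphic but whose compactification lattices ${\mathscr K}(X)$ and ${\mathscr K}(Y)$ are not order-isomorphic. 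The remark after Theorem \ref{KLFA} already flags that Magill's theorem fails for non-locally-compact spaces, citing \cite{Th}; the plan is to use precisely such an example, verifying that both spaces are locally-$\mathcal P$ non-$\mathcal P$ (automatic: locally compact non-compact) and that each contains two disjoint non-compact zero-sets inside cozero-sets with compact closure (so that the hypotheses of the theorem are met), while the relevant non-isomorphism of lattices is inherited from the cited construction. The main obstacle I anticipate is the continuity argument in $(1)\Rightarrow(2)$: recovering the topology (not just the point set) of $\beta X\backslash\lambda_{\mathcal P}X$ purely from the order structure of ${\mathscr M}^{\mathcal Q}_{\mathcal P}(X)$ requires carefully matching the lattice of $e_X(K)$'s to the closed-set lattice of the outgrowth, which is where Magill's original proof is most delicate; isolating exactly which order-theoretic invariants pin down a compact set and then checking $\Theta$ preserves them (via the meet formulas of Lemma \ref{OTH} and the cofinality/uniqueness statements of Lemmas \ref{DFH}, \ref{LKA}) is the technical heart.
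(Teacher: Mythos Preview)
Your plan for $(1)\Rightarrow(2)$ would work but is more laborious than necessary. The paper does not rebuild Magill's reconstruction from type-$(\mbox{II})$ anti-atoms; instead it uses Lemma \ref{PKJF} (together with Lemmas \ref{GGFDS} and \ref{HVFS}) to recognize the optimal elements order-theoretically inside ${\mathscr M}^{\mathcal Q}_{\mathcal P}$, so that $\Theta$ restricts to an order-isomorphism ${\mathscr O}^{\mathcal Q}_{\mathcal P}(X)\to{\mathscr O}^{\mathcal Q}_{\mathcal P}(Y)$, and then invokes Theorem \ref{KKLFA} directly. Your approach extracts the same homeomorphism by hand from anti-atom combinatorics; the paper's route is shorter because the delicate topology-recovery step is already packaged inside Lemma \ref{HGFA} and Magill's theorem.

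Your plan for $(2)\not\Rightarrow(1)$, however, cannot work with $\mathcal P=$ compactness. If $X$ is locally compact then $\lambda_{\mathcal P}X=X$, so $\lambda_{\mathcal P}X\backslash X=\emptyset$; in particular the hypothesis of the theorem (via Lemma \ref{PLFA}) fails: a non-compact zero-set cannot sit inside a cozero-set with compact closure, since zero-sets are closed. Worse, in this situation ${\mathscr M}^{\mathcal Q}_{\mathcal P}(X)={\mathscr O}^{\mathcal Q}_{\mathcal P}(X)={\mathscr K}(X)$ and $\beta X\backslash\lambda_{\mathcal P}X=\beta X\backslash X$, so Magill's Theorem \ref{KLFA} gives $(1)\Leftrightarrow(2)$ outright. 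The Thrivikraman example you cite uses \emph{non}-locally-compact spaces, which are then not locally-$\mathcal P$ and again violate the theorem's hypotheses. The whole point of the counterexample is that ${\mathscr M}^{\mathcal Q}_{\mathcal P}$ must be genuinely larger than ${\mathscr O}^{\mathcal Q}_{\mathcal P}$, which forces $\lambda_{\mathcal P}X\backslash X\neq\emptyset$ and hence $\mathcal P\neq$ compactness. The paper takes $\mathcal P$ to be the Lindel\"of property and builds $X=\mathbf{N}\oplus\bigoplus_{i<\Omega}[0,\infty)$ and $Y=\bigoplus_{i<\Omega}[0,\infty)$; it shows $\beta X\backslash\lambda_{\mathcal P}X=\beta Y\backslash\lambda_{\mathcal P}Y$ literally as subspaces, and then uses the order-theoretic preservation of local compactness (Lemmas \ref{KJSQ}, \ref{AYLF}, \ref{FSWR}) together with connectedness of $\beta[0,\infty)\backslash[0,\infty)$ versus zero-dimensionality of $\beta\mathbf{N}\backslash\mathbf{N}$ to derive a contradiction from any putative order-isomorphism. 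You need an example of that shape; nothing from the compactification lattice literature for non-locally-compact spaces will fit the hypotheses here.
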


\begin{proof}
By Lemma \ref{PLFA} we have
\[\mbox{card}(\lambda_{{\mathcal P}}X\backslash X)\geq 2\mbox{ and }\mbox{card}(\lambda_{{\mathcal P}}Y\backslash Y)\geq 2.\]
To show that (1) implies (2) let
\[\Theta:\big({\mathscr M}^{\mathcal Q}_{\mathcal P}(X),\leq\big)\rightarrow\big({\mathscr M}^{\mathcal Q}_{\mathcal P}(Y),\leq\big)\]
denote an order--isomorphism. By Lemma \ref{FSWR} we have $\Theta({\mathscr O}^{\mathcal Q}_{\mathcal P}(X))\subseteq{\mathscr O}^{\mathcal Q}_{\mathcal P}(Y)$. Now, since  \[\Theta^{-1}:\big({\mathscr M}^{\mathcal Q}_{\mathcal P}(Y),\leq\big)\rightarrow\big({\mathscr M}^{\mathcal Q}_{\mathcal P}(X),\leq\big)\]
also is an order--isomorphism, again, using Lemma \ref{FSWR} we have $\Theta^{-1}({\mathscr O}^{\mathcal Q}_{\mathcal P}(Y))\subseteq{\mathscr O}^{\mathcal Q}_{\mathcal P}(X)$, or equivalently, that ${\mathscr O}^{\mathcal Q}_{\mathcal P}(Y)\subseteq\Theta({\mathscr O}^{\mathcal Q}_{\mathcal P}(X))$. Therefore $\Theta({\mathscr O}^{\mathcal Q}_{\mathcal P}(X))={\mathscr O}^{\mathcal Q}_{\mathcal P}(Y)$. Thus
\[\Theta|{\mathscr O}^{\mathcal Q}_{\mathcal P}(X):\big({\mathscr O}^{\mathcal Q}_{\mathcal P}(X),\leq\big)\rightarrow\big({\mathscr O}^{\mathcal Q}_{\mathcal P}(Y),\leq\big)\]
is an order--isomorphism. By Theorem \ref{KKLFA} this now implies (2).

Next, by means of an example, we show that (2) does not necessarily imply (1). Let
\[X=\mathbf{N}\oplus\bigoplus_{i<\Omega}R_i\mbox{ and }Y=\bigoplus_{i<\Omega}R_i\]
where $R_i$ for any $i<\Omega$ is the subspace $[0,\infty)$ of $\mathbf{R}$. Let ${\mathcal P}$ be the Lindel\"{o}f property  and let ${\mathcal Q}$ be regularity. Then
${\mathcal P}$ and  ${\mathcal Q}$ is a pair of compactness--like topological properties (see Example \ref{20UIHG}) and $X$ and $Y$ are locally compact non---Lindel\"{o}f spaces each containing a pair of disjoint non--compact clopen Lindel\"{o}f subsets. (Just consider $R_i$ and $R_j$ for some distinct $i,j<\Omega$ as the desired pair.) Note that
\begin{equation}\label{AWF}
\lambda_{{\mathcal P}}X=\mbox{cl}_{\beta X}\mathbf{N}\cup\bigcup\Big\{\mbox{cl}_{\beta X}\Big(\bigcup_{i\in J}R_i\Big):J\subseteq[0,\Omega)\mbox{ is countable}\Big\}
\end{equation}
and
\begin{equation}\label{PGDS}
\lambda_{{\mathcal P}}Y=\bigcup\Big\{\mbox{cl}_{\beta Y}\Big(\bigcup_{i\in J}R_i\Big):J\subseteq[0,\Omega)\mbox{ is countable}\Big\}
\end{equation}
as, for example, in the first case, for any Lindel\"{o}f $Z\in{\mathscr Z}(X)$ we have
\[Z\subseteq\mathbf{N}\cup\bigcup_{i\in J}R_i\]
for some countable $J\subseteq[0,\Omega)$, and conversely, if $J\subseteq[0,\Omega)$ is countable then
\[S=\mathbf{N}\cup\bigcup_{i\in J}R_i\]
is a clopen Lindel\"{o}f subset of $X$, and thus $\mbox{cl}_{\beta X}S=\mbox{int}_{\beta X}\mbox{cl}_{\beta X}S\subseteq\lambda_{{\mathcal P}}X$. (Note that a clopen subset of a Tychonoff space has a clopen closure in its Stone--\v{C}ech compactification; see Corollary 3.6.5 of \cite{E}.) We now verify that $\beta X\backslash\lambda_{{\mathcal P}}X$ and $\beta Y\backslash\lambda_{{\mathcal P}}Y$ are homeomorphic. Since $X$ contains $Y$ as a closed subspace (and it is normal) the spaces $\mbox{cl}_{\beta X}Y$ and $\beta Y$ are equivalent compactifications of $Y$ (see Corollary 3.6.8 of \cite{E}). Therefore for any countable $J\subseteq[0,\Omega)$ we have
\[\mbox{cl}_{\beta Y}\Big(\bigcup_{i\in J}R_i\Big)=\mbox{cl}_{(\mbox{cl}_{\beta X}Y)}\Big(\bigcup_{i\in J}R_i\Big)=\mbox{cl}_{\beta X}\Big(\bigcup_{i\in J}R_i\Big)\cap\mbox{cl}_{\beta X}Y=\mbox{cl}_{\beta X}\Big(\bigcup_{i\in J}R_i\Big).\]
Thus by (\ref{PGDS}) we have
\begin{equation}\label{PJEW}
\lambda_{{\mathcal P}}Y=\bigcup\Big\{\mbox{cl}_{\beta X}\Big(\bigcup_{i\in J}R_i\Big):J\subseteq[0,\Omega)\mbox{ is countable}\Big\}.
\end{equation}
We then by (\ref{AWF}) have
\[\lambda_{{\mathcal P}}X=\mbox{cl}_{\beta X}\mathbf{N}\cup\lambda_{{\mathcal P}}Y\]
and also, since $X=\mathbf{N}\cup Y$ we have
\[\beta X=\mbox{cl}_{\beta X}(\mathbf{N}\cup Y)=\mbox{cl}_{\beta X}\mathbf{N}\cup\mbox{cl}_{\beta X} Y=\mbox{cl}_{\beta X}\mathbf{N}\cup\beta Y.\]
Note that for any countable $J\subseteq[0,\Omega)$ we have
\[\mbox{cl}_{\beta X}\mathbf{N}\cap\mbox{cl}_{\beta X}\Big(\bigcup_{i\in J}R_i\Big)=\emptyset\]
as $\mathbf{N}$ and $\bigcup_{i\in J}R_i$ are disjoint zero--sets (in fact clopen subsets) of $X$. Therefore by (\ref{PJEW}) we have
\[\mbox{cl}_{\beta X}\mathbf{N}\cap\lambda_{{\mathcal P}}Y=\emptyset.\]
Also
\[\mbox{cl}_{\beta X}\mathbf{N}\cap\beta Y=\mbox{cl}_{\beta X}\mathbf{N}\cap\mbox{cl}_{\beta X}Y=\emptyset.\]
It now follows that
\[\beta X\backslash\lambda_{{\mathcal P}}X=(\mbox{cl}_{\beta X}\mathbf{N}\cup\beta Y)\backslash(\mbox{cl}_{\beta X}\mathbf{N}\cup\lambda_{{\mathcal P}}Y)=\beta Y\backslash\lambda_{{\mathcal P}}Y.\]
This shows (2). Next, we show that the partially ordered sets ${\mathscr M}^{\mathcal Q}_{\mathcal P}(X)$ and ${\mathscr M}^{\mathcal Q}_{\mathcal P}(Y)$ are not order--isomorphic. But first, we need to prove the following.

\begin{xclaim}
Let $D$ be a non--empty clopen subset of $\beta Y\backslash Y$. Then $\mbox{\em cl}_{\beta Y}R_i\backslash Y\subseteq D$ for some $i<\Omega$.
\end{xclaim}

\subsubsection*{Proof of the claim} Let $g:\beta Y\backslash Y\rightarrow\mathbf{I}$ be continuous with
\[g[D]\subseteq\{0\}\mbox{ and }g\big[(\beta Y\backslash Y)\backslash D\big]\subseteq\{1\}.\]
Since $Y$ is locally compact, $\beta Y\backslash Y$ is closed in (the normal space) $\beta Y$ and thus by The Tietze--Urysohn Theorem $g=G|(\beta Y\backslash Y)$ for some continuous $G:\beta Y\rightarrow\mathbf{I}$. Let
\[V=G^{-1}\big[[0,1/2)\big]\cap Y.\]
Then $V$ is an open subset of $Y$. Since
\begin{eqnarray*}
G^{-1}\big[[0,1/2)\big]\backslash Y&\subseteq&\mbox{cl}_{\beta Y}G^{-1}\big[[0,1/2)\big]\backslash Y\\&=&\mbox{cl}_{\beta Y}\big(G^{-1}\big[[0,1/2)\big]\cap Y\big)\backslash Y\subseteq G^{-1}\big[[0,1/2]\big]\backslash Y
\end{eqnarray*}
and
\[G^{-1}\big[[0,1/2)\big]\backslash Y=g^{-1}\big[[0,1/2)\big]=D=g^{-1}\big[[0,1/2]\big]=G^{-1}\big[[0,1/2]\big]\backslash Y\]
it follows that
\[D=\mbox{cl}_{\beta Y}\big(G^{-1}\big[[0,1/2)\big]\cap Y\big)\backslash Y=\mbox{cl}_{\beta Y}V\backslash Y.\]
Also, $\mbox{bd}_YV$ is compact, as
\begin{eqnarray*}
\mbox{bd}_YV=\mbox{cl}_YV\backslash V&\subseteq&\big(G^{-1}\big[[0,1/2]\big]\cap Y\big)\backslash\big(G^{-1}\big[[0,1/2)\big]\cap Y\big)\\&=&\big(G^{-1}\big[[0,1/2]\big]\backslash G^{-1}\big[[0,1/2)\big]\big)\cap Y\\&=&G^{-1}(1/2)\cap Y\subseteq G^{-1}(1/2)
\end{eqnarray*}
which implies that
\[\mbox{cl}_{\beta Y}\mbox{bd}_YV\backslash Y\subseteq G^{-1}(1/2)\backslash Y= g^{-1}(1/2)=\emptyset.\]
Therefore $\mbox{cl}_{\beta Y}\mbox{bd}_YV\subseteq Y$ and thus $\mbox{bd}_YV=\mbox{cl}_{\beta Y}\mbox{bd}_YV\cap Y=\mbox{cl}_{\beta Y}\mbox{bd}_YV$ is compact, as it is closed in $\beta Y$. Let
\[H=\{i<\Omega:\mbox{bd}_YV\cap R_i\neq\emptyset\}.\]
Note that $H$ is finite, as $\mbox{bd}_YV$ is compact. To prove the claim suppose to the contrary that $\mbox{cl}_{\beta Y}R_i\backslash Y\nsubseteq D$ for any $i<\Omega$. But
\[\mbox{cl}_{\beta Y}R_i\backslash Y=\mbox{cl}_{\beta Y}R_i\backslash R_i=\beta R_i\backslash R_i\]
as $\mbox{cl}_{\beta Y}R_i$ and $\beta R_i$ are equivalent compactifications of $R_i$, because $R_i$ is closed in $Y$ (and $Y$ is normal) and, therefore since $\beta R_i\backslash R_i$ is connected (see Problem 6L of \cite{GJ}), we have
\begin{equation}\label{GFDW}
\mbox{cl}_{\beta Y}R_i\cap D=(\mbox{cl}_{\beta Y}R_i\backslash Y)\cap D=\emptyset
\end{equation}
for any $i<\Omega$. Now let $i<\Omega$ be such that $V\cap R_i$ is non--empty. If $\mbox{bd}_{R_i}(V\cap R_i)=\emptyset$ then $V\cap R_i$ is clopen in $R_i$, and since $R_i$ is connected, $V\cap R_i=R_i$, that is, $R_i\subseteq V$. But then
\[\emptyset\neq\beta R_i\backslash R_i=\mbox{cl}_{\beta Y}R_i\backslash Y\subseteq\mbox{cl}_{\beta Y}V\backslash Y=D\]
which by (\ref{GFDW}) cannot be true. Thus
\[\mbox{bd}_YV\cap R_i=\mbox{bd}_{R_i}(V\cap R_i)\neq\emptyset\]
that is, $i\in H$. Therefore $V\subseteq\bigcup_{i\in H}R_i$. Now
\[D=\mbox{cl}_{\beta Y}V\backslash Y\subseteq\mbox{cl}_{\beta Y}V\subseteq\mbox{cl}_{\beta Y}\Big(\bigcup_{i\in H}R_i\Big)=\bigcup_{i\in H}\mbox{cl}_{\beta Y}R_i\]
which again contradicts (\ref{GFDW}), as $D$ is non--empty. This proves the claim.

\medskip

\noindent Now we prove that ${\mathscr M}^{\mathcal Q}_{\mathcal P}(X)$ and ${\mathscr M}^{\mathcal Q}_{\mathcal P}(Y)$ are not order--isomorphic. Suppose the contrary and let
\[\Theta:\big({\mathscr M}^{\mathcal Q}_{\mathcal P}(X),\leq\big)\rightarrow\big({\mathscr M}^{\mathcal Q}_{\mathcal P}(Y),\leq\big)\]
denote an order--isomorphism. Let
\[C=(\beta X\backslash X)\backslash\mbox{cl}_{\beta X}\mathbf{N}.\]
Then $C$ is a clopen non--empty subset of $\beta X\backslash X$. (Note that $\beta X\backslash X$ is closed in $\beta X$, as $X$ is locally compact, and $\mbox{cl}_{\beta X}\mathbf{N}$ is clopen in  $\beta X$, as $\mathbf{N}$ is clopen in $X$.) Let $T=e_CX$. Then $T$ is a one--point Tychonoff extension of $X$, which by Lemmas \ref{YTRO} and \ref{FSAEW} is locally compact. By Lemmas \ref{16} and \ref{YTRO} and Theorem \ref{HUHG16} we have $T\in{\mathscr M}^{\mathcal Q}_{\mathcal P}(X)$. Let $S=\Theta(T)$. Then by Lemma \ref{FSWR} the element $S$ is a one--point locally compact extension of $Y$. Denote by $\psi:\beta Y\rightarrow\beta S$ the continuous extension of $\mbox{id}_Y$. Then $D=\psi^{-1}(S\backslash Y)$ is clopen in $\beta Y\backslash Y$ by Lemma \ref{FSAEW}, and obviously $D\neq\beta Y\backslash Y$, as $S$ is not the smallest element in ${\mathscr M}^{\mathcal Q}_{\mathcal P}(Y)$, because $T$ is not the smallest element in ${\mathscr M}^{\mathcal Q}_{\mathcal P}(X)$, since $C\neq\beta X\backslash X$. By the above claim, $\mbox{cl}_{\beta Y}R_i\backslash Y\subseteq(\beta Y\backslash Y)\backslash D$ for some $i<\Omega$. Choose some distinct $b',c'\in\mbox{cl}_{\beta Y}R_i\backslash Y$ (which exist, as by above $\mbox{cl}_{\beta Y}R_i\backslash Y=\beta R_i\backslash R_i$) and choose some $a'\in\beta Y\backslash\lambda_{{\mathcal P}}Y$ (which exists, as $Y$ is non---Lindel\"{o}f; see Lemma \ref{HFH}). Let
\[S'=e_Y\big(\{a',b'\}\big)\mbox{ and }S''=e_Y\big(\{a',c'\}\big)\]
and let
\[T'=\Theta^{-1}(S')\mbox{ and }T''=\Theta^{-1}(S'').\]
By Lemma \ref{FSWR} both $T'$ and $T''$ are anti--atoms in ${\mathscr M}^{\mathcal Q}_{\mathcal P}(X)$ of type $(\mbox{I})$, as $S'$ and $S''$ are anti--atoms in ${\mathscr M}^{\mathcal Q}_{\mathcal P}(Y)$ of type $(\mbox{I})$. Let
\[T'=e_X\big(\{a,b\}\big)\mbox{ and }T''=e_X\big(\{c,d\}\big)\]
where $b,d\in\lambda_{{\mathcal P}}X$. Note that $b'\notin\{a',c'\}$, which by Lemma \ref{KLYS} yields $b\notin\{c,d\}$. We neither have $T\leq T'$ nor $T\leq T''$, as neither
\[\Theta(T)=S\leq S'=\Theta(T')\mbox { nor }\Theta(T)=S\leq S''=\Theta(T'')\]
because $b',c'\notin D$; see Lemmas \ref{DFH} and \ref{LKA}. Thus again by Lemmas \ref{DFH}, \ref{LKA} and \ref{YTRO} neither $\{a,b\}\subseteq C$ nor $\{c,d\}\subseteq C$, which implies that $b,d\notin C$, or $b,d\in\mbox{cl}_{\beta X}\mathbf{N}$, as $a,c\in\beta X\backslash\lambda_{{\mathcal P}}X\subseteq C$. But, since $\mbox{cl}_{\beta X}\mathbf{N}\backslash\mathbf{N}$ is zero--dimensional, there exists a clopen subset $E$ of $\beta X\backslash X$ containing $b$, but not $d$, such that it contains $\beta X\backslash\lambda_{{\mathcal P}}X$. This by Lemmas \ref{YTRO} and \ref{FSWR} corresponds to a one--point locally compact element of ${\mathscr M}^{\mathcal Q}_{\mathcal P}(X)$, namely, $e_EX$. By Lemma \ref{FSWR} the element $\Theta(e_EX)$ is a one--point locally compact element in ${\mathscr M}^{\mathcal Q}_{\mathcal P}(Y)$.  Now if ${\mathscr F}_Y(\Theta(e_EX))=\{G\}$, then by Lemma \ref{FSAEW} the set $G$ is a clopen subset of $\beta Y\backslash Y$, and neither
\[\mbox{cl}_{\beta Y}R_i\backslash Y\subseteq G\mbox { nor }(\mbox{cl}_{\beta Y}R_i\backslash Y)\cap G=\emptyset\]
as $b'\in G$ and $c'\notin G$, because $\{a',b'\}\subseteq G$ and $\{a',c'\}\nsubseteq G$, since $\Theta(e_EX)\leq S'$ and $\Theta(e_EX)\nleq S''$, as
\[e_EX\leq \Theta^{-1}(S')=T'\mbox{ and }e_EX\nleq\Theta^{-1}(S'')=T''\]
because $\{a,b\}\subseteq E$ and $\{c,d\}\nsubseteq E$, again by Lemmas \ref{DFH}, \ref{LKA} and \ref{YTRO}. This contradicts the fact that $\mbox{cl}_{\beta Y}R_i\backslash Y$ is connected.
\end{proof}

In the final result of this chapter we introduces the largest (with respect to the partial order $\leq$) compactification--like $\mathcal{P}$--extension of a Tychonoff space $X$. This largest element (also introduced in the proof of Lemma \ref{KJH}) turns out to be a familiar subspace of the Stone--\v{C}ech compactification of $X$. We formally define this element and prove some of its properties which characterize it among all $\mathcal{P}$--extensions of $X$ with compact remainder.

\begin{theorem}\label{A207}
Let ${\mathcal P}$ and  ${\mathcal Q}$ be a pair of compactness--like topological properties. Let $X$ be a Tychonoff locally--$\mathcal{P}$ non--${\mathcal P}$ space with $\mathcal{Q}$. Consider the subspace
\[\zeta_{\mathcal{P}} X=X\cup(\beta X\backslash\lambda_{\mathcal{P}} X)\]
of $\beta X$. Then
\begin{itemize}
\item[\rm(1)] $\zeta_{\mathcal{P}}X$ is the largest element (with respect to $\leq$) of either ${\mathscr E}^{\mathcal Q}_{\mathcal P}(X)$, ${\mathscr M}^{\mathcal Q}_{\mathcal P}(X)$ or ${\mathscr O}^{\mathcal Q}_{\mathcal P}(X)$.
\item[\rm(2)] For any $Y\in{\mathscr E}^{\mathcal Q}_{\mathcal P}(X)$ consider the following properties:
\begin{itemize}
\item[\rm(a)] For any $S,Z\in{\mathscr Z}(X)$ where $S\cap Z\subseteq C$ for some $C\in Coz(X)$ such that $\mbox{\em cl}_X C$ has $\mathcal{P}$, we have $\mbox{\em cl}_Y S\cap\mbox{\em cl}_Y Z\subseteq X$.
\item[\rm(b)] $Y$ satisfies the following:
\begin{itemize}
\item[\rm(i)] For any $S,Z\in{\mathscr Z}(X)$ we have
\[\mbox{\em cl}_Y (S\cap Z)\backslash X=(\mbox{\em cl}_Y S\cap\mbox{\em cl}_Y Z)\backslash X.\]
\item[\rm(ii)] For any $Z\in{\mathscr Z}(X)$ where $Z\subseteq C$ for some $C\in Coz(X)$ such that $\mbox{\em cl}_X C$ has $\mathcal{P}$, we have $\mbox{\em cl}_Y Z\subseteq X$.
\end{itemize}
\end{itemize}
Then $\zeta_{\mathcal{P}}X$ is characterized in ${\mathscr E}^{\mathcal Q}_{\mathcal P}(X)$ by either of the above properties.
\end{itemize}
\end{theorem}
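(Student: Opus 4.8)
The plan is to reduce everything to the three characterizations already in hand: Lemma \ref{16}, Theorem \ref{HUHG16}, and Theorem \ref{HG16}, together with Lemmas \ref{B}, \ref{15}, \ref{HFH}, \ref{BA27} and \ref{19FG}. For part (1), I would first show that $\zeta_{\mathcal{P}}X\in{\mathscr O}^{\mathcal Q}_{\mathcal P}(X)$; since ${\mathscr O}^{\mathcal Q}_{\mathcal P}(X)\subseteq{\mathscr M}^{\mathcal Q}_{\mathcal P}(X)\subseteq{\mathscr E}^{\mathcal Q}_{\mathcal P}(X)$, it then suffices to prove it is an upper bound for ${\mathscr E}^{\mathcal Q}_{\mathcal P}(X)$ with respect to $\leq$, and the largest-element claims for all three classes follow at once. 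The fact that $\zeta_{\mathcal{P}}X$ is a Tychonoff extension of $X$ with compact remainder is immediate: by Lemma \ref{15} we have $X\subseteq\lambda_{\mathcal{P}}X$ so $X$ is dense in $\zeta_{\mathcal{P}}X$, the remainder $\beta X\setminus\lambda_{\mathcal{P}}X$ is compact since $\lambda_{\mathcal{P}}X$ is open in $\beta X$, and $\zeta_{\mathcal{P}}X$ is nonempty-remaindered by Lemma \ref{HFH} since $X$ is non-$\mathcal P$. Also $\beta\zeta_{\mathcal P}X=\beta X$ by Corollary 3.6.9 of \cite{E} as $X\subseteq\zeta_{\mathcal P}X\subseteq\beta X$, so the relevant $\phi:\beta X\to\beta\zeta_{\mathcal P}X$ is $\mathrm{id}_{\beta X}$ and $\phi^{-1}[\zeta_{\mathcal P}X\setminus X]=\beta X\setminus\lambda_{\mathcal P}X$. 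Applying Lemma \ref{16} (with $Y=X$, $f=\mathrm{id}_X$, $T=\zeta_{\mathcal P}X$, $\alpha T=\beta X$) gives $\zeta_{\mathcal P}X\in{\mathscr E}^{\mathcal Q}_{\mathcal P}(X)$, Theorem \ref{HUHG16} gives membership in ${\mathscr M}^{\mathcal Q}_{\mathcal P}(X)$, and Theorem \ref{HG16}(2) (the equality $\phi^{-1}[\zeta_{\mathcal P}X\setminus X]=\beta X\setminus\lambda_{\mathcal P}X$ holds trivially) gives optimality. For the upper-bound claim, given $Y\in{\mathscr E}^{\mathcal Q}_{\mathcal P}(X)$ with $\psi:\beta X\to\beta Y$ the continuous extension of $\mathrm{id}_X$, Lemma \ref{16} yields $\beta X\setminus\lambda_{\mathcal P}X\subseteq\psi^{-1}[Y\setminus X]$; by Lemma \ref{j2} the fibers $\psi^{-1}(p)$, $p\in Y\setminus X$, partition $\beta X\setminus X$, hence every element of ${\mathscr F}(Y)$ is contained in an element of ${\mathscr F}(\zeta_{\mathcal P}X)=\{\{t\}:t\in\beta X\setminus\lambda_{\mathcal P}X\}$ — wait, rather: the fibers of $\zeta_{\mathcal P}X$ are singletons, so I should argue directly via Lemma \ref{DFH}, noting each $\psi^{-1}(p)$ trivially contains the singleton fibers, but the right statement is the reverse: each singleton $\{t\}\subseteq\psi^{-1}(\psi(t))$, which is exactly the condition of Lemma \ref{DFH}(2) for $\zeta_{\mathcal P}X\le Y$ — no, I need $Y\le\zeta_{\mathcal P}X$. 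The clean route: by Theorem \ref{HG16}(2), $Y\le\zeta_{\mathcal P}X$ follows from showing any element of ${\mathscr F}(\zeta_{\mathcal P}X)$ sits inside an element of ${\mathscr F}(Y)$, and since elements of ${\mathscr F}(\zeta_{\mathcal P}X)$ are singletons $\{t\}$ with $t\in\beta X\setminus\lambda_{\mathcal P}X\subseteq\psi^{-1}[Y\setminus X]$, each such $t$ lies in some $\psi^{-1}(p)\in{\mathscr F}(Y)$; Lemma \ref{DFH} then gives $Y\le\zeta_{\mathcal P}X$.

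For part (2), the strategy is to show, for $Y\in{\mathscr E}^{\mathcal Q}_{\mathcal P}(X)$, the chain of implications: $Y$ equivalent to $\zeta_{\mathcal P}X$ $\iff$ (b) $\iff$ (a), and separately that (2.b.ii) alone (the second half of (b)) is equivalent to Theorem \ref{HG16}(3), which characterizes optimality — and optimality together with the largest-element property of part (1) pins down $\zeta_{\mathcal P}X$ inside ${\mathscr O}^{\mathcal Q}_{\mathcal P}(X)$. Concretely: condition (2.b.ii) is verbatim Theorem \ref{HG16}(3) once one observes that ``$Z\in{\mathscr Z}(X)$ with $Z\subseteq C$, $C\in Coz(X)$, $\mathrm{cl}_XC$ has $\mathcal P$'' is the hypothesis there and $\mathrm{cl}_YZ\subseteq X$ is equivalent to $\mathrm{cl}_YZ\cap(Y\setminus X)=\emptyset$. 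So $Y$ satisfies (2.b.ii) iff $Y\in{\mathscr O}^{\mathcal Q}_{\mathcal P}(X)$, i.e. iff $\psi^{-1}[Y\setminus X]=\beta X\setminus\lambda_{\mathcal P}X$ by Theorem \ref{HG16}(2). Condition (2.b.i) I would interpret as forcing the fibers to be singletons: $\mathrm{cl}_Y(S\cap Z)\setminus X=(\mathrm{cl}_YS\cap\mathrm{cl}_YZ)\setminus X$ for all zero-sets $S,Z$ is the analogue, on the remainder side, of the Stone–Čech zero-set intersection property; since $\beta Y$ is obtained from $\beta X$ by contracting the fibers $\psi^{-1}(p)$ (Lemma \ref{j2}) and $\beta X$ itself has the intersection property $\mathrm{cl}_{\beta X}(S\cap Z)=\mathrm{cl}_{\beta X}S\cap\mathrm{cl}_{\beta X}Z$, the failure of this identity on $Y\setminus X$ occurs precisely when some fiber $\psi^{-1}(p)$ is non-degenerate and straddles the closures in a way that survives the quotient. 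I would prove: (2.b.i) holds iff every $\psi^{-1}(p)$, $p\in Y\setminus X$, is a singleton. Combining, (b) holds iff $\psi^{-1}[Y\setminus X]=\beta X\setminus\lambda_{\mathcal P}X$ and all fibers are singletons, i.e. iff ${\mathscr F}(Y)=\{\{t\}:t\in\beta X\setminus\lambda_{\mathcal P}X\}={\mathscr F}(\zeta_{\mathcal P}X)$, which by Lemma \ref{DFH} means $Y$ is equivalent to $\zeta_{\mathcal P}X$.

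For the equivalence of (a) and (b): (b) $\Rightarrow$ (a) is a direct computation — given $S,Z$ with $S\cap Z\subseteq C$, $\mathrm{cl}_XC$ has $\mathcal P$, apply (2.b.i) to get $(\mathrm{cl}_YS\cap\mathrm{cl}_YZ)\setminus X=\mathrm{cl}_Y(S\cap Z)\setminus X$, then note $S\cap Z$ is a zero-set contained in $C$ (a cozero-set with $\mathcal P$-closure) so (2.b.ii) gives $\mathrm{cl}_Y(S\cap Z)\subseteq X$, hence $\mathrm{cl}_YS\cap\mathrm{cl}_YZ\subseteq X$. For (a) $\Rightarrow$ (b): to get (2.b.ii), take $Z\subseteq C\in Coz(X)$ with $\mathrm{cl}_XC$ having $\mathcal P$, and apply (a) with $S=Z$ (so $S\cap Z=Z\subseteq C$), obtaining $\mathrm{cl}_YZ\subseteq X$ directly. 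To get (2.b.i), the nontrivial inclusion is $(\mathrm{cl}_YS\cap\mathrm{cl}_YZ)\setminus X\subseteq\mathrm{cl}_Y(S\cap Z)\setminus X$ (the other is automatic); here I would use that (a), applied across all admissible pairs, forces the fibers to be singletons — pick $p\in(\mathrm{cl}_YS\cap\mathrm{cl}_YZ)\setminus X$ and a point $t\in\psi^{-1}(p)$; if $\psi^{-1}(p)$ had two points I could separate in $\beta X$ by disjoint cozero-sets whose closures lie in $\lambda_{\mathcal P}X$ (using Lemma \ref{BA27}/\ref{19FG} and that, by (a) with $S=Z=$ the relevant zero-set, $\mathrm{cl}_{\beta X}(\,\cdot\,)\subseteq\lambda_{\mathcal P}X$ is forced), contradicting non-degeneracy after the quotient; once fibers are singletons, $\beta Y=\beta X$ and the zero-set intersection property of $\beta X$ transfers. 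The main obstacle I anticipate is exactly this: cleanly extracting ``all fibers are singletons'' from condition (2.b.i) — or from (a) — in a way that also handles the interaction with $\lambda_{\mathcal P}X$. I expect the right bookkeeping is to first establish $Y\in{\mathscr O}^{\mathcal Q}_{\mathcal P}(X)$ from (2.b.ii) via Theorem \ref{HG16}, so that $\psi=\mathrm{id}$ on $\lambda_{\mathcal P}X$ and the non-singleton fibers all live over $\beta X\setminus\lambda_{\mathcal P}X$, and then use (2.b.i) with carefully chosen zero-sets separating a putative two-point fiber — mimicking the ``(3) implies (2)'' argument inside Theorem \ref{HG16} — to reach the contradiction.
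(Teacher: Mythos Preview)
Your plan for part (1) matches the paper's proof; the paper exhibits the map $\phi|\zeta_{\mathcal{P}}X:\zeta_{\mathcal{P}}X\to Y$ directly rather than going through Lemma~\ref{DFH}, but the content is identical.

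For part (2) your overall strategy is correct and close to the paper's, but the organization is more roundabout and one step contains a genuine misstep. The paper does not attempt (a) $\Leftrightarrow$ (b) directly; it shows (i) $\zeta_{\mathcal{P}}X$ satisfies both (a) and (b) by direct computation in $\beta X$, (ii) (a) implies $Y=\zeta_{\mathcal{P}}X$, and (iii) (b) implies (a). Step (iii) is exactly your easy computation. Your route (a) $\Rightarrow$ (2.b.i) via ``fibers are singletons'' is in substance the paper's step (ii), after which the detour back through (2.b.i) is redundant --- once fibers are singletons and $\phi^{-1}[Y\setminus X]=\beta X\setminus\lambda_{\mathcal P}X$, you already have $Y=\zeta_{\mathcal P}X$.

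The misstep is in how you propose to extract ``fibers are singletons'' from (a). You suggest separating two points of $\psi^{-1}(p)$ by cozero-sets \emph{whose closures lie in $\lambda_{\mathcal{P}}X$}; but once you have established optimality (correctly, from (2.b.ii), which is the special case $S=Z$ of (a)), both points of $\psi^{-1}(p)$ lie in $\beta X\setminus\lambda_{\mathcal{P}}X$, so no set with $\beta X$-closure inside $\lambda_{\mathcal{P}}X$ can reach them. The paper's argument is simpler and does not involve $\lambda_{\mathcal P}X$ at this step: given distinct $a,b\in\phi^{-1}(p)$, pick continuous $f:\beta X\to\mathbf{I}$ with $f(a)=0$, $f(b)=1$, and set $S=f^{-1}[[0,1/3]]\cap X$, $Z=f^{-1}[[2/3,1]]\cap X$, so that $S\cap Z=\emptyset$. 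Since $X$ is nonempty and locally-$\mathcal{P}$, there exists \emph{some} $C\in Coz(X)$ with $\mbox{cl}_XC$ having $\mathcal{P}$, and trivially $S\cap Z=\emptyset\subseteq C$; hence (a) applies and gives $\mbox{cl}_YS\cap\mbox{cl}_YZ\subseteq X$. On the other hand, pulling back any neighborhood of $p$ along $\phi$ one sees that it meets both $S$ and $Z$, so $p\in\mbox{cl}_YS\cap\mbox{cl}_YZ$, contradicting $p\in Y\setminus X$. The point you were missing is that the hypothesis of (a) is vacuously available whenever $S\cap Z=\emptyset$.
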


\begin{proof}
(1). By  Lemma \ref{15} we have  $X\subseteq\lambda_{\mathcal{P}} X$. Thus $\zeta_{\mathcal{P}} X$ is a Tychonoff extension of $X$ with the compact remainder $\zeta_{\mathcal{P}} X\backslash X=\beta X\backslash \lambda_{\mathcal{P}} X$. Since $X\subseteq\zeta_{\mathcal{P}} X\subseteq \beta X$ we have $\beta\zeta_{\mathcal{P}} X=\beta X$ (see Corollary 3.6.9 of \cite{E}). Therefore by Lemma \ref{16} (with $f=\mbox{id}_X$ and $\phi=\mbox{id}_{ \beta X}$) we have $\zeta_{\mathcal{P}} X\in{\mathscr E}^{\mathcal Q}_{\mathcal P}(X)$ and by Theorem \ref{HG16} it follows that $\zeta_{\mathcal{P}} X\in{\mathscr O}^{\mathcal Q}_{\mathcal P}(X)$ and thus $\zeta_{\mathcal{P}} X\in{\mathscr M}^{\mathcal Q}_{\mathcal P}(X)$.
Now let $Y\in{\mathscr E}^{\mathcal Q}_{\mathcal P}(X)$ and let $\phi:\beta X\rightarrow \beta Y$ be the continuous extension of $\mbox{id}_X$.  By Lemma \ref{16} we have $\beta X\backslash \lambda_{\mathcal{P}}X\subseteq\phi^{-1}[Y\backslash X]$. Therefore
\begin{eqnarray*}
\phi[\zeta_{\mathcal{P}}X]&=&\phi\big[X\cup(\beta X\backslash\lambda_{\mathcal{P}}X)\big]\\&=&\phi[X]\cup\phi[\beta X\backslash\lambda_{\mathcal{P}}X]\\&=&X\cup\phi[\beta X\backslash\lambda_{\mathcal{P}}X]\subseteq X\cup\phi\big[\phi^{-1}[Y\backslash X]\big]\subseteq X\cup(Y\backslash X)=Y
\end{eqnarray*}
and thus $\phi|\zeta_{\mathcal{P}} X:\zeta_{\mathcal{P}} X\rightarrow Y$. Since the latter fixes $X$ pointwise this shows that $Y\leq \zeta_{\mathcal{P}}X$. Therefore $Y$ is the largest element of ${\mathscr E}^{\mathcal Q}_{\mathcal P}(X)$.

(2). Let  $Y=\zeta_{\mathcal{P}} X$. We show that $Y$ satisfies (2.a) and (2.b). To show (2.a) suppose that $S,Z\in {\mathscr Z}(X)$ are such that $S\cap Z\subseteq C$ for some $C\in Coz(X)$ such that $\mbox{cl}_X C$ has $\mathcal{P}$. Then by Lemma \ref{BA27} we have $\mbox{cl}_{\beta X}(S\cap Z)\subseteq\lambda_{\mathcal{P}} X$ and thus
\[\mbox{cl}_Y S\cap\mbox{cl}_Y Z=\mbox{cl}_{\beta X} S\cap\mbox{cl}_{\beta X}Z\cap Y=\mbox{cl}_{\beta X}(S\cap Z)\cap Y\subseteq\lambda_{\mathcal{P}} X\cap Y\subseteq X.\]
To show (2.b) note that for any $S,Z\in{\mathscr Z}(X)$ we have
\[\mbox{cl}_Y (S\cap Z)=\mbox{cl}_{\beta X}(S\cap Z)\cap Y=\mbox{cl}_{\beta X} S\cap\mbox{cl}_{\beta X}Z\cap Y=\mbox{cl}_Y S\cap\mbox{cl}_Y Z.\]
Therefore (2.b.i) holds. Note that since (2.a) holds, (2.b.ii) holds as well.

Now suppose that some $Y\in{\mathscr E}^{\mathcal Q}_{\mathcal P}(X)$ satisfies (2.a). Let $\phi:\beta X\rightarrow\beta Y$ be the continuous extension of $\mbox{id}_X$.
Recall the construction of $\beta Y$ and the representation of $\phi$ given in Lemma \ref{j2}. Note that (2.a) in particular implies that $\mbox{cl}_Y Z\cap(Y\backslash X)=\emptyset$ for any $Z\in {\mathscr Z}(X)$ such that $Z\subseteq C$ for some $C\in Coz(X)$ such that $\mbox{cl}_X C$ has $\mathcal{P}$. Thus by Theorem \ref{HG16} we have $Y\in{\mathscr O}_{\mathcal P}(X)$ and therefore $\phi^{-1}[Y\backslash X]=\beta X\backslash\lambda_{\mathcal{P}} X$. We show that for any $p\in Y\backslash X$ the set $\phi^{-1}(p)$ consists of a single point from this it then follows that $Y=\zeta_{\mathcal{P}} X$. Suppose to the contrary that for some $p\in Y\backslash X$ there exist distinct $a,b\in \phi^{-1}(p)$. Let $f:\beta X\rightarrow\mathbf{I}$ be continuous with $f(a)=0$ and $f(b)=1$. Let
\[S=f^{-1}\big[[0,1/3]\big]\cap X\mbox{ and }Z=f^{-1}\big[[2/3,1]\big]\cap X.\]
Then $S,Z\in {\mathscr Z}(X)$ and $S\cap Z=\emptyset$. Thus (and since $X$ is a non--empty Tychonoff locally--${\mathcal{P}}$ space, and ${\mathcal{P}}$ is hereditary with respect to closed subsets of Hausdorff spaces and thus containing some $C\in Coz(X)$ such that $\mbox{cl}_X C$ has $\mathcal{P}$) by our assumption $\mbox{cl}_Y S\cap\mbox{cl}_Y Z\subseteq X$. We show that $p\in\mbox{cl}_Y S\cap\mbox{cl}_Y Z$, this contradiction proves that $Y=\zeta_{\mathcal{P}} X$. Let $V$ be an open neighborhood of $p$ in $Y$ and let the open subset $V'$ of $\beta Y$ be such that $V=V'\cap Y$. Then $\phi^{-1}[V']\cap f^{-1}[[0,1/3)]$ and $\phi^{-1}[V']\cap f^{-1}[(2/3,1]]$ are open neighborhoods of $a$ and $b$ in $\beta X$, respectively, and therefore, have non--empty intersection with $X$. Note that
\[\phi^{-1}[V']\cap f^{-1}\big[[0,1/3)\big]\cap X\subseteq S\cap V\mbox{ and }\phi^{-1}[V']\cap f^{-1}\big[(2/3,1]\big]\cap X\subseteq Z\cap V.\]
Thus $p\in \mbox{cl}_Y S\cap\mbox{cl}_Y Z$. Finally, we show that (2.b) implies (2.a). This together with the above proves the theorem. Suppose that (2.b) holds. Let $S,Z\in{\mathscr Z}(X)$ be such that $S\cap Z\subseteq C$ for some $C\in Coz(X)$ such that $\mbox{cl}_X C$ has $\mathcal{P}$. Then by (2.b.ii) we have $\mbox{cl}_Y(S\cap Z)\subseteq X$. Therefore using (2.b.i) we have
\[(\mbox{cl}_Y S\cap \mbox{cl}_Y Z)\backslash X=\mbox{cl}_Y (S\cap Z)\backslash X=\emptyset\]
and thus $\mbox{cl}_Y S\cap \mbox{cl}_Y Z\subseteq X$.
\end{proof}

\section{Applications}

\subsection{Tight $\mathcal{P}$--extensions}

In \cite{Mag3} K.D. Magill, Jr. proved the following theorem relating the order--structure of the set of all compactifications of a locally compact space $X$ to the topology of $\beta X\backslash X$. Recall that order--isomorphic lattices are called {\em lattice--isomorphic}.

\begin{theorem}[Magill \cite{Mag3}]\label{KLFA}
Let $X$ and $Y$ be locally compact non--compact spaces. The following are equivalent:
\begin{itemize}
\item[\rm(1)] $({\mathscr K}(X),\leq)$ and $({\mathscr K}(Y),\leq)$ are lattice--isomorphic.
\item[\rm(2)] $\beta X\backslash X$ and $\beta Y\backslash Y$ are homeomorphic.
\end{itemize}
\end{theorem}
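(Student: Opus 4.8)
The final statement is Magill's theorem (Theorem \ref{KLFA}), which relates the lattice structure of $({\mathscr K}(X),\leq)$ to the topology of $\beta X\backslash X$ for locally compact non--compact $X$. My plan is to derive this as a special case of the machinery already developed in the excerpt, since the apparatus built in Chapter~4 is precisely designed to generalize it.

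The plan is to specialize Theorem \ref{KKLFA} to the case when ${\mathcal P}$ is compactness and ${\mathcal Q}$ is regularity. For these choices, ${\mathcal P}$ and ${\mathcal Q}$ form a pair of compactness--like topological properties (item (1) in Example \ref{20UIHG}, together with the remark that regularity may always play the role of ${\mathcal Q}$). The first step is to check that a locally compact space is trivially locally--${\mathcal P}$ (every point has a neighborhood with compact closure), and that for locally compact $X$ one has $\lambda_{\mathcal P}X=X$: indeed $X\subseteq\lambda_{\mathcal P}X$ by Lemma \ref{15}, and conversely, since $X$ is open in $\beta X$ and every $Z\in{\mathscr Z}(X)$ which is compact satisfies $\mbox{int}_{\beta X}\mbox{cl}_{\beta X}Z=\mbox{int}_{\beta X}Z\subseteq X$, so the union defining $\lambda_{\mathcal P}X$ is contained in $X$. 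Next, one verifies that with ${\mathcal P}=\mbox{compactness}$ the class ${\mathscr O}^{\mathcal Q}_{\mathcal P}(X)$ coincides with ${\mathscr K}(X)$: a compactification of $X$ is exactly a Tychonoff ${\mathcal P}$--extension with (empty, hence compact) remainder, it is automatically minimal since deleting a point from $\beta Y\backslash X$... more directly, since $\beta X\backslash\lambda_{\mathcal P}X=\beta X\backslash X$ and every compactification $Y$ satisfies $\phi^{-1}[Y\backslash X]=\beta X\backslash X$ (as $\phi$ maps $\beta X\backslash X$ onto $\beta Y\backslash X=Y\backslash X$ by Theorem~3.5.7 of \cite{E}), condition (2) of Theorem \ref{HG16} holds, so $Y\in{\mathscr O}^{\mathcal Q}_{\mathcal P}(X)$; conversely any element of ${\mathscr O}^{\mathcal Q}_{\mathcal P}(X)$ is a compact Tychonoff extension of $X$, i.e. a compactification.

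With these identifications in hand, the equivalence of the two conditions in the statement follows directly: Theorem \ref{KKLFA} (with ${\mathcal P}=\mbox{compactness}$, ${\mathcal Q}=\mbox{regularity}$, noting that a locally compact space is non--compact if and only if it is non--${\mathcal P}$) asserts that $({\mathscr O}^{\mathcal Q}_{\mathcal P}(X),\leq)$ and $({\mathscr O}^{\mathcal Q}_{\mathcal P}(Y),\leq)$ are order--isomorphic if and only if $\beta X\backslash\lambda_{\mathcal P}X$ and $\beta Y\backslash\lambda_{\mathcal P}Y$ are homeomorphic, which is exactly the statement that $({\mathscr K}(X),\leq)$ and $({\mathscr K}(Y),\leq)$ are order--isomorphic if and only if $\beta X\backslash X$ and $\beta Y\backslash Y$ are homeomorphic. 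The only remaining point is to bridge ``order--isomorphic'' and ``lattice--isomorphic'': since $({\mathscr K}(X),\leq)$ is a complete lattice for locally compact $X$ (Propositions 4.2(a) and 4.3(e) of \cite{PW}, or Corollary \ref{YGFAS}), any order--isomorphism between ${\mathscr K}(X)$ and ${\mathscr K}(Y)$ automatically preserves the lattice operations $\vee$ and $\wedge$ (being least upper bounds and greatest lower bounds, which are order--theoretic notions), and conversely a lattice--isomorphism is in particular an order--isomorphism; so the two notions coincide here.

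I expect the main obstacle to be purely bookkeeping: carefully confirming the identifications $\lambda_{\mathcal P}X=X$ and ${\mathscr O}^{\mathcal Q}_{\mathcal P}(X)={\mathscr K}(X)$, and making sure the underlying order isomorphism $\Theta$ in the proof of Lemma \ref{HGFA} (which sends $Y\in{\mathscr O}^{\mathcal Q}_{\mathcal P}(X)$ to $\beta Y\in{\mathscr K}(\lambda_{\mathcal P}X)=\mathscr{K}(X)$) really is the identity in this degenerate case, so that no circularity is introduced. No new mathematical difficulty arises; the content is entirely contained in Theorem \ref{KKLFA} and the completeness of the compactification lattice. The proof is therefore short: it consists of these specializations followed by a one--line invocation of Theorem \ref{KKLFA} and a remark on complete lattices.
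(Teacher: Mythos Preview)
Your approach has a fatal circularity. The paper does not give its own proof of Theorem~\ref{KLFA}; it is stated as a cited result of Magill and is then \emph{used as an ingredient} in the proof of Theorem~\ref{KKLFA}. If you read that proof, you will see the sentence ``By Theorem~\ref{KLFA} the partially ordered sets ${\mathscr K}(\lambda_{{\mathcal P}}X)$ and ${\mathscr K}(\lambda_{{\mathcal P}}Y)$ are order--isomorphic if and only if $\beta\lambda_{{\mathcal P}}X\backslash\lambda_{{\mathcal P}}X$ and $\beta\lambda_{{\mathcal P}}Y\backslash\lambda_{{\mathcal P}}Y$ are homeomorphic.'' So the logical flow in the paper is: Magill's theorem (cited) $+$ Lemma~\ref{HGFA} $\Rightarrow$ Theorem~\ref{KKLFA}. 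Your plan reverses the arrow and deduces Magill's theorem from Theorem~\ref{KKLFA}, which is circular.

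You anticipated a possible circularity in Lemma~\ref{HGFA}, and you are right that when ${\mathcal P}$ is compactness the map $\Theta$ there degenerates to the identity. But that is precisely the problem: once Lemma~\ref{HGFA} becomes trivial, the entire content of Theorem~\ref{KKLFA} in this special case reduces to the invocation of Theorem~\ref{KLFA} itself. There is nothing left over to extract. If you want an independent proof you would have to go back to Magill's original argument (or rebuild it from the anti--atom machinery in Lemmas~\ref{KJH}--\ref{HVFS}), not specialize Theorem~\ref{KKLFA}.
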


The idea of generalizing the above result led J. Mack, M. Rayburn and R.G. Woods in \cite{MRW1} to introduce and study a new class of extensions. We state some definitions together with some results from \cite{MRW1} below. The reader may find it useful to compare these results with those we have already obtained in the previous chapter.

Let $X$ be a Tychonoff space and let $\mathcal{P}$ be a topological property. A Tychonoff  $\mathcal{P}$--extension of $X$ is called {\em tight} if it does not contain properly any other $\mathcal{P}$--extension of $X$. Now let ${\mathcal P}$ be a topological property which is  closed hereditary, productive and is  such that if a Tychonoff space is the union of a compact space and a space with ${\mathcal P}$ then it has ${\mathcal P}$. Let $X$ be a Tychonoff space. Define the {\em ${\mathcal P}$--reflection} $\gamma_{\mathcal P} X$ of $X$ by
\[\gamma_{\mathcal P}X=\bigcap\{T:T \mbox{ has }{\mathcal P}\mbox{ and } X\subseteq T\subseteq\beta X\}.\]
If ${\mathcal P}$ is compactness then $\gamma_{\mathcal P} X=\beta X$ and if ${\mathcal P}$ is realcompactness then $\gamma_{\mathcal P}X=\upsilon X$ (the Hewitt realcompactification of $X$). Also, by Corollary 2.4 of \cite{MRW1} the space $\gamma_{\mathcal P}X$ has
$\mathcal{P}$. Denote by ${\mathscr P}(X)$ the set of all tight ${\mathcal P}$--extensions of $X$. As remarked in \cite{MRW1}, for a Tychonoff locally--${\mathcal P}$ non--${\mathcal P}$ space $X$ there is the largest one--point extension $X^*$ in ${\mathscr P}(X)$. Let
\[{\mathscr P}^*(X)=\big\{T\in{\mathscr P}(X):X^*\leq T\big\}\]
and for any $T\in{\mathscr P}^*(X)$, if $f_T:\beta X\rightarrow\beta T$ denotes the continuous extension of $\mbox{id}_X$, let
\[{\mathscr D}^*(X)=\big\{T\in {\mathscr P}^*(X):f_T[\gamma_{\mathcal P} X]=T\big\}.\]

\begin{theorem}[Mack, Rayburn and  Woods \cite{MRW1}]\label{JlFA}
Let $X$ and $Y$ be Tychonoff locally--${\mathcal P}$ non--${\mathcal P}$ spaces. If  $({\mathscr P}^*(X),\leq)$ and $({\mathscr P}^*(Y),\leq)$  are lattice--isomorphic then  $\gamma_{\mathcal P} X\backslash X$ and $\gamma_{\mathcal P} Y\backslash Y$ are homeomorphic.
\end{theorem}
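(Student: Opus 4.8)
The plan is to mirror the proof of the implication $(1)\Rightarrow(2)$ in Theorem \ref{LGLH}: transport the distinguished order--theoretic structure of ${\mathscr P}^*(X)$ across the given lattice--isomorphism, identify a sublattice order--isomorphic to the lattice of compactifications of $\gamma_{\mathcal P}X$, and then invoke a Magill--type theorem relating the order structure of that lattice to the topology of the outgrowth.

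First I would develop the internal combinatorics of ${\mathscr P}^*(X)$ exactly as in Lemmas \ref{LKA}, \ref{OTH}, \ref{KJH}, \ref{GGFDS}, \ref{HVFS} and \ref{PKJF}. Working inside $\beta X$ (so that $\beta\gamma_{\mathcal P}X=\beta X$), for a finite family $K_1,\dots,K_n$ of pairwise disjoint compact subsets of $\beta X\backslash X$, each meeting $\beta X\backslash\gamma_{\mathcal P}X$, there should be a unique $e_X(K_1,\dots,K_n)\in{\mathscr P}^*(X)$ whose $\beta$--family $\{f_T^{-1}(p):p\in T\backslash X\}$ consists of the $K_i$ together with the singletons of the remaining points of $\beta X\backslash\gamma_{\mathcal P}X$; the meet operation is described by merging overlapping $K_i$'s (as in Lemma \ref{OTH}); and the anti--atoms of ${\mathscr P}^*(X)$ are precisely the $e_X(\{a,b\})$ with $a\notin\gamma_{\mathcal P}X$ or $b\notin\gamma_{\mathcal P}X$ (Lemma \ref{KJH}), split into type $(\mbox{I})$ and type $(\mbox{II})$ by whether $\{a,b\}$ meets $\gamma_{\mathcal P}X$. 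From this one obtains, exactly as in Lemmas \ref{GGFDS} and \ref{HVFS}, purely lattice--theoretic descriptions of $\mbox{card}(\beta X\backslash\gamma_{\mathcal P}X)$, of anti--atoms of each type, of one--point extensions (Lemma \ref{KHFA}), and of the elements of ${\mathscr D}^*(X)$, which should be characterized as the $T\in{\mathscr P}^*(X)$ lying below no anti--atom of type $(\mbox{I})$ (Lemma \ref{PKJF}). Here the hypotheses on ${\mathcal P}$ (closed hereditary, productive, absorbing unions with compacta, so that $\gamma_{\mathcal P}X$ has ${\mathcal P}$ by Corollary 2.4 of \cite{MRW1}, and $X^*$ is the largest one--point element) play the role of the compactness--like axioms used earlier.

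Given a lattice--isomorphism $\Theta\colon({\mathscr P}^*(X),\leq)\to({\mathscr P}^*(Y),\leq)$, the next step is the analogue of Lemma \ref{FSWR}: since $\Theta$ preserves meets, anti--atoms, and anti--atoms of each type, it preserves the order--theoretic property characterizing membership in ${\mathscr D}^*$; applying the same to $\Theta^{-1}$ gives $\Theta[{\mathscr D}^*(X)]={\mathscr D}^*(Y)$, so $\Theta$ restricts to a lattice--isomorphism $({\mathscr D}^*(X),\leq)\cong({\mathscr D}^*(Y),\leq)$. I would then prove the analogue of Lemma \ref{HGFA}: the assignment $T\mapsto\beta T$, equivalently the compactification of $\gamma_{\mathcal P}X$ obtained by collapsing the $\beta$--family of $T$, is a lattice--isomorphism $({\mathscr D}^*(X),\leq)\cong({\mathscr K}(\gamma_{\mathcal P}X),\leq)$, using $\beta\gamma_{\mathcal P}X=\beta X$ and the defining condition $f_T[\gamma_{\mathcal P}X]=T$ to see that the $\beta$--families involved are exactly those collapsing only points of $\beta X\backslash\gamma_{\mathcal P}X$. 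Combining the two displayed isomorphisms yields $({\mathscr K}(\gamma_{\mathcal P}X),\leq)\cong({\mathscr K}(\gamma_{\mathcal P}Y),\leq)$, and a Magill--type theorem then gives a homeomorphism $\gamma_{\mathcal P}X\backslash X\cong\gamma_{\mathcal P}Y\backslash Y$.

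The main obstacle is this last step combined with the identification ${\mathscr D}^*(X)\cong{\mathscr K}(\gamma_{\mathcal P}X)$: unlike $\lambda_{\mathcal P}X$, which is automatically open in $\beta X$ and hence locally compact, the ${\mathcal P}$--reflection $\gamma_{\mathcal P}X$ carries no such guarantee, so Theorem \ref{KLFA} cannot be quoted directly. One must either invoke the extension of Magill's theorem to non--locally--compact spaces (cf. \cite{R}), or exploit that $\gamma_{\mathcal P}X$ is an intersection of ${\mathcal P}$--spaces sitting between $X$ and $\beta X$, so that $\beta X\backslash\gamma_{\mathcal P}X$ functions as the analogue of the ``bad set'' $\beta X\backslash\lambda_{\mathcal P}X$ and the full order--theoretic reconstruction of the remainder goes through as in the proof of Theorem \ref{KKLFA}. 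A secondary technical point is verifying that the $e_X(K_1,\dots,K_n)$ are genuinely tight ${\mathcal P}$--extensions dominating $X^*$, which is where the special hypotheses on ${\mathcal P}$ must be used in place of the compactness--like machinery of the earlier chapters.
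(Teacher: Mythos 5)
Before reviewing the substance, note that the paper does not prove this statement: Theorem \ref{JlFA} is quoted from \cite{MRW1} as background, so your outline can only be measured against what a correct proof in the Mack--Rayburn--Woods setting must accomplish.

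The central gap is that your dictionary between ${\mathscr P}^*(X)$ and subsets of $\beta X$ is inverted, and as a result the argument, even if all its steps were granted, would recover the wrong space. Throughout you treat $\beta X\backslash\gamma_{\mathcal P}X$ as the analogue of $\beta X\backslash\lambda_{\mathcal P}X$: the proposed $e_X(K_1,\dots,K_n)$ have fibers meeting $\beta X\backslash\gamma_{\mathcal P}X$, the anti--atoms are claimed to be $e_X(\{a,b\})$ with $a\notin\gamma_{\mathcal P}X$ or $b\notin\gamma_{\mathcal P}X$, and ${\mathscr D}^*(X)$ is claimed to consist of those $T$ whose $\beta$--families ``collapse only points of $\beta X\backslash\gamma_{\mathcal P}X$.'' That last claim already contradicts the definition: $T\in{\mathscr D}^*(X)$ means $f_T[\gamma_{\mathcal P}X]=T$, which forces every fiber $f_T^{-1}(p)$, $p\in T\backslash X$, to meet $\gamma_{\mathcal P}X\backslash X$ and forces $f_T[\gamma_{\mathcal P}X\backslash X]\subseteq T\backslash X$; if instead the union of the fibers were $\beta X\backslash\gamma_{\mathcal P}X$, then $f_T[\gamma_{\mathcal P}X]\cap T=X\neq T$. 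In the Mack--Rayburn--Woods theory the distinguished subset of $\beta X\backslash X$ is $\gamma_{\mathcal P}X\backslash X$ (generally non--compact), and the relevant dual atoms identify pairs of points of $\gamma_{\mathcal P}X\backslash X$; that is exactly why the conclusion of the theorem concerns $\gamma_{\mathcal P}X\backslash X$. Your final step --- an isomorphism ${\mathscr D}^*(X)\cong{\mathscr K}(\gamma_{\mathcal P}X)$ followed by a Magill--type theorem --- would at best relate the lattices to $\beta\gamma_{\mathcal P}X\backslash\gamma_{\mathcal P}X=\beta X\backslash\gamma_{\mathcal P}X$, not to $\gamma_{\mathcal P}X\backslash X$ (the two coincide only when $\gamma_{\mathcal P}X=\beta X$, e.g.\ for compactness); moreover, for $T\in{\mathscr D}^*(X)$ the space $\beta T$ is an image of $\beta X$ that typically collapses points of $\gamma_{\mathcal P}X\backslash X$, so it is not a compactification of $\gamma_{\mathcal P}X$ and the claimed analogue of Lemma \ref{HGFA} fails at the definition stage.

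A second, independent gap is that the transplanted combinatorics is unsupported in this setting. Lemmas \ref{LKA}, \ref{OTH}, \ref{KJH}, \ref{GGFDS}, \ref{HVFS}, \ref{PKJF}, \ref{FSWR} and Theorems \ref{KKLFA}, \ref{LGLH} all rest on Lemma \ref{16} and Theorems \ref{HUHG16} and \ref{HG16}, hence on $\mathcal P$ being clopen hereditary, finitely additive, perfect and satisfying Mr\'{o}wka's condition $(\mbox{W})$, and on every extension in sight having compact remainder. Elements of ${\mathscr P}^*(X)$ are tight extensions whose remainders need not be compact, tightness is not minimality--with--compact--remainder (Examples \ref{FDG} and \ref{JKTXA} show ${\mathscr T}_{\mathcal P}(X)$, ${\mathscr M}_{\mathcal P}(X)$ and ${\mathscr O}_{\mathcal P}(X)$ all differ), and the hypotheses of \cite{MRW1} (closed hereditary, productive, absorption of compact unions) are incomparable with the compactness--like ones, as Section 5.1 itself emphasizes. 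So the statements that ``$e_X(K_1,\dots,K_n)\in{\mathscr P}^*(X)$,'' the description of the anti--atoms, and the order--theoretic characterization of ${\mathscr D}^*(X)$ are precisely what a proof must establish from the MRW axioms, and nothing in this paper supplies them. To repair the outline one must rebuild the Magill--style machinery around $\gamma_{\mathcal P}X\backslash X$ --- with points of the remainder recovered from dual atoms that identify two points of $\gamma_{\mathcal P}X\backslash X$ --- rather than around $\beta X\backslash\gamma_{\mathcal P}X$.
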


The following main result of \cite{MRW1} generalizes K.D. Magill, Jr.'s theorem in \cite{Mag3} (Theorem \ref{KLFA}).

\begin{theorem}[Mack, Rayburn and  Woods \cite{MRW1}]\label{GGF}
Let $X$ and $Y$ be Tychonoff locally--${\mathcal P}$ non--${\mathcal P}$ spaces and suppose that ${\mathscr D}^*(X)={\mathscr P}^*(X)$ and ${\mathscr D}^*(Y)={\mathscr P}^*(Y)$. Suppose moreover that $\gamma_{\mathcal P}X\backslash X$ and $\gamma_{\mathcal P}Y\backslash Y$ are $C^*$--embedded in
$\gamma_{\mathcal P}X$ and $\gamma_{\mathcal P}Y$, respectively. The following are equivalent:
\begin{itemize}
\item[\rm(1)] $({\mathscr P}^*(X),\leq)$ and $({\mathscr P}^*(Y),\leq)$ are lattice--isomorphic.
\item[\rm(2)] $\gamma_{\mathcal P}X\backslash X$ and $\gamma_{\mathcal P}Y\backslash Y$ are homeomorphic.
\end{itemize}
\end{theorem}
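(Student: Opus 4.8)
The two implications have quite different characters, and one of them is already available in the excerpt.

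\textbf{Direction $(1)\Rightarrow(2)$.} This needs no new argument: it is precisely the content of Theorem \ref{JlFA}, which says that whenever $({\mathscr P}^*(X),\leq)$ and $({\mathscr P}^*(Y),\leq)$ are lattice--isomorphic, the remainders $\gamma_{\mathcal P}X\backslash X$ and $\gamma_{\mathcal P}Y\backslash Y$ are homeomorphic, under the sole blanket hypothesis that $X$ and $Y$ are Tychonoff locally--$\mathcal P$ non--$\mathcal P$ spaces. So the extra hypotheses ${\mathscr D}^*={\mathscr P}^*$ and the $C^*$--embedding play no role in this direction.

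\textbf{Direction $(2)\Rightarrow(1)$: the plan.} Let $h\colon\gamma_{\mathcal P}X\backslash X\to\gamma_{\mathcal P}Y\backslash Y$ be a homeomorphism. The strategy imitates K.D. Magill, Jr.'s proof of Theorem \ref{KLFA} (and the proof of Theorem \ref{KKLFA} above), with $\beta X\backslash X$ replaced by $\gamma_{\mathcal P}X\backslash X$ and compactifications replaced by tight $\mathcal P$--extensions dominating $X^*$. The first step is to set up, exactly as with the $\beta$--family of a compactification in \cite{Mag3}, a bijective correspondence
\[
\Phi_X\colon{\mathscr P}^*(X)={\mathscr D}^*(X)\longrightarrow\mathscr U(\gamma_{\mathcal P}X\backslash X),
\]
where $\mathscr U(\gamma_{\mathcal P}X\backslash X)$ is the set of upper semicontinuous decompositions of $\gamma_{\mathcal P}X\backslash X$ into compact subsets; concretely $\Phi_X(T)=\{f_T^{-1}(p)\colon p\in T\backslash X\}$, noting that $f_T[\gamma_{\mathcal P}X]=T$ by the definition of ${\mathscr D}^*(X)$ and that $f_T|\gamma_{\mathcal P}X$ is perfect, so the fibres are compact and the decomposition is upper semicontinuous. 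That $\Phi_X$ is a bijection amounts to reconstructing $T$ as the quotient of $\gamma_{\mathcal P}X$ obtained by collapsing the blocks of $\Phi_X(T)$ (keeping $X$ pointwise fixed) --- a $\gamma_{\mathcal P}X$--analogue of Lemma \ref{j2}. Granting this, one transports decompositions along $h$: for $\mathscr G\in\mathscr U(\gamma_{\mathcal P}X\backslash X)$ put $h_*(\mathscr G)=\{h[G]\colon G\in\mathscr G\}\in\mathscr U(\gamma_{\mathcal P}Y\backslash Y)$, and defines $\Theta=\Phi_Y^{-1}\circ h_*\circ\Phi_X\colon{\mathscr P}^*(X)\to{\mathscr P}^*(Y)$. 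One then checks that $\Theta$ is a lattice isomorphism. For the order: for $T_1,T_2\in{\mathscr D}^*(X)$ one has $T_1\leq T_2$ iff every member of $\Phi_X(T_2)$ is contained in a member of $\Phi_X(T_1)$ (the criterion of \cite{Mag3} recalled before Lemma \ref{DFH}, transplanted from $\beta X$ to $\gamma_{\mathcal P}X$), and this refinement relation is evidently preserved by the homeomorphism $h_*$; hence $\Theta$ and $\Theta^{-1}$ are order--homomorphisms. Finally, meets and joins in ${\mathscr P}^*(X)$ correspond under $\Phi_X$ to the common refinement and to the coarsest upper semicontinuous decomposition each of whose blocks is a union of blocks of both, and these lattice operations on decompositions are carried by $h_*$ to the corresponding operations on $\mathscr U(\gamma_{\mathcal P}Y\backslash Y)$; so $\Theta$ is a lattice isomorphism, which is $(1)$.

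\textbf{Where the hypotheses enter, and the main obstacle.} The genuinely delicate point --- and what I expect to occupy the bulk of the work --- is the surjectivity of $\Phi_X$: given an arbitrary upper semicontinuous decomposition $\mathscr G$ of $\gamma_{\mathcal P}X\backslash X$ into compact sets, one must show that the quotient $T$ of $\gamma_{\mathcal P}X$ obtained by collapsing the blocks of $\mathscr G$ is a Tychonoff extension of $X$ which (a) has $\mathcal P$, (b) is tight, (c) dominates $X^*$, and moreover satisfies $f_T[\gamma_{\mathcal P}X]=T$, so that $T\in{\mathscr D}^*(X)$ with $\Phi_X(T)=\mathscr G$. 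Upper semicontinuity with compact blocks yields that $T$ is Hausdorff and that $X$ is dense in $T$; the closed--hereditary, productive, ``compact $\cup\ \mathcal P$ has $\mathcal P$'' properties of $\mathcal P$ give that $T$ has $\mathcal P$ once $T$ is correctly located inside $\gamma_{\mathcal P}T$; and it is here that the hypothesis that $\gamma_{\mathcal P}X\backslash X$ is $C^*$--embedded in $\gamma_{\mathcal P}X$ does the real work: it guarantees that no bounded continuous function on $T$ detects extra points --- equivalently, that $\gamma_{\mathcal P}T$ is exactly the quotient of $\beta X$ determined by $\mathscr G$ and that $T$ is tight --- whence $T\in{\mathscr D}^*(X)={\mathscr P}^*(X)$. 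In short, the two extra hypotheses are precisely what turns the extension $\leftrightarrow$ decomposition dictionary from a mere order--embedding of decompositions into extensions into a genuine bijection, so that $\Theta$ above is well defined and invertible; once that dictionary is in place, the transport along $h$ and the lattice--operation bookkeeping are formal.
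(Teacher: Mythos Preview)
The paper does not contain a proof of this statement. Theorem~\ref{GGF} is quoted from \cite{MRW1} (Mack, Rayburn and Woods) as background and motivation for the paper's own results on compactification--like $\mathcal{P}$--extensions; no argument is supplied here. So there is nothing in this paper to compare your proposal against.

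That said, your sketch is a reasonable outline of the Magill-style argument one would expect in \cite{MRW1}: the $(1)\Rightarrow(2)$ direction is indeed already Theorem~\ref{JlFA}, and for $(2)\Rightarrow(1)$ the decomposition dictionary via $\gamma_{\mathcal P}X$ in place of $\beta X$ is the natural route. Your identification of where the $C^*$--embedding hypothesis and the assumption ${\mathscr D}^*={\mathscr P}^*$ enter --- namely in making the correspondence $\Phi_X$ surjective and in ensuring the quotient lands back in ${\mathscr D}^*(X)$ --- is on target. If you want to verify the details, you would need to consult \cite{MRW1} directly rather than this paper.
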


Topological properties considered in \cite{MRW1} are all assumed to be productive while topological properties we have considered are hardly productive. (As it is shown in Example \ref{20UIHG} specific examples of compactness--like topological properties are mostly covering properties which are normally not expected to be productive.) However, there are topological properties which satisfy the two sets of requirements. We need to know the relation between the classes of compactification--like $\mathcal{P}$--extensions of a Tychonoff space $X$ and the class of its tight $\mathcal{P}$--extensions with compact remainder, in particular, we need to know if these two coincide. In this section we apply some of our previous results to obtain analogous results in the context of tight $\mathcal{P}$--extensions with compact countable remainder. Also, we give examples to show that the concepts ``tight $\mathcal{P}$--extension with compact remainder", ``minimal $\mathcal{P}$--extension" and ``optimal $\mathcal{P}$--extension" in general do not coincide.

We start with the following result which together with Lemma \ref{18} and Theorem \ref{20} characterizes spaces having a tight $\mathcal{P}$--extension with compact countable remainder. Note that by definitions, the two terms ``$n$--point minimal $\mathcal{P}$--extension" and ``$n$--point tight $\mathcal{P}$--extension" coincide for any $n\in\mathbf{N}$. Thus Lemma \ref{18} and Theorem \ref{20} also characterize spaces with an $n$--point tight $\mathcal{P}$--extension.

\begin{theorem}\label{HYDE}
Let ${\mathcal P}$ and  ${\mathcal Q}$ be a pair of compactness--like topological properties. Let $X$  be a Tychonoff  space with $\mathcal{Q}$. The following are equivalent:
\begin{itemize}
\item[\rm(1)] $X$ has a  countable--point minimal $\mathcal{P}$--extension with $\mathcal{Q}$.
\item[\rm(2)] $X$ has a  countable--point optimal $\mathcal{P}$--extension with $\mathcal{Q}$.
\item[\rm(3)] $X$ has a  countable--point tight $\mathcal{P}$--extension with compact remainder with $\mathcal{Q}$.
\end{itemize}
\end{theorem}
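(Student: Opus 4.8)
The plan is to prove the chain of equivalences (1) $\Leftrightarrow$ (2) $\Leftrightarrow$ (3) by reducing everything to statements about the topology of $\beta X\backslash\lambda_{\mathcal P}X$, exactly as in Lemma \ref{18}(2) and Theorem \ref{20}(2). The equivalence (1) $\Leftrightarrow$ (2) is already contained in Lemma \ref{18}(2) (parts (2.a) and (2.b) there), so the only real work is to bring condition (3) into the picture. The key observation I would isolate first is the following: a Tychonoff $\mathcal P$--extension $Y$ of $X$ with compact remainder is tight (i.e. contains no other $\mathcal P$--extension of $X$ properly) if and only if it is a minimal $\mathcal P$--extension of $X$ in the sense of Definition \ref{4}. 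Indeed, ``minimal'' already says $Y$ contains no other \emph{Tychonoff $\mathcal P$--extension of $X$ with compact remainder} properly; but if $Z\subseteq Y$ is any $\mathcal P$--extension of $X$, then $Z\backslash X$ is closed in the compact set $Y\backslash X$, hence compact, and $Z$ is automatically Tychonoff as a subspace of $Y$. So the ``with compact remainder'' qualifier is vacuous here, and the two notions coincide. This remark is the heart of the matter and should be stated explicitly at the start of the proof.

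Granting that remark, (3) says exactly that ${\mathscr M}^{\mathcal Q}_{\mathcal P}(X)$ contains an element with countable remainder, which is condition (2.a) of Lemma \ref{18}, and we are done: (1) is (2.a), (2) is (2.b), and (3) is again (2.a), all three being equivalent by Lemma \ref{18}(2). Concretely I would write: ``(1) $\Leftrightarrow$ (2) is Lemma \ref{18}(2). For (1) $\Leftrightarrow$ (3) it suffices, by the remark above, to observe that a countable--point tight $\mathcal P$--extension of $X$ with compact remainder and with $\mathcal Q$ is precisely a countable--point element of ${\mathscr M}^{\mathcal Q}_{\mathcal P}(X)$.'' One should be slightly careful that the $\mathcal Q$ bookkeeping matches: in (3) we demand the extension have both $\mathcal P$ and $\mathcal Q$, and ${\mathscr M}^{\mathcal Q}_{\mathcal P}(X)={\mathscr E}^{\mathcal Q}(X)\cap{\mathscr M}_{\mathcal P}(X)$ by Notation \ref{TR}, so this is the same class; no hypothesis on $X$ beyond ``Tychonoff with $\mathcal Q$'' is needed, which matches the statement.

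The one genuine subtlety — and the step I expect to be the main obstacle — is verifying the equivalence ``tight $=$ minimal'' cleanly, i.e. making sure there is no gap between ``$Y$ contains no other \emph{$\mathcal P$--extension of $X$}'' and ``$Y$ contains no other \emph{Tychonoff $\mathcal P$--extension of $X$ with compact remainder}''. The direction minimal $\Rightarrow$ tight requires: any $\mathcal P$--extension $Z$ of $X$ with $X\subseteq Z\subsetneq Y$ is automatically a Tychonoff $\mathcal P$--extension of $X$ with compact remainder (Tychonoff because it is a subspace of the Tychonoff space $Y$; compact remainder because $Z\backslash X$ is a closed subset of the compact Hausdorff space $Y\backslash X$, as $X$ is open in $Y$ — here one uses that $Y\backslash X$ is compact and $Y$ is Hausdorff, so $X$ is open, hence $Z\backslash X=(Z)\cap(Y\backslash X)$ is closed in $Y\backslash X$). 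The reverse direction tight $\Rightarrow$ minimal is immediate since ``minimal'' quantifies over a smaller class. I would also note in passing that Theorem \ref{20}(2) then gives the internal characterization in each case, so the theorem can be combined with Lemma \ref{18} and Theorem \ref{20} to yield both an external and an internal description of the spaces in question, which is presumably the intended use.
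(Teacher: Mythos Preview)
Your argument has a genuine gap in the direction ``minimal $\Rightarrow$ tight''. You claim that if $Z\subsetneq Y$ is any $\mathcal P$--extension of $X$ then $Z\backslash X=Z\cap(Y\backslash X)$ is closed in $Y\backslash X$, hence compact. But $Z$ is merely a subspace of $Y$; nothing forces $Z$ to be \emph{closed} in $Y$, so there is no reason for $Z\cap(Y\backslash X)$ to be closed in $Y\backslash X$. In fact the paper itself provides a counterexample to the claimed equivalence: in Example~\ref{FDG} (with $\mathcal P=\aleph_0$--boundedness and $X=D(\aleph_1)$) one has $\beta X\in{\mathscr O}_{\mathcal P}(X)\subseteq{\mathscr M}_{\mathcal P}(X)$ while $\beta X\notin{\mathscr T}_{\mathcal P}(X)$, because $\gamma_{\mathcal P}X$ is a $\mathcal P$--extension of $X$ properly contained in $\beta X$ whose remainder is \emph{not} compact. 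So ``tight'' is strictly stronger than ``minimal'' in general, and your reduction of (3) to (2.a) of Lemma~\ref{18} collapses.

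The direction (3) $\Rightarrow$ (1) is indeed trivial (tight implies minimal, since minimal quantifies over a smaller class), and (1) $\Leftrightarrow$ (2) is Lemma~\ref{18}(2) as you say. But (1) $\Rightarrow$ (3) requires real work: one must exhibit a \emph{specific} countable--point extension that is tight, not merely minimal. The paper does this by taking the extension $Y$ built in the proof of Lemma~\ref{18} ((2.c) $\Rightarrow$ (2.b)) from a sequence $H_1,H_2,\ldots$ of pairwise disjoint non--empty clopen subsets of $\beta X\backslash\lambda_{\mathcal P}X$, and then arguing directly that $Y$ is tight: if some $p_n\in Y\backslash Y'$ for a $\mathcal P$--extension $Y'\subsetneq Y$, one separates $H_n$ from its complement in $\beta X\backslash\lambda_{\mathcal P}X$ by a continuous $f_n$ and shows the resulting zero--set $Z_n$ is closed in $Y'$, hence has $\mathcal P$, forcing $H_n\subseteq\lambda_{\mathcal P}X$ --- a contradiction. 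This step genuinely uses the particular structure of the constructed $Y$ and cannot be bypassed by the general remark you propose.
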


\begin{proof}
The equivalence of (1) and (2) follows from Theorem \ref{20}.  (1) {\em implies} (3). By Lemma \ref{18}(2.c) the space $X$ is locally--$\mathcal{P}$ and $\beta X\backslash\lambda_{\mathcal{P}}X$ contains an infinite bijectively indexed sequence $H_1,H_2,\ldots$ of pairwise disjoint non--empty clopen subsets. Let $p_i$'s, $T$, $q$ and $Y$ be as in Lemma \ref{18} ((2.c) $\Rightarrow$ (2.b)). Then $Y$ is a countable--point Tychonoff  $\mathcal{P}$--extension of $X$ with compact remainder with $\mathcal{Q}$. We show that $Y$ is also a tight $\mathcal{P}$--extension. Suppose to the contrary that there exists a $\mathcal{P}$--extensions $Y'$ of $X$ properly contained in $Y$. Choose some $p_n\in Y\backslash Y'$ where $n\in\mathbf{N}$. The sets $H_n$ and $(\beta X\backslash\lambda_{\mathcal{P}}X)\backslash H_n$ are closed in $\beta X$, as they are  closed in $\beta X\backslash\lambda_{\mathcal{P}} X$. Let $f_n:\beta X\rightarrow\mathbf{I}$ be continuous with
\[f_n[H_n]\subseteq\{0\}\mbox{ and }f_n\big[(\beta X\backslash\lambda_{\mathcal{P}}X)\backslash H_n\big]\subseteq\{1\}.\]
When $n=1$ note that $p_1$ is obtained by contracting a set containing $H_1$. The set
\[Z_n=f_n^{-1}\big[[0,1/2]\big]\cap X=q\big[f_n^{-1}\big[[0,1/2]\big]\big]\cap Y'\in{\mathscr Z}(X)\]
has $\mathcal{P}$, as it is closed in $Y'$. Therefore
\[H_n\subseteq f_n^{-1}\big[[0,1/2)\big]\subseteq\mbox{int}_{\beta X}\mbox{cl}_{\beta X}\big(f_n^{-1}\big[[0,1/2]\big]\cap X\big)=\mbox{int}_{\beta X}\mbox{cl}_{\beta X}Z_n\subseteq\lambda_{\mathcal{P}}X\]
which is a contradiction. This shows that $Y$ is also a tight $\mathcal{P}$--extension. That (3) implies (1) is trivial and follows from definitions.
\end{proof}

The following is a counterpart for Corollary \ref{20UHYJG}. Thus it too (besides Corollary \ref{20UHYJG}) may be considered as a generalization of the K.D. Magill, Jr.'s theorem in \cite{Mag2} (Theorem \ref{i0}).

\begin{theorem}\label{UGHFR}
Let ${\mathcal P}$ and  ${\mathcal Q}$ be a pair of compactness--like topological properties. Let $X$  be a  Tychonoff space with $\mathcal{Q}$. The following are equivalent:
\begin{itemize}
\item[\rm(1)] $X$ has an $n$--point  tight ${\mathcal P}$--extension  with $\mathcal{Q}$ for any $n\in\mathbf{N}$.
\item[\rm(2)] $X$ has a countable--point tight ${\mathcal P}$--extension with compact remainder with $\mathcal{Q}$.
\end{itemize}
\end{theorem}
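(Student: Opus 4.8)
The plan is to deduce this from the results already in hand, namely Theorem \ref{HYDE} together with Lemma \ref{18} and its internal counterpart Theorem \ref{20}, plus Corollary \ref{20UHYJG}. First I would observe that the implication from (1) to (2) is essentially free: if $X$ has an $n$-point tight ${\mathcal P}$-extension with ${\mathcal Q}$ for each $n$, then by the remark preceding Theorem \ref{HYDE} an $n$-point tight ${\mathcal P}$-extension is the same thing as an $n$-point minimal ${\mathcal P}$-extension, so ${\mathscr M}^{\mathcal Q}_{\mathcal P}(X)$ contains an element with $n$-point remainder for every $n\in\mathbf{N}$. By Corollary \ref{20UHYJG} (applied with the hypothesis that $X$ is a Tychonoff space with ${\mathcal Q}$), this is equivalent to ${\mathscr M}^{\mathcal Q}_{\mathcal P}(X)$ containing an element with countable remainder, which in turn (via Lemma \ref{18}, or directly) means $X$ has a countable-point minimal ${\mathcal P}$-extension with ${\mathcal Q}$; Theorem \ref{HYDE} then converts this into statement (2).

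For the reverse implication, I would argue in the same spirit. If $X$ has a countable-point tight ${\mathcal P}$-extension with compact remainder and with ${\mathcal Q}$, then by Theorem \ref{HYDE} (the equivalence of its (3) with (1)) $X$ has a countable-point minimal ${\mathcal P}$-extension with ${\mathcal Q}$, so ${\mathscr M}^{\mathcal Q}_{\mathcal P}(X)$ contains an element with countable remainder. By Corollary \ref{20UHYJG} this forces ${\mathscr M}^{\mathcal Q}_{\mathcal P}(X)$ to contain an element with $n$-point remainder for every $n\in\mathbf{N}$. Since, again by the remark preceding Theorem \ref{HYDE}, an $n$-point minimal ${\mathcal P}$-extension is precisely an $n$-point tight ${\mathcal P}$-extension, we conclude that $X$ has an $n$-point tight ${\mathcal P}$-extension with ${\mathcal Q}$ for every $n$, which is statement (1).

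So the proof reduces to stitching together three facts: (i) the definitional coincidence of $n$-point tight and $n$-point minimal ${\mathcal P}$-extensions, (ii) Corollary \ref{20UHYJG} relating ``$n$-point for all $n$'' to ``countable-point'' inside ${\mathscr M}^{\mathcal Q}_{\mathcal P}(X)$, and (iii) Theorem \ref{HYDE} relating countable-point minimal, optimal, and tight extensions. The only point that requires a little care is making sure the ${\mathcal Q}$-bookkeeping is consistent throughout: in each invocation of Corollary \ref{20UHYJG} and of Theorem \ref{HYDE} one must check that the space carries ${\mathcal Q}$, which is part of the standing hypothesis of the present theorem, and that ``$n$-point tight ${\mathcal P}$-extension with ${\mathcal Q}$'' really does match ``element of ${\mathscr M}^{\mathcal Q}_{\mathcal P}(X)$ with $n$-point remainder'' — this is immediate once one unwinds Notation \ref{TR} and Definition \ref{4}. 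I do not anticipate a serious obstacle; the main (mild) subtlety is simply being explicit that a tight ${\mathcal P}$-extension with finite remainder automatically has compact remainder, so that Corollary \ref{20UHYJG} applies without any extra compactness hypothesis on the remainders in statement (1).
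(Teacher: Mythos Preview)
Your proposal is correct and follows essentially the same approach as the paper's own proof: the paper simply notes the definitional coincidence of $n$-point tight and $n$-point minimal ${\mathcal P}$-extensions and then cites Theorems \ref{20} and \ref{HYDE} (the former supplying the ``$n$-point for all $n$'' $\Leftrightarrow$ ``countable-point'' equivalence inside ${\mathscr M}^{\mathcal Q}_{\mathcal P}(X)$, which you access via Corollary \ref{20UHYJG}). Your write-up is just a more explicit unpacking of the same chain of implications.
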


\begin{proof}
As noted before, by definitions the terms ``$n$--point minimal ${\mathcal P}$--extension" and ``$n$--point tight ${\mathcal P}$--extension" coincide for any $n\in\mathbf{N}$. The result now follows from Theorems \ref{20} and \ref{HYDE}.
\end{proof}

\begin{theorem}\label{HHTR}
Let ${\mathcal P}$ and  ${\mathcal Q}$ be a pair of compactness--like topological properties. Let $X$  be a Tychonoff  space.
\begin{itemize}
\item[\rm(1)] Let $n\in\mathbf{N}$. If $X$ has a perfect image  with $\mathcal{Q}$ which has an $n$--point tight  ${\mathcal P}$--extension with $\mathcal{Q}$, then so does $X$.
\item[\rm(2)] If $X$ has a  perfect image  with $\mathcal{Q}$ which has a countable--point tight ${\mathcal P}$--extension with compact remainder with $\mathcal{Q}$, then so does $X$.
\end{itemize}
\end{theorem}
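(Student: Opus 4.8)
The plan is to deduce Theorem \ref{HHTR} from the corresponding result for minimal and optimal extensions, namely Theorem \ref{20UHG}, together with the trivial identification between $n$--point minimal extensions and $n$--point tight extensions. The statement splits naturally into two cases, and each case will be handled by reducing ``tight with compact remainder'' to ``minimal''.

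For part (1), suppose $f:X\to Y$ is a perfect surjective map onto a space $Y$ with $\mathcal{Q}$ which has an $n$--point tight $\mathcal{P}$--extension with $\mathcal{Q}$. As was remarked before Theorem \ref{HYDE}, for any $n\in\mathbf{N}$ the terms ``$n$--point minimal $\mathcal{P}$--extension'' and ``$n$--point tight $\mathcal{P}$--extension'' coincide; the point is that an $n$--point extension, having a finite remainder, automatically has compact remainder, and an $n$--point $\mathcal{P}$--extension is tight exactly when no proper $\mathcal{P}$--extension of $X$ sits inside it, which (since all intermediate spaces still have finite, hence compact, remainder) is precisely minimality in the sense of Definition \ref{4}. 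Hence $Y$ has an element of ${\mathscr M}^{\mathcal Q}_{\mathcal P}(Y)$ with $n$--point remainder. By Theorem \ref{20UHG}(1) (applied in the minimal case), $X$ then has an element of ${\mathscr M}^{\mathcal Q}_{\mathcal P}(X)$ with $n$--point remainder, which is the same thing as an $n$--point tight $\mathcal{P}$--extension of $X$ with $\mathcal{Q}$. This gives part (1).

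For part (2), suppose $f:X\to Y$ is perfect and surjective, $Y$ has $\mathcal{Q}$, and $Y$ has a countable--point tight $\mathcal{P}$--extension with compact remainder with $\mathcal{Q}$. By Theorem \ref{HYDE} (equivalence of (1) and (3), applied to $Y$), $Y$ then has a countable--point \emph{minimal} $\mathcal{P}$--extension with $\mathcal{Q}$, i.e.\ ${\mathscr M}^{\mathcal Q}_{\mathcal P}(Y)$ contains an element with countable remainder. By Theorem \ref{20UHG}(2) (minimal case), ${\mathscr M}^{\mathcal Q}_{\mathcal P}(X)$ contains an element with countable remainder; that is, $X$ has a countable--point minimal $\mathcal{P}$--extension with $\mathcal{Q}$. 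Applying Theorem \ref{HYDE} once more, this time to $X$ (equivalence of (1) and (3)), we conclude that $X$ has a countable--point tight $\mathcal{P}$--extension with compact remainder with $\mathcal{Q}$, as desired.

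I expect this proof to be short and essentially bookkeeping, since all the real work has already been done in Theorems \ref{20UHG} and \ref{HYDE}. The only point requiring a little care — and the step I would flag as the main (minor) obstacle — is making sure the identification ``$n$--point tight $=$ $n$--point minimal'' and the reduction ``countable--point tight with compact remainder $\Leftrightarrow$ countable--point minimal'' are invoked in the correct direction at each stage (once to go from $Y$'s tight extension to $Y$'s minimal extension before applying Theorem \ref{20UHG}, and once to convert $X$'s minimal extension back into a tight one afterward), and that the hypothesis ``$X$ is Tychonoff'' together with ``$X$ has a perfect image with $\mathcal{Q}$'' is enough for $X$ itself to carry $\mathcal{Q}$ (it does, since $\mathcal{Q}$ is inverse invariant under perfect mappings) so that Theorem \ref{HYDE} applies to $X$; this last point is exactly the observation already used at the start of the proof of Theorem \ref{20UHG}.
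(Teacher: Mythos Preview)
Your proposal is correct and matches the paper's approach exactly: the paper's entire proof reads ``This follows from Theorems \ref{20UHG} and \ref{HYDE},'' and you have simply unpacked this, invoking the remark (stated just before Theorem \ref{HYDE}) that $n$--point tight and $n$--point minimal coincide for part (1), and applying Theorem \ref{HYDE} in both directions around Theorem \ref{20UHG}(2) for part (2). Your observation that $X$ inherits $\mathcal{Q}$ from its perfect image (so that Theorem \ref{HYDE} applies to $X$) is also handled implicitly in the paper via the same remark made in the proof of Theorem \ref{20UHG}.
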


\begin{proof}
This follows from Theorems \ref{20UHG} and \ref{HYDE}.
\end{proof}

In the following we give examples of a topological property ${\mathcal P}$ and Tychonoff spaces $X$ for which the notion ``tight $\mathcal{P}$--extension with compact remainder" differ from both ``minimal $\mathcal{P}$--extension" and ``optimal $\mathcal{P}$--extension". For convenience, for a space $X$ and a topological property ${\mathcal P}$, denote by ${\mathscr T}_{\mathcal P}(X)$ the set of all tight $\mathcal{P}$--extensions of $X$ with compact remainder. Observe that by definitions ${\mathscr T}_{\mathcal P}(X)\subseteq{\mathscr M}_{\mathcal P}(X)$.

\begin{example}\label{FDG}
Let $\mathcal{P}$ be $\aleph_0$--boundedness (see Example \ref{20UIHG} for the definition) and let $X=D(\aleph_1)$ (the discrete space of cardinality $\aleph_1$). Note that ${\mathcal P}$ is closed hereditary, productive, finitely additive, perfect and satisfies Mr\'{o}wka's condition $(\mbox{W})$ (thus by Corollary 2.6 of \cite{MRW1} is such that if a Tychonoff  space is the union of a compact space and a space with ${\mathcal P}$, then it has ${\mathcal P}$). Then $\lambda_{\mathcal{P}} X=X$, as any $\aleph_0$--bounded $Z\in {\mathscr Z}(X)$ is finite. Therefore
\[{\mathscr O}_{\mathcal P}(X)={\mathscr M}_{\mathcal P}(X)={\mathscr K}(X)\]
where as before ${\mathscr K}(X)$ is the set of all compactifications of $X$. In \cite{Wo} it is shown that
\[\gamma_{\mathcal{P}} X=\bigcup\{\mbox{cl}_{\beta X}A:A\subseteq X\mbox{ is countable}\}.\]
Now $\gamma_{\mathcal{P}} X$ is a $\mathcal{P}$--extension of $X$ (as $\gamma_{\mathcal{P}} X$ always has ${\mathcal P}$; see Corollary 2.4 of \cite{MRW1}) and obviously it is contained properly in $\beta X$. Therefore $\beta X\in {\mathscr O}_{\mathcal P}(X)$, while $\beta X\notin{\mathscr T}_{\mathcal P}(X)$.
\end{example}

\begin{example}\label{JKTXA}
Let $\mathcal{P}$ be $\aleph_0$--boundedness and let $X=[0,\Omega)\backslash\{\omega\}$. Note that $X$ is locally compact; denote by $X^*$ the one--point compactification of $X$. Then $X^*\in {\mathscr T}_{\mathcal P}(X)$, as the only extension of $X$ contained properly in $X^*$ is $X$ itself which does not have $\mathcal{P}$, because $\omega\notin X$. Now let $\phi:\beta X\rightarrow X^*$ be the continuous extension of $\mbox{id}_X$. Then
\begin{equation}\label{UHES}
\phi^{-1}[X^*\backslash X]=\beta X\backslash X\neq\beta X\backslash\lambda_{\mathcal{P}} X
\end{equation}
as $\lambda_{\mathcal{P}} X\backslash X$ is non--empty. To show the latter simply let $Z=(\omega,\Omega)$ and observe that $Z$ is clopen in $X$ (thus it is a zero--set in $X$) and it has  $\mathcal{P}$. Since $Z$ is non--compact
\[\emptyset\neq\mbox{cl}_{\beta X}Z\backslash X=\mbox{int}_{\beta X}\mbox{cl}_{\beta X}Z\backslash X\subseteq\lambda_{\mathcal{P}} X\backslash X.\]
Now by Theorem \ref{HG16} from (\ref{UHES}) it follows that $X^*\notin{\mathscr O}_{\mathcal P}(X)$.
\end{example}

\subsection{On a question of  S. Mr\'{o}wka  and J.H. Tsai}

Let $X$ and $E$ be Hausdorff spaces. The space $X$ is said to be {\em $E$--completely regular} if $X$ is homeomorphic to a subspace of a product $E^\alpha$ for some cardinal $\alpha$ (see \cite{EM} and \cite{Mr1}). In \cite{MT} (also see \cite{T}) the authors proved that for a topological property $\mathcal{P}$ which is regular--closed hereditary, finitely additive with respect to closed subsets (that is, if a Hausdorff space is the finite union of its closed subsets with ${\mathcal P}$, then it has ${\mathcal P}$) and satisfy  Mr\'{o}wka's condition $(\mbox{W})$, every $E$--completely regular (where $E$ is regular and subject to some restrictions) locally--$\mathcal{P}$ space has a one--point $E$--completely regular $\mathcal{P}$--extension (see \cite{Ma} for related results). The authors then posed the following more general question: {\em For what pairs of topological properties ${\mathcal P}$ and ${\mathcal Q}$ is it true that every locally--$\mathcal{P}$ space  with $\mathcal{Q}$ has a one--point extension with both $\mathcal{P}$ and $\mathcal{Q}$?} Indeed, the systematic study of this sort of questions dates back to earlier times when P. Alexandroff proved that every locally compact (Hausdorff) non--compact space has a one--point compact (Hausdorff) extension (thus answering the question in the case when ${\mathcal P}$ is compactness and ${\mathcal Q}$ is the property of being Hausdorff). Since then the question has been considered by various authors for specific choices of topological properties ${\mathcal P}$ and ${\mathcal Q}$. The following result which is a corollary of Lemma \ref{16} is to provide an answer to the above question of S. Mr\'{o}wka and J.H. Tsai (see also Theorem 4.1 of \cite{MRW} for a related result).

\begin{theorem}\label{HGAB}
Let  $\mathcal{P}$ be a compactness--like topological property. Let $\mathcal{Q}$ be a topological property which is either
\begin{itemize}
\item clopen hereditary, inverse invariant under perfect mappings and satisfying Mr\'{o}wka's condition $(\mbox{\em W})$, or
\item strong zero--dimensionality.
\end{itemize}
Let $X$ be a Tychonoff non--$\mathcal{P}$ space with $\mathcal{Q}$. The following are equivalent:
\begin{itemize}
\item[\rm(1)] $X$ is locally--$\mathcal{P}$.
\item[\rm(2)] $X$ has a one--point Tychonoff extension with both $\mathcal{P}$ and $\mathcal{Q}$.
\end{itemize}
\end{theorem}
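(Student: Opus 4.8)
\textbf{Proof plan for Theorem \ref{HGAB}.}

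The plan is to deduce both implications from Lemma \ref{16}, applied in its special case $Y=X$, $f=\mathrm{id}_X$, $\alpha T=\beta T$. First I would dispose of the trivial direction: if $X$ has a one--point Tychonoff extension $T$ with both $\mathcal P$ and $\mathcal Q$, then $T\in{\mathscr E}^{\mathcal Q}_{\mathcal P}(X)$ (its remainder is a single point, hence compact), so Lemma \ref{16} immediately yields that $X$ is locally--$\mathcal P$. This does not even use the one--point hypothesis, only that $X$ has \emph{some} member of ${\mathscr E}^{\mathcal Q}_{\mathcal P}(X)$.

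For the forward direction, suppose $X$ is locally--$\mathcal P$ (and Tychonoff, non--$\mathcal P$, with $\mathcal Q$). By Lemma \ref{HFH} the set $C=\beta X\backslash\lambda_{\mathcal P}X$ is a non--empty compact subset of $\beta X\backslash X$ (non--emptiness uses that $X$ is non--$\mathcal P$; that $C\subseteq\beta X\backslash X$ follows since $X\subseteq\lambda_{\mathcal P}X$ by Lemma \ref{15}). Form the one--point Tychonoff extension $T=e_CX$ obtained by contracting $C$ to a single point, as in Lemma \ref{YTRO}, so that with $\phi:\beta X\rightarrow\beta T$ the continuous extension of $\mathrm{id}_X$ we have $\phi^{-1}[T\backslash X]=C=\beta X\backslash\lambda_{\mathcal P}X$. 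In particular $\beta X\backslash\lambda_{\mathcal P}X\subseteq\phi^{-1}[T\backslash X]$, so condition (2) of Lemma \ref{16} holds, and Lemma \ref{16} gives $T\in{\mathscr E}^{\mathcal Q}_{\mathcal P}(X)$, i.e. $T$ is a one--point Tychonoff extension of $X$ with both $\mathcal P$ and $\mathcal Q$. This handles the first bulleted alternative for $\mathcal Q$, where ``$\mathcal P$ and $\mathcal Q$ is a pair of compactness--like topological properties'' holds directly by Definition \ref{RDQA}.

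The only genuine subtlety is the second bulleted alternative, $\mathcal Q=$ strong zero--dimensionality, which is \emph{not} of the form required by the notion of a pair of compactness--like properties. The plan here is to bypass the ``$\mathcal Q$'' slot of Lemma \ref{16} entirely: replace $\mathcal Q$ by regularity (which, as noted in the remark following Lemma \ref{16}, is inverse invariant under perfect mappings and vacuously satisfies Mr\'owka's condition $(\mathrm W)$, so that ``$\mathcal P$ and regularity is a pair of compactness--like properties'' just means ``$\mathcal P$ is a compactness--like property''). Then the construction above produces a one--point Tychonoff $\mathcal P$--extension $T=e_CX$ of $X$ and it remains only to check that $T$ is strongly zero--dimensional when $X$ is. The key observation is that $\beta T$ is the quotient of $\beta X$ obtained by collapsing the compact set $C=\beta X\backslash\lambda_{\mathcal P}X$ to a point (Lemma \ref{j2}); since $\beta X$ is zero--dimensional (strong zero--dimensionality of $X$) and collapsing a compact subset of a compact zero--dimensional space to a point again yields a zero--dimensional compact space --- one separates the collapsed point from any other point by a clopen set, using that clopen sets of $\beta X$ containing $C$ descend to clopen sets of $\beta T$ --- we conclude $\beta T$ is zero--dimensional, i.e. $T$ is strongly zero--dimensional. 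This last lemma about quotients is the main point to verify carefully; everything else is an immediate appeal to Lemmas \ref{16}, \ref{15}, \ref{HFH}, \ref{YTRO} and \ref{j2}.
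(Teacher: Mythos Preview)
Your proposal is correct and follows essentially the same route as the paper: construct the one--point extension by contracting $\beta X\backslash\lambda_{\mathcal P}X$ to a point, invoke Lemma \ref{16} (with $\mathcal Q$ as given in the first bullet, and with $\mathcal Q$ replaced by regularity in the second), and for strong zero--dimensionality verify via Lemma \ref{j2} that $\beta T$ is the quotient of the zero--dimensional $\beta X$ by a compact set and hence zero--dimensional. The only cosmetic difference is that the paper calls $(2)\Rightarrow(1)$ ``obvious'' (it follows since $\mathcal P$ is closed hereditary) rather than citing Lemma \ref{16}, and carries out the identification $\beta Y=T$ explicitly rather than packaging it via Lemma \ref{YTRO}.
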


\begin{proof}
That (2) implies (1) is obvious. (1) {\em implies} (2). By Lemma \ref{15} we have $X\subseteq\lambda_{\mathcal{P}} X$. Let $T$ be the space obtained from $\beta X$ by contracting the compact subset $\beta X\backslash\lambda_{\mathcal{P}} X$ to a point $p$
(note that $\beta X\backslash\lambda_{\mathcal{P}} X$ is non--empty by Lemma \ref{HFH}). Then $T$ is Tychonoff. Consider the subspace $Y=X\cup\{p\}$ of $T$.
\begin{description}
\item[{\sc Case 1.}] Suppose that $\mathcal{Q}$ is hereditary with respect to clopen subsets, inverse invariant under perfect mappings and satisfies   Mr\'{o}wka's condition $(\mbox{W})$. Lemma \ref{16} then implies that $Y$ is a one--point Tychonoff extension of $X$ with both $\mathcal{P}$ and $\mathcal{Q}$.
\item[{\sc Case 2.}] Suppose that $\mathcal{Q}$ is  strong zero--dimensionality. By Lemma \ref{16} (with $\mathcal{Q}$ being regularity in its statement) the space $Y$ is a one--point Tychonoff $\mathcal{P}$--extension of $X$. We verify that $Y$ is strongly zero--dimensional. Note that $T$ is a compactification of $Y$. Let $\phi:\beta X\rightarrow\beta Y$ and $\gamma:\beta Y\rightarrow T$ be the continuous extensions of $\mbox{id}_X$ and $\mbox{id}_Y$, respectively. Since $\gamma\phi:\beta X\rightarrow T$ agrees with $q$ on $X$ we have $\gamma\phi=q$. Since $T$ is a compactification of $Y$ (and $\gamma|Y=\mbox{id}_Y$), by Theorem 3.5.7 of \cite{E} we have $\gamma[\beta Y\backslash Y]=T\backslash Y$. Thus $\gamma^{-1}(p)=\{p\}$ and
    \[\phi^{-1}(p)=\phi^{-1}\big[\gamma^{-1}(p)\big]=(\gamma\phi)^{-1}(p)=q^{-1}(p)=\beta X\backslash\lambda_{\mathcal{P}} X.\]
    By Lemma \ref{j2} we have $\beta Y=T$ and $\phi=q$. Using zero--dimensionality of $\beta X$ it is easy to verify that $\beta Y$ is zero--dimensional, that is,  $Y$ is strongly zero--dimensional.
\end{description}
Therefore (2) holds in either case.
\end{proof}

\section{Question}

We conclude with a question which naturally arose in connection with our study.

\begin{question}
{\em Let $X$ be a space, let $\mathcal{P}$ be a topological property and let $Y$ be a Tychonoff $\mathcal{P}$--extension of $X$ with compact remainder. The extension $Y$ of $X$ is called {\em maximal} if the topology of $Y$ is maximal (with respect to the inclusion relation $\subseteq$) among all topologies on $Y$ which turn $Y$ into a  Tychonoff ${\mathcal P}$--extension  of $X$  with compact remainder. Thus $Y$ is an optimal ${\mathcal P}$--extension of $X$ if it is both a minimal and a maximal ${\mathcal P}$--extension of $X$. Now to what extent and how the results of this article can be rephrased in order to remain valid in the new context?}
\end{question}



\begin{theindex}

{\em Items are listed according to the order of appearance in the text.}\\

Minimal ${\mathcal P}$--extension; \hfill {\em 4}

Optimal ${\mathcal P}$--extension; \hfill{\em 4}

Compactification--like ${\mathcal P}$--extension; \hfill{\em 4}

${\mathscr E}(X)$; \hfill{\em 4}

${\mathscr E}^{\mathcal P}(X)$, ${\mathscr E}_{\mathcal P}(X)$, ${\mathscr E}^{\mathcal Q}_{\mathcal P}(X)$; \hfill{\em 4}

${\mathscr M}_{\mathcal P}(X)$, ${\mathscr M}^{\mathcal Q}_{\mathcal P}(X)$; \hfill{\em 4}

${\mathscr O}_{\mathcal P}(X)$, ${\mathscr O}^{\mathcal Q}_{\mathcal P}(X)$; \hfill{\em 4}

$\lambda_{\mathcal{P}} X$; \hfill{\em 5}

Mr\'{o}wka's condition $(\mbox{W})$; \hfill{\em 5}

compactness--like topological property; \hfill{\em 6}

Pair of compactness--like topological properties; \hfill{\em 6}

$(EX,k_X)$; \hfill{\em 6}

${\mathscr F}_X(Y)$, ${\mathscr F}(Y)$; \hfill{\em 12, 13}

$\beta$--family; \hfill{\em 13}

${\mathscr F}_\alpha$; \hfill{\em 13}

${\mathscr K}(X)$; \hfill{\em 21}

$A^{(\zeta)}$; \hfill{\em 21}

Of type $(\sigma,n)$; \hfill{\em 21, 22}

Extension of $U$ to $\alpha X$; \hfill{\em 36}

$\mbox {Ex}_{\alpha X}U$; \hfill{\em 36}

$\mbox {Ex}_XU$; \hfill{\em 36}

$\leq_{inj}$; \hfill{\em 49}

$R|Y$; \hfill{\em 50}

$\leq_{surj}$; \hfill{\em 52}

$e_X(K_1,\ldots,K_n)$; \hfill{\em 57}

anti--atom in ${\mathscr M}^{\mathcal Q}_{\mathcal P}(X)$ of type $(\mbox{I})$; \hfill{\em 58}

anti--atom in ${\mathscr M}^{\mathcal Q}_{\mathcal P}(X)$ of type $(\mbox{II})$; \hfill{\em 58}

$e_CX$; \hfill{\em 66}

$M^X_{\mathcal P}$; \hfill{\em 66}

$Y_U$; \hfill{\em 66}

Almost optimal element of ${\mathscr M}^{\mathcal Q}_{\mathcal P}(X)$; \hfill{\em 67}

Tight ${\mathcal P}$--extension; \hfill{\em 79}

${\mathcal P}$--reflection; \hfill{\em 79}

$\gamma_{\mathcal P}X$; \hfill{\em 79}

${\mathscr P}(X)$; \hfill{\em 79}

$X^*$; \hfill{\em 79}

${\mathscr P}^*(X)$; \hfill{\em 79}

$f_T$; \hfill{\em 79}

${\mathscr D}^*(X)$; \hfill{\em 79}

${\mathscr T}_{\mathcal P}(X)$; \hfill{\em 81}

Maximal ${\mathcal P}$--extension; \hfill{\em 82}

\end{theindex}

\end{document}